\documentclass[reqno,a4paper,12pt]{amsart}
\usepackage{newtxtext,newtxmath}
\usepackage[english]{babel}
\usepackage{url}
\usepackage[
colorlinks=true, linkcolor = black, urlcolor=black, citecolor=blue, anchorcolor=blue]{hyperref}

\usepackage{xcolor,color}
\usepackage{geometry}
\usepackage[all, cmtip]{xy}
\usepackage{enumitem}
\usepackage{booktabs}
\usepackage{tikz}
\usepackage{tikz-cd}
\tikzcdset{arrow style=tikz, diagrams={>=stealth}}

\usepackage{mathtools, 
amsthm, amsfonts}
\usepackage{cleveref}
\usepackage{graphicx}
\usepackage{cases}

\calclayout
\numberwithin{equation}{section}
\theoremstyle{plain}
\newtheorem{lemma}{Lemma}[section]
\newtheorem{proposition}[lemma]{Proposition}
\newtheorem{proposition/definition}[lemma]{Proposition/Definition}
\newtheorem{theorem}[lemma]{Theorem}
\newtheorem{corollary}[lemma]{Corollary}

\newtheorem*{theorem*}{Theorem}

\theoremstyle{definition}
\newtheorem{definition}[lemma]{Definition}
\newtheorem{remark}[lemma]{Remark}
\newtheorem{example}[lemma]{Example}

\DeclareRobustCommand{\SkipTocEntry}[5]{}

\DeclareMathOperator{\im}{im}
\DeclareMathOperator{\rank}{rank}

\newcommand{\bfi}{\mathbf{i}}

\newcommand{\bfm}{\mathbf{m}}

\newcommand{\bfs}{\mathbf{s}}
\newcommand{\bft}{\mathbf{t}}
\newcommand{\bfu}{\mathbf{u}}

\newcommand{\calD}{\mathcal{D}}
\newcommand{\calE}{\mathcal{E}}
\newcommand{\calF}{\mathcal{F}}

\newcommand{\calI}{\mathcal{I}}

\newcommand{\calK}{\mathcal{K}}
\newcommand{\calL}{\mathcal{L}}

\newcommand{\calN}{\mathcal{N}}

\newcommand{\calR}{\mathcal{R}}

\newcommand{\bbR}{\mathbb{R}}

\newcommand{\frakX}{\mathfrak{X}}

\newcommand{\frakg}{\mathfrak{g}}

\newcommand{\ra}{\rightarrow}
\newcommand{\xra}{\xrightarrow}
\newcommand{\la}{\leftarrow}
\newcommand{\xla}{\xleftarrow}
\newcommand{\da}{\downarrow}

\newcommand{\rra}{\rightrightarrows}

\newcommand{\Ra}{\Rightarrow}
\newcommand{\pr}{\textnormal{pr}}

\allowdisplaybreaks

\title{Fat Lie theory}

\makeatletter
\def\author@andify{%
\nxandlist {\unskip ,\penalty-1 \space\ignorespaces}%
{\unskip {} \@@and~}%
{\unskip \penalty-2 \space \@@and~}%
}
\makeatother

\author[L.~Obster]{Lennart Obster}
\address{CMUC, University of Coimbra, Department of Mathematics, Portugal.}
\email{\href{mailto:lennartzegthoi@hotmail.com}{lennartzegthoi at hotmail.com}}

\begin{document}

\date{\today}
\maketitle

\vspace{-0.5cm}

\begin{abstract}
We discuss a new point of view of representation theory of Lie groupoids and algebroids: fat Lie theory. The category of fat extensions is introduced, as well as the category of abstract $2$-term representations up to homotopy (ruths) --- the intrinsic objects behind usual (split) $2$-term ruths. We obtain a one-to-one correspondence between them, and relate to the well-known equivalence between $2$-term ruths and VB-groupoids/algebroids. On the other hand, we show that fat extensions of groupoids correspond to general linear PB-groupoids. The differentiation procedure of fat extensions is discussed, as well as the functorial aspects of all mentioned correspondences. In particular, we upgrade the one-to-one correspondence between general linear PB-groupoids and VB-groupoids of Cattafi and Garmendia to an equivalence of categories. Fat extensions are intimately related to another notion we introduce: core extensions. We show that they correspond to vertically/horizontally core-transitive double groupoids, generalising work by Brown, Jotz-Lean and Mackenzie. This way, we also realise regular fat extensions as general linear double groupoids.
\end{abstract} 

\setcounter{tocdepth}{1}
\tableofcontents

\vspace{-1cm}

\addtocontents{toc}{\SkipTocEntry}
\section*{Acknowledgements}
First, I would like to thank Ioan M\u{a}rcu\c{t}, Jo\~ao Nuno Mestre and Luca Vitagliano for their many helpful comments on this work. I also thank Miquel Cueca, Sebasti\'an Daza-K\"uhn and \v{Z}an Grad for our discussions on this topic. The author acknowledges financial support by the Centre for Mathematics of the University of Coimbra (CMUC, \url{https://doi.org/10.54499/UID/00324/2025}) under the Portuguese Foundation for Science and Technology (FCT), Grants UID/00324/2025 and UID/PRR/00324/2025. I acknowledge FCT for support under the Ph.D. Scholarship UI/BD/152718/2022. This article is based upon work from COST Action CaLISTA CA21109 supported by COST (European Cooperation in Science and Technology). \url{www.cost.eu}

\newpage

\section*{Introduction}

The representation theory of Lie groupoids and algebroids has been widely studied by now, and many different viewpoints have been proposed; see, for example, \cite{Transversemeasures,CrainicFernandeschar,VBalgebroidsruths,LAruths,LGruths,VBgroupoidsruths,Wolbert,Deformationsgroupoids,GLStefaniMatias,Bouaziz,PB-groupoids}. The adjoint representation can be seen as a $2$-term representation up to homotopy \cite{LAruths,LGruths} (we abbreviate representation up to homotopy to ruth from now on), and as a VB-groupoid/algebroid \cite{PradinesVBLA,PradinesVBLG,Mackenziebook} through the tangent bundle. The two points of view are related in a very precise way, for there is an equivalence of categories between the categories of $2$-term ruths and VB-groupoids/algebroids that sends the adjoint ruth of a groupoid/algebroid to (an isomorphic copy of) the tangent bundle of the groupoid/algebroid \cite{VBalgebroidsruths,VBgroupoidsruths,VBMorita}.

In this work, we propose another point of view, axiomatising a structure that has been around, but has not yet been explicitly mentioned in the literature. We introduce a category of \textit{fat extensions}, both for Lie groupoids and for Lie algebroids, that is equivalent to the category of $2$-terms ruths and of VB-groupoids/algebroids. The structure has indeed been around in different contexts, for the main example is the first \textit{jet groupoid} or first \textit{jet algebroid}, together with its canonical cochain complex representation; see, for example, \cite{CrainicFernandeschar,BlaomInfinitesimal,MariaThesis,IvanMariaMarius,BlaomCartangroupoids}. But in its generality presented here, it seems that the type of structure we will consider appeared first in \cite{VBalgebroidsruths} as the \textit{fat algebroid} of a VB-algebroid, and in \cite{VBgroupoidsruths} as the \textit{fat groupoid} of a VB-groupoid. Especially the fat algebroid and its generalisations have been studied in different contexts \cite{ShengLAdeformations,DrummondEgea,Papantonis,MeinrenkenPike,LucaLaPastina,ShengCategorification,LiBlandMeinrenkenManin}. The fat groupoid appears, for example, in \cite{LucaLaPastina,LiBlandMeinrenkenManin,PB-groupoids}.

The main goal of this work is to establish equivalences between the category of fat extensions and the following other models presented in the literature: 
\begin{center}
    \begin{tikzcd}
        \phantom{A} & \{\textnormal{Fat extensions}\} \ar[dl, leftrightarrow] \ar[d, leftrightarrow] \ar[dr, leftrightarrow] & \phantom{A} \\
         \{\textnormal{VB-groupoids}\} \ar[r, leftrightarrow] & \{\textnormal{General linear PB-groupoids}\} \ar[r, leftrightarrow] & \{\textnormal{$2$-term ruths}\} 
    \end{tikzcd}
\end{center}
We also explain the connection to \cite{Wolbert} and \cite{GLStefaniMatias}. The discussion about the infinitesimal analogue of the correspondences is kept shorter, and we only briefly comment on the notion of  ``general linear PB-algebroid''. In turn, we also discuss the Lie functor --- the differentiation procedure. The work presented here reveals many new concepts that become directly related to the representation theory of Lie groupoids and algebroids. On the other hand, we motivate throughout that, for many purposes, fat Lie theory is a natural and useful perspective on the representation theory in Lie theory.

We start the motivation of this work through two tangential projects that will be mentioned several times. One is joint work with Jo\~ao Nuno Mestre and Luca Vitagliano (to appear in \cite{Multiplicativetensors}) and one is joint work with Ioan M\u{a}rcu\c{t} and Jo\~ao Nuno Mestre (to appear in \cite{Homotopyoperators}). 

The topic of \cite{Multiplicativetensors} is to develop further the theory of (infinitesimally) multiplicative tensors on Lie groupoids and algebroids. In that project, we consider tensor products of VB-groupoids and VB-algebroids, as well as their associated linear cohomology theory (see also \cite{BursztynCabreraOrtiz,BursztynCabrera,VanEsthomogeneous,BursztynDrummond,DrummondEgea}). While working on that, it became clear that (infinitesimally) multiplicative tensors can be thought of as natural objects defined on the fat groupoid and the fat algebroid. In fact, there are at least two natural ways to take tensor products of fat extensions, and one in particular will show up in this work. Namely, morphisms of fat extensions can naturally be viewed as, what we call here, \textit{invariant} cochains. Moreover, from the description presented in this work, it is immediate that maps of fat extensions define $1$-cocycles in a canonically defined cochain complex --- a $3$-term ruth. Such reformulations to the theory of fat extensions show that the language is especially well-suited for such representation theoretic aspects like taking tensor products. While we make these points more precise, especially when discussing the functorial aspects, we discuss the relationship between tensor products of VB-groupoids, $2$-term ruths and fat extensions in greater detail in \cite{Multiplicativetensors}.

Although we summarise the results already in this work, we develop the theory of \textit{abstract ruths} for Lie groupoids in \cite{Homotopyoperators} (this should be compared to the infinitesimal counterpart discussed in \cite{LAruthsasfgp}). The main goal of that project, however, is to showcase different perspectives on the construction of certain homotopy operators for ruths, and how to do so \textit{explicitly}. This should be compared to \cite{TangVillatoro}. General homotopy theoretic methods (perturbation theory) are used to construct the desired homotopy operators for ruths, and this is especially satisfactory when understood abstractly. However, for the applications we have in mind, it is important to understand the analytical properties of the homotopy operators. Therefore, it is useful to pass to \textit{split} ruths (which become ruths in the ``usual sense''). Understanding the interplay between abstract and split ruths then becomes crucial. We start this work with a summary on abstract ruths, and we put an emphasis on abstract $2$-term ruths. 

Motivation for fat Lie theory is presented already in the literature: usually, when the fat groupoid or algebroid is used, it is because the language of fat extensions helps in thinking about the problem at hand. Take, for example, the appearance of the fat groupoid in \cite{PB-groupoids}. The \textit{frame bundle} of a VB-groupoid, one of the main objects of study in \cite{PB-groupoids}, is defined using the fat groupoid. The reason is simple: the structure of the frame bundle in terms of fat extensions uses only actions that are naturally associated to the fat groupoid and its canonical representations. Another example is by reinterpretation of \cite{CamiloMariusWeil,MeinrenkenPike}. Namely, the \textit{Weil algebra} of a Lie algebroid, the infinitesimal counterpart of the \textit{Bott-Shulman-Stasheff complex} of a Lie groupoid \cite{BSS}, is efficient to write down using the jet algebroid and its canonical representations. The conditions appearing in the definition of the Weil algebra of \cite{CamiloMariusWeil} are natural in terms of the jet algebroid. The work \cite{MeinrenkenPike} presents generalisations, and this has to do with possible generalisations of this work to the setting of LA-groupoids (or quasi LA-groupoids; see \cite{AlvarezCueca}) and double algebroids (see also \cite{Matchedpairs}, \cite{Zanbrocov} and \cite{Ruioan}). A clear future direction of this work is therefore to describe ``enriched'' versions of the equivalences described above.

On the other hand, we will make first steps towards ``higher'' versions of the equivalences discussed above. In \cite{Highervectorbundles}, a higher version of the equivalence between VB-groupoids and $2$-term ruths is discussed. More precisely, it is proved that the categories of so-called higher vector bundles (VB-$n$-groupoids), equipped with a normal weakly flat cleavage, and split ($n+1$)-term ruths are equivalent. Our reformulation of ruths seems to suggest that the appearance of ``normal weakly flat cleavages'' has to do with the fact that split ruths are considered instead of abstract ruths. So, a clear future direction of this work is to use the present work as a stepping stone towards formulating the correct categories and establish the equivalences above in such a ``higher'' generality. This also relates to \cite{Multiplicativetensors}, for tensor products of VB-groupoids and VB-algebroids should be considered in such a higher context.

We now give a description of the contents of the work. 

\newpage

\subsection*{Description of the work}

Before we explain the contents, let us give a brief introduction to the fat extension of a VB-groupoid $V_G$. The fat groupoid 
\begin{equation*}
    \widehat V_G \rra M
\end{equation*}
can be seen as the Lie groupoid of ``pointwise bihorizontal distributions'' or ``pointwise linear bisections''. That is, the bisections of $\widehat V_G$ comprise the \textit{linear bisections} of $V_G$. The groupoid structure of $\widehat V_G$ is reminiscent of the usual group structure of bisections of a Lie groupoid. Moreover, the canonical action of the group of bisections on a groupoid defines a linear groupoid action of $\widehat V_G$ on $V_G$. The restriction of this action to the (core) $2$-term cochain complex $C_M \ra V_M$ of $V_G$, which sits naturally inside $V_G$ as an action VB-groupoid $C_M \ltimes V_M$, reveals the structure of a \textit{cochain complex representation} of $\widehat V_G$ on $C_M \ra V_M$. The short exact sequence of VB-groupoids
\begin{center}
\begin{tikzcd}
1 \ar[r] & C_M \ltimes V_M \ar[r] \ar[d, shift left] \ar[d, shift right] & V_G \ar[r] \ar[d, shift left] \ar[d, shift right] & 0_G \ar[r] \ar[d, shift left] \ar[d, shift right] & 1 \\
1 \ar[r] & V_M \ar[r] & V_M \ar[r] & 0_M \ar[r] & 1
\end{tikzcd}
\end{center}
descends, by passing to pointwise linear bisections, to a short exact sequence of Lie groupoids
\begin{center}
\begin{tikzcd}
1 \ar[r] & \textnormal{H}(V_M,C_M) \ar[r] & \widehat V_G \ar[r] & G \ar[r] & 1.
\end{tikzcd}
\end{center}
On the one hand, this reveals that $\widehat V_G$ is an extension of $G$ by the \textit{bundle of invertible homotopies} 
\begin{equation*}
\textnormal{H}(V_M,C_M) \coloneq \{h_x \in \textnormal{Hom}(V_M,C_M) \mid 1 + \partial h \textnormal{ is invertible}\}.
\end{equation*}
On the other hand, it shows that the two pieces of data on $\widehat V_G$ defined above --- the cochain complex representation and the extension --- are compatible in a precise way (we come back to this later). This identifies $\widehat V_G$ as a \textit{fat extension} of $G$ over $C_M \ra V_M$.

We now discuss the contents of this work. As mentioned above, our first goal in Section \ref{sec: Ruths as differential graded modules} is to explain an abstract point of view on ruths of Lie groupoids. Our viewpoint is related to the remark made after Definition 3.9 in \cite{LGruths} and also with the approach on ruths of Lie groupoids taken in \cite{Stefanithesis}. However, our approach is to view ruths of Lie groupoids as intrinsically defined objects. Namely, as in \cite{LAruthsasfgp} in the case of Lie algebroids, we view a ruth as a finitely generated projective differential graded module --- see Definition \ref{def: ruths as dgms}. The reason for including the discussion is that, with this point of view on ruths, the equivalence of categories described in \cite{VBgroupoidsruths,VBMorita} can be understood canonically as the functor
\begin{equation*}
\{\textnormal{VB-groupoids}\} \ra \{\textnormal{Abstract $2$-term ruths}\} \qquad V_G \mapsto C_\textnormal{VB} V_G^* \cong C_\textnormal{proj} V_G
\end{equation*}
using the VB-complex from \cite{VanEsthomogeneous} or the projectable complex from \cite{VBgroupoidsruths,Deformationsgroupoids}. Although a more complete discussion will appear in \cite{Homotopyoperators}, we believe the point of view we take is useful, and clarifies that all equivalences appearing in this work are canonical and come with a ``split'' version. At least in the $2$-term case, these split categories are equivalent to the intrinsic models via the forgetful functor. We will comment on this several times.

After the introduction on abstract ruths, we formulate the category of fat extensions in Section \ref{sec: The fat extension of a VB-groupoid} using the language of VB-groupoids. That is, we study properties of the fat groupoid of a VB-groupoid, and we will explain what structure turns it into a fat extension --- see Definition \ref{def: fat extensions}. We also explain that we can encode the data of a fat extension using the \textit{fat category} instead. We call such objects \textit{fat category extensions} ---  see Definition \ref{def: fat category extension}. Fat category extensions are similar to fat extensions, but they are the intrinsic models for \textit{weak representations} (see \cite{Wolbert}). Having both fat extensions and fat category extensions in mind will turn out to be useful. Afterwards, we will describe the typical constructions of VB-groupoids in terms of fat extensions. For example, the dual of a fat extension (see Definition \ref{def: the dual of a fat extension}) dualises simply its cochain complex representation, and the analogue of fiber products of VB-groupoids become a type of space of ``invertible blockmatrices'' for fat extensions (see Definition \ref{def: fat fiber product}). In essence, such phenomena seem to make the theory of fat extensions suited for considering (infinitesimally) multiplicative tensors, for example.

This introduction to the theory of fat extensions, together with the initial step of reformulating the theory of ruths of groupoids, will allow us to formulate the canonical one-to-one correspondences of categories
\begin{center}
\begin{tikzcd}
\{\textnormal{VB-groupoids}\} \ar[r, leftrightarrow] & \{\textnormal{Fat extensions}\} \ar[r] & \{\textnormal{Abstract $2$-term ruths}\}.
\end{tikzcd}
\end{center}
The functorial aspects of these equivalences are also discussed, but delayed until Section \ref{sec: functorial aspects}. As mentioned above, the step from VB-groupoids to abstract $2$-term ruths is to associate to a VB-groupoid either its VB-complex or its projectable complex. The step from VB-groupoids to fat extensions is discussed in Section \ref{sec: fat extensions}. Associating an abstract $2$-term ruth of a fat extension is by taking its \textit{invariant complex} (see Definition \ref{def: invariant cochains}). In other words, we reformulate the abstract $2$-term ruth of a VB-groupoid using the fat groupoid (see Proposition \ref{prop: cochain map to representation up to homotopy of fat groupoid}). The complex turns out to be a type of \textit{relative complex} as introduced in \cite{Mariarelative}. When interpreting the jet groupoid as a fat extension, the result is the adjoint ruth. This complex can be seen as the deformation complex of the Lie groupoid (see \cite{Deformationsgroupoids}), and it is the global counterpart of the deformation complex of a Lie algebroid in terms of the jet algebroid (see \cite{ShengLAdeformations}). A small section, Section \ref{sec: Deformation theory for Lie groupoids using the jet groupoid}, is dedicated to this new description of the deformation complex.

We then proceed: in \cite{PB-groupoids}, it is shown that there is a one-to-one correspondence between \textit{general linear PB-groupoids} (PB-groupoids with structural strict Lie $2$-groupoid the general linear strict Lie $2$-groupoid) and VB-groupoids. The fat groupoid is used explicitly in \cite{PB-groupoids}; in hindsight, the fat groupoid shows up in \cite{PB-groupoids} because of passing through the correspondence with the category of fat extensions:
\begin{center}
\begin{tikzcd}
\{\textnormal{VB-groupoids}\} \ar[r] & \{\textnormal{Fat extensions}\} \ar[r] & \{\textnormal{General linear PB-groupoids}\}.
\end{tikzcd}
\end{center}
We will go through these correspondences in detail in Section \ref{sec: PB-groupoids}, and slightly reformulate the correspondence between VB-groupoids and general linear PB-groupoids. Moreover, part of this work is to formulate the category of general linear PB-groupoids that turns the above correspondences into equivalences of categories. This is done in Section \ref{sec: functorial aspects}. The step
\begin{equation*}
\{\textnormal{General linear PB-groupoids}\} \ra \{\textnormal{Abstract $2$-term ruths}\}
\end{equation*}
produces from a PB-groupoid a type of complex of $\textnormal{GL}$-equivariant cochains.

Now, this relation between general linear PB-groupoids and fat extensions is rather similar to the discussion of \cite{Corediagram} (and \cite{CorediagramLA}), where double groupoids (double algebroids) are discussed and how double transitive groupoids (double transitive algebroids) are equivalently described by so-called \textit{core diagrams}. The suspicion seems justified, for general linear PB-groupoids sometimes come with a \textit{gauge double groupoid}. However, the double groupoids in question are not double transitive groupoids. Still, a generalisation of \cite{Corediagram} shows that horizontally or vertically \textit{core-transitive} double groupoids are equivalently described by, what we call here, \textit{core extensions} (see Definition \ref{def: core extension}). The generalisation should be seen as a clarification of the concluding remarks in \cite{Corediagram}. Core extensions are generalisations of core diagrams of \cite{Corediagram}, but they are also generalisations of so-called \textit{crossed modules} of Lie groupoids as in \cite{MackenzieClassification,CrossedAndrouli,CamilleWagemann}. The proof of the generalisation of the main result of \cite{Corediagram} is postponed to Appendix \ref{app: double groupoids}. There we also show that, as expected, core extensions of general linear double groupoids are fat extensions. In the main body, in Section \ref{sec: PB-groupoids}, we do explain what the concepts mentioned above mean.

After the discussion on the many equivalent ways of organising the information of the above categories in the setting of Lie groupoids, we turn to the infinitesimal side of things in Section \ref{sec: The infinitesimal picture}. That discussion is kept brief, but the essential points will be made. In particular, a new model for the linear complex of a VB-algebroid will be discussed in terms of an invariant complex of the fat algebroid (see Definition \ref{def: invariant cochains fat extensions LA}). The discussion on the equivalences between $2$-term ruths, VB-algebroids and fat extensions is short, but rather complete. Although clear indications will be given about the steps to ``PB-algebroids'' and ``core-transitive double algebroids'', these concepts are not fully developed and deserve a more thorough treatment elsewhere. The concept of core extension for Lie algebroids is also introduced (see Definition \ref{def: core extension LA}). After that, in Section \ref{sec: Fat Lie theory}, we discuss the Fat Lie theory to differentiate from global fat extensions to infinitesimal fat extensions. Part of that discussion appears in \cite{DrummondEgea}, but we elaborate on some aspects. We also write the Van Est map of \cite{CamiloFlorianruths,VanEsthomogeneous} in terms of invariant cochains.

Our discussion in Appendix \ref{app: double groupoids} points towards an interesting new direction: can we describe PB-groupoids/algebroids in terms of ``core data''? With an eye on defining geometric structures on Lie groupoids and algebroids, this seems an interesting direction that will be pursued elsewhere.

The content of the sections is as follows:
\begin{itemize}
\item In Section \ref{sec: Notation for VB-groupoids} we recall some of the basic structure behind VB-groupoids. For the reader that is new to this topic, see, for example, \cite{PradinesVBLA,PradinesVBLG,Mackenziebook,VBalgebroidsruths,GrabowskiRotkiewicz,VBalejandrohenriquematias,VBgroupoidsruths,GrabowskaGrabowskiRavanpak}
\item In Section \ref{sec: Ruths as differential graded modules} we explain our point of view on abstract ruths. We give a rather complete summary, but to keep the discussion short, the proofs of the statements presented there are kept to a minimum. The equivalence between VB-groupoids and abstract $2$-term ruths is Theorem \ref{thm: equivalence VB-groupoids and ruths}. In Proposition \ref{prop: VB complex} we also demonstrate how abstract ruths can be ``split'' to recover the usual notion of ruths. 
\item In Section \ref{sec: The fat extension of a VB-groupoid} we give the necessary background on the structure behind the fat groupoid of a VB-groupoid. In particular, we give an account on the bundle of invertible homotopies (see Section \ref{sec: the bundle of invertible homotopies}). The main definition of fat (category) extension of groupoids will be discussed here (Definition \ref{def: fat extensions} and Definition \ref{def: fat category extension}), and so will examples of fat extensions (see Section \ref{sec: examples of fat extensions}).
\item In Section \ref{sec: fat extensions} we explain the one-to-one correspondence between VB-groupoids and fat extensions (Proposition \ref{prop: essential surjectivity of fat construction}) and we do the same for split $2$-term ruths and fat extensions (see Section \ref{sec: From 2-term ruths to fat extensions}, but also Section \ref{sec: Relation to Representations up to homotopy}) as well as for abstract $2$-term ruths and fat extensions (See Corollary \ref{cor: invariant complex fat extension} and Proposition \ref{prop: invariant cochains in the fat case}).
\item In Section \ref{sec: PB-groupoids} we define (general linear) PB-groupoids. Our setup is slightly different from \cite{PB-groupoids}, but we comment on the differences. In the rest of the section, we set the correspondences between general linear PB-groupoids and fat extensions (See Proposition \ref{prop: one to one correspondence fat extensions and PB-groupoids}), as well as between fat extensions and general linear double groupoids (in the regular case; the details for double groupoids are left for Appendix \ref{app: double groupoids}). Here, we also present the model of $2$-term ruths in terms of general linear PB-groupoids (Proposition \ref{prop: ruth of PB-groupoid}). Since our definition of general linear PB-groupoid is slightly different than in \cite{PB-groupoids}, we also show how general linear PB-groupoids correspond to VB-groupoids (Proposition \ref{prop: essential surjectivity of GL of VB-groupoid}).
\item In Section \ref{sec: comments} we make some other types of comments on the correspondences. Firstly, the vanishing theorem for $2$-term ruths of \cite{LGruths,VanEsthomogeneous} is proved using the invariant complex of a fat extension (Proposition \ref{prop: vanishing for proper groupoids}). Secondly, the ``split'' category of fat extensions is introduced. That is, we define the notion of \textit{fat splitting} (see Definition \ref{def: fat splittings}), which corresponds to a cleavage on the level of VB-groupoids (see Proposition \ref{prop: cleavage and fat splitting}). Thirdly, we explain the connection to the work \cite{GLStefaniMatias} (see Section \ref{sec: Fat extensions as a functor to the quasi general linear Lie 2-groupoid}).
\item In Section \ref{sec: functorial aspects} we go into the functorial aspects. We introduce the definition of a fat extension through the language of VB-groupoids. That way, we will naturally set the equivalence of categories between VB-groupoids and fat extensions (Theorem \ref{thm: equivalence of VBLG and fat}). We also comment on the equivalence of categories between split $2$-term ruths and fat extensions. Afterwards, using the language of VB-groupoids again, we will introduce morphisms of general linear PB-groupoids, and show that the categories of VB-groupoids and general linear PB-groupoids are equivalent (Theorem \ref{thm: equivalence VB-groupoids and PB-groupoids}). We will then also comment on the equivalence between general linear PB-groupoids and fat extensions (Theorem \ref{thm: equivalence fat extensions and PB-groupoids}).
\item In Section \ref{sec: The infinitesimal picture} we explain the infinitesimal picture. That is, we introduce fat extensions using the language of VB-algebroids, we give an abstract definition of fat extension, and then we go through the correspondences between abstract and split $2$-term ruths, VB-algebroids and fat extensions (Proposition \ref{prop: correspondence fat extensions VB-algebroids} and Theorem \ref{thm: equivalence of VBLA and fat}). Except for a small comment, ``general linear PB-algebroids'' and ``general linear double algebroids'' are not discussed.
\item In Section \ref{sec: Fat Lie theory} we explain how, abstractly, fat extensions of groupoids differentiate, in a functorial way, to fat extensions of algebroids (Proposition \ref{prop: Lie functor fat extensions}). Then we explicitly work out the details for the case of the fat groupoid of a VB-groupoid (see Section \ref{sec: The fat algebroid of the fat groupoid of a VB-groupoid}; for this step, see also \cite{DrummondEgea}). After that, we write the Van Est map for invariant cochains in terms of the fat groupoid of a VB-groupoid (Theorem \ref{thm: Van Est theorem of invariant cochains}).
\item In Section \ref{sec: Deformation theory for Lie groupoids using the jet groupoid} we make a small comment that we get a new model for the deformation cohomology of a Lie groupoid. In particular, we explain there that deformations of Lie groupoids give rise to $2$-cocycles (this is entirely analogous to the construction of \cite{Deformationsgroupoids}).
\item In Appendix \ref{app: double groupoids} we introduce the notion of vertical/horizontal core extensions and prove that they correspond to vertically/horizontally core-transitive double groupoids (Theorem \ref{thm: horizontally core transtive double Lie groupoids equivalent to core extensions}). There is also a discussion on PB-groupoids that come with a gauge double groupoid. Proposition \ref{prop: fat extension as core extension} shows that regular fat extensions correspond to a type of core extension.
\end{itemize}

\section{Notation for VB-groupoids}\label{sec: Notation for VB-groupoids}
We develop here the relevant notation for VB-groupoids that we use.

\subsection{Notation for groupoids}
We denote the structure maps of a groupoid $G \rra M$ by $(\bfs,\bft,\bfm,\bfi,\bfu)$, but we often simplify the notation to
\begin{equation*}
\bfm(g,g') = gg' \qquad \bfi(g) = g^{-1} \qquad \bfu(x) = 1_x.
\end{equation*}
We also use the division map of $G$, denoted by
\begin{equation*}
\overline\bfm(g,g') = g(g')^{-1}.
\end{equation*}
The groupoid $G$ has an underlying simplicial manifold (the nerve):
\begin{center}
\begin{tikzcd}
    G^{(\bullet)}: \cdots \textnormal{ } G^{(2)} \ar[r, shift left, shift left, shift left, shift left] \ar[r] \ar[r, shift right, shift right, shift right, shift right] & \ar[l, shift left, shift left] \ar[l, shift right, shift right] G \ar[r, shift right, shift right] \ar[r, shift left, shift left] & \ar[l] M.
\end{tikzcd}
\end{center}
We denote the face maps by\footnote{That is, $d_k$ ``skips'' the $k$-th basepoint.}
\begin{equation*}
d_k(g_1,\dots,g_\bullet) \coloneq
\begin{cases}
(g_2,\dots,g_\bullet) & \textnormal{if } k=0 \\
(g_1,\dots,g_kg_{k+1},\dots,g_\bullet) & \textnormal{if } 1 \le k \le \bullet-1 \\
(g_1,\dots,g_{\bullet-1}) & \textnormal{if } k=\bullet
\end{cases}
\end{equation*}
(for $\bullet=1$, $d_0 = \bfs$ and $d_1 = \bft$) and the degeneracy maps by\footnote{That is, $u_k$ ``repeats'' the $k$-th basepoint.}
\begin{equation*}
u_k(g_1,\dots,g_\bullet) \coloneq (g_1,\dots,g_k,1_{\bfs g_k},g_{k+1},\dots,g_\bullet) \textnormal{ for all } 0 \le k \le \bullet.
\end{equation*}
We put $C^\bullet G = C^\infty G^{(\bullet)}$ and
\begin{equation*}
\delta \coloneq (-1)^\bullet\textstyle\sum_{k=0}^{\bullet+1} (-1)^k d_k^*.
\end{equation*}
Then $CG$ becomes a differential graded algebra with $\delta$ and the product\footnote{If we view $f$ and $g$ as having degree $\bullet$, and $f'$ and $g'$ as having degree $\bullet'$, we can interpret the sign as a Koszul sign rule. Similarly, our sign convention for $\delta$ can be explained this way.}
\begin{equation*}
f \cdot f'(g,g') \coloneq (-1)^{\bullet \cdot \bullet'} f(g)f'(g'),
\end{equation*}
where $f \in C^\bullet G$, $f' \in C^{\bullet'}G$ and $(g,g') \in G^{(\bullet+\bullet')}$. We will also use the normalised (sub)complex
\begin{equation*}
    (C^\bullet G)_\textnormal{N} \coloneq \{f \in C^\bullet G \mid \textnormal{for all } 0 \le \ell \le \bullet-1, \nu_\ell f = 0\}
\end{equation*}
where $\nu_\ell = u_\ell^*$. Importantly, the inclusion 
\begin{equation*}
    (CG)_\textnormal{N} \hookrightarrow CG
\end{equation*}
is a quasi-isomorphism (see \cite{EilenbergMaclane} for groups; see also Remark \ref{rema: Dold-Kan correspondence ruths}).

Lastly, we write
\begin{equation*}
\bfs^{(\bullet)}(g_1,\dots,g_\bullet) \coloneq \bfs g_\bullet \qquad \bft^{(\bullet)}(g_1,\dots,g_\bullet) \coloneq \bft g_1 \qquad \pr_k^{(\bullet)}(g_1,\dots,g_\bullet) \coloneq g_k
\end{equation*}
and if $E_M \ra M$ is a vector bundle, we write
\begin{equation*}
C^\bullet(G; E) \coloneq \Gamma (\bft^{(\bullet)})^*E_M.
\end{equation*}

\subsection{Notation for VB-groupoids}
We denote a VB-groupoid over $G \rra M$ by $V_G \rra V_M$:
\begin{center}
\begin{tikzcd}
    V_G \ar[r, shift left] \ar[r, shift right] \ar[d] & V_M \ar[d] \\
    G \ar[r, shift left] \ar[r, shift right] & M
\end{tikzcd}
\end{center}
The structure maps for $V_G$ and for $G$ are denoted by the same symbols. We then write, for all $g \in G$,
\begin{equation*}
\bfs_g: (V_G)_g \ra (V_M)_{\bfs g} \qquad \bft_g: (V_G)_g \ra (V_M)_{\bft g}
\end{equation*}
for the induced linear maps on fibers. The scalar multiplication of $V_G$ and of $V_M$ are both denoted by $h^\lambda$, but we often simplify to
\begin{equation*}
h^\lambda v = \lambda v.
\end{equation*}
For all $\lambda \in \bbR_{>0}$ and $(v,w) \in V_G^{(2)}$ we then have
\begin{equation*}
\lambda (v\cdot w) = (\lambda v) \cdot (\lambda w).
\end{equation*}
That is, scalar multiplication by $\lambda \in \bbR_{>0}$ is a groupoid isomorphism $V_G \ra V_G$ (see, for example, \cite{VBalejandrohenriquematias}).

For the core of the VB-groupoid we always use the right-core
\begin{equation*}
C_M \coloneqq \ker \bfs|_M
\end{equation*}
and we denote
\begin{equation*}
\partial \coloneq C_M \xra{\bft} V_M
\end{equation*}
for the $2$-term complex associated to $V_G$.

Writing
\begin{equation*}
\ell_g: (C_M)_{\bfs g} \xra{-0_g \cdot c^{-1}} (V_G)_g \qquad r_g: (C_M)_{\bft g} \xra{c \cdot 0_g} (V_G)_g,
\end{equation*}
the dual VB-groupoid $V_G^* \rra C_M^*$ has source and target map given by $\bfs = \ell_g^*$ and $\bft = r_g^*$, and the other structure maps are determined by
\begin{equation*}
\bfm(\varphi_g,\varphi_h)(v \cdot w) = \varphi_gv + \varphi_hw \qquad \bfi(\varphi_g)(v^{-1}) = -\varphi_gv \qquad \bfu\varphi_x = \varphi_x|_{(C_M)_x}.
\end{equation*}
In particular, $V_G^*$ has
\begin{equation*}
\partial^*: V_M^* \xra{\bft^*} C_M^*
\end{equation*}
as its associated $2$-term complex.

Lastly, we mean by a cleavage $\Sigma$ a unital splitting of the exact sequence
\begin{center}
\begin{tikzcd}
0 \ar[r] & \bft^*C_M \ar[r, "r_g"] & V_G \ar[r, "\bfs"] & \ar[l, "\Sigma", bend left] \bfs^*V_M \ar[r] & 0.
\end{tikzcd}
\end{center} 
The cleavage $\Sigma$ induces a unital splitting $\overline\Sigma$
\begin{center}
\begin{tikzcd}
0 \ar[r] & \bfs^*C_M \ar[r, "\ell_g"] & V_G \ar[r, "\bft"] & \ar[l, "\overline\Sigma", bend left] \bft^*V_M \ar[r] & 0
\end{tikzcd}
\end{center} 
given by $\overline\Sigma_gv = (\Sigma_{g^{-1}}v)^{-1}$. The duals of these exact sequences are the corresponding exact sequences for the target and the source map, respectively, of the dual VB-groupoid.

\subsection{The fat groupoid and the fat extension of a VB-groupoid}
One of the main points of this work is the observation that, equivalently, we can describe a VB-groupoid $V_G \rra V_M$ through its so-called fat groupoid $\widehat V_G \rra M$, where
\begin{equation*}
\widehat V_G = \{H_g: (V_M)_{\bfs g} \ra (V_G)_g \mid \bfs_g H_g = 1 \textnormal{ and } \bft_g H_g \textnormal{ is a linear isomorphism}\}
\end{equation*}
(see \cite{VBgroupoidsruths}). Although we leave the full discussion for Section \ref{sec: The fat extension of a VB-groupoid}, here is a summary. A VB-groupoid turns out to be equivalent to the data of, what we call, a \textit{fat extension}. In short, the fat extension is defined out of the Lie groupoid map
\begin{equation*}
\widehat V_G \ra G \qquad H_g \mapsto g,
\end{equation*}
which is a surjective submersion. The kernel of this map is a bundle of Lie groups, denoted $\textnormal{H}(V_M,C_M) \subset \textnormal{Hom}(V_M,C_M)$, and called the \textit{bundle of invertible homotopies} for reasons that will be clarified in Section \ref{sec: the bundle of invertible homotopies}. Its structure of bundle of Lie groups only depends on the cochain complex $C_M \ra V_M$, which, in turn, is canonically a cochain complex representation of $\textnormal{H}(V_M,C_M)$. But $C_M \ra V_M$ is also canonically a cochain complex representation of $\widehat V_G$, and the short exact sequence of Lie groupoids over $M$
\begin{center}
\begin{tikzcd}
1 \ar[r] &\textnormal{H}(V_M,C_M) \ar[r] & \widehat V_G \ar[r] & G \ar[r] & 1
\end{tikzcd}
\end{center}
is compatible with these cochain complex representations. By this we mean that, on the one hand, the inclusion $\textnormal{H}(V_M,C_M) \hookrightarrow \widehat V_G$ is equivariant with respect to the cochain complex representations and, on the other hand, the two natural conjugation actions of $\widehat V_G$ on $\textnormal{H}(V_M,C_M)$ agree. Together, the cochain complex representation of $\widehat V_G$ on $C_M \ra V_M$, the exact sequence above, and the compatibility between these two structures that we described is called the fat extension associated to $V_G$. So, the extension is of a very particular form, and our abstract definition of fat extension keeps track of this form.

\subsection{The linear complex and the VB-subcomplex}
The components of the nerve of a VB-groupoid $V_G$ are vector bundles: for all $\bullet \ge 0$,
\begin{equation*}
V_G^{(\bullet)} \ra G^{(\bullet)}
\end{equation*}
is a vector bundle (with structure defined ``componentwise''). The simplicial maps of $V_G$ are vector bundle maps. In other words, the nerve of $V_G$ is a simplicial vector bundle over the nerve of $G$. In particular, the $1$-homogeneous cochains\footnote{We wrote $\bbR_{>0}$ for the open interval $(0,\infty) \subset \bbR$.}
\begin{equation}\label{eq: linear complex of a VB groupoid}
C^\bullet_{\textnormal{lin}} V_G = \{f \in C^\infty V_G^{(\bullet)} \mid \textnormal{ for all } \lambda \in \bbR_{>0}, \textnormal{ } \lambda^*f=\lambda \cdot f\} \subset C^\bullet V_G
\end{equation}
(i.e. the linear functions on the nerve) form a subcomplex called the linear complex.

The VB-complex is a subcomplex of the linear complex given by\footnote{The conditions for $f \in C^\bullet_{\textnormal{lin}} V_G$ to be an element of $C^\bullet_{\textnormal{VB}} V_G$ can be rewritten as
\begin{equation*}
f(0_{g_1},v_2,\dots,v_\bullet) = 0 \textnormal{ and } f(0_{g_1} \cdot v_2,\dots,v_{\bullet+1}) = f(v_2,\dots,v_{\bullet+1}).
\end{equation*}
}
\begin{equation*}
C_{\textnormal{VB}}^\bullet V_G = \{f \in C^\bullet_{\textnormal{lin}} V_G \mid f(0_{g_1},v_2,\dots,v_\bullet) = 0 \textnormal{ and } \delta f(0_{g_1},v_2,\dots, v_{\bullet+1}) = 0\}.
\end{equation*}
Using this notation, the inclusion $C_\textnormal{VB} V_G \hookrightarrow C_\textnormal{lin} V_G$ is a quasi-isomorphism (see \cite{VanEsthomogeneous,VBMorita} and Remark \ref{rema: inclusion VB-complex is a qiso}). Moreover, $C_\textnormal{VB} V_G^*$ is isomorphic to the complex of projectable cochains
\begin{equation*}
C_{\textnormal{proj}}^\bullet V_G = \{c: G^{(\bullet)} \ra V_G \mid c(g_1,\dots,g_\bullet) \in (V_G)_{g_1} \textnormal{ and } \bfs c(g_1,\dots,g_\bullet) = \bfs c(1_{\bft g_2}, g_2, \dots, g_\bullet)\}
\end{equation*}
with differential given by
\begin{equation*}
\delta = (-1)^\bullet(-\overline\bfm(d_1^*, d_0^*) + \textstyle\sum_{k=2}^{\bullet+1} (-1)^k d_k^*).
\end{equation*}
Without giving explicitly the differential, this complex appeared in \cite{VBgroupoidsruths}. For $V_G = TG$, we have $C_{\textnormal{proj}} V_G = C_{\textnormal{def}} G$, and this complex appeared in \cite{Deformationsgroupoids}.

In terms of a representation up to homotopy (we write ruth for short), $C_\textnormal{VB} V_G^*$ is isomorphic to the ruth $\calE_G$ associated to $V_G$
\begin{equation*}
\calE_G^\bullet = C^\bullet(G; C_M) \oplus C^{\bullet-1}(G; V_M)
\end{equation*}
with differential given by\footnote{The structure of the ruth $\calE_G$ is denoted by $\partial,R_1,R_2$. Defining this structure out of $V_G$ uses a cleavage; we will recall the construction in Section \ref{sec: Relation to Representations up to homotopy}.}
\begin{equation*}
\delta = \partial + R_1 + R_2 + (-1)^\bullet\textstyle\sum_{k=1}^{\bullet+1} (-1)^k d_k^*.
\end{equation*}
We view the dual ruth $\calE_G^*$ as being concentrated in positive degrees as well (this is a non-standard convention; we come back to this in Remark \ref{rema: dual of ruth}).

We describe a new model for the VB-complex in Section \ref{sec: fat extensions}, which comes from understanding VB-groupoids as a fat extension. In the next section, Section \ref{sec: Ruths as differential graded modules}, we adopt a point of view on ruths which allows us to see all the different models of VB-complexes canonically as ruths. The model $\calE_G$ will then be called a split ruth.

\section{Abstract representations up to homotopy of Lie groupoids}\label{sec: Ruths as differential graded modules}

This section is kept short for it will appear in \cite{Homotopyoperators}. Still, we would like to emphasise the main points. We think the point of view we take is useful, and we stress on the natural appearance of the fat groupoid in the main example (see Proposition \ref{prop: VB complex}).

\subsection{The definition of an abstract ruth}

We take the following approach to ruths of Lie groupoids (for Lie algebroids, see \cite{LAruthsasfgp}):

\begin{definition}\label{def: ruths as dgms}
An \textit{abstract representation up to homotopy} (ruth) $(\calE_G)_\textnormal{N}$ is a finitely generated projective right differential graded $(CG)_\textnormal{N}$-module. In particular, the differential
\begin{equation*}
\delta: (\calE_G^\bullet)_\textnormal{N} \ra (\calE_G^{\bullet+1})_\textnormal{N}
\end{equation*}
satisfies the Leibniz rule: for all $c \in (\calE_G)_\textnormal{N}$ and $f \in (CG)_\textnormal{N}$,
\begin{equation*}
\delta(c \cdot f) = \delta(c) \cdot f + (-1)^c c \cdot \delta f.
\end{equation*}
\end{definition}

\begin{remark}
    Using the inclusion 
    \begin{equation*}
        (CG)_\textnormal{N} \hookrightarrow CG
    \end{equation*}
    an abstract ruth $(\calE_G)_\textnormal{N}$ gives rise to a finitely generated projective differential graded $CG$-module 
    \begin{equation*}
        \calE_G \coloneq (\calE_G)_\textnormal{N} \otimes_{(CG)_\textnormal{N}} CG.
    \end{equation*}
    Here, the elements of $\calE_G$ are graded by total degree and generated by pure tensors 
    \begin{equation*}
        c \otimes f \in \calE_G
    \end{equation*}
    where $c \in (\calE_G)_\textnormal{N}$ and $f \in CG$. Whenever $c \in (\calE_G)_\textnormal{N}$, $f \in C_NG$ and $f' \in CG$, we have 
    \begin{equation*}
        (c \cdot f) \otimes f' = c \otimes (f \cdot f')
    \end{equation*}
    and the differential on $\calE_G$ is such that
    \begin{equation*}
        \delta(c \otimes f) = \delta(c) \otimes f + (-1)^c c \otimes \delta f.
    \end{equation*}
    We will often work at the level of $CG$-modules, but it is important to incorporate a subcomplex of normalised cochains $(\calE_G)_\textnormal{N}$ of $\calE_G$ (in this way). We will elaborate on this point in Section \ref{sec: the normalised complex of an abstract ruth}.
\end{remark}

To clarify: a $CG$-module $\calE_G$ is finitely generated projective if we can find a split short exact sequence of $CG$-modules\footnote{Here, $C^\bullet G[-\ell] = C^{\bullet-\ell}G$, so $CG[-\ell]$ is concentrated in degrees $\bullet \ge \ell$.}
\begin{center}
\begin{tikzcd}
    0 \ar[r] & \calR \ar[r] & \textstyle\bigoplus_{k \in I} CG[-\ell_k] \ar[r] & \calE_G \ar[r] & 0
\end{tikzcd}
\end{center}
where $I$ is a finite indexing set, and all $\ell_k$ are non-negative. Alternatively, this is equivalent to the existence of a $CG$-module $\calR$ such that $\calR \oplus \calE_G$ is a free $CG$-module.

\begin{example}\label{exa: VB-complex as an abstract ruth}
    The main example we consider in this work is the VB-complex, or the projectable complex, of a VB-groupoid $V_G$:
    \begin{equation*}
        C_\textnormal{VB} V_G \cong C_\textnormal{proj} V_G^*.
    \end{equation*}
    (see Section \ref{sec: Towards a Serre-Swan theorem}). Notice that the normalised complex is the subcomplex
    \begin{equation*}
        (C_\textnormal{VB}^\bullet V_G)_\textnormal{N} = \textstyle\bigcap_{\ell=0}^{\bullet-1} \ker \nu_\ell,
    \end{equation*}
    where $\nu_\ell = u_\ell^*$ for all $0 \le \ell \le \bullet-1$ (here, $u_\ell$ are the degeneracy maps of $V_G$). For $C_\textnormal{proj} V_G$, $\nu_0$ takes the form ($\bfu$ and $\bfs$ denote the unit and source map of $V_G$, respectively)
    \begin{equation*}
        \nu_0: C_\textnormal{proj}^\bullet V_G \ra C_\textnormal{proj}^{\bullet-1} V_G \qquad \nu_0c(g_2,\cdots,g_\bullet) = c(1_{\bft g_2},g_2,\cdots,g_\bullet) - \bfu_{1_{\bft g_2}} \bfs_{1_{\bft g_2}} c(1_{\bft g_2},g_2,\cdots,g_\bullet)
    \end{equation*} 
    and $\nu_\ell = u_\ell^*$ for all $1 \le \ell \le \bullet-1$ (here, $u_\ell$ are the degeneracy maps of $G$).
\end{example}

\subsection{Locally free sheaves of $CG$-modules}

In fact, we conjecture that a type of Serre-Swan theorem relates higher vector bundles with abstract ruths (see Remark \ref{rema: Serre-Swan}). A weak version at least holds: the global sections functor sets an equivalence of categories between ``locally free sheaves of $CG$-modules'' and finitely generated projective $CG$-modules. We discuss next how we think of a (locally free) sheaf of $CG$-modules.

\begin{remark}
We can view $CG$ as a graded sheaf (over $M$) of right $CG$-modules as follows. For each open set $U$ of $M$, we set
\begin{equation*}
G^{(\bullet)}U = (\bft^{(\bullet)})^{-1} U = \{(g_1,\dots,g_\bullet) \in G^{(\bullet)} \mid \bft g_1 \in U\}.
\end{equation*}
We then write
\begin{equation*}
C^\bullet G(U) = C^\infty(G^{(\bullet)}U).
\end{equation*}
The product of $CG$ defines the structure of right $CG$-module on this sheaf. That is, for $f \in C^\bullet G(U)$ and $f' \in C^\bullet G$, we define $f \cdot f' \in C^{\bullet+\bullet'} G(U)$ by
\begin{equation*}
(f \cdot f')(g,g') \coloneq (-1)^{\bullet \cdot \bullet'} f(g) \cdot f'(g').
\end{equation*}
When we view $CG$ as a sheaf in this way, the differential is only defined on the global sections. There are ways to think of the differential as being local which is interesting for developing further representation theory for non-Hausdorff Lie groupoids. This will be discussed in \cite{Homotopyoperators}.
\end{remark}

Consider now a graded sheaf of $CG$-modules $\calE_G$. That is, for all open sets $U \subset M$, we have a right $CG$-module $\calE_G(U)$ (altogether satisfying sheaf-like properties). Locally free means that, for all $x \in M$, there is an open subset $U \ni x$ of $M$ such that
\begin{equation*}
    \calE_G(U) \cong \textstyle\bigoplus_{k \in I} CG[-\ell_k](U)
\end{equation*}
as sheaves of $CG$-modules, where $I$ is a finite indexing set and all $\ell_k$ are non-negative. The following result is a first step towards a type of Serre-Swan theorem.

\begin{proposition}\label{prop: Serre-Swan}
The global sections functor establishes an equivalence of categories
\begin{equation*}
\{\textnormal{locally free sheaves of $CG$-modules}\} \xra{\sim} \{\textnormal{finitely generated projective $CG$-modules}\}.
\end{equation*}
\end{proposition}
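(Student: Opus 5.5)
We follow the classical Serre--Swan proof for vector bundles, with $C^\infty M$ replaced by the sheaf $CG$ over $M$. The key observation is that $CG$ is a \emph{fine} sheaf of modules over $C^\infty M$. For $\phi\in C^\infty M$ and $f\in C^\bullet G(U)$, set $(\phi\cdot f)(g_1,\dots,g_\bullet)=\phi(\bft g_1)f(g_1,\dots,g_\bullet)$. This is left multiplication by the degree-$0$ cochain $\phi$ pulled back along $\bft^{(\bullet)}$. Since $CG$ is graded commutative and $\phi$ has degree $0$, this action commutes with the right $CG$-module structure. Right multiplication by $CG$ only involves later arrows, so $\bft g_1$ is unaffected. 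Hence every sheaf of $CG$-modules $\calE_G$ carries compatible partitions of unity subordinate to covers of $M$. The differential plays no role in the statement, since the categories are of graded modules and sheaves without differential.

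\textbf{Step 1: the functor lands in finitely generated projectives.} Let $\calE_G$ be locally free. By compactness-free arguments (finite-dimensionality of $M$) choose a cover of $M$ by trivialising opens $U_1,\dots,U_N$, with $N=\dim M+1$ after the usual colouring/refinement trick, each a disjoint union of trivialising opens. Each $\calE_G(U_i)$ is free on finitely many generators $e^i_k$ of degrees $\ell^i_k$, and we may take the ranks bounded so the indexing set stays finite. Take a partition of unity $\{\phi_i\}$ subordinate to the cover, with $\psi_i$ such that $\psi_i=1$ on $\operatorname{supp}\phi_i$ and $\sum\psi_i^2$ controlled; concretely, use $\sqrt{\phi_i}$. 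The global sections $\sqrt{\phi_i}\,e^i_k$ define a surjection $\bigoplus_{i,k} CG[-\ell^i_k]\ra \Gamma\calE_G$. It is split by $c\mapsto (\text{coefficients of } \sqrt{\phi_i}\,c|_{U_i} \text{ in the basis } e^i_k)$. Verifying that this is a splitting uses $\sum\phi_i=1$, and $CG$-linearity uses the commutation noted above. So $\Gamma\calE_G$ is a direct summand of a finitely generated free module.

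\textbf{Step 2: full faithfulness.} Morphisms of sheaves of $CG$-modules are determined by, and can be reconstructed from, their action on global sections. The restriction $\Gamma\calE_G\ra\calE_G(U)$ induces an isomorphism
\begin{equation*}
\Gamma\calE_G\otimes_{CG}CG(U)\cong \calE_G(U)
\end{equation*}
(the localisation statement). To prove it, we use bump functions $\phi$ with $\phi=1$ near a point: any local section $s$ agrees near $x$ with the global section $\phi s$. We reduce to the free case, where it is immediate.

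\textbf{Step 3: essential surjectivity.} Given $P$ finitely generated projective, write $P=\operatorname{im} p$ for an idempotent $CG$-linear $p$ on $F=\bigoplus_k CG[-\ell_k]$. Define the sheaf $\calE(U)=p_U(F(U))$, where $p_U$ is the matrix of $p$ with entries in $CG$, restricted to $U$. This is a sheaf of $CG$-modules with global sections $P$. \textbf{The main obstacle} is showing local freeness. Here $p$ is a matrix of cochains of various degrees, and it is not pointwise a projection of a vector bundle. I would first decompose $p=p_0+p_{>0}$ into the part of degree $0$ and higher degrees. The degree-$0$ part $p_0$ is a matrix with entries in $C^0G=C^\infty M$ and is itself idempotent modulo positive degrees. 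In fact, comparing lowest-degree components of $p^2=p$ gives that $p_0$ is idempotent on the graded vector bundle $\underline{\bbR}^{I}$ with shifts. So $\operatorname{im}p_0$ is a graded vector bundle $E$ over $M$, locally trivial over some $U\ni x$. Then I would show that $F(U)\xra{p}P(U)$ restricted to the free module generated by a local frame of $E$ is an isomorphism. It is an isomorphism modulo positive-degree terms (filtration by cochain degree). Since $CG$ is non-negatively graded and the filtration is complete degreewise (each degree involves finitely many steps), a filtered map that is an isomorphism on the associated graded is an isomorphism. This gives local freeness.
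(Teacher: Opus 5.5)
There is a genuine gap, and it is in your opening observation.

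\textbf{The non-locality problem.} First, $CG$ is not graded commutative. For $\phi\in C^0G$ and $f\in C^1G$ one has $(\phi\cdot f)(g)=\phi(\bft g)f(g)$ but $(f\cdot\phi)(g)=f(g)\phi(\bfs g)$. That left multiplication by $\phi\circ\bft$ commutes with the right action on $CG(U)$ is just associativity. The real issue is that cochains of positive degree act \emph{non-locally} with respect to the sheaf structure, which is indexed by $\bft g_1$. For $a\in C^mG$ with $m\geq1$ and $b\in CG(U)$, the product $(a\cdot b)(g,g')=\pm a(g)b(g')$ needs $b$ at $\bft g'_1=\bfs g_m$, which need not lie in $U$. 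So $CG(U)$ is a $(C^\infty M,CG)$-bimodule but not a left $CG$-module. This affects all three steps.
\begin{itemize}
\item In Step 2, $\Gamma\calE_G\otimes_{CG}CG(U)$ is undefined.
\item In Step 3, $p_U$ does not act on $F(U)$ once $p$ has entries of positive degree.
\item In Step 1, nothing you say produces a left $C^\infty M$-action on $\calE_G$. You need the transition isomorphisms to commute with $\phi(\bft g_1)$. This is in fact true: compatibility with restrictions forces the positive-degree entries of a morphism of free sheaves to vanish off $\{\bft g_1=\bfs g_m\}$, and there they do commute with $\phi(\bft g_1)$. But this is the missing argument.
\end{itemize}

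\textbf{Full faithfulness fails as argued.} The same forced locality sinks the reconstruction claim in Step 2. On the free module $CG\oplus CG[-1]$, the map $\Phi(a,b)=(a+h\cdot b,b)$ with $h\in C^1G$ is $CG$-linear. Suppose $h(g)\neq0$ for an arrow with $\bft g\in U$ and $\bfs g\notin U$, and $b\in C^\infty M$ vanishes on $U$ with $b(\bfs g)\neq0$. Then $(a,b)$ and $(a,0)$ agree on $U$, while $\Phi(a,b)$ and $\Phi(a,0)$ do not. No restriction-compatible family of maps induces $\Phi$, so fullness cannot come from bump-function localisation. The sheaf-side morphisms have to be set up to accommodate such maps (the paper defers these details), and your Step 2 does not engage with this.

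\textbf{What survives, and how it matches the paper.} The associated-graded computation in Step 3 works if you carry it out globally. This is the route the paper indicates: the filtration $\calF_k=\calE_G\cdot\calI_k$, the graded bundle with $\Gamma E_M^\bullet=\calF_0^\bullet/\calF_1^\bullet$, and a filtration-preserving isomorphism $\calE_G\cong\bigoplus_k C^k(G;E_M^{\bullet-k})$.
\begin{itemize}
\item Your $p_0$ is idempotent, and $E=\im p_0$ is a graded vector bundle; it is the paper's $E_M$.
\item The composite $C(G;E)\hookrightarrow F\xra{p}P$ is $CG$-linear and filtration-preserving.
\item It is the identity on associated gradeds, because $p_0$ acts pointwise at $\bft g_1$ and fixes $E$.
\item The filtration has finite length in each total degree, so the composite is an isomorphism.
\end{itemize}
Hence every finitely generated projective module is isomorphic to $C(G;E)$. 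This is visibly the module of global sections of the locally free sheaf $U\mapsto C(G;E)(U)$. That gives essential surjectivity without sheafifying $P$ through $p$ or through a tensor product.
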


Although the proof is relatively straightforward, we discuss the proof only in \cite{Homotopyoperators}. In fact, the main ingredients of the proof will be discussed next.

\begin{remark}\label{rema: ideal of ruth}
Notice that $CG$ is canonically filtered by (differential, left and right) ideals:
\begin{equation*}
    \calI_k \coloneq \textstyle\bigoplus_{\bullet \ge k} C^\bullet G.
\end{equation*}
We write $\calI_k^\bullet = \calI_k \cap C^\bullet G$. These ideals canonically define a filtration on a locally free sheaf of $CG$-modules / finitely generated projective $CG$-module $\calE_G$:
\begin{equation*}
    \calF_k \coloneq \calE_G \cdot \calI_k = \textstyle\bigoplus_{\bullet \ge k} \{c \in \calE_G^\bullet \mid c = \textstyle\sum_m c_m \cdot f_m \textnormal{ for some } c_m \in \calE_G^{\bullet-\bullet'} \textnormal{ and } f_m \in \calI_k^{\bullet'}\}.
\end{equation*}
We write $\calF_k^\bullet = \calF_k \cap \calE_G^\bullet$.
\end{remark}

\begin{proposition}\label{prop: underlying complex of vector bundles}
Let $\calE_G$ be a ruth. For all $\bullet \ge 0$,
\begin{equation*}
\Gamma E_M^\bullet = \calF^\bullet_0/\calF^\bullet_1
\end{equation*}
forms the space of sections of a vector bundle $E_M^\bullet$ over $M$. Moreover, the differential $\delta$ descends to a cochain complex of vector bundles:
\begin{center}
\begin{tikzcd}
    E_M^0 \ar[r,"\partial"] & E_M^1 \ar[r,"\partial"] & \cdots \ar[r,"\partial"] & E_M^{n-1} \ar[r, "\partial"] & E_M^n
\end{tikzcd}
\end{center}
(We assume that $E_M^{\bullet>n} = 0$; in this case we call $\calE_G$ an \textit{$(n+1)$-term abstract ruth}).
\end{proposition}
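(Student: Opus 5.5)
Since $\calE_G$ is finitely generated projective, I choose a $CG$-module $\calR$ with $\calR \oplus \calE_G \cong \calK \coloneq \bigoplus_{k\in I} CG[-\ell_k]$ free. The filtration $\calF_k = \calE_G\cdot\calI_k$ of Remark \ref{rema: ideal of ruth} is functorial and compatible with direct sums, so $\calK\cdot\calI_k = (\calR\cdot\calI_k)\oplus(\calE_G\cdot\calI_k)$. Consequently $\calF^\bullet_0/\calF^\bullet_1$ for $\calE_G$ is a direct summand of the corresponding quotient for $\calK$, and the projection onto it is $C^\infty(M)$-linear.

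The first step is the free case. For $CG[-\ell]$ I compute $\calF_0^\bullet = C^{\bullet-\ell}G$. The submodule $\calF_1^\bullet$ consists of sums $c\cdot f$ with $f\in C^{\geq 1}G$, and these span exactly the cochains of degree $\geq \ell+1$ in the shifted grading. Indeed, for $\bullet-\ell\geq 1$ every cochain $c'$ equals $1\cdot c'$ with $1 \in C^0G[-\ell]$ of degree $\ell$, up to sign. Hence $\calF^\bullet_0/\calF^\bullet_1$ equals $C^\infty(M)$ when $\bullet=\ell$ and is zero otherwise. Summing over $I$, the quotient for $\calK$ is $\Gamma$ of the trivial bundle $K^\bullet_M = M\times\bbR^{\#\{k : \ell_k=\bullet\}}$, as a $C^0G = C^\infty M$-module. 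Here the right action of $C^0G$ descends to the quotient because $\calF_1$ is a submodule.

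The second step is to pass to summands. The idempotent $p$ projecting $\calK$ onto $\calE_G$ is $CG$-linear, so it preserves $\calF_k$ and induces a $C^\infty(M)$-linear idempotent on $\Gamma K^\bullet_M$. By Serre--Swan for vector bundles (a $C^\infty M$-linear idempotent endomorphism of $\Gamma K_M^\bullet$ is a smooth bundle idempotent whose image has locally constant rank), its image is $\Gamma E^\bullet_M$ for a subbundle $E^\bullet_M\subset K^\bullet_M$. This image is exactly $\calF_0^\bullet/\calF_1^\bullet$ for $\calE_G$. The \textbf{main subtlety} is that $p$ is not a map of differential modules, so the vector bundle must be extracted without reference to $\delta$; the argument above only uses module structure, so this is fine. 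Finally, $E^\bullet_M$ vanishes for $\bullet$ larger than $n=\max\ell_k$.

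The third step is to descend the differential. The ideals $\calI_k$ are differential ideals, since $\delta$ raises cochain degree. By the Leibniz rule, $\delta(c\cdot f) = \delta c\cdot f \pm c\cdot\delta f$, which lies in $\calF_k$ whenever $f\in\calI_k$. Thus $\delta$ preserves $\calF_0$ and $\calF_1$ and induces $\partial:\Gamma E^\bullet_M\to\Gamma E^{\bullet+1}_M$ with $\partial^2=0$. To see that $\partial$ is a bundle map, I check $C^\infty(M)$-linearity. For $f\in C^0G=C^\infty M$, we have $\delta(c\cdot f)=\delta c\cdot f\pm c\cdot\delta f$, and $\delta f\in C^1G\subset\calI_1$, so $c\cdot\delta f\in\calF_1$. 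Hence $\partial(c f)=\partial(c)f$. Here I use that the right action of $C^0G$ on classes agrees with the $C^\infty M$-module structure used in the first step, via $f\mapsto f$ (functions on $M=G^{(0)}$). Therefore $\partial$ is tensorial and defines vector bundle maps $E^\bullet_M\to E^{\bullet+1}_M$ forming a cochain complex.
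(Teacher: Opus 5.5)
Your proof is correct and complete. Note that the paper does not prove this proposition at all: it is stated as one of the ingredients behind the Serre--Swan type Proposition \ref{prop: Serre-Swan}, with the details deferred to \cite{Homotopyoperators}. So there is no written argument to compare yours against.

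The natural alternative within the paper's framework would be a local argument. One would compute $\calF_0/\calF_1$ on free models $\calE_G(U)\cong\bigoplus_k CG[-\ell_k](U)$ and then glue. That route, however, presupposes local freeness, which is precisely what Proposition \ref{prop: Serre-Swan} is meant to deliver and what this proposition helps establish. Your argument avoids that circularity by using only the global finitely generated projective definition. It also exhibits $E^\bullet_M$ concretely as the image of a smooth idempotent on a trivial bundle, so local triviality comes for free.

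The two steps that carry the proof are both handled correctly:
\begin{itemize}
\item the filtration is compatible with the splitting $\calK=\calR\oplus\calE_G$, because $p$ is $CG$-linear and hence preserves each $\calK\cdot\calI_k$;
\item $\partial$ is $C^\infty M$-linear, because $\delta(C^0G)\subset\calI_1$ kills the correction term $c\cdot\delta f$ modulo $\calF_1$.
\end{itemize}
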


\begin{remark}
The quotient map $(CG)_\textnormal{N} \ra (CG)_\textnormal{N} / \calI_1 = C^\infty M$ comes from the unit map
\begin{equation*}
1_M \hookrightarrow G
\end{equation*}
viewed as a groupoid map (here, $1_M$ denotes the unit groupoid). Now, we can always pullback an abstract ruth along a Lie groupoid map, and what Proposition \ref{prop: underlying complex of vector bundles} shows is that the resulting abstract ruth $\calE_G|_M$ can be seen as a complex of vector bundles. 
\end{remark}

\begin{remark}\label{rema: dual of ruth}
Notice that we are assuming that (abstract) ruths are non-negatively graded. In particular, when dualising an abstract ruth, we shift it to non-negative degrees. To be clear, given an $(n+1)$-term abstract ruth $\calE_G$,
\begin{equation*}
(\calE_G^*)^k \coloneq \textnormal{Hom}^k_{CG\textnormal{-lin}}(\calE_G, CG) = \{\varphi: \calE_G \ra CG[k] \mid \textnormal{for all } f \in CG, \textnormal{ } \varphi(c \cdot f) = \varphi(c) \cdot f\}
\end{equation*}
is the dual ruth, which is concentrated in non-positive degrees. The differential is given by\footnote{Here and later, the notation $[\cdot,\cdot]$ is used for the graded commutator. In this case, $[\delta,\varphi]$ means $\delta \varphi - (-1)^\varphi \varphi \delta$.}
\begin{equation*}
\delta^*\varphi \coloneq [\delta, \varphi]
\end{equation*}
and it is a right $CG$-module by setting ($\iota \coloneq (-1)^{\tfrac{\bullet(\bullet+1)}{2}}\bfi^*$ is pullback by the inversion map\footnote{Notice that $[\delta,\iota] = \delta \iota - \iota \delta = 0$.})
\begin{equation*}
(\varphi \cdot f)c \coloneq (-1)^{\varphi \cdot f} (\iota f) \cdot (\varphi c).
\end{equation*}
We will, however, consider $\calE_G^*[-n]$, which introduces a sign $(-1)^n$ in the equation for $\delta$. Instead of writing $\calE_G^*[-n]$, we simply write $\calE_G^*$. So, the dual construction we consider is a ``shifted duality'' and in the $2$-term case it aligns with the dualisation of VB-groupoids. For higher terms, we expect there is a precise relation with \cite{RonchiZhu} (see Remark \ref{rema: Serre-Swan}).
\end{remark}

\begin{example}\label{exa: dual of VB as ruth}
    As mentioned implicitly above, we have an isomorphism of abstract ruths
    \begin{equation*}
        C_\textnormal{VB} V_G \cong (C_\textnormal{VB} V_G^*)^* 
    \end{equation*}
    which, explicitly, is given by sending $f \in C_\textnormal{VB} V_G$ to 
    \begin{equation*}
        \varphi_f(c)(g_1,\cdots,g_{f+c-1}) \coloneq (-1)^{(f-1)c + \tfrac{f(f+1)}{2}} c(f((-)_{g_f^{-1}}, g_{f-1}^{-1}, \dots, g_1^{-1})^{-1}, g_{f+1}, \dots, g_{f+c-1}).
    \end{equation*}
\end{example}

\subsection{The normalised complex of an abstract ruth}\label{sec: the normalised complex of an abstract ruth}

Here, we elaborate on our choice to define abstract ruths as in Definition \ref{def: ruths as dgms} as $(CG)_\textnormal{N}$-modules.

\begin{remark}\label{rema: Dold-Kan correspondence ruths}
The cosimplicial version of the Dold-Kan correspondence \cite{Dold,Kan} sets an equivalence of categories between (non-negatively graded)  cochain complexes and cosimplicial objects (in an abelian category). The cosimplicial object (say, as a cosimplicial vector space)
\begin{center}
\begin{tikzcd}
C^\infty M \ar[r, shift right, shift right] \ar[r, shift left, shift left] & \ar[l] C^1 G \ar[r, shift left, shift left, shift left, shift left] \ar[r] \ar[r, shift right, shift right, shift right, shift right] & \ar[l, shift left, shift left] \ar[l, shift right, shift right] C^2 G \textnormal{ } \cdots: C^\bullet G
\end{tikzcd}
\end{center}
corresponds this way to the cochain complex $(CG)_\textnormal{N}$. A crucial point is that the inclusion 
\begin{equation*}
(CG)_\textnormal{N} \hookrightarrow CG
\end{equation*}
is a quasi-isomorphism, and especially the way in which it is. In short, the degeneracy maps provide homotopy data
\begin{equation*}
    \eta_m \coloneq (-1)^{m+1}\nu_{\bullet-m-1} = (-1)^{m+1}u_{\bullet-m-1}^* \qquad \pi_m \coloneq 1 - [\delta,\eta_m]
\end{equation*}
that show that each inclusion in the filtration 
\begin{center}
\begin{tikzcd}
(C^\bullet G)_\textnormal{N} \ar[r, shift left, hookrightarrow] & \ar[l, shift left, "\pi_{\bullet-1}"]\textstyle\bigcap_{\ell=1}^{\bullet-1} \ker \nu_\ell \ar[r, shift left, hookrightarrow] & \ar[l, shift left, "\pi_{\bullet-2}"] \cdots \ar[r, shift left, hookrightarrow] & \ar[l, shift left, "\pi_1"] \ker \nu_{\bullet-1} \ar[r, shift left, hookrightarrow] & \ar[l, shift left, "\pi_0"] C^\bullet G
\end{tikzcd}
\end{center}
is a quasi-isomorphism.\footnote{This argument works for general cosimplicial objects.} The VB-complex $C_\textnormal{VB} V_G$ of a VB-groupoid $V_G$ is similarly filtered
\begin{center}
\begin{tikzcd}
(C_\textnormal{VB}^\bullet V_G)_\textnormal{N} \ar[r, shift left, hookrightarrow] & \ar[l, shift left, "\pi_{\bullet-1}"]\textstyle\bigcap_{\ell=1}^{\bullet-1} \ker \nu_\ell \ar[r, shift left, hookrightarrow] & \ar[l, shift left, "\pi_{\bullet-2}"] \cdots \ar[r, shift left, hookrightarrow] & \ar[l, shift left, "\pi_1"] \ker \nu_{\bullet-1} \ar[r, shift left, hookrightarrow] & \ar[l, shift left, "\pi_0"] C_\textnormal{VB}^\bullet V_G
\end{tikzcd}
\end{center}
and each inclusion is a quasi-isomorphism by the same argument as above. However, realising
\begin{equation*}
    C_\textnormal{VB} V_G \cong (C_\textnormal{VB} V_G)_\textnormal{N} \otimes_{(CG)_\textnormal{N}} CG,
\end{equation*}     
this fact can be seen as a consequence of the above argument for $(CG)_\textnormal{N} \hookrightarrow CG$.
\end{remark}

By virtue of the way in which we defined abstract ruths, we have the following general statement.

\begin{proposition}\label{prop: inclusion of normalised subcomplex is qiso for abstract ruth}
    Given an abstract ruth $(\calE_G)_\textnormal{N}$, the inclusion 
    \begin{equation*}
    (\calE_G)_\textnormal{N} \hookrightarrow \calE_G
    \end{equation*}
    is a $((CG)_\textnormal{N} \hookrightarrow CG)$-linear quasi-isomorphism.
\end{proposition}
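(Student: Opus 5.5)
The plan is to reduce to the free case and then use the quasi-isomorphism $(CG)_\textnormal{N} \hookrightarrow CG$ from Remark \ref{rema: Dold-Kan correspondence ruths}, via a filtration argument.

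First, linearity is immediate. The inclusion is the map $c \mapsto c \otimes 1$, and $(c\cdot f)\otimes 1 = c \otimes f$ for $f \in (CG)_\textnormal{N}$. Moreover, $\delta(c\otimes 1) = \delta c \otimes 1$ because $\delta 1 = 0$. So it is a cochain map compatible with the module structures along $(CG)_\textnormal{N} \hookrightarrow CG$.

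For the quasi-isomorphism, use that $(\calE_G)_\textnormal{N}$ is finitely generated projective. Choose $\calR$ with $\calR \oplus (\calE_G)_\textnormal{N} \cong \bigoplus_{k\in I} (CG)_\textnormal{N}[-\ell_k]$ as graded modules. The differential need not respect this decomposition, so I would not argue directly with the complexes. Instead I would filter by the ideals $\calI_p$ of Remark \ref{rema: ideal of ruth}, whose normalised analogues are $\calI_p \cap (CG)_\textnormal{N}$, with induced filtrations $\calF_p$ on $(\calE_G)_\textnormal{N}$ and on $\calE_G$.

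The differential preserves these filtrations, since $\delta \calI_p \subset \calI_p$ and Leibniz holds. On associated graded pieces, only the part of $\delta$ not raising filtration degree survives. By Proposition \ref{prop: underlying complex of vector bundles}, the graded pieces are identified with $\Gamma E_M^\bullet \otimes_{C^\infty M} \calI_p/\calI_{p+1}$, together with its normalised counterpart. I expect the induced differential on $\textnormal{gr}$ to be $\partial \otimes 1 \pm 1\otimes \delta_G$. Here $\delta_G$ is the ``non-unit'' part of the groupoid differential, in the sense of the bigraded structure $\bigoplus_{q} \Gamma E_M^q \otimes C^{\bullet-q}G$.

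More cleanly, I would filter by the degree of $E_M$ rather than by $\calI_p$. Using a local free presentation, write $\calE_G \cong \bigoplus_q \Gamma E^q_M \otimes_{C^\infty M} C^{\bullet-q}G$ as graded $CG$-modules. Proposition \ref{prop: Serre-Swan} lets me work locally. Then filter by $q$, which is bounded since the ruth has finitely many terms. On the associated graded, the differential is $1\otimes\delta$ acting on $\Gamma E_M^q \otimes_{C^\infty M} CG$, with $CG$ viewed as a complex of $C^\infty M$-modules via $\bft$. On the normalised side the same holds with $(CG)_\textnormal{N}$.

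The inclusion $\Gamma E^q_M \otimes (CG)_\textnormal{N} \hookrightarrow \Gamma E^q_M\otimes CG$ is a quasi-isomorphism. The reason is that the homotopy data $\eta_m,\pi_m$ of Remark \ref{rema: Dold-Kan correspondence ruths} are pullbacks along degeneracy maps, which commute with $\bft^{(\bullet)}$ and are therefore $C^\infty M$-linear, so they tensor with $\Gamma E^q_M$. Since $\Gamma E^q_M$ is projective over $C^\infty M$, tensoring preserves the quasi-isomorphism anyway. The filtrations are finite, so a spectral sequence (or five-lemma induction on $q$) comparison gives the quasi-isomorphism on the total complexes.

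The main obstacle is justifying the splitting $\calE_G \cong \bigoplus_q \Gamma E^q_M \otimes C^{\bullet-q} G$ compatibly on both sides. Specifically, I need that the normalised submodule $(\calE_G)_\textnormal{N}$ corresponds exactly to $\bigoplus_q \Gamma E^q_M\otimes (C^{\bullet-q}G)_\textnormal{N}$ under a choice of lifts of $\Gamma E_M^q$ into $(\calE_G)_\textnormal{N}$. I would choose these lifts inside $(\calE_G)_\textnormal{N}$, using projectivity of the quotients $\calF_0/\calF_1$ from Proposition \ref{prop: underlying complex of vector bundles}. Then base change of a module that is free on these generators gives the matching decomposition of $\calE_G$. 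If such a global choice proves awkward, I would do it locally and use the sheaf perspective of Proposition \ref{prop: Serre-Swan} to globalise.
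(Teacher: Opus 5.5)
There is a genuine gap in the filtration step, and it is not the one you flag. Matching the normalised and unnormalised decompositions is harmless, by graded projectivity.

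\textbf{The $\calI_p$-filtration.} It is $\delta$-stable. But on $\calF_p/\calF_{p+1}$ every component of $\delta$ that raises the groupoid degree dies. So the associated graded carries only $\partial$, not $\partial\otimes 1\pm 1\otimes\delta_G$. And $\Gamma E_M^\bullet\otimes_{C^\infty M}(C^pG)_\textnormal{N}\hookrightarrow\Gamma E_M^\bullet\otimes_{C^\infty M}C^pG$ is not a quasi-isomorphism for $\partial$; take $\partial=0$.

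\textbf{The $E_M$-degree filtration.} It is not a filtration by subcomplexes. In a splitting, $\delta=\sum_\ell\delta_\ell$ with $\delta_\ell\colon C^k(G;E_M^q)\to C^{k+\ell}(G;E_M^{q+1-\ell})$. Thus $\delta_0=\partial$ raises $q$ while $\delta_2=R_2$ lowers it. As soon as both are nonzero, neither the increasing nor the decreasing $q$-filtration is preserved. No weight linear in $(k,q)$ isolates $\delta_1$ either.

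\textbf{The graded differential.} Your ``$1\otimes\delta$'' on $\Gamma E^q_M\otimes_{C^\infty M}CG$ is not defined. $CG$ is not a complex of $C^\infty M$-modules via $\bft$, because $d_0^*$ is not $C^\infty M$-linear: $\delta(\phi\cdot f)=\delta\phi\cdot f+\phi\cdot\delta f$ with $\delta\phi\neq 0$. This is exactly why the $q$-preserving component $\delta_1$ contains the quasi-action $R_1$. Since $\delta_1^2=-[\partial,\delta_2]\neq 0$ in general, the pieces $C(G;E^q_M)$ are not even complexes. Localising via Proposition \ref{prop: Serre-Swan} does not help, since $\delta$ only lives on global sections.

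\textbf{Why special input is needed.} Some input specific to $(CG)_\textnormal{N}\hookrightarrow CG$ is unavoidable. Take $A=\bbR\oplus\bbR x\subset B=A\oplus\bbR\epsilon_1\oplus\bbR\epsilon_2$ with $|x|=|\epsilon_1|=1$, $dx=0$, $d\epsilon_1=\epsilon_2$, $x\epsilon_1=\epsilon_2$, and all other products of positive-degree elements zero. Then $A\hookrightarrow B$ is a quasi-isomorphism. The free module $P$ on $e_0,e_1$ (degrees $0,1$) with $\delta e_i=e_ix$ is acyclic. Yet $e_1\otimes\epsilon_2$ is a nonzero class in $H^3(P\otimes_AB)$.

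\textbf{The missing idea.} Apply the Dold--Kan homotopies of Remark \ref{rema: Dold-Kan correspondence ruths} to $\calE_G$ itself. The linearity you need is left $(CG)_\textnormal{N}$-linearity, not $C^\infty M$-linearity.
\begin{itemize}
\item The operators $\eta_m=(-1)^{m+1}\nu_{\bullet-m-1}$ only insert units among the last entries. So for normalised $a$, $\eta_m(a\cdot f)=(-1)^{|a|}a\cdot\eta_m f$. If the unit would land in the slots of $a$, both sides vanish because $a$ is normalised.
\item Likewise, the subcomplexes $K_m=\bigcap_{\ell=\bullet-m}^{\bullet-1}\ker\nu_\ell$ are left $(CG)_\textnormal{N}$-submodules.
\item Hence $\widetilde\eta_m(c\otimes f):=(-1)^{|c|}c\otimes\eta_m f$ is well defined on $\calE_G=(\calE_G)_\textnormal{N}\otimes_{(CG)_\textnormal{N}}CG$.
\item The Leibniz rule gives $[\delta,\widetilde\eta_m](c\otimes f)=c\otimes[\delta,\eta_m]f$, because the two terms $\pm\,\delta c\otimes\eta_m f$ cancel. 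So the twisting by $\partial$, $R_1$, $R_2$, which sits inside $\delta c$, never interacts with the homotopy. Therefore $1-[\delta,\widetilde\eta_m]=1\otimes\pi_m$.
\item Since $(\calE_G)_\textnormal{N}$ is flat, the modules $(\calE_G)_\textnormal{N}\otimes_{(CG)_\textnormal{N}}K_m$ form a filtration of $\calE_G$ by subcomplexes. Each inclusion is a homotopy equivalence with inverse $1\otimes\pi_m$.
\item In any bounded range of degrees, this filtration reaches $(\calE_G)_\textnormal{N}$ after finitely many steps.
\end{itemize}
This is what the paper means by the statement holding ``by virtue of the way in which we defined abstract ruths''.
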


So, if we wish to view an abstract ruth as a $CG$-module, Definition \ref{def: ruths as dgms} automatically incorporates a sensible notion of ``normalised subcomplex''. In the next section (Section \ref{sec: split ruths}) we show how to recover the usual notion of a ruth (called split ruth in this work). We conclude with two remarks.

\begin{remark}\label{rema: inclusion VB-complex is a qiso}
Observe the similarities of the above discussion with the result that the inclusion
\begin{equation*}
C_\textnormal{VB} V_G \hookrightarrow C_\textnormal{lin} V_G
\end{equation*}
is a quasi-isomorphism of complexes (see \cite{VanEsthomogeneous,VBMorita}). As made precise in \cite{VBMorita}, this fact can be proven via the same strategy explained above. Indeed, $C_\textnormal{lin} V_G$ is canonically filtered  
\begin{center}
\begin{tikzcd}
C_\textnormal{VB}^\bullet V_G \ar[r, shift left, hookrightarrow] & \ar[l, shift left, "\pi_{\bullet-1}"] \calK^\bullet_{\bullet-1} \ar[r, shift left, hookrightarrow] & \ar[l, shift left, "\pi_{\bullet-2}"] \cdots \ar[r, shift left, hookrightarrow] & \ar[l, shift left, "\pi_1"]  \calK^\bullet_1 \ar[r, shift left, hookrightarrow] & \ar[l, shift left, "\pi_0"] C_\textnormal{lin}^\bullet V_G
\end{tikzcd}
\end{center}
by the subcomplexes 
\begin{equation*}
\calK^\bullet_m = \{f \in C_\textnormal{lin}^\bullet V_G \mid \begin{array}{l} f(0_{g_1}, \dots, 0_{g_{\bullet-m+1}}, v_{\bullet-m+2}, \dots, v_\bullet) = 0 \\ \delta f(0_{g_1},\cdots,0_{g_{\bullet-m+1}},v_{\bullet-m+2}, \dots, v_{\bullet+1}) = 0 \end{array}\}
\end{equation*}
and the homotopy data is defined via a (unital) cleavage $\Sigma$. Explicitly, given $f \in K_m^\bullet$,
\begin{equation*}
\eta_mf(v_1, \dots, v_{\bullet-1}) \coloneq (-1)^\bullet f(\Sigma_{g_{\bullet-1}^{-1} \cdots g_1^{-1}}\bft_{g_1} v_1, v_1, \dots, v_{\bullet-1})
\end{equation*}
and then, as before, we set $\pi_m \coloneq 1 - [\delta,\eta_m]$.
\end{remark}

\begin{remark}\label{rema: VB-complex and Dold-Kan}
Given a VB-groupoid $V_G \rra V_M$, the cosimplicial object
\begin{center}
\begin{tikzcd}
C^\infty_\textnormal{lin} V_M \ar[r, shift right, shift right] \ar[r, shift left, shift left] & \ar[l] C^1_\textnormal{lin} V_G \ar[r, shift left, shift left, shift left, shift left] \ar[r] \ar[r, shift right, shift right, shift right, shift right] & \ar[l, shift left, shift left] \ar[l, shift right, shift right] C^2_\textnormal{lin} V_G \textnormal{ } \cdots: C_\textnormal{lin}^\bullet V_G
\end{tikzcd}
\end{center}
is related to the cochain complex $(C_\textnormal{lin} V_G)_\textnormal{N}$ via the Dold-Kan correspondence. But, usually, we can not view $C_\textnormal{VB} V_G \subset C_\textnormal{lin} V_G$ as a cosimplicial subobject even though
\begin{equation*}
    (C_\textnormal{VB} V_G)_\textnormal{N} \subset (C_\textnormal{lin} V_G)_\textnormal{N}
\end{equation*}
is a subcomplex. Instead, $(C_\textnormal{VB} V_G)_\textnormal{N}$ corresponds to the cosimplicial subobject $C_\textnormal{VB,cs} V_G$ consisting of those $f \in C_\textnormal{lin}^\bullet V_G$ for which
\begin{equation*}
    f(0_{g_1} \cdot c^{-1}, v_2, \dots, v_\bullet) = f(c^{-1}, v_2, \dots, v_\bullet),
\end{equation*}
and such that, for all $2 \le k \le \bullet$,
\begin{equation*}
    f(0_{g_1}, \dots, 0_{g_{k-1}}, 0_{g_k} \cdot c^{-1}, v_{k+1}, \dots, v_\bullet) = f(0_{g_1}, \dots, 0_{g_{k-1}g_k}, c^{-1}, v_{k+1}, \dots, v_\bullet).
\end{equation*}
Indeed, we have
\begin{equation*}
    (C_\textnormal{VB,cs} V_G)_\textnormal{N} = (C_\textnormal{VB} V_G)_\textnormal{N}.
\end{equation*}
Notice that, $C_\textnormal{VB, cs} V_G \subset C_\textnormal{lin} V_G$ is not a $CG$-submodule and, for all $\bullet \ge 1$, $C_\textnormal{VB, cs}^\bullet V_G \subset C_\textnormal{lin}^\bullet V_G$ is not a $C^\infty G^{(\bullet)}$-submodule in general.
\end{remark}

\subsection{Split ruths}\label{sec: split ruths}

Let $\calE_G = (\calE_G)_\textnormal{N} \otimes_{(CG)_\textnormal{N}} CG$ be an abstract ruth. There is a non-canonical, filtration preserving, $CG$-linear isomorphism of $\calE_G^\bullet$ with the sum of the $\bullet$-antidiagonal of
\begin{center}
\begin{tikzcd}
C^2(G; E_M^0) & C^2(G; E_M^1) & C^2(G; E_M^2) \\
C^1(G; E_M^0) & C^1(G; E_M^1) & C^1(G; E_M^2) \\
C^0(G; E_M^0) & C^0(G; E_M^1) & C^0(G; E_M^2)
\end{tikzcd}
\end{center}
We consider the bigraded object above to be filtered by the rows: $\calF_k$ consists of those elements of $\calE_G$ that lie in a row $\ge k$. We can view a component in degree $(k,\bullet-k)$\footnote{We use the matrix convention, so $k$ denotes the row and $\bullet-k$ the column.} as
\begin{equation*}
\calF_k^\bullet/\calF_{k+1}^\bullet = C^k(G;E_M^{\bullet-k}).
\end{equation*}
Now, $\delta$ decomposes into maps\footnote{Notice that $\partial=\delta_0$ is determined by the previously defined $\partial$.}
\begin{equation*}
    \delta_\ell \coloneq C^k(G; E_M^{\bullet-k}) \ra C^{k+\ell}(G; E_M^{\bullet-k+1-\ell})
\end{equation*}
that diagrammatically are given as follows:\footnote{We only drew the maps $\partial$, $\delta_1$ and $\delta_2$.}
\begin{center}
\begin{tikzcd}
C^2(G; E_M^0) \ar[r] & C^2(G; E_M^1) \ar[r] & C^2(G; E_M^2) \\
C^1(G; E_M^0) \ar[r, dotted] \ar[u] & C^1(G; E_M^1) \ar[r, dotted] \ar[u] & C^1(G; E_M^2) \ar[u] \\
C^0(G; E_M^0) \ar[r, "\partial"] \ar[u, "\delta_1"] & C^0(G; E_M^1) \ar[r] \ar[u] \ar[luu, "\delta_2"] & C^0(G; E_M^2) \ar[u] \ar[luu]
\end{tikzcd}
\end{center}

While this seems to realise the abstract ruth as a ruth in the usual sense, this is only true if the splitting preserves normalised cochains.

\begin{definition}\label{def: splitting of a ruth}
A \textit{splitting} of an $(n+1)$-term abstract ruth $\calE_G = (\calE_G)_\textnormal{N} \otimes_{(CG)_\textnormal{N}} CG$ is a $CG$-linear isomorphism
\begin{equation*}
\calE_G^\bullet \cong C(G;E_M)^\bullet = \textstyle\bigoplus_{k=0}^\bullet C^k(G; E_M^{\bullet-k})
\end{equation*}
that, for all $\bullet \le n$, induces the identity map on $\Gamma E_M^\bullet$. The splitting is called \textit{normalised} or \textit{unital} if it preserves normalised cochains.
\end{definition}

\begin{remark}\label{rema: splitting is choice of section of projection to underlying graded vector bundle}
    A (unital) splitting of an $(n+1)$-term abstract ruth $\calE_G$ is automatically filtration preserving. Moreover, it is equivalently described by a choice of sections of the $n$ projections
    \begin{equation*}
        (\calE_G^{1 \le \bullet \le n})_\textnormal{N} \ra \Gamma E_M^\bullet. 
    \end{equation*}
\end{remark}

We adopt the following terminology if we want to emphasise the difference:

\begin{definition}\label{def: split ruth}
A \textit{split ruth} is a graded vector bundle $E_M^{\bullet \ge 0}$ together with a differential $\delta = \textstyle\sum_{\ell \ge 0} \delta_\ell$ on the graded $CG$-module
\begin{equation*}
    \calE_G^\bullet = C(G;E_M)^\bullet = \textstyle\bigoplus_{k=0}^\bullet C^k(G; E_M^{\bullet-k})
\end{equation*}
turning it into a differential graded $CG$-module, i.e. such that the Leibniz rule
\begin{equation*}
    \delta(c \cdot f) = \delta c \cdot f + (-1)^c c \cdot \delta f
\end{equation*}
is satisfied. We assume $\calE_G$ is \textit{unital}, meaning that $\delta$ preserves normalised cochains.
\end{definition}

\begin{remark}\label{rema: structure maps R}
Given a split ruth, we can define maps $R=(R_\ell)_{\ell \ge 0}$, where
\begin{equation*}
    R_\ell \in \Gamma \textnormal{End}^{\ell,1-\ell} E_M \qquad \textnormal{End}^{\ell,\ell'} E_M \coloneq \textnormal{Hom}((\bfs^{(\ell)})^*E_M^\bullet,(\bft^{(\ell)})^*E_M^{\bullet+\ell'})
\end{equation*}
are defined as
\begin{equation*}
    R_{\ell \neq 1} \coloneq \delta_\ell \qquad R_1 \coloneq \delta_1 - (-1)^\bullet\textstyle\sum_{k=1}^{\bullet+1} (-1)^k d_k^*.
\end{equation*}
This uses the Leibniz rule and the fact that all $CG$-linear maps
\begin{equation*}
C^\bullet(G;E_M^{\bullet'}) \ra C^{\bullet+\ell}(G;E_M^{\bullet'+\ell'})
\end{equation*}
(so of bidegree $(\ell,\ell')$) come uniquely from elements of $\textnormal{End}^{\ell,\ell'} E_M$ (see \cite{LGruths}). The correspondence is such that the Koszul sign rule applies: given $R_{\ell,\ell'} \in \textnormal{End}^{\ell,\ell'} E_M$, the associated $CG$-linear map is given by\footnote{As mentioned in Section \ref{sec: Notation for VB-groupoids}, we think of $g \in G^{(\ell)}$ as having degree $\ell$.}
\begin{equation*}
R_{\ell,\ell'} \cdot f(g,g') = (-1)^{\ell \cdot f} R_{\ell,\ell'}(g)f(g')
\end{equation*}
That $\delta^2 = 0$ then translates into the Maurer-Cartan equation
\begin{equation*}
\delta_\textnormal{ruth} R + \tfrac{1}{2}[R,R]_\textnormal{ruth} = 0.
\end{equation*}
The Maurer-Cartan equation can be taken literally as follows: the dgLa of ruths, that controls deformations, is given by
\begin{equation*}
C^\bullet_\textnormal{ruth} \coloneq \textstyle\bigoplus_{\ell \ge 0} \textnormal{End}^{\ell,\bullet-\ell} E_M
\end{equation*}
together with the graded Lie bracket and differential defined through the differential graded algebra\footnote{Notice that $d_0$ and $d_{\bullet+1}$ are not appearing in the equation for $\delta_\textnormal{ruth}$.}
\begin{equation*}
R_\ell \cdot R_{\ell'}(g,g') = (-1)^{\ell \cdot R_{\ell'}} R_\ell(g) R_{\ell'}(g') \qquad \delta_\textnormal{ruth} = (-1)^\bullet \textstyle\sum_{k=1}^\ell (-1)^k d_k^*
\end{equation*}
That is, the graded Lie bracket $[\cdot,\cdot]_\textnormal{ruth}$ is the commutator. This differential graded algebra appeared in \cite{Tensorruth}. The deformation complex (which is a dgLa) of a ruth $R = (R_\ell)$ is then
\begin{equation*}
C_\textnormal{def}^\bullet R = C^\bullet_\textnormal{ruth}
\end{equation*}
together with the same graded Lie bracket, but with
\begin{equation*}
\delta_R \coloneq \delta_\textnormal{ruth} + [R,\cdot].
\end{equation*}
(see also \cite{LaPastinathesis}). Notice that the Maurer-Cartan equation above gives rise to \textit{non-unital ruths}; we didn't take into account the condition that $\delta$ preserves normalised cochains. It is readily verified that this happens precisely when $R_1(1)=1$ and $u_k^*R_{\ell \ge 2}=0$ for all degeneracies $u_k$.
\end{remark}

\subsection{Towards a Serre-Swan theorem}\label{sec: Towards a Serre-Swan theorem}

Definitions of ruths as finitely generated projective differential graded $CG$-modules can already be found in a remark in \cite{LGruths} (after Definition 3.9 there), and in \cite{Stefanithesis}. However, it seems that in the literature ruths of Lie groupoids have not properly been studied as ``abstract'' or ``canonical'' objects. The recognition of a normalised complex as being (part of) the structure of a ruth is important for achieving this. That abstract ruths appear canonically in examples becomes especially helpful in more complicated examples, but we observe here that the VB-complex of a VB-groupoid is a valid (universal) example of a $2$-term abstract ruth. In essence, this was proved in \cite{VBgroupoidsruths}.

\begin{proposition}\label{prop: VB complex}
Let $V_G$ be a VB-groupoid. Then $C_\textnormal{VB} V_G$ is a $2$-term abstract ruth. Moreover, (unital) cleavages correspond to (unital) splittings.
\end{proposition}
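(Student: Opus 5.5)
We need to show that $C_\textnormal{VB} V_G$ is a finitely generated projective differential graded $CG$-module whose normalised part is a $(CG)_\textnormal{N}$-module with $C_\textnormal{VB}V_G \cong (C_\textnormal{VB}V_G)_\textnormal{N}\otimes_{(CG)_\textnormal{N}} CG$. We also need to show that the underlying complex of Proposition \ref{prop: underlying complex of vector bundles} has two terms, and that (unital) cleavages correspond to (unital) splittings.

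\textbf{Step 1: module and dg structure.} The right $CG$-module structure is $(f\cdot f')(v,g') = \pm f(v)f'(g')$, where $v\in V_G^{(\bullet)}$ and $g'\in G^{(\bullet')}$ with $\bfs$ of $v$ matching $\bft$ of $g'$. This is the restriction of the product of $CV_G$ along the projection $V_G\to G$ in the last $\bullet'$ slots. One checks directly that:
\begin{itemize}
\item linearity, and the two defining conditions $f(0_{g_1},\dots)=0$ and $\delta f(0_{g_1},\dots)=0$, are preserved by this product;
\item the Leibniz rule is inherited from $CV_G$, and $C_\textnormal{VB}V_G$ is a subcomplex (as recalled in Section \ref{sec: Notation for VB-groupoids});
\item the degeneracies $\nu_\ell$ interact with the product as for $CG$, so normalised cochains form a $(CG)_\textnormal{N}$-submodule.
\end{itemize}

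\textbf{Step 2: a cleavage gives an explicit free-type decomposition.} Fix a unital cleavage $\Sigma$. By the footnote description, a VB-cochain is determined by its first-slot behaviour, via $f(0_{g_1}\cdot v_2,\dots)=f(v_2,\dots)$ and $f(0_{g_1},v_2,\dots)=0$. Decomposing $v_1 = \Sigma_{g_1}\bfs v_1 + r_{g_1}(c)$ in the first slot, I will build a $CG$-linear isomorphism
\[
C^\bullet_\textnormal{VB}V_G \cong C^{\bullet-1}(G;V_M^*)\oplus C^{\bullet}(G;C_M^*).
\]
This is the split form of Definition \ref{def: splitting of a ruth}, with $E^0_M = V_M^*$ and $E^1_M = C_M^*$, i.e.\ the dual of the ruth $\calE_G$ of $V_G^*$. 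The inverse map takes a pair $(\alpha,\beta)$ and evaluates $\beta$ on the core component of $v_1$ and $\alpha$ on $\bfs$ of the first arrow, extended to all slots using the projectability relation.

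Since $C^k(G;E)$ is a finitely generated projective $CG[-k]$-summand (vector bundles are summands of trivial ones, by Serre--Swan over $M$), this decomposition gives finite generation and projectivity. Compatibility with the filtration $\calF_k$ of Remark \ref{rema: ideal of ruth} then identifies $\calF_0/\calF_1$. The point is that this quotient is independent of $\Sigma$, so it yields the two-term complex $\partial^*$.

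\textbf{Step 3: cleavages versus splittings.} By Remark \ref{rema: splitting is choice of section of projection to underlying graded vector bundle}, a splitting of a $2$-term ruth amounts to one section of the projection $(C^1_\textnormal{VB}V_G)_\textnormal{N}\to \Gamma E^1_M$. Degree-one VB-cochains are linear functions on $V_G$ that vanish on $0_G\cdot$ core-translates appropriately. A section of this projection is therefore the same as a $CG$-linear way of extending fibrewise functionals on $C_M$ to linear functions on $V_G$. Dually, this is a splitting of $0\to \bft^*C_M\to V_G\to \bfs^*V_M\to 0$, which is exactly a cleavage. Unitality of $\Sigma$ corresponds to the section landing in normalised cochains, i.e.\ to vanishing on units after subtracting $\bfu\bfs$, as in Example \ref{exa: VB-complex as an abstract ruth}.

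\textbf{Main obstacle.} I expect the hardest part to be Step 2's verification that the map built from $\Sigma$ is a genuine $CG$-linear bijection in all degrees and preserves normalisation. This requires checking that the projectability and VB conditions propagate through all slots, the delicate interplay being the $\delta f(0_{g_1},\dots)=0$ condition. I would handle it by first proving the isomorphism in degree $1$, then using the $CG$-module structure to reduce higher degrees to degree-one generators. Finally, I would invoke the known equivalence with $\calE_G$ from \cite{VBgroupoidsruths} for the remaining bookkeeping.
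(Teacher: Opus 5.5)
Your overall plan is the paper's: a cleavage gives an explicit $CG$-linear decomposition, hence finite generation and projectivity, and the cleavage is read off from a splitting in degree $1$. However, the decomposition in Step~2 is false, and not just by a typo.

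The VB conditions constrain $f$ only on $\ker\bft_{g_1}=\ell_{g_1}(C_M)_{\bfs g_1}$. The relation $f(0_{g_1}\cdot v_2,\dots)=f(v_2,\dots)$ only makes sense when $\bft v_2=0$, and it gives $f(\ell_{g_1}c,g_2,\dots,g_\bullet)=f(-c^{-1},g_2,\dots,g_\bullet)$, which depends only on $(g_2,\dots,g_\bullet)$. On a complement of $\ker\bft_{g_1}$, i.e.\ on the image of a section of $\bft$, $f$ is unconstrained. Your splitting $v_1=\Sigma_{g_1}\bfs v_1+r_{g_1}c$ is adapted to $\ker\bfs_{g_1}$ instead. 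As a result, neither component behaves as you claim: $f\circ r_{g_1}$ is not free, and $f(\Sigma_{g_1}v,g_2,\dots)$ still depends on $g_1$.

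Here is a concrete check. Take a Lie group $G$ and $V_G=TG$, so $V_M=0$ and $C_M=\frakg$. Since $0_{g_1}\cdot v_2=dL_{g_1}v_2$, $C^1_{\textnormal{VB}}TG$ consists of the left-invariant $1$-forms and is $\cong\frakg^*$. Your right-hand side in degree $1$ is $C^1(G;\frakg^*)=C^\infty(G,\frakg^*)$. Indeed $f(r_gX)=f(\textnormal{Ad}_{g^{-1}}X)$ with $\textnormal{Ad}_{g^{-1}}X\in T_eG$, so it is determined by $f|_{T_eG}$.

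The formula fails already in degree $0$: yours gives $\Gamma C_M^*$, while $C^0_{\textnormal{VB}}V_G=\Gamma V_M^*$. It also contradicts your own choice $E^0_M=V_M^*$, $E^1_M=C_M^*$, which by Definition~\ref{def: splitting of a ruth} requires $C^\bullet(G;V_M^*)\oplus C^{\bullet-1}(G;C_M^*)$. What you wrote (note the phrase ``projectability relation'') is the decomposition $C^\bullet(G;C_M)\oplus C^{\bullet-1}(G;V_M)$ of $C_{\textnormal{proj}}V_G\cong C_{\textnormal{VB}}V_G^*$ with the coefficients dualised. In other words, you have conflated $C_{\textnormal{VB}}V_G$ with $C_{\textnormal{VB}}V_G^*$.

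The repair is to use the induced section $\overline\Sigma_gv=(\Sigma_{g^{-1}}v)^{-1}$ of $\bft$ together with $\ell_{g_1}$, so that $(V_G)_{g_1}\cong(C_M)_{\bfs g_1}\oplus(V_M)_{\bft g_1}$. Then set $f_{V_M}(g_1,\dots,g_\bullet)v=f(\overline\Sigma_{g_1}v,g_2,\dots,g_\bullet)$ and $f_{C_M}(g_2,\dots,g_\bullet)c=f(-c^{-1},g_2,\dots,g_\bullet)$. This is the paper's isomorphism $C^\bullet_{\textnormal{VB}}V_G\cong C^\bullet(G;V_M^*)\oplus C^{\bullet-1}(G;C_M^*)$. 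Combined with the fact that a VB-cochain depends only on $(v_1,g_2,\dots,g_\bullet)$, it works uniformly in all degrees, so your reduction to degree-one generators is unnecessary.

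The same slip recurs in Step~3. A section of $C^1_{\textnormal{VB}}V_G\to\Gamma C_M^*$ dualises to a retraction of $\ell$, i.e.\ a splitting of $0\to\bfs^*C_M\xra{\ell_g}V_G\xra{\bft}\bft^*V_M\to0$. So it yields $\overline\Sigma$, not $\Sigma$. Since $\Sigma\mapsto\overline\Sigma$ is a bijection preserving unitality, your conclusion survives. This matches the paper, which recovers $\overline\Sigma$ from the projection onto $C^1(G;V_M^*)$, checks $\bft\overline\Sigma=1$ on products $f_{V_M}\cdot f'$, and gets unitality from $f\circ\overline\Sigma_1=0$ for normalised $f$.
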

\begin{proof}
That $C_{\textnormal{VB}} V_G$ is finitely generated and projective as a $CG$-module can be seen by picking a cleavage $\Sigma$, for $\Sigma$ sets an isomorphism
\begin{equation*}
C_{\textnormal{VB}}^\bullet V_G \cong C^\bullet(G; V_M^*) \oplus C^{\bullet-1}(G; C_M^*) \qquad f \mapsto (f_{V_M}, f_{C_M}),
\end{equation*}
where\footnote{We use here that, if $f \in C^\bullet_\textnormal{VB} V_G$, then $f(v_{g_1},\dots,v_{g_\bullet}) \in \bbR$ only depends on $(v_1,g_2,\dots,g_\bullet)$.}
\begin{equation*}
f_{V_M}(g_1,\dots,g_\bullet)v \coloneq f(\overline\Sigma_{g_1} v, g_2, \dots, g_\bullet) \qquad f_{C_M}(g_2,\dots,g_\bullet)c \coloneq f(-c^{-1}, g_2, \dots, g_\bullet).
\end{equation*}
Given a (unital) splitting of the abstract ruth $C_{\textnormal{VB}} V_G$, the (unital) cleavage $\overline\Sigma$ (a section of $\bft$) is recovered as the unique linear map for which
\begin{equation*}
C^1_\textnormal{VB} V_G \cong C^1(G; V_M^*) \oplus \Gamma C_M^* \xra{\pr} C^1(G; V_M^*) \qquad f \mapsto f \circ \overline\Sigma.
\end{equation*}
That $\overline\Sigma$ is a splitting follows by considering $f = f_{V_M} \cdot f'$ for all $f_{V_M} \in \Gamma V_M^*$ and $f' \in C^1G$. If the ruth splitting is unital, then for all normalised $f$,
\begin{equation*}
f \circ \overline\Sigma_1 = 0.
\end{equation*}
So, $\overline\Sigma_1$ maps into the units of $V_G$, and therefore it is the unit map. This proves the statement.
\end{proof}

We will discuss a slightly different proof, which shows directly that $C_{\textnormal{VB}} V_G$ can be seen as a locally free sheaf of $CG$-modules. The main point is that, locally, we can use a special type of cleavage $H$, one for which all maps $\bft_g H_g$ are invertible. We can see such sections as local sections of the surjective submersion\footnote{See Section \ref{sec: Notation for VB-groupoids} for a short introduction to the fat groupoid $\widehat V_G$. In Section \ref{sec: The fat extension of a VB-groupoid} we give a more thorough introduction}
\begin{equation*}
\widehat V_G \ra G \qquad H_g \mapsto g.
\end{equation*}
That is, $H$ is a smooth family of linear maps
\begin{equation*}
g \mapsto (V_M)_{\bfs g} \xra{H_g} (V_G)_g
\end{equation*}
such that, for all $g \in G$, $\bfs_g H_g = 1$ and $\bft_g H_g$ is a linear isomorphism.

Now, in the above isomorphism using the cleavage $\Sigma$, we used the linear isomorphisms
\begin{equation*}
(C_M)_{\bfs g} \oplus (V_M)_{\bft g} \ra (V_G)_g \qquad (c,v) \mapsto \ell_gc + \overline\Sigma_gv.
\end{equation*}
Given a local section $H$ of $\widehat V_G \ra G$, we can instead use the linear isomorphism
\begin{equation*}
(C_M)_{\bfs g} \oplus (V_M)_{\bft g} \ra (V_G)_g \qquad (c,v) \mapsto \ell_gc + H_g(\bft_g H_g)^{-1}v.
\end{equation*}
In fact, this observation leads to a whole new description of $C_\textnormal{VB} V_G$ in terms of the fat groupoid $\widehat V_G$. This will be made more precise in Section \ref{sec: From fat extensions to ruths using the fat groupoid}.

\begin{proof}[Alternative proof of Proposition \ref{prop: VB complex}]
As said before, we will show that $C_\textnormal{VB} V_G$ is locally free as a sheaf of $CG$-modules (see Proposition \ref{prop: Serre-Swan}). In fact, given $U \subset M$ and a local section $H$ of $\widehat V_G \ra G$ defined over $U$, then $H$ defines an isomorphism
\begin{equation*}
    C_{\textnormal{VB}}^\bullet V_G(U) \cong C^\bullet(G; V_M^*)(U) \oplus C^{\bullet-1}(G; C_M^*)(U) \qquad f \mapsto (f_{V_M}, f_{C_M})
\end{equation*}
where 
\begin{equation*}
    f_{V_M}(g_1,\dots,g_\bullet)v \coloneq f(H_{g_1}(\bft H_{g_1})^{-1}v, g_2, \dots, g_\bullet) \qquad f_{C_M}(g_2,\dots,g_\bullet)c \coloneq f(-c^{-1}, g_2, \dots, g_\bullet).
    \end{equation*}
For the inverse map, we construct out of $f_{V_M} \in C^1(G; V_M^*)$ a linear function $f \in C_{\textnormal{lin}}^1 V_G$ by setting
\begin{equation*}
    f(v) \coloneq f_{V_M}(g)\bft_gv.
\end{equation*}
Since $C^1(G; V_M^*) \cong \Gamma V_M \otimes C^\infty G$, we see that every such function is locally generated by sections of $V_M^*$ (sitting in degree $0$). Now, given a local section $H$ of $\widehat V_G \ra G$, a section $f_{C_M}$ of $C_M^*$ gives rise to the linear function $f \in C_{\textnormal{lin}}^1 V_G$ defined as
\begin{equation*}
f(v) \coloneq f_{C_M}\omega_g^{H_g}v,
\end{equation*}
where
\begin{equation*}
\omega_g^{H_g}: (V_G)_g \ra (C_M)_{\bfs g} \qquad v \mapsto r_g((H_g(\bft_gH_g)^{-1}\bft_gv)^{-1} - v^{-1}).
\end{equation*}
It is readily verified that both types of linear functions $f \in C^1_\textnormal{lin} V_G$ are elements of $C^1_{\textnormal{VB}} V_G$. Similarly, or extending $CG$-linearly, we obtain elements $C^\bullet_{\textnormal{VB}} V_G$ out of (local) sections of $C^\bullet(G; V_M^*)$ and out of (local) sections of $C^{\bullet-1}(G; C_M^*)$. Now, given $f \in C^1_\textnormal{VB} V_G$, we have
\begin{equation*}
f(v) = f_{C_M}\omega_g^{H_g}v + f_{V_M}(g)\bft_gv,
\end{equation*}
and similarly we can decompose elements of $C_\textnormal{VB}^\bullet V_G$. This shows that the above maps are inverse to each other, so $C_{\textnormal{VB}} V_G$ is locally free as a sheaf of $CG$-modules.
\end{proof}

The equivalence of categories between VB-groupoids and $2$-term ruths can now be understood through the assignment
\begin{equation*}
V_G \mapsto C_\textnormal{VB} V_G^* \coloneq C_\textnormal{VB}(V_G^*) \cong (C_\textnormal{VB} V_G)^*.
\end{equation*}
The assignment on morphisms uses, for $\Phi: V_G \ra V_H$ a VB-groupoid map, the dual map
\begin{equation*}
\Phi^*: V_H^* \ra V_G^*
\end{equation*}
and then the pullback map
\begin{equation*}
(\Phi^*)^*_\textnormal{VB}: C_\textnormal{VB} V_G^* \ra C_\textnormal{VB} V_H^*.
\end{equation*}
(or this pullback map first and then the dual map of ruths). This defines a functor, and it is an equivalence of categories \cite{VBgroupoidsruths,VBMorita}:
\begin{theorem}\label{thm: equivalence VB-groupoids and ruths}
The functor
\begin{equation*}
\{\textnormal{VB-groupoids}\} \ra \{\textnormal{Abstract $2$-term ruths}\} \qquad V_G \mapsto C_\textnormal{VB} V_G^* \qquad \Phi \mapsto (\Phi^*)^*_\textnormal{VB}
\end{equation*}
sets an equivalence of categories.
\end{theorem}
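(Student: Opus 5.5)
Since the equivalence between VB-groupoids and \emph{split} $2$-term ruths is known from \cite{VBgroupoidsruths,VBMorita}, I will reduce the statement to that result. The bridge is Proposition \ref{prop: VB complex}, together with the observation that every abstract $2$-term ruth admits a unital splitting. Concretely, the plan has three steps: (i) show that the functor is well defined; (ii) show that it is fully faithful; (iii) show that it is essentially surjective, by producing from any abstract $2$-term ruth a VB-groupoid realising it.

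For (i), Proposition \ref{prop: VB complex} applied to $V_G^*$ shows that $C_\textnormal{VB} V_G^*$ is a $2$-term abstract ruth, with underlying complex $\partial^*$ of $V_G^*$ dualised back, i.e.\ $C_M \ra V_M$ up to the shift conventions of Remark \ref{rema: dual of ruth}. Given a VB-groupoid map $\Phi$, the pullback $(\Phi^*)^*_\textnormal{VB}$ preserves the defining conditions of $C_\textnormal{VB}$, since $\Phi$ preserves zeros and multiplication. It is compatible with $\delta$, with the module structure along $CG \ra CH$, and with normalised cochains, so functoriality is immediate.

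For (iii), take an abstract $2$-term ruth $(\calE_G)_\textnormal{N}$. By Remark \ref{rema: splitting is choice of section of projection to underlying graded vector bundle}, a unital splitting amounts to a section of the projection $(\calE_G^1)_\textnormal{N} \ra \Gamma E_M^1$. Such a section exists: use projectivity locally (via Proposition \ref{prop: Serre-Swan} and local freeness) and glue with a partition of unity on $M$. The gluing is legitimate because $C^\infty M \subset (CG)_\textnormal{N}$ acts, and convex combinations of sections of a projection are again sections. The splitting turns $\calE_G$ into a unital split $2$-term ruth $(\partial,R_1,R_2)$ in the sense of Remark \ref{rema: structure maps R}. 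The known construction then yields a VB-groupoid $V_G = \bft^*E^0 \oplus \bfs^* E^1$-type semidirect product (with the appropriate dualisation) whose split ruth, computed using its canonical cleavage via Proposition \ref{prop: VB complex}, is exactly $(\partial,R_1,R_2)$. Since Proposition \ref{prop: VB complex} identifies the split ruth of $V_G$ with $C_\textnormal{VB}V_G^*$ through a $CG$-linear isomorphism compatible with $\delta$, we obtain $C_\textnormal{VB}V_G^* \cong \calE_G$ as abstract ruths. This requires checking that the isomorphism restricts to normalised cochains, which holds because both cleavage and splitting are unital.

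For (ii), fix cleavages on $V_G$ and $V_H$. Morphisms of abstract ruths $C_\textnormal{VB}V_G^* \ra C_\textnormal{VB}V_H^*$ over a groupoid map $\phi: G \ra H$ then become morphisms of split ruths, i.e.\ components $(\Phi_0,\Phi_1)$ with $\Phi_1 \in C^1(G;\textnormal{Hom}(\cdot))$ satisfying the morphism equations. The known bijection between these and VB-groupoid maps \cite{VBgroupoidsruths} gives full faithfulness, once I check that our functor agrees with the composite \emph{choose cleavages, then apply the split functor}. This last check, that $(\Phi^*)^*_\textnormal{VB}$ corresponds under the cleavage isomorphisms to the split morphism built from $\Phi$, and that unital abstract morphisms correspond to unital split morphisms (with the $\Phi_1$ term absorbing the failure of $\Phi$ to intertwine cleavages), is the step I expect to be the main obstacle. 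Here the signs and the dualisation in Example \ref{exa: dual of VB as ruth} must be tracked carefully. If that proves awkward, I would instead argue full faithfulness intrinsically. A ruth morphism is determined by its values on generators in degrees $0$ and $1$, which are the linear functions on $V_G^*$ described in the alternative proof of Proposition \ref{prop: VB complex}. A $CG$-linear chain map defined on these generators then determines a fibrewise linear map $V_G \ra V_H$, and compatibility with $\delta$ forces this map to be multiplicative.
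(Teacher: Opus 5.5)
Your proposal is correct and takes the same route the paper relies on. The paper gives no independent argument for this theorem: it cites \cite{VBgroupoidsruths,VBMorita} for the equivalence with split ruths, and uses Proposition~\ref{prop: VB complex} (unital cleavages correspond to unital splittings) together with the existence of unital splittings to pass from split to abstract $2$-term ruths, which is exactly your steps (ii)--(iii).

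One small adjustment is needed. Work with morphisms over the identity of $G$, as the paper does in its fibred-category remark. For a map over $\phi\colon G\ra H$, the dual $\Phi^*$ is only defined on the pullback $\phi^*V_H^*$. Likewise, the induced morphism of ruths takes values in $C_\textnormal{VB}V_H^*\otimes_{CH}CG$, not in $C_\textnormal{VB}V_H^*$ itself.
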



\begin{remark}\label{rema: Serre-Swan}
Perhaps we should identify Theorem \ref{thm: equivalence VB-groupoids and ruths} truly as a Serre-Swan type theorem. That is, we can think of $C_\textnormal{VB} V_G^*$ (or $C_\textnormal{proj} V_G$) as a type of ``space of sections'' of the VB-groupoid $V_G$. A current work in progress is to understand generalisations of this fact. Motivated by \cite{Highervectorbundles}, we expect to set a canonical Serre-Swan correspondence between higher vector bundles and abstract ruths. In doing so, we believe it is important to investigate the link between the choice of a ``normal pre-cleavage'' (or a normal weakly flat cleavage) for higher vector bundles in \cite{Highervectorbundles} to the above unital splittings of abstract ruths. Indeed, in \cite{Highervectorbundles} it is shown that the choice of a normal weakly flat cleavage of a higher vector bundle (that admits such a cleavage) defines a ruth, and the normal and weakly flat conditions are used to show that this ruth is unital. 
\end{remark}

\begin{remark}
Related to the above remark is how the tensor product constructions of VB-groupoids and $2$-term ruths relate (for the latter, see \cite{Tensorruth}). This is part of the work in progress \cite{Multiplicativetensors}.
\end{remark}

\subsection{Cochain complex representations}

We end this section by mentioning a special type of $2$-term ruth that is central to this work:

\begin{definition}\label{def: cochain complex representation}
Let $G \rra M$ be a Lie groupoid. A \textit{cochain complex representation}, or strict ruth, is a split ruth $\calE_G$ such that $R_{\ell \ge 2} = 0$.
\end{definition}
A cochain complex representation of $G$ on $C_M \ra V_M$ is a pair of representations of $G$ on $C_M$ and $V_M$, such that the map $C_M \ra V_M$ is equivariant with respect to the actions. We write the abstract ruth associated to a cochain complex representation by
\begin{equation*}
C(G; C_M \ra V_M) = C_\textnormal{VB}(G \ltimes (V_M^* \ra C_M^*)).
\end{equation*}

There are at least two more ways of describing a cochain complex representation that we want to discuss.

\begin{remark}\label{rema: GL of a cochain complex representation}
Given two vector bundles $C_M$ and $V_M$, there is a type of general linear groupoid given by
\begin{equation*}
\textnormal{GL}(C_M,V_M) \coloneq \textnormal{GL } C_M \times_{M \times M} \textnormal{GL } V_M
\end{equation*}
which we can view as a fiber product of Lie groupoids.\footnote{We think here of $M \times M$ as the pair groupoid, and we use the (surjective) anchor maps.} Now, the differential induces a map
\begin{equation*}
[\cdot, \partial]: \textnormal{GL}(C_M,V_M) \ra \textnormal{Hom}(\pr_1^*C_M,\pr_2^*V_M) \qquad \Phi = (\Phi^{C_M}, \Phi^{V_M}) \mapsto \Phi^{V_M}\partial - \partial\Phi^{C_M}.
\end{equation*}
We can interpret the preimage of zero
\begin{equation*}
\textnormal{Aut}(C_M \ra V_M) \coloneq \{\Phi \in \textnormal{GL}(C_M,V_M) \mid \partial \Phi^{C_M} = \Phi^{V_M} \partial\}
\end{equation*}
as cochain isomorphisms
\begin{center}
\begin{tikzcd}
(C_M)_x \ar[r, "\partial"] \ar[d, "\Phi^{C_M}"] & (V_M)_x \ar[d, "\Phi^{V_M}"] \\
(C_M)_y \ar[r, "\partial"] & (V_M)_y
\end{tikzcd}
\end{center}
However, while $\textnormal{Aut}(C_M \ra V_M)$ is a subgroupoid of $\textnormal{GL}(C_M,V_M)$, it does not define a smooth Lie subgroupoid in general. In fact, the above commutator map is the restriction of a linear map, and then it is readily verified that it has constant rank precisely when $\partial$ has constant rank. In any case, we can think of a cochain complex representation as a Lie groupoid map
\begin{equation*}
G \ra \textnormal{GL}(C_M,V_M) \dasharrow \textnormal{Hom}(\pr_1^*C_M,\pr_2^*V_M)
\end{equation*}
such that postcomposing with the map $[\cdot,\partial]$ is zero.
\end{remark}

\begin{remark}\label{rema: cochain complex representation as groupoid map}
Another way to organise the information of a cochain complex representation of a Lie groupoid $G$ on $C_M \ra V_M$ is as a Lie groupoid map in the following way. Consider first the general linear groupoid from Remark \ref{rema: GL of a cochain complex representation}:
\begin{equation*}
\textnormal{GL}(C_M,V_M) = \textnormal{GL } C_M \times_{M \times M} \textnormal{GL } V_M.
\end{equation*}
This groupoid acts (on the right) on ``differentials'' $\textnormal{Hom}(C_M,V_M)$ by conjugating the differentials: given $\Psi_{y,x} = (\Psi_{y,x}^{C_M}, \Psi_{y,x}^{V_M}) \in \textnormal{GL}(C_M,V_M)$ and $\partial_y \in \textnormal{Hom}(C_M,V_M)$,
\begin{equation*}
\partial_y \cdot \Psi_{y,x} \coloneq (\Psi_{y,x}^{V_M})^{-1} \partial_y \Psi_{y,x}^{C_M}.
\end{equation*}
We write the resulting action groupoid by
\begin{equation*}
    \textnormal{GL}(C_M,V_M)_M \coloneq \textnormal{Hom}(C_M,V_M) \rtimes \textnormal{GL}(C_M,V_M).
\end{equation*}
A cochain complex representation of $G$ on $C_M \ra V_M$ can now be seen as a groupoid map
\begin{equation*}
G \ra \textnormal{GL}(C_M,V_M)_M
\end{equation*}
whose basemap is the differential:
\begin{equation*}
\partial: M \ra \textnormal{Hom}(C_M,V_M).
\end{equation*}
If we postcompose this map with the groupoid projection $\textnormal{GL}(C_M,V_M)_M \ra \textnormal{GL}(C_M,V_M)$, then we recover the map we considered in Remark \ref{rema: GL of a cochain complex representation}.
\end{remark}

\section{The fat extension of a VB-groupoid}\label{sec: The fat extension of a VB-groupoid}

We start this section by introducing the fat groupoid of a VB-groupoid. In this generality, it seems to have first appeared in \cite{VBgroupoidsruths}.

\subsection{The fat groupoid}\label{sec: the fat groupoid}
Let $V_G \rra V_M$ be a VB-groupoid. Consider the surjective linear map
\begin{equation*}
\textnormal{Hom}(\bfs^*V_M, V_G) \ra \textnormal{End } V_M \qquad H_g \mapsto \bfs_g H_g
\end{equation*}
over $\bfs: G \ra M$, and the preimage of the identity section
\begin{equation*}
 \widehat V_G^\textnormal{cat} = \{H_g: (V_M)_{\bfs g} \ra (V_G)_g \mid \bfs_g H_g = 1\}.
\end{equation*}
Then $\widehat V_G^\textnormal{cat}$ is an affine bundle over $G$\footnote{The fiber over $g$ is isomorphic to $\textnormal{Hom}((V_M)_{\bfs g},(C_M)_{\bft g})$.} and it is a Lie category (without boundary) over $M$ (a thorough discussion on Lie categories is presented in \cite{Zanbroscats}). The structure maps are given as follows: we set $\widehat\bfs H_g = \bfs g$, $\widehat\bft H_g = \bft g$ and
\begin{equation*}
(H_g \cdot H_{g'})v = H_g(\bft_{g'} H_{g'}v) \cdot H_{g'}v.
\end{equation*}
The invertible elements of this Lie category are given by the following open subset of $\widehat V_G^\textnormal{cat}$:
\begin{equation*}
    \widehat V_G = \{H_g \in \widehat V_G^\textnormal{cat} \mid \bft_g H_g \textnormal{ is a linear isomorphism}\}
\end{equation*}
which becomes a Lie groupoid with
\begin{equation*}
(H_g)^{-1}v = (H_g(\bft_g H_g)^{-1}v)^{-1}.
\end{equation*}

\begin{definition}\label{def: fat groupoid of VB-groupoid}
Given a VB-groupoid $V_G$, we call $\widehat V_G \rra M$ the fat groupoid (of $V_G$).
\end{definition}

We can think of the elements of $\widehat V_G$ as coming from (fiberwise) linear bisections of $V_G$.

\begin{remark}\label{rema: core and linear bisections}
Notice that there is a one-to-one correspondence between bisections of $\widehat V_G$ and linear bisections of $V_G$. Indeed, a linear bisection $\sigma$ of $V_G$ is a bisection
\begin{equation*}
V_M \ra V_G \qquad x \mapsto \sigma x
\end{equation*}
that is a vector bundle map over a bisection $\sigma$ of $G$.\footnote{Alternatively, it is a $0$-homogeneous bisection, i.e. $\lambda^*\sigma (= \lambda^{-1} \circ \sigma \circ \lambda) = \sigma$.} We can therefore, equivalently, view $\sigma$ as the bisection
\begin{equation*}
M \ra \widehat V_G \qquad x \mapsto (V_M)_x \xra{\sigma x} (V_G)_{\sigma x}
\end{equation*}
of $\widehat V_G$. There are also ``core bisections'' of $V_G$ (over the identity bisection of $G$): given $\chi \in \Gamma C_M$, we can construct a bisection $\sigma_\chi$ of $V_G$ by setting
\begin{equation*}
    \sigma_\chi(v_x) = 1_{v_x} + \chi(x).
\end{equation*}
Applying the target to this expression gives the translation $v_x \mapsto v_x + \partial\chi(x)$.
\end{remark}
\begin{remark}
In \cite{VBgroupoidsruths} the fat groupoid is denoted by $\widehat G$. However, we prefer to keep track of the dependence on $V_G$ in the notation.
\end{remark}

We end the section by mentioning a few examples. In Section \ref{sec: examples of fat extensions} we discuss more examples and go into more detail.

\begin{example}\label{exa: jet is fat}
Probably the most well-known example of a ``fat groupoid'' is the jet groupoid $J^1G$:
\begin{equation*}
J^1G = \{j^1_x\sigma \mid \sigma \in \textnormal{Bis}_\textnormal{loc} G\}.
\end{equation*}
In words, its elements consist of all possible values of $1$-jets of local bisections $\sigma$. The product is induced by the usual product of (local) bisections. We can realise $J^1G$ as the fat groupoid $\widehat{TG}$ of the VB-groupoid $TG$. Indeed, every $1$-jet of a local bisection takes the form
\begin{equation*}
H_g: T_{\bfs g}M \ra T_gG,
\end{equation*}
where $d\bfs_g H_g = 1$ and $d\bft_g H_g$ is a linear isomorphism. On the other hand, we can integrate such a linear map $H_g$ to a local bisection $\sigma$ with the property that $j^1_{\bfs g}\sigma = H_g$. This follows from standard proofs on the existence of local bisections through elements of $G$.
\end{example}

\begin{example}\label{exa: subgroupoid of jet}
Let $H \rra N$ be a Lie subgroupoid of $G \rra M$. The jet groupoid $J^1G$ contains the Lie subgroupoid
\begin{equation*}
J^1_H G \coloneq \{j^1_y\sigma \in J^1G \mid y \in N, \im \sigma|_N \subset H\} \rra N.
\end{equation*}
This subgroupoid is equal to $J^1H$ only if $N=M$. How are $J^1H$ and $J^1G$ related if $N \neq M$? For now, we only observe that $J^1_H G$ and $J^1H$ are related via a restriction map
\begin{equation*}
J^1_H G \ra J^1H.
\end{equation*}
A similar phenomenon appears when considering $J^1_\calN(G,H)$, the fat groupoid of
\begin{equation*}
\calN(G,H) \rra \calN(M,N),
\end{equation*}
the normal bundle, which is a VB-groupoid over $H$.\footnote{The Lie groupoid structure of the normal bundle $\calN(G,H)$ is induced by that of $TG$.} Namely, there is again a natural map
\begin{equation*}
J^1_HG \ra J^1_\calN(G,H).
\end{equation*}
In Section \ref{sec: examples of fat extensions}, we will further clarify the relation between $J^1 H$, $J^1G$ and $J^1_\calN(G,H)$.
\end{example}

\begin{example}\label{exa: general linear}
Let $E_M \ra M$ be a vector bundle. The general linear groupoid
\begin{equation*}
\textnormal{GL } E_M = \{\Psi_{y,x}: E_x \xra{\sim} E_y \textnormal{ is a linear isomorphism}\}.
\end{equation*}
is the fat groupoid of the pair groupoid $E_M \times E_M \rra E_M$, seen as a VB-groupoid over $M \times M \rra M$.
\end{example}

The last example shows the extension of the ``classical'' analogues of the correspondences we aim to describe in Sections \ref{sec: fat extensions} and \ref{sec: PB-groupoids}:
\begin{center}
\begin{tikzcd}[column sep = small]
\{\textnormal{f.g.p. $C^\infty M$-mods}\} & \ar[l, "\textnormal{Serre-Swan}"'] \{\textnormal{Vector bundles}\} \ar[r, "\textnormal{Fr}"] \ar[d, "\textnormal{GL}"] & \{\textnormal{Principal $\textnormal{GL}$-bundles}\} \ar[ld, "\textnormal{Gauge groupoid}"] \\
& \ar[ul] \{\textnormal{fat extensions of $M \times M$ over $1_{E_M}$}\} &
\end{tikzcd}
\end{center}
The goal of the coming two sections (Sections \ref{sec: the bundle of invertible homotopies} and \ref{sec: canonical representations of the fat groupoid}) is to discuss two essential pieces of information: the bundle of invertible homotopies, and the canonical representations of the fat groupoid. This is enough to understand the basic structure of fat extensions (see Section \ref{sec: fat extensions 1}). In the sections afterwards we look at several consequences of the structure behind the fat groupoid and discuss, in this order, the fat category (Section \ref{sec: the fat category}), examples (Section \ref{sec: examples of fat extensions}), other models and dualisation of the fat groupoid (Section \ref{sec: other models of the fat groupoid}) and, lastly, the fat pairing (Section \ref{sec: fat pairing}).

\subsection{The bundle of invertible homotopies of a $2$-term complex}\label{sec: the bundle of invertible homotopies}
As mentioned before, if $V_G$ is a VB-groupoid, then the groupoid map
\begin{equation*}
\widehat V_G \ra G \qquad H_g \mapsto g
\end{equation*}
is a surjective submersion. The kernel of this map is a bundle of Lie groups, and it can be identified with the open set of $\textnormal{Hom}(V_M,C_M)$ given by
\begin{equation*}
\textnormal{H}(V_M,C_M) = \{h_x: (V_M)_x \ra (C_M)_x \mid 1 + \partial h_x \textnormal{ is invertible}\}
\end{equation*}
(via $h_x \mapsto 1+h_x$). The unit is the zero map and
\begin{equation*}
h_x \cdot h_x' = h_x + h_x' + h_x\partial h_x' \qquad h_x^{-1} = -h_x(1+\partial h_x)^{-1} = -(1+h_x\partial)^{-1}h_x.
\end{equation*}
Elements $h_x \in \textnormal{H}(V_M,C_M)$ are homotopies, for they show that the isomorphism\footnote{Notice that we have defined two maps, $1 + \partial h_x: (V_M)_x \ra (V_M)_x$ and $1 + h_x\partial: (C_M)_x \ra (C_M)_x$. The latter map is a linear isomorphism by the five-lemma.}
\begin{equation*}
1 + [\partial, h_x]
\end{equation*}
(from $C_M \ra V_M$ to itself) is homotopic to the identity map. The product can then be interpreted as a ``composition of homotopies'' (see also Section \ref{sec: From 2-term ruths to fat extensions}):
\begin{lemma}\label{lemm: homotopy composition}
Suppose we are given complexes $C_1$, $C_2$ and $C_3$ (with differentials $\partial$) with the following homotopy data: let\footnote{We use a subscript $ji$ to denote the relevant map $C_i \ra C_j$.}
\begin{center}
\begin{tikzcd}
C_1 \ar[d] \ar[r, "\Phi", shift left] \ar[r, "\Psi"', shift right] & C_2 \ar[d] \ar[r, "\Phi", shift left] \ar[r, "\Psi"', shift right] & C_3 \ar[d] \\
C_1 \ar[r, "\Phi", shift left] \ar[r, "\Psi"', shift right] \ar[ru, "\eta"] & C_2 \ar[r, "\Phi", shift left] \ar[r, "\Psi"', shift right] \ar[ru, "\eta"] & C_3
\end{tikzcd}
\end{center}
be maps satisfying
\begin{equation*}
[\partial,\Phi] = 0 \qquad [\partial,\Psi] = 0 \qquad \Phi - \Psi = [\partial, \eta].
\end{equation*}
Then
\begin{equation*}
\Phi_{32}\Phi_{21} - \Psi_{32}\Psi_{21} = [\partial, \eta_{32}\Phi_{21} + \Psi_{32}\eta_{21}] = [\partial, \eta_{32}\Psi_{21} + \Phi_{32}\eta_{21}].
\end{equation*}
Moreover, $[\partial, \eta_{32}\eta_{21}] = 0$ if and only if
\begin{equation*}
\eta_{32}\Phi_{21} + \Psi_{32}\eta_{21} = \eta_{32}\Psi_{21} + \Phi_{32}\eta_{21}.
\end{equation*}
In that case, we call this map the composition of homotopies (of $\eta_{32}$ and $\eta_{21}$).
\end{lemma}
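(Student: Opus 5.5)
The proof is a direct computation with graded commutators. Throughout, $[\partial,\cdot]$ is the graded commutator, so for degree-$0$ maps $[\partial,\Phi]=\partial\Phi-\Phi\partial$, and for the degree $-1$ homotopy $\eta$ we have $[\partial,\eta]=\partial\eta+\eta\partial$. The key tool is the graded Leibniz rule for composition:
\begin{equation*}
[\partial,AB]=[\partial,A]B+(-1)^{|A|}A[\partial,B].
\end{equation*}

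\textbf{First identity.} Telescope the difference as
\begin{equation*}
\Phi_{32}\Phi_{21}-\Psi_{32}\Psi_{21}=(\Phi_{32}-\Psi_{32})\Phi_{21}+\Psi_{32}(\Phi_{21}-\Psi_{21}).
\end{equation*}
Substitute $\Phi-\Psi=[\partial,\eta]$ in each bracket. Since $\Phi_{21}$ and $\Psi_{32}$ are chain maps, Leibniz gives $[\partial,\eta_{32}]\Phi_{21}=[\partial,\eta_{32}\Phi_{21}]$ and $\Psi_{32}[\partial,\eta_{21}]=[\partial,\Psi_{32}\eta_{21}]$. Summing yields the first expression.

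The second expression comes from telescoping the other way,
\begin{equation*}
(\Phi_{32}-\Psi_{32})\Psi_{21}+\Phi_{32}(\Phi_{21}-\Psi_{21}),
\end{equation*}
and applying the same argument.

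\textbf{Second claim.} Take the difference of the two candidate homotopies:
\begin{equation*}
D\coloneq(\eta_{32}\Phi_{21}+\Psi_{32}\eta_{21})-(\eta_{32}\Psi_{21}+\Phi_{32}\eta_{21})=\eta_{32}(\Phi_{21}-\Psi_{21})-(\Phi_{32}-\Psi_{32})\eta_{21}.
\end{equation*}
Substituting the homotopy relations and expanding, the terms $\eta_{32}\eta_{21}\partial$ and $\partial\eta_{32}\eta_{21}$ survive, while the mixed terms $\eta_{32}\partial\eta_{21}$ cancel:
\begin{equation*}
D=\eta_{32}\partial\eta_{21}+\eta_{32}\eta_{21}\partial-\partial\eta_{32}\eta_{21}-\eta_{32}\partial\eta_{21}=-(\partial\eta_{32}\eta_{21}-\eta_{32}\eta_{21}\partial).
\end{equation*}
The composite $\eta_{32}\eta_{21}$ has degree $-2$, so the bracket on the right is its graded commutator. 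Hence $D=-[\partial,\eta_{32}\eta_{21}]$, and $D=0$ exactly when $[\partial,\eta_{32}\eta_{21}]=0$.

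The only delicate point is the sign bookkeeping: one must confirm that the expansion produces the graded commutator of the degree $-2$ map with the correct sign. In the intended $2$-term setting, $\eta_{32}\eta_{21}$ is automatically zero, so the condition holds trivially there. Nothing else is a real obstacle.
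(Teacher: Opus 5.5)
Your computation is correct. The two telescoping decompositions give both expressions for $\Phi_{32}\Phi_{21}-\Psi_{32}\Psi_{21}$, and the difference of the two candidate homotopies is indeed $-[\partial,\eta_{32}\eta_{21}]$, where the degree $-2$ graded commutator is $\partial\eta_{32}\eta_{21}-\eta_{32}\eta_{21}\partial$. The paper gives no written proof of this lemma and treats it as exactly this kind of direct verification, so your argument is the intended one.
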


The composition of homotopies of $\textnormal{H}(V_M,C_M)$ is well-defined (and is equal to either side of the above formula). This composition actually defines a structure of Lie category on $\textnormal{Hom}(V_M,C_M)$,\footnote{More precisely, $\textnormal{Hom}(V_M,C_M)$ is a bundle of Lie monoids.} and $\textnormal{H}(V_M,C_M)$ form precisely the invertible elements of this Lie category. Notice that the structure of $\textnormal{H}(V_M,C_M)$ as a bundle of Lie groups is independent of the fat groupoid $\widehat V_G$.

\begin{definition}
Let
\begin{equation*}
C_M \xra{\partial} V_M
\end{equation*}
be a $2$-term complex. We call the bundle of Lie groups
\begin{equation*}
\textnormal{H}(V_M,C_M) = \{h_x: (V_M)_x \ra (C_M)_x \mid 1 + \partial h_x \textnormal{ is invertible}\}
\end{equation*}
the \textit{bundle of invertible homotopies} of the $2$-term complex $C_M \ra V_M$.
\end{definition}

\begin{remark}
    Although we write elements of $\textnormal{H}(V_M,C_M)$ as $h_x$, its product shows that a good way to think of the elements $h_x$ is as $1+h_x$. This shows that the product of $\textnormal{H}(V_M,C_M)$ is determined by a structure of Lie category on $\textnormal{Hom}(V_M,C_M)$:
    \begin{equation*}
        h_x \cdot h_x' \coloneq h_x \partial h_x'.
    \end{equation*}
    This product defines a Lie category, and we will elaborate on its structure in Section \ref{sec: the fat category}.
\end{remark}

That the elements of $\textnormal{H}(V_M,C_M)$ give rise to isomorphisms of the complex $C_M \ra V_M$ can be interpreted as a cochain complex representation of $\textnormal{H}(V_M,C_M)$ on $C_M \ra V_M$:
\begin{equation*}
\textnormal{H}(V_M,C_M) \ra \textnormal{GL}(C_M, V_M)_M \qquad h_x \mapsto (1+ h_x\partial, 1+\partial h_x).
\end{equation*}
This canonical representation of $\textnormal{H}(V_M,C_M)$ plays a central role in this work.

Before we move on, notice that elements $h_x \in \textnormal{Hom}(V_M,C_M)$ formally have an inverse:
\begin{equation*}
(1+\partial h_x)^{-1} = \textstyle\sum_{k \ge 0} (-1)^k (\partial h_x)^k.
\end{equation*}
In particular, if $\partial h_x$ is nilpotent, i.e. $(\partial h_x)^k = 0$ for some $k$, then $1+\partial h_x$ has a smooth inverse.

\begin{remark}\label{rema: perturbation theory}
With cohomological perturbation theory in mind (see, for example, \cite{PerturbationMarius,LiblandMeinrenkenVanEst,MariaMeinrenkenVanEst} and references therein), we can think of $\partial$ as a perturbation of the zero differential. The identity map
\begin{center}
\begin{tikzcd}
(C_M)_x \ar[d, "="]& \ar[ld, dotted, "h_x"'] (V_M)_x \ar[d, "="] \\ (C_M)_x & (V_M)_x
\end{tikzcd}
\end{center}
can then be perturbed, using $h_x \in \textnormal{H}(V_M,C_M)$, to an isomorphism
\begin{center}
\begin{tikzcd}
(C_M)_x \ar[d, "\varphi_x"] \ar[r, "\partial"]& \ar[ld, dotted] (V_M)_x \ar[d, "\varphi_x"] \\ (C_M)_x \ar[r, "\partial"] & (V_M)_x
\end{tikzcd}
\end{center}
In fact, we can view $\textnormal{H}(V_M,C_M)$ as a level set (of a section of $\textnormal{Hom}(C_M,V_M) \ra M$; the differential) of the canonical projection
\begin{equation*}
\textnormal{Pert}(C_M,V_M) \ra \textnormal{Hom}(C_M,V_M)
\end{equation*}
where
\begin{equation*}
\textnormal{Pert}(C_M,V_M) = \{(\partial,h) \in \textnormal{Hom}(C_M,V_M) \oplus \textnormal{Hom}(V_M,C_M) \mid 1+\partial h \textnormal{ is invertible}\}.
\end{equation*}
We can also view $\textnormal{Pert}(C_M,V_M)$ as a bundle of Lie groups, with the same product as we defined for $\textnormal{H}(C_M,V_M)$. This groupoid structure becomes very relevant in Section \ref{sec: PB-groupoids}.

Heuristically, a ``small'' perturbation $\partial$ comes with fewer restrictions on $h_x$, while a ``big'' perturbation imposes more restrictions on $h_x$. For example, if $\partial = 0$, then $\textnormal{H}(V_M,C_M) = \textnormal{Hom}(V_M,C_M)$. But if $\partial$ is of full rank, then $h_x \in \textnormal{H}(V_M,C_M)$ is determined by isomorphisms $\varphi_x$ and left or right inverses of $\partial$.
\end{remark}

The last observation of the above remark can be made much more precise as follows.

\begin{proposition}\label{prop: bundle of Lie groups is locally trivial}
Let $\bbR^k \xra{d} \bbR^\ell$ be a linear map. The Lie group
\begin{equation*}
\textnormal{H}(\bbR^\ell, \bbR^k) = \{h: \bbR^\ell \ra \bbR^k \mid 1 + dh \textnormal{ is invertible}\}
\end{equation*}
embeds into $\textnormal{GL}(k+\ell)$ via
\begin{equation*}
\textnormal{H}(\bbR^\ell, \bbR^k) \ra \textnormal{GL}(k+\ell) \qquad h \mapsto \begin{pmatrix} 1 & h \\ 0 & 1+dh \end{pmatrix}.
\end{equation*}
In fact, if $r = \rank d$, then $\textnormal{H}(\bbR^\ell, \bbR^k)$ is isomorphic to the semidirect product associated to the action of $\textnormal{GL}(r)$ on upper diagonal blockmatrices\footnote{The non-trivial blocks are of the form $(k-r) \times r$, $(k-r) \times (\ell-r)$ and $r \times (\ell-r)$.} given by
\begin{equation*}
g \cdot
\begin{pmatrix}
1 & h_1 & h_2 \\ 0 & 1 & h_3 \\ 0 & 0 & 1
\end{pmatrix}
=
\begin{pmatrix}
1 & h_1g^{-1} & h_2 \\ 0 & 1 & gh_3 \\ 0 & 0 & 1
\end{pmatrix}.
\end{equation*}
In particular, if $C_M \xra{\partial} V_M$ is a $2$-term complex, then $\partial$ is regular if and only if $\textnormal{H}(V_M,C_M)$ is a Lie group bundle.\footnote{A Lie group bundle is a locally trivial bundle of Lie groups.}
\end{proposition}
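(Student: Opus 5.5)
The proof has three steps: a direct check that the stated map is an embedding, a normal form for $d$ that exhibits the semidirect product, and a local-triviality argument for the bundle statement.

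\emph{The embedding.} Write the block matrix with respect to $\bbR^k \oplus \bbR^\ell$, so that it acts on the complex $\bbR^k \xra{d} \bbR^\ell$. The map is injective, since $h$ appears as a block. It is a homomorphism by direct multiplication:
\begin{equation*}
\begin{pmatrix} 1 & h \\ 0 & 1+dh \end{pmatrix}\begin{pmatrix} 1 & h' \\ 0 & 1+dh' \end{pmatrix} = \begin{pmatrix} 1 & h+h'+hdh' \\ 0 & 1+d(h+h'+hdh') \end{pmatrix}.
\end{equation*}
The diagonal block of the right-hand side is $(1+dh)(1+dh')$, which matches the product rule $h\cdot h' = h+h'+hdh'$ of $\textnormal{H}(V_M,C_M)$. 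Invertibility of the matrix is equivalent to invertibility of $1+dh$. The image is the closed subset $\{\begin{pmatrix} 1 & h \\ 0 & A\end{pmatrix} \mid A = 1+dh\}$ of $\textnormal{GL}(k+\ell)$, so the map is a closed embedding.

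\emph{Normal form and semidirect product.} Choose bases adapted to $d$: $\bbR^k = \ker d \oplus W$ and $\bbR^\ell = \im d \oplus U$, with $d|_W \colon W \ra \im d$ an isomorphism, used to identify both spaces with $\bbR^r$. Then $d = \begin{pmatrix} 0 & 0 \\ 1 & 0\end{pmatrix}$ with respect to $\bbR^k = \bbR^{k-r}\oplus\bbR^r$ and $\bbR^\ell = \bbR^r \oplus \bbR^{\ell-r}$. Write $h$ in blocks as $\begin{pmatrix} a & b \\ c & e\end{pmatrix}$, mapping $\bbR^r\oplus\bbR^{\ell-r} \ra \bbR^{k-r}\oplus\bbR^r$. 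Then
\begin{equation*}
1+dh = \begin{pmatrix} 1+c & e \\ 0 & 1\end{pmatrix},
\end{equation*}
so the defining condition is exactly $g \coloneq 1+c \in \textnormal{GL}(r)$, while $a$, $b$, $e$ are free.

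This gives a map to $\textnormal{GL}(r) \times \{\text{unipotent upper triangular } 3\times 3 \text{ block matrices}\}$, sending $h$ to $g = 1+c$ together with $(h_1,h_2,h_3)$ built from $(a,b,e)$. A short computation of $h\cdot h'$ should show:
\begin{itemize}
\item the $c$-block multiplies as $(1+c)(1+c')$;
\item the other blocks compose like the unipotent block matrices, after twisting by $g$;
\item the natural identification is $h_1 = a g^{-1}$, $h_3 = e$, $h_2 = b$, possibly with a correction such as $h_2 = b - a g^{-1} e$.
\end{itemize}
Fixing this identification so that the product becomes exactly the stated action $g\cdot(h_1,h_2,h_3) = (h_1 g^{-1}, h_2, g h_3)$ is the main obstacle. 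I would first compute $h\cdot h'$ in blocks, then guess the unipotent coordinates from the $g=1$ case, where the group is just the unipotent group, and adjust by $g^{-1}$ factors until the twisting matches.

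\emph{Bundle statement.} If $\partial$ is regular, it has locally constant rank. Locally I can then choose frames of $C_M$ and $V_M$ adapted to $\ker\partial$ and $\im\partial$, which are subbundles. In such frames $\partial$ is the constant normal form above. This gives a local isomorphism of $\textnormal{H}(V_M,C_M)|_U$ with $U \times \textnormal{H}(\bbR^\ell,\bbR^k)$ as bundles of Lie groups, which is local triviality.

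Conversely, suppose $\textnormal{H}(V_M,C_M)$ is a Lie group bundle. Then the fibre Lie groups are locally isomorphic, so their isomorphism type is locally constant. The rank $r$ is recoverable from the group: for instance $\dim$ of the center, or the number of connected components ($2$ if $r>0$, $1$ if $r=0$ — not sufficient alone). A better invariant is the abelianization, or the dimension of the derived subgroup, which I expect to depend on $r$ explicitly. So I would compute $\dim[\textnormal{H},\textnormal{H}]$ as a function of $(k,\ell,r)$ and check that it is strictly monotone in $r$. Then a locally constant isomorphism type forces a locally constant rank, that is, $\partial$ is regular.
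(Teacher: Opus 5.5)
Your embedding step and the direction ``$\partial$ regular $\Rightarrow$ $\textnormal{H}(V_M,C_M)$ locally trivial'' are fine and agree with the paper, since adapted local frames make $\partial$ constant. One small slip: in your block ordering $d=\begin{pmatrix}0&1\\0&0\end{pmatrix}$, not its transpose, although your formula for $1+dh$ is the one for the correct $d$. The real gap is that you never establish the semidirect-product isomorphism, which is the whole content of the second claim; you leave it as a guess-and-adjust plan. No guessing is needed. Put $g=1+c$ and $g'=1+c'$. Then the rule $h\cdot h'=h+h'+hdh'$ reads blockwise
\[
h\cdot h'=\begin{pmatrix} ag'+a' & b+b'+ae' \\ gg'-1 & e+ge' \end{pmatrix}.
\]
From this, $h\mapsto g$ is a surjective homomorphism onto $\textnormal{GL}(r)$, with homomorphic section $s(g)=\begin{pmatrix}0&0\\ g-1&0\end{pmatrix}$. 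Its kernel $\{c=0\}$ multiplies exactly like the unipotent block group in the coordinates $(h_1,h_2,h_3)=(a,b,e)$. Conjugation by the section is $(a,b,e)\mapsto(ag^{-1},b,ge)$, which is the stated action. This split-extension argument is the paper's proof. Your first guess $(h_1,h_2,h_3)=(ag^{-1},b,e)$ is in fact exactly the factorisation $h=u\cdot s(g)$ with $u$ in the kernel, so no correction to $h_2$ is needed.

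The paper does not argue the converse (``Lie group bundle $\Rightarrow$ regular'') at all, and the invariant you propose for it would fail. For every $r\geq 1$, the derived algebra of $\textnormal{H}(\bbR^\ell,\bbR^k)$ is $\mathfrak{sl}(r)$ together with all three off-diagonal blocks. Its dimension is therefore $k\ell-1$ independently of $r$, and it is $0$ for $r=0$. So $\dim[\textnormal{H},\textnormal{H}]$ is not monotone in $r$ and cannot detect a jump between two positive ranks. For example, with $k=\ell=2$, the cases $r=1$ and $r=2$ agree in number of components, dimension of the centre, and dimension of the derived algebra.

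An invariant that does separate the ranks comes from the following facts. $\textnormal{H}(\bbR^\ell,\bbR^k)$ is diffeomorphic to $\textnormal{GL}(r)\times\bbR^{k\ell-r^2}$, and its maximal compact subgroups are conjugate to $O(r)$. Hence the number of components (one if and only if $r=0$), together with the dimension $r(r-1)/2$ of a maximal compact subgroup, determines $r$. Equivalently, in Lie-algebraic terms, use the number of components together with the dimension of a Levi factor $\mathfrak{sl}(r)$. Local triviality then forces the rank of $\partial$ to be locally constant.
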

\begin{proof}
The first statement is readily verified. To prove the second statement, we conjugate $d$ so that $d: \bbR^r \oplus \bbR^{k-r} \ra \bbR^r \oplus \bbR^{\ell-r}$ is in the canonical form $(x,y) \mapsto (x,0)$.\footnote{If we conjugate $d$ to $AdB$, then the two models of $\textnormal{H}(\bbR^\ell,\bbR^k)$ are isomorphic via $h \mapsto B^{-1}hA^{-1}$.} The subgroup of $\textnormal{GL}(k+\ell)$ corresponding to $\textnormal{H}(\bbR^k,\bbR^\ell)$ consists now of invertible matrices of the form
\begin{equation*}
\begin{pmatrix}
1 & 0 & h_4 & h_3 \\ 0 & 1 & h_1 & h_2 \\ 0 & 0 & 1 + h_4 & h_3 \\ 0 & 0 & 0 & 1
\end{pmatrix}.
\end{equation*}
Notice that $h_1$, $h_2$ and $h_3$ can be taken arbitrary. The constraint on $h_4$ is that $1 + h_4$ is invertible. Now, the map that sends a matrix as above to $1+h_4$ is a Lie group homomorphism onto $\textnormal{GL}(r)$ that is a surjective submersion. Its kernel consist of elements of the form
\begin{equation*}
\begin{pmatrix}
1 & 0 & 0 & h_3 \\ 0 & 1 & h_1 & h_2 \\ 0 & 0 & 1 & h_3 \\ 0 & 0 & 0 & 1
\end{pmatrix},
\end{equation*}
and this group is isomorphic to the group of upper triangular blockmatrices (by eliminating, from such elements, the first row and column). A direct computation shows that the (conjugation) action that defines the resulting semidirect product is given as in the statement.

The last statement follows by the constant rank theorem: if $\partial$ is regular, then local frames adapted to $\partial$ trivialise $\textnormal{H}(V_M,C_M)$ locally.
\end{proof}

\begin{remark}\label{rema: invertible homotopy bundle is semidirect product non-canonically}
Notice that, if $\partial$ is regular, we can split (non-canonically)
\begin{equation*}
V_M \cong \textnormal{coker } \partial \oplus \im \partial \qquad C_M \cong \ker \partial \oplus \im \partial.
\end{equation*}
Therefore, there is a global non-canonical isomorphism between $\textnormal{H}(V_M,C_M)$ and a type of semidirect product as in the above statement. Indeed, we can form a semidirect product (of Lie group bundles) of the isotropy Lie group bundle of $\textnormal{GL}(\im \partial)$, and the Lie group bundle of upper triangular blockmatrices, where now
\begin{equation*}
h_1 \in \textnormal{Hom}(\im \partial, \ker \partial) \qquad h_2 \in \textnormal{Hom}(\textnormal{coker } \partial, \ker \partial) \qquad h_3 \in \textnormal{Hom}(\textnormal{coker } \partial, \im \partial).
\end{equation*}

In general, given a $2$-term complex $C_M \ra V_M$, we can represent elements of $\textnormal{H}(V_M,C_M) \ni h_x$ as blockmatrices
\begin{equation*}
\begin{pmatrix}
1 & h_x \\ 0 & 1 + \partial h_x
\end{pmatrix} \in \textnormal{GL}(C_M \oplus V_M).
\end{equation*}
Another way is to use instead
\begin{equation*}
\begin{pmatrix}
1 + h_x \partial & h_x \\ 0 & 1
\end{pmatrix} \in \textnormal{GL}(C_M \oplus V_M)
\end{equation*}
which is related to the previous matrix as follows:
\begin{equation*}
\begin{pmatrix}
1 + h_x \partial & h_x \\ 0 & 1
\end{pmatrix} = \begin{pmatrix}
1 & h_x \\ 0 & 1 + \partial h_x
\end{pmatrix}
\begin{pmatrix}
1 + h_x\partial & h_x \partial h_x (1 + \partial h_x)^{-1} \\ 0 & (1 + \partial h_x)^{-1}
\end{pmatrix}.
\end{equation*}
Similarly, we can depict elements of $\textnormal{Pert}(C_M,V_M)$, but such elements ``carry a differential''.
\end{remark}

Under the identification of $\textnormal{Hom}(V_M,C_M)$ with $\textnormal{Hom}(C_M^*,V_M^*)$ (by taking the dual), the structure of $\textnormal{H}(V_M,C_M)$ takes its ``opposite'' form:
\begin{equation*}
h_x \cdot_\textnormal{op} h_x' \coloneq h_x' \cdot h_x = h_x + h_x'+ h_x' \partial h_x \qquad (h_x)_\textnormal{op}^{-1} \coloneq h_x^{-1} = -h_x(1+ \partial h_x)^{-1}.
\end{equation*}
As happens for groups, bundles of groups are isomorphic to their opposites via the inverse:

\begin{proposition}\label{prop: dual of invertible homotopy bundle}
Let $C_M \ra V_M$ be a $2$-term complex. The canonical map
\begin{equation*}
\textnormal{H}(V_M,C_M) \ra \textnormal{H}(C_M^*,V_M^*) \qquad h_x \mapsto (h_x^{-1})^*
\end{equation*}
is an isomorphism of bundles of Lie groups.
\end{proposition}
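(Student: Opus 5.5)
We argue in two stages: first we check that the map lands in $\textnormal{H}(C_M^*,V_M^*)$ and is a bijection, and then that it respects the products. The dual complex is $\partial^*: V_M^* \ra C_M^*$, so $\textnormal{H}(C_M^*,V_M^*)$ consists of those $k_x: (C_M^*)_x \ra (V_M^*)_x$ with $1 + \partial^* k_x$ invertible. Its product is
\begin{equation*}
k \cdot k' = k + k' + k\partial^* k'.
\end{equation*}
For $h \in \textnormal{H}(V_M,C_M)$ we have $h^{-1} = -h(1+\partial h)^{-1} \in \textnormal{H}(V_M,C_M)$, and $(h^{-1})^* : C_M^* \ra V_M^*$. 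To see that it lies in $\textnormal{H}(C_M^*,V_M^*)$, observe that for any $h'$ in the group,
\begin{equation*}
1 + \partial^* (h')^* = (1 + h'\partial)^*,
\end{equation*}
and $1+h'\partial$ is invertible by the footnote preceding Lemma \ref{lemm: homotopy composition}. Hence $1 + \partial^*(h')^*$ is invertible, and taking $h' = h^{-1}$ gives the claim.

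Next, the map $h \mapsto (h^{-1})^*$ is the composite of inversion on $\textnormal{H}(V_M,C_M)$, which is a diffeomorphism, with the dualisation map $h' \mapsto (h')^*$. The latter is the restriction of the vector bundle isomorphism $\textnormal{Hom}(V_M,C_M) \cong \textnormal{Hom}(C_M^*,V_M^*)$. By the computation above, together with the symmetric computation using $(\partial^*)^* = \partial$, this restriction maps $\textnormal{H}(V_M,C_M)$ bijectively onto $\textnormal{H}(C_M^*,V_M^*)$: on this side the defining condition reads ``$1+\partial^*h'^*$ invertible'', which is equivalent to ``$1+h'\partial$ invertible''. So the whole map is a diffeomorphism over the identity of $M$.

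For the homomorphism property, we first record that dualisation is an anti-homomorphism:
\begin{equation*}
(h \cdot h')^* = h^* + h'^* + h'^*\partial^* h^* = h'^* \cdot h^*,
\end{equation*}
computed in $\textnormal{H}(C_M^*,V_M^*)$. This is exactly the observation made before the statement that dualisation identifies the structure with its opposite. Composing with inversion, which is also an anti-homomorphism, we obtain
\begin{equation*}
((hh')^{-1})^* = (h'^{-1}h^{-1})^* = (h^{-1})^*\cdot(h'^{-1})^*,
\end{equation*}
so the map is a homomorphism. The only point needing care is the invertibility check in the first stage, which the identity $1+\partial^*h^* = (1+h\partial)^*$ settles.
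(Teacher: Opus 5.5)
Your proof is correct and follows the argument the paper intends. Dualisation identifies $\textnormal{H}(V_M,C_M)$ with the opposite of $\textnormal{H}(C_M^*,V_M^*)$, and composing with inversion turns this anti-isomorphism into an isomorphism. You also supply the check the paper leaves implicit: dualisation maps $\textnormal{H}(V_M,C_M)$ onto $\textnormal{H}(C_M^*,V_M^*)$, via $1+\partial^*h^* = (1+h\partial)^*$ together with the equivalence of invertibility of $1+h\partial$ and $1+\partial h$.
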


In the coming section, Section \ref{sec: canonical representations of the fat groupoid}, we define the well-known canonical cochain complex representation of the fat groupoid $\widehat V_G$ of a VB-groupoid $V_G$. This cochain complex representation is compatible with the exact sequence of Lie groupoids\footnote{The short exact sequence can be seen as a restriction of the short exact sequence of Lie categories
\begin{center}
\begin{tikzcd}[ampersand replacement = \&]
1 \ar[r] \& \textnormal{Hom}(V_M,C_M) \ar[r] \& \widehat V_G^\textnormal{cat} \ar[r] \& G \ar[r] \& 1.
\end{tikzcd}
\end{center}
}
\begin{center}
\begin{tikzcd}
1 \ar[r] & \textnormal{H}(V_M,C_M) \ar[r] & \widehat V_G \ar[r] & G \ar[r] & 1
\end{tikzcd}
\end{center}
in two ways. One is that the inclusion $\textnormal{H}(V_M,C_M) \hookrightarrow \widehat V_G$ is equivariant with respect to the canonical cochain complex representations. Another is that the conjugation action of $\widehat V_G$ on $\textnormal{H}(V_M,C_M)$ is induced by the representation on $\textnormal{Hom}(V_M,C_M)$. This is precisely what encodes the fat extension associated to $V_G$ (see the next Section \ref{sec: fat extensions 1} and Section \ref{sec: fat extensions}).

\subsection{Canonical representations of the fat groupoid}\label{sec: canonical representations of the fat groupoid}
The action of $V_G$ on itself induces an action of $\widehat V_G$ on $V_G$: given $H_g \in \widehat V_G$ and $v \in (V_G)_{g'}$ with $(g,g') \in G^{(2)}$,
\begin{equation*}
H_g \cdot v \coloneq H_g(\bft_g v) \cdot v.
\end{equation*}
In turn, this induces actions of $\widehat V_G$ on $C_M$ and $V_M$: given $H_g \in \widehat V_G$, $c \in (C_M)_{\bfs g}$ and $v \in (V_M)_{\bfs g}$,
\begin{equation}\label{eq: representations of the fat groupoid actions}
H_g \cdot c \coloneq r_{g^{-1}}(H_g(\partial c) \cdot c) \qquad H_g \cdot v \coloneq \bft_g (H_g v).
\end{equation}

\begin{remark}\label{rema: linear actions induce actions of fat groupoid}
More generally, a linear action of $V_G$ on a vector bundle $V_P \ra P$ induces an action of $\widehat V_G$ on $V_P$. By linear, we mean that the action map
\begin{equation*}
V_G \ltimes V_P \ra V_P
\end{equation*}
is linear (over an action of $G$ on $P$). A more thorough discussion (including a discussion on VB-Morita equivalences in terms of bibundles, the notion of Morita equivalence in terms of ruths, and the notion of Morita equivalence in terms of fat extensions) will appear in \cite{Homotopyoperators}.
\end{remark}

It is readily verified that the $2$-term complex
\begin{equation*}
\partial: C_M \xra{\bft} V_M
\end{equation*}
is equivariant with respect to the actions, so we can see $C_M \ra V_M$ as a cochain complex representation of $\widehat V_G$. In turn, there is a dual cochain complex representation on\footnote{Recall that, with our convention, $V_M^*$ sits in degree $0$ and $C_M^*$ sits in degree $1$.}
\begin{equation*}
\partial^*: V_M^* \xra{\bft^*} C_M^*
\end{equation*}
and the representations can also be put together to give a representation on $\textnormal{Hom}(V_M,C_M) = V_M^* \otimes C_M$. To be clear, $\widehat V_G$ acts on $\textnormal{Hom}(V_M,C_M)$ via the representation of $C_M$ and via the representation of $V_M$. Denoting these actions by $\cdot_{C_M}$ and $\cdot_{V_M}$, respectively, the conjugation action is given by
\begin{equation*}
H_g \cdot h_{\bfs g} = H_g \cdot_{V_M} h_{\bfs g} \cdot_{C_M} H_g^{-1}.
\end{equation*}
When using this action, we use the notation from the right hand side to avoid confusion. This representation descends to an action on $\textnormal{H}(V_M,C_M) \subset \textnormal{Hom}(V_M,C_M)$.\footnote{For $h_{\bfs g} \in \textnormal{H}(V_M,C_M)$ and $H_g \in \widehat V_G$, the map $\bft_g(H_g \cdot h_{\bfs g})$ is given by $(\bft_g H_g)(1 + \partial h_{\bfs g})$.}

\begin{proposition}\label{prop: representation H(V_M,C_M) is action by conjugation}
The inclusion map
\begin{equation*}
\textnormal{H}(V_M,C_M) \hookrightarrow \widehat V_G \qquad h_x \mapsto 1+h_x
\end{equation*}
is equivariant with respect to the canonical cochain complex representations on $C_M \ra V_M$. Moreover, the conjugation action of $\widehat V_G$ on $\textnormal{H}(V_M,C_M)$, induced by the representation on $\textnormal{Hom}(V_M,C_M)$, coincides with the conjugation action of $\widehat V_G$ on $\textnormal{H}(V_M,C_M)$ by viewing $\textnormal{H}(V_M,C_M)$ as a normal subgroupoid of $\widehat V_G$.
\end{proposition}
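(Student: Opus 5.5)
The plan is to make everything explicit on elements, using the description of $h_x \in \textnormal{H}(V_M,C_M)$ as the element $1+h_x \in \widehat V_G$ over $1_x$. I first fix this identification: for $v \in (V_M)_x$, set $(1+h_x)v \coloneq 1_v + h_x v$. Here $h_xv \in (C_M)_x = \ker \bfs|_M$ is regarded as an element of $(V_G)_{1_x}$, so the sum is taken in the fiber $(V_G)_{1_x}$. Then $\bfs_{1_x}(1+h_x) = 1$ and $\bft_{1_x}(1+h_x) = 1+\partial h_x$, so indeed $1+h_x \in \widehat V_G$ exactly when $h_x \in \textnormal{H}(V_M,C_M)$. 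As a quick sanity check, which is already implicit in the paper's claim that this is the kernel of $\widehat V_G \ra G$, I would verify that the product $(1+h_x)\cdot(1+h'_x)$ corresponds to $h_x + h'_x + h_x\partial h'_x$. For this I use the interchange law in the fiber over $1_x \cdot 1_x$:
\begin{equation*}
(1_w + c)\cdot(1_v + c') = 1_v + c + c' \qquad \textnormal{when } w = v + \partial c'.
\end{equation*}

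\textbf{Equivariance.} I compute the two actions in \eqref{eq: representations of the fat groupoid actions} for $H = 1+h_x$. On $V_M$ the action is $\bft(1_v + h_xv) = v + \partial h_x v$, that is, $1+\partial h_x$. On $C_M$ it is $r_{1_x}^{-1}\big((1_{\partial c} + h_x\partial c)\cdot c\big)$. By the same interchange law, with $c = 0_{1_x}$-based core element written as $1_0 + c$, this equals $c + h_x\partial c$, that is, $1 + h_x\partial$. Together these give exactly the canonical representation $h_x \mapsto (1+h_x\partial, 1+\partial h_x)$, which proves the first claim.

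\textbf{Conjugation.} For $H_g \in \widehat V_G$ with $\bfs g = x$, I must show that
\begin{equation*}
H_g (1+h_x) H_g^{-1} = 1 + H_g\cdot_{V_M} h_x \cdot_{C_M} H_g^{-1}.
\end{equation*}
I would evaluate the left side on $w \in (V_M)_{\bft g}$ using the product formula $(H\cdot H')v = H(\bft H'v)\cdot H'v$ and the inverse $H_g^{-1}w = (H_g(\bft_gH_g)^{-1}w)^{-1}$. Setting $v = (\bft_g H_g)^{-1}w$, the product unfolds to
\begin{equation*}
H_g\big((\bft_gH_g)(1+\partial h_x)v\big)\cdot(1_v + h_xv)\cdot(H_gv)^{-1}.
\end{equation*}
By linearity, $H_g\big((\bft_gH_g)(1+\partial h_x)v\big) = H_gv + H_g(\partial h_x v)$ in the fiber over $g$. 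I then write $H_g(\partial h_xv) = H_g(\partial c)$ with $c = h_xv$. Combining $H_g(\partial c)\cdot c$ via the interchange law gives $r_g$ of the action $H_g\cdot_{C_M} c$, and multiplying by $H_gv\cdot 1_v\cdot(H_gv)^{-1} = 1_w$ yields
\begin{equation*}
1_w + H_g\cdot_{C_M}\big(h_x (\bft_gH_g)^{-1}w\big).
\end{equation*}
This is $1+(H_g\cdot h_x)$, where $H_g\cdot h_x$ is the tensor action on $V_M^*\otimes C_M$ (acting by $H_g$ on $C_M$ and by $(\bft_gH_g)^{-1}$ on the $V_M$ input), which proves the second claim.

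The main obstacle is this last bookkeeping step: splitting the product in the fiber over $g\cdot 1_x\cdot g^{-1}$ into a ``unit part'' and a ``core part''. This relies on the interchange law $(a+b)\cdot(a'+b') = a\cdot a' + b\cdot b'$ for composable pairs in a VB-groupoid, applied with $b' = 0$ or $b = 0$, together with careful tracking of the left and right core embeddings $r_g$, $\ell_g$. I would handle it by first proving the auxiliary identity
\begin{equation*}
(u + r_g c)\cdot v' = u\cdot v' + r_g c
\end{equation*}
for suitable zeros, and then applying it twice.
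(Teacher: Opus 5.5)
Your proposal is correct and follows the same route as the paper: a direct elementwise computation of $H_g\cdot(1+h_x)\cdot H_g^{-1}$ on $w\in(V_M)_{\bft g}$. It uses the product formula, the inverse formula and the interchange law to split off the unit part $1_w$ from the core part $H_g\cdot_{C_M}(h_x(H_g^{-1}\cdot w))$, and you additionally spell out the equivariance check that the paper calls readily verified. Two small slips to fix: in the unfolded product the argument of $H_g$ should be $(1+\partial h_x)v$ rather than $(\bft_gH_g)(1+\partial h_x)v$, and your auxiliary identity should read $(u+r_gc)\cdot v'=u\cdot v'+r_{gg'}c$ for $v'$ over $g'$ (which reduces to adding $c$ when $g'=g^{-1}$).
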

\begin{proof}
The first statement is readily verified. For the second statement we do a direct computation:
\begin{align*}
(H_g \cdot (1+h_{\bfs g}) \cdot H_g^{-1})v &= H_g(1+\partial h_{\bfs g})(H_g^{-1} \cdot v) \cdot (H_g^{-1} \cdot v + h_{\bfs g}(H_g^{-1} \cdot v)) \cdot H_g^{-1}(v) \\
&= H_g(1+\partial h_{\bfs g})(H_g^{-1} \cdot v) \cdot (H_g^{-1}(v) + h_{\bfs g}(H_g^{-1} \cdot v) \cdot 0_{g^{-1}}) \\
&= v + H_g(\partial h_{\bfs g}(H_g^{-1} \cdot v)) \cdot h_{\bfs g}(H_g^{-1} \cdot v) \cdot 0_{g^{-1}} \\
&= (1 + H_g \cdot_{V_M} h_{\bfs g} \cdot_{C_M} H_g^{-1})v.
\end{align*}
This proves the statement.
\end{proof}

\subsection{Fat extensions}\label{sec: fat extensions 1}

As we discussed now the structure behind the fat groupoid of a VB-groupoid that makes it into a \textit{fat extension}, we already put here the abstract definition:

\begin{definition}\label{def: fat extensions}
Let $G \rra M$ be a Lie groupoid, and consider a $2$-term complex
\begin{equation*}
C_M \xra{\partial} V_M.
\end{equation*}
A \textit{fat extension} of $G$, with underlying complex $C_M \ra V_M$, consists of a Lie groupoid
\begin{equation*}
F_G \rra M,
\end{equation*}
that comes with a cochain complex representation on $C_M \ra V_M$, and a short exact sequence of Lie groupoids (over $M$)
\begin{center}
\begin{tikzcd}
1 \ar[r] & \textnormal{H}(V_M,C_M) \ar[r] & F_G \ar[r] & G \ar[r] & 1.
\end{tikzcd}
\end{center}
Moreover, the cochain complex representation of $F_G$ on $C_M \ra V_M$ restricts to the canonical cochain complex representation of $\textnormal{H}(V_M,C_M)$, and the two natural conjugation actions of $F_G$ on $\textnormal{H}(V_M,C_M)$ agree. That is, denoting by $\cdot_{V_M}$ and $\cdot_{C_M}$ the actions of $F_G$ on $\textnormal{H}(V_M,C_M)$ induced by the representations, we have that, for all $H_g \in F_G$ and $h_{\bfs g} \in \textnormal{H}(V_M,C_M)$,
\begin{equation*}
H_g \cdot h_{\bfs g} \cdot H_g^{-1} = H_g \cdot_{V_M} h_{\bfs g} \cdot_{C_M} H_g^{-1}.
\end{equation*}
\end{definition}

We will simply write $F_G$ for a fat extension. The underlying groupoid $F_G$ is called the fat groupoid of the fat extension.

To understand the structure and examples of fat extensions, it is helpful to keep the reference with VB-groupoids, and so, for now, we study the above structure in terms of VB-groupoids.

\subsection{Fat category extensions}\label{sec: the fat category}

Before we move to the examples, we observe that $\widehat V_G^\textnormal{cat}$ can be recovered from $\widehat V_G$ and its cochain complex representation. In fact, this leads to yet another equivalent way to encode the structure of a fat extension in terms of a \textit{fat category extension}. This defines the intrinsic notion of weak representation as defined in \cite{Wolbert}. 

To start, observe that $\textnormal{Hom}(\bfs^*V_M, \bft^*C_M) \rra M$ is a Lie category with $\bfs h_g = \bfs g$, $\bft h_g = \bft g$ and
\begin{equation*}
h_g \cdot h_{g'} \coloneq h_g \partial h_{g'}.
\end{equation*}
This is not a ``VB-category'' over $G$, because this product is not linear but bilinear. The fat groupoid $\widehat V_G$ acts on the vector bundle $\textnormal{Hom}(\bfs^*V_M, \bft^*C_M)$, both on the left and on the right, via the two representations on $C_M$ and $V_M$. We denote these two actions by $\cdot_{C_M}$ and $\cdot_{V_M}$ for clarity. We then get a groupoid structure on
\begin{equation*}
\textnormal{Hom}(\bfs^*V_M, \bft^*C_M) \times_G \widehat V_G \rra M
\end{equation*}
by setting $\bfs(h_g, H_g) = \bfs g$, $\bft(h_g, H_g) = \bft g$ and
\begin{equation*}
(h_g, H_g) \cdot (h_{g'}, H_{g'}) \coloneq (h_g \cdot h_{g'} + H_g \cdot_{C_M} h_{g'} + h_g \cdot_{V_M} H_{g'}, H_g \cdot H_{g'}).
\end{equation*}
The connection with $\widehat V_G^\textnormal{cat}$ becomes clear when associating to $(h_g, H_g)$ the map
\begin{equation*}
r_gh_g + H_g: (V_M)_{\bfs g} \ra (V_G)_g \qquad v \mapsto r_gh_gv + H_gv
\end{equation*}
which satisfies $\bfs_g(r_gh_g + H_g) = 1$.

\begin{proposition}\label{prop: fat category from fat groupoid}
The map
\begin{equation*}
\textnormal{Hom}(\bfs^*V_M, \bft^*C_M) \times_G \widehat V_G \ra \widehat V_G^\textnormal{cat} \qquad (h_g, H_g) \mapsto r_gh_g + H_g
\end{equation*}
is a surjective submersion of Lie categories. Moreover, the embedding
\begin{equation*}
\textnormal{H}(V_M,C_M) \hookrightarrow \textnormal{Hom}(\bfs^*V_M, \bft^*C_M) \times_G \widehat V_G \qquad h_x \mapsto (-h_x,1+h_x)
\end{equation*}
defines free and proper left and right actions whose quotients are both given by $\widehat V_G^\textnormal{cat}$, and the above map defines both quotient maps.
\end{proposition}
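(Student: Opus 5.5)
The plan is to work fibrewise over $G$, using the affine structure of $\widehat V_G^\textnormal{cat}$ and $\widehat V_G \subset \widehat V_G^\textnormal{cat}$. Write $\Phi(h_g,H_g) = r_gh_g + H_g$. Since $\bfs_g r_g = 0$, we have $\bfs_g\Phi(h_g,H_g) = \bfs_gH_g = 1$, so $\Phi$ lands in $\widehat V_G^\textnormal{cat}$.

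\textbf{Step 1: $\Phi$ is a surjective submersion.} For surjectivity, fix $K_g \in \widehat V_G^\textnormal{cat}$. The fibre of $\widehat V_G \ra G$ over $g$ is nonempty, because this map is a surjective submersion. Pick any $H_g$ in it. Then $K_g - H_g$ takes values in $\ker \bfs_g = r_g((C_M)_{\bft g})$, so $K_g = r_gh_g + H_g$ for a unique $h_g$. For the submersion property, observe that for fixed $H_g$ the map $h_g \mapsto r_gh_g + H_g$ is an affine isomorphism onto the fibre of $\widehat V_G^\textnormal{cat}$ over $g$. Hence $d\Phi$ is already surjective on vertical vectors over $G$. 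Since both sides submerse onto $G$ compatibly, $\Phi$ is a submersion.

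\textbf{Step 2: $\Phi$ is a functor.} We must show
\begin{equation*}
\Phi\big((h_g,H_g)\cdot(h_{g'},H_{g'})\big) = \Phi(h_g,H_g)\cdot\Phi(h_{g'},H_{g'}),
\end{equation*}
where the right-hand side is $(K_g\cdot K_{g'})v = K_g(\bft_{g'}K_{g'}v)\cdot K_{g'}v$. Expand this with $K = rh + H$, using the following ingredients:
\begin{itemize}
\item $\bft_{g'} r_{g'} h_{g'} = \partial h_{g'}$;
\item bilinearity of the multiplication on $V_G$ (interchange law), to split off the pieces;
\item the identity $r_gc\cdot w = H_g\text{-independent}$ core translations. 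Concretely, $(r_gc + x)\cdot(y) = x\cdot y + r_{gg'}(c)$ when $x\cdot y$ is defined and we add $r_gc\cdot 0_{g'}$;
\item the definition of the $C_M$-action $H_g\cdot c = r_{g^{-1}}(H_g(\partial c)\cdot c)$, which converts $H_g(\partial h_{g'}v)\cdot r_{g'}h_{g'}v$ into $r_{gg'}$ of $H_g\cdot_{C_M} h_{g'}$ plus $H_g\cdot H_{g'}$ terms.
\end{itemize}
The four resulting terms should match $h_g\partial h_{g'}$, $H_g\cdot_{C_M}h_{g'}$, $h_g\cdot_{V_M}H_{g'}$ (coming from $r_gh_g(\bft H_{g'}v)$) and $H_gH_{g'}$. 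Units and source/target are immediate.

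This bookkeeping computation, the same kind as in the proof of Proposition \ref{prop: representation H(V_M,C_M) is action by conjugation}, is the main obstacle. I would first verify the case $h_g = 0$ and the case $H$ a unit separately, and then combine them by bilinearity.

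\textbf{Step 3: the actions.} Let $h_x$ act on the left by $(-h_x,1+h_x)\cdot(h_g,H_g)$ and on the right by $(h_g,H_g)\cdot(-h_x,1+h_x)$. These are actions because the embedding is a groupoid morphism. To see this, note that $\Phi(-h_x,1+h_x) = 1$ (the unit), and compute the product in the source, which should reduce to the group law $h+h'+h\partial h'$ of $\textnormal{H}(V_M,C_M)$. Step 2 then gives that $\Phi$ is invariant under both actions.

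Next, the second components transform by left and right multiplication by $1+h_x$ in $\widehat V_G$. These are exactly the free and transitive actions of the kernel $\textnormal{H}(V_M,C_M)$ on the fibres of $\widehat V_G \ra G$. Consequently:
\begin{itemize}
\item both actions are free;
\item both are proper, since they are principal actions of the bundle of groups on $\widehat V_G$, pulled back to the fibre product;
\item by the uniqueness in Step 1, each fibre of $\Phi$ is exactly one orbit.
\end{itemize}
A surjective submersion whose fibres are precisely the orbits of a free proper action is the quotient map. This identifies $\widehat V_G^\textnormal{cat}$ with both quotients.
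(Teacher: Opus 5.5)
Your proposal is correct and follows essentially the paper's route. Surjectivity and submersivity come from the fibrewise affine identification $h_g \mapsto r_gh_g + H_g$; the paper phrases the same idea as a local section $H'_g \mapsto (r_{g^{-1}}(H'_g - H_g), H_g)$ built from a local section of $\widehat V_G \ra G$. Fibres are matched with orbits via the unique $h_{\bft g}$ with $(1+h_{\bft g})\cdot H_g = H'_g$, exactly as in the paper.

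Your Step 2 functoriality computation, the check that $h_x \mapsto (-h_x,1+h_x)$ is multiplicative with $\Phi(-h_x,1+h_x)=1$, and the freeness and properness arguments are details the paper leaves implicit. They work out as you indicate.
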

\begin{proof}
To see that the first map is a surjective submersion follows, for example, by constructing a local section from a local section $H$ of $\widehat V_G \ra G$:
\begin{equation*}
\widehat V_G^\textnormal{cat} \ra \textnormal{Hom}(\bfs^*V_M, \bft^*C_M) \times_G \widehat V_G \qquad H_g' \mapsto (r_{g^{-1}}(H_g' - H_g), H_g).
\end{equation*}
The second statement follows by realising that, whenever
\begin{equation*}
r_gh_g + H_g = r_gh_g' + H_g'
\end{equation*}
there is a unique element $h_{\bft g} \in \textnormal{H}(V_M,C_M)$ such that 
\begin{equation*}
(-h_{\bft g}, 1+h_{\bft g}) \cdot (h_g,H_g) = (h_g',H_g'),
\end{equation*}
namely, the unique element $h_{\bft g}$ such that $(1+h_{\bft g}) \cdot H_g = H_g'$ (and similarly for the right action). This proves the statement.
\end{proof}

\begin{remark}\label{rema: exact sequence fat category}
The above statement describes an ``exact sequence of Lie categories''
\begin{center}
\begin{tikzcd}
1 \ar[r] & \textnormal{H}(V_M,C_M) \ar[r] & \textnormal{Hom}(\bfs^*V_M, \bft^*C_M) \times_G \widehat V_G \ar[r] & \widehat V_G^\textnormal{cat} \ar[r] & 1.
\end{tikzcd}
\end{center}
Namely, while it is true that $\textnormal{H}(V_M,C_M)$ is the kernel, in the sense that it comprises the elements mapping to units in $\widehat V_G$, it is the kernel in a more general, more appropriate sense. Being more precise here leads to the above statement: whenever two elements coincide in the image of the map
\begin{equation*}
\textnormal{Hom}(\bfs^*V_M, \bft^*C_M) \times_G \widehat V_G \ra \widehat V_G^\textnormal{cat}
\end{equation*}
they are related under the equivalence obtained from the inclusion by $\textnormal{H}(V_M,C_M)$. That is, the action map from $\textnormal{H}(V_M,C_M) \times_M (\textnormal{Hom}(\bfs^*V_M, \bft^*C_M) \times_G \widehat V_G)$ to
\begin{equation*}
 (\textnormal{Hom}(\bfs^*V_M, \bft^*C_M) \times_G \widehat V_G) \times_{\widehat V_G^\textnormal{cat}} (\textnormal{Hom}(\bfs^*V_M, \bft^*C_M) \times_G \widehat V_G)
\end{equation*}
is an isomorphism (and similarly for the right action), meaning that the action is free and proper.
\end{remark}

\begin{remark}\label{rema: fat category from fat groupoid}
From the previous statement, or rather its proof, we saw that $\widehat V_G^\textnormal{cat}$ is a quotient with respect to the (left) action of $\textnormal{H}(V_M,C_M)$ on $\textnormal{Hom}(\bfs^*V_M, \bft^*C_M) \times_G \widehat V_G$ given by
\begin{equation*}
h_{\bft g} \cdot (h_g, H_g) \coloneq (h_g - h_{\bft g} \cdot H_g, (1 + h_{\bft g}) \cdot H_g).
\end{equation*}
\end{remark}

\begin{remark}\label{rema: cochain complex representation of fat extension}
The Lie category $\widehat V_G$ also has $C_M \ra V_M$ as a cochain complex representation. By representation we mean here a smooth functor to the endomorphism Lie category
\begin{equation*}
\textnormal{End } E_M \coloneq \{E_x \ra E_y \mid x,y \in M\}
\end{equation*}
where $E_M$ is a vector bundle. It is defined exactly as in \eqref{eq: representations of the fat groupoid actions}.
\end{remark}

The above discussion shows that the fat category of a fat extension can be defined as follows:

\begin{definition}\label{def: abstract definition fat category}
Let $F_G$ be a fat extension. Consider the Lie category
\begin{equation*}
\textnormal{Hom}(\bfs^*V_M, \bft^*C_M) \times_G F_G \rra M
\end{equation*}
The fat category $F_G^\textnormal{cat} \rra M$ (of $F_G$) is defined as the quotient of
\begin{equation*}
\textnormal{Hom}(\bfs^*V_M, \bft^*C_M) \times_G F_G
\end{equation*}
by the action induced by the inclusion\footnote{Notice that here we identify $h_x \in \textnormal{H}(V_M,C_M)$ with its image in $F_G$. If $F_G = \widehat V_G$, then this element equals $1+h_x$.}
\begin{equation*}
\textnormal{H}(V_M,C_M) \hookrightarrow \textnormal{Hom}(\bfs^*V_M, \bft^*C_M) \times_G F_G \qquad h_x \mapsto (-h_x,h_x).
\end{equation*}
\end{definition}

Now, apart from the cochain complex representation, the fat category $\widehat V_G^\textnormal{cat}$ comes with a type of ``comparison'' map 
\begin{equation*}
h: \widehat V_G^\textnormal{cat} \times_G \widehat V_G^\textnormal{cat} \ra \textnormal{Hom}(\bfs^*V_M,\bft^*C_M) \qquad (H_g,H_g') \mapsto r_{g^{-1}}(H_g' - H_g)
\end{equation*}
that is equivariant for both $\widehat V_G^\textnormal{cat}$ actions (considering the diagonal action on $\widehat V_G^\textnormal{cat} \times_G \widehat V_G^\textnormal{cat}$). It can be interpreted as a homotopy, for, given $H_g,H_g' \in F_G^\textnormal{cat}$, $c \in (C_M)_{\bfs g}$ and $v \in (V_M)_{\bfs g}$, we have
\begin{equation*}
    h(H_g,H_g') \partial c = H_g' \cdot c - H_g \cdot c \qquad \partial h(H_g,H_g') v = H_g' \cdot v - H_g \cdot v.
\end{equation*}
Moreover, the map $h$ restricts to the ``canonical'' comparison map of $\textnormal{Hom}(V_M,C_M)$. In particular, it restricts to the zero map on the diagonal embedding 
\begin{equation*}
    \widehat V_G^\textnormal{cat} \hookrightarrow \widehat V_G^\textnormal{cat} \times_G \widehat V_G^\textnormal{cat}.
\end{equation*}
This defines the structure of a fat category extension:
\begin{definition}\label{def: fat category extension}
Let $G \rra M$ be a Lie groupoid, and consider a $2$-term complex
\begin{equation*}
C_M \xra{\partial} V_M.
\end{equation*}
A \textit{fat category extension} of $G$, with underlying complex $C_M \ra V_M$, consists of a Lie category
\begin{equation*}
F_G^\textnormal{cat} \rra M,
\end{equation*}
that comes with a cochain complex representation on $C_M \ra V_M$, a short exact sequence of Lie categories (over $M$)\footnote{Recall that this means that $\textnormal{Hom}(V_M,C_M)$ defines a free and proper action on $F_G^\textnormal{cat}$.}
\begin{center}
\begin{tikzcd}
1 \ar[r] & \textnormal{Hom}(V_M,C_M) \ar[r] & F_G^\textnormal{cat} \ar[r] & G \ar[r] & 1
\end{tikzcd}
\end{center}
and a smooth map 
\begin{equation*}
    h: F_G^\textnormal{cat} \times_G F_G^\textnormal{cat} \ra \textnormal{Hom}(\bfs^*V_M,\bft^*C_M).
\end{equation*}
The cochain complex representation of $F_G^\textnormal{cat}$ on $C_M \ra V_M$ restricts to the canonical cochain complex representation of $\textnormal{Hom}(V_M,C_M)$. Moreover, $h$ restricts to the canonical comparison map of $\textnormal{Hom}(V_M,C_M)$ (taking the difference). Lastly, $h$ is multiplicative, meaning that, for all $H_g,H_g' \in F_G^\textnormal{cat}$, $c \in (C_M)_{\bfs g}$ and $v \in (V_M)_{\bfs g}$,
\begin{equation*}
    h(H_g,H_g') \partial c = H_g' \cdot c - H_g \cdot c \qquad \partial h(H_g,H_g') v = H_g' \cdot v - H_g \cdot v
\end{equation*}
and it is equivariant with respect to the canonical left/right (diagonal) actions of $F_G^\textnormal{cat}$.
\end{definition}

\begin{remark}\label{rema: splitting of fat category extension}
    The above definition can be seen to define the intrinsic object behind a ``weak representation'' of \cite{Wolbert}. Namely, as was already observed in \cite{VBgroupoidsruths}, a splitting $\Sigma$ of a VB-groupoid $V_G$ can be seen as a section
    \begin{equation*}
        \Sigma: G \ra \widehat V_G^\textnormal{cat}
    \end{equation*}
    of the projection $V_G^\textnormal{cat} \ra G$ (that is not necessarily a functor). Therefore, such a map should be seen as ``splitting'' a fat category extension. Decomposing $\widehat V_G^\textnormal{cat}$ using $\Sigma$, this leads precisely to the axioms of a weak representation as in \cite{Wolbert}. There, it is proved that the category of weak representations is equivalent to the category of (split) $2$-term ruths and to the category of VB-groupoids.  
\end{remark}

\begin{remark}
    As we will see in Section \ref{sec: functorial aspects}, the comparison map
    \begin{equation*}
        h: F_G^\textnormal{cat} \times_G F_G^\textnormal{cat} \ra \textnormal{Hom}(\bft^*V_M,\bfs^*C_M)
    \end{equation*}
    from above can be seen as the ``identity map'' of the fat category extensions $F_G^\textnormal{cat}$.  
\end{remark}

From the discussion in terms of VB-groupoids it is already clear that we can pass from a fat category extension $F_G^\textnormal{cat}$ to $F_G$ by taking the invertible elements of $F_G^\textnormal{cat}$. Indeed, the properties of the map $h$ imply that, for all $H_g \in F_G$ and $h_{\bfs g} \in \textnormal{H}(V_M,C_M)$,   
\begin{equation*}
    H_g \cdot h_{\bfs g} \cdot H_g^{-1} = h(H_g \cdot h_{\bfs g} \cdot H_g^{-1}, 0_{\bft g}) = H_g \cdot_{C_M} h_{\bfs g} \cdot_{V_M} H_g^{-1}.
\end{equation*}
So, to end this section, we conclude:

\begin{proposition}\label{prop: fat category extension correspondence}
    The assignment
    \begin{equation*}
    \{\textnormal{Fat category extensions}\} \ra \{\textnormal{Fat extensions}\} \qquad F_G^\textnormal{cat} \mapsto F_G
    \end{equation*}
    sets a one-to-one correspondence, and the assignment
    \begin{equation*}
    \{\textnormal{Fat extensions}\} \ra \{\textnormal{Fat category extensions}\} \qquad F_G \mapsto F_G^\textnormal{cat}
    \end{equation*}
    is inverse to it (up to canonical isomorphisms).
\end{proposition}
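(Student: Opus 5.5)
We prove the statement by constructing both assignments explicitly and checking that each composite is canonically isomorphic to the identity. This follows the VB-groupoid picture, where $\widehat V_G \subset \widehat V_G^\textnormal{cat}$ are the invertible elements and Proposition \ref{prop: fat category from fat groupoid} recovers $\widehat V_G^\textnormal{cat}$ as a quotient.

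\emph{From fat category extensions to fat extensions.} Given $F_G^\textnormal{cat}$, set $F_G$ to be its subgroupoid of invertible elements. The first task is to show that $F_G$ is open in $F_G^\textnormal{cat}$. We argue that an element $H_g$ is invertible if and only if its action on $V_M$ (equivalently on $C_M$) is invertible and $g$ is invertible in $G$ (which always holds). For the nontrivial direction, we use the free and proper action of $\textnormal{Hom}(V_M,C_M)$ together with the map $h$ to build a candidate inverse, correcting an arbitrary lift of $g^{-1}$ by an element of $\textnormal{Hom}(V_M,C_M)$, exactly as $(1+h_x)^{-1}$ is computed in $\textnormal{H}(V_M,C_M)$. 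Openness then follows because invertibility of the action on $V_M$ is an open condition.

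Once $F_G$ is available, we restrict the exact sequence. Invertible elements of $\textnormal{Hom}(V_M,C_M)$ are precisely $\textnormal{H}(V_M,C_M)$, and surjectivity of $F_G \ra G$ follows from the same correction argument. The cochain complex representation restricts to $F_G$. The compatibility of conjugation actions is the displayed identity preceding this proposition: multiplicativity and equivariance of $h$ give
\begin{equation*}
H_g \cdot h_{\bfs g} \cdot H_g^{-1} = h(H_g \cdot h_{\bfs g} \cdot H_g^{-1}, 0_{\bft g}) = H_g \cdot_{C_M} h_{\bfs g} \cdot_{V_M} H_g^{-1}.
\end{equation*}

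\emph{From fat extensions to fat category extensions.} Given $F_G$, define $F_G^\textnormal{cat}$ as in Definition \ref{def: abstract definition fat category}. Three things need checking.
\begin{itemize}[label=--]
\item The multiplication on $\textnormal{Hom}(\bfs^*V_M,\bft^*C_M)\times_G F_G$ descends to the quotient. This reduces to the compatibility axiom of a fat extension, since the conjugation action of $\textnormal{H}(V_M,C_M)$ agrees with the action through the representations.
\item The action by $h_x \mapsto (-h_x,h_x)$ is free and proper, which is immediate from the second factor. This makes the quotient a Lie category.
\item The map $\textnormal{Hom}(V_M,C_M) \ra F_G^\textnormal{cat}$, $h_x \mapsto [(h_x,1)]$, defines the short exact sequence.
\end{itemize}
The representation on $C_M \ra V_M$ is defined by $[(h_g,H_g)] \mapsto H_g + (\text{action of } h_g)$, that is, $v \mapsto H_g\cdot v + \partial h_g v$ and $c \mapsto H_g \cdot c + h_g\partial c$. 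This is well defined by the first fat extension axiom. The comparison map is
\begin{equation*}
h([(h_g,H_g)],[(h'_g,H'_g)]) = h'_g - h_g + k\cdot H_g,
\end{equation*}
where $k \in \textnormal{H}(V_M,C_M)$ is the unique element with $H'_g = k H_g$. The multiplicativity and equivariance of $h$ then follow by direct computation.

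\emph{Mutual inverses.} The invertible elements of $F_G^\textnormal{cat}$ are the classes $[(0,H_g)]$, so $F_G \ra (F_G^\textnormal{cat})^\times$, $H_g \mapsto [(0,H_g)]$, is an isomorphism of fat extensions. Conversely, starting from $F_G^\textnormal{cat}$, the map
\begin{equation*}
\textnormal{Hom}(\bfs^*V_M,\bft^*C_M)\times_G F_G \ra F_G^\textnormal{cat}, \qquad (h_g,H_g)\mapsto h_g\cdot H_g,
\end{equation*}
where $\textnormal{Hom}$ acts via the free action, descends to an isomorphism. This mimics Proposition \ref{prop: fat category from fat groupoid}, and the key input is that $h$ identifies fibers as $\textnormal{Hom}$-torsors.

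The main obstacle is well-definedness and the category axioms for the multiplication on the quotient, together with the compatibility of the reconstructed $h$. Both require careful bookkeeping of the bilinear term $h_g\partial h_{g'}$ against the conjugation compatibility. I would handle this by embedding everything into blockmatrices over $C_M\oplus V_M$, as in Remark \ref{rema: invertible homotopy bundle is semidirect product non-canonically}, to verify the identities.
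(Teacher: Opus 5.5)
Your proposal is correct and follows the paper's route. One direction takes invertible elements, with the conjugation compatibility coming from the multiplicativity and equivariance of $h$ exactly as in the identity displayed before the proposition. The other direction is the quotient of Definition~\ref{def: abstract definition fat category}, modeled on Proposition~\ref{prop: fat category from fat groupoid}.

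You actually supply more than the paper: openness of the invertibles, descent of the product via the conjugation axiom, and the explicit comparison map. The one step stated too quickly is surjectivity of $F_G \ra G$. It does not follow from the inverse-construction argument itself. It needs the separate observation that every element $K$ acts by a homotopy equivalence, since $KL$ and $LK$ lie in $\textnormal{Hom}(V_M,C_M)$. Using constant rank, $K$ can then be translated by some $h_g$ so that its action on $V_M$ becomes invertible.
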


We now move to a more detailed discussion on examples of fat extensions.

\subsection{Examples of fat extensions}\label{sec: examples of fat extensions}

In this section we describe, for well-known constructions of VB-groupoids, the associated fat extensions
\begin{center}
\begin{tikzcd}
1 \ar[r] & \textnormal{H}(V_M,C_M) \ar[r] & \widehat V_G \ar[r]& G \ar[r] & 1.
\end{tikzcd}
\end{center}
Naturally, we will see how to define the constructions on an abstract level for fat extensions.

To start, recall that we can view $2$-term cochain complexes as VB-groupoids (via the Dold-Kan correspondence). The fat extension associated to this VB-groupoid keeps track of the cochain complex itself, but it also comes with a fat groupoid, namely, the bundle of invertible homotopies:

\begin{example}\label{exa: bundle of invertible homotopies as fat extension}
Notice that $\textnormal{H}(V_M,C_M)$, together with its canonical cochain complex representation $C_M \ra V_M$, itself comes with a short exact sequence:
\begin{center}
\begin{tikzcd}
1 \ar[r] & \textnormal{H}(V_M,C_M) \ar[r] & \textnormal{H}(V_M,C_M) \ar[r] & 1_M \ar[r] & 1,
\end{tikzcd}
\end{center}
where $1_M$ denotes the unit groupoid of $M$. This exact sequence appears when considering
\begin{equation*}
C_M \ltimes V_M \rra V_M
\end{equation*}
as a VB-groupoid over $1_M$; here, $C_M$ acts on $V_M$ via the differential:
\begin{equation*}
c \cdot v = \partial c + v.
\end{equation*}
Its underlying $2$-term complex is $C_M \ra V_M$, and its fat groupoid is $\textnormal{H}(V_M,C_M)$. It is now readily verified that the associated exact sequence is the one written above.
\end{example}

Given a single vector bundle $E_M$, recall that there is a slightly different fat groupoid we can associate to it: the general linear groupoid (see \ref{exa: general linear}).
\begin{example}\label{exa: general linear fat extension}
In Example \ref{exa: general linear} we already mentioned that $\textnormal{GL } E_M$ is the fat groupoid of the pair groupoid $E_M \times E_M \rra E_M$. In this case,
\begin{equation*}
\textnormal{H}(E_M,E_M) = \textnormal{Aut } E_M = \{\Psi_x: (E_M)_x \xra{\sim} (E_M)_x\}
\end{equation*}
and the representation on $E_M \xra{=} E_M$ is the canonical one: given $\Psi_{y,x} \in \textnormal{GL } E_M$ and $e \in (E_M)_x$,
\begin{equation*}
\Psi_{y,x} \cdot e \coloneq \Psi_{y,x}e.
\end{equation*}
\end{example}

\begin{remark}\label{rema: differential is map of fat extensions GL groupoids}
As mentioned in Remark \ref{rema: GL of a cochain complex representation}, a differential $C_M \ra V_M$ defines the map
\begin{equation*}
[\cdot, \partial]: \textnormal{GL}(C_M,V_M) = \textnormal{GL } C_M \times_{M \times M} \textnormal{GL } V_M \ra \textnormal{Hom}(\pr_1^*C_M,\pr_2^*V_M).
\end{equation*}
We can think of this map as a type of comparison map. In this case, the zeros of the above map are exactly the cochain maps between $C_M \ra V_M$ and itself. In Section \ref{sec: functorial aspects}, we will see that we can think of this as a map of fat extensions between $\textnormal{GL } C_M$ and $\textnormal{GL } V_M$. On the level of VB-groupoids, this map corresponds to the VB-groupoid map
\begin{equation*}
\partial \times \partial: C_M \times C_M \ra V_M \times V_M.
\end{equation*}
\end{remark}

We now discuss the continuation of Example \ref{exa: subgroupoid of jet}. More precisely, we will come to see what a \textit{fat subextension} and a \textit{fat quotient extension} should be abstractly. The following bundles of Lie groups will play an important role.
\begin{definition}\label{def: restricted bundle of invertible homotopies}
Let $C_M \ra V_M$ be a $2$-term complex, and $C_N \ra V_N$ a subcomplex. The \textit{restricted bundle of invertible homotopies} is the subbundle of of Lie groups of $\textnormal{H}(V_M,C_M)$ given by
\begin{equation*}
\textnormal{H}(V_M,C_M)_{(V_N,C_N)} \coloneq \{h_y \in \textnormal{H}(V_M,C_M) \mid y \in N, h_y|_{(V_N)_y} \subset (C_N)_y\}.
\end{equation*}
\end{definition}

\begin{example}\label{exa: fat extension subgroupoid}
Recall that, given a Lie groupoid $G$ and a subgroupoid $H$, $J^1G$ has the subgroupoid
\begin{equation*}
J^1_H G = \{j^1_y \sigma \in J^1G \mid y \in N, \im \sigma|_N \subset H\}
\end{equation*}
which played a role in the relation between $J^1H$, $J^1G$ and $J^1_\calN(G,H)$ (see Example \ref{exa: subgroupoid of jet}). In general, given a VB-groupoid $V_G$ over $C_M \ra V_M$ and a VB-subgroupoid $V_H$ over $C_N \ra V_N$, $\widehat V_G$ has
\begin{equation*}
\widehat V_{G,H} \coloneq \{H_h \in \widehat V_G \mid h \in H, \im H_h|_{(V_N)_{\bfs h}} \subset (V_H)_h\}
\end{equation*}
as a subgroupoid. It fits into an extension
\begin{center}
\begin{tikzcd}
1 \ar[r] & \textnormal{H}(V_M,C_M)_{(V_N,C_N)} \ar[r] & \widehat V_{G,H} \ar[r] & H \ar[r] & 1
\end{tikzcd}
\end{center}
and $\widehat V_{G,H}$ comes with a cochain complex representation on
\begin{equation*}
C_M|_N \ra V_M|_N.
\end{equation*}
Moreover, the subcomplex $C_N \ra V_N$ is also a cochain complex representation of $\widehat V_{G,H}$, and both cochain complex representations restrict to $\textnormal{H}(V_M,C_M)_{(V_N,C_N)}$. So, while not classifying as a fat extension, $\widehat V_{G,H}$ carries at least a very similar structure. In turn, it induces two actual fat extensions: one recovers $\widehat V_H$ as a quotient:
\begin{center}
\begin{tikzcd}
1 \ar[r] & \textnormal{H}(V_M,C_M)_{(C_N,0_N)} \ar[r] & \widehat V_{G,H} \ar[r] & \widehat V_H \ar[r] & 1
\end{tikzcd}
\end{center}
and one recovers $\widehat{V_G/V_H}$\footnote{We wrote $V_G/V_H$ for $V_G|_H/V_H$.} as a quotient:
\begin{center}
\begin{tikzcd}
1 \ar[r] & \textnormal{H}(V_M,C_M)_{(V_M|_N,C_N)} \ar[r] & \widehat V_{G,H} \ar[r] & \widehat{V_G/V_H} \ar[r] & 1.
\end{tikzcd}
\end{center}
To be clear, also the cochain complex representations of $\widehat V_H$ and $\widehat{V_G/V_H}$ are induced by the cochain complex representations of $\widehat V_{G,H}$.
\end{example}

From the previous example we can infer how to think of fat subextensions and fat quotient extensions abstractly using the following intermediate definition.

\begin{definition}\label{def: restricted fat extension}
Let $F_G$ be a fat extension over $C_M \ra V_M$. Moreover, let $H \rra N$ be a Lie subgroupoid of $G$, and let $C_N \ra V_N$ be a subcomplex of $C_M \ra V_M$. A \textit{restricted fat extension} of $F_G$ consists of a Lie subgroupoid
\begin{equation*}
F_{G,H} \rra N
\end{equation*}
of $F_G$ that comes with a cochain complex representation on $C_M|_N \ra V_M|_N$, which further restricts to $C_N \ra V_N$, and a short exact sequence\footnote{Again, we wrote
\begin{equation*}
\textnormal{H}(V_M,C_M)_{(V_N,C_N)} \coloneq \{h_y \in \textnormal{H}(V_M,C_M) \mid y \in N, \im h_y|_{(V_N)_y} \subset (C_N)_y\}.
\end{equation*}}
\begin{center}
\begin{tikzcd}
1 \ar[r] & \textnormal{H}(V_M,C_M)_{(V_N,C_N)} \ar[r] & F_{G,H} \ar[r] & H \ar[r] & 1.
\end{tikzcd}
\end{center}
Moreover, the cochain complex representation of $F_{G,H}$ on $C_M|_N \ra V_M|_N$ restricts to the canonical cochain complex representation of $\textnormal{H}(V_M,C_M)_{(V_N,C_N)}$.
\end{definition}

We now define:

\begin{definition}\label{def: fat subextension and fat quotient}
Given a restricted fat extension $F_{G,H}$ of a fat extension $F_G$, we call
\begin{center}
\begin{tikzcd}
1 \ar[r] & \textnormal{H}(V_N,C_N) \ar[r] & F_G/\textnormal{H}(V_M,C_M)_{(V_N,0_N)} \ar[r] & H \ar[r] & 1
\end{tikzcd}
\end{center}
the \textit{fat subextension} of $F_{G,H}$, denoted $F_H$, and
\begin{center}
\begin{tikzcd}
1 \ar[r] & \textnormal{H}(V_M|_N/V_N,C_M|_N/C_N) \ar[r] & F_G/\textnormal{H}(V_M,C_M)_{(V_M|_N,C_N)} \ar[r] & H \ar[r] & 1
\end{tikzcd}
\end{center}
the \textit{fat quotient extension} of $F_{G,H}$, denoted $F_G/F_H$.
\end{definition}

\begin{remark}\label{rema: fat subextension and fat quotient extension as fat maps}
We can compare elements of $\widehat V_H$ and $\widehat V_G$, and we can think of this as a map
\begin{equation*}
\iota: \widehat V_H \times_G \widehat V_G\ra \textnormal{Hom}(\bfs^*V_N, \bft^*C_M) \qquad (H_{h,1}, H_{h,2}) \mapsto r_{h^{-1}}(H_{h,1} - H_{h,2}).
\end{equation*}
We can also compare elements of $\widehat V_G$ and $\widehat{V_G/V_H}$ in a similar way
\begin{equation*}
\pi: \widehat V_G \times_G \widehat{V_G/V_H} \ra \textnormal{Hom}(\bfs^*V_M, \bft^*C_M|_N/C_N).
\end{equation*}
Given $H_{h,2} \in \widehat V_{G,H}$, there is a unique $H_{h,1} \in \widehat V_H$ such that
\begin{equation*}
\iota(H_{h,1}, H_{h,2}) = 0,
\end{equation*}
and, similarly, given $H_{h,1} \in \widehat V_{G,H}$, there is a unique $H_{h,2} \in \widehat{V_G/V_H}$ such that
\begin{equation*}
\pi(H_{h,1}, H_{h,2}) = 0.
\end{equation*}
The map $\iota$ should be interpreted as the ``inclusion'' map and $\pi$ as the ``projection'' map. These are examples of maps of fat extensions that we will discuss in Section \ref{sec: functorial aspects}.
\end{remark}

In the the next example we explain how the fiber product of VB-groupoids looks like in terms of fat extensions. Particular examples of interest are the pullback and direct sum constructions, but we first discuss the more general case.

\begin{example}\label{exa: pullback}
Let
\begin{equation*}
G_1 \ra G \la G_2
\end{equation*}
be Lie groupoid maps that cleanly intersect, and let $V_{G_1}$ and $V_{G_2}$ be VB-groupoids over $G_1$ and $G_2$, respectively. Then we can form the fiber product of VB-groupoids (see \cite{VBalejandrohenriquematias}):
\begin{equation*}
V_{G_1} \times_G V_{G_2} \rra V_{M_1} \times_M V_{M_2}.
\end{equation*}
This is a VB-groupoid over $G_1 \times_G G_2 \rra M_1 \times_M M_2$ whose structure is defined ``componentwise''. To understand how the fat groupoid of this VB-groupoid looks like, notice that an element of it can be decomposed into four maps that we organise into a matrix as follows:
\begin{equation*}
\begin{pmatrix} H_{g_1} & h_{12} \\ h_{21} & H_{g_2} \end{pmatrix} = \begin{pmatrix} H_{g_1} & r_{g_1}h_{12} \\ r_{g_2}h_{21} & H_{g_2} \end{pmatrix}: (V_{M_1})_{\bfs g_1} \times (V_{M_2})_{\bfs g_2} \ra (V_{G_1})_{g_1} \times (V_{G_2})_{g_2}.
\end{equation*}
Here, $H_{g_1} \in \widehat V_{G_1}^\textnormal{cat}$, $H_{g_2} \in \widehat V_{G_2}^\textnormal{cat}$ and
\begin{equation*}
h_{12} \in \textnormal{Hom}((V_{M_2})_{\bfs g_2}, (C_{M_1})_{\bft g_1}) \qquad h_{21} \in \textnormal{Hom}((V_{M_1})_{\bfs g_1}, (C_{M_2})_{\bft g_2}).
\end{equation*}
In fact, without extra conditions, such matrices describe exactly
\begin{equation*}
(V_{G_1} \widehat\times_G V_{G_2})^\textnormal{cat} \rra M_1 \times_M M_2.
\end{equation*}
When interpreted in a specific way, the product of this Lie category can really be seen as matrix multiplication:
\begin{equation*}
\begin{pmatrix} H_{g_1} & h_{12} \\ h_{21} & H_{g_2} \end{pmatrix} \cdot \begin{pmatrix} H_{g_1'} & h_{12}' \\ h_{21}' & H_{g_2'} \end{pmatrix} = \begin{pmatrix} H_{g_1} \cdot H_{g_1'} + h_{12} \cdot h_{21}' & H_{g_1} \cdot_{C_M} h_{12}' + h_{12} \cdot_{V_M} H_{g_2'} \\ h_{21} \cdot_{V_M} H_{g_1'} + H_{g_2} \cdot_{C_M} h_{21}' & h_{21} \cdot h_{12}' + H_{g_2} \cdot H_{g_2'} \end{pmatrix}.
\end{equation*}
We used here the notion from Section \ref{sec: the fat category}, but it is also readily verified by a direct computation what the different products are supposed to represent. We obtain the invertible elements of this category, the elements of the fat groupoid $V_{G_1} \times_G V_{G_2}$, by applying the target to each entry and asking the result to be invertible:
\begin{equation*}
\begin{pmatrix} \bft_{g_1}H_{g_1} & \partial h_{12} \\ \partial h_{21} & \bft_{g_2}H_{g_2} \end{pmatrix}: (V_{M_1})_{\bfs g_1} \times (V_{M_2})_{\bfs g_2} \xra{\sim} (V_{M_1})_{\bft g_1} \times (V_{M_2})_{\bft g_2}.
\end{equation*}
Notice that
\begin{equation*}
\textnormal{H}(V_{M_1} \times_M V_{M_2}, C_{M_1} \times_M C_{M_2}) = \{\begin{pmatrix} h_1 & h_{12} \\ h_{21} & h_2 \end{pmatrix} \mid \begin{pmatrix} 1 + \partial h_1 & h_{12} \\ h_{21} & 1 + \partial h_2 \end{pmatrix} \textnormal{ is a linear isomorphism}\}
\end{equation*}
where
\begin{equation*}
h_1 \in \textnormal{Hom}(V_{M_1}, C_{M_1}) \quad h_{12} \in \textnormal{Hom}(V_{M_2}, C_{M_1}) \quad h_{21} \in \textnormal{Hom}(V_{M_1}, C_{M_2}) \quad h_2 \in \textnormal{Hom}(V_{M_2}, C_{M_2}).
\end{equation*}
The diagonal blockmatrices (those for which $h_{12}$ and $h_{21}$ are both zero) form the fiber product of the fat groupoids of $V_{G_1}$ and $V_{G_2}$:
\begin{equation*}
\widehat V_{G_1} \times_G \widehat V_{G_2} \rra M_1 \times_M M_2
\end{equation*}
so the structure is defined componentwise. This Lie groupoid fits into an exact sequence
\begin{center}
\begin{tikzcd}
1 \ar[r] & \textnormal{H}(V_{M_1}, C_{M_1}) \times_M \textnormal{H}(V_{M_2}, C_{M_2}) \ar[r] & \widehat V_{G_1} \times_G \widehat V_{G_2} \ar[r] & G_1 \times_G G_2 \ar[r] & 1.
\end{tikzcd}
\end{center}
We continue this line of thought in the next example.
\end{example}

As an application of the previous example, we describe the fat groupoid of a direct sum of VB-groupoids and of a pullback VB-groupoid.

\begin{example}\label{exa: direct sum in terms of fat extension}
To describe the direct sum, suppose we are given two VB-groupoids $V_{G,1} \rra V_{M,1}$ and $V_{G,2} \rra V_{M,2}$ over the same base groupoid $G$. Then the fat groupoid of
\begin{equation*}
V_{G,1} \oplus V_{G,2} \rra V_{M,1} \oplus V_{M,2}
\end{equation*}
consist of matrices
\begin{equation*}
\begin{pmatrix} H_{g,1} & h_{12} \\ h_{21} & H_{g,2} \end{pmatrix}: (V_{M,1})_{\bfs g} \oplus (V_{M,2})_{\bfs g} \ra (V_{G,1})_g \oplus (V_{G,2})_g.
\end{equation*}
Here, $H_{g,1} \in \widehat V_{G,1}^\textnormal{cat}$, $H_{g,2} \in \widehat V_{G,2}^\textnormal{cat}$ and
\begin{equation*}
h_{12} \in \textnormal{Hom}((V_{M,2})_{\bfs g}, (C_{M,1})_{\bft g}) \qquad h_{21} \in \textnormal{Hom}((V_{M,1})_{\bfs g}, (C_{M,2})_{\bft g}).
\end{equation*}
The diagonal blockmatrices constitute the fiber product of fat groupoids $\widehat V_{G,1} \times_G \widehat V_{G,2}$. This groupoid fits into an exact sequence of the form
\begin{center}
\begin{tikzcd}
1 \ar[r] & \textnormal{H}(V_{M,1}, C_{M,1}) \times_M \textnormal{H}(V_{M,2}, C_{M,2}) \ar[r] & \widehat V_{G,1} \times_G \widehat V_{G,2} \ar[r] & G \ar[r] & 1.
\end{tikzcd}
\end{center}
This groupoid has four natural $3$-term cochain complex representations by taking tensor products. For example, on
\begin{center}
\begin{tikzcd}
\textnormal{Hom}(V_{M,1}, C_{M,2}) \ar[r] & \textnormal{Hom}(V_{M,1}, V_{M,2}) \oplus \textnormal{Hom}(C_{M,1}, C_{M,2}) \ar[r] & \textnormal{Hom}(C_{M,1}, V_{M,2}).
\end{tikzcd}
\end{center}
The actions are defined ``componentwise''. Essentially, $\widehat V_{G,1} \times_G \widehat V_{G,2}$ together with its representations on $\textnormal{Hom}(V_{M,1}, C_{M,2})$ and on $\textnormal{Hom}(V_{M,2}, C_{M,1})$ are what constitutes the product of the upper/lower triangular blockmatrices.
\end{example}

From \cite{BursztynDrummond} we know that multiplicative tensors define homogeneous cochains on Whitney sums of VB-groupoids. In the upcoming work \cite{Multiplicativetensors} we will argue that multiplicative tensors on VB-groupoids $V_{G,1}, \dots, V_{G,n}$ appear naturally as objects defined on $\widehat V_{G,1} \times_G \dots \times_G \widehat V_{G,n}$ together with its canonical representations. A first instance of this can be found in Section \ref{sec: functorial aspects} where we introduce morphisms of fat extensions.

\begin{example}\label{exa: pullback of fat extension}
To describe the pullback, let $V_{G_2} \rra V_{M_2}$, be a VB-groupoid over $G_2$ and $\Phi: G_1 \ra G_2$ a groupoid map. The fat groupoid of the pullback VB-groupoid $\Phi^*V_{G_2}$ is
\begin{equation*}
\Phi^*\widehat V_{G_2} = G_1 \times_{G_2} \widehat V_{G_2} \rra M_1
\end{equation*}
where the multiplication of $\Phi^*\widehat V_{G_2}$ is defined componentwise. If $\Phi$ is the inclusion of a Lie subgroupoid $H$, we simply write
\begin{equation*}
\widehat V_G|_H = \{H_h \in \widehat V_G \mid h \in H\}.
\end{equation*}
\end{example}

To conclude, we formulate the abstract definition of the above examples for fat extensions.

\begin{definition}\label{def: fat fiber product}
Let $F_{G_1}$ and $F_{G_2}$ be fat extensions of $G_1 \rra M_1$ and $G_2 \rra M_2$, over $C_{M_1} \ra V_{M_1}$ and $C_{M_2} \ra V_{M_2}$, respectively, and let
\begin{equation*}
G_1 \ra G \la G_2
\end{equation*}
be two Lie groupoid maps that cleanly intersect. The \textit{fat extension of invertible blockmatrices} of $F_{G_1}$ and $F_{G_2}$ is the following fat extension. The fat groupoid form the invertible elements of the following fat category:
\begin{equation*}
\left\{\begin{pmatrix} H_{g_1} & h_{12} \\ h_{21} & H_{g_2} \end{pmatrix} \mid H_{g_1} \in F_{G_1}^\textnormal{cat}, h_{12} \in \textnormal{Hom}(\bfs^*V_{M_2}, \bft^*C_{M_1}), h_{21} \in \textnormal{Hom}(\bfs^*V_{M_1}, \bft^*C_{M_2}), H_{g_2} \in F_{G_2}^\textnormal{cat}\right\}
\end{equation*}
whose product is defined as
\begin{equation*}
\begin{pmatrix} H_{g_1} & h_{12} \\ h_{21} & H_{g_2} \end{pmatrix} \cdot \begin{pmatrix} H_{g_1'} & h_{12}' \\ h_{21}' & H_{g_2'} \end{pmatrix} = \begin{pmatrix} H_{g_1} \cdot H_{g_1'} + h_{12} \cdot h_{21}' & H_{g_1} \cdot_{C_M} h_{12}' + h_{12} \cdot_{V_M} H_{g_2'} \\ h_{21} \cdot_{V_M} H_{g_1'} + H_{g_2} \cdot_{C_M} h_{21}' & h_{21} \cdot h_{12}' + H_{g_2} \cdot H_{g_2'} \end{pmatrix}.
\end{equation*}
Here, $h_{12} \cdot h_{21}' = h_{12} \partial h_{21}'$, $h_{21} \cdot h_{12}' = h_{21} \partial h_{12}'$, and $\cdot_{V_M}$ and $\cdot_{C_M}$ are the actions induced by the representations. The cochain complex representation on
\begin{equation*}
\partial \times \partial: C_{M_1} \times_M C_{M_2} \ra V_{M_1} \times_M V_{M_2}
\end{equation*}
is given by matrix application: given $(c_1,c_2) \in (C_M)_{\bfs g_1} \times (C_M)_{\bfs g_2}$ and $(v_1,v_2) \in (V_M)_{\bfs g_1} \times (V_M)_{\bfs g_2}$,
\begin{equation*}
\begin{pmatrix} H_{g_1} & h_{12} \\ h_{21} & H_{g_2} \end{pmatrix} \begin{pmatrix} c_1 \\ c_2 \end{pmatrix} = \begin{pmatrix} H_{g_1} \cdot c_1 + h_{12}\partial c_2 \\ h_{21}\partial c_1 + H_{g_2} \cdot c_2 \end{pmatrix} \qquad \begin{pmatrix} H_{g_1} & h_{12} \\ h_{21} & H_{g_2} \end{pmatrix} \begin{pmatrix} v_1 \\ v_2 \end{pmatrix} = \begin{pmatrix} H_{g_1} \cdot v_1 + \partial h_{12}v_2 \\ \partial h_{21} v_1 + H_{g_2} \cdot v_2 \end{pmatrix}.
\end{equation*}
\end{definition}

In the last sections, Sections \ref{sec: other models of the fat groupoid} and \ref{sec: fat pairing}, we make remarks on other models of the fat groupoid of a VB-groupoid. In particular, we will see that dualising a fat extension should be seen as dualising its cochain complex representation. 

\subsection{Other models for the fat groupoid of a VB-groupoid}\label{sec: other models of the fat groupoid}
The fat groupoid of a VB-groupoid $V_G$ appeared in \cite{VBgroupoidsruths} as follows. Consider
\begin{equation*}
\widetilde V_G^\textnormal{cat} = \{\widetilde H_g \subset (V_G)_g \mid (V_G)_g = \ker \bfs_g \oplus \widetilde H_g\}
\end{equation*}
and the following open subset of it:
\begin{equation*}
\widetilde V_G = \{\widetilde H_g \in \widetilde V_G^\textnormal{cat} \mid (V_G)_g = \ker \bft_g \oplus \widetilde H_g\}.
\end{equation*}
Then $\widetilde V_G^\textnormal{cat}$ is a Lie category over $M$ with $\widetilde\bfs \widetilde H_g = \bfs g$, $\widetilde\bft \widetilde H_g = \bft g$, and
\begin{equation*}
\widetilde H_g \cdot \widetilde H_h = \im\bfm(\widetilde H_g, \widetilde H_h) = \{v_g \cdot v_h \in (V_G)_{gh} \mid v_g \in \widetilde H_g, v_h \in \widetilde H_h\},
\end{equation*}
and $\widetilde V_G$ form the invertible elements with
\begin{equation*}
(\widetilde H_g)^{-1} = \im\bfi(\widetilde H_g).
\end{equation*}
It is rather straightforward to see that the canonical map
\begin{equation*}
\widehat V_G^\textnormal{cat} \xra{\im} \widetilde V_G^\textnormal{cat}
\end{equation*}
is an isomorphism of Lie categories (that therefore restricts to an isomorphism of $\widehat V_G$ to $\widetilde V_G$). Still, we discuss the proof, because some interesting things appear. For example, it becomes clear that the fat groupoid $\widehat V_G^*$ of the dual VB-groupoid $V_G^*$ is isomorphic to $\widehat V_G$ as groupoids.

Notice that we can think of elements of $\widehat V_G^*$ as (the dual of) maps
\begin{equation*}
    \omega_g: (V_G)_g \ra (C_M)_{\bfs g}
\end{equation*}
such that precomposing with $\ell_g$ gives the identity map, and precomposing with $r_g$ gives a linear isomorphism.\footnote{As before, we wrote
\begin{equation*}
    \ell_g: (C_M)_{\bfs g} \xra{-0_g \cdot c^{-1}} (V_G)_g \qquad r_g: (C_M)_{\bft g} \xra{c \cdot 0_g} (V_G)_g.
\end{equation*}}
The product of two elements $\omega_g$ and $\omega_h$ is given by
\begin{equation*}
(\omega_g \cdot \omega_h)(v_g \cdot v_h) = \omega_h(r_h\omega_g(v_g)) + \omega_h(v_h)
\end{equation*}
(here $(v_g,v_h) \in V_G^{(2)}$ is a composable element) and the inverse of $\omega_g$ is
\begin{equation*}
\omega_g^{-1}(v_g^{-1}) = -(\omega_g r_g)^{-1}(\omega_gv_g).
\end{equation*}
\begin{proposition}\label{prop: Gracia Saz model fat groupoid and dual}
The maps
\begin{equation*}
    \widehat V_G \ra \widetilde V_G \la \widehat V_G^* \qquad H_g \mapsto \im H_g = \textnormal{ker } \omega_g \textnormal{ \reflectbox{$\mapsto$} } \omega_g
\end{equation*}
are groupoid isomorphisms.
\end{proposition}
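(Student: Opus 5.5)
The plan is to treat the two maps separately. For each, I will show that it is a well-defined bijection onto $\widetilde V_G$, that it is smooth with smooth inverse, and that it is multiplicative. Compatibility with units and inverses then follows, because a bijective functor between groupoids is a groupoid isomorphism. The underlying idea is that a complement $\widetilde H_g$ of $\ker \bfs_g$ can be encoded in two ways: as the image of a section of $\bfs_g$, or as the kernel of a projection onto $\ker\bft_g$.

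\emph{The map $H_g \mapsto \im H_g$.} Since $\bfs_gH_g = 1$, the map $H_g$ is injective, $\bfs_g|_{\im H_g}$ is an isomorphism onto $(V_M)_{\bfs g}$, and so $(V_G)_g = \ker\bfs_g \oplus \im H_g$. Moreover $\im H_g \cap \ker\bft_g = H_g(\ker \bft_gH_g)$. Since $\dim \ker\bft_g = \dim (C_M)_{\bfs g} = \dim(V_G)_g - \dim \im H_g$, we get $(V_G)_g = \ker\bft_g \oplus \im H_g$ if and only if $\bft_gH_g$ is invertible. The inverse sends $\widetilde H_g$ to $(\bfs_g|_{\widetilde H_g})^{-1}$. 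In local trivialisations this is the standard identification of an affine chart of the Grassmannian with graphs of linear maps, so it is a diffeomorphism.

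For multiplicativity, recall $(H_g\cdot H_h)v = H_g(\bft_hH_hv)\cdot H_hv$, so $\im(H_g\cdot H_h) \subset \im\bfm(\im H_g,\im H_h)$. Conversely, take a composable pair $(v_g,v_h)$ with $v_g\in\im H_g$ and $v_h \in \im H_h$. Since $\bfs_g|_{\im H_g}$ is injective and $\bfs v_g = \bft v_h$, we have $v_g = H_g(\bft v_h)$. Hence $v_g \cdot v_h = (H_g\cdot H_h)(\bfs v_h)$, which gives equality.

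\emph{The map $\omega_g \mapsto \ker\omega_g$.} First I would unwind the dual VB-groupoid conventions. The core of $V_G^*$ is $V_M^*$, and $\bfs = \ell_g^*$ on $V_G^*$. Hence an element of $\widehat V_G^*$ is a map $(C_M^*)_{\bfs g} \ra (V_G^*)_g$ whose dual $\omega_g\colon (V_G)_g \ra (C_M)_{\bfs g}$ satisfies $\omega_g\ell_g = 1$. Its target condition translates, via $\bft = r_g^*$, into invertibility of $\omega_g r_g$. I would also check that the product and inverse of $\widehat V_G^*$ dualise to the formulas displayed before the statement; this is a direct transcription of the structure maps of $V_G^*$.

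Next, $\im \ell_g = \ker\bft_g$, which follows from $\bft(0_g\cdot c^{-1}) = 0$ together with injectivity and a dimension count. Likewise $\im r_g = \ker\bfs_g$. The condition $\omega_g\ell_g = 1$ says exactly that $\ker\omega_g$ is a complement of $\ker\bft_g$. Invertibility of $\omega_g r_g$ says that $\ker\omega_g \cap \ker\bfs_g = 0$, i.e.\ that $\ker\omega_g$ is a complement of $\ker\bfs_g$ (using $\dim\ker\bfs_g = \dim\ker\bft_g$). The inverse sends $\widetilde H_g$ to $\ell_g^{-1}\circ \pr$, where $\pr$ is the projection onto $\ker\bft_g$ along $\widetilde H_g$; this is smooth by the same Grassmannian-chart argument.

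For multiplicativity, the product formula shows that $(\omega_g\cdot\omega_h)(v_g\cdot v_h) = 0$ whenever $v_g\in\ker\omega_g$ and $v_h \in \ker\omega_h$. Thus $\im\bfm(\ker\omega_g,\ker\omega_h)\subset \ker(\omega_g\cdot\omega_h)$. Both sides have dimension $\operatorname{rank} V_M$, by the first part, so they are equal.

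The main obstacle I expect is the bookkeeping in the dual description: checking that the displayed product and inverse on $\omega$'s really are the structure maps of $\widehat V_G^*$ under the paper's sign conventions for $\ell_g$, $r_g$ and $V_G^*$. Once that is settled, everything above is linear algebra fibre by fibre.
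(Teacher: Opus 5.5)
Your proposal is correct, but for the groupoid-map part it takes a different route from the paper. The bijectivity of $H_g \mapsto \im H_g$ is essentially the paper's argument. The paper also recovers $H_g$ as the $\widetilde H_g$-component of an auxiliary $H_{g,1}v$ in $(V_G)_g = \ker\bfs_g \oplus \widetilde H_g$, which is your $(\bfs_g|_{\widetilde H_g})^{-1}$. It also gets invertibility of $\bft_g H_g$ from $\ker\bft_g \cap \widetilde H_g = 0$. For smoothness it uses local sections of $\widehat V_G \ra G$ rather than Grassmannian charts.

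The difference lies in the rest. The paper never treats $\omega_g \mapsto \ker\omega_g$ on its own. Instead it defers to Proposition \ref{prop: fat groupoid of dual}. There, the composite $H_g \mapsto \omega_g^{H_g}$ (the unique $\omega_g$ with $\omega_g H_g = 0$) is given by an explicit formula. Its inverse is built from an auxiliary element and a correcting homotopy, and multiplicativity is checked by direct computation with these formulas.

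You instead describe $\widehat V_G$ and $\widehat V_G^*$ intrinsically, as sections of $\bfs_g$ and as retractions onto $\ker\bft_g = \im\ell_g$. Both target conditions then become ``complement to both $\ker\bfs_g$ and $\ker\bft_g$''. You get multiplicativity from $\im(H_g\cdot H_h) = \im\bfm(\im H_g,\im H_h)$, and from $\im\bfm(\ker\omega_g,\ker\omega_h)\subset\ker(\omega_g\cdot\omega_h)$ together with a dimension count.

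The convention check you flagged as the main obstacle is harmless. Dualising $\bfm(\varphi_g,\varphi_h)(v\cdot w) = \varphi_g v + \varphi_h w$ with $\bft = r_h^*$ gives exactly $(\omega_g\cdot\omega_h)(v_g\cdot v_h) = \omega_h(r_h\omega_g v_g) + \omega_h v_h$. You only use that this vanishes on kernels.

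Your route is shorter and symmetric in $\bfs$ and $\bft$. It also proves that each of the two maps is separately multiplicative, whereas the paper's cited computation only addresses the composite. The paper's route, in exchange, gives explicit formulas for $\omega_g^{H_g}$ and its inverse. These are reused later: for the fat pairing, for reconstructing $V_G$ from $\widehat V_G$ via $((\omega_g r_g)^{-1}\omega_g,\bfs_g)$, and for the passage to split ruths.
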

\begin{proof}
We construct the inverse of the first map. To do this, take for every $g \in G$ an $H_{g,1} \in \widehat V_G$. Given $\widetilde H_g \in \widetilde V_G$, we can then write, for all $v \in (V_M)_{\bfs g}$,
\begin{equation*}
    H_{g,1}v = v_{\bfs,1} + H_gv
\end{equation*}
where $v_{\bfs,1} \in \ker \bfs_g$ and $H_gv \in \widetilde H_g$. We claim that the map
\begin{equation*}
    \widetilde V_G \ra \widehat V_G \qquad \widetilde H_g \mapsto H_g
\end{equation*}
is a well-defined smooth inverse of the map $\widehat V_G \ra \widetilde V_G$. To see that it is well-defined, take another $H_{g,2} \in \widehat V_G$ and write as above, for all $v \in (V_M)_{\bfs g}$,
\begin{equation*}
H_{g,2}v = v_{\bfs,2} + H_g'v.
\end{equation*}
Then it follows that
\begin{equation*}
H_g'v - H_gv \in \ker \bfs_g \cap \widetilde H_g = 0.
\end{equation*}
A similar argument shows that $\bft_gH_g$ must be invertible: if $\bft_gH_gv = 0$, then
\begin{equation*}
H_gv \in \ker \bft_g \cap \widetilde H_g = 0.
\end{equation*}
The map is smooth, for once we work locally, we can take $H_{g,1}$ to be in the image of a local section of the projection $\widehat V_G \ra G$. For a proof that the maps are groupoid maps, we refer to the proof of Proposition \ref{prop: fat groupoid of dual}.
\end{proof}

\begin{remark}\label{rema: not important to take element of fat groupoid in proof}
Notice that, in the above proof, the auxiliary elements $H_{g,1} \in \widehat V_G$ that are used to define the inverse map $\widetilde V_G \ra \widehat V_G$ can be taken in $\widehat V_G^\textnormal{cat}$ as well. In particular, we could also use a cleavage $\Sigma$ to define, for all $v \in (V_M)_{\bfs g}$, the inverse through the equation
\begin{equation*}
\Sigma_g v = v_\bfs + H_gv
\end{equation*}
where $v_\bfs \in \ker s_g$ and $H_gv \in \widetilde H_g$.
\end{remark}

The isomorphism $\widehat V_G \ra \widetilde V_G \ra \widehat V_G^*$ can be understood through a pairing that we discuss next.

\subsection{The fat pairing}\label{sec: fat pairing}
We first look at the isomorphism $\widehat V_G \ra \widetilde V_G \ra \widehat V_G^*$ more carefully.

\begin{proposition}\label{prop: fat groupoid of dual}
Let $V_G$ be a VB-groupoid and consider $\widehat V_G$ and $\widehat V_G^*$. For $H_g \in \widehat V_G$, write
\begin{equation*}
\omega_g^{H_g}: (V_G)_g \ra (C_M)_{\bfs g} \qquad v \mapsto r_g(H_g^{-1}\bft_gv - v^{-1}).
\end{equation*}
The map
\begin{equation*}
\widehat V_G \ra \widehat V_G^* \qquad H_g \mapsto \omega_g^{H_g}
\end{equation*}
is a Lie groupoid isomorphism. Moreover, the map restricts to the canonical isomorphism $\textnormal{H}(V_M,C_M) \cong \textnormal{H}(C_M^*,V_M^*)$ from Proposition \ref{prop: dual of invertible homotopy bundle}, so the diagram
\begin{center}
\begin{tikzcd}
1 \ar[r] & \textnormal{H}(V_M,C_M) \ar[r] \ar[d, "\cong"] & \widehat V_G \ar[r] \ar[d, "\cong"] & G \ar[r] \ar[d, "="] & 1 \\
1 \ar[r] & \textnormal{H}(C_M^*,V_M^*) \ar[r] & \widehat V_G^* \ar[r] & G \ar[r] & 1
\end{tikzcd}
\end{center}
commutes.
\end{proposition}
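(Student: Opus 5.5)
The plan is to first check that $\omega^{H_g}_g$ lies in $\widehat V_G^*$, and then to identify it with the map of Proposition \ref{prop: Gracia Saz model fat groupoid and dual}. The key observation is that $\ker\omega^{H_g}_g = \im H_g$. Indeed, $H_g^{-1}\bft_g v - v^{-1}$ lies in $(V_G)_{g^{-1}}$ and has vanishing target, since $\bft(H_g^{-1}w) = \bfs(H_g(\bft_gH_g)^{-1}w)$. It therefore lies in $\ker\bft_{g^{-1}}$, and $r_g$ is meant as the identification of this kernel with $(C_M)_{\bfs g}$ (via right translation by $0_g$). Hence $\omega^{H_g}_g$ is well defined and linear.

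Next I would verify the two defining conditions of $\widehat V_G^*$.

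1. For $v = \ell_g c = -0_g\cdot c^{-1}$ we have $\bft_g v = -\partial c$ up to the sign conventions, and $v^{-1} = -c\cdot 0_{g^{-1}}$. A short computation then gives $\omega\ell_g = 1$. This is the usual identity that the core part of the inverse of a core element returns $c$.
2. Precomposing with $r_g$ gives $c \mapsto r_g(H_g^{-1}\partial c - (c\cdot 0_g)^{-1})$. Via the definition of the action \eqref{eq: representations of the fat groupoid actions}, this is the action of $H_g^{-1}$ on $C_M$, up to a sign. It is therefore invertible.

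Since $\omega(H_gv) = r_g(H_g^{-1}(\bft_gH_gv) - (H_gv)^{-1}) = 0$, dimension counting gives $\ker\omega^{H_g}_g = \im H_g$. Here we use that $\omega$ is surjective by the first condition, and that $\dim\im H_g = \mathrm{rk}\,V_M$.

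Consequently the map $H_g \mapsto \omega^{H_g}_g$ is the composite of the two bijections $\widehat V_G \to \widetilde V_G$ and $\widetilde V_G \leftarrow \widehat V_G^*$ from Proposition \ref{prop: Gracia Saz model fat groupoid and dual}. This is because an element $\omega_g$ of $\widehat V_G^*$ is determined by its kernel, being the projection onto $\ell_g C_M$ along that kernel. So it is a smooth bijection, with the inverse constructed as in that proof, and it remains to show it is a groupoid morphism. The proof of Proposition \ref{prop: Gracia Saz model fat groupoid and dual} defers exactly this point to here, so it must be done directly.

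For multiplicativity I would compare kernels. $\widehat V_G \to \widetilde V_G$ is multiplicative because
\begin{equation*}
\im(H_g\cdot H_h) = \{H_g(\bft_hH_hv)\cdot H_hv\} = \im\bfm(\im H_g,\im H_h).
\end{equation*}
For the dual side, using the product formula $(\omega_g\cdot\omega_h)(v_g\cdot v_h) = \omega_h(r_h\omega_g v_g) + \omega_h v_h$, the kernel of $\omega_g\cdot\omega_h$ contains all products $v_g\cdot v_h$ with $v_g\in\ker\omega_g$ and $v_h\in\ker\omega_h$. Dimension counting then gives equality. Units and inverses follow automatically, since bijective multiplicative maps between groupoids preserve them. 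Smoothness of the inverse follows from the local-section argument already given.

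Finally, I would restrict the map to $g = 1_x$ and $H = 1 + h_x$. For $v = c + w$ (core plus unit part), I would compute $\omega(v) = c - (1+h_x\partial)^{-1}\cdots$ and match it with $(h_x^{-1})^* = (-h_x(1+\partial h_x)^{-1})^*$, viewed as the element $1 + (h_x^{-1})^*$ of $\textnormal{H}(C_M^*,V_M^*)\subset\widehat V_G^*$. Commutativity of the right square is immediate, since both maps cover the identity of $G$.

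I expect the main obstacle to be the sign and convention bookkeeping: the sign in $\ell_g$, the precise meaning of $r_g$ applied to an element over $g^{-1}$, and the dual identification $\widehat V_G^*\ni\omega_g$. These must come out exactly so that the restriction equals $(h_x^{-1})^*$ rather than $-h_x^*$ or $h_x^*$. I would fix these conventions carefully on the action VB-groupoid $C_M\ltimes V_M$ first, where everything is explicit.
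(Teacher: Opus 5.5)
Your overall route is sound and differs from the paper's in the multiplicativity step. However, the well-definedness check as written has source and target interchanged, and those steps need fixing.

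The element $H_g^{-1}\bft_gv-v^{-1}\in(V_G)_{g^{-1}}$ has vanishing \emph{source}, not target. Since $\bfs_{g^{-1}}H_g^{-1}=1$, one has $\bfs(H_g^{-1}\bft_gv)=\bft_gv=\bfs(v^{-1})$. Its target is $(\bft_gH_g)^{-1}\bft_gv-\bfs_gv$, which is generally nonzero; for $v=\ell_gc$ it equals $\partial c$. Vanishing source is also exactly what you need: right multiplication by $0_g$ is defined precisely on $\ker\bfs_{g^{-1}}$, and it carries this kernel onto $(C_M)_{\bfs g}$.

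Likewise, $\ell_gc=-0_g\cdot c^{-1}$ lies in $\ker\bft_g$, since $\ell_g$ is the kernel inclusion in the exact sequence for $\bft$. It is $\bfs_g\ell_gc$, not $\bft_g\ell_gc$, that equals $-\partial c$. With $\bft_g\ell_gc=0$ the check is one line: $\omega(\ell_gc)=-r_g\bigl((\ell_gc)^{-1}\bigr)=(c\cdot 0_{g^{-1}})\cdot 0_g=c$.

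Finally, $\omega r_g$ is exactly $c\mapsto H_g^{-1}\cdot c$, with no sign. This follows from the identity $w\cdot c=w-0_{g^{-1}}\cdot c^{-1}$ for $\bfs w=\partial c$.

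With these corrections the argument goes through, and it compares to the paper's proof as follows.
\begin{itemize}
\item Both proofs rest on the fact that $\omega_g^{H_g}$ is the unique element of $\widehat V_G^*$ annihilating $H_g$, i.e.\ $\ker\omega_g^{H_g}=\im H_g$.
\item The paper writes the inverse $\omega_g\mapsto H_g^{\omega_g}$ explicitly via an auxiliary $H_g'$. It then proves multiplicativity by directly comparing $(\omega_g^{H_g}\cdot\omega_{g'}^{H_{g'}})(v_g\cdot v_{g'})$ with $\omega_{gg'}^{H_gH_{g'}}(v_g\cdot v_{g'})$.
\item You instead show that $H\mapsto\im H$ and $\omega\mapsto\ker\omega$ both send products to $\im\bfm(\cdot,\cdot)$, by an inclusion plus a dimension count. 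You then use that $\omega$ is determined by its kernel.
\end{itemize}
Your route avoids the VB-groupoid algebra. It also supplies exactly the groupoid-map claim that the proof of Proposition~\ref{prop: Gracia Saz model fat groupoid and dual} defers. It gives the inverse as $H_g^{\omega_g}=(\bfs_g|_{\ker\omega_g})^{-1}$, which is visibly smooth.

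The paper's explicit route has its own payoff. It produces the homotopy $h_g$ with $H_g^{\omega_g}=H_g'+r_gh_g$, which is reused in Remark~\ref{rema: fat groupoid of dual} to pass to the split-ruth model.

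Your planned check of the left square goes beyond the paper's proof, which does not verify it. It comes out right: $\omega^{1+h_x}(1_w+c)=c+h_x^{-1}(w+\partial c)$. This is the dual of $1+(h_x^{-1})^*$ once the core of $V_G^*$ is identified with $V_M^*$ by restriction to units. That is the identification under which the core differential of $V_G^*$ is $+\partial^*$.
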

\begin{proof}
Notice that $\omega_g^{H_g}$ is the unique element $\omega_g \in \widehat V_G^*$ with the property that $\omega_gH_g = 0$.\footnote{If $\omega_g'$ is another element of $\widehat V_G^*$ such that $\omega_g'H_g=0$, then $\omega_g^{H_g}=\omega_g'$ on $\im H_g$, but also on $\ker \bft_g$, which are elements of the form $\ell_gc$.} To see that the map $\widehat V_G \ra \widehat V_G^*$ is well-defined, observe that
\begin{equation*}
\omega_g^{H_g}r_g = r_g(H_g^{-1}(\partial c) + \ell_g(\partial c - c)) = H_g^{-1} \cdot c
\end{equation*}
has inverse $c \mapsto H_g \cdot c$.

We compute the inverse map $\omega_g \mapsto H_g^{\omega_g}$ explicitly. Pick for this some $H_g' \in \widehat V_G$. Then $H_g^{\omega_g}$ is the unique element $H_g \in \widehat V_G$ such that
\begin{equation*}
    H_g'v = \ell_g\omega_g(H_g'v) + H_g(v + \partial\omega_g(H_g'v)).
\end{equation*}
In fact, given $v \in (V_M)_{\bfs g}$, writing $h_gv \in (C_M)_{\bft g}$ for the inverse of $-\omega_g(H_g'v)$ under $\omega_gr_g$, we have
\begin{equation*}
    H_g^{\omega_g}v = H_g'v + r_g h_gv.
\end{equation*}
We end the proof by showing that $\widehat V_G \ra \widehat V_G^*$ is a groupoid map. For $H_g, H_{g'} \in \widehat V_G$ composable,
\begin{equation*}
\omega_g^{H_g} \cdot \omega_{g'}^{H_{g'}}(v_g \cdot v_{g'}) = (H_{g'}^{-1}((\bft_g H_g)^{-1}\bft_g v_g) + 0_{(g')^{-1}g^{-1}} \cdot (v_g - H_g(\bft_g H_g)^{-1} \bft_g v_g) - v_{g'}^{-1}) \cdot 0_{g'},
\end{equation*}
while
\begin{equation*}
\omega_{gg'}^{H_{gg'}}(v_g \cdot v_{g'}) = (H_{g'}^{-1}H_g^{-1}\bft_g v_g - v_{g'}^{-1}v_g^{-1}) \cdot 0_g \cdot 0_{g'}.
\end{equation*}
If we replace $0_g$ with $\pm v_g \pm H_g(\bft_g H_g)^{-1}\bft_g v_g$ in the last equation, we see that the two expressions agree. This proves the statement.
\end{proof}

\begin{remark}\label{rema: fat groupoid of dual}
Given $H_g$ and $\omega_g = \omega_g^{H_g}$, we constructed in the above proof a map $h_g$ using an auxiliary element $H_g' \in \widehat V_G$:
\begin{equation*}
h_g: (V_M)_{\bfs g} \ra (C_M)_{\bft g} \qquad v \mapsto r_{g^{-1}}(\ell_g + H_g\partial)\omega_g(H_g'v) = r_{g^{-1}}(H_g'v - H_gv).
\end{equation*}
As we mentioned already in Remark \ref{rema: not important to take element of fat groupoid in proof}, we could take such auxiliary elements in $\widehat V_G^\textnormal{cat}$. Given a cleavage $\Sigma$, the equation
\begin{equation*}
    \Sigma_gv = -r_g h_gv + H_gv
\end{equation*}
reveals a new description of the fat groupoid in terms of the associated split ruth. This will be explained in detail in Section \ref{sec: From 2-term ruths to fat extensions}.
\end{remark}

We can understand this isomorphism from Proposition \ref{prop: fat groupoid of dual} more conceptually as follows.
\begin{proposition}\label{prop: fat pairing}
The smooth map
\begin{equation*}
\widehat V_G^* \times_G \widehat V_G \ra \textnormal{H}(V_M,C_M) \qquad (\omega_g,H_g) \mapsto \omega_gH_g
\end{equation*}
is non-degenerate in the sense that, given $H_g \in \widehat V_G$ (resp. $\omega_g \in \widehat V_G^*$) and $h_{\bfs g} \in \textnormal{H}(V_M,C_M)$, there is a unique element $\omega_g \in \widehat V_G^*$ (resp. $H_g \in \widehat V_G$) such that $\omega_g H_g = h_{\bfs g}$.
\end{proposition}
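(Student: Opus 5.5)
First I make precise the meaning of $\omega_gH_g$. For $\omega_g\in\widehat V_G^*$, i.e.\ $\omega_g:(V_G)_g\ra(C_M)_{\bfs g}$ with $\omega_g\ell_g=1$ and $\omega_gr_g$ invertible, and $H_g\in\widehat V_G$, the composite $\omega_gH_g$ is a linear map $(V_M)_{\bfs g}\ra(C_M)_{\bfs g}$, so it lies in $\textnormal{Hom}(V_M,C_M)$ over $\bfs g$. I will show it lies in $\textnormal{H}(V_M,C_M)$ by computing $1+\partial\,\omega_gH_g$. I expect the identity
\begin{equation*}
\bft_g H_g = (\bft_g H_g)\bigl(1 + \partial\, \omega_g^{H_g'}H_g\bigr)
\end{equation*}
to have a clean analogue. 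Concretely, I decompose $(V_G)_g=\ker\bft_g\oplus\im H_g$, with $\ker \bft_g=\ell_g(C_M)_{\bfs g}$ by the definition of $\widehat V_G$. For any $v\in(V_G)_g$ this gives $v=\ell_g c+H_g(\bft_gH_g)^{-1}\bft_gv$. Feeding $v=H_g'w$ for a second element $H_g'$ lets me express $\omega_g$ through $\omega_g^{H_g'}$. A shorter route is as follows. By the previous proof, $\omega^{H}:=\omega_g^{H_g}$ is the unique element of $\widehat V_G^*$ killing $H_g$. Every other $\omega_g$ then differs from it by a map vanishing on $\ker\bft_g$, hence factors through $\bft_g$:
\begin{equation*}
\omega_g=\omega_g^{H_g}+k\,\bft_g,\qquad k:(V_M)_{\bft g}\ra(C_M)_{\bfs g}.
\end{equation*}
This holds because both $\omega_g$ and $\omega_g^{H_g}$ restrict to the identity via $\ell_g$. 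Consequently $\omega_gH_g=k(\bft_gH_g)$.

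Uniqueness and existence in the $H_g$-fixed direction now follow from this parametrisation. Given $h\in\textnormal{H}(V_M,C_M)_{\bfs g}$, the only candidate is $k=h(\bft_gH_g)^{-1}$, so $\omega_g=\omega_g^{H_g}+h(\bft_gH_g)^{-1}\bft_g$. I must check that this candidate lies in $\widehat V_G^*$, i.e.\ that $\omega_gr_g$ is invertible. Here $\omega_gr_g=\omega_g^{H_g}r_g+h(\bft_gH_g)^{-1}\partial$. By the proof of Proposition~\ref{prop: fat groupoid of dual}, $\omega_g^{H_g}r_g=H_g^{-1}\cdot(-)$ on $C_M$. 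Factoring, $\omega_gr_g=\bigl(1+h\,(\bft_gH_g)^{-1}\partial\,(H_g\cdot)\bigr)(H_g^{-1}\cdot)$. Equivariance of $\partial$ gives $(\bft_gH_g)^{-1}\partial(H_g\cdot c)=\partial c$, so the first factor is $1+h\partial$. By the five-lemma remark, $1+h\partial$ is invertible iff $1+\partial h$ is, i.e.\ iff $h\in\textnormal{H}$. This gives existence and uniqueness. Running the same equivalence backwards shows that $\omega_gH_g$ always lies in $\textnormal{H}(V_M,C_M)$, so the map is well defined. Smoothness is clear since it is a composition of linear maps depending smoothly on the points.

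For the other direction (fix $\omega_g$, vary $H_g$), I will not redo the computation. Instead I use the symmetry furnished by Proposition~\ref{prop: fat groupoid of dual}: any $H_g$ can be written $H_g^{\omega'}$. Alternatively, I argue dually: elements $H_g$ with $\bfs_gH_g=1$ form an affine space over $r_g\textnormal{Hom}(V_M,C_M)$. I write $H_g=H_g^{\omega_g}+r_gk$ where $H_g^{\omega_g}$ is the unique element killed by $\omega_g$. Then $\omega_gH_g=(\omega_gr_g)k$, so $k=(\omega_gr_g)^{-1}h$ is forced. It remains to check that $\bft_gH_g=\bft_gH^{\omega_g}_g+\partial k$ is invertible, which again reduces to invertibility of $1+\partial h$ by the same equivariance manipulation, now using $(\omega_g r_g)^{-1}=H^{\omega_g}_g\cdot(-)$.

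The main obstacle is the bookkeeping in these two invertibility checks: identifying $\omega^{H}_gr_g$ and $\bft_gH^{\omega}_g$ with the actions, and commuting $\partial$ past them correctly. Everything else is linear algebra on the decompositions $\ker\bft_g\oplus\im H_g$ and $\ker\bfs_g\oplus\ker\omega_g$.
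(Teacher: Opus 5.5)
Your argument is correct and is essentially the paper's: it too sets $\omega_g=\omega_g^{H_g}+h_{\bfs g}(\bft_gH_g)^{-1}\bft_g$ and checks that $\omega_gr_g$ is invertible. Your factorization $\omega_gr_g=(1+h\partial)(H_g^{-1}\cdot -)$ makes that check more transparent than the explicit inverse written in the paper, and it also gives you that the pairing lands in $\textnormal{H}(V_M,C_M)$ and the omitted ``resp.'' direction. The only slip is your unused heuristic display, which should read $\bft_gH_g=(\bft_gH_g')(1+\partial\,\omega_g^{H_g'}H_g)$; in that corrected form it would give well-definedness directly, since every $\omega_g$ equals some $\omega_g^{H_g'}$.
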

\begin{proof}
Let $H_g \in \widehat V_G^*$ and $h_{\bfs g} \in \textnormal{H}(V_M,C_M)$. If $h_{\bfs g} = 0$, we saw in Proposition \ref{prop: fat groupoid of dual} that
\begin{equation*}
\omega_g^{H_g}v = r_g(H_g^{-1} \bft_g v - v^{-1})
\end{equation*}
is unique with the property that
\begin{equation*}
\omega_g^{H_g}H_g=0.
\end{equation*}
Now, if $h_{\bfs g} \neq 0$, consider
\begin{equation*}
\omega_gv = \omega_g^{H_g}v + h_{\bfs g} (\bft_g H_g)^{-1}\bft_g v.
\end{equation*}
Then $\omega_g \in \widehat V_G^*$. Indeed, the inverse of precomposition with $r_g$ is given by
\begin{equation*}
c \mapsto r_{g^{-1}}(\ell_g c + H_g \partial c) + (h_{\bfs g} \partial c)^{-1}.
\end{equation*}
The element $\omega_g$ is unique with the property that $\omega_gH_g = h_{\bfs g}$, so we proved the statement.
\end{proof}

\begin{remark}\label{rema: fat pairing is a multiplicative tensor}
Given $V_{G,1},\dots,V_{G,n}$ VB-groupoids over $G$, we can see
\begin{equation*}
\widehat V_{G,1} \times_G \cdots \times_G \widehat V_{G,n}
\end{equation*}
as a groupoid over $M$ (with structure defined componentwise; see Example \ref{exa: pullback}). However, with this groupoid structure, the above map $\widehat V_G^* \times_G \widehat V_G \ra \textnormal{H}(V_M,C_M)$ is not a groupoid map (see the discussion after this remark).
Rather, this map should be interpreted as the identity map of fat extensions, and we can interpret the map as a multiplicative tensor if we view $\widehat V_G^* \times_G \widehat V_G$ as a groupoid. That it maps into $\textnormal{H}(V_M,C_M)$ is a feature of the identity map being an isomorphism. We will be more precise in Section \ref{sec: functorial aspects}, but a more general treatment of viewing multiplicative tensors this way is a topic of our upcoming work \cite{Multiplicativetensors}.
\end{remark}

Using $\widehat V_G^* \cong \widehat V_G$, the fat pairing is simply the map
\begin{equation*}
\widehat V_G \times_G \widehat V_G \ra \textnormal{H}(V_M,C_M) \qquad (H_g, H_g') \mapsto -1 + H_g' \cdot H_g^{-1}
\end{equation*}
that describes the inverse to the (principal) action map
\begin{equation*}
\textnormal{H}(V_M,C_M) \ltimes \widehat V_G \ra \widehat V_G \times_G \widehat V_G.
\end{equation*}

\begin{remark}\label{rema: maps of fat extensions are not maps of extensions}
Proposition \ref{prop: fat groupoid of dual} reveals that a map of fat extensions $F_{G,1} \ra F_{G,2}$ (over the same base groupoid $G$) is not simply a commutative diagram
\begin{center}
\begin{tikzcd}
1 \ar[r] & \textnormal{H}(V_{M,1},C_{M,1}) \ar[r] \ar[d] & F_{G,1} \ar[r] \ar[d] & G \ar[r] \ar[d, "="] & 1 \\
1 \ar[r] & \textnormal{H}(V_{M,2},C_{M,2}) \ar[r] & F_{G,2} \ar[r] & G \ar[r] & 1
\end{tikzcd}
\end{center}
Indeed, $\widehat V_G$ and $\widehat V_G^*$ are fat extensions, and there is a commutative diagram
\begin{center}
\begin{tikzcd}
1 \ar[r] & \textnormal{H}(V_M,C_M) \ar[r] \ar[d, "\cong"] & \widehat V_G \ar[r] \ar[d, "\cong"] & G \ar[r] \ar[d, "="] & 1 \\
1 \ar[r] & \textnormal{H}(C_M^*, V_M^*) \ar[r] & \widehat V_G^* \ar[r] & G \ar[r] & 1.
\end{tikzcd}
\end{center}
In fact, we should not even expect that a map of fat extensions comes with a groupoid map $F_{G,1} \ra F_{G,2}$ (see Section \ref{sec: functorial aspects}). Rather, maps of fat extensions are ``comparison maps'' like the ones we encountered in Section \ref{sec: examples of fat extensions}.
\end{remark}

What Proposition \ref{prop: fat groupoid of dual} shows is that the dual of a fat extension should be seen as having the same underlying extension, but considering the dual cochain complex representation:

\begin{definition}\label{def: the dual of a fat extension}
Let $F_G$ be a fat extension of $G$ over $C_M \ra V_M$. The dual of $F_G$ is the extension
\begin{center}
\begin{tikzcd}
1 \ar[r] & \textnormal{H}(V_M,C_M) \ar[r] & F_G \ar[r] & G \ar[r] & 1
\end{tikzcd}
\end{center}
together with the dual cochain complex representation $V_M^* \ra C_M^*$.
\end{definition}

\section{Fat extensions and equivalent formulations}\label{sec: fat extensions}
Having encountered the structure of \textit{fat extension} in the previous section, we proceed to show how to go back and forth between fat extensions, VB-groupoids and $2$-term ruths. We dedicate Section \ref{sec: PB-groupoids} to the correspondence with (general linear) PB-groupoids after this section.

We call the $2$-term ruth associated to a fat extension an \textit{invariant complex}. This type of complex is a \textit{relative complex} as in \cite{Mariarelative}. This new point of view leads to interesting connections between the ``natural candidates'' for tensor product of VB-groupoids and of $2$-term ruths. As already mentioned a few times, this is the topic of the upcoming work \cite{Multiplicativetensors}. Defining the correct notion of morphism for the category of fat extensions (that yields the claimed equivalence of categories between fat extensions and VB-groupoids or $2$-term ruths) is a little more subtle, but similar to how one defines morphisms of (split) $2$-term ruths. We delay the notion of morphism of fat extensions to Section \ref{sec: functorial aspects}, where we also go into the functorial aspects of the equivalence we start describing in this section. The reason for the postponement is that we believe the discussion of this section should help understand the functorial aspects better.

\subsection{From VB-groupoids to fat extensions}
In Section \ref{sec: The fat extension of a VB-groupoid}, we developed the notion of fat extension through the language of VB-groupoids. For referential purposes and convenience, we repeat the definition here.

\begin{definition}
Let $G \rra M$ be a Lie groupoid, and consider a $2$-term complex
\begin{equation*}
C_M \xra{\partial} V_M.
\end{equation*}
A \textit{fat extension} of $G$, with underlying complex $C_M \ra V_M$, consists of a Lie groupoid
\begin{equation*}
F_G \rra M,
\end{equation*}
that comes with a cochain complex representation on $C_M \ra V_M$, and a short exact sequence of Lie groupoids (over $M$)
\begin{center}
\begin{tikzcd}
1 \ar[r] & \textnormal{H}(V_M,C_M) \ar[r] & F_G \ar[r] & G \ar[r] & 1.
\end{tikzcd}
\end{center}
Moreover, the cochain complex representation of $F_G$ on $C_M \ra V_M$ restricts to the canonical cochain complex representation of $\textnormal{H}(V_M,C_M)$, and the two natural conjugation actions of $F_G$ on $\textnormal{H}(V_M,C_M)$ agree.
\end{definition}

We will simply write $F_G$ for a fat extension, and the underlying groupoid $F_G$ is called the fat groupoid of the fat extension. Now, in Sections \ref{sec: the fat groupoid}, \ref{sec: the bundle of invertible homotopies} and \ref{sec: canonical representations of the fat groupoid}, we proved:

\begin{proposition}\label{prop: fat groupoid comes with fat extension}
Let $V_G$ be a VB-groupoid over $G$. Then, the fat groupoid $\widehat V_G$ of $V_G$, together with its cochain complex representation \eqref{eq: representations of the fat groupoid actions}, is a fat extension of $G$ over $C_M \ra V_M$.
\end{proposition}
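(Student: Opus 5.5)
The proposition collects what was established in Sections \ref{sec: the fat groupoid}, \ref{sec: the bundle of invertible homotopies} and \ref{sec: canonical representations of the fat groupoid}. I would verify the axioms of Definition \ref{def: fat extensions} in order.

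First, $\widehat V_G \rra M$ is a Lie groupoid. It is the open subset of invertible elements of the affine bundle $\widehat V_G^\textnormal{cat} \ra G$, and the product and inverse formulas of Section \ref{sec: the fat groupoid} are smooth. Associativity is inherited from $V_G$. One checks that $\bfs_g(H_g\cdot H_{g'}) = 1$ and that $\bft(H_g \cdot H_{g'}) = (\bft_g H_g)(\bft_{g'}H_{g'})$, so invertibility is preserved.

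Second, I would show that $\widehat V_G \ra G$, $H_g \mapsto g$, is a surjective submersion, i.e. that local sections exist through every point. Choose a cleavage $\Sigma$. Near any $g$, pick $h_g \in \textnormal{Hom}((V_M)_{\bfs g},(C_M)_{\bft g})$ with $\bft_g(\Sigma_g + r_g h_g)$ invertible; for example, take $H_g$ itself and set $h_g = r_{g^{-1}}(H_g-\Sigma_g)$. Extend $h_g$ locally as a section; invertibility is an open condition, so this gives a local section. The kernel consists of $H_{1_x}$ with $H_{1_x} v = 1_v + h_x v$, where $h_x v$ is a core element. Its target is $1 + \partial h_x$, so the kernel is $\textnormal{H}(V_M,C_M)$ via $h_x \mapsto 1+h_x$. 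I would check directly from the product formula that $(1+h)(1+h') = 1 + h + h' + h\partial h'$. This gives the exact sequence.

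Third, the formulas \eqref{eq: representations of the fat groupoid actions} define representations. Functoriality comes from the action of $\widehat V_G$ on $V_G$ being an action, which follows from associativity of $\bfm$. I would check that $\bft(H_g\cdot c) = H_g\cdot \partial c$, so that $\partial$ is equivariant. Both compatibility conditions are exactly Proposition \ref{prop: representation H(V_M,C_M) is action by conjugation}. The restriction to $1+h_x$ acts by $1+\partial h_x$ on $V_M$ and by $1+h_x\partial$ on $C_M$, which is the canonical representation.

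The main obstacle is the submersion and smoothness claims, in particular making the local-section argument precise. The remaining steps are direct computations.
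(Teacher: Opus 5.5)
Your outline follows the paper, whose proof simply assembles Sections~\ref{sec: the fat groupoid}--\ref{sec: canonical representations of the fat groupoid} and Proposition~\ref{prop: representation H(V_M,C_M) is action by conjugation}. The groupoid structure, the identification of the kernel with $\textnormal{H}(V_M,C_M)$, and the representation checks are fine.

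The step that fails as written is surjectivity of $\widehat V_G \ra G$, which you need for exactness at $G$. Your argument starts with ``pick $h_g$ with $\bft_g(\Sigma_g + r_g h_g)$ invertible; for example, take $H_g$ itself''. This presupposes that the fibre over $g$ is non-empty, which is exactly what has to be shown. Local sections through every point of $\widehat V_G$ only give the submersion property, and that part is actually automatic. Indeed, $\widehat V_G^\textnormal{cat} \ra G$ is an affine bundle (the preimage of the identity section under the surjective bundle map $H_g \mapsto \bfs_g H_g$), and $\widehat V_G$ is open in it. So the real obstacle is not where you placed it.

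To close the gap, use that $\ker \bfs_g = r_g((C_M)_{\bft g})$ and $\ker \bft_g = \ell_g((C_M)_{\bfs g})$ have the same dimension. This holds because $C_M$ has the same rank at $\bfs g$ and $\bft g$; by $\dim (V_G)_g = \rank (C_M)_{\bft g} + \rank (V_M)_{\bfs g} = \rank (C_M)_{\bfs g} + \rank (V_M)_{\bft g}$, the same is then true for $V_M$. Two subspaces of equal dimension have a common complement $\widetilde H_g$. Set $H_g = (\bfs_g|_{\widetilde H_g})^{-1}$. Then $\bfs_g H_g = 1$, and $\bft_g H_g$ is injective (since $\widetilde H_g \cap \ker \bft_g = 0$) between spaces of equal dimension, hence invertible.

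In your cleavage language the same argument reads as follows. The map $\phi = \bft_g \Sigma_g$ satisfies $\im \phi + \im \partial = (V_M)_{\bft g}$, so $K = \phi^{-1}(\im\partial)$ has dimension $\dim \im \partial$. Choose $h_g$ vanishing on a complement of $K$ such that $\phi + \partial h_g$ maps $K$ isomorphically onto $\im\partial$. Then $\phi + \partial h_g$ is invertible.

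The rank condition is genuinely used. Take the pair VB-groupoid $E \times E \rra E$ of Example~\ref{exa: general linear} with $E$ of different ranks on two components of $M$. Its fat groupoid $\textnormal{GL } E$ has empty fibres over the arrows joining those components. So surjectivity holds only under the (standard) constant-rank convention.
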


Similarly, we have:

\begin{proposition}\label{prop: fat category comes with fat extension}
    Let $V_G$ be a VB-groupoid over $G$. Then, the fat category $\widehat V_G^\textnormal{cat}$ of $V_G$, together with its cochain complex representation on $C_M \ra V_M$ and the (comparison) map 
    \begin{equation*}
        h: F_G^\textnormal{cat} \times_G F_G^\textnormal{cat} \ra \textnormal{Hom}(\bft^*V_M,\bfs^*C_M)
    \end{equation*}
    is a fat category extension of $G$ over $C_M \ra V_M$.
\end{proposition}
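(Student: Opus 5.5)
To prove that $\widehat V_G^\textnormal{cat}$ is a fat category extension, I will check the axioms of Definition \ref{def: fat category extension} one at a time. Most of the structure is already in place from Section \ref{sec: The fat extension of a VB-groupoid}; I will organise the argument by reducing, wherever possible, to statements already proved for the fat groupoid $\widehat V_G$.

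\emph{Structure and representation.} I first recall that $\widehat V_G^\textnormal{cat}$ is a Lie category over $M$: it is an affine bundle over $G$ with fibre $\textnormal{Hom}((V_M)_{\bfs g},(C_M)_{\bft g})$, and the product is $(H_g\cdot H_{g'})v = H_g(\bft_{g'}H_{g'}v)\cdot H_{g'}v$. Next I check that formulas \eqref{eq: representations of the fat groupoid actions} define a functor to $\textnormal{End}\,C_M \times \textnormal{End}\,V_M$ commuting with $\partial$, as in Remark \ref{rema: cochain complex representation of fat extension}. For $V_M$ functoriality is immediate from $\bft_{gg'}(H_g\cdot H_{g'}) = \bft_g H_g\,\bft_{g'}H_{g'}$. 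For $C_M$ it follows from the interchange law in $V_G$. Equivariance of $\partial$ is the identity $\bft r_{g^{-1}}(H_g(\partial c)\cdot c) = \bft_g H_g \partial c$.

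\emph{Exactness and restriction.} The kernel of $\widehat V_G^\textnormal{cat}\to G$ over units is $\{1+h_x\}$ with $h_x \in \textnormal{Hom}(V_M,C_M)$. The product formula shows it composes by $h_x + h'_x + h_x\partial h'_x$, which is the Lie monoid structure noted after Lemma \ref{lemm: homotopy composition}. The actions $(1+h_{\bft g})\cdot H_g$ and $H_g\cdot(1+h_{\bfs g})$ are free and proper because the fibres are affine with translation group $\textnormal{Hom}(\bfs^*V_M,\bft^*C_M)$. Concretely, $(1+h)\cdot H_g = H_g + r_g h\,\bft_g H_g$, and $H_g\cdot(1+h) = H_g + r_g(H_g\cdot_{C_M}h)$ after using the interchange law. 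I will double-check this last identity because it requires care: left multiplication by the kernel need not act transitively on fibres when $\bft_gH_g$ is not invertible. I therefore plan to interpret ``exact sequence of Lie categories'' as in Remark \ref{rema: exact sequence fat category}, namely as an equivalence that is free and proper, and to use the quotient description of Proposition \ref{prop: fat category from fat groupoid} to identify $\widehat V_G^\textnormal{cat}$ with the quotient. This is where I expect the main obstacle, since one must keep the invertible part separate from the general category structure. The restriction of the representation to the kernel gives $(1+h\partial, 1+\partial h)$ by direct evaluation, as in the first part of Proposition \ref{prop: representation H(V_M,C_M) is action by conjugation}.

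\emph{The comparison map.} The map $h(H_g,H'_g) = r_{g^{-1}}(H'_g - H_g)$ is well defined because $H'_g - H_g$ lands in $\ker\bfs_g = r_g(C_M)_{\bft g}$. On the kernel it gives $h'_x - h_x$, the canonical difference. The homotopy identities follow by applying $\bft$, which gives $\partial h v = \bft_g H'_g v - \bft_g H_g v$, and by substituting $H'_g = H_g + r_g h$ into the $C_M$-action, which gives $h\partial c = H'_g\cdot c - H_g\cdot c$. For equivariance with respect to diagonal left and right multiplication by $K\in\widehat V_G^\textnormal{cat}$, I will compute $(K\cdot H'_g) - (K\cdot H_g)$ using bilinearity of the product on the fibre differences:
\[
r\bigl(K\cdot_{C_M} h(H_g,H'_g)\bigr), \qquad r\bigl(h(H_g,H'_g)\cdot_{V_M} K\bigr).
\]
This is essentially the computation in the proof of Proposition \ref{prop: representation H(V_M,C_M) is action by conjugation}, carried out without needing inverses.
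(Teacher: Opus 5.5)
\paragraph{Verdict.} Most of your proposal is correct and matches what the paper relies on, but the exactness step is wrong as written and your fallback does not fix it.

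\paragraph{What is correct.} Your checks of the representation are right: functoriality, $\partial$-equivariance, and the restriction $(1+h\partial,1+\partial h)$ to the kernel. Your checks of the comparison map are also right: it is well defined, it gives the canonical difference on the kernel, both homotopy identities hold, and left/right equivariance follows from linearity of the multiplication on $V_G^{(2)}$. This is the direct verification the paper relies on in Section~\ref{sec: the fat category}; the paper gives no separate proof.

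\paragraph{The gap: exactness.} Your own formulas $(1+h)\cdot H_g=H_g+r_g\,h\,\bft_gH_g$ and $H_g\cdot(1+h)=H_g+r_g(H_g\cdot_{C_M}h)$ show that the orbit maps of the multiplication actions of $\textnormal{Hom}(V_M,C_M)$ on the fibre over $g$ are $h\mapsto h\circ\bft_gH_g$, and $h\mapsto H_g\cdot_{C_M}h$ (postcomposition with the action of $H_g$ on $C_M$).
\begin{itemize}
\item For $C_M\neq 0$, the first map is injective only if $\bft_gH_g$ is surjective, and onto the fibre only if $\bft_gH_g$ is injective.
\item The second map behaves the same way with respect to the $C_M$-action of $H_g$.
\item Hence the action is free and transitive on the fibre only at invertible elements.
\end{itemize}
A concrete example: for the pair groupoid $E_M\times E_M$ of Example~\ref{exa: general linear}, $\widehat V_G^\textnormal{cat}$ is the category of all linear maps $E_x\to E_y$. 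The zero map is fixed by every $1+h$.

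The affine structure you invoke is the \emph{translation} action of $\textnormal{Hom}(\bfs^*V_M,\bft^*C_M)$, which is a different action. So ``free and proper because the fibres are affine'' is false.

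Your fallback does not repair this. Proposition~\ref{prop: fat category from fat groupoid} and Remark~\ref{rema: exact sequence fat category} concern the free and proper action of the group bundle $\textnormal{H}(V_M,C_M)$ on $\textnormal{Hom}(\bfs^*V_M,\bft^*C_M)\times_G\widehat V_G$, with quotient $\widehat V_G^\textnormal{cat}$. They describe how $\widehat V_G^\textnormal{cat}$ is built from $\widehat V_G$. They do not exhibit $G$ as a quotient of $\widehat V_G^\textnormal{cat}$ by $\textnormal{Hom}(V_M,C_M)$.

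\paragraph{How to fix it.} What you can prove, and what the paper in effect uses, is exactness in a weaker sense. Compare the footnote in Section~\ref{sec: the bundle of invertible homotopies}, which calls the groupoid sequence a restriction of the category sequence. The weaker statement is:
\begin{itemize}
\item $\widehat V_G^\textnormal{cat}\to G$ is a surjective submersive functor (an affine bundle projection);
\item the preimage of the units is $\{1+h_x\}\cong\textnormal{Hom}(V_M,C_M)$, with the product $h+h'+h\partial h'$ you computed;
\item on the invertible part the multiplication action is principal, which is exactly the already established sequence $1\to\textnormal{H}(V_M,C_M)\to\widehat V_G\to G\to 1$.
\end{itemize}
The torsor structure of the non-invertible fibres is not carried by the kernel action. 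It is recorded by the comparison map, via $H_g'=H_g+r_g\,h(H_g,H_g')$.

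State this meaning of ``short exact sequence of Lie categories'' explicitly and verify it; your computations already do so. Do not claim freeness. Under the literal ``free and proper action'' reading of the footnote to Definition~\ref{def: fat category extension}, the example above shows the statement would fail.
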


\subsection{From fat extensions to VB-groupoids}\label{sec: From fat extensions to VB-groupoids}
In this section we prove how to recover the VB-groupoid from a fat extension. Let $F_G$ be a fat extension and consider the VB-groupoid associated to the cochain complex representation $C_M \ra V_M$. We denote this VB-groupoid by
\begin{equation*}
F_G \ltimes (C_M \ra V_M) = C_M \times_M F_G \times_M V_M \rra V_M.
\end{equation*}
The source and target maps are given by $\bfs(c,H_g,v) = v$ and $\bft(c,H_g,v) = \partial c + H_g \cdot v$, and
\begin{equation*}
(c_1,H_g,\partial c_2 + H_h \cdot v) \cdot (c_2,H_h,v) = (c_1 + H_g \cdot c_2, H_g \cdot H_h, v).
\end{equation*}
Now, to recover a VB-groupoid $V_G$ from its fat extension $F_G = \widehat V_G$, observe that an element $H_g \in \widehat V_G$ defines a linear isomorphism
\begin{equation*}
r_g + H_g: (C_M)_{\bft g} \oplus (V_M)_{\bfs g} \ra (V_G)_g \qquad (c,v) \mapsto c \cdot 0_g + H_gv.
\end{equation*}
The inverse is given by $((\omega_g r_g)^{-1}\omega_g,\bfs_g)$, where $\omega_g = \omega_g^{H_g} \in \widehat V_G^*$ is unique with the property that $\omega_gH_g = 0$ (see Proposition \ref{prop: fat groupoid of dual}). This observation leads to the following statement.

\begin{proposition}\label{prop: fat exact sequence of VB-groupoids}
The VB-groupoid map
\begin{equation*}
\widehat V_G \ltimes (C_M \ra V_M) \ra V_G \qquad (c,H_g,v) \mapsto r_gc + H_gv
\end{equation*}
is surjective, and the associated exact sequence (of VB-groupoids) is given by
\begin{center}
\begin{tikzcd}
1 \ar[r] & \textnormal{H}(V_M,C_M) \ltimes V_M \ar[r] & \widehat V_G \ltimes (C_M \ra V_M) \ar[r] & V_G \ar[r] & 1,
\end{tikzcd}
\end{center}
where $\textnormal{H}(V_M,C_M)$ acts trivially on $V_M$, and the inclusion is given by $(h_x,v) \mapsto (-h_xv,1+h_x,v)$.
\end{proposition}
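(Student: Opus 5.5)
We must show four things about $\Phi(c,H_g,v) = r_gc + H_gv$: it is a VB-groupoid map, it is surjective, its kernel is as stated, and the inclusion is the claimed one. The plan is to check them in that order, using the fiberwise isomorphism $r_g + H_g$ noted just before the statement.

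\emph{$\Phi$ is a VB-groupoid map.} Linearity over the projection $\widehat V_G \ra G$ is clear, since $\Phi$ is linear in $(c,v)$ for fixed $H_g$. For source and target we compute
\begin{equation*}
\bfs_g(r_gc + H_gv) = v, \qquad \bft_g(r_gc + H_gv) = \partial c + \bft_g H_g v = \partial c + H_g \cdot v,
\end{equation*}
which match the structure maps of $\widehat V_G \ltimes (C_M \ra V_M)$. For multiplicativity, take composable elements $(c_1,H_g,\partial c_2 + H_h\cdot v)$ and $(c_2,H_h,v)$. We will expand
\begin{equation*}
(r_gc_1 + H_g(\partial c_2 + H_h\cdot v))\cdot(r_hc_2 + H_hv)
\end{equation*}
using the interchange law in the VB-groupoid, which says that multiplication is linear on composable pairs. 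We split the pair as
\begin{equation*}
(r_gc_1 + H_g(\partial c_2),\, r_hc_2) + (H_g(H_h\cdot v),\, H_hv).
\end{equation*}
The second summand gives $(H_g\cdot H_h)v$ by the definition of the fat product. The first summand should give $r_{gh}(c_1 + H_g\cdot c_2)$; this follows from unwinding $H_g\cdot c = r_{g^{-1}}(H_g(\partial c)\cdot c)$ together with $r_gc_1\cdot 0_h = r_{gh}c_1$. I expect this bookkeeping with $r$, the core, and right translations to be the main (if routine) obstacle, so I would write it out carefully with $0_h$ insertions. Compatibility with units and inverses then follows because $\Phi$ is a groupoid map.

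\emph{Surjectivity.} Since $\widehat V_G \ra G$ is a surjective submersion, every $g$ admits some $H_g$. The map $r_g + H_g$ is a linear isomorphism, with inverse $((\omega_gr_g)^{-1}\omega_g,\bfs_g)$ as recalled before the statement, so $\Phi$ is fiberwise surjective. Local sections $H$ of $\widehat V_G \ra G$ then show that $\Phi$ is a surjective submersion. In fact, locally $\Phi$ is the projection of a trivial product along the $\textnormal{H}(V_M,C_M)$-directions.

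\emph{Kernel.} Elements mapping to units of $V_G$ satisfy $g = 1_x$ and $r_{1_x}c + H_{1_x}v = 1_v$ (equivalently $0$ core-part). Write $H_{1_x} = 1 + h_x$ with $h_x\in\textnormal{H}(V_M,C_M)$, that is, $H_{1_x}v = 1_v + r_{1_x}h_xv$. The condition becomes $c + h_xv = 0$, i.e. $c = -h_xv$. This gives exactly the image of
\begin{equation*}
(h_x,v)\mapsto(-h_xv,1+h_x,v),
\end{equation*}
which is an injective map. Its target is $\partial(-h_xv) + (1+\partial h_x)v = v$, so $\textnormal{H}(V_M,C_M)$ acts trivially on $V_M$. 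Restricting the groupoid product shows the induced structure is that of the action groupoid $\textnormal{H}(V_M,C_M)\ltimes V_M$: the first components compose as $-h_xv - (1+h_x)\cdot h'_xv = -(h_x\cdot h'_x)v$, using the canonical representation $(1+h_x)\cdot c = (1+h_x\partial)c$. Finally, dimension counting, or the fiberwise isomorphism above, shows that the kernel has constant rank. This makes the sequence an exact sequence of VB-groupoids with $V_G$ as the quotient.
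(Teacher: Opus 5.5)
Your proof is correct and follows the paper's route. The paper's only stated justification is that $r_g + H_g \colon (C_M)_{\bft g}\oplus (V_M)_{\bfs g}\to (V_G)_g$ is a linear isomorphism (with inverse $((\omega_g r_g)^{-1}\omega_g,\bfs_g)$), from which surjectivity and the kernel over the units $H_{1_x}=1+h_x$ follow; you make explicit the checks the paper leaves implicit, namely the multiplicativity of $r_gc+H_gv$ via linearity of the VB-groupoid product, and the kernel computation $c=-h_xv$ together with its action-groupoid structure.
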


Now, given again an abstract fat extension
\begin{center}
\begin{tikzcd}
1 \ar[r] & \textnormal{H}(V_M,C_M) \ar[r] & F_G \ar[r] & G \ar[r] & 1
\end{tikzcd}
\end{center}
we obtain an injective map of VB-groupoids\footnote{As above, $\textnormal{H}(V_M,C_M)$ acts trivially on $V_M$. Notice that now we identified $h_x \in \textnormal{H}(V_M,C_M)$ with its image in $F_G$; if $F_G = \widehat V_G$, this identifies $h_x$ with its image $1+h_x$.}
\begin{equation*}
\textnormal{H}(V_M,C_M) \ltimes V_M \ra F_G \ltimes (C_M \ra V_M) \qquad (h_x,v) \mapsto (-h_xv, h_x, v).
\end{equation*}
The quotient naturally inherits the structure of a VB-groupoid over $G$ that we denote by $V(F_G)$. The resulting short exact sequence
\begin{center}
\begin{tikzcd}
1 \ar[r] & \textnormal{H}(V_M,C_M) \ltimes V_M \ar[r] & F_G \ltimes (C_M \ra V_M) \ar[r] & V(F_G) \ar[r] & 1
\end{tikzcd}
\end{center}
is a short exact sequence of VB-groupoids over the extension
\begin{center}
\begin{tikzcd}
1 \ar[r] & \textnormal{H}(V_M,C_M) \ar[r] & F_G \ar[r] & G \ar[r] & 1.
\end{tikzcd}
\end{center}
In particular, the underlying cochain complex of $V(F_G)$ is given by $C_M \ra V_M$. We will now show that VB-groupoids and fat extensions are in a one-to-one correspondence with each other. As we delayed the discussion on morphisms of fat extensions to Section \ref{sec: functorial aspects}, we won't go into the functorial aspects here. However, the notion of isomorphism of fat extensions necessarily appears.
\begin{proposition}\label{prop: essential surjectivity of fat construction}
The assignment
\begin{equation*}
\{\textnormal{VB-groupoids}\} \ra \{\textnormal{Fat extensions}\} \qquad V_G \mapsto \widehat V_G
\end{equation*}
sets a one-to-one correspondence, and the assignment
\begin{equation*}
\{\textnormal{Fat extensions}\} \ra \{\textnormal{VB-groupoids}\} \qquad F_G \mapsto V(F_G)
\end{equation*}
is inverse to it (up to canonical isomorphisms).
\end{proposition}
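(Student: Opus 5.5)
We need two canonical isomorphisms: $V(\widehat V_G) \cong V_G$ as VB-groupoids for every VB-groupoid $V_G$, and $\widehat{V(F_G)} \cong F_G$ as fat extensions for every abstract fat extension $F_G$, compatible with the extensions and the cochain complex representations.

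\emph{First isomorphism.} Proposition \ref{prop: fat exact sequence of VB-groupoids} does most of the work. The map $\widehat V_G \ltimes (C_M \ra V_M) \ra V_G$, $(c,H_g,v) \mapsto r_gc + H_gv$, is a surjective VB-groupoid map whose kernel is exactly the image of $\textnormal{H}(V_M,C_M) \ltimes V_M$ under $(h_x,v) \mapsto (-h_xv,1+h_x,v)$. This is precisely the subgroupoid quotiented out in the definition of $V(\widehat V_G)$. The map therefore descends to a bijective VB-groupoid map $V(\widehat V_G) \ra V_G$ over $\textnormal{id}_G$. Since it is a fiberwise linear isomorphism of vector bundles over $G$ (dimension count, or the explicit inverse $((\omega_gr_g)^{-1}\omega_g,\bfs_g)$ after choosing local sections of $\widehat V_G \ra G$), it is a diffeomorphism. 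Canonicity is clear because no choices enter the definition of the map.

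\emph{Second isomorphism.} Given $F_G$, write $[c,H_g,v]$ for classes in $V(F_G)$. First I identify the core: $\ker \bfs$ restricted to units consists of $[c,1_x,0]$, which gives back $C_M$ with $\partial$, so the underlying complex is $C_M \ra V_M$. I then define
\begin{equation*}
\Theta: F_G \ra \widehat{V(F_G)} \qquad H_g \mapsto \bigl(v \mapsto [0,H_g,v]\bigr).
\end{equation*}
One checks that $\bfs_g\Theta(H_g) = 1$ and that $\bft_g\Theta(H_g) = H_g \cdot(-)$, which is invertible, so $\Theta$ lands in the fat groupoid. Multiplicativity follows from the product in $F_G \ltimes (C_M \ra V_M)$, since $(0,H_g,H_h\cdot v)\cdot(0,H_h,v) = (0,H_gH_h,v)$. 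Next, $\Theta$ covers $\textnormal{id}_G$. On $\textnormal{H}(V_M,C_M)$, the relation $[0,h_x,v] = [h_xv,1_x,v]$ shows that $\Theta(h_x) = 1 + h_x$ under the identification with $r$ of the core, so $\Theta$ restricts to the identity of $\textnormal{H}(V_M,C_M)$. By the five lemma for the two short exact sequences over $G$, $\Theta$ is a bijective groupoid map. It is a diffeomorphism because it is an equivariant map of principal $\textnormal{H}(V_M,C_M)$-bundles over $G$. Finally, comparing \eqref{eq: representations of the fat groupoid actions} for $\widehat{V(F_G)}$ with the given actions shows that $\Theta$ intertwines the cochain complex representations. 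For $V_M$ this is immediate. For $C_M$ one computes $r_{g^{-1}}(\Theta(H_g)(\partial c)\cdot c)$ in $V(F_G)$ and reduces it to $H_g\cdot c$.

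\emph{Main obstacle.} The delicate point is showing that $V(F_G)$ is a genuine VB-groupoid: the quotient must be smooth, and the groupoid structure must descend. Smoothness comes from the action of $\textnormal{H}(V_M,C_M) \ltimes V_M$ being free and proper, since it is a principal action inherited from $F_G \ra G$. For the groupoid structure, the subgroupoid must be normal in $F_G \ltimes (C_M \ra V_M)$, and this is exactly where the compatibility axioms of Definition \ref{def: fat extensions} enter. Conjugating $(-h v,h,v)$ by $(c,H_g,w)$ yields an element of the same form precisely because:
\begin{itemize}
\item the representation restricted to $\textnormal{H}(V_M,C_M)$ is the canonical one;
\item the two conjugation actions agree.
\end{itemize}
I would verify this by a direct computation, expanding the conjugate and matching its first component with $-(H_g\cdot_{V_M} h\cdot_{C_M} H_g^{-1})(\cdot)$. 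The same axioms are used again in the $C_M$-equivariance of $\Theta$.
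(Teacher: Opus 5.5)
Your proof is correct. For $V(\widehat V_G)\cong V_G$ it is the paper's argument, a direct consequence of Proposition~\ref{prop: fat exact sequence of VB-groupoids}. For the other direction you use the same map $\Theta$ as the paper, but you establish its invertibility differently.

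The paper works with representatives. It writes an element of $\widehat{V(F_G)}$ as $v \mapsto [h(v),H_g^v,v]$ and multiplies first on the left, then on the right, by elements of $\textnormal{H}(V_M,C_M)\ltimes V_M$. This normalises the element to $v\mapsto[0,H_g^0,v]$, which gives surjectivity of $\Theta$; injectivity and smoothness of the inverse are left implicit.

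You instead note that $\Theta$ is a morphism of extensions of $G$ restricting to the identity on the kernel, via $[0,h_x,v]=[h_xv,1_x,v]$. The groupoid five lemma then gives bijectivity. The equivariance $\Theta(ka)=k\,\Theta(a)$, together with local sections of $F_G\ra G$, makes $\Theta$ a diffeomorphism.

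What each route buys:
\begin{itemize}
\item Your route is more structural. It delivers injectivity and smoothness for free and avoids the $v$-dependent choices in the paper's normalisation.
\item Its cost is that you must first know $V(F_G)$ is a VB-groupoid with core $C_M$, so that $\widehat{V(F_G)}$ is itself an extension of $G$ by $\textnormal{H}(V_M,C_M)$.
\item The paper's route is more explicit about what elements of $\widehat{V(F_G)}$ look like.
\end{itemize}

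Your normality computation fills in something the paper only asserts, namely that the quotient $V(F_G)$ inherits a VB-groupoid structure. It also correctly locates where the two compatibility axioms of a fat extension are used.

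One small overstatement: the $C_M$-equivariance of $\Theta$ needs no axioms. Directly from the product in $F_G\ltimes(C_M\ra V_M)$ one has $[0,H_g,\partial c]\cdot[c,1_{\bfs g},0]=[H_g\cdot c,H_g,0]=r_g(H_g\cdot c)$.
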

\begin{proof}
That, given a VB-groupoid $V_G$, there is a canonical isomorphism
\begin{equation*}
V(\widehat V_G) \xra{\sim} V_G
\end{equation*}
is a straightforward consequence of Proposition \ref{prop: fat exact sequence of VB-groupoids}.

Given a fat extension $F_G$, observe that the map
\begin{equation*}
F_G \ra \widehat V(F_G) \qquad H_g \mapsto (v \mapsto [0,H_g,v])
\end{equation*}
is a Lie groupoid map that intertwines the representations on $C_M \ra V_M$. In the remainder of the proof, we show that this map is invertible.\footnote{Therefore, we obtain an isomorphism of fat extensions; see Section \ref{sec: functorial aspects} for more details.} Notice for this that elements of $\widehat V(F_G)$ can be represented by maps
\begin{equation*}
(V_M)_{\bfs g} \ra (C_M)_{\bft g} \times (F_G)_g \times (V_M)_{\bfs g} \qquad v \mapsto (hv, H_g^v, v)
\end{equation*}
such that $h$ is linear, and
\begin{equation*}
(V_M)_{\bfs g} \ra (V_M)_{\bft g} \qquad v \mapsto \partial h(v) + H_g^v \cdot v
\end{equation*}
is a well-defined linear isomorphism. We claim that, for all elements of $\widehat V(F_G)$, we can take $h=0$ and $H_g^v=H_g^0$. Indeed, given a representative as above, write, for all $v \in (V_M)_{\bfs g}$,
\begin{equation*}
h_{\bft g}^vw = h((H_g^v)^{-1} \cdot w) \in \textnormal{H}(V_M,C_M).
\end{equation*}
Then, for all $v \in (V_M)_{\bfs g}$,
\begin{equation*}
(h_{\bft g}^v,\partial h(v) + H_g^v \cdot v) \cdot (h(v), H_g^v,v) = (0, h_{\bft g}^v \cdot H_g^v, v),
\end{equation*}
so $v \mapsto (0, h_{\bft g}^v \cdot H_g^v, v)$ represents the same element as $v \mapsto (h(v), H_g^v,v)$ in $\widehat V(F_G)$. Now, for all $v \in (V_M)_{\bfs g}$, we can take the unique element $h_{\bfs g}^v \in \textnormal{H}(V_M,C_M)$ such that
\begin{equation*}
h_{\bft g}^v \cdot H_g^v \cdot h_{\bfs g}^v = H_g^0.
\end{equation*}
Then
\begin{equation*}
(0, h_{\bft g}^v \cdot H_g^v, v) \cdot (h_{\bfs g}^v, v) = (0, H_g^0, v),
\end{equation*}
which shows that, $v \mapsto (0, H_g^0,v)$ represents the same element as $v \mapsto (h(v), H_g^v,v)$ in $\widehat V(F_G)$. This proves the claim, so the map $F_G \ra \widehat V(F_G)$ is indeed invertible.
\end{proof}

This result can be seen as the essential surjectivity of functors that is described in Section \ref{sec: functorial aspects}.

\subsection{From split $2$-term ruths to fat extensions}\label{sec: From 2-term ruths to fat extensions}
Given a VB-groupoid $V_G$, we observed in Remark \ref{rema: fat groupoid of dual} that we can construct an inverse to the canonical map $\widehat V_G \ra \widehat V_G^*$ using a cleavage $\Sigma$. The key ingredient we used was the map
\begin{equation*}
    h_g: (V_M)_{\bfs g} \ra (C_M)_{\bft g} \qquad h_g = r_{g^{-1}}(H_g - \Sigma_g)
\end{equation*}
that satisfies
\begin{equation*}
    \Sigma_g + r_g h_g = H_g.
\end{equation*}
This map $h_g$ is a homotopy, showing that the quasi-action $R_1(g)$ and the action by $H_g$ define the same map in cohomology: for all $v \in (V_M)_{\bfs g}$, we have
\begin{equation*}
    R_1(g)v + [\partial,h_g]v = H_g \cdot v.
\end{equation*}
Defining $H_g$ out of $h_g$ and $\Sigma$ using the above formula, we obtain another description of $\widehat V_G$ in terms of the split $2$-term ruth associated to $V_G$. It should be seen as the fat groupoid associated to a split $2$-term ruth:

\begin{definition}\label{def: fat ruth}
The fat groupoid $\widehat \calE_G$ of a split $2$-term ruth $\calE_G$ is the Lie groupoid
\begin{equation*}
    \widehat \calE_G = \{h_g: (V_M)_{\bfs g} \ra (C_M)_{\bft g} \mid \varphi_g = R_1(g) + \partial h_g \textnormal{ is a linear isomorphism}\}
\end{equation*}
over $M$.
\end{definition}

Notice that, given $h_g \in \widehat\calE_G$, the map
\begin{equation*}
\varphi_g = R_1(g) + [\partial,h_g]
\end{equation*}
is a cochain map that is an isomorphism: it defines the cochain complex representation.\footnote{That $R_1(g) + h_g\partial$ is an isomorphism as well follows from the five-lemma, for $\varphi_g$ is homotopic to $R_1(g)$. Indeed, $\varphi_g$ is therefore a quasi-isomorphism, and since $R_1(g) + \partial h_g$ is an isomorphism, $R_1(g) + h_g\partial$ must be as well.} To clarify, the structure maps are defined again as composition of homotopies (see Lemma \ref{lemm: homotopy composition}): if $h_g$ and $h_{g'}$ are composable (i.e. $g$ and $g'$ are composable), then the homotopy $h_g \cdot h_{g'}$ is such that
\begin{equation*}
    \varphi_g \varphi_{g'} = R_1(gg') + [\partial, h_g \cdot h_{g'}].
\end{equation*}
That is,
\begin{align*}
    h_g \cdot h_{g'} &\coloneq R_2(g,g') + h_g R_1(g') + R_1(g) h_{g'} + h_g \partial h_{g'} \\
h_g^{-1} &\coloneq -(R_2(g^{-1},g) + R_1(g^{-1})h_g)\varphi_g^{-1} = -\varphi_g^{-1} (R_2(g,g^{-1}) + h_g R_1(g^{-1})).
\end{align*}
Alternatively, we may think of $\widehat\calE_G$ as the open set of invertible elements of the Lie category
\begin{equation*}
    \widehat\calE_G^\textnormal{cat} = \textnormal{Hom}(\bfs^*V_M,\bft^*C_M) \rra M
\end{equation*}
with product defined as above. We have an isomorphism of Lie categories
\begin{equation*}
h_\Sigma: \widehat V_G^\textnormal{cat} \ra \widehat\calE_G^\textnormal{cat} \qquad H_g \mapsto r_{g^{-1}}(H_g - \Sigma_g)
\end{equation*}
and this isomorphism restricts to an isomorphism $\widehat V_G \ra \widehat \calE_G$ (see Remark \ref{rema: compare with VB-groupoids: direct sum vs tensor product} and Section \ref{sec: Relation to Representations up to homotopy}).

\begin{remark}\label{rema: twisting by a MC element}
Thinking of $R = (\partial=R_0,R_1,R_2)$ as a Maurer-Cartan element (see Remark \ref{rema: structure maps R}), we can perhaps think of the above product on $\textnormal{Hom}(\bfs^*V_M, \bft^*C_M)$ as a ``twisting'' of the trivial product
\begin{equation*}
h_g \cdot h_{g'} = 0
\end{equation*}
by $R$. The product
\begin{equation*}
h_g \cdot h_{g'} \coloneq h_g \partial h_{g'}
\end{equation*}
we used in Section \ref{sec: the fat category} can also be seen this way by taking $R=(\partial,0,0)$, but notice that this is not a unital ruth.
\end{remark}

\begin{remark}\label{rema: compare with VB-groupoids: direct sum vs tensor product}
The map $h_\Sigma: \widehat V_G \ra \textnormal{Hom}(\bfs^*V_M,\bft^*C_M)$ we constructed above is ``invariant'': given $H_g, H_g' \in \widehat V_G$, we have
\begin{equation*}
h_\Sigma(H_g)-h_\Sigma(H_g') = -h(H_g,H_g')
\end{equation*}
where
\begin{equation*}
h: \widehat V_G \times \widehat V_G \ra \textnormal{Hom}(\bfs^*V_M,\bft^*C_M) \qquad h(H_g,H_g') \coloneq r_{g^{-1}}(H_g'-H_g).
\end{equation*}
Conversely, $\Sigma$ can be recovered as
\begin{equation*}
\Sigma_g = -r_gh_\Sigma(H_g) + H_g
\end{equation*}
where $H_g \in \widehat V_G$ is arbitrary. That $\Sigma$ is unital translates into $h_\Sigma(1+h_x) = h_x$ for all $h_x \in \textnormal{H}(V_M,C_M)$. We can therefore think abstractly of splittings of fat extensions as being invariant maps as above. In Section \ref{sec: Relation to Representations up to homotopy} we put some more details about these \textit{fat splittings}.
\end{remark}

\subsection{From fat extensions to $2$-term ruths}\label{sec: From fat extensions to ruths}
Let $F_G$ be a fat extension of $G$ over $C_M \ra V_M$. Recall that we can construct an exact sequence
\begin{center}
\begin{tikzcd}
1 \ar[r] & \textnormal{H}(V_M,C_M) \ltimes V_M \ar[r] & F_G \ltimes (C_M \ra V_M) \ar[r] & V(F_G) \ar[r] & 1
\end{tikzcd}
\end{center}
of VB-groupoids, where we view $V_M$ as a representation of $\textnormal{H}(V_M,C_M)$ trivially.

\begin{definition}\label{def: invariant complex}
Given a short exact sequence of groupoids
\begin{center}
\begin{tikzcd}
1 \ar[r] & K \ar[r] & G \ar[r] & H \ar[r] & 1
\end{tikzcd}
\end{center}
over $M$,\footnote{All three groupoids are groupoids over $M$ and the maps in the exact sequence are over the identity maps.} the subcomplex\footnote{The condition should be satisfied for all $(g_1,\dots,g_\bullet) \in G^{(\bullet)}$ and $k_1,\dots,k_{\bullet+1} \in K$ with $\bfs g_{\ell-1} = \bfs k_\ell = \bft g_\ell$.}
\begin{equation*}
C^\bullet_{K\textnormal{-rel}} G = \{f \in C^\bullet G \mid f(k_1 g_1 k_2^{-1}, \dots, k_\bullet g_\bullet k_{\bullet+1}^{-1}) = f(g_1,\dots,g_\bullet)\}
\end{equation*}
of $CG$ is called the \textit{$K$-relative complex}.
\end{definition}

The relevance of this complex, that was introduced in general in \cite{Mariarelative}, becomes clear from the following characterisation.

\begin{proposition}\label{prop: invariant complex iso to quotient complex}
Given a short exact sequence of groupoids
\begin{center}
\begin{tikzcd}
1 \ar[r] & K \ar[r] & G \ar[r, "\Phi"] & H \ar[r] & 1
\end{tikzcd}
\end{center}
over $M$, the induced pullback map
\begin{equation*}
\Phi^*: CH \ra CG
\end{equation*}
is an isomorphism onto $C_{K\textnormal{-rel}} G$.
\end{proposition}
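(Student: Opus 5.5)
We show three things in turn: $\Phi^*$ is an injective cochain map, its image lies in $C_{K\textnormal{-rel}}G$, and every $K$-relative cochain descends to a smooth cochain on $H$.

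First, $\Phi^*$ is a cochain map, since $\Phi$ is a groupoid map and so induces a simplicial map $\Phi^{(\bullet)}: G^{(\bullet)} \ra H^{(\bullet)}$ commuting with the face maps $d_k$. Its image lies in $C_{K\textnormal{-rel}}G$. Indeed, $\Phi$ sends every $k \in K$ to a unit, because the sequence is exact and all maps are over the identity of $M$. Hence
\begin{equation*}
\Phi(k_\ell g_\ell k_{\ell+1}^{-1}) = 1_{\bft g_\ell}\,\Phi(g_\ell)\,1_{\bfs g_\ell} = \Phi(g_\ell),
\end{equation*}
so $\Phi^*f$ is invariant. Injectivity of $\Phi^*$ follows from surjectivity of $\Phi^{(\bullet)}: G^{(\bullet)} \ra H^{(\bullet)}$: given $(h_1,\dots,h_\bullet) \in H^{(\bullet)}$, lift each $h_\ell$ to some $g_\ell$. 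Since $\Phi$ is the identity on $M$, consecutive lifts have matching source and target, so $(g_1,\dots,g_\bullet)$ is composable.

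For surjectivity onto $C_{K\textnormal{-rel}}G$, the key step is to identify $\Phi^{(\bullet)}: G^{(\bullet)} \ra H^{(\bullet)}$ as the quotient map of a free and proper action, or at least as a surjective submersion whose fibers are exactly the orbits of the action
\begin{equation*}
(k_1,\dots,k_{\bullet+1}) \cdot (g_1,\dots,g_\bullet) = (k_1 g_1 k_2^{-1}, \dots, k_\bullet g_\bullet k_{\bullet+1}^{-1}).
\end{equation*}

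1. $\Phi^{(\bullet)}$ is a surjective submersion. In a short exact sequence of Lie groupoids, $\Phi: G \ra H$ is a surjective submersion. I would show that $G^{(\bullet)} = G \times_M \cdots \times_M G$ maps submersively onto $H \times_M \cdots \times_M H$ using local sections. Local sections of $\Phi$ exist around any point, and a composable string of such local lifts can be arranged compatibly because $\Phi$ covers $\textnormal{id}_M$. Concretely, pick local sections $\sigma_\ell$ of $\Phi$ and correct them by units of $K$ so that sources and targets match. Since $\Phi$ preserves $\bfs$ and $\bft$, $\bfs\sigma_\ell(h_\ell) = \bfs h_\ell = \bft h_{\ell+1} = \bft\sigma_{\ell+1}(h_{\ell+1})$ holds automatically, so no correction is in fact needed.

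2. The fibers of $\Phi^{(\bullet)}$ are the $K^{\bullet}$-orbits. If $\Phi(g_\ell) = \Phi(g'_\ell)$, then $g'_\ell g_\ell^{-1} \in K$. So already with $k_{\ell+1} = 1$ we get $g'_\ell = k_\ell g_\ell$, and the invariance condition covers this case.

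3. Descent. A $K$-invariant smooth $f$ is therefore constant on the fibers of the surjective submersion $\Phi^{(\bullet)}$. It descends to a function $\bar f$ on $H^{(\bullet)}$, and $\bar f$ is smooth because $\bar f = f \circ \sigma$ for any local section $\sigma$ from step 1. By construction $\Phi^*\bar f = f$.

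The main obstacle is the smooth-structure part: making sure $\Phi$ is a surjective submersion, so that local sections exist, and that these give local sections of $\Phi^{(\bullet)}$. Once that is in place, the rest is set-theoretic.
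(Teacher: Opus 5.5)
Your argument is essentially the paper's. The paper inverts $\Phi^*$ by $f \mapsto f(\sigma h_1,\dots,\sigma h_\bullet)$ for a set-theoretic section $\sigma$ of $\Phi$, and gets smoothness by locally replacing $\sigma$ in each slot with a smooth local section $\sigma_\ell$. That is exactly your descent along $\Phi^{(\bullet)}$, including your observation that componentwise lifts are automatically composable because $\Phi$ covers $\textnormal{id}_M$.

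One step needs repair, namely step 2. You claim that invariance gives $f(k'_1g_1,\dots,k'_\bullet g_\bullet)=f(g_1,\dots,g_\bullet)$ ``already with $k_{\ell+1}=1$''. But in the definition of $C_{K\textnormal{-rel}}G$ the tuple $(k_1,\dots,k_{\bullet+1})$ is shared between slots. The element $k_\ell$ enters slot $\ell-1$ as $g_{\ell-1}k_\ell^{-1}$ and slot $\ell$ as $k_\ell g_\ell$. Setting every $k_{\ell+1}=1$ leaves only $k_1$ free, so you only obtain invariance under changing the first slot.

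To reach an arbitrary point $(k'_1g_1,\dots,k'_\bullet g_\bullet)$ of the fibre, you must solve $k_\ell g_\ell k_{\ell+1}^{-1}=k'_\ell g_\ell$ for all $\ell$. One way is to set $k_{\bullet+1}=1$ and then define, going downward, $k_\ell = k'_\ell\,(g_\ell k_{\ell+1} g_\ell^{-1})$. This element lies in $K$ precisely because $K=\ker\Phi$ is normal in $G$: indeed $\Phi(g_\ell k_{\ell+1}g_\ell^{-1})=1$.

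With this normality argument, the fibres of $\Phi^{(\bullet)}$ are exactly the orbits of the action, and the rest of your proof goes through. The paper's proof relies on the same fact implicitly when it treats the choice of $\sigma$, or of the $\sigma_\ell$, as irrelevant.
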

\begin{proof}
To see that $\Phi^*$ maps into $C_{K\textnormal{-rel}} G$ is readily verified by direct computation. The inverse is given by choosing a set-theoretic section $\sigma$ of $G \ra H$ (over the identity map) and setting
\begin{equation*}
C^\bullet_{K\textnormal{-rel}} G \ra C^\bullet H \qquad f \mapsto ((h_1,\dots,h_\bullet) \mapsto f(\sigma h_1,\dots, \sigma h_\bullet)).
\end{equation*} 
The map $(h_1,\dots,h_\bullet) \mapsto f(\sigma h_1,\dots, \sigma h_\bullet)$ is indeed smooth, for locally (in $H^{(\bullet)}$) we can replace $\sigma$ in the $\ell$-th component with a smooth section $\sigma_\ell$.
\end{proof}

Returning to the setup where $F_G$ is a fat extension, we write
\begin{equation*}
C_\textnormal{inv}(F_G; V_M^* \ra C_M^*) = C_{(\textnormal{H}(V_M,C_M) \ltimes V_M)\textnormal{-rel}}(F_G; V_M^* \ra C_M^*).
\end{equation*}
(with respect to the short exact sequence of VB-groupoids above) and we call its elements \textit{invariant}. The reason we passed to the dual cochain complex representation is because that corresponds to the VB-complex of the VB-groupoid $F_G \ltimes (C_M \ra V_M)$. Similarly, we write
\begin{equation*}
C_\textnormal{inv}(F_G; C_M \ra V_M) = C_{(\textnormal{H}(V_M,C_M) \ltimes C_M^*)\textnormal{-rel}}(F_G; C_M \ra V_M).
\end{equation*}
As a corollary of Proposition \ref{prop: invariant complex iso to quotient complex}, we obtain:

\begin{corollary}\label{cor: invariant complex fat extension}
Let $F_G$ be a fat extension over $C_M \ra V_M$. Then the pullback induces an isomorphism of differential graded $CG$-modules:
\begin{equation*}
C_\textnormal{VB} V(F_G)^* \xra{\sim} C_\textnormal{inv}(F_G; C_M \ra V_M).
\end{equation*}
\end{corollary}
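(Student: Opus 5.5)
We apply Proposition \ref{prop: invariant complex iso to quotient complex} to the short exact sequence of VB-groupoids
\begin{center}
\begin{tikzcd}
1 \ar[r] & \textnormal{H}(V_M,C_M) \ltimes V_M \ar[r] & F_G \ltimes (C_M \ra V_M) \ar[r, "\Pi"] & V(F_G) \ar[r] & 1
\end{tikzcd}
\end{center}
from Section \ref{sec: From fat extensions to VB-groupoids}, but after dualising. The first step is to observe that $\Pi$ is a fiberwise surjective VB-groupoid map over the surjective submersion $F_G \ra G$. Its dual is therefore a fiberwise injective VB-groupoid map
\begin{equation*}
\Pi^*: F_G \times_G V(F_G)^* \ra (F_G \ltimes (C_M \ra V_M))^* = F_G \ltimes (V_M^* \ra C_M^*).
\end{equation*}
Pulling back linear cochains along the pullback map $F_G \times_G V(F_G)^* \ra V(F_G)^*$, and then pushing forward by the identification just described, gives a map $C_\textnormal{lin} V(F_G)^* \ra C_\textnormal{lin}(F_G \ltimes (V_M^* \ra C_M^*))$. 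Equivalently, and more simply, we use pullback by $\Pi$ on the nerve level.

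The cleanest route is to work with the projectable model: $C_\textnormal{VB} V(F_G)^* \cong C_\textnormal{proj} V(F_G)$. Similarly, $C_\textnormal{VB}$ of the action VB-groupoid $(F_G \ltimes (C_M\ra V_M))^*$ is $C(F_G; C_M \ra V_M)$. Pulling back along the nerve map of $F_G \ra G$, a projectable cochain $c: G^{(\bullet)} \ra V(F_G)$ lifts uniquely to a projectable cochain of $F_G \ltimes(C_M\ra V_M)$ with values in the $\Pi$-preimage. Here we use that $\Pi$ is a fiberwise isomorphism $(C_M)_{\bft g}\oplus (V_M)_{\bfs g} \ra V(F_G)_g$ once an element $H_g$ of $F_G$ is fixed, exactly as $r_g + H_g$ is for $\widehat V_G$. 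The general Proposition \ref{prop: invariant complex iso to quotient complex}, applied to the nerves (levelwise, with the kernel groupoid $\textnormal{H}(V_M,C_M)\ltimes C_M^*$ on the dual side), then identifies the image with the $(\textnormal{H}(V_M,C_M)\ltimes C_M^*)$-relative cochains. This is by definition $C_\textnormal{inv}(F_G; C_M \ra V_M)$. The pullback is a cochain map because it is induced by a groupoid morphism. It is $CG$-linear, since $CG$ acts on the target through $C G \ra C F_G$, and pullback is multiplicative.

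The main obstacle is checking that the relative-complex characterisation survives the passage to VB/projectable cochains. Proposition \ref{prop: invariant complex iso to quotient complex} is stated for ordinary cochains on groupoids. We must verify two things. First, invariance under the dual kernel $\textnormal{H}(V_M,C_M)\ltimes C_M^*$ must match exactly the condition of descending through $\Pi^*$. Second, the VB conditions (vanishing on $0_{g_1}$ and the $\delta$-condition) must be preserved in both directions. For both points we would restrict the isomorphism $\Phi^*: C_\textnormal{lin}V(F_G)^*\ra C_\textnormal{rel}$ to linear cochains, using that $\Pi$ commutes with scalar multiplication. We would then note that the VB conditions are stated via zero sections and the groupoid structure, both of which $\Pi^*$ preserves and reflects because it is a surjective groupoid morphism. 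For smoothness of the inverse we would use local sections of $F_G \ra G$, as in the proof of Proposition \ref{prop: invariant complex iso to quotient complex}.
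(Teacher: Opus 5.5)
Your proposal is correct and follows the paper's route. The corollary is just Proposition~\ref{prop: invariant complex iso to quotient complex} applied to the dualised sequence $1 \ra \textnormal{H}(V_M,C_M)\ltimes C_M^* \ra F_G\ltimes(V_M^*\ra C_M^*) \ra V(F_G)^* \ra 1$ of groupoids over $C_M^*$, followed by restriction to linear and VB cochains, which the surjective, fiberwise linear quotient map preserves and reflects exactly as in your last paragraph. The detour through projectable cochains is unnecessary, and as written it blurs two different models. Also, $\Pi$ is fiberwise an isomorphism, not merely surjective; this is what gives $F_G\times_G V(F_G)^* \cong F_G\ltimes(V_M^*\ra C_M^*)$ and hence the exactness of the dual sequence.
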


To make the invariance condition more concrete, thereby also explaining the terminology, notice that the left action of $\textnormal{H}(V_M,C_M) \ltimes V_M$ on $F_G \ltimes (C_M \ra V_M)$ is given by
\begin{equation*}
(h_{\bft g},\partial c + H_g \cdot v) \cdot (c,H_g,v) = (h_{\bft g}(H_g \cdot v) + c, h_{\bft g} \cdot H_g, v)
\end{equation*}
and the right action by
\begin{equation}\label{eq: right action by elements of the bundle of invertible homotopies}
(c,H_g,v) \cdot (h_{\bfs g}, v) = (c - H_g \cdot (h_{\bfs g}v), H_g \cdot h_{\bfs g}, v).
\end{equation}
Working out the condition of being invariant using these explicit expressions, we obtain:

\begin{proposition}\label{prop: invariant cochains in the fat case}
The invariant cochains in $C^\bullet(F_G; C_M \ra V_M)$ are those pairs
\begin{equation*}
(f_0,f_1) \in C^\bullet(F_G; C_M) \oplus C^{\bullet-1}(F_G; V_M)
\end{equation*}
for which 
\begin{equation}\label{eq: fat f_V_M relation to f_C_M 0}
f_1(H_{g_2},\dots,H_{g_\bullet}) = f_1(H_{g_2}',\dots,H_{g_\bullet}')
\end{equation}
and
\begin{equation}\label{eq: fat f_V_M relation to f_C_M 1}
f_0(H_{g_1},\dots,H_{g_\bullet}) - f_0(H_{g_1}',\dots,H_{g_\bullet}') = h_{\bft g}(H_{g,1} \cdot f_1(H_{g_2},\dots,H_{g_\bullet})),
\end{equation}
where $h_{\bft g}$ is the unique element in $\textnormal{H}(V_M,C_M)$ such that $h_{\bft g} \cdot H_{g,1} = H_{g,1}'$ in $F_G$.
\end{proposition}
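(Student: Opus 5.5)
The proof is a direct unwinding of Definition \ref{def: invariant complex}: a cochain on $F_G \ltimes (C_M \ra V_M)$ is invariant when it is unchanged under replacing each arrow $x_\ell$ of the nerve by $k_\ell x_\ell k_{\ell+1}^{-1}$ with $k_\ell \in \textnormal{H}(V_M,C_M) \ltimes V_M$. First I will identify elements of $C^\bullet(F_G; C_M \ra V_M)$, that is of the VB-complex of $F_G \ltimes (V_M^* \ra C_M^*)$, with pairs $(f_0,f_1)$. This is the split description of Proposition \ref{prop: VB complex} applied to the action VB-groupoid, which carries the canonical cleavage $(H_g,v) \mapsto (0,H_g,v)$. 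Here $f_0$ is the $C_M$-component evaluated on $(H_{g_1},\dots,H_{g_\bullet})$ and $f_1$ is the $V_M$-component, which depends only on $(H_{g_2},\dots,H_{g_\bullet})$. With this identification a cochain $f$ on the nerve is written as
\begin{equation*}
f(c,H_{g_1},v;H_{g_2},\dots) = \langle f_0(H_{g_1},\dots),\,\cdot\,\rangle + \langle f_1(H_{g_2},\dots),\,\cdot\,\rangle,
\end{equation*}
where the pairing is taken with the core and side components, transported by the action.

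Next I reduce the invariance condition to the two generating moves. Since the relative condition is multiplicative in the $k_\ell$, it suffices to check invariance under the right action \eqref{eq: right action by elements of the bundle of invertible homotopies} on the last slot of each entry, combined with the left action on the first slot. Because the constraints between neighbouring slots are shared ($\bfs$ of one entry equals $\bft$ of the next), this amounts to letting each $H_{g_\ell}$ vary within its $\textnormal{H}(V_M,C_M)$-coset, together with the induced correction in the $C_M$-entry. The moves on the slots $\ell \ge 2$ change no core component seen by $f_1$, so invariance forces $f_1$ to be independent of the coset representatives; this is \eqref{eq: fat f_V_M relation to f_C_M 0}. The moves on the slots $2,\dots,\bullet$ also leave $f_0$ invariant once the correction terms are accounted for.

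The remaining move is the left action on the first slot, $(c,H_{g_1},v) \mapsto (h_{\bft g}(H_{g_1} \cdot v) + c,\, h_{\bft g}\cdot H_{g_1},\, v)$. It shifts the core entry by $h_{\bft g}(H_{g_1}\cdot v)$. In the split description, where $v$ is paired against $f_1$ transported by $H_{g_1}$, this shift produces exactly the discrepancy $f_0(H_{g_1},\dots) - f_0(H_{g_1}',\dots) = h_{\bft g}(H_{g_1} \cdot f_1(H_{g_2},\dots))$ with $H_{g_1}' = h_{\bft g} \cdot H_{g_1}$, which is \eqref{eq: fat f_V_M relation to f_C_M 1}. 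Conversely, if both equations hold, then every generating move preserves $f$, so $f$ is invariant.

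The main obstacle will be bookkeeping in the first-slot computation: getting the duality and sign conventions right, since $f_0$ takes values in $C_M$ but is really evaluated via the dual pairing, with the Koszul signs of Remark \ref{rema: dual of ruth}. The right-action moves must also be seen to telescope consistently, so that no extra condition on $f_0$ arises from the interior slots. I would handle this first for $\bullet = 1$, where $f_1 \in \Gamma V_M$ and the check is a single line, and then observe that the higher slots only enter through $CG$-linearity.
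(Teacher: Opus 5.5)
Your strategy is the paper's: pass to VB-cochains of an action VB-groupoid through the flat splitting, then unwind the relative condition move by move. But you run it on the wrong one of the two dual action groupoids, and the decisive step is asserted rather than computed.

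By definition $C(F_G;C_M\ra V_M)=C_\textnormal{VB}(F_G\ltimes(V_M^*\ra C_M^*))$. So a cochain is a function of arrows $(\phi^V,H_{g_1},\phi^C)$, with core entry $\phi^V\in(V_M^*)_{\bft g_1}$ and side entry $\phi^C\in(C_M^*)_{\bfs g_1}$. The relevant kernel is $\textnormal{H}(V_M,C_M)\ltimes C_M^*$, where $h\in\textnormal{H}(V_M,C_M)$ enters through $(h^{-1})^*\in\textnormal{H}(C_M^*,V_M^*)$ (Proposition \ref{prop: dual of invertible homotopy bundle}); this is the setting of the paper's proof. You instead let $\textnormal{H}(V_M,C_M)\ltimes V_M$ act on arrows $(c,H_{g_1},v)$ of $F_G\ltimes(C_M\ra V_M)$. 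The invariant VB-cochains there form $C_\textnormal{inv}(F_G;V_M^*\ra C_M^*)$: pairs in $C(F_G;V_M^*)\oplus C(F_G;C_M^*)$ satisfying a differently shaped condition (compare Proposition \ref{prop: cochain map to representation up to homotopy of fat groupoid}), not the pairs in the statement. This is why your pairing does not typecheck: $f_0\in C_M$ against $c\in C_M$, and $f_1\in V_M$ against $v\in V_M$. In the correct groupoid, with the flat splitting, one has
\[
f(\phi^V,H_{g_1},\phi^C;H_{g_2},\dots)=\langle \partial^*\phi^V+H_{g_1}\cdot\phi^C,\,f_0(H_{g_1},\dots)\rangle+\langle \phi^V,\,H_{g_1}\cdot f_1(H_{g_2},\dots)\rangle ,
\]
so $f_1$ is seen by the core entry, not the side. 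The left move sends $(\phi^V,H_{g_1},\phi^C)$ to $(\phi^V-(h_{\bft g}^{-1})^*(H_{g_1}\cdot\phi^C),\,h_{\bft g}\cdot H_{g_1},\,\phi^C)$ and fixes the target. The term $h_{\bft g}(H_{g_1}\cdot f_1)$ appears only after three steps: pair this core shift with the new transport $(1+\partial h_{\bft g})(H_{g_1}\cdot f_1)$, use $h^{-1}(1+\partial h)=-h$, and regroup into the target component. It does not come from ``$v$ paired against $f_1$''.

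Second, the interior moves hide the one structural input the statement needs. The move at slot $2$ replaces $(H_{g_1},H_{g_2})$ by $(H_{g_1}h,h^{-1}H_{g_2})$ and also shifts the core entry of the first arrow, so it is not passive. It yields a right-translation condition relating $f_0(H_{g_1}h,\dots)$ to $f_0(H_{g_1},\dots)$ through $H_{g_1}\cdot(hf_1)$ (cf.\ Remark \ref{rema: invariance condition makes f_0 determine f_1}). Your converse (``every generating move preserves $f$'') requires deriving this from \eqref{eq: fat f_V_M relation to f_C_M 1}, which only involves left translation. To do so, write $H_{g_1}h=(H_{g_1}hH_{g_1}^{-1})H_{g_1}$ and use the fat-extension axiom that the conjugation action of $F_G$ on $\textnormal{H}(V_M,C_M)$ agrees with the one induced by the representation, i.e.\ $(H_{g_1}hH_{g_1}^{-1})(H_{g_1}\cdot w)=H_{g_1}\cdot(hw)$. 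Your sketch never invokes this axiom, yet without it the two displayed conditions would not account for the slot-$2$ moves.

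Finally, the reduction to $\bullet=1$ ``through $CG$-linearity'' is not justified. $C^\bullet(F_G;C_M\ra V_M)$ is a $CF_G$-module and invariance is not $CF_G$-linear. Showing that both sides are generated in degrees $\le1$ over $CG$ would be as much work as the general-degree computation. In that computation the moves at slots $\ge3$ only act on base arrows and give coset-independence in the later arguments, so slots $1$ and $2$ are the only ones needing the work above.
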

\begin{proof}
Explicitly, the identification
\begin{equation*}
C(F_G; C_M \ra V_M) \xra{\sim} C_\textnormal{VB}(F_G \ltimes (V_M^* \ra C_M^*)) \qquad (f_0, f_1) \mapsto f
\end{equation*}
is given by
\begin{equation*}
f((f_{\bft g_1}^{V_M}, H_{g_1}, f_{\bfs g_1}^{C_M}), g_2, \dots, g_\bullet) = f_{\bfs g_1}^{C_M}(H_{g_1}^{-1} \cdot f_0(H_{g_1},\dots,H_{g_n})) - f_{\bft g_1}^{V_M}(H_{g_1} \cdot f_1(H_{g_2}, \dots, H_{g_\bullet})).
\end{equation*}
Now, the elements $f \in C_\textnormal{VB}(F_G \ltimes (V_M^* \ra C_M^*))$ that are inside the relative complex with respect to the sequence
\begin{center}
\begin{tikzcd}
1 \ar[r] & \textnormal{H}(V_M,C_M) \ltimes C_M^* \ar[r] & F_G \ltimes (V_M^* \ra C_M^*) \ar[r] & V(F_G) \ar[r] & 1
\end{tikzcd}
\end{center}
are those for which
\begin{equation*}
f((f_{\bft g_1}^{V_M}, H_{g_1}, f_{\bfs g_1}^{C_M}), g_2, \dots, g_\bullet) = f((f_{\bfs g_1}^{C_M}(H_{g_1}^{-1} \cdot h_{\bft g_1}) + f_{\bft g_1}^{V_M}, h_{\bft g_1} \cdot H_{g_1}, f_{\bfs g_1}^{C_M}), g_2, \dots, g_\bullet).
\end{equation*}
Therefore, $f_1$ is independent of its entries:
\begin{equation*}
f_1(H_{g_2}, \dots, H_{g_\bullet}) = f_1(H_{g_2}',\dots, H_{g_\bullet}') = f(g_2, \dots, g_\bullet)
\end{equation*}
and
\begin{align*}
f_0(H_{g_1}, \dots, H_{g_\bullet}) &= f_0(H_{g_1},g_2,\dots, g_\bullet) \\
&= f_0(h_{\bft g_1} \cdot H_{g_1}, g_2,\dots, g_\bullet) - h_{\bft g_1}(H_{g_1} \cdot f_1(g_2, \dots, g_\bullet)).
\end{align*}
Taking $h_{\bft g_1}$ such that $h_{\bft g_1} \cdot H_{g_1} = H_{g_1}'$, we obtain the desired result.
\end{proof}

Notice that the dual result is that
\begin{equation*}
C_\textnormal{VB} V(F_G) \xra{\sim} C_\textnormal{inv}(F_G; V_M^* \ra C_M^*),
\end{equation*}
but this isomorphism can be obtained as the dual of the above isomorphism.

\begin{remark}\label{rema: invariance condition makes f_0 determine f_1}
Given $(f_0,f_1)$ satisfying the conditions from Proposition \ref{prop: invariant cochains in the fat case}, we often write
\begin{equation*}
    f_0(H_{g_1},g_2,\dots,g_\bullet) = f_0(H_{g_1},\dots,H_{g_\bullet}) \qquad f_1(g_2,\dots,g_\bullet) = f_1(H_{g_2},\dots,H_{g_\bullet})
\end{equation*}
as above. We can write the invariance condition also in the following way: for all $h_{\bft g_1} \in \textnormal{H}(V_M,C_M)$,
\begin{equation*}
f_0(h_{\bft g_1} \cdot H_{g_1}, g_2,\dots, g_\bullet) = f_0(H_{g_1}, g_2,\dots, g_\bullet) + h_{\bft g_1}(H_{g_1} \cdot f_1(g_2,\dots,g_\bullet)).
\end{equation*}
Yet another way is that, for all $h_{\bfs g_1} \in \textnormal{H}(V_M,C_M)$,
\begin{equation*}
f_0(H_{g_1} \cdot h_{\bfs g_1}, g_2,\dots, g_\bullet) = f_0(H_{g_1}, g_2,\dots, g_\bullet) + H_{g_1} \cdot (h_{\bfs g_1}f_1(g_2,\dots,g_\bullet)).
\end{equation*}
If $C_M \neq 0$, then invariant cochains $(f_0,f_1)$ are determined by the element $f_0 \in C(F_G; C_M)$. For this reason, we usually write $C_\textnormal{inv}^\bullet(F_G; C_M \ra V_M)$ as the subcomplex
\begin{equation*}
C_\textnormal{inv}^\bullet(F_G; C_M) = \{f_0 \in C^\bullet(F_G; C_M)\mid \exists f_1 \in C^{\bullet-1}(F_G; V_M) \textnormal{ such that } \eqref{eq: fat f_V_M relation to f_C_M 0} \textnormal{ and } \eqref{eq: fat f_V_M relation to f_C_M 1} \textnormal{ hold}\}
\end{equation*}
of $C(F_G; C_M)$. This is done mostly for notational convenience, so usually $C_\textnormal{inv}^\bullet(F_G; C_M \ra V_M)$ is meant when $C_\textnormal{inv}^\bullet(F_G; C_M)$ is written.
\end{remark}

For clarity, we arrived at the following definition for abstract fat extensions:

\begin{definition}\label{def: invariant cochains}
Let $F_G$ be a fat extension of $G$ over $C_M \ra V_M$. The \textit{invariant complex}
\begin{equation*}
C_\textnormal{inv}(F_G; C_M) \subset C(F_G; C_M \ra V_M)
\end{equation*}
consists of pairs $(f_0,f_1) \in C(F_G; C_M \ra V_M)$ such that, for all $h_{\bft g} \in \textnormal{H}(V_M,C_M)$,\footnote{Implicit from the notation is that $f_0$ does not depend on the last $\bullet-1$ entries and $f_1$ is independent of its entries.}
\begin{equation*}
f_0(h_{\bft g} \cdot H_{g_1},g_2,\dots,g_\bullet) = f_0(H_{g_1},g_2,\dots,g_\bullet) + h_{\bft g_1}(H_{g_1} \cdot f_1(g_2,\dots,g_\bullet)).
\end{equation*}
\end{definition}

\begin{remark}\label{rema: invariance condition for fat groupoid of VB-groupoid}
If $F_G = \widehat V_G$ for a VB-groupoid $V_G$, and $H_g,H_g' \in \widehat V_G$,
\begin{equation*}
h_{\bft g}(H_g \cdot v) = h(H_g,H_g') = r_{g^{-1}}(H_g'v - H_gv)
\end{equation*}
where $h_{\bft g} \in \textnormal{H}(V_M,C_M)$ is the map
\begin{equation*}
h_{\bft g} = -1 + H_g' \cdot H_g^{-1}.
\end{equation*}
This can be verified by direct computation.
\end{remark}

\begin{remark}\label{rema: invariant complex fat category extension}
    The invariant complex of a fat category extension is defined in exactly the same way as above.
\end{remark}

For convenience of the reader, we write the VB-complex of a VB-groupoid $V_G$ in terms of the fat groupoid $\widehat V_G$ (as the invariant complex) in the next section explicitly, and we prove this fact directly.

\subsection{From VB-groupoids to $2$-term ruths using the fat groupoid}\label{sec: From fat extensions to ruths using the fat groupoid}
From the previous section, Section \ref{sec: From fat extensions to ruths}, we see that there is a model for $C_{\textnormal{VB}} V_G$ in terms of the fat groupoid $\widehat V_G$. As mentioned there, it is a subcomplex
\begin{equation*}
C_\textnormal{inv}(\widehat V_G; V_M^* \ra C_M^*) \subset C(\widehat V_G; V_M^* \ra C_M^*)
\end{equation*}
of invariant cochains, but one can often view it as a subcomplex
\begin{equation*}
C_\textnormal{inv}(\widehat V_G; V_M^*) \subset C(\widehat V_G; V_M^*)
\end{equation*}
(if $V_M $ is not the zero bundle) which we sometimes use for notational convenience. We write here an explicit isomorphism between $C_\textnormal{VB} V_G$ and $C_\textnormal{inv}(\widehat V_G; V_M^* \ra C_M^*)$.

Consider an element $f$ of $C^\bullet_\textnormal{VB} V_G$. Then we can construct elements
\begin{equation*}
f_0 \in C^\bullet(\widehat V_G; V_M^*)\textnormal{ and } f_1 \in C^{\bullet-1}(\widehat V_G; C_M^*)
\end{equation*}
by setting
\begin{align*}
f_0(H_{g_1},\dots,H_{g_\bullet})v &= f(H_{g_1}(\bft H_{g_1})^{-1}v, g_2, \dots, g_\bullet)
\\
f_1(H_{g_2},\dots,H_{g_\bullet})c &= f(-c^{-1}, g_2, \dots, g_\bullet)
.
\end{align*}
Notice the resemblance with the equations that appeared in the alternative proof of Proposition \ref{prop: VB complex} in Section \ref{sec: Ruths as differential graded modules}.

\begin{proposition}\label{prop: cochain map to representation up to homotopy of fat groupoid}
The map
\begin{align*}
C_{\textnormal{VB}} V_G \ra C(\widehat V_G, V_M^* \ra C_M^*); \quad f \mapsto (f_0, f_1)
\end{align*}
is a cochain map. Moreover, it maps bijectively onto $C_\textnormal{inv}(\widehat V_G, V_M^* \ra C_M^*)$, i.e. the pairs $(f_0,f_1)$ such that 
\begin{equation}\label{eq: f_V_M relation to f_C_M 0}
f_1(H_{g_2},\dots,H_{g_\bullet}) = f_1(H_{g_2}',\dots,H_{g_\bullet}')
\end{equation}
and
\begin{equation}\label{eq: f_V_M relation to f_C_M 1}
f_0(H_{g_1},\dots,H_{g_\bullet})v - f_0(H_{g_1}',\dots,H_{g_\bullet}')v = f_1(H_{g_2},\dots,H_{g_\bullet}) h(H_{g_1}^{-1}, (H_{g_1}')^{-1})v
\end{equation}
where
\begin{equation*}
h(H_{g_1^{-1}},H_{g_1^{-1}}'): (V_M)_{\bft g_1} \ra (C_M)_{\bft g_2}
\end{equation*}
is given by
\begin{equation*}
h(H_{g_1^{-1}},H_{g_1^{-1}}')v = r_{g_1}(H_{g_1^{-1}}'v - H_{g_1^{-1}}v).
\end{equation*}
\end{proposition}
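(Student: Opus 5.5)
The plan is to reduce to Corollary \ref{cor: invariant complex fat extension} applied to the fat extension $\widehat V_G$, and to make the isomorphism explicit. That corollary, via Proposition \ref{prop: invariant complex iso to quotient complex}, already gives an isomorphism
\begin{equation*}
C_\textnormal{VB} V(\widehat V_G) \xra{\sim} C_\textnormal{inv}(\widehat V_G; V_M^* \ra C_M^*).
\end{equation*}
It is obtained by pulling back along the quotient VB-groupoid map $\widehat V_G \ltimes (C_M \ra V_M) \ra V_G$, $(c,H_g,v) \mapsto r_gc + H_gv$, from Proposition \ref{prop: fat exact sequence of VB-groupoids}. Proposition \ref{prop: essential surjectivity of fat construction} identifies $V(\widehat V_G) \cong V_G$. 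The pullback along a VB-groupoid map is a cochain map, since it commutes with all face maps. It therefore suffices to check two things. First, under the identification $C(\widehat V_G; V_M^* \ra C_M^*) \cong C_\textnormal{VB}(\widehat V_G \ltimes (C_M \ra V_M))$ written in the proof of Proposition \ref{prop: invariant cochains in the fat case}, the pullback of $f$ is exactly the pair $(f_0,f_1)$ of the statement. Second, the invariance conditions of Proposition \ref{prop: invariant cochains in the fat case} take the form \eqref{eq: f_V_M relation to f_C_M 0}--\eqref{eq: f_V_M relation to f_C_M 1}.

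For the first point, evaluate the pulled-back cochain on a first entry of the form $(c, H_{g_1}, v)$, using that $f$ only depends on $(v_1, g_2, \dots, g_\bullet)$. Because $f$ is linear, it splits into a part coming from $r_{g_1}c$ and a part coming from $H_{g_1}v$. Next rewrite the representative $H_{g_1}v$ as $H_{g_1}(\bft H_{g_1})^{-1}w$ with $w = H_{g_1}\cdot v$, which gives $f_0$. The core part $r_{g_1}c$ is handled by the VB-complex condition $f(0_{g_1}\cdot u, \dots) = f(u,\dots)$ from the footnote on $C_\textnormal{VB}$, which turns it into $f(-c'^{-1}, g_2,\dots)$ for $c' = H_{g_1}^{-1}\cdot c$ up to sign, and this gives $f_1$. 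Matching the signs and the actions $H_{g_1}\cdot$ against the explicit formula in the proof of Proposition \ref{prop: invariant cochains in the fat case} is a routine but sign-sensitive computation.

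For the second point, dualise Proposition \ref{prop: invariant cochains in the fat case}: $f_1$ must be independent of the $H$'s, and $f_0$ may change under $H_{g_1} \mapsto H_{g_1}'$ only by $f_1$ composed with the relevant homotopy. Remark \ref{rema: invariance condition for fat groupoid of VB-groupoid} expresses this homotopy as $h(H,H') = r_{g^{-1}}(H'v - Hv)$. In the dual picture the homotopy acts on $V_M^*$ through inverses, which is why $h(H_{g_1}^{-1}, (H_{g_1}')^{-1})$ appears. I expect the main obstacle to be exactly this bookkeeping: checking that the element $h_{\bft g}$ with $h_{\bft g}\cdot H = H'$ corresponds under dualisation and inversion (Propositions \ref{prop: dual of invertible homotopy bundle} and \ref{prop: fat groupoid of dual}) to $h(H_{g_1}^{-1},(H'_{g_1})^{-1})$ with the correct source and target fibres.

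As a fallback, or a direct check, I would verify \eqref{eq: f_V_M relation to f_C_M 1} straight from the definitions. The vectors $H_{g_1}(\bft H_{g_1})^{-1}v$ and $H'_{g_1}(\bft H'_{g_1})^{-1}v$ both have target $v$, so their difference lies in $\ker \bft_{g_1} = \im \ell_{g_1}$. Applying $f$ and the VB-condition then yields $f_1$ evaluated on that core element, which one identifies with $h(H_{g_1}^{-1},(H'_{g_1})^{-1})v$ using the inverse formula $(H_g)^{-1}v = (H_g(\bft_gH_g)^{-1}v)^{-1}$. Bijectivity then follows either from the corollary, or from the alternative proof of Proposition \ref{prop: VB complex}: a local section $H$ reconstructs $f$ from $(f_0,f_1)$ via $f(v) = f_1\,\omega^{H}_g v + f_0(H_g)\bft_g v$, and invariance guarantees that this does not depend on $H$.
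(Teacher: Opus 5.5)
Your handling of the invariance conditions and of bijectivity is precisely the paper's argument. That covers the direct check that $H_{g_1}(\bft H_{g_1})^{-1}v-H'_{g_1}(\bft H'_{g_1})^{-1}v$ lies in $\ker\bft_{g_1}=\im\ell_{g_1}$, and the reconstruction $f(v_1,\dots)=f_1\,\omega^{H_{g_1}}_{g_1}v_1+f_0(H_{g_1},\dots)\bft v_1$, which is independent of the $H$'s exactly when the invariance conditions hold. So the detour through dualising Proposition~\ref{prop: invariant cochains in the fat case} and Propositions~\ref{prop: dual of invertible homotopy bundle}, \ref{prop: fat groupoid of dual} is not needed. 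Obtaining the cochain-map property as a pullback along a VB-groupoid map is also the paper's idea.

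The problem is the step where you compute that pullback. You claim that the VB condition $f(0_{g_1}\cdot u,\dots)=f(u,\dots)$ turns $f(r_{g_1}c,g_2,\dots)$ into a single $f_1$-term. It cannot: anything of the form $0_{g_1}\cdot u$ has target $0$, whereas $\bft(r_{g_1}c)=\partial c$, so $r_{g_1}c\notin\ker\bft_{g_1}$. What is true is
\[
r_{g_1}c=\ell_{g_1}(H_{g_1}^{-1}\cdot c)+H_{g_1}(\bft H_{g_1})^{-1}\partial c .
\]
Hence the pullback of $f$ along $(c,H_{g_1},v)\mapsto r_{g_1}c+H_{g_1}v$ is
\[
f_1(H_{g_2},\dots,H_{g_\bullet})(H_{g_1}^{-1}\cdot c)+f_0(H_{g_1},\dots,H_{g_\bullet})(\partial c+H_{g_1}\cdot v).
\]
Your version drops the term $f_0(H_{g_1},\dots)\partial c$. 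This is not a sign issue: without that term the expression is in general not even a VB-cochain of $\widehat V_G\ltimes(C_M\ra V_M)$.

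Your route therefore works only once you make two corrections. First, use the decomposition above. Second, identify $C_\textnormal{VB}(\widehat V_G\ltimes(C_M\ra V_M))$ with $C(\widehat V_G;V_M^*\ra C_M^*)$ by pairing the $V_M^*$-component with the target $\partial c+H_{g_1}\cdot v$, and the $C_M^*$-component with $H_{g_1}^{-1}\cdot c$. With that identification the pullback is exactly $(f_0,f_1)$.

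Be careful if you transpose the formula written in the proof of Proposition~\ref{prop: invariant cochains in the fat case}. It pairs the degree-zero component with the source transported by $H_{g_1}$ rather than with the target. Matching against it would make your current computation look consistent, although both omit the same term.

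The paper avoids all of this by pulling back along the other action groupoid, $(C_M\ra V_M)\rtimes\widehat V_G\ra V_G$, $(v,H_g,c)\mapsto\ell_gc+H_g(\bft H_g)^{-1}v$. There the core part already lies in $\ker\bft_g$ and the side part has target $v$, so the pullback is $(f_0,f_1)$ directly by definition.
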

\begin{proof}
The map is a cochain map, for it can be seen as the pullback of the VB-groupoid map
\begin{equation*}
 (C_M \ra V_M) \rtimes \widehat V_G \ra V_G \qquad (v,H_g,c) \mapsto \ell_gc + H_g(\bft H_g)^{-1}v.
\end{equation*}
Notice that we view here $(C_M \ra V_M) \rtimes \widehat V_G$ as a VB-groupoid with $\bfs(v,H_g,c) = \partial c + H_g^{-1} \cdot v$, $\bft(v,H_g,c) = v$, and
\begin{equation*}
(v,H_g,c_1) \cdot (\partial c_1 + H_g^{-1} \cdot v, H_h, c_2) = (v, H_g \cdot H_h, H_h^{-1} \cdot c_1 + c_2).
\end{equation*}
We can recover $f$ from $f_0$ and $f_1$ as follows. Given $(v_1,\dots,v_\bullet) \in V_G^{(\bullet)}$ over $(g_1,\dots,g_\bullet)$, we take any $(H_{g_1},\cdots,H_{g_\bullet})$ and then we set
\begin{equation}\label{eq: f = f_V_M + f_C_M}
f(v_1, \dots, v_\bullet) = f_1(H_{g_2},\dots,H_{g_\bullet})\omega_{g_1}^{H_{g_1}}v_1 + f_0(H_{g_1},\dots,H_{g_\bullet})\bft v_1.
\end{equation}
The map
\begin{equation*}
\omega_{g_1}^{H_{g_1}}v = r_g(H_{g_1}^{-1} \bft v - v^{-1})
\end{equation*}
is as in Proposition \ref{prop: fat groupoid of dual}: the unique map $(V_G)_g \ra (C_M)_{\bfs g}$ such that $\omega_{g_1}^{H_{g_1}} H_{g_1} = 0$.
The right hand side of \eqref{eq: f = f_V_M + f_C_M} is independent of $H_{g_1},\dots,H_{g_\bullet}$. So, if we are given $f_0 \in C^\bullet(\widehat V_G; V_M^*)$ and $f_1 \in C^{\bullet-1}(\widehat V_G; C_M^*)$ such that the sum (as in the right hand side of the equation \eqref{eq: f = f_V_M + f_C_M} above) is independent of $H_{g_1},\dots,H_{g_\bullet}$, then $f \in C_{\textnormal{VB}}V_G$ can be defined by the equation \eqref{eq: f = f_V_M + f_C_M} above. This gives rise to a one-to-one correspondence, and this last condition is exactly the condition as written in the statement.
\end{proof}

As a corollary, we obtain:
\begin{corollary}\label{coro: cochain map to representation}
The map
\begin{equation*}
C_\textnormal{lin} V_G \ra C(\widehat V_G, V_M^*); \quad f \mapsto f_0
\end{equation*}
is a cochain map. If $V_M \neq 0$, then the restriction to $C^\bullet_{\textnormal{VB}} V_G$ maps bijectively onto $C_\textnormal{inv}(\widehat V_G; V_M^*)$, so those elements $f_0 \in C^\bullet(\widehat V_G, V_M^*)$ for which there exists $f_1 \in C^{\bullet-1}(\widehat V_G, C_M^*)$ such that \eqref{eq: f_V_M relation to f_C_M 0} and \eqref{eq: f_V_M relation to f_C_M 1} hold.\footnote{If $V_M=0$, then the map
\begin{equation*}
C^\bullet_\textnormal{lin} V_G \ra C^{\bullet-1}(\widehat V_G, C_M^*); \quad f \mapsto f_1
\end{equation*}
is a cochain map, and the restriction to $C^\bullet_{\textnormal{VB}} V_G$ maps bijectively onto the elements $f_1 \in C^{\bullet-1}(\widehat V_G, C_M^*)$ that are independent of its arguments.}
\end{corollary}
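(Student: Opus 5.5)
The corollary will be read off from Proposition \ref{prop: cochain map to representation up to homotopy of fat groupoid}, together with a direct argument for the statement on all of $C_\textnormal{lin} V_G$. For the first claim, I will realise $f \mapsto f_0$ as a pullback along a map of simplicial manifolds, exactly as in the proof of that proposition. Consider the VB-groupoid $\widehat V_G \ltimes V_M$ coming from the representation of $\widehat V_G$ on $V_M$ alone (so the $C_M$ part is dropped), and the linear map
\begin{equation*}
(v,H_g) \mapsto H_g(\bft_g H_g)^{-1}v,
\end{equation*}
which sends $(V_M)_{\bft g}$ into $(V_G)_g$. I will check that it is a VB-groupoid map over the projection $\widehat V_G \ra G$. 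Its target is $v$, its source is $(\bft_gH_g)^{-1}v = H_g^{-1} \cdot v$, and multiplicativity follows from the product formula $(H_g\cdot H_{g'})w = H_g(\bft H_{g'}w)\cdot H_{g'}w$. Pullback along a VB-groupoid map preserves $1$-homogeneity and commutes with the simplicial differential, so it sends $C_\textnormal{lin}V_G$ into the linear complex of $\widehat V_G\ltimes V_M$. That complex is identified with $C(\widehat V_G;V_M^*)$ by $f_0(H_{g_1},\dots)v = f(\dots)$ on the first component, just as for action VB-groupoids. Composing gives precisely $f \mapsto f_0$, so this map is a cochain map.

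For the second claim, assume $V_M\neq 0$. The proposition already gives a bijection $f\mapsto(f_0,f_1)$ from $C_\textnormal{VB}V_G$ onto the pairs satisfying \eqref{eq: f_V_M relation to f_C_M 0} and \eqref{eq: f_V_M relation to f_C_M 1}. By definition, $C_\textnormal{inv}(\widehat V_G;V_M^*)$ is the image of this set under $(f_0,f_1)\mapsto f_0$. Surjectivity of the restriction is therefore immediate, and what remains is injectivity: $f_0$ must determine $f_1$.

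Injectivity is the main step. Suppose $f_0=0$. Then \eqref{eq: f_V_M relation to f_C_M 1} gives $f_1(g_2,\dots)\,h(H_{g_1^{-1}},H'_{g_1^{-1}})v=0$ for every pair $H,H'$ and every $v$. By Proposition \ref{prop: fat pairing} (equivalently, the free and transitive action of $\textnormal{H}(V_M,C_M)$ on the fibres of $\widehat V_G\ra G$), the differences $r_{g_1}(H'-H)$ range over all of $\textnormal{Hom}((V_M)_{\bft g_1},(C_M)_{\bft g_2})$, or at least over an open subset containing $0$ of this vector space, which is enough by linearity. Since $V_M\neq0$, these maps have images spanning $(C_M)_{\bft g_2}$, hence $f_1=0$. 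Linearity of the correspondence then gives injectivity.

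The delicate point is this spanning argument. It needs the difference map to hit enough homotopies, which is exactly why $V_M\neq0$ is required: if $V_M=0$, $f_1$ is unconstrained by $f_0$, and that case is the one treated in the footnote. The footnote case is handled the same way using the projection to $f_1$, with $f_1$ independent of its arguments by \eqref{eq: f_V_M relation to f_C_M 0}.
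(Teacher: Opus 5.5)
Your proof of the main statement is correct and follows essentially the paper's route. The paper reads the corollary off from Proposition~\ref{prop: cochain map to representation up to homotopy of fat groupoid}, whose cochain-map part is pullback along $(v,H_g,c)\mapsto \ell_gc+H_g(\bft_gH_g)^{-1}v$. Restricting to the zero-core VB-subgroupoid, as you do, is what makes $f\mapsto f_0$ well defined and a cochain map on all of $C_\textnormal{lin}V_G$; the paper's formula with entries $g_2,\dots,g_\bullet$ only makes literal sense on $C_\textnormal{VB}V_G$. Your injectivity argument fills in what the paper only asserts: the fibres of $\widehat V_G\ra G$ are open in an affine space modelled on $\textnormal{Hom}$, so $f_1\circ k=0$ for all $k$, hence $f_1=0$ wherever $V_M$ has positive rank.

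The one sentence that does not hold up is your treatment of the footnote. For $V_M=0$ the component $f_1$ sits in shifted degree and is not the pullback along any VB-groupoid map. So ``the same way'' gives nothing for the claim that $f\mapsto f_1$ is a cochain map on $C_\textnormal{lin}V_G$, and in fact that claim fails.

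Take a nontrivial Lie group $G$ over a point and $V_G=G\times\bbR$ with $(g,a)\cdot(g',b)=(gg',a+b)$. Then $V_M=0$, $C_M=\bbR$ with trivial action, and $\widehat V_G\cong G$. For $f(g,a)=\phi(g)a\in C^1_\textnormal{lin}V_G$ one has $-c^{-1}=(1,c)$, hence $f_1=\phi(1)$ and $\delta f_1=0$. On the other hand, $(\delta f)_1(g)c=\delta f((1,c),(g,b))=(\phi(g)-\phi(1))c$ for every choice of $b$, which is nonzero for non-constant $\phi$.

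So that part of the footnote is only true on $C_\textnormal{VB}V_G$, where $\phi$ is forced to be constant. There it is immediate from the Proposition, since $f\mapsto(0,f_1)$ is already a cochain map.
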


\begin{remark}\label{rema: dual version of VB-complex in terms of fat groupoid}
The VB-complex $C_\textnormal{VB} V_G^*$, in terms of $\widehat V_G$, is given by $C_\textnormal{inv}(\widehat V_G; C_M \ra V_M)$. The latter complex consist of those pairs $(f_0,f_1)$ such that
\begin{equation*}
f_1(H_{g_2},\dots,H_{g_\bullet}) = f_1(H_{g_2}',\dots,H_{g_\bullet}')
\end{equation*}
and
\begin{equation*}
f_0(H_{g_1},\dots,H_{g_\bullet}) - f_0(H_{g_1}',\dots,H_{g_\bullet}') = h(H_{g_1},H_{g_1}')f_1(H_{g_2},\dots,H_{g_\bullet})
\end{equation*}
where again
\begin{equation*}
h(H_{g_1},H_{g_1}'): (V_M)_{\bfs g_1} \ra (C_M)_{\bft g_1}
\end{equation*}
is given by
\begin{equation*}
h(H_{g_1},H_{g_1}')v = r_{g_1^{-1}}(H_{g_1}'v - H_{g_1}v).
\end{equation*}
\end{remark}

We complement the last remark by observing that the isomorphism
\begin{equation*}
C_\textnormal{inv}^\bullet(\widehat V_G; C_M \ra V_M) \ra C_\textnormal{VB} V_G^*
\end{equation*}
is particularly simple to write when using the projectable cochains:
\begin{equation*}
C_\textnormal{VB} V_G^* \cong C_\textnormal{proj} V_G
\end{equation*}
(see Section \ref{sec: Notation for VB-groupoids} for the definition).

\begin{proposition}\label{prop: relation invariant cochains to projectable cochains}
The complex $C_\textnormal{inv}(\widehat V_G; C_M)$ is isomorphic to $C_\textnormal{proj} V_G$ via the map
\begin{equation*}
C_\textnormal{inv}(\widehat V_G; C_M) \ra C_\textnormal{proj} V_G
\end{equation*}
given by
\begin{equation*}
c(g_1,\dots,g_\bullet) = r_{g_1}f_0(H_{g_1},g_2,\dots,g_\bullet) + H_{g_1}f_1(g_2,\dots,g_\bullet).
\end{equation*}
\end{proposition}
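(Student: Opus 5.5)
We prove this directly from the definitions. The plan has three steps: show the formula defines an element of $C_\textnormal{proj} V_G$ independently of the auxiliary choice, write down an inverse, and then check the differentials.

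\textbf{Well-definedness.} Given an invariant pair $(f_0,f_1)$, first check that
\begin{equation*}
c(g_1,\dots,g_\bullet)=r_{g_1}f_0(H_{g_1},g_2,\dots,g_\bullet)+H_{g_1}f_1(g_2,\dots,g_\bullet)
\end{equation*}
does not depend on the choice of $H_{g_1}\in\widehat V_G$. Replace $H_{g_1}$ by $H_{g_1}'=(1+h_{\bft g_1})\cdot H_{g_1}$. By Remark \ref{rema: invariance condition for fat groupoid of VB-groupoid},
\begin{equation*}
H_{g_1}'v-H_{g_1}v=r_{g_1}h_{\bft g_1}(H_{g_1}\cdot v).
\end{equation*}
The invariance condition of Definition \ref{def: invariant cochains} changes $r_{g_1}f_0$ by $r_{g_1}h_{\bft g_1}(H_{g_1}\cdot f_1)$. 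Up to the sign conventions already fixed in the invariance formula, these two changes cancel, so $c$ is well defined. Smoothness of $c$ follows by using local sections of $\widehat V_G\ra G$.

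\textbf{Projectability.} We need $c(g_1,\dots)\in(V_G)_{g_1}$ and
\begin{equation*}
\bfs c(g_1,\dots,g_\bullet)=\bfs c(1_{\bft g_2},g_2,\dots,g_\bullet).
\end{equation*}
Since $\bfs r_{g_1}=0$ and $\bfs_{g_1}H_{g_1}=1$, we get $\bfs c=f_1(g_2,\dots,g_\bullet)$, which is independent of $g_1$. This gives projectability immediately.

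\textbf{Inverse map.} Given $c\in C_\textnormal{proj}^\bullet V_G$, set
\begin{equation*}
f_1(g_2,\dots,g_\bullet)=\bfs c(g_1,\dots,g_\bullet),
\end{equation*}
which is independent of $g_1$ by projectability. Then set
\begin{equation*}
f_0(H_{g_1},g_2,\dots,g_\bullet)=r_{g_1}^{-1}\bigl(c(g_1,\dots,g_\bullet)-H_{g_1}f_1(g_2,\dots,g_\bullet)\bigr).
\end{equation*}
This is meaningful because the argument lies in $\ker\bfs_{g_1}=\im r_{g_1}$. The same identity for $H_{g_1}'-H_{g_1}$ shows that $(f_0,f_1)$ satisfies the invariance condition. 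Both composites are visibly the identity. Linearity over $CG$ is also clear, since $CG$ acts through the last arguments on both sides.

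\textbf{Differentials.} This is the main obstacle: checking that the bijection intertwines the differential of $C_\textnormal{inv}(\widehat V_G;C_M)$ with
\begin{equation*}
\delta=(-1)^\bullet\bigl(-\overline\bfm(d_1^*,d_0^*)+\textstyle\sum_{k\ge2}(-1)^kd_k^*\bigr).
\end{equation*}
The terms $d_k^*$ with $k\ge2$ match trivially. For the $d_0$ and $d_1$ terms we compute the division in $V_G$ of
\begin{equation*}
r_{g_1g_2}f_0(H_{g_1}H_{g_2},\dots)+H_{g_1}H_{g_2}f_1(\dots) \quad\text{by}\quad r_{g_2}f_0(H_{g_2},\dots)+H_{g_2}f_1(\dots).
\end{equation*}
This uses the product formula $(H_g\cdot H_{g'})v=H_g(\bft H_{g'}v)\cdot H_{g'}v$ and the action $H_g\cdot c=r_{g^{-1}}(H_g(\partial c)\cdot c)$. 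We then compare the result with the representation-twisted $d_0$ term of the cochain complex representation of $\widehat V_G$.

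To avoid the full computation, we would instead try to obtain the cochain-map property for free. The idea is to compose the isomorphisms
\begin{equation*}
C_\textnormal{inv}(\widehat V_G;C_M)\cong C_\textnormal{VB}V_G^*\cong C_\textnormal{proj}V_G,
\end{equation*}
using Corollary \ref{cor: invariant complex fat extension} and the known isomorphism from Section \ref{sec: Notation for VB-groupoids}. We would then check on generators that this composite agrees with the displayed formula. Since both sides are $CG$-linear, it suffices to check degrees $0$ and $1$.
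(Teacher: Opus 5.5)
Your route is the paper's: you get the inverse by reading the formula backwards with $f_1=\bfs c$ and $f_0=r_{g_1}^{-1}(c-H_{g_1}f_1)$, $CG$-linearity is immediate, and the cochain-map property reduces to generators in degrees $0$ and $1$. The paper just asserts a direct computation on generators, which suffices because $[\delta,\Phi]$ is $CG$-linear by the two Leibniz rules. Your shortcut through $C_\textnormal{VB}V_G^*$ would additionally need the explicit form of $C_\textnormal{VB}V_G^*\cong C_\textnormal{proj}V_G$, which the paper only cites.

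The step that fails as written is well-definedness, exactly where you write ``up to the sign conventions''. By your own computation, the two changes caused by $H_{g_1}\mapsto H'_{g_1}=(1+h_{\bft g_1})\cdot H_{g_1}$ are \emph{equal}: both are $r_{g_1}h_{\bft g_1}(H_{g_1}\cdot f_1)$. With the invariance condition of Definition \ref{def: invariant cochains} they therefore add up to $2r_{g_1}h_{\bft g_1}(H_{g_1}\cdot f_1)$, which is nonzero in general. The displayed formula is independent of $H_{g_1}$ only under the opposite relation $f_0(H'_{g_1},\dots)=f_0(H_{g_1},\dots)-h(H_{g_1},H'_{g_1})f_1(\dots)$. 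That is the relation written in Remark \ref{rema: dual version of VB-complex in terms of fat groupoid} and in the statement of Proposition \ref{prop: invariant cochains in the fat case}. Accordingly, your inverse $f_0=r_{g_1}^{-1}(c-H_{g_1}f_1)$ satisfies that relation, not the Definition's.

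The paper is inconsistent about this sign and its proof does not address it, but you cannot pick the sign freely: it is forced by requiring a subcomplex. Take the differential $\partial+R_1+(-1)^\bullet\sum_{k\ge1}(-1)^kd_k^*$ of a cochain complex representation. A degree-$0$ element $\gamma\in\Gamma C_M$ is automatically invariant, and $\delta\gamma=(H_g\cdot\gamma_{\bfs g}-\gamma_{\bft g},\partial\gamma)$. The identity $(1+h_{\bft g})\cdot(H_g\cdot\gamma_{\bfs g})-H_g\cdot\gamma_{\bfs g}=h_{\bft g}(H_g\cdot\partial\gamma_{\bfs g})$ shows that $\delta\gamma$ obeys the Definition's sign. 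So with these conventions the map has to be $\pm(r_{g_1}f_0-H_{g_1}f_1)$, or else the sign of $\partial$ in the differential must be flipped. The remaining overall sign then has to be matched against the degree-$0$ convention for $\delta$ on $C_\textnormal{proj}V_G$ in your generator check. As it stands, neither the well-definedness nor the claim that your inverse lands in the invariant complex is established until you fix these signs explicitly.
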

\begin{proof}
The map is $CG$-linear and a bijection, for we can read the equation the other way around (using $f_1 = \bfs c$). That the map is a cochain map, is a straightforward computation on generators.
\end{proof}

\begin{remark}
In this remark we will draw some parallels with the discussion presented here and the proofs of Proposition \ref{prop: VB complex} and Proposition \ref{prop: invariant complex iso to quotient complex}. In the discussion of this section and the previous one (Section \ref{sec: From fat extensions to ruths}), we employed exactly the strategy presented in the proof of Proposition \ref{prop: invariant complex iso to quotient complex}. Namely, in the proof of the latter proposition, we constructed an isomorphism $CH \xra{\sim} C_{K\textnormal{-rel}} G$ that is given by the pullback of a groupoid map $G \ra H$ (over the same base) which is a surjective submersion. In the above setup we described, we took one of the surjective VB-groupoid maps
\begin{equation*}
\widehat V_G \ltimes (C_M \ra V_M) \ra V_G \qquad (C_M \ra V_M) \rtimes \widehat V_G \ra V_G
\end{equation*}
In general, the inverse of $CH \xra{\sim} C_{K\textnormal{-rel}} G$ is given explicitly by using local sections of $G \ra H$; the choice of local sections is irrelevant. In our case, the local sections are determined by local sections of $\widehat V_G \ra G$, and this explains the appearance of them in the alternative proof of Proposition \ref{prop: VB complex}. The proof of Proposition \ref{prop: VB complex} using the cleavage is almost like the usage of a set-theoretic section in the proof of Proposition \ref{prop: invariant complex iso to quotient complex} in that it is a global version of the argument. Of course, for the proof of Proposition \ref{prop: VB complex} to work, we need a smooth section, and therefore we have to pass to $\widehat V_G^\textnormal{cat} \ra G$. That is, we drop the invertibility assumption on the values of the section and we need to work ``up to homotopy''. See also Section \ref{sec: Relation to Representations up to homotopy}, where we further clarify the step from fat extensions to the associated split $2$-term ruth.
\end{remark}

\section{Fat extensions and general linear PB-groupoids}\label{sec: PB-groupoids}

We dedicate this section to the correspondence between fat extension and general linear PB-groupoids. In \cite{PB-groupoids}, where the notion of PB-groupoid is introduced, it is already observed that the fat groupoid of a VB-groupoid is useful to construct the general linear PB-groupoid of a VB-groupoid. Following the previous section, Section \ref{sec: fat extensions}, it becomes clear that this should be interpreted as a composition of functors
\begin{equation*}
\{\textnormal{VB-groupoids}\} \ra \{\textnormal{Fat extensions}\} \ra \{\textnormal{General linear PB-groupoids}\}.
\end{equation*}
The functorial aspects are not discussed in \cite{PB-groupoids}, but we will do so in Section \ref{sec: functorial aspects}.

Although the correspondence works as in \cite{PB-groupoids}, we will go a slightly different route. The difference lies in that the general linear PB-groupoids we consider have a different structural strict Lie $2$-groupoid than in \cite{PB-groupoids}. We use the one associated to two vector bundles $C_M$ and $V_M$ instead of the one associated to two \textit{trivial} vector bundles. Moreover, the structure maps of the general linear strict Lie $2$-groupoid are slightly different than in \cite{PB-groupoids}. These changes are done to improve the exposition.

In Appendix \ref{app: double groupoids} we will discuss a related correspondence between a class of double groupoids that we call \textit{core-transitive}, and structures that we call \textit{core extensions}. The discussion presented in the appendix generalises \cite{Corediagram}, but can be seen as a clarification of the remarks made at the end of that work. General linear PB-groupoids come with a gauge double groupoid when the underlying complex is regular, and such double groupoids are core-transitive. The associated core extension is exactly the fat extension. At the end of this section we mention how to think of fat extensions as double groupoids, but the details are left for the appendix.

\subsection{PB-groupoids}
In analogy with VB-groupoids, a PB-groupoid is a diagram
\begin{center}
\begin{tikzcd}
    P_G \ar[r, shift left] \ar[r, shift right] \ar[d] & P_M \ar[d] \\
    G \ar[r, shift left] \ar[r, shift right] & M
\end{tikzcd}
\end{center}
where $P_G$ and $P_M$ are principal groupoid bundles over $G$ and $M$, respectively. The compatibility between the principal bundle structures and the groupoid structures involved can be phrased as follows. The groupoids acting on $P_G$ and on $P_M$ are both groupoids over the same base:
\begin{center}
\begin{tikzcd}
    & & \ar[dl, phantom, "\rotatebox{120}{\Large$\circlearrowright$}"] H_G \ar[dd, shift left] \ar[dd, shift right] \ar[rr, shift left] \ar[rr, shift right] & & \ar[dl, phantom, "\rotatebox{120}{\Large$\circlearrowright$}"] H_M \ar[dd, shift left] \ar[dd, shift right] \\
    & P_G \ar[rr, shift left, crossing over] \ar[rr, shift right, crossing over] \ar[rd] & & P_M \ar[rd] & \\
    & & N \ar[rr, shift left] \ar[rr, shift right] & & N \\
    G \ar[uur, leftarrow] \ar[rr, shift left] \ar[rr, shift right] & & M \ar[uur, crossing over, leftarrow] & & \\
\end{tikzcd}
\end{center}
and these groupoids fit into a double groupoid over the unit groupoid $1_N$:
\begin{center}
\begin{tikzcd}
    H_G \ar[d, shift left] \ar[d, shift right] \ar[r, shift left] \ar[r, shift right] & H_M \ar[d, shift left] \ar[d, shift right] \\
    N \ar[r, shift left] \ar[r, shift right] & N
\end{tikzcd}
\end{center}
That is, it is a strict Lie $2$-groupoid: after appropriate identifications, the structure maps of $H_G \rra H_M$ are groupoid maps over the identity (see also Remark \ref{rema: strict 2-groupoids} and Appendix \ref{app: double groupoids}).

Now, the compatibility between the actions in a PB-groupoid can be phrased as the actions defining a (principal) $2$-action of the structural strict Lie $2$-groupoid.

\begin{definition}\label{def: PB-groupoid}
Let $P_G \rra P_M$ be a Lie groupoid and let $H_G \rra H_M \rra N$ be a strict Lie $2$-groupoid. A \textit{$2$-action} of $H_G$ on $P_G$ consists of two groupoid actions
\begin{equation*}
P_G \textnormal{ } \rotatebox[origin=c]{90}{\large$\circlearrowright$} \textnormal{ } H_G \qquad P_M \textnormal{ } \rotatebox[origin=c]{90}{\large$\circlearrowright$} \textnormal{ } H_M
\end{equation*}
whose moment maps form a groupoid map
\begin{equation*}
P_G \ra 1_N
\end{equation*}
and whose action maps form a groupoid map
\begin{center}
\begin{tikzcd}
    P_G \times_N H_G \ar[d, shift left] \ar[d, shift right] \ar[r] & P_G \ar[d, shift left] \ar[d, shift right] \\
    P_M \times_N H_M \ar[r] & P_M
\end{tikzcd}
\end{center}
where the groupoid structure of $P_G \times_N H_G \rra P_M \times_N H_M$ is defined componentwise. A $2$-action $P_G$ $\rotatebox[origin=c]{90}{\large$\circlearrowright$}$ $H_G$ whose actions are principal is called a \textit{PB-groupoid}, and we say it is over $G \rra M$ if $P_G/H_G \rra P_M/H_M$ is $G \rra M$ (up to isomorphism).
\end{definition}

\begin{example}
    An interesting class of examples is when $H_G = H_M$ is a Lie group $G$. Such PB-groupoids appear in \cite{BruceGrabowskaGrabowski} and are called \textit{$G$-groupoids} there.
\end{example}

To end this small section, we conclude with some comments on strict Lie $2$-groupoids.

\begin{remark}\label{rema: strict 2-groupoids}
Given a strict Lie $2$-groupoid $H_2 \rra H_1 \rra N$, we can think of the elements $h \in H_2$ as abstract homotopy data between the source and target isomorphisms in $H_1$. In this analogy, $H_2$ is a groupoid in two compatible ways: there is one composition of homotopies which should be compared to the fact that being homotopic is a transitive relation, and the other composition of homotopies should be compared to Lemma \ref{lemm: homotopy composition}. In the main example of interest to us, the general linear strict Lie $2$-groupoid, the analogy can be taken literally (see also Remark \ref{rema: 2-groupoids}).

A more geometric perspective is that $H_2 \rra N$ is a Lie groupoid whose source and target fibers have a compatible internal groupoid structure. The foliation by source fibers form a \textit{family of Lie groupoids},\footnote{A family of Lie groupoids is a Lie groupoid together with a groupoid map that is a surjective submersion onto some unit groupoid.} and so does the foliation by target fibers. If two elements are composable in $H_2 \rra H_1$, then they are both elements of a single source fiber (of $H_2 \rra N$), and also of a single target fiber. Their product in $H_2 \rra H_1$, in turn, also lies in these fibers. Now, take two pairs of composable elements in $H_2 \rra H_1$, say, one pair in a source fiber over $x \in N$ and the other pair in a target fiber over $x$. Either, we multiply first in $H_2 \rra H_1$ and then in $H_2 \rra N$, or we multiply first in $H_2 \rra N$ and then in $H_2 \rra H_1$. The results are the same, and this compatibility between the two groupoid structures $H_2 \rra N$ and $H_2 \rra H_1$ is called the \textit{interchange law}.
\end{remark}

\subsection{The general linear strict Lie $2$-groupoid}

General linear PB-groupoids are PB-groupoids with structural strict Lie $2$-groupoid the general linear strict Lie $2$-groupoid. We introduce the general linear strict Lie $2$-groupoid of two vector bundles $C_M$ and $V_M$ next, but we do so in a slightly different way than in \cite{PB-groupoids}. We will comment on this in Remark \ref{rema: matrices out of elements of general linear strict Lie 2-groupoid}. Notice that, below, the groupoid
\begin{equation*}
\textnormal{GL}(C_M,V_M)_M = \textnormal{Hom}(C_M,V_M) \rtimes \textnormal{GL}(C_M,V_M)
\end{equation*}
from Remark \ref{rema: cochain complex representation as groupoid map} is used.

\begin{example}\label{exa: general linear 2-groupoid}
Consider the bundle of Lie groups
\begin{equation*}
\textnormal{Pert}(C_M,V_M) \rra \textnormal{Hom}(C_M,V_M)
\end{equation*}
from Remark \ref{rema: perturbation theory}; its multiplication is given by
\begin{equation*}
(h_x,\partial_x) \cdot (h_x',\partial_x) \coloneq (h_x + h_x' + h_x \partial_x h_x', \partial_x).
\end{equation*}
We then set
\begin{equation*}
    \textnormal{GL}(C_M,V_M)_G \coloneq \textnormal{Pert}(C_M,V_M) \ltimes \textnormal{GL}(C_M,V_M)_M
\end{equation*}
where $\textnormal{Pert}(C_M,V_M)$ acts (on the left) on $\textnormal{GL}(C_M,V_M)_M$ via
\begin{equation*}
    (\partial_y,h_y) \cdot (\partial_y,\Psi_{y,x}) = (\partial_y, (1+ h_y \partial_y)\Psi_{y,x}^{C_M}, (1+\partial_y h_y)\Psi_{y,x}^{V_M}).
\end{equation*}
Following Remark \ref{rema: strict 2-groupoids}, the elements $(h_y,\partial_y,\Psi_{y,x})$ of $\textnormal{GL}(C_M,V_M)_G$ can be interpreted as pointwise homotopy data: we can interpret
\begin{equation*}
    \bfs(h_y, \partial_y, \Psi_{y,x}) = (\partial_y, \Psi_{y,x}^{C_M}, \Psi_{y,x}^{V_M}) \qquad \bft(h_y, \partial_y, \Psi_{y,x}) = (\partial_y, (1+ h_y \partial_y)\Psi_{y,x}^{C_M}, (1+\partial_y h_y)\Psi_{y,x}^{V_M})
\end{equation*}
as isomorphisms of complexes with respect to the differentials
\begin{equation*}
    (\Psi_{y,x}^{V_M})^{-1}\partial_y \Psi_{y,x}^{C_M} \textnormal{ and } \partial_y
\end{equation*}
and they are homotopic via $h_y\Psi_{y,x}^{V_M}$. The two multiplications of the two groupoid structures that $\textnormal{GL}(C_M,V_M)_G$ carries can now both be seen as composition of homotopies (see Lemma \ref{lemm: homotopy composition}). The groupoid structure of
\begin{equation*}
\textnormal{GL}(C_M,V_M)_G \rra \textnormal{GL}(C_M,V_M)_M
\end{equation*}
has the above source and target maps, and
\begin{equation*}
    (h_y, \partial_y, \Psi_{y,x}) \cdot (h_y', \partial_y, \Psi_{y,x}') = (h_y + h_y' + h_y \partial_y h_y', \partial_y, \Psi_{y,x}').
\end{equation*}
The groupoid multiplication of
\begin{equation*}
\textnormal{GL}(C_M,V_M)_G \rra \textnormal{Hom}(C_M,V_M)
\end{equation*}
is again a type of composition of homotopies: we set $\bfs(h_y, \partial_y, \Psi_{y,x}) = (\Psi_{y,x}^{V_M})^{-1} \partial_y \Psi_{y,x}^{C_M}$ and $\bft(h_y, \partial_y, \Psi_{y,x}) = \partial_y$, and
\begin{equation*}
    (h_z, \partial_z, \Psi_{z,y}) \cdot (h_y, \partial_y, \Psi_{y,x}) \coloneq (h_z + \Psi_{z,y}^{C_M}h_y (\Psi_{z,y}^{V_M})^{-1} + h_z \partial_z \Psi_{z,y}^{C_M} h_y (\Psi_{z,y}^{V_M})^{-1}, \partial_z, \Psi_{z,y}\Psi_{y,x}).
\end{equation*}
Precomposing the homotopy on the right hand side with $\Psi_{z,y}^{V_M}\Psi_{y,x}^{V_M}$ is indeed composition of homotopies of $h_z \Psi_{z,y}^{V_M}$ and $h_y\Psi_{y,x}^{V_M}$. Another way of interpreting this multiplication is that it can be seen as being induced by the product of $\textnormal{Pert}(C_M,V_M)$:
\begin{equation*}
 (h_z, \partial_z, \Psi_{z,y}) \cdot (h_y, \partial_y, \Psi_{y,x}) = ((h_z,\partial_z) \cdot (\Psi_{z,y}^{C_M} h_y (\Psi_{z,y}^{V_M})^{-1},\partial_z), \Psi_{z,y}\Psi_{y,x}).
\end{equation*}
Taking $C_M = M \times \bbR^k$ and $V_M = M \times \bbR^\ell$, we write $\textnormal{GL}(k,\ell)_G \rra \textnormal{GL}(k,\ell)_M \rra \textnormal{Hom}(\bbR^k,\bbR^\ell)$ for the resulting strict Lie $2$-groupoid.
\end{example}

\begin{remark}\label{rema: general linear strict Lie 2-groupoid as crossed module}
    In essence, the above object is a strict Lie $2$-groupoid defined from a \textit{crossed module of Lie groupoids} as in \cite{MackenzieClassification,CrossedAndrouli,CamilleWagemann}. Crossed modules and strict Lie $2$-groupoids correspond to each other. In the above case, the structure of crossed module is encoded in the Lie groupoid map from $\textnormal{Pert}(C_M,V_M)$ to $\textnormal{GL}(C_M,V_M)_M$, and the canonical action of $\textnormal{GL}(C_M,V_M)_M$ on $\textnormal{Pert}(C_M,V_M)$ (which is a \textit{bundle of Lie groups representation} of $\textnormal{GL}(C_M,V_M)_M$ on $\textnormal{Pert}(C_M,V_M)$). We refer to Section \ref{sec: From general linear PB-groupoids to general linear double groupoids} and Appendix \ref{app: double groupoids} for more details.
\end{remark}

To lighten the notation, we often write $(h_y,\Psi_{y,x})$, or even just $(h,\Psi)$, for elements
\begin{equation*}
(h_y,\partial_y,\Psi_{y,x}) \in \textnormal{GL}(C_M,V_M)_G.
\end{equation*}
Similarly, we often write $\Psi = \Psi_{y,x} = (\partial_y,\Psi_{y,x}) \in \textnormal{GL}(C_M,V_M)_M$.

\begin{remark}\label{rema: matrices out of elements of general linear strict Lie 2-groupoid}
A useful way to depict elements $(h_y,\Psi_{y,x}) \in \textnormal{GL}(C_M,V_M)_G$ is using blockmatrices. Recall for this the observation we made in Remark \ref{rema: invertible homotopy bundle is semidirect product non-canonically}. There, we observed that an element $(\partial_x,h_x) \in \textnormal{Pert}(V_M,C_M)$ can be represented by a matrix 
\begin{equation*}
\begin{pmatrix}
1+h_x\partial_x & h_x \\ 0 & 1
\end{pmatrix} \in \textnormal{GL}(C_M \oplus V_M).
\end{equation*}
Notice that we keep track of the differential $\partial_x$ separately, but we suppress it from the notation. Then, elements of $\textnormal{GL}(C_M,V_M)_G$ can be represented by pairs of matrices
\begin{equation*}
\left(\begin{pmatrix}
1+h_y\partial_y & h_y \\ 0 & 1
\end{pmatrix},
\begin{pmatrix}
\Psi_{y,x}^{C_M} & 0 \\ 0 & \Psi_{y,x}^{V_M}
\end{pmatrix}\right).
\end{equation*}
This way, the groupoid structures of $\textnormal{GL}(C_M,V_M)_G$ are turned into matrix multiplications;
for example, the multiplication of $\textnormal{GL}(C_M,V_M)_G \rra \textnormal{Hom}(C_M,V_M)$ is given by
\begin{align*}
&\left(\begin{pmatrix}
1+ h_z\partial_z & h_z \\ 0 & 1
\end{pmatrix},
\begin{pmatrix}
\Psi_{z,y}^{C_M} & 0 \\ 0 & \Psi_{z,y}^{V_M}
\end{pmatrix}\right) \cdot \left(\begin{pmatrix}
1+ h_y\partial_y & h_y \\ 0 & 1
\end{pmatrix},
\begin{pmatrix}
\Psi_{y,x}^{C_M} & 0 \\ 0 & \Psi_{y,x}^{V_M}
\end{pmatrix}\right) \\
&= \left(\begin{pmatrix}
1+ h_z\partial_z & h_z \\ 0 & 1
\end{pmatrix} \begin{pmatrix}
\Psi_{z,y}^{C_M} & 0 \\ 0 & \Psi_{z,y}^{V_M}
\end{pmatrix} \begin{pmatrix}
1+ h_y\partial_y & h_y \\ 0 & 1
\end{pmatrix} \begin{pmatrix}
\Psi_{z,y}^{C_M} & 0 \\ 0 & \Psi_{z,y}^{V_M}
\end{pmatrix}^{-1},
\begin{pmatrix}
\Psi_{z,y}^{C_M} & 0 \\ 0 & \Psi_{z,y}^{V_M}
\end{pmatrix} \begin{pmatrix}
\Psi_{y,x}^{C_M} & 0 \\ 0 & \Psi_{y,x}^{V_M}
\end{pmatrix}\right).
\end{align*}
In \cite{PB-groupoids}, the correct matrix to consider does not have the ``$1+ h_x\partial_x$-factor'' in the upper left corner (of the matrices representing the elements of $\textnormal{Pert}(C_M,V_M)$). This is a consequence of the fact that the groupoid structure of $\textnormal{GL}(C_M,V_M)_G \rra \textnormal{Hom}(C_M,V_M)$ we use is slightly different. We find the convention presented above more suitable for our discussion, especially with an eye on the correspondence we will set with fat extensions.
\end{remark}

\begin{remark}\label{rema: representations general linear strict Lie 2-groupoid}
The previous remark already touched on it, but recall the following from \cite{PB-groupoids}: the strict Lie $2$-groupoid $\textnormal{GL}(C_M,V_M)_G$ comes with a ``$2$-representation'' (in \cite{PB-groupoids} called ``anchored $2$-representation'') on the $2$-term complex
\begin{equation*}
\partial \times 1: C_M \times_M \textnormal{Hom}(C_M,V_M) \ra V_M \times_M \textnormal{Hom}(C_M,V_M).
\end{equation*}
One way to put this is that there is a map of strict Lie $2$-groupoids
\begin{equation*}
\textnormal{GL}(C_M,V_M)_G \ra \textnormal{GL}(C_M \times_M \textnormal{Hom}(C_M,V_M),V_M \times_M \textnormal{Hom}(C_M,V_M))_G.
\end{equation*}
It is an embedding determined by the base map, which is the diagonal
\begin{equation*}
\textnormal{Hom}(C_M,V_M) \hookrightarrow \textnormal{Hom}(C_M,V_M) \times_M \textnormal{Hom}(C_M,V_M).
\end{equation*}
Following Proposition 3.10 of \cite{PB-groupoids}, it can also be seen as a linear $2$-action of $\textnormal{GL}(C_M,V_M)_G$ on the above $2$-term complex seen as a VB-groupoid over $1_{\textnormal{Hom}(C_M,V_M)}$:
\begin{equation*}
(C_M \times_M \textnormal{Hom}(C_M,V_M)) \ltimes (V_M \times_M \textnormal{Hom}(C_M,V_M)) \rra V_M \times_M \textnormal{Hom}(C_M,V_M)
\end{equation*}
(see Example \ref{exa: bundle of invertible homotopies as fat extension}). That $\textnormal{GL}(C_M,V_M)_G$ acts ``linearly'' on this VB-groupoid becomes clear when using the matrix notation: the (left) action is given by applying the linear transformation
\begin{equation*}
\begin{pmatrix}
1+ h_y\partial_y & h_y \\ 0 & 1
\end{pmatrix}
\begin{pmatrix}
\Psi_{y,x}^{C_M} & 0 \\ 0 & \Psi_{y,x}^{V_M}
\end{pmatrix} = \begin{pmatrix}
(1+ h_y\partial_y)\Psi_{y,x}^{C_M} & h_y\Psi_{y,x}^{V_M} \\ 0 & \Psi_{y,x}^{V_M}
\end{pmatrix} \in \textnormal{GL}(C_M \oplus V_M)
\end{equation*}
(and conjugating the differential). Moreover, $\textnormal{GL}(C_M,V_M)_M$ acts on $V_M \times_M \textnormal{Hom}(C_M,V_M)$ via
\begin{equation*}
\Psi_{y,x} \cdot v_x = \Psi_{y,x}^{V_M}v_x
\end{equation*}
(and conjugating the differential). Recall from Definition \ref{def: PB-groupoid} that the key feature of these two actions defining a $2$-action is that the action map is a groupoid map.\footnote{Implicitly, we use that the moment maps of the two actions are compatible.} Here, the groupoid
\begin{center}
\begin{tikzcd}
\textnormal{GL}(C_M,V_M)_G \times_{\textnormal{Hom}(C_M,V_M)} ((C_M \times_M \textnormal{Hom}(C_M,V_M)) \ltimes (V_M \times_M \textnormal{Hom}(C_M,V_M))) \ar[d, shift left] \ar[d, shift right] \\
\textnormal{GL}(C_M,V_M)_M \times_{\textnormal{Hom}(C_M,V_M)} (V_M \times_M \textnormal{Hom}(C_M,V_M))
\end{tikzcd}
\end{center}
is used, whose structure is defined componentwise. Notice that this groupoid is a VB-groupoid over $\textnormal{GL}(C_M,V_M)_G \rra \textnormal{GL}(C_M,V_M)_M$, and with respect to this structure the action is linear. These facts are readily verified using the matrix notation from above.

Using the differential $\partial$ as moment map, we see that there is also a $2$-representation of $\textnormal{GL}(C_M,V_M)_G$ on the $2$-term complex $C_M \ra V_M$. The linear $2$-action is defined in an analogous way as above, and it corresponds to the identity map $\textnormal{GL}(C_M,V_M)_G \xra{=} \textnormal{GL}(C_M,V_M)_G$.
\end{remark}

\subsection{General linear PB-groupoids}

In this section, we define what we mean with a general linear PB-groupoid. We also discuss some important properties that we will use.

\begin{definition}\label{def: general linear PB-groupoid}
Let $C_M$ and $V_M$ be vector bundles. An abstract general linear PB-groupoid with underlying vector bundles $C_M$ and $V_M$ is a PB-groupoid $P_G \rra P_M$ with structural strict Lie $2$-groupoid $\textnormal{GL}(C_M,V_M)_G \rra \textnormal{GL}(C_M,V_M)_M \rra \textnormal{Hom}(C_M,V_M)$. It is called trivial if
\begin{equation*}
P_M = \textnormal{GL}(C_M,V_M)
\end{equation*}
is the canonical principal $\textnormal{GL}(C_M,V_M)_M$-bundle. A trivial abstract general linear PB-groupoid is called a \textit{general linear PB-groupoid}.
\end{definition}

To be clear, the (right) principal $\textnormal{GL}(C_M,V_M)_M$-bundle structure on $\textnormal{GL}(C_M,V_M)$ is given by
\begin{equation*}
\Psi_{z,y} \cdot \Psi_{y,x} (= \Psi_{z,y} \cdot ((\Psi_{z,y}^{V_M})^{-1}\partial_z\Psi_{z,y}^{C_M}, \Psi_{y,x})) \coloneq \Psi_{z,y}\Psi_{y,x}.
\end{equation*}

To explain the definition, we want to emphasise two features of general linear PB-groupoids (Proposition \ref{prop: trivial abstract general linear PB-groupoid} and Remark \ref{rema: general linear PB-groupoid definition}).

\begin{remark}\label{rema: trivial abstract general linear PB-groupoid}
We first observe that the moment map of $P_M$ in an abstract general linear PB-groupoid $P_G \rra P_M$ is always given by conjugation in the following sense. Let $p \in P_M$, and let
\begin{equation*}
\Psi_{y,x} = (\partial_y,\Psi_{y,x}) \in \textnormal{GL}(C_M,V_M)_M
\end{equation*}
be composable with $p$. Then $1_y = (\partial_y,1_y)$ is composable with $p$, and therefore also any other $\Psi_{y,x}' = (\partial_y,\Psi_{y,x}') \in \textnormal{GL}(C_M,V_M)_M$ is. Since all $p' \in P_M$ with the same image as $p$ under the projection map $P_M \ra M$ are of the form
\begin{equation*}
p' = p \cdot \Psi_{y,x}',
\end{equation*}
the moment map is given on the fibers of the projection $P_M \ra M$ by conjugation of the differential $\partial_y$ by an element of $\textnormal{GL}(C_M,V_M)$.
\end{remark}

Now, to justify the terminology in the above definition, observe the following.

\begin{proposition}\label{prop: trivial abstract general linear PB-groupoid}
Suppose $P_G \rra P_M$ is an abstract general linear PB-groupoid. If $P_M \ra M$ has a global section, then $P_G$ is isomorphic to a (trivial abstract) general linear PB-groupoid.
\end{proposition}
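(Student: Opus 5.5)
Given a global section $\sigma: M \ra P_M$ of the projection $P_M \ra M$, I would first trivialise the base principal bundle: the map
\begin{equation*}
\Theta: \textnormal{GL}(C_M,V_M) \ra P_M \qquad \Psi_{y,x} \mapsto \sigma(y) \cdot \Psi_{y,x}
\end{equation*}
should be an isomorphism of right principal $\textnormal{GL}(C_M,V_M)_M$-bundles. Two things need care. First, $\sigma(y) \cdot \Psi_{y,x}$ only makes sense if the moment map of $\sigma(y)$ equals the differential $\partial_y$ used in the principal structure of $\textnormal{GL}(C_M,V_M)$. By Remark \ref{rema: trivial abstract general linear PB-groupoid}, the moment map on the fiber over $y$ is conjugation of the differential, so the moment map $\mu(\sigma(y))$ gives a smooth section $\partial' \in \Gamma\textnormal{Hom}(C_M,V_M)$. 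I would take $\partial'$ as the differential of the complex, replacing the one implicit in $\textnormal{GL}(C_M,V_M)$. Equivalently, the underlying complex is not prescribed in the definition, and $\sigma$ determines it. Second, freeness and transitivity of the principal action on fibers of $P_M \ra M$ then make $\Theta$ a bijective equivariant smooth map between principal bundles over $M$, hence a diffeomorphism.

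Next I would transport the structure. Define $P_G' \rra \textnormal{GL}(C_M,V_M)$ by pulling back the groupoid structure of $P_G \rra P_M$ along $\Theta$ on the base. Concretely, $P_G' = P_G$ as a manifold, with source and target $\Theta^{-1}\circ\bfs$ and $\Theta^{-1}\circ\bft$, and the same multiplication. The $\textnormal{GL}(C_M,V_M)_G$-action on $P_G$ is unchanged. The action of $\textnormal{GL}(C_M,V_M)_M$ on the new base is the canonical one by construction, because $\Theta$ is equivariant. The $2$-action axioms are that the moment maps form a groupoid map to $1_N$ and that the action maps form a groupoid map. Both transfer verbatim, since $\Theta$ intertwines the base actions and moment maps. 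Principality also transfers. The quotient $P_G'/\textnormal{GL}(C_M,V_M)_G \rra \textnormal{GL}(C_M,V_M)/\textnormal{GL}(C_M,V_M)_M = M$ is still $G \rra M$. Finally, $(\textnormal{id}_{P_G},\Theta)$ is an isomorphism of PB-groupoids $P_G' \ra P_G$.

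I expect the main obstacle to be the first step: showing that the moment map, restricted along $\sigma$, really produces a smooth differential compatible with the chosen principal structure. This requires unpacking Remark \ref{rema: trivial abstract general linear PB-groupoid} for how $\mu$ varies along a fiber, namely
\begin{equation*}
\mu(p \cdot \Psi) = (\Psi^{V_M})^{-1}\mu(p)\Psi^{C_M}.
\end{equation*}
If the definition instead insists on a fixed differential $\partial$, I would try to modify $\sigma$ by a section of $\textnormal{GL}(C_M,V_M)$ over the unit maps to conjugate $\mu\circ\sigma$ to $\partial$. This is possible only when the two are pointwise conjugate, so I would check whether that is automatic here or whether "trivial" allows any differential. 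Everything after this step is routine transport of structure.
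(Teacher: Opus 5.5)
Your route is the paper's. The paper sends $p\in P_M$ to the unique arrow $\Psi(p)$ of $\textnormal{GL}(C_M,V_M)_M$ with $p\cdot\Psi(p)^{-1}=\sigma(\pi p)$, which is exactly the inverse of your $\Theta$. It also lets the differential of the trivial model be whatever $\mu\circ\sigma$ is, so your fallback of conjugating $\mu\circ\sigma$ back to a prescribed $\partial$ is not needed. The transport of structure you spell out is what the paper leaves implicit.

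There is, however, a gap, and it comes before the step you identified as the obstacle. Reading $\partial'=\mu\circ\sigma$ as a section of $\textnormal{Hom}(C_M,V_M)\to M$, and hence writing $\sigma(y)\cdot\Psi_{y,x}$, presupposes $\mu(\sigma(y))\in\textnormal{Hom}(C_M,V_M)_y$. Nothing forces this. On a fiber of $P_M\to M$ the moment map is not confined to one base point: in the canonical bundle the fiber over $z$ is $\{\Psi_{z,y}\}_{y\in M}$, with moment map over $y$. So $\mu\circ\sigma$ only covers some smooth $f\colon M\to M$, and $\Theta$ is defined only when $f=\textnormal{id}_M$.

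Replacing $\sigma(z)$ by $\sigma(z)\cdot(\mu\sigma(z),\Psi_{f(z),z})$ repairs this exactly when the $\Psi_{f(z),z}$ can be chosen smoothly, i.e.\ when $f^*C_M\cong C_M$ and $f^*V_M\cong V_M$. Otherwise the conclusion itself fails. Take $M=S^1$, $C_M$ the M\"obius line bundle, $V_M=0$, $G=1_M$ and $P_G=1_{P_M}$. Let $P_M=\{(z,\phi)\mid z\in M,\ \phi\colon(C_M)_y\xrightarrow{\sim}\mathbb{R}\}$, with projection $(z,\phi)\mapsto z$, moment map $(z,\phi)\mapsto y$ and action by precomposition. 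Then $z\mapsto(z,\phi_0)$ is a global section, with $f$ constant. But the associated line bundle $P_M\times_{\textnormal{GL}(C_M)}C_M$ is trivial, whereas that of $\textnormal{GL}(C_M)$ is $C_M$, so $P_M\not\cong\textnormal{GL}(C_M)$.

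The paper's proof makes the same tacit assumption when it says the section is turned into the unit section. If you add the hypothesis that $\textnormal{pr}\circ\mu\circ\sigma=\textnormal{id}_M$, with $\textnormal{pr}\colon\textnormal{Hom}(C_M,V_M)\to M$, your argument is complete as written.
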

\begin{proof}
If the projection map $P_M \ra M$ has a global section $\sigma$, then we can send $p \in P_M$ to the unique $\Psi(p) = (\partial_y, \Psi(p)) \in \textnormal{GL}(C_M,V_M)_M$ such that
\begin{equation*}
1_p \cdot (0_y, \Psi(p)^{-1}) = 1_{\sigma \pi p} \in P_G.
\end{equation*}
This turns the image of the section in $P_M$ into the unit section of $\textnormal{GL}(C_M,V_M) \ra M$. In turn, the moment map of $\textnormal{GL}(C_M,V_M)$ becomes the conjugation of a differential, and the action transforms into the canonical action of $\textnormal{GL}(C_M,V_M)_M$ on $\textnormal{GL}(C_M,V_M)$. This proves the statement.
\end{proof}

To be clear, from now on we always assume general linear PB-groupoids are trivial abstract general linear PB-groupoids. As expected, a fundamental example of a general linear PB-groupoid is given by a $2$-term complex $C_M \ra V_M$.

\begin{example}\label{exa: cochain complex defines PB-groupoid}
Given a $2$-term complex $C_M \ra V_M$, observe that $\textnormal{H}(C_M,V_M)$ (see Section \ref{sec: the bundle of invertible homotopies}) acts on $\textnormal{GL}(C_M,V_M)$ via its canonical representation. The Lie groupoid
\begin{equation*}
\textnormal{H}(V_M,C_M) \ltimes \textnormal{GL}(C_M,V_M) \rra \textnormal{GL}(C_M,V_M)
\end{equation*}
then has a structure of general linear PB-groupoid given by
\begin{equation*}
(h_z,\Psi_{z,y}) \cdot (h_y, \Psi_{y,x}) = (h_z \cdot (\Psi_{z,y}^{C_M} h_y (\Psi^{V_M}_{z,y})^{-1}), \Psi_{z,y} \Psi_{y,x}).
\end{equation*}
The moment map is given by conjugating the differential of $C_M \ra V_M$. Observe that, as is implicit from the notation, $\Psi_{z,y}^{C_M} h_y (\Psi^{V_M}_{z,y})^{-1}$ is an element of $\textnormal{H}(V_M,C_M)$. Indeed,
\begin{equation*}
1 + \partial\Psi_{z,y}^{C_M} h_y (\Psi^{V_M}_{z,y})^{-1} = \Psi_{z,y}^{V_M}(1+\partial_y h_y)(\Psi^{V_M}_{z,y})^{-1}
\end{equation*}
where $\partial_y$ is the (suppressed) differential in $(h_y, \Psi_{y,x}) = (h_y, \partial_y, \Psi_{y,x})$, which satisfies
\begin{equation*}
\partial_y = (\Psi_{z,y}^{V_M})^{-1}\partial\Psi_{z,y}^{C_M}
\end{equation*}
by assumption.
\end{example}
\begin{remark}\label{rema: 2-term complex as strict Lie 2-groupoid}
In the previous example, $\textnormal{H}(V_M,C_M) \ltimes \textnormal{GL}(C_M,V_M)$ simply inherits the above structure from the differential
\begin{equation*}
\partial: M \ra \textnormal{Hom}(C_M,V_M)
\end{equation*}
through the embedding of strict Lie $2$-groupoids (a $2$-representation)
\begin{equation*}
(\textnormal{H}(C_M,V_M) \ltimes \textnormal{GL}(C_M,V_M) \rra \textnormal{GL}(C_M,V_M) \rra M) \hookrightarrow \textnormal{GL}(C_M,V_M)_G.
\end{equation*}
As $\textnormal{H}(C_M,V_M)$, together with its representations, carries little information about the differential (see Proposition \ref{prop: bundle of Lie groups is locally trivial}), the strict Lie $2$-groupoid $\textnormal{H}(C_M,V_M) \ltimes \textnormal{GL}(C_M,V_M)$ itself does not carry information about the differential.
\end{remark}

The next fact points towards the relation between general linear PB-groupoids, fat extensions, VB-groupoids and $2$-term ruths. Namely, every general linear PB-groupoid over $C_M \ra V_M$ contains the above example:

\begin{proposition}\label{prop: PB-groupoid has H in it}
Let $P_G \rra \textnormal{GL}(C_M,V_M)$ be a general linear PB-groupoid with underlying complex $C_M \ra V_M$. Then\footnote{We used $1$ for both unit embeddings $\textnormal{GL}(C_M,V_M) \hookrightarrow P_G$ and $M \hookrightarrow \textnormal{GL}(C_M,V_M)$.}
\begin{equation*}
\textnormal{H}(V_M,C_M) \ltimes \textnormal{GL}(C_M,V_M) \hookrightarrow P_G \qquad (h_y,\Psi_{y,x}) \mapsto 1_{1_y} \cdot (h_y,\partial,\Psi_{y,x})
\end{equation*}
is an embedding of general linear PB-groupoids. Moreover, this map fits into a short exact sequence
\begin{center}
\begin{tikzcd}
1 \ar[r] & \textnormal{H}(V_M,C_M) \ltimes \textnormal{GL}(C_M,V_M) \ar[r] & P_G \ar[r] & G \ar[r] & 1
\end{tikzcd}
\end{center}
of Lie groupoids, where $P_G \ra G$ is the projection map.
\end{proposition}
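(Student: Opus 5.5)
The plan is to use only the structure of the $2$-action of $\textnormal{GL}(C_M,V_M)_G$ on $P_G$ from Definition \ref{def: PB-groupoid}, together with principality of both actions. First I will check that the map is well defined. The element $1_{1_y} \in P_G$ is the unit over $1_y \in \textnormal{GL}(C_M,V_M) = P_M$. By Remark \ref{rema: trivial abstract general linear PB-groupoid}, the moment map of $P_M$ sends $1_y$ to the differential $\partial_y$. Since the action map on $P_G$ covers the action on $P_M$, and the moment maps form a groupoid map to $1_N$, the element $(h_y,\partial_y,\Psi_{y,x})$ is composable with $1_{1_y}$ exactly when its target in $\textnormal{Hom}(C_M,V_M)$ is $\partial_y$. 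This holds by construction, and it is what makes $h_y \in \textnormal{H}(V_M,C_M)$ rather than $\textnormal{Pert}$ the relevant parameter. The source and target of $1_{1_y}\cdot(h_y,\Psi_{y,x})$ in $P_M$ are then $1_y\cdot\Psi_{y,x}=\Psi_{y,x}$ and $1_y\cdot(1+h_y\partial,1+\partial h_y)\Psi_{y,x}$. These match the source and target in Example \ref{exa: cochain complex defines PB-groupoid}.

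Next I will show that the map is a groupoid morphism. This rests on the key identity that the action map $P_G\times_N H_G \ra P_G$ is a groupoid map. Take $(h_z,\Psi_{z,y})$ and $(h_y,\Psi_{y,x})$ composable. Their product in Example \ref{exa: cochain complex defines PB-groupoid} coincides with their product in $\textnormal{GL}(C_M,V_M)_G \rra \textnormal{GL}(C_M,V_M)_M$, after rewriting $(h_z,\Psi_{z,y})\cdot(h_y,\Psi_{y,x})$ via the interchange law. Concretely, I will factor $1_{1_z}\cdot(h_z,\Psi_{z,y})$ as $(1_{1_z}\cdot(h_z,\Psi_{z,y}))\cdot(1_{\Psi_{z,y}}\cdot 1)$ and use that $1_{1_z}\cdot(0,\Psi_{z,y}) = 1_{\Psi_{z,y}}$. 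I will then multiply the pairs componentwise in $P_G\times_N H_G$ and compare with the formula for the multiplication over $\textnormal{Hom}(C_M,V_M)$ in Example \ref{exa: general linear 2-groupoid}. The main obstacle here is bookkeeping: the conjugated homotopy $\Psi^{C_M}_{z,y}h_y(\Psi^{V_M}_{z,y})^{-1}$ arises from transporting $h_y$ through $\Psi_{z,y}$ using the two compatible multiplications, and the matrix picture of Remark \ref{rema: matrices out of elements of general linear strict Lie 2-groupoid} should make this transparent. Compatibility of the map with the $2$-actions is immediate, since it is defined by acting.

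For injectivity and the embedding property, I will use that the action of $\textnormal{GL}(C_M,V_M)_G$ on $P_G$ is free. Indeed, $1_{1_y}\cdot a = 1_{1_y}\cdot b$ forces $a=b$. The map is also proper onto a closed submanifold, namely the orbit of the unit embedding $\textnormal{GL}(C_M,V_M)\hookrightarrow P_G$ restricted to the elements with the correct moment. So it is an embedding, as the action map of a principal action restricted to a slice.

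Finally, for exactness, I will project $P_G \ra P_G/\textnormal{GL}(C_M,V_M)_G = G$. This projection is a surjective submersion and a groupoid map, because the quotient groupoid structure is by definition $G$. The image of the embedding consists of elements of orbits through units, so it maps to units of $G$. Conversely, suppose $p\in P_G$ maps to $1_y$. Then $p$ lies in the orbit of $1_{\bfs p}$, and $\bfs p\in \textnormal{GL}(C_M,V_M)$ can be written as $1_{y'}\cdot\Psi$. I will rewrite $p=1_{1}\cdot(h,\Psi)$ using principality on both levels: the $P_M$-level determines $\Psi$, and the fiber over it determines $h$ uniquely. It remains to check that this $h$ lies in $\textnormal{H}(V_M,C_M)$, i.e.\ that its target differential is $\partial$. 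This follows from the moment map identity of Remark \ref{rema: trivial abstract general linear PB-groupoid}, and it is the one step where I expect care is needed.
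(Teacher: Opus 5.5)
Most of your plan follows the paper's proof, which is only a few lines. You get composability with $1_{1_y}$ from the moment map value $\mu(1_{1_y})=\partial_y$. You get compatibility with the principal $2$-action from associativity of the action. You get the embedding from freeness: the restriction of $P_G\to G$ over the units $1_M$ is a principal bundle trivialised by the section $y\mapsto 1_{1_y}$. You get exactness because each $p$ over a unit $1_y\in G$ equals $1_{1_y}\cdot a$ for a unique $a$ whose target in $\textnormal{Hom}(C_M,V_M)$ is $\partial_y$, and this forces $h\in\textnormal{H}(V_M,C_M)$.

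The step that does not work as written is the groupoid-morphism check. You treat the formula $(h_z,\Psi_{z,y})\cdot(h_y,\Psi_{y,x})=(h_z\cdot(\Psi^{C_M}_{z,y}h_y(\Psi^{V_M}_{z,y})^{-1}),\Psi_{z,y}\Psi_{y,x})$ of Example \ref{exa: cochain complex defines PB-groupoid} as the multiplication of the Lie groupoid $\textnormal{H}(V_M,C_M)\ltimes\textnormal{GL}(C_M,V_M)\rightrightarrows\textnormal{GL}(C_M,V_M)$. It is not; it is the principal right $2$-action of $\textnormal{GL}(C_M,V_M)_G$, i.e. the product over $\textnormal{Hom}(C_M,V_M)$ restricted to $\partial$. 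That is why your computation ends by comparing with that formula. Arrows indexed as $(h_z,\Psi_{z,y})$ and $(h_y,\Psi_{y,x})$ are in general not composable in the groupoid over $\textnormal{GL}(C_M,V_M)$. Your claim that this product coincides with the one in $\textnormal{GL}(C_M,V_M)_G\rightrightarrows\textnormal{GL}(C_M,V_M)_M$ is false. As planned, the unit factorisation and interchange-law argument only re-proves the $2$-action compatibility you already call immediate. Preservation of products of composable arrows is never checked.

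The structure to check is the action groupoid of the canonical representation. Its composable pairs are $(h'_y,h_y\cdot\Psi_{y,x})$ and $(h_y,\Psi_{y,x})$, where $h_y\cdot\Psi_{y,x}=((1+h_y\partial)\Psi^{C_M}_{y,x},(1+\partial h_y)\Psi^{V_M}_{y,x})$, and the product is $(h'_y+h_y+h'_y\partial h_y,\Psi_{y,x})$. This is exactly the vertical product of $\textnormal{GL}(C_M,V_M)_G\rightrightarrows\textnormal{GL}(C_M,V_M)_M$ at $\partial_y=\partial$.

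The fact you single out then gives the morphism property in one line. That fact is that the action map $P_G\times_N\textnormal{GL}(C_M,V_M)_G\to P_G$ is a groupoid map for the componentwise structure. For such $a',a$ the pairs $(1_{1_y},a')$ and $(1_{1_y},a)$ are composable, so $(1_{1_y}\cdot a')\cdot(1_{1_y}\cdot a)=(1_{1_y}\cdot 1_{1_y})\cdot(a'\cdot a)=1_{1_y}\cdot(a'\cdot a)$. Units $(0,\Psi_{y,x})$ go to $1_{1_y}\cdot 1_{(\partial,\Psi_{y,x})}=1_{\Psi_{y,x}}$. The conjugated homotopy $\Psi^{C_M}_{z,y}h_y(\Psi^{V_M}_{z,y})^{-1}$ plays no role here. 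It appears only in the $2$-action compatibility, which follows from $(p\cdot a)\cdot b=p\cdot(a\cdot b)$.
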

\begin{proof}
That the map is an embedding of general linear PB-groupoids is readily verified and follows from the compatibility assumptions between the Lie groupoid structure of $P_G$ and its principal $\textnormal{GL}(C_M,V_M)_G$-structure. The last claim is a consequence of all elements in the kernel of $P_G \ra G$ being related to $1_{1_y} \in P_G$ by a unique element $(h_y,\partial_y,\Psi_{y,x}) \in \textnormal{GL}(C_M,V_M)_G$. Since then $\partial_y = \partial$ is the differential $C_M \ra V_M$, $h_y \in \textnormal{H}(V_M,C_M)$ and the result follows.
\end{proof}

\begin{remark}\label{rema: short exact sequence for a general linear PB-groupoid}
Notice that the short exact sequence of Lie groupoids
\begin{center}
\begin{tikzcd}
1 \ar[r] & \textnormal{H}(V_M,C_M) \ltimes \textnormal{GL}(C_M,V_M) \ar[r] & P_G \ar[r] & G \ar[r] & 1.
\end{tikzcd}
\end{center}
is not over the identity, but over the sequence
\begin{center}
\begin{tikzcd}
\textnormal{GL}(C_M,V_M) \ar[r, "="] & \textnormal{GL}(C_M,V_M) \ar[r, "\bft"] & M.
\end{tikzcd}
\end{center}
\end{remark}

A general linear PB-groupoid $P_G$ comes with a cochain complex representation on
\begin{equation*}
\partial \times 1: C_M \times_M \textnormal{GL}(C_M,V_M) \ra V_M \times_M \textnormal{GL}(C_M,V_M).
\end{equation*}
This cochain complex representation of $P_G$ can be described as follows:

\begin{definition}\label{def: cochain complex representation PB-groupoid}
Let $P_G$ be a general linear PB-groupoid. The cochain complex representation associated to $P_G \rra \textnormal{GL}(C_M,V_M)$ on
\begin{equation*}
\partial \times 1: C_M \times_M \textnormal{GL}(C_M,V_M) \ra V_M \times_M \textnormal{GL}(C_M,V_M)
\end{equation*}
is defined as follows. Given $p_g \in P_G$, $(c_{\bfs g}, \Psi_{\bfs g,x}) \in C_M \times_M \textnormal{GL}(C_M,V_M)$ and $(v_{\bfs g}, \Psi_{\bfs g,x}) \in V_M \times_M \textnormal{GL}(C_M,V_M)$, we set
\begin{equation*}
p_g \cdot (c_{\bfs g}, \Psi_{\bfs g,x}) \coloneq (\bft(p_g) \Psi_{\bfs g,x}^{-1}c_{\bfs g}, \bft p_g) \qquad p_g \cdot (c_{\bfs g}, \Psi_{\bfs g,x}) \coloneq (\bft(p_g) \Psi_{\bfs g,x}^{-1}c_{\bfs g}, \bft p_g).
\end{equation*}
\end{definition}

\begin{remark}\label{rema: cochain complex representation general linear PB-groupoid}
The cochain complex representation is ``$\textnormal{GL}(C_M,V_M)_M$-equivariant''. More precisely, $\textnormal{GL}(C_M,V_M)_M$ acts on the complex (on the right): if $(c_{\bfs g}, \Psi_{\bfs g,y}) \in C_M \times_M \textnormal{GL}(C_M,V_M)$, $(v_{\bfs g}, \Psi_{\bfs g,y}) \in V_M \times_M \textnormal{GL}(C_M,V_M)$ and $\Psi_{y,x} \in \textnormal{GL}(C_M,V_M)_M$, then
\begin{equation*}
(c_{\bfs g}, \Psi_{\bfs g,y}) \cdot \Psi_{y,x} = (c_{\bfs g}, \Psi_{\bfs g,y}\Psi_{y,x}) \qquad (v_{\bfs g}, \Psi_{\bfs g,y}) \cdot \Psi_{y,x} = (v_{\bfs g}, \Psi_{\bfs g,y}\Psi_{y,x}).
\end{equation*}
Given $p_g \in P_G$, $(c_{\bfs g}, \Psi_{\bfs g,y}) \in C_M \times_M \textnormal{GL}(C_M,V_M)$ and $\Psi_{y,x} \in \textnormal{GL}(C_M,V_M)_M$, we therefore have
\begin{equation*}
(p_g \cdot \Psi_{y,x}) \cdot ((c_{\bfs g}, \Psi_{\bfs g,y}) \cdot \Psi_{y,x}) = (p_g \cdot (c_{\bfs g}, \Psi_{\bfs g,y})) \cdot \Psi_{y,x},
\end{equation*}
and similarly for $(v_{\bfs g}, \Psi_{\bfs g,y}) \in V_M \times_M \textnormal{GL}(C_M,V_M)$. Because of this fact, the cochain complex representation is determined by the induced cochain complex representation of the subgroupoid
\begin{equation*}
F_G \coloneq \{p_g \in P_G \mid \bfs p_g = 1_{\bfs g}\} \subset P_G.
\end{equation*}
The suggestive notation is of course to emphasise that this subgroupoid, together with the above cochain complex representation, reveals the fat extension associated to a general linear PB-groupoid.
\end{remark}

Before we move on to give the correspondences, we make another remark which relates our setup of general linear PB-groupoids back to \cite{PB-groupoids}.

\begin{remark}\label{rema: general linear PB-groupoid definition}
The general linear groupoid $\textnormal{GL}(C_M,V_M) $, seen as a groupoid over $M$, contains the gauge groupoid of the principal $\textnormal{GL}(k,\ell)_M$-bundle
\begin{equation*}
\textnormal{Fr}(C_M,V_M) = \textnormal{Fr } C_M \times_M \textnormal{Fr } V_M = \{\phi_x^{C_M} \times \phi_x^{V_M}: \bbR^k \oplus \bbR^\ell \xra{\sim} (C_M)_x \oplus (V_M)_x \mid x \in M\},
\end{equation*}
if it exists, that is. The frame bundle $\textnormal{Fr}(C_M,V_M)$ has moment map given by conjugating the differential $\partial$ using the frames, and the action is the canonical one. This is completely analogous to how we defined the action of $\textnormal{GL}(C_M,V_M)_M$ on $\textnormal{GL}(C_M,V_M)$. In fact, the gauge groupoids of $\textnormal{GL}(C_M,V_M)$ and $\textnormal{Fr}(C_M,V_M)$ are equal and given by the cochain isomorphisms $\textnormal{Aut}(C_M \ra V_M)$ from Remark \ref{rema: GL of a cochain complex representation}
\begin{center}
\begin{tikzcd}
(C_M)_x \ar[r, "\partial"] \ar[d, "\Phi_{y,x}^{C_M}"] & (V_M)_x \ar[d, "\Phi_{y,x}^{V_M}"]\\ (C_M)_x \ar[r, "\partial"] & (V_M)_x
\end{tikzcd}
\end{center}
(which is smooth only when $\partial$ has constant rank). Now, as follows from \cite{PB-groupoids}, a $2$-term complex of vector bundles $C_M \ra V_M$ is actually equivalently described by a principal $\textnormal{GL}(k,\ell)_M$-bundle (using the associated vector bundle construction in the other direction). Instead of making this point again, we simply encode the $2$-term complex as a map on the gauge groupoid $\textnormal{GL}(C_M,V_M)$. Or rather, we reinterpret this data again as a type of general linear principal groupoid bundle:
\begin{equation*}
\textnormal{GL}(C_M,V_M) \textnormal{ \rotatebox[origin=c]{90}{\large$\circlearrowright$} } \textnormal{GL}(C_M,V_M)_M.
\end{equation*}
Then, the structure of general linear PB-groupoid over it will, as for PB-groupoids with structural strict Lie $2$-groupoid $\textnormal{GL}(k,\ell)_G \rra \textnormal{GL}(k,\ell)_M \rra \textnormal{Hom}(C_M,V_M)$, exactly describe the data of a fat extension. Moreover, whether we use $\textnormal{GL}(k,\ell)_G$ or $\textnormal{GL}(C_M,V_M)_G$, the constructions that appear in the equivalence will be defined in analogous ways.

If the gauge groupoids of $\textnormal{GL}(C_M,V_M)$ (or $\textnormal{Fr}(C_M,V_M)$) is smooth, we can also take a smooth gauge groupoid of the total space $P_G$ of a general linear PB-groupoid. The result is a double groupoid, and this gives rise to another way to encode the structure of a fat extension that will be explained in Section \ref{sec: From general linear PB-groupoids to general linear double groupoids}.
\end{remark}

We will first describe the correspondence between fat extensions and general linear PB-groupoids.

\subsection{From fat extensions to general linear PB-groupoids}\label{sec: From fat extensions to general linear PB-groupoids}

Given a fat extension $F_G$, the associated general linear PB-groupoid is given as follows. We define
\begin{equation*}
P_G \coloneq F_G \ltimes \textnormal{GL}(C_M,V_M)
\end{equation*}
where we used the canonical (left) action of $F_G$ on $\textnormal{GL}(C_M,V_M)$ by using the actions of $F_G$ on $C_M$ and $V_M$. We proceed to define the right $\textnormal{GL}(C_M,V_M)_G$-action on $P_G$ as another canonical action:
\begin{equation*}
(H_g, \Psi_{\bfs g, x}) \cdot (h_x, (\Psi_{\bfs g, x})^{-1} \partial \Psi_{\bfs g, x}, \Psi_{x, y}') \coloneq (H_g \cdot (\Psi_{\bfs g, x}^{C_M} h_x (\Psi_{\bfs g, x}^{V_M})^{-1}), \Psi_{\bfs g, x} \Psi_{x,y}').
\end{equation*} 
Using the explicit descriptions of all the structure involved, we obtain:
\begin{proposition}
Let $F_G$ be a fat extension. Then
\begin{equation*}
F_G \ltimes \textnormal{GL}(C_M,V_M) \rra \textnormal{GL}(C_M,V_M)
\end{equation*}
is a general linear PB-groupoid.
\end{proposition}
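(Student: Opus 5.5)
Following Definition \ref{def: PB-groupoid} and Definition \ref{def: general linear PB-groupoid}, I would check four things. First, that $P_G = F_G \ltimes \textnormal{GL}(C_M,V_M)$ is a Lie groupoid over $\textnormal{GL}(C_M,V_M)$. Second, that $\textnormal{GL}(C_M,V_M)_G$ acts on $P_G$ and $\textnormal{GL}(C_M,V_M)_M$ acts on $P_M = \textnormal{GL}(C_M,V_M)$, with compatible moment maps and a groupoid action map. Third, that both actions are principal. Fourth, that the quotient is $G \rra M$.

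\textbf{Groupoid structure and moment maps.} The action groupoid is a Lie groupoid because $F_G$ acts smoothly on $\textnormal{GL}(C_M,V_M)$ by $H_g \cdot \Psi_{\bfs g,x} = (H_g\cdot_{C_M}\Psi^{C_M}, H_g \cdot_{V_M} \Psi^{V_M})$. Since the representation is by cochain isomorphisms, the conjugated differential $\Psi^{-1}\partial\Psi$ is unchanged along this action. Hence the moment map $P_G \ra \textnormal{Hom}(C_M,V_M)$,
\begin{equation*}
(H_g,\Psi_{\bfs g,x}) \mapsto \Psi_{\bfs g,x}^{-1}\partial\Psi_{\bfs g,x},
\end{equation*}
is a groupoid map to the unit groupoid. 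It agrees with the moment map of the canonical $\textnormal{GL}(C_M,V_M)_M$-action on $P_M$.

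\textbf{The action is well defined and principal.} In the defining formula, $k \coloneq \Psi^{C_M} h_x (\Psi^{V_M})^{-1}$ lies in $\textnormal{H}(V_M,C_M)$, by the computation of Example \ref{exa: cochain complex defines PB-groupoid}. So $H_g \cdot k$ makes sense in $F_G$. That this is a right action of $\textnormal{GL}(C_M,V_M)_G \rra \textnormal{GL}(C_M,V_M)_M$ (the vertical groupoid structure) follows from the product $k\cdot k'$ in $\textnormal{H}(V_M,C_M)$ matching the multiplication of $\textnormal{Pert}$. For principality, let $(H_g,\Psi)$ and $(H_g',\Psi')$ lie over the same $g$. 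Then $\Psi_{\bfs g,x}^{-1}\Psi'$ is unique, and by exactness of $1 \ra \textnormal{H}(V_M,C_M) \ra F_G \ra G \ra 1$ there is a unique $k \in \textnormal{H}(V_M,C_M)$ with $H_g' = H_g\cdot k$. This shows freeness and transitivity on fibres of $P_G \ra G$. Properness and smoothness of the quotient follow because $F_G \ra G$ is a surjective submersion with fibres $\textnormal{H}$-torsors (a principal bundle). The same argument, without the $F_G$ factor, gives principality of $P_M \ra M$. Thus $P_G/\textnormal{GL}(C_M,V_M)_G \cong G$ and $P_M/\textnormal{GL}(C_M,V_M)_M \cong M$.

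\textbf{Main obstacle: the action map is a groupoid morphism.} This is the interchange condition
\begin{equation*}
\bigl((H_g,\Psi)\cdot(H_{g'},\Psi')\bigr)\cdot\bigl((h,\Phi)\cdot(h',\Phi')\bigr) = \bigl((H_g,\Psi)\cdot(h,\Phi)\bigr)\cdot\bigl((H_{g'},\Psi')\cdot(h',\Phi')\bigr),
\end{equation*}
where the product on $\textnormal{GL}(C_M,V_M)_G$ is the horizontal one over $\textnormal{Hom}(C_M,V_M)$. Expanding the left side gives $H_gH_{g'}$ multiplied by the conjugated product $k \cdot (\Phi\text{-conjugate of } k')$. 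Expanding the right side gives $H_g k H_{g'} k''$. Matching the two requires moving $k$ past $H_{g'}$, i.e.\ rewriting $k H_{g'}$ as $H_{g'}$ times the conjugate of $k$ by $H_{g'}^{-1}$. This is exactly where the fat-extension axiom is used: conjugation in $F_G$ agrees with the conjugation induced by the representations, $H\cdot k\cdot H^{-1} = H\cdot_{V_M} k\cdot_{C_M} H^{-1}$. After this rewriting, the remaining identity is the formula for the horizontal multiplication in Example \ref{exa: general linear 2-groupoid}, where the $\Psi^{C_M}(\cdot)(\Psi^{V_M})^{-1}$ conjugations compose. I would carry out this bookkeeping in the blockmatrix notation of Remark \ref{rema: matrices out of elements of general linear strict Lie 2-groupoid}, where it reduces to associativity of matrix products. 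I would also check unit compatibility separately, where the axiom that the representation restricts to the canonical one on $\textnormal{H}(V_M,C_M)$ enters.
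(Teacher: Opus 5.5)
Your overall route matches the paper's, which gives no argument beyond ``using the explicit descriptions of all the structure involved''. Your checks of the moment map, of $\Psi^{C_M}h_x(\Psi^{V_M})^{-1}\in\textnormal{H}(V_M,C_M)$, and of principality are fine. The central step, however, uses the two groupoid structures of $\textnormal{GL}(C_M,V_M)_G$ the wrong way round, and the identity you state is false.

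In Definition~\ref{def: PB-groupoid} the moment maps take values in $N=\textnormal{Hom}(C_M,V_M)$. So what acts on $P_G$ is $\textnormal{GL}(C_M,V_M)_G\rra\textnormal{Hom}(C_M,V_M)$; the defining formula confirms this, since it composes $\Psi_{\bfs g,x}\Psi'_{x,y}$. Your $\textnormal{Pert}$-associativity check is the right one for this structure, not for $\textnormal{GL}(C_M,V_M)_G\rra\textnormal{GL}(C_M,V_M)_M$ as you label it. By contrast, ``the action map is a groupoid map'' refers to $P_G\times_N H_G\rra P_M\times_N H_M$, that is, to the product of $\textnormal{GL}(C_M,V_M)_G\rra\textnormal{GL}(C_M,V_M)_M$. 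With the product over $\textnormal{Hom}(C_M,V_M)$, as you propose, the two sides fail to match in three ways:
\begin{itemize}
\item their $\textnormal{GL}(C_M,V_M)$-components are $\Psi'\Phi\Phi'$ and $\Psi'\Phi'$;
\item even after commuting $k$ past $H_{g'}$, their $F_G$-components differ by $\Phi h'\Phi^{-1}$ versus $h'$;
\item your right-hand side is only defined when $\Phi=(1+[\partial_x,h'])\Phi'$, with $\partial_x=(\Psi')^{-1}\partial\Psi'$, which is composability over $\textnormal{GL}(C_M,V_M)_M$, not over $\textnormal{Hom}(C_M,V_M)$.
\end{itemize}

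The fix is to use $(h,\partial_x,\Phi)\cdot(h',\partial_x,\Phi')=(h+h'+h\partial_xh',\partial_x,\Phi')$ with $\Phi=(1+[\partial_x,h'])\Phi'$. Write $\Psi h\Psi^{-1}$ for $\Psi^{C_M}h(\Psi^{V_M})^{-1}$, and use juxtaposition for the product in $F_G$. Both sides then have $\textnormal{GL}(C_M,V_M)$-component $\Psi'\Phi'$, and the $F_G$-components are $H_gH_{g'}\,\Psi'(h+h'+h\partial_xh')(\Psi')^{-1}$ and $H_g\,(\Psi h\Psi^{-1})\,H_{g'}\,(\Psi'h'(\Psi')^{-1})$. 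Because $\Psi=H_{g'}\cdot\Psi'$, the conjugation axiom of a fat extension gives $(\Psi h\Psi^{-1})\,H_{g'}=H_{g'}\,(\Psi'h(\Psi')^{-1})$; this is exactly the commutation you singled out. What remains is only that conjugation by $\Psi'$ carries the $\textnormal{Pert}$-product at $\partial_x$ to the product of $\textnormal{H}(V_M,C_M)$; no conjugation by $\Phi$ enters.

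Also, the axiom that the representation restricts to the canonical one on $\textnormal{H}(V_M,C_M)$ is not about units. It is what gives $\bft(p\cdot\eta)=\bft(p)\cdot\bft(\eta)$, namely $(H_{g'}\,\Psi'h'(\Psi')^{-1})\cdot\Psi'\Phi'=H_{g'}\cdot\Psi'(1+[\partial_x,h'])\Phi'=\Psi\Phi$. This is precisely the composability required on the right-hand side.
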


\begin{remark}\label{rema: similarities with construction of PB-groupoids}
Notice the similarities with the PB-groupoid associated to the fat groupoid as explained in \cite{PB-groupoids}.
Instead of defining the PB-groupoid
\begin{equation*}
F_G \ltimes \textnormal{GL}(C_M,V_M) \rra \textnormal{GL}(C_M,V_M)
\end{equation*}
we can set\footnote{Recall that $\textnormal{Fr}(C_M,V_M) = \textnormal{Fr } C_M \times_M \textnormal{Fr } V_M$.}
\begin{equation*}
F_G \ltimes \textnormal{Fr}(C_M,V_M) \rra \textnormal{Fr}(C_M,V_M).
\end{equation*}
In exactly the same way as above, this defines a PB-groupoid with structural strict Lie $2$-groupoid $\textnormal{GL}(k,\ell)_G \rra \textnormal{GL}(k,\ell)_M \rra \textnormal{Hom}(\bbR^k,\bbR^\ell)$. Roughly speaking, the shift in perspective is as the difference between $\textnormal{GL } E$ and $\textnormal{Fr } E$ for a vector bundle $E$. Instead of thinking of usual frames of $E$, namely frames modelled on Euclidean space, we think of frames modelled on its own fibers. We therefore need to consider a $\textnormal{GL}(E)$-symmetry instead of a $\textnormal{GL}(k)$-symmetry.
\end{remark}

\subsection{From general linear PB-groupoids to fat extensions}\label{sec: From general linear PB-groupoids to fat extensions}

The construction we did in the previous section, Section \ref{sec: From fat extensions to general linear PB-groupoids}, gives a clear hint on how to recover the fat extension from its general linear PB-groupoid $P_G \rra \textnormal{GL}(C_M,V_M)$. Namely, the subgroupoid we saw in Remark \ref{rema: cochain complex representation general linear PB-groupoid}
\begin{equation}\label{eq: representative set fat groupoid}
\{p_g \in P_G \mid \bfs p_g = 1_{\bfs g}\} \subset P_G
\end{equation}
turns out to form a unique representative set for the Lie groupoid
\begin{equation*}
F_G \coloneq P_G/\textnormal{GL}(C_M,V_M)_M \rra M.
\end{equation*}
We used here that $\textnormal{GL}(C_M,V_M)_M$ acts freely and properly, via the unit embedding, on $P_G$. As observed in Remark \ref{rema: cochain complex representation general linear PB-groupoid}, the description of $F_G$ using the representatives above makes it clear that the cochain complex representation descends to $F_G$: given $H_g = [p_g] \in F_G$, where $\bfs p_g = 1_{\bfs g} \in \textnormal{GL}(C_M,V_M)$, then the cochain complex representation is given by
\begin{equation*}
F_G \ra \textnormal{GL}(C_M, V_M)_M \qquad H_g \mapsto \bft p_g.
\end{equation*}
The surjective submersion $P_G \ra G$ is a groupoid map, and the induced groupoid map $F_G \ra G$ is still a surjective submersion. 
We therefore obtain a short exact sequence of groupoids
\begin{center}
\begin{tikzcd}
1 \ar[r] & \textnormal{H} \ar[r] & F_G \ar[r] & G \ar[r] & 1.
\end{tikzcd}
\end{center}

The cochain complex representation of $F_G$ induces a cochain complex representation of $\textnormal{H}$, and then we conclude with the following corollary of Proposition \ref{prop: PB-groupoid has H in it}:

\begin{corollary}\label{cor: fat extension of a PB-groupoid H}
The embedding $\textnormal{H}(V_M,C_M) \hookrightarrow P_G$ induced by Proposition \ref{prop: PB-groupoid has H in it} is a Lie groupoid isomorphism onto $\textnormal{H}$. Moreover, this map is equivariant with respect to the cochain complex representations.\footnote{We consider here the cochain complex representation on $\textnormal{H} \subset F_G$ that we constructed above, and the canonical cochain complex representation of $\textnormal{H}(V_M,C_M)$.}
\end{corollary}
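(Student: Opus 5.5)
The plan is to compose the embedding of Proposition \ref{prop: PB-groupoid has H in it} with the quotient map $P_G \ra F_G = P_G/\textnormal{GL}(C_M,V_M)_M$, and to work with the representative set \eqref{eq: representative set fat groupoid}. The resulting map is $h_y \mapsto [1_{1_y} \cdot (h_y,\partial,1_y)]$. First observe that $1_{1_y}\cdot(h_y,\partial,1_y)$ already lies in the representative set. Indeed, its source is $1_y \cdot \bfs(h_y,\partial,1_y) = 1_y\cdot(\partial,1_y) = 1_y$, by the groupoid-map property of the $2$-action. So the induced map $\textnormal{H}(V_M,C_M) \ra F_G$ is just the restriction of the embedding to the slice $\Psi_{y,x} = 1_y$. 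It is a groupoid map because the embedding is one, and the projection to $F_G$ is one as well, since $\textnormal{GL}(C_M,V_M)_M$ acts via the unit embedding compatibly with multiplication.

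Next I would show that the image is exactly $\textnormal{H} = \ker(F_G \ra G)$. For $H_g = [p_g] \in \textnormal{H}$ with $\bfs p_g = 1_{\bfs g}$, the element $p_g$ lies over a unit $1_y$ of $G$. The short exact sequence of Proposition \ref{prop: PB-groupoid has H in it} then writes $p_g = 1_{1_y}\cdot(h_y,\partial,\Psi_{y,x})$ for a unique pair. Comparing sources, $\Psi_{y,x} = \bfs p_g = 1_y$, so $p_g$ lies in the image, and uniqueness of $(h_y,\Psi_{y,x})$ gives injectivity. Freeness of the principal actions gives uniqueness of representatives in \eqref{eq: representative set fat groupoid}, so injectivity passes to the quotient. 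Smoothness of the inverse should follow because the slice $\{\Psi=1\}$ is an embedded submanifold of $\textnormal{H}(V_M,C_M)\ltimes\textnormal{GL}(C_M,V_M)$, and the restriction of an embedding to it remains an embedding into the (embedded) representative set, which is diffeomorphic to $F_G$. I expect this diffeomorphism claim to be the main technical obstacle. The first thing I would try is to exhibit the representative set as the preimage of the unit section under the submersion $\bfs: P_G \ra \textnormal{GL}(C_M,V_M)$, and to identify it with $F_G$ through the smooth slice map $p \mapsto p\cdot (\bfs p)^{-1}$.

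Finally I would check equivariance. The representation of $F_G$ is $H_g \mapsto \bft p_g$. For $p = 1_{1_y}\cdot(h_y,\partial,1_y)$, again by the $2$-action being a groupoid map, $\bft p = 1_y\cdot \bft(h_y,\partial,1_y) = (\partial,1+h_y\partial,1+\partial h_y)$, using the target formula of Example \ref{exa: general linear 2-groupoid}. This is precisely the canonical cochain complex representation $h_y \mapsto (1+h_y\partial,1+\partial h_y)$ of $\textnormal{H}(V_M,C_M)$, which concludes the proof.
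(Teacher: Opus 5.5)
Your proposal is correct and is essentially the paper's argument: the paper's inverse sends $[p_x]$, with $\bfs p_x = 1_x$, to the unique $h$ such that $p_x = 1_{1_x}\cdot(h,\partial,1_x)$, which is exactly the element you obtain from the exact sequence of Proposition \ref{prop: PB-groupoid has H in it} together with your source comparison $\Psi_{y,x} = \bfs p_g = 1_y$. Your explicit target computation for equivariance and your treatment of smoothness of the inverse supply details the paper leaves implicit.
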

\begin{proof}
Its inverse is given as follows. Given $h_x = [p_x] \in \textnormal{H}$ (with $\bfs p_x = 1_x$), denote by $\Phi h_x$ the unique element in $\textnormal{H}(V_M,C_M)$ such that
\begin{equation*}
h_x \cdot (\Phi h_x, \partial, 1_x)^{-1} = 1_{1_x} \in P_G.
\end{equation*}
Then, the map
\begin{equation*}
\Phi: \textnormal{H} \ra \textnormal{H}(V_M,C_M)
\end{equation*}
is the desired inverse.
\end{proof}

It is readily verified that we set a true one-to-one correspondence between fat extensions and general linear PB-groupoids:

\begin{proposition}\label{prop: one to one correspondence fat extensions and PB-groupoids}
The assignment
\begin{equation*}
\{\textnormal{Fat extensions}\} \ra \{\textnormal{General linear PB-groupoids}\} \qquad F_G \mapsto F_G \ltimes \textnormal{GL}(C_M,V_M)
\end{equation*}
sets a one-to-one correspondence, and the assignment
\begin{equation*}
\{\textnormal{General linear PB-groupoids}\} \ra \{\textnormal{Fat extensions}\} \qquad P_G \mapsto P_G/\textnormal{GL}(C_M,V_M)_M.
\end{equation*}
is inverse to it (up to isomorphisms).
\end{proposition}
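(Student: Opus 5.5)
To prove Proposition \ref{prop: one to one correspondence fat extensions and PB-groupoids}, I check that the two assignments are mutually inverse up to canonical isomorphisms. Each direction reduces to the representative set \eqref{eq: representative set fat groupoid}. Well-definedness of both assignments has already been established: the previous section shows that $F_G \ltimes \textnormal{GL}(C_M,V_M)$ is a general linear PB-groupoid, and Section \ref{sec: From general linear PB-groupoids to fat extensions} together with Corollary \ref{cor: fat extension of a PB-groupoid H} shows that $P_G/\textnormal{GL}(C_M,V_M)_M$ is a fat extension. I take these as given. The one fat-extension axiom not yet addressed is that the two conjugation actions on $\textnormal{H}(V_M,C_M)$ agree. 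I would deduce it from the interchange law of the $2$-action. Conjugating $1_{1_y}\cdot(h_y,\partial,1_y)$ by a representative $p_g$ with $\bfs p_g = 1_{\bfs g}$ yields $1_{1_{\bft g}}\cdot(\Psi^{C_M} h \,(\Psi^{V_M})^{-1},\partial,1)$, where $\Psi = \bft p_g$. This is exactly the action of $F_G$ on $\textnormal{Hom}(V_M,C_M)$.

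\emph{Fat extension to PB-groupoid and back.} Starting from $F_G$, the elements of $P_G = F_G \ltimes \textnormal{GL}(C_M,V_M)$ with source the unit $1_{\bfs g}\in\textnormal{GL}(C_M,V_M)$ are exactly the pairs $(H_g,1_{\bfs g})$. The map $H_g \mapsto [(H_g,1_{\bfs g})]$ is therefore a bijection $F_G \to P_G/\textnormal{GL}(C_M,V_M)_M$. It is a groupoid map because the product in the action groupoid restricts on these representatives to the product of $F_G$. It intertwines the representations because $\bft(H_g,1_{\bfs g}) = H_g\cdot 1_{\bfs g}$ is precisely the image of $H_g$ in $\textnormal{GL}(C_M,V_M)_M$. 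Finally, it restricts to the identity on $\textnormal{H}(V_M,C_M)$, since the embedding of Proposition \ref{prop: PB-groupoid has H in it} sends $h_y$ to $1_{1_y}\cdot(h_y,\partial,1_y) = (h_y,1_y)$. For smoothness, note that the representative set is the preimage of the unit section under the source map, which is a submersion. Hence it is a closed embedded submanifold transverse to the free and proper $\textnormal{GL}(C_M,V_M)_M$-orbits, and it is a global slice for the quotient.

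\emph{PB-groupoid to fat extension and back.} Starting from $P_G$, let $F_G = P_G/\textnormal{GL}(C_M,V_M)_M$, realised on representatives $p_g$ with $\bfs p_g = 1_{\bfs g}$. Define
\[
\Theta: F_G\ltimes\textnormal{GL}(C_M,V_M) \to P_G, \qquad ([p_g],\Psi_{\bfs g,x}) \mapsto p_g\cdot(0,\partial_{\bfs g},\Psi_{\bfs g,x}),
\]
that is, the right action of the unit element of $\textnormal{GL}(C_M,V_M)_G$ over $\Psi$. Since $\textnormal{GL}(C_M,V_M)_M$ acts principally on $\textnormal{GL}(C_M,V_M)$ and $P_G$ is principal, every $p\in P_G$ is uniquely of the form $p_g\cdot 1_{\Psi}$ with $\bfs p_g$ a unit, so $\Theta$ is a diffeomorphism. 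It is a groupoid map because the action map $P_G\times_N H_G\to P_G$ is itself a groupoid map by Definition \ref{def: PB-groupoid}. The main obstacle is checking that $\Theta$ intertwines the full $\textnormal{GL}(C_M,V_M)_G$-action, including the $h$-components, and not only the unit part. I would first reduce to elements of the form $(h,\partial,1)$, using the fact that $\textnormal{GL}(C_M,V_M)_G$ is generated by these together with units. For such elements, I would write $p_g\cdot 1_\Psi\cdot(h,\dots)$ as $p_g\cdot(\Psi^{C_M}h(\Psi^{V_M})^{-1},\dots)\cdot 1_\Psi$, again by the interchange law, and then compare this with the formula in Section \ref{sec: From fat extensions to general linear PB-groupoids}. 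Compatibility with the moment maps, which are conjugation of the differential, then follows from Remark \ref{rema: trivial abstract general linear PB-groupoid}.
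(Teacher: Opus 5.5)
Your proposal is correct and follows the paper's route. The paper only says the mutual inverseness is ``readily verified'' from the slice $\{p_g \in P_G \mid \bfs p_g = 1_{\bfs g}\}$, Proposition~\ref{prop: PB-groupoid has H in it} and Corollary~\ref{cor: fat extension of a PB-groupoid H}, and your checks, including the Peiffer identity derived from the $2$-action (which the paper leaves implicit), are exactly the omitted verifications. One small correction: normalised representatives are generally not composable in $P_G$, since the target of $(H_{g'},1_{\bfs g'})$ is $H_{g'}\cdot 1_{\bfs g'}$, which is in general not a unit; the product on the slice is therefore $[p_g]\cdot[p_{g'}] = [(p_g\cdot 1_{\bft p_{g'}})\cdot p_{g'}]$ rather than a literal restriction of the product of $P_G$, and with this formula your groupoid-map arguments go through unchanged.
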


We find it worthwhile to take a moment to also describe the correspondences with VB-groupoids and $2$-term ruths. This is what we discuss next, starting with VB-groupoids.

\subsection{From VB-groupoids to general linear PB-groupoids}\label{sec: From VB-groupoids to general linear PB-groupoids}

As we use a different strict structural Lie $2$-groupoid $\textnormal{GL}(C_M,V_M)_G \rra \textnormal{GL}(C_M,V_M)_M \rra \textnormal{Hom}(C_M,V_M)$ for general linear PB-groupoids, the correspondence between VB-groupoids and general linear PB-groupoids looks slightly different than in \cite{PB-groupoids}. But the shift in perspective is analogous to the one from the previous correspondence between fat extensions and general linear PB-groupoids. Instead of looking at (adapted) frames of a VB-groupoid $V_G$ modelled on $\bbR^k \oplus \bbR^\ell$, we will consider ``frames'' modelled on fibers of $C_M \oplus V_M$:
\begin{equation*}
\phi_g: (C_M)_x \oplus (V_M)_x \xra{\sim} (V_G)_g.
\end{equation*}
In fact, from the previous section, Section \ref{sec: From general linear PB-groupoids to fat extensions}, we can infer what maps we should really be considering: a composition of an isomorphism defined by an element $H_g \in \widehat V_G$:
\begin{equation*}
(C_M)_{\bft g} \oplus (V_M)_{\bfs g} \xra{\sim} (V_G)_g \qquad (c,v) \mapsto r_gc + H_g v
\end{equation*}
with two linear isomorphisms
\begin{equation*}
(C_M)_x \xra{\sim} (C_M)_{\bft g} \qquad (V_M)_x \xra{\sim} (V_M)_{\bfs g}.
\end{equation*}
The collection of such elements form the general linear PB-groupoid
\begin{equation*}
\textnormal{GL}_\bfs V_G \rra \textnormal{GL}(C_M,V_M)
\end{equation*}
of interest. This is analogous to how one defines the (adapted) frame bundle of a VB-groupoid in \cite{PB-groupoids}. More precisely, we define this general linear PB-groupoid, without using the fat groupoid, as follows.\footnote{See for this the older versions of the paper \cite{PB-groupoids} on the arXiv.} We set
\begin{equation*}
\textnormal{GL}_\bfs V_G = \{\phi_g: (C_M)_x \oplus (V_M)_x \xra{\sim} (V_G)_g \mid \bfs_g\phi_g|_{(C_M)_x} = 0, \textnormal{ } \bft_g\phi_g|_{(V_M)_x}, \textnormal{ } \bfs_g\phi_g|_{(V_M)_x} \in \textnormal{GL}(V_M)\}
\end{equation*}
which will be the total groupoid $P_G$ of the general linear PB-groupoid associated to $V_G$. To define the structure maps of this groupoid, we use the following diagram:
\begin{center}
\begin{tikzcd}
(C_M)_x \ar[d, hookrightarrow] & (C_M)_{\bfs g} \ar[rd, "\ell_g"] & & \ar[ld, "r_g"'] (C_M)_{\bft g} \ar[dd, "\partial"] \\
(C_M)_x \oplus (V_M)_x \ar[rr, "\phi_g"] \ar[d, hookleftarrow] & & \ar[ld, "\bfs"] (V_G)_g \ar[rd, "\bft"'] & \\
(V_M)_x & (V_M)_{\bfs g} & & (V_M)_{\bft g}
\end{tikzcd}
\end{center}
Now, we construct four maps that will describe the source and target map of $\textnormal{GL}_\bfs V_G \rra \textnormal{GL}(C_M,V_M)$. Namely, given $\phi_g \in \textnormal{GL}_\bfs V_G$, we write
\begin{equation*}
\phi_{\bft g}^{C_M} \coloneq r_{g^{-1}}\phi_g|_{(C_M)_x} \qquad \phi_{\bft g}^{V_M} \coloneq \bft_g\phi_g|_{(V_M)_x} \qquad \phi_{\bfs g}^{V_M} \coloneq \bfs_g\phi_g|_{(V_M)_x}
\end{equation*}
and\footnote{Notice that
\begin{equation*}
\ell_g^{-1}v = -r_g(v^{-1})
\end{equation*}
where $v \in \ker \bft$.}
\begin{equation*}
\phi_{\bfs g}^{C_M} \coloneq \ell_g^{-1}\phi_g (1,-(\phi_{\bft g}^{V_M})^{-1} \partial \phi_{\bft g}^{C_M}).
\end{equation*}
We then set $\bfs \phi_g = (\phi_{\bfs g}^{C_M}, \phi_{\bfs g}^{V_M})$, $\bft \phi_g = (\phi_{\bft g}^{C_M}, \phi_{\bft g}^{V_M})$, and
\begin{equation*}
\phi_g \cdot \phi_h = r_h\phi_g|_{(C_M)_x} + \phi_g|_{(V_M)_x} \cdot \phi_h|_{(V_M)_x} \qquad \phi_g^{-1} = -(\phi_g(1,-1-(\phi_{\bft g}^{V_M})^{-1} \partial \phi_{\bft g}^{C_M}))^{-1}.
\end{equation*}

Realising elements $(h_y,\Psi_{y,x})$ of $\textnormal{GL}(C_M,V_M)_G$ as block matrices
\begin{equation*}
\begin{pmatrix}
(1+h_y\partial_y)\Psi_{y,x}^{C_M} & h_y\Psi_{y,x}^{V_M} \\ 0 & \Psi_{y,x}^{V_M}
\end{pmatrix} \in \textnormal{GL}(C_M \oplus V_M)
\end{equation*}
the (right) action of $\textnormal{GL}(C_M,V_M)_G \rra \textnormal{Hom}(C_M,V_M)$ on $\textnormal{GL}_\bfs V_G$ is given by matrix application:
\begin{equation*}
\begin{pmatrix} \phi|_{(C_M)_y} & \phi|_{(V_M)_y} \end{pmatrix} \begin{pmatrix}
(1+h_y\partial_y)\Psi_{y,x}^{C_M} & h_y\Psi_{y,x}^{V_M} \\ 0 & \Psi_{y,x}^{V_M}
\end{pmatrix} = \begin{pmatrix}
\phi(1+h_y\partial_y)\Psi_{y,x}^{C_M} & \phi h\Psi_{y,x}^{V_M} + \phi\Psi_{y,x}^{V_M}
\end{pmatrix}.
\end{equation*}
This matrix showed up in Remark \ref{rema: representations general linear strict Lie 2-groupoid} where we discussed the canonical $2$-representations of $\textnormal{GL}(C_M,V_M)_G$. For similar reasons, the action defined above defines a $2$-action, and this is the desired principal $2$-action. We therefore obtain:
\begin{proposition}\label{prop: PB-groupoid of a VB-groupoid}
Let $V_G$ be a VB-groupoid. Then
\begin{equation*}
\textnormal{GL}_\bfs V_G \rra \textnormal{GL}(C_M,V_M)
\end{equation*}
is a general linear PB-groupoid.
\end{proposition}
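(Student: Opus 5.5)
Rather than verify the PB-groupoid axioms for $\textnormal{GL}_\bfs V_G$ directly, I will identify $\textnormal{GL}_\bfs V_G$ with the general linear PB-groupoid $\widehat V_G \ltimes \textnormal{GL}(C_M,V_M)$ built from the fat extension $\widehat V_G$. By Proposition \ref{prop: fat groupoid comes with fat extension}, $\widehat V_G$ is a fat extension, so the proposition of Section \ref{sec: From fat extensions to general linear PB-groupoids} already tells us that $\widehat V_G \ltimes \textnormal{GL}(C_M,V_M) \rra \textnormal{GL}(C_M,V_M)$ is a general linear PB-groupoid. Everything then reduces to exhibiting an isomorphism that intertwines the groupoid structures and the $\textnormal{GL}(C_M,V_M)_G$-actions.

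The candidate map is the one suggested just before the statement:
\begin{equation*}
\Theta: \widehat V_G \ltimes \textnormal{GL}(C_M,V_M) \ra \textnormal{GL}_\bfs V_G \qquad (H_g,\Psi_{\bfs g,x}) \mapsto \big((c,v) \mapsto r_g(H_g \cdot \Psi^{C_M}_{\bfs g,x}c) + H_g\Psi^{V_M}_{\bfs g,x}v\big).
\end{equation*}
By Proposition \ref{prop: fat exact sequence of VB-groupoids}, $(c,v) \mapsto r_gc + H_gv$ is a linear isomorphism, so $\Theta$ lands in $\textnormal{GL}_\bfs V_G$. Indeed, $\bfs_g r_g = 0$, $\bfs_g H_g\Psi^{V_M} = \Psi^{V_M}$ is invertible, and $\bft_g H_g\Psi^{V_M}$ is invertible because $H_g \in \widehat V_G$.

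For the inverse, take $\phi_g \in \textnormal{GL}_\bfs V_G$. Set $\Psi^{V_M} = \phi^{V_M}_{\bfs g}$ and $H_g = \phi_g|_{V_M}(\phi^{V_M}_{\bfs g})^{-1}$. This $H_g$ lies in $\widehat V_G$, since $\bfs_g H_g = 1$ and $\bft_g H_g$ is invertible. The $C_M$-component then determines $\Psi^{C_M}$ through $\phi^{C_M}_{\bft g} = H_g \cdot \Psi^{C_M}$, which is possible because the action of $H_g$ on $C_M$ is invertible. Both maps are smooth, so $\Theta$ is a diffeomorphism.

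The remaining work is to check that $\Theta$ is compatible with the structure.
\begin{itemize}
\item \emph{Source and target.} Compute $\bft\Theta(H_g,\Psi) = (H_g \cdot \Psi^{C_M}, H_g \cdot \Psi^{V_M})$, which matches the action of $\widehat V_G$ on $\textnormal{GL}(C_M,V_M)$. For the source, the formula for $\phi^{C_M}_{\bfs g}$ should reduce to $\Psi^{C_M}$, using the description of $H_g^{-1}$ and the identity $\ell_g^{-1}v = -r_g(v^{-1})$.
\item \emph{Multiplicativity.} Check $\Theta(H_g,\Psi)\cdot\Theta(H_h,\Psi') = \Theta(H_g H_h,\Psi')$, where $\Psi = H_h \cdot \Psi'$. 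This follows from $(H_g\cdot H_h)v = H_g(\bft H_h v)\cdot H_h v$ together with bilinearity of multiplication in $V_G$, in particular $r_h c = c\cdot 0_h$.
\item \emph{Equivariance of the action.} Check that $\Theta$ intertwines the right $\textnormal{GL}(C_M,V_M)_G$-actions. In matrix form, right multiplication by $\begin{pmatrix}(1+h\partial)\Psi^{C_M} & h\Psi^{V_M}\\ 0 & \Psi^{V_M}\end{pmatrix}$ must correspond to replacing $H_g$ by $H_g\cdot(\Psi^{C_M}h(\Psi^{V_M})^{-1})$. This comes down to the identity $H_g\cdot(1+h) = H_g + r_g\circ(H_g\cdot_{C_M} h)$, i.e.\ precomposition by an invertible homotopy adds an $r_g$-term. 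That identity is the content of the computation in the proof of Proposition \ref{prop: representation H(V_M,C_M) is action by conjugation}.
\end{itemize}

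I expect the main obstacle to be the source map. The formula $\phi^{C_M}_{\bfs g} = \ell_g^{-1}\phi_g(1,-(\phi^{V_M}_{\bft g})^{-1}\partial\phi^{C_M}_{\bft g})$ has to match $\Psi^{C_M}$, and this hinges on the relation $H_g \cdot c = r_{g^{-1}}(H_g(\partial c)\cdot c)$ from \eqref{eq: representations of the fat groupoid actions}, with signs that are easy to get wrong. I would do this computation first, directly on the generators $r_gc$ and $H_gv$.

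If the identification turns out to hold only up to sign conventions, the fallback is to verify the principal $2$-action axioms directly through the matrix picture of Remark \ref{rema: representations general linear strict Lie 2-groupoid}. In that approach, freeness and transitivity on fibres of $\textnormal{GL}_\bfs V_G \ra G$ follow because any two adapted frames over $g$ differ by a block upper-triangular automorphism of $C_M\oplus V_M$ of exactly that form.
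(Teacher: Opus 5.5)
Your proposal is correct, but your main route differs from the paper's. The paper argues directly on frames. It writes the right action of $\textnormal{GL}(C_M,V_M)_G$ on $\textnormal{GL}_\bfs V_G$ as precomposition with the block upper-triangular matrices attached to $(h,\Psi)$. It notes that these are the matrices of the canonical $2$-representation in Remark \ref{rema: representations general linear strict Lie 2-groupoid}, and concludes ``for similar reasons'' that this is the required principal $2$-action. That is essentially your fallback. The identification $\textnormal{GL}_\bfs V_G \cong \widehat V_G \ltimes \textnormal{GL}(C_M,V_M)$, on which you build your proof, appears in the paper only in the remark after the proposition, and without proof.

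Your route has real advantages. The groupoid axioms of $\textnormal{GL}_\bfs V_G$, whose source and inversion formulas are rather opaque, are inherited from the action groupoid, where they are transparent. So is the principality of the action. All that remains is the handful of identities you list. Along the way you also prove that remark, which is exactly what makes the composite VB-groupoids $\to$ fat extensions $\to$ PB-groupoids agree with $V_G \mapsto \textnormal{GL}_\bfs V_G$.

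The price is that you rely on the proposition that $F_G \ltimes \textnormal{GL}(C_M,V_M)$ is a general linear PB-groupoid, which the paper also only asserts. So the verification is moved to an easier setting rather than removed. The paper's argument, by contrast, stays intrinsic to $V_G$ and mirrors the frame-bundle construction of \cite{PB-groupoids}.

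The source computation you flagged as the main obstacle goes through with no sign trouble. For $\phi_g = \Theta(H_g,\Psi)$, equivariance gives $(\phi^{V_M}_{\bft g})^{-1}\partial\phi^{C_M}_{\bft g} = (\Psi^{V_M})^{-1}\partial\Psi^{C_M}$. Writing $c' = \Psi^{C_M}c$, you get $\phi_g\bigl(c, -(\Psi^{V_M})^{-1}\partial\Psi^{C_M}c\bigr) = r_g(H_g\cdot c') - H_g\partial c' = H_g(\partial c')\cdot c' - H_g(\partial c')\cdot 1_{\partial c'} = 0_g\cdot(c' - 1_{\partial c'}) = \ell_g c'$. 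The last step uses $(c')^{-1} = 1_{\partial c'} - c'$. Hence $\phi^{C_M}_{\bfs g} = \Psi^{C_M}$.

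For the inverse map, you should also record that $\phi^{C_M}_{\bft g}$ is invertible. This holds because $\phi_g$ is an isomorphism and $\phi_g|_{C_M}$ lands in $\ker\bfs_g$, whose rank is that of $C_M$. This ensures that $\Psi^{C_M}$ really lies in $\textnormal{GL}(C_M,V_M)$.
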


\begin{remark}
Using the fat groupoid to describe this general linear PB-groupoid, i.e. seeing it as an action groupoid
\begin{equation*}
\widehat V_G \ltimes \textnormal{GL}(C_M,V_M) \rra \textnormal{GL}(C_M,V_M)
\end{equation*}
describes the composition of functors
\begin{equation*}
\{\textnormal{VB-groupoids}\} \ra \{\textnormal{Fat extensions}\} \ra \{\textnormal{General linear PB-groupoids}\}.
\end{equation*}
As we already mentioned, this realises elements $\phi \in \textnormal{GL}_\bfs V_G$ as compositions of maps
\begin{equation*}
r_g\phi_{\bft g}^{C_M} + H_g\phi_{\bfs g}^{V_M}: (C_M)_x \oplus (V_M)_x \xra{\sim} (C_M)_{\bft g} \oplus (V_M)_{\bfs g} \xra{\sim} (V_G)_g.
\end{equation*}
Working with $\textnormal{GL}(k,\ell)_G$ instead of $\textnormal{GL}(C_M,V_M)_G$ (so with $\textnormal{Fr}(C_M,V_M)$ instead of $\textnormal{GL}(C_M,V_M)$), this action groupoid was used in \cite{PB-groupoids}.
\end{remark}

\subsection{From general linear PB-groupoids to VB-groupoids}

Following \cite{PB-groupoids}, it should not come as a surprise now that we can recover a VB-groupoid from its associated general linear PB-groupoid using a type of associated bundle construction. However, since we passed from $\textnormal{Fr}(C_M,V_M)$ to $\textnormal{GL}(C_M,V_M)$, the construction will especially be reminiscent to the association of a VB-groupoid from a fat extension.

Now, given a general linear PB-groupoid $P_G$, recall from Definition \ref{def: cochain complex representation PB-groupoid} that we can view
\begin{equation*}
\partial \times 1: C_M \times_M \textnormal{GL}(C_M,V_M) \ra V_M \times_M \textnormal{GL}(C_M,V_M)
\end{equation*}
as a cochain complex representation of $P_G$. Therefore, it comes with an associated VB-groupoid that we simply denote by
\begin{equation*}
P_G \ltimes (C_M \ra V_M) = C_M \times_M P_G \times_M V_M \rra V_M \times_M \textnormal{GL}(C_M,V_M).
\end{equation*}
Explicitly, the structure maps are as follows: the source and target maps are given, respectively, by $\bfs(c_{\bft g},p_g,v_{\bfs g}) = (v_{\bfs g}, \bfs p_g)$, $\bft(c_{\bft g},p_g,v_{\bfs g}) = (\partial c_{\bft g} + p_g \cdot v_{\bfs g}, \bft p_g)$ and
\begin{equation*}
(c_{\bft g}, p_g, \partial c_{\bfs g} + p_{g'} \cdot v_{\bfs g'}) \cdot (c_{\bfs g}, p_{g'}, v_{\bfs g'}) = (c_{\bft g} + p_g \cdot c_{\bfs g}, p_g \cdot p_{g'}, v_{\bfs g'}).
\end{equation*}
The VB-groupoid inherits a natural free and proper (right $2$-) action of $\textnormal{GL}(C_M,V_M)_G$:
\begin{equation*}
(c_{\bft g},p_g,v_{\bfs g}) \cdot (h_y, \Psi_{y,x}) = (c_{\bft g} - p_g \cdot (\bfs(p_g) h_y\bfs(p_g)^{-1}v_{\bfs g}), p_g \cdot (h_y, \Psi_{y,x}), v_{\bfs g}).
\end{equation*}
Since the action indeed preserves the linear structure, the result $V(P_G)$ is a VB-groupoid over $V_M$.

\begin{proposition}\label{prop: essential surjectivity of GL of VB-groupoid}
The assignment
\begin{equation*}
\{\textnormal{VB-groupoids}\} \ra \{\textnormal{General linear PB-groupoids}\} \qquad V_G \mapsto \textnormal{GL}_\bfs V_G
\end{equation*}
sets a one-to-one correspondence, and the assignment
\begin{equation*}
\{\textnormal{General linear PB-groupoids}\} \ra \{\textnormal{VB-groupoids}\} \qquad P_G \mapsto V(P_G)
\end{equation*}
is inverse to it (up to canonical isomorphisms).
\end{proposition}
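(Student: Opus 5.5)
The plan is to factor the correspondence through fat extensions and reuse Proposition \ref{prop: essential surjectivity of fat construction} and Proposition \ref{prop: one to one correspondence fat extensions and PB-groupoids}. First I will show that $\textnormal{GL}_\bfs V_G$ is canonically isomorphic, as a general linear PB-groupoid, to $\widehat V_G \ltimes \textnormal{GL}(C_M,V_M)$. The map is
\begin{equation*}
\widehat V_G \ltimes \textnormal{GL}(C_M,V_M) \ra \textnormal{GL}_\bfs V_G \qquad (H_g,\Psi_{\bfs g,x}) \mapsto r_g\Psi^{C_M}_{\bft g,x}+H_g\Psi^{V_M}_{\bfs g,x},
\end{equation*}
where $\Psi^{C_M}_{\bft g,x}$ denotes the $C_M$-component of $\bft(H_g,\Psi_{\bfs g,x}) = H_g\cdot\Psi_{\bfs g,x}$. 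To construct the inverse, take $\phi_g$ and set $\Psi^{V_M}=\phi^{V_M}_{\bfs g}$ and $H_g=\phi_g|_{(V_M)_x}(\phi^{V_M}_{\bfs g})^{-1}$. One checks that $H_g\in\widehat V_G$, since $\bfs_gH_g=1$ and $\bft_gH_g$ is invertible. The $C_M$-component is then recovered from $\phi^{C_M}_{\bft g}$ by transporting back along $H_g\cdot$. Having this map, I then verify that it intertwines the groupoid multiplications and the right $\textnormal{GL}(C_M,V_M)_G$-actions. Both actions are given by matrix application of the blockmatrix from Remark \ref{rema: representations general linear strict Lie 2-groupoid}, so this reduces to the identity $H_g\cdot(1+h)=H_g+r_gH_g\cdot_{C_M}h$ coming from Proposition \ref{prop: representation H(V_M,C_M) is action by conjugation}.

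Second, I will identify $V(P_G)$ with $V(P_G/\textnormal{GL}(C_M,V_M)_M)$. Here $P_G/\textnormal{GL}(C_M,V_M)_M$ is the fat extension $F_G$ obtained in Section \ref{sec: From general linear PB-groupoids to fat extensions}, and $V(F_G)$ is the quotient of $F_G\ltimes(C_M\ra V_M)$ from Section \ref{sec: From fat extensions to VB-groupoids}. Using the representative set \eqref{eq: representative set fat groupoid}, consisting of those $p_g$ with $\bfs p_g=1_{\bfs g}$, every orbit of the $\textnormal{GL}(C_M,V_M)_G$-action on $P_G\ltimes(C_M\ra V_M)$ meets the subgroupoid
\begin{equation*}
\{(c,p_g,v)\mid \bfs p_g = 1\}
\end{equation*}
in exactly one orbit of the residual $\textnormal{H}(V_M,C_M)$-action. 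By Corollary \ref{cor: fat extension of a PB-groupoid H}, that residual action is precisely the action $(h_x,v)\mapsto(-h_xv,h_x,v)$ used to define $V(F_G)$. This gives a canonical VB-groupoid isomorphism $V(P_G)\cong V(F_G)$.

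Combining both steps, the two composites are handled by the earlier results. For a VB-groupoid, $V(\textnormal{GL}_\bfs V_G)\cong V(\widehat V_G)\cong V_G$ by Proposition \ref{prop: essential surjectivity of fat construction}. For a general linear PB-groupoid, $\textnormal{GL}_\bfs V(P_G)\cong \widehat{V(F_G)}\ltimes\textnormal{GL}(C_M,V_M)\cong F_G\ltimes \textnormal{GL}(C_M,V_M)\cong P_G$ by Propositions \ref{prop: essential surjectivity of fat construction} and \ref{prop: one to one correspondence fat extensions and PB-groupoids}.

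I expect the main obstacle to be the first step: checking that the explicit source map of $\textnormal{GL}_\bfs V_G$ matches the action-groupoid source, and that the somewhat opaque multiplication formula for $\textnormal{GL}_\bfs V_G$ corresponds to the product in $\widehat V_G\ltimes\textnormal{GL}(C_M,V_M)$. The relevant source component is
\begin{equation*}
\phi^{C_M}_{\bfs g} = \ell_g^{-1}\phi_g\bigl(1,-(\phi^{V_M}_{\bft g})^{-1}\partial\phi^{C_M}_{\bft g}\bigr).
\end{equation*}
For this step I would first reduce to the case $\Psi=1$ using equivariance, so that only the identity $\ell_g^{-1}(r_gc-H_g(\bft_gH_g)^{-1}\partial c)=H_g^{-1}\cdot c$ from the proof of Proposition \ref{prop: fat groupoid of dual} remains to be checked.
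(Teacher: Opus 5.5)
Your proposal is correct and follows essentially the route the paper indicates. The paper likewise identifies $\textnormal{GL}_\bfs V_G$ with the action groupoid $\widehat V_G \ltimes \textnormal{GL}(C_M,V_M)$ via $\phi_g = r_g\phi^{C_M}_{\bft g} + H_g\phi^{V_M}_{\bfs g}$, and obtains $V(P_G)$ by taking the quotient in two steps: first by $\textnormal{GL}(C_M,V_M)_M$, recovering $F_G \ltimes (C_M \ra V_M)$, then by $\textnormal{H}(V_M,C_M) \ltimes V_M$, so that the statement reduces to the correspondences with fat extensions. Your explicit checks (the source formula via $\ell_g^{-1}(r_gc - H_g(\bft_gH_g)^{-1}\partial c) = H_g^{-1}\cdot c$, and the compatibility of the products and of the $\textnormal{GL}(C_M,V_M)_G$-actions) supply details the paper leaves implicit.
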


\begin{remark}
The above quotient can be taken in steps. Indeed, we can take a quotient of $P_G \ltimes (C_M \ra V_M)$ by $\textnormal{GL}(C_M,V_M)_M$, and then by $\textnormal{H}(C_M,V_M) \ltimes V_M$ (see Section \ref{sec: From fat extensions to VB-groupoids} and \eqref{eq: right action by elements of the bundle of invertible homotopies} for the explicit formula of this action). This way, we can interpret the first quotient as recovering $F_G \ltimes (C_M \ra V_M)$, and then the second quotient is as in Section \ref{sec: From fat extensions to VB-groupoids}. Doing this, we described the composition of functors
\begin{equation*}
\{\textnormal{General linear PB-groupoids}\} \ra \{\textnormal{Fat extensions}\} \ra \{\textnormal{VB-groupoids}\}.
\end{equation*}
\end{remark}

\subsection{The abstract ruth of a general linear PB-groupoid}

We conclude by setting the correspondence between general linear PB-groupoids and $2$-term ruths. 
Consider the abstract ruth associated to $P_G \ltimes (V_M^* \ra C_M^*)$, so
\begin{equation*}
C^\bullet(P_G; C_M \ra V_M) = C^\bullet(P_G; C_M) \oplus C^{\bullet-1}(P_G; V_M).
\end{equation*}
Note the abuse of notation; it would be more correct to write the complex as
\begin{equation*}
C^\bullet(P_G; C_M \times_M \textnormal{GL}(C_M,V_M) \ra V_M \times_M \textnormal{GL}(C_M,V_M)).
\end{equation*}
But if $(f_0,f_1)$ is a cochain in this complex, then the $\textnormal{GL}(C_M,V_M)$ components of $f_0(p_1,\dots,p_\bullet)$ and $f_0(p_2,\dots,p_\bullet)$ are equal to $\bft p_1$ and $\bft p_2$, respectively, so we simplify the notation.

If we argue through the language of fat extensions or VB-groupoids, for example, using Proposition \ref{prop: invariant complex iso to quotient complex}, we see that the following subcomplex should be considered:
\begin{definition}\label{def: equivariant cochains general linear PB-groupoids}
The \textit{$\textnormal{GL}$-equivariant cochains}
\begin{equation*}
(f_0,f_1) \in C(P_G; C_M \ra V_M) = C^\bullet(P_G; C_M) \oplus C^{\bullet-1}(P_G; V_M)
\end{equation*}
are those satisfying, for all $(p_{g_1},\dots,p_{g_\bullet}) \in P^{(\bullet)}$,
\begin{equation*}
f_1(p_{g_2},\dots,p_{g_\bullet}) = f_1(p_{g_2}',\dots,p_{g_\bullet}') 
\end{equation*}
and, for all $(h_y,\Psi_{y,x}) \in \textnormal{GL}(C_M,V_M)_G$,
\begin{equation*}
f_0(p_{g_1} \cdot (h_y,\Psi_{y,x}), \dots, p_{g_\bullet} \cdot (h_y,\Psi_{y,x})) = f_0(p_{g_1}, \dots, p_{g_\bullet}) + p_{g_1} \cdot (\bfs p_{g_1} h_y (\bfs p_{g_1})^{-1} f_1(p_{g_2}, \dots, p_{g_\bullet})).
\end{equation*}
The resulting subcomplex is denoted by $C_\textnormal{GL}(P_G; C_M \ra V_M)$ (or $C_\textnormal{GL}(P_G; C_M)$).
\end{definition}

\begin{remark}\label{rema: dropping p from notation gl euivariant cochains}
Given $(f_0,f_1) \in C_\textnormal{GL}(P_G; C_M)$, we write
\begin{equation*}
f_0(p_{g_1},g_2,\dots,g_\bullet) = f_0(p_{g_1},p_{g_2},\dots,p_{g_\bullet}) \qquad f_1(\bft(p_{g_2}), g_2,\dots,g_\bullet) = f_1(p_{g_2},\dots,p_{g_\bullet}).
\end{equation*}
The above condition then reads
\begin{equation*}
f_0(p_{g_1} \cdot (h_y,\Psi_{y,x}), g_2, \dots, g_\bullet) = f_0(p_{g_1}, g_2, \dots, g_\bullet) + p_{g_1} \cdot (\bfs p_{g_1} h_y (\bfs p_{g_1})^{-1} f_1(\bfs(p_{g_1}), g_2, \dots, g_\bullet)).
\end{equation*}
\end{remark}

From our previous discussions, we can infer that the above abstract ruth is isomorphic to $C_\textnormal{VB} V(P_G)^*$ and to $C_\textnormal{inv}(F_G; C_M)$. For example: the PB-groupoid associated to a fat extension $F_G$ is given by
\begin{equation*}
P_G = F_G \ltimes \textnormal{GL}(C_M,V_M).
\end{equation*}
The representation on $P_G$ is given by the pullback along the canonical groupoid map
\begin{equation*}
P_G \ra F_G.
\end{equation*}
This pullback then sets the desired isomorphism.

\begin{proposition}\label{prop: ruth of PB-groupoid}
Let $F_G$ be a fat extension of $G$ over $C_M \ra V_M$. The pullback
\begin{equation*}
C^\bullet(F_G; C_M \ra V_M) \ra C^\bullet(F_G \ltimes \textnormal{GL}(C_M,V_M); C_M \ra V_M)
\end{equation*} 
restricts to an isomorphism
\begin{equation*}
C^\bullet_\textnormal{inv}(F_G; C_M) \xra{\sim} C^\bullet_\textnormal{GL}(F_G \ltimes \textnormal{GL}(C_M,V_M); C_M).
\end{equation*}
\end{proposition}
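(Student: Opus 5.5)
The plan is to view the statement as an instance of Proposition \ref{prop: invariant complex iso to quotient complex}. The pullback along the canonical projection $\pi: P_G = F_G \ltimes \textnormal{GL}(C_M,V_M) \ra F_G$, $(H_g,\Psi_{\bfs g,x}) \mapsto H_g$, is a groupoid map and a surjective submersion. The representation of $P_G$ on $C_M \times_M \textnormal{GL}(C_M,V_M) \ra V_M \times_M \textnormal{GL}(C_M,V_M)$ is the pullback of the representation of $F_G$, up to the identification of the fibers by $\Psi$ as in Definition \ref{def: cochain complex representation PB-groupoid}. So $\pi^*$ makes sense on $C(F_G; C_M \ra V_M)$, after transporting the values by the frame $\bft p$. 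It is injective, since $\pi$ has a section on arrows, namely $H_g \mapsto (H_g,1_{\bfs g})$, which is exactly the representative set \eqref{eq: representative set fat groupoid}.

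First I will check that $\pi^*$ maps $C_\textnormal{inv}(F_G;C_M)$ into $C_\textnormal{GL}$. The condition on $f_1$ is immediate, because $f_1$ depends only on $(g_2,\dots,g_\bullet)$ in both complexes. For $f_0$, I split the action of $(h_y,\Psi_{y,x}) \in \textnormal{GL}(C_M,V_M)_G$ on $p_{g_1} = (H_{g_1},\Psi_{\bfs g_1,y})$ into two parts. The first is the $\textnormal{GL}(C_M,V_M)_M$-part $\Psi_{y,x}$, which only changes the frame and leaves $\pi$ unchanged. The second is the $\textnormal{Pert}$-part, which replaces $H_{g_1}$ by $H_{g_1} \cdot h'$ with $h' = \Psi^{C_M} h_y (\Psi^{V_M})^{-1} \in \textnormal{H}(V_M,C_M)$. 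The defining formula of $C_\textnormal{GL}$ then becomes the right-invariance form of Remark \ref{rema: invariance condition makes f_0 determine f_1}:
\begin{equation*}
f_0(H_{g_1} \cdot h_{\bfs g_1}, g_2, \dots) = f_0(H_{g_1}, g_2, \dots) + H_{g_1} \cdot (h_{\bfs g_1} f_1(g_2,\dots)).
\end{equation*}
This is equivalent to Definition \ref{def: invariant cochains}. The twist $\bfs p\, h_y\, (\bfs p)^{-1}$ in Definition \ref{def: equivariant cochains general linear PB-groupoids} is exactly what converts $h_y$ into $h'$. The same computation, read backwards, gives the converse: an element of $C_\textnormal{GL}$ is invariant along $\textnormal{GL}(C_M,V_M)_M$.

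Second, I prove surjectivity. Given $(f_0,f_1) \in C_\textnormal{GL}$, I restrict it along the section $s: F_G \ra P_G$, $H \mapsto (H,1)$. This section is a groupoid map onto the subgroupoid $\{\bfs p = 1\}$ of Remark \ref{rema: cochain complex representation general linear PB-groupoid}, so it is compatible with the differential. The restriction is invariant by the computation above with $\Psi = 1$, and pulling it back again via $\pi$ recovers $(f_0,f_1)$. The reason is that every element of $P_G$ is $s(H) \cdot \Psi$ for some $\Psi \in \textnormal{GL}(C_M,V_M)_M$, and $\textnormal{GL}$-equivariance under the pure $\Psi$-part, where $h_y = 0$, gives invariance with no correction term. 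That the maps are cochain maps follows because $\pi$ and $s$ are groupoid maps intertwining the representations, and the pullback is $CG$-linear.

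The main obstacle is the bookkeeping of the identification of fibers through frames. The values of a cochain on $P_G$ live in $C_M$ at $\bft p_{g_1}$ composed with the frame. I must verify that the convention hidden in the ``abuse of notation'' makes the $\Psi$-action genuinely trivial on $\pi^*f_0$, and that the sign and conjugation in the term $p_{g_1} \cdot (\bfs p\, h_y\, (\bfs p)^{-1} f_1)$ match $H_{g_1} \cdot (h' f_1)$. I will first test this against \eqref{eq: right action by elements of the bundle of invertible homotopies} in degree $1$ and then extend $CG$-linearly.
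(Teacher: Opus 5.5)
This is essentially the paper's argument. The isomorphism is nothing but the pullback along the canonical groupoid map $\pi\colon F_G \ltimes \textnormal{GL}(C_M,V_M) \ra F_G$, and no frame transport is needed: $\bft(p_g)\Psi_{\bfs g,x}^{-1}c = H_g \cdot c$ for $p_g = (H_g,\Psi_{\bfs g,x})$, so the representation of $P_G$ is literally the pullback of that of $F_G$. Your splitting of the $\textnormal{GL}(C_M,V_M)_G$-action into a frame part, on which $\pi$ is constant, and an $\textnormal{H}(V_M,C_M)$-part $\Psi^{C_M}h_y(\Psi^{V_M})^{-1}$, matched against the right-invariance form of Remark \ref{rema: invariance condition makes f_0 determine f_1}, is exactly the verification the paper leaves implicit.

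One correction: $H_g \mapsto (H_g,1_{\bfs g})$ is not a groupoid map into $P_G$. Two such elements are composable only when the second acts trivially on $C_M \ra V_M$, so $\{\bfs p = 1\}$ is merely a set of representatives for $P_G/\textnormal{GL}(C_M,V_M)_M$, not a subgroupoid of $P_G$. You do not need it to be a groupoid map, however. Use it only in the first slot, which is the only slot in which $f_0$ sees more than the underlying arrows $g_i$. The inverse of $\pi^*$ is then automatically a cochain map, because $\pi^*$ is an injective cochain map with image $C_\textnormal{GL}$.
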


\begin{remark}\label{rema: explanation of terminology}
To explain the terminology ``\textnormal{GL}-equivariance'', notice that we can conjugate elements $(f_0,f_1) \in C_\textnormal{GL}(P_G; C_M)$ to
\begin{align*}
\widetilde f_0(p_{g_1},g_2,\dots,g_\bullet) &\coloneq \bft(p_{g_1})^{-1}f_0(p_{g_1},g_2,\dots,g_\bullet) \\ \widetilde f_1(\Psi_{\bft g_2,x},g_2,\dots,g_\bullet) &\coloneq \Psi_{\bft g_2,x}^{-1}f_1(\Psi_{\bft g_2,x}, g_2,\dots,g_\bullet).
\end{align*}
We can think of $(\widetilde f_0, \widetilde f_1)$ as the result of postcomposing $(f_0,f_1)$ with the evaluation/action map
\begin{center}
\begin{tikzcd}
C_M \times_M \textnormal{GL}(C_M,V_M) \ar[r] \ar[d] & V_M \times_M \textnormal{GL}(C_M,V_M) \ar[d] \\
C_M\ar[r] & V_M
\end{tikzcd}
\end{center}
The conditions of Definition \ref{def: equivariant cochains general linear PB-groupoids} in terms of $(\widetilde f_0, \widetilde f_1)$ are as follows:
\begin{equation*}
\begin{pmatrix}
\widetilde f_0(p_{g_1} \cdot (h_y, \Psi_{y,x}), g) \\ \widetilde f_1(\bfs(p_{g_1} \cdot (h_y, \Psi_{y,x})), g)
\end{pmatrix} = \begin{pmatrix} (\Psi_{y,x}^{C_M})^{-1}(1+h_y\partial_y)^{-1} & -(\Psi_{y,x}^{V_M})^{-1}h_y^{-1} \\ 0 & (\Psi_{y,x}^{V_M})^{-1} \end{pmatrix} \begin{pmatrix}
\widetilde f_0(p_{g_1}, g) \\ \widetilde f_1(\bfs p_{g_1}, g)
\end{pmatrix}.
\end{equation*}
The result is therefore a complex $\widetilde C_\textnormal{GL}(P_G; C_M)$ consisting of ``\textnormal{GL}-equivariant'' cochains $(\widetilde f_0, \widetilde f_1)$ with respect to the $2$-actions of $\textnormal{GL}(C_M,V_M)_G$ on $P_G$ and on $C_M \ltimes V_M \rra V_M$ (see Remark \ref{rema: representations general linear strict Lie 2-groupoid}). In terms of this complex, the differential is given by
\begin{align*}
\delta \widetilde f_0(p_{g_1},g_2,\dots,g_{\bullet+1}) &= (-1)^\bullet (\widetilde f_0(p_{g_2}, g_3,\dots, g_{\bullet+1}) - \widetilde f_0(p_{g_1} \cdot p_{g_2}, g_3,\dots,g_{\bullet+1})) \\
&+ (-1)^\bullet\textstyle\sum_{k=2}^{\bullet+1} (-1)^k d_k^* \widetilde f_0(p_{g_1},g_2,\dots,g_{\bullet+1})
\end{align*}
where $p_{g_2}$ is arbitrary and composable with $p_{g_1}$ (and similarly for $\delta \widetilde f_1$).
\end{remark}

\subsection{From general linear PB-groupoids to general linear double groupoids}\label{sec: From general linear PB-groupoids to general linear double groupoids}

There is yet another way to encode the data of a fat extension. This becomes clear from the fact that general linear PB-groupoids sometimes come with a \textit{gauge double groupoid}.

\begin{remark}\label{rema: PB-groupoid come with a gauge double groupoid}
Recall from Remark \ref{rema: general linear PB-groupoid definition} that the groupoid of cochain isomorphisms
\begin{equation*}
\textnormal{Aut}(C_M \ra V_M) \rra M
\end{equation*}
can be seen as the gauge groupoid of the principal groupoid bundle $\textnormal{GL}(C_M,V_M)$ (or $\textnormal{Fr}(C_M,V_M)$). Given a general linear PB-groupoid $P_G$, what if we also take the gauge groupoid of the principal $\textnormal{GL}(C_M,V_M)_G$-bundle $P_G$?

In general, given a PB-groupoid $P_G \rra P_M$ with structural strict Lie $2$-groupoid $H_G \rra H_M \rra N$, then $P_G$ comes with a (not necessarily smooth) \textit{gauge double groupoid}
\begin{center}
\begin{tikzcd}
P_G \times_{H_G} P_G \ar[d, shift left] \ar[d, shift right] \ar[r, shift left] \ar[r, shift right] & P_M \times_{H_M} P_M \ar[d, shift left] \ar[d, shift right] \\ G \ar[r, shift left] \ar[r, shift right] & M
\end{tikzcd}
\end{center}
Doing this for a general linear PB-groupoid (also if we take a general linear PB-groupoid as in \cite{PB-groupoids}), we obtain a double groupoid whose vertical base groupoid is given by $\textnormal{Aut}(C_M \ra V_M)$. A natural condition to put on the PB-groupoid so that the above gauge groupoids are smooth is to assume the moment maps to be surjective submersions onto an embedded submanifold of $N$.\footnote{By the properties of the moment maps, one moment map has this property if the other does.} This happens for general linear PB-groupoids precisely when the differential $\partial$ is of constant rank.
\end{remark}

Now, to be more precise, we introduce a special class of double groupoids. First of all, we use the following notation for double groupoids:

\begin{center}
\begin{tikzcd}
    G \ar[d, shift left] \ar[d, shift right] \ar[r, shift left] \ar[r, shift right] & G_\da \ar[d, shift left] \ar[d, shift right] \\
    G_\ra \ar[r, shift left] \ar[r, shift right] & M
\end{tikzcd}
\end{center}

Here, $G_\ra$ and $G_\da$ are Lie groupoids, and we denote their structure maps using the same arrow-index. Moreover, $G$ is a Lie groupoid in two ways, over $G_\ra$ and over $G_\da$, and we denote the groupoid structure maps of $G \rra G_\da$ with indices $\ra$ and $G \rra G_\ra$ with indices $\da$.

\begin{definition}\label{def: core-transitive double groupoids}
A \textit{double groupoid} is a diagram of groupoids
\begin{center}
\begin{tikzcd}
    G \ar[d, shift left] \ar[d, shift right] \ar[r, shift left] \ar[r, shift right] & G_\da \ar[d, shift left] \ar[d, shift right] \\
    G_\ra \ar[r, shift left] \ar[r, shift right] & M
\end{tikzcd}
\end{center}
such that the structure maps of $G \rra G_\ra$ are groupoid maps over the respective structure maps of $G_\da \rra M$.\footnote{Equivalently, the structure maps of $G \rra G_\da$ are groupoid maps over the respective structure maps of $G_\ra \rra M$} We also assume that the double source map
\begin{equation*}
(\bfs_\ra,\bfs_\da): G \ra G_\da \times_M G_\ra
\end{equation*}
is a (not necessarily surjective) submersion.
\end{definition}

In the compatibility assumption made in this definition, we see, for example, the composable arrows of $G \rra G_\ra$ as a groupoid
\begin{equation*}
G \times_{G_\ra} G \rra G_\da \times_M G_\da
\end{equation*}
with structure defined componentwise. The compatibility of the two products is called the \textit{interchange law} and takes the form
\begin{equation*}
    (g \cdot_\ra h) \cdot_\da (g' \cdot_\ra h') = (g \cdot_\da g') \cdot_\ra (h \cdot_\da h').
\end{equation*}

\begin{remark}\label{rema: double source map}
As in \cite{Stefanini,DoublegroupoidsMehtaTang}, we do not require the double source map $(\bfs_\ra,\bfs_\da)$ to be a \textit{surjective} submersion. However, gauge double groupoids of PB-groupoids (that come with a gauge double groupoid) have this property. Following \cite{DoublegroupoidsMehtaTang}, we call double groupoids with a double source map that is a surjective submersion \textit{full}.
\end{remark}

In general, since the double source map $(\bfs_\ra,\bfs_\da)$ is a submersion, we obtain a \textit{core groupoid} that we suggestively denote by $F \rra M$:

\begin{definition}\label{def: core groupoid of double groupoid}
The \textit{core groupoid} of a double groupoid is given by
\begin{equation*}
    F \coloneq \ker \bfs_\da \cap \ker \bfs_\ra \rra M.
\end{equation*}
\end{definition}

Explicitly, its structure maps are $\bfs(g) = \bfs_\ra\bfs_\da g = \bfs_\da\bfs_\ra g$, $\bft(g) = \bft_\ra\bft_\da g = \bft_\da\bft_\ra g$, and\footnote{That is, in the definition of $g \cdot h$, we right-translate $g$ to be composable with $h$.}
\begin{align*}
    g \cdot h &= (g \cdot_\da 1_\ra(\bft_\ra h)) \cdot_\ra h = (g \cdot_\ra 1_\da(\bft_\da h)) \cdot_\da h \\
    g^{-1} &= (g)_\ra^{-1} \cdot_\da 1_\ra(\bft_\ra g)^{-1} = (g)_\da^{-1} \cdot_\ra 1_\da(\bft_\da g)^{-1}.
\end{align*}

Moreover, $F$ comes with two groupoid maps
\begin{equation*}
    G_\da \xla{\bft_\ra} F \xra{\bft_\da} G_\ra
\end{equation*}
which are just the restriction to the target maps. We now concentrate on the following very particular case of double groupoids:

\begin{definition}\label{def: transitive double Lie groupoid}
A double groupoid as above is called \textit{(vertically) core-transitive} if the map
\begin{equation*}
    \bft_\da: F \ra G_\ra
\end{equation*}
is a surjective submersion.\footnote{Of course, there is an analogous definition of horizontally core-transitive if we interchange the vertical structures with the horizontal ones.}
\end{definition}

Core-transitive double groupoids turn out to be determined by, what we will call, \textit{core extensions}.

\begin{definition}
A \textit{core extension} is a diagram\footnote{The action $G_\da \rotatebox[origin=c]{270}{$\circlearrowright$} \textnormal{ H}$ is a representation; that is, it acts by Lie group automorphisms (See Definition \ref{def: representations of bundles of Lie groups}).}
\begin{center}
\begin{tikzcd}
    1 \ar[r] & \textnormal{H} \ar[r] & \ar[dl] F \ar[r] & G_\ra \ar[r] & 1 \\
    \phantom{A} & G_\da \ar[u, phantom, "\textnormal{\Large$\circlearrowright$}"] & \phantom{A} & \phantom{A} & \phantom{A}
\end{tikzcd}
\end{center}
such that the restriction of the map $F \ra G_\da$ to $\textnormal{H}$ is equivariant (with respect to conjugation in $G_\da$) and the Peiffer identity\footnote{The \textit{Peiffer identity} is a name given to a condition that a \textit{crossed module} of Lie groupoids satisfies, of which the Peiffer identity in our case is a generalisation.} holds: the conjugation action of $F$ on $\textnormal{H}$, viewing $\textnormal{H}$ as a normal subgroupoid of $F$, equals the action by $F$ on $\textnormal{H}$ induced by the representation $G_\da$ $\rotatebox[origin=c]{270}{\large$\circlearrowright$}$ $\textnormal{H}$ and $F \ra G_\da$.
\end{definition}

This is directly related to \cite{Corediagram}, a result that is generalised in Appendix \ref{app: double groupoids}. In the appendix we present a proof, and we explain the already apparent connection with fat extensions. In the rest of this section we focus on the implication that regular fat extensions (fat extensions over regular cochain complexes) are equivalently described by \textit{general linear double groupoids}.

The general linear double groupoids that encode regular $2$-term complexes are strict Lie $2$-groupoids. Such a general linear double groupoid sits naturally inside the strict Lie $2$-groupoid of Remark \ref{rema: 2-term complex as strict Lie 2-groupoid}.

\begin{example}
Given a regular $2$-term complex $C_M \ra V_M$, we denote again by $\textnormal{Aut}(C_M \ra V_M)$ its Lie groupoid of cochain isomorphisms. Consider the double groupoid
\begin{center}
\begin{tikzcd}
\textnormal{H} \times_M \textnormal{Aut}(C_M \ra V_M) \times_M \textnormal{H} \ar[d, shift left] \ar[d, shift right] \ar[r, shift left] \ar[r, shift right] & \textnormal{Aut}(C_M \ra V_M) \ar[d, shift left] \ar[d, shift right] \\
\textnormal{H} \ar[r, shift left] \ar[r, shift right] & M
\end{tikzcd}
\end{center}
The groupoid structures are given as follows. The vertical groupoid structure has source and target map given, respectively, by $\bfs_\da(h_y,\Psi_{y,x},h_x) = h_x$, $\bft_\da(h_y,\Psi_{y,x},h_x) = h_y$ and
\begin{equation*}
(h_z,\Psi_{z,y},h_y) \cdot_\da (h_y,\Psi_{y,x},h_x) = (h_z, \Psi_{z,y}\Psi_{y,x}, h_x).
\end{equation*}
The horizontal groupoid structure is given by $\bfs_\ra(h_y,\Psi_{y,x},h_x) = \Psi_{y,x}$,
\begin{equation*}
\bft_\ra(h_y,\Psi_{y,x},h_x) = ((1+h_y\partial)\Psi_{y,x}^{C_M}(1+h_x\partial)^{-1}, (1+\partial h_y)\Psi_{y,x}^{V_M}(1+\partial h_x)^{-1}),
\end{equation*}
and
\begin{equation*}
(h_y,\Psi_{y,x},h_x) \cdot_\ra (h_y',\Psi_{y,x}',h_x') = (h_yh_y', \Psi_{y,x}', h_xh_x').
\end{equation*}
The vertical groupoid structure has a normal subgroupoid of the form
\begin{equation*}
\textnormal{Aut}(C_M \ra V_M) \ltimes \textnormal{H} \rra \textnormal{H}
\end{equation*}
through the embedding
\begin{equation*}
\textnormal{Aut}(C_M \ra V_M) \ltimes \textnormal{H} \ra \textnormal{H} \times_M \textnormal{Aut}(C_M \ra V_M) \times_M \textnormal{H} \qquad (\Psi_{y,x}, h_x) \mapsto (\Psi_{y,x} h_x \Psi^{-1}_{y,x}, \Psi_{y,x}, h_x).
\end{equation*}
We can then take the quotient with respect to this normal subgroupoid to obtain a double groupoid (a strict Lie $2$-groupoid) of the form
\begin{center}
\begin{tikzcd}
\textnormal{H}_\textnormal{Aut} \ar[d, shift left] \ar[d, shift right] \ar[r, shift left] \ar[r, shift right] & \textnormal{Aut}(C_M \ra V_M) \ar[d, shift left] \ar[d, shift right] \\
M \ar[r, shift left] \ar[r, shift right] & M
\end{tikzcd}
\end{center}
This is the general linear double groupoid associated to a $2$-term complex. It sits naturally inside of $\textnormal{H}(V_M,C_M) \ltimes \textnormal{GL}(C_M,V_M)$ (seen as a strict Lie $2$-groupoid; see Remark \ref{rema: 2-term complex as strict Lie 2-groupoid}), for it is also this strict Lie $2$-groupoid restricted to $\textnormal{Aut}(C_M \ra V_M)$:
\begin{equation*}
\textnormal{H} \ltimes \textnormal{Aut}(C_M \ra V_M) \xra{\sim} \textnormal{H}_\textnormal{Aut} \qquad (h_y,\Psi_{y,x}) \mapsto [h_y, \Psi_{y,x}, 0_x].
\end{equation*}
The reason for writing the double groupoid as above is that this description generalises to produce the general linear double groupoid of a fat extension (see Appendix \ref{app: double groupoids}).
\end{example}

We now define:

\begin{definition}\label{def: general linear double groupoids}
Let $G$ be a Lie groupoid, and let $C_M \ra V_M$ be a regular $2$-term complex. A double groupoid
\begin{center}
\begin{tikzcd}
    G_\textnormal{Aut} \ar[d, shift left] \ar[d, shift right] \ar[r, shift left] \ar[r, shift right] & \textnormal{Aut}(C_M \ra V_M) \ar[d, shift left] \ar[d, shift right] \\
    G \ar[r, shift left] \ar[r, shift right] & M
\end{tikzcd}
\end{center}
that is vertically core-transitive, and whose restriction to the vertical units is the strict Lie $2$-groupoid
\begin{center}
\begin{tikzcd}
    \textnormal{H}_\textnormal{Aut} \ar[d, shift left] \ar[d, shift right] \ar[r, shift left] \ar[r, shift right] & \textnormal{Aut}(C_M \ra V_M) \ar[d, shift left] \ar[d, shift right] \\
    M \ar[r, shift left] \ar[r, shift right] & M
\end{tikzcd}
\end{center}
is called a \textit{general linear double groupoid} of $G$ over $C_M \ra V_M$.
\end{definition}

Our observation in Remark \ref{rema: PB-groupoid come with a gauge double groupoid}, together with Proposition \ref{prop: PB-groupoid has H in it}, reveals:

\begin{proposition}
The gauge double groupoid of a regular general linear PB-groupoid $P_G$
\begin{center}
\begin{tikzcd}
P_G \times_{\textnormal{GL}} P_G \ar[d, shift left] \ar[d, shift right] \ar[r, shift left] \ar[r, shift right] & \textnormal{Aut}(C_M \ra V_M) \ar[d, shift left] \ar[d, shift right] \\ G \ar[r, shift left] \ar[r, shift right] & M
\end{tikzcd}
\end{center}
is a general linear double groupoid.
\end{proposition}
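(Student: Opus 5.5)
We verify the conditions of Definition \ref{def: general linear double groupoids} for the gauge double groupoid $P_G \times_{\textnormal{GL}} P_G$, where $\textnormal{GL}$ abbreviates $\textnormal{GL}(C_M,V_M)_G$. The plan has four steps: show the gauge construction is a smooth double groupoid, identify its horizontal side with $\textnormal{Aut}(C_M \ra V_M)$, prove vertical core-transitivity, and identify the restriction to vertical units with $\textnormal{H}_\textnormal{Aut}$.

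\textbf{Smoothness and the side groupoids.} Since $\partial$ has constant rank, Remark \ref{rema: PB-groupoid come with a gauge double groupoid} applies. The moment map of $\textnormal{GL}(C_M,V_M)$ is conjugation of the differential (Remark \ref{rema: trivial abstract general linear PB-groupoid}). It is therefore a surjective submersion onto the orbit of $\partial$ in $\textnormal{Hom}(C_M,V_M)$, which is an embedded submanifold of constant-rank maps. Consequently the fiber products $P_G \times_{\textnormal{Hom}} P_G$ and $\textnormal{GL}(C_M,V_M) \times_{\textnormal{Hom}} \textnormal{GL}(C_M,V_M)$ are smooth. The structural $2$-groupoid acts freely and properly on them diagonally, so the quotients are manifolds.

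The horizontal base $P_M \times_{H_M} P_M$ is the gauge groupoid of $\textnormal{GL}(C_M,V_M)$, which equals $\textnormal{Aut}(C_M \ra V_M)$ by Remark \ref{rema: general linear PB-groupoid definition}. The vertical base is $P_G/\textnormal{GL} = G$, because $P_G$ is a PB-groupoid over $G$.

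The double groupoid axioms hold for the following reasons. The groupoid structure of $P_G$ descends to classes because the $2$-action map is a groupoid map (Definition \ref{def: PB-groupoid}). The vertical gauge structure is the standard gauge groupoid construction. The interchange law is inherited from $P_G \times P_G$. For the double source map to be a submersion, I would use local sections of the principal bundles.

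\textbf{Vertical core-transitivity.} This is the main point. An element of the core groupoid is represented by a pair $[p,p']$ whose vertical source is a unit of $G$ and whose horizontal source is a unit of $\textnormal{Aut}$. The core-transitivity map is the vertical target $[p,p'] \mapsto \pi(p) \in G$.

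For surjectivity, take $g\in G$ and pick $p\in P_G$ over $g$. I would then look for $p'$ over $1_{\bfs g}$ such that $[p,p']$ lies in the core. The key input here is Proposition \ref{prop: PB-groupoid has H in it}: the short exact sequence there shows that the fiber of $P_G \ra G$ over units is $\textnormal{H}(V_M,C_M) \ltimes \textnormal{GL}(C_M,V_M)$, which is large enough to adjust the sources to units. In practice I would use the representative set \eqref{eq: representative set fat groupoid}, taking $p$ with $\bfs p = 1_{\bfs g}$ and $p' = 1_{1_{\bfs g}}$.

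For the submersion property, I would choose local sections of $P_G \ra G$. These exist because $P_G \ra G$ is a principal bundle.

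I expect the main obstacle to be the careful bookkeeping of which structure maps are horizontal and which are vertical in the gauge quotient. An alternative, likely cleaner route is to identify the core with $F_G = P_G/\textnormal{GL}(C_M,V_M)_M$ via $p \mapsto [p, 1_{1_{\bfs g}}]$. Surjectivity of the core-transitivity map then follows from surjectivity of $F_G \ra G$ in the fat extension of Section \ref{sec: From general linear PB-groupoids to fat extensions}.

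\textbf{Restriction to vertical units.} Over the units of $G$, the PB-groupoid restricts to the kernel $\textnormal{H}(V_M,C_M) \ltimes \textnormal{GL}(C_M,V_M)$ of Proposition \ref{prop: PB-groupoid has H in it}. So the restriction of the gauge double groupoid is the gauge double groupoid of the trivial general linear PB-groupoid of Example \ref{exa: cochain complex defines PB-groupoid}.

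It remains to identify this with $\textnormal{H}_\textnormal{Aut}$. Given a class $[(h_y,\Psi),(h'_y,\Psi')]$, I would act by $\textnormal{GL}$ to normalize the second entry to $(0,1)$, producing a pair $(h,\Phi)$ with $\Phi \in \textnormal{Aut}(C_M \ra V_M)$. This yields the map to $\textnormal{H} \ltimes \textnormal{Aut}(C_M \ra V_M) \cong \textnormal{H}_\textnormal{Aut}$. Compatibility of both groupoid structures would then be checked using the block-matrix description of Remark \ref{rema: matrices out of elements of general linear strict Lie 2-groupoid}.
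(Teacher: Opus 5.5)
Your proposal is correct and follows essentially the paper's route. The paper obtains the statement by combining Remark~\ref{rema: PB-groupoid come with a gauge double groupoid} (smoothness of the gauge double groupoid when $\partial$ has constant rank, with horizontal side $\textnormal{Aut}(C_M \ra V_M)$) with Proposition~\ref{prop: PB-groupoid has H in it} (the part over the units of $G$ is $\textnormal{H}(V_M,C_M) \ltimes \textnormal{GL}(C_M,V_M)$). Vertical core-transitivity is Lemma~\ref{lemm: core-transitive PB-groupoid}, and the appendix identifies the core with $P_G/\textnormal{GL}(C_M,V_M)_M$ via $[p] \mapsto [p,1_{\bfs p}]$, exactly as in your alternative route.

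One point needs to be made explicit in your first route. Local sections of the core map through some points do not by themselves make it a submersion. What closes the argument is that every core element has a representative of the form $[p,1_{\bfs p}]$. Local sections of $P_G \ra G$ through arbitrary points then give local sections of the core map through every point of the core.
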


To go from general linear double groupoids to regular general linear PB-groupoids requires a slight generalisation (and clarification) of \cite{Corediagram}.\footnote{To emphasise: most essential points are explained in \cite{Corediagram}, especially in the last remarks made in that work.} If we consider a regular general linear PB-groupoid, then the core extension of its gauge double groupoid recovers precisely the fat extension. So, with this correspondence in mind, general fat extensions can be thought of as ``core extensions'' of general linear PB-groupoids. We delay the discussion now to the appendix, and proceed with making some further comments on the correspondences.

\section{Fat splittings and comments on the correspondences}\label{sec: comments}

Now that we went through the correspondences, we make some final comments.

\subsection{Cohomological vanishing for proper groupoids}\label{sec: Cohomological vanishing for proper groupoids}
We mention here that the cohomological vanishing theorem for $2$-term ruths of proper groupoids is straightforward using fat extensions:

\begin{proposition}\label{prop: vanishing for proper groupoids}
Let $F_G$ be a fat extension over a proper groupoid $G$. Moreover, let $\textstyle\int_x = \textstyle\int_{\bft^{-1}x} d\mu(g)$ be a proper normalised Haar system of $G$. Then the maps
\begin{equation*}
\eta: C_\textnormal{inv}^\bullet(F_G; C_M) \ra C_\textnormal{inv}^{\bullet-1}(F_G; C_M)
\end{equation*}
given by
\begin{equation*}
\eta f(H_{g_1},g_2,\dots,g_{\bullet-1}) = (-1)^\bullet\textstyle\int_{\bfs g_{\bullet-1}} f(H_{g_1},g_2,\dots,g_{\bullet-1},g)
\end{equation*}
define a contraction homotopy $[\delta,\eta]=1$ in degrees $\bullet \ge 2$.
\end{proposition}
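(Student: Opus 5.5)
The plan is to run the classical averaging argument for proper groupoids, integrating over the last arrow of a composable string. This is possible because, for invariant cochains in degrees $\bullet \ge 2$, the last entry is a plain arrow of $G$ rather than an element of $F_G$. First I will make the formula well defined: the integral is over $g \in \bft^{-1}(\bfs g_{\bullet-1})$, so that $(H_{g_1},g_2,\dots,g_{\bullet-1},g)$ is composable, and it is smooth by properness of the Haar system. (When $\bullet-1=1$, $g_{\bullet-1}$ is read as $g_1$.)

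Next I will check that $\eta$ preserves $C_\textnormal{inv}(F_G;C_M)$. By Definition \ref{def: invariant cochains}, invariance of $(f_0,f_1)$ is a condition on the first slot $H_{g_1}$ only: $f_0(h\cdot H_{g_1},\dots)-f_0(H_{g_1},\dots)=h(H_{g_1}\cdot f_1(g_2,\dots))$. Both sides are linear in the trailing arrows, so integrating in the last slot preserves the identity, provided I define $\eta$ on the pair by
\begin{equation*}
\eta(f_0,f_1)=(\eta f_0,\eta f_1),
\end{equation*}
where $\eta f_1$ integrates $f_1$ in its last slot with the same sign convention. Here $\bullet\ge 2$ matters: for $\bullet-1 \ge 1$, $f_1$ has at least one argument to integrate over. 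The boundary case $\bullet=2$, where $\eta f_1$ lands in degree $0$, I will treat separately, checking that it yields a section of $V_M$ and that the invariance condition still holds.

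The main step is the identity $[\delta,\eta]=\delta\eta+\eta\delta=1$. I will use the explicit differential of $C(F_G;C_M\ra V_M)$, namely the ruth differential of the cochain complex representation restricted to invariant cochains. Its $d_0$-type term acts by $H_{g_1}$ on $f(g_2,\dots)$ and couples $f_0$ with $f_1$ through $\partial$; its remaining terms are $\sum_{k\ge1}(-1)^kd_k^*$. Writing out $\eta\delta f$, the faces $d_0,\dots,d_{\bullet-1}$ never touch the integrated slot. They therefore commute with integration and cancel against the corresponding terms of $\delta\eta f$; this includes the $H_{g_1}$-action and $\partial$ terms, which are linear in the integrand. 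Of the two faces involving the integrated variable, the term $d_{\bullet+1}$ (dropping $g$) contributes $\int f = f$ by normalisation of the Haar system. The term $d_\bullet$ gives $\int f(\dots,g_{\bullet-1}g)\,d\mu(g)$. By left invariance of the Haar system, this equals $\int_{\bft^{-1}(\bft g_{\bullet-1})} f(\dots,g')\,d\mu(g')$, which cancels the last face term of $\delta\eta f$. The signs are fixed by the factor $(-1)^\bullet$ in $\eta$ together with the $(-1)^\bullet$ in $\delta$, and the same computation applies to the $f_1$-component.

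I expect the main obstacle to be the sign and index bookkeeping, in particular matching the $d_\bullet$ face of $\eta\delta$ with the top face of $\delta\eta$. The coupling between $f_0$ and $f_1$, the $R_1$-type terms coming from the representation, is the other point of difficulty. I will first verify the identity on the split model $C^\bullet(G;C_M)\oplus C^{\bullet-1}(G;V_M)$ obtained from a local section of $F_G\ra G$, as in the alternative proof of Proposition \ref{prop: VB complex}. There the check reduces to the standard computation for representations of proper groupoids, because the operator only affects the last arrow while the structure operators $R_1,R_2$ involve only the first arrows.
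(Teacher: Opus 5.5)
Your argument is the one the paper intends. Its proof only says that this is the standard averaging computation for trivial coefficients. The details you supply are exactly what that computation needs: for $\bullet\ge 2$ the last slot of an invariant cochain is a plain arrow of $G$; $\eta$ must act on $f_0$ and $f_1$ with the same sign so that invariance is preserved; the $\partial$ cross-terms cancel; and left invariance and normalisation of the Haar system handle the two faces that touch the integrated slot.

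The step that fails as written is your sign claim. With the prefactor $(-1)^\bullet$ in $\delta$ from Section~\ref{sec: Notation for VB-groupoids}, the factor $(-1)^\bullet$ in $\eta$ makes the terms of $\delta\eta f$ and $\eta\delta f$ that avoid the integrated slot add up rather than cancel. For instance, with $G$ a point, $C_M=\bbR$ and $V_M=0$, one finds $[\delta,\eta]=-1$ in degree $2$. The stated $\eta$, with sign $(-1)^\bullet$ on both components, is the correct homotopy for the unsigned differential $\delta(f_0,f_1)=(Df_0,\,\partial f_0-Df_1)$, where $D$ is the action in the $d_0$-slot plus $\sum_{k\ge 1}(-1)^kd_k^*$. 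This is also a convention in which the sign-free invariance condition of Definition~\ref{def: invariant cochains} is preserved by $\delta$. So do your bookkeeping in that convention, or change the signs of $\eta$ if you keep the prefactor.

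Finally, for your side check on a split model, use a global fat splitting (Proposition~\ref{prop: invariant cochain complex is a ruth}) rather than a local section of $F_G\ra G$. Since $\delta$ puts $H_{g_2}$ and $H_{g_1}\cdot H_{g_2}$ into the first slot, a local section does not give a model on which $\delta$ acts. With a global splitting, the correction term $h(H_{g_1})f_1^{\calE}(g_2,\dots)$ commutes with integrating the last slot in degrees $\ge 2$, so $\eta$ corresponds on both sides.
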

\begin{proof}
The proof is a straightforward computation, entirely analogous to the proof that groupoid cohomology with trivial coefficients of $G$ vanishes.
\end{proof}

\subsection{Splittings of fat extensions and trivialisations}\label{sec: Relation to Representations up to homotopy}

In this section we clarify some aspects of ``split'' fat extensions. Naturally, this discussion comes with several clarifications of the correspondences between $2$-term ruths, VB-groupoids and fat extensions. Following Section \ref{sec: From 2-term ruths to fat extensions}, we define fat splittings as follows.

\begin{definition}\label{def: fat splittings}
Let $F_G$ be a fat extension of $G$ over $C_M \ra V_M$. A \textit{fat splitting} of $F_G$ is a map
\begin{equation*}
h: F_G \ra \textnormal{Hom}(\bfs^*V_M,\bft^*C_M)
\end{equation*}
that is invariant: for all $H_g,H_g' \in F_G$, \footnote{Recall that $h(H_g,H_g')$}
\begin{equation*}
h(H_g) - h(H_g') = -h(H_g,H_g') = -H_g' \cdot H_g^{-1}
\end{equation*}
and unital: for all $h_x \in \textnormal{H}(V_M,C_M)$,
\begin{equation*}
    h(h_x) = h_x.
\end{equation*}
It is called multiplicative if
\begin{equation*}
h(H_{g_1} \cdot H_{g_2})v = h(H_{g_1})H_{g_2} \cdot v - h(H_{g_1}) \partial h(H_{g_2})v + H_{g_1} \cdot h(H_{g_2})v.
\end{equation*}
\end{definition}

A disclaimer is that, although the similarity will become apparent, the multiplicativity condition is different than the multiplicativity condition for tensors (that appears in Section \ref{sec: functorial aspects}).

\begin{proposition}\label{prop: cleavage and fat splitting}
Let $V_G$ be a VB-groupoid. The association
\begin{equation*}
\{\textnormal{Cleavages on $V_G$}\} \ra \{\textnormal{Fat splittings on $\widehat V_G$}\} \qquad \Sigma \mapsto h_\Sigma(H_g) \coloneq r_{g^{-1}}(H_g-\Sigma_g)
\end{equation*}
sets a one-to-one correspondence between (unital) cleavages of $V_G$ and (unital) fat splittings. Moreover, for all $H_{g_1}, H_{g_2} \in \widehat V_G$ composable,
\begin{equation*}
h_\Sigma(H_{g_1} \cdot H_{g_2})v = R_2(g_1,g_2)v + H_{g_1} \cdot h_\Sigma(H_{g_2})v - h_\Sigma(H_{g_1}) \partial h_\Sigma(H_{g_2})v + h_\Sigma(H_{g_1})H_{g_2} \cdot v,
\end{equation*}
so under this correspondence, flat cleavages correspond to fat multiplicative splittings.
\end{proposition}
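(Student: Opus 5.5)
The plan has three parts: show that $h_\Sigma$ is an invariant fat splitting and that every fat splitting comes from a unique cleavage, then compute $h_\Sigma(H_{g_1}\cdot H_{g_2})$ directly.

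\emph{Well-definedness.} Since $\bfs_g H_g = 1 = \bfs_g \Sigma_g$, the difference $H_g - \Sigma_g$ takes values in $\ker \bfs_g = r_g((C_M)_{\bft g})$. Hence $h_\Sigma(H_g) = r_{g^{-1}}(H_g - \Sigma_g)$, where $r_{g^{-1}}$ denotes the inverse of $r_g$, is a well-defined element of $\textnormal{Hom}((V_M)_{\bfs g},(C_M)_{\bft g})$. Invariance is immediate from subtraction:
\[
h_\Sigma(H_g) - h_\Sigma(H_g') = r_{g^{-1}}(H_g - H_g') = -h(H_g,H_g').
\]
By Remark \ref{rema: invariance condition for fat groupoid of VB-groupoid}, this last term agrees with the fat pairing $-1 + H_g'\cdot H_g^{-1}$.

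\emph{Bijection.} Conversely, given an invariant map $h$, define
\[
\Sigma_g \coloneq H_g - r_g h(H_g)
\]
for any $H_g \in \widehat V_G$. Invariance says exactly that this is independent of the choice of $H_g$. Smoothness follows by taking $H$ to be a local section of the surjective submersion $\widehat V_G \ra G$. The two constructions are clearly inverse to each other; this is the content of Remark \ref{rema: compare with VB-groupoids: direct sum vs tensor product}. For unitality, take $H_x = 1 + h_x$, which in $\widehat V_G$ is the map $v \mapsto 1_v + h_x v$. Then $h_\Sigma(1+h_x) = h_x + r_{1}^{-1}(1_v - \Sigma_{1_x} v)$. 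This equals $h_x$ for all $h_x$ if and only if $\Sigma_{1_x} = \bfu$, i.e.\ exactly when $\Sigma$ is unital.

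\emph{Multiplicativity formula.} Write $H_{g_i} = \Sigma_{g_i} + r_{g_i} h_i$ with $h_i = h_\Sigma(H_{g_i})$, and expand
\[
(H_{g_1}\cdot H_{g_2})v = H_{g_1}(\bft H_{g_2} v)\cdot H_{g_2} v
\]
using bilinearity of the groupoid product on composable pairs in $V_G$.
\begin{itemize}
\item The argument is $\bft H_{g_2} v = R_1(g_2)v + \partial h_2 v$, which equals $H_{g_2}\cdot v$.
\item The first factor is $\Sigma_{g_1}(H_{g_2}\cdot v) + r_{g_1}h_1(H_{g_2}\cdot v)$.
\item The second factor is $\Sigma_{g_2} v + r_{g_2} h_2 v$.
\end{itemize}
To multiply these, split each composable product as a sum of products of the form core$\cdot 0$ and $0\cdot$core, together with $\Sigma\cdot\Sigma$. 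Four types of term appear.
\begin{itemize}
\item The term $\Sigma_{g_1}(R_1(g_2)v)\cdot\Sigma_{g_2}v$ equals $\Sigma_{g_1g_2}v + r_{g_1g_2}R_2(g_1,g_2)v$, by the definition of the curvature $R_2$ recalled in Section \ref{sec: Relation to Representations up to homotopy}.
\item The term $r_{g_1}h_1(H_{g_2}\cdot v)\cdot 0_{g_2}$ contributes $h_1 H_{g_2}\cdot v$.
\item The core term from $h_2$ contributes $H_{g_1}\cdot h_2 v$. To see this, rewrite $\Sigma_{g_1}(\partial h_2 v)\cdot r_{g_2}h_2 v$ and then re-express $\Sigma_{g_1}$ through $H_{g_1} - r_{g_1}h_1$, matching the action formula \eqref{eq: representations of the fat groupoid actions} on $C_M$.
\item The correction generated by that substitution is exactly $-h_1\partial h_2 v$.
\end{itemize}

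I expect the main obstacle to be this sign and bookkeeping in the third step, and in particular correctly matching $r_{g_1}(\cdot)\cdot r_{g_2}(\cdot)$-type terms with the action of $\widehat V_G$ on $C_M$. I would first verify the whole computation in the split ruth model $\widehat\calE_G$, where it reduces to the already stated product formula
\[
h_g\cdot h_{g'} = R_2 + h_g R_1 + R_1 h_{g'} + h_g\partial h_{g'}.
\]
Writing $R_1 h_{g'} = H_{g}\cdot h_{g'} - h_g\partial h_{g'}$ and $h_g R_1 = h_g H_{g'}\cdot{} - h_g\partial h_{g'}$ then recovers the displayed formula, with the sign of $h_g\partial h_{g'}$ coming out as $+1-1-1=-1$.

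The final claim follows at once. A cleavage is flat precisely when $R_2 = 0$, and in that case the formula reduces to the multiplicativity condition of Definition \ref{def: fat splittings}.
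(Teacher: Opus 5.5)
Your proposal is correct and follows essentially the paper's route: the inverse $\Sigma_g = H_g - r_g h(H_g)$, well defined by invariance, gives the bijection, and the displayed identity comes from the same direct expansion of $(H_{g_1}\cdot H_{g_2})v$ with $H_{g_i}=\Sigma_{g_i}+r_{g_i}h_i$. Your four contributions $R_2(g_1,g_2)v$, $h_1H_{g_2}\cdot v$, $H_{g_1}\cdot h_2v$ and $-h_1\partial h_2v$ are exactly what the paper's add-and-subtract of $\Sigma_{g_1}\cdot\Sigma_{g_2}v$ and $\Sigma_{g_1g_2}v$ produces. Keep the split-ruth computation in $\widehat\calE_G$ only as a sanity check: the paper deduces the compatibility of $h_\Sigma$ with that product from this very proposition (Remark \ref{rema: relation to split ruth}), so it cannot serve as part of the proof.
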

\begin{proof}
As mentioned before, an invariant map $h$ defines a cleavage $\Sigma$ by setting
\begin{equation*}
\Sigma_g v = -r_gh(H_g)v + H_gv
\end{equation*}
where $H_g$ is arbitrary. It is readily verified that the two constructions are mutually inverse. The last statement follows by a direct computation:
\begin{align*}
r_{g_1g_2}(H_{g_1} \cdot h_\Sigma(H_{g_2})v &+ h_\Sigma(H_{g_1})H_{g_2} \cdot v) = H_{g_1}(H_{g_2} \cdot v - R_1(g_2)v) \cdot (H_{g_2}v - \Sigma_{g_2}v) \\
&+(H_{g_1} - \Sigma_{g_1})(H_{g_2} \cdot v) \\
&= (H_{g_1} \cdot H_{g_2})v + r_{g_2}(H_{g_1} - \Sigma_{g_1})(H_{g_2} \cdot v - R_1(g_2)v) - \Sigma_{g_1} \cdot \Sigma_{g_2}v \\
&= r_{g_1g_2}(h_\Sigma(H_{g_1} \cdot H_{g_2}) + h_\Sigma(H_{g_1})\partial h_\Sigma(H_{g_2}) - R_2(g_1,g_2))
\end{align*}
where we added $\pm \Sigma_{g_1} \cdot \Sigma_{g_2} v$ in the second equality and $\pm \Sigma_{g_1g_2}$ in the last.
\end{proof}

\begin{remark}\label{rema: relation to split ruth}
Notice that, given $h_\Sigma$ as above, then, for all $H_{g_1},H_{g_2} \in \widehat V_G$ composable,
\begin{align*}
h(H_{g_1}) \partial h(H_{g_2})v &= h_\Sigma(H_{g_1})H_{g_2} \cdot v - h_\Sigma(H_{g_1})R_1(g_2)v \\
&= H_{g_1} \cdot h_\Sigma(H_{g_2}) v - R_1(g_1)h_\Sigma(H_{g_2})v.
\end{align*}
Therefore, $h_\Sigma(H_{g_1} \cdot H_{g_2})$ equals $h_\Sigma(H_{g_1}) \cdot h_\Sigma(H_{g_2})$, where the product is defined as in Section \ref{sec: From 2-term ruths to fat extensions}.
\end{remark}

\begin{remark}\label{rema: fat category and splitting}
Another way to arrive at the correct notion of fat splitting is Proposition \ref{prop: fat category from fat groupoid} of Section \ref{sec: the fat category}. Namely, since
\begin{equation*}
\widehat V_G^\textnormal{cat} \cong (\textnormal{Hom}(\bfs^*V_M,\bft^*C_M) \times_G \widehat V_G) / H(V_M,C_M) \qquad H_g \mapsto (-h(H_g,H_g'),H_g'),
\end{equation*}
we see that a cleavage $\Sigma$, i.e. a map $\Sigma: G \ra \widehat V_G^\textnormal{cat}$, can also be seen as a fat splitting $h$. Indeed, $h$ defines the map
\begin{equation*}
(h, 1): \widehat V_G \ra \textnormal{Hom}(\bfs^*V_M,\bft^*C_M) \times_G \widehat V_G
\end{equation*}
and, by invariance, this last map descends to a map $G \ra \widehat V_G^\textnormal{cat}$, which is $\Sigma$ if $h=h_\Sigma$. The failure for $h$ to be multiplicative measures the failure of this last map being a Lie category map with respect to the structure of Lie category on $\textnormal{Hom}(\bfs^*V_M,\bft^*C_M)$ as defined in Section \ref{sec: the fat category}.
\end{remark}

\begin{remark}\label{rema: fat splitting as multiplicartive tensor?}
We can translate a fat splitting $h$ to a map
\begin{equation*}
\widetilde h: \widehat V_G \ra \bft^*\textnormal{Hom}(V_M,C_M) = \textnormal{Hom}(\bft^*V_M, \bft^*C_M)
\end{equation*}
by using the representation:
\begin{equation*}
\widetilde h(H_g)v \coloneq h(H_g)(H_g^{-1} \cdot v)
\end{equation*}
Then, $\widetilde h$ satisfies the invariance condition
\begin{equation*}
\widetilde h(H_g)H_g \cdot v - \widetilde h(H_g')H_g' \cdot v = -h(H_g,H_g')v
\end{equation*}
(such that $\widetilde h(h_x) = 0$ for all $h_x \in \textnormal{H}(V_M,C_M)$). We can rewrite the condition for $h$ to be multiplicative in terms of $\widetilde h$ as
\begin{equation*}
\widetilde h(H_{g_1} \cdot H_{g_2})v = H_{g_1} \cdot \widetilde h(H_{g_2}) H_{g_1}^{-1} \cdot v - \widetilde h(H_{g_1}) \partial(H_{g_1} \cdot \widetilde h(H_{g_2}) H_{g_1}^{-1} \cdot v) + \widetilde h(H_{g_1})v.
\end{equation*}
\end{remark}

Using the following terminology, it is straightforward to see that a fat multiplicative splitting ``trivialises'' the associated fat extension.

\begin{definition}\label{def: trivialisation fat extension}
Suppose we are given a cochain complex representation of $G$ on $C_M \ra V_M$. The trivial fat extension is the fat extension corresponding to the fat groupoid
\begin{equation*}
\textnormal{H}(V_M,C_M) \times_M G \rra M
\end{equation*}
with $\widehat\bft(h_{\bft g},g) = \bft g$, $\widehat\bfs(h_{\bft g},g) = \bfs g$, and multiplication and inversion are given by\footnote{Here, $G$ acts on $\textnormal{H}(V_M,C_M)$ by conjugation.}
\begin{equation*}
(h_{\bft g_1}, g_1) \cdot (h_{\bft g_2}, g_2) = (h_{\bft g_1} \cdot (g_1 \cdot h_{\bft g_2}), g_1g_2) \qquad (h_{\bft g_1}, g_1)^{-1} = (g_1^{-1} \cdot h_{\bft g_1}^{-1}, g_1^{-1}).
\end{equation*}
The representation of $\textnormal{H}(V_M,C_M) \times_M G \rra M$ on $C_M \ra V_M$ is given by
\begin{equation*}
(h_{\bft g}, g) \cdot c = (1+ h_{\bft g}\partial)(g \cdot c) \qquad (h_{\bft g}, g) \cdot v = (1+\partial h_{\bft g})(g \cdot v).
\end{equation*}
A trivialisation of a fat extension is a cochain complex representation of $G$ on $C_M \ra V_M$, and an isomorphism of fat extensions
\begin{equation*}
F_G \cong \textnormal{H}(V_M,C_M) \times_M G.
\end{equation*}
\end{definition}

\begin{proposition}\label{prop: fat multiplicative splitting}
Let $F_G$ be a fat extension. A fat multiplicative splitting $h$ is equivalent to a trivialisation of $F_G$. More precisely, the correspondence is given by
\begin{equation*}
F_G \cong \textnormal{H}(V_M,C_M) \times_M G \qquad H_g \mapsto (v \mapsto -h(H_g)(H_g^{-1} \cdot v), g),
\end{equation*}
where the representation of $G$ on $C_M \ra V_M$ is given by
\begin{equation*}
g \cdot c \coloneq H_g \cdot c - h(H_g)\partial c \qquad g \cdot v = H_g \cdot v - \partial h(H_g) v.
\end{equation*}
\end{proposition}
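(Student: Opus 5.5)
The plan is to verify directly that the formulas in the statement give a representation of $G$ and an isomorphism of fat extensions, and then to construct the inverse assignment from a trivialisation.

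\emph{Well-definedness and functoriality.} First I will check that the formulas for $g \cdot c$ and $g \cdot v$ do not depend on the choice of $H_g$ over $g$. If $H_g'$ is another element over $g$, invariance gives $h(H_g') - h(H_g) = h(H_g,H_g')$. The homotopy property of the comparison map gives $H_g' \cdot c - H_g \cdot c = h(H_g,H_g')\partial c$, and similarly $H_g' \cdot v - H_g \cdot v = \partial h(H_g,H_g')v$. Hence the two differences cancel. The maps are clearly cochain maps.

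Unitality follows from $h(h_x) = h_x$ applied to the unit $h_x = 0$, which gives $1_x \cdot v = v$. For functoriality I will expand
\begin{equation*}
(g_1g_2) \cdot v = H_{g_1}H_{g_2} \cdot v - \partial h(H_{g_1}H_{g_2})v
\end{equation*}
using the multiplicativity identity of Definition \ref{def: fat splittings}. I will then compare the result with
\begin{equation*}
g_1 \cdot (g_2 \cdot v) = H_{g_1} \cdot (H_{g_2} \cdot v - \partial h(H_{g_2})v) - \partial h(H_{g_1})(H_{g_2} \cdot v - \partial h(H_{g_2})v),
\end{equation*}
using that $H_{g_1} \cdot$ commutes with $\partial$. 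The terms match one-to-one, and the same computation works for $c$. Invertibility of $g \cdot$ follows from functoriality.

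\emph{The isomorphism.} Put $k_g = -h(H_g)(H_g^{-1} \cdot -)$. Then $(1+\partial k_g)(H_g \cdot w) = g \cdot w$, so $1 + \partial k_g$ is invertible and $k_g \in \textnormal{H}(V_M,C_M)$. This identity is also what makes $H_g \mapsto (k_g, g)$ intertwine the representations: the representation of the trivial fat extension is $(1+\partial k)(g \cdot -)$. Depending on the ordering conventions, the correct formula may come out with $k_g$ replaced by its inverse in $\textnormal{H}(V_M,C_M)$; I will fix this by testing on $H_g = h_x$, where unitality must return $h_x$ itself.

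Next I will show that the map is a groupoid morphism. This reduces once more to the multiplicativity identity, now rewritten through $\widetilde h$ as in Remark \ref{rema: fat splitting as multiplicartive tensor?}; that rewritten form is exactly the product rule $(k_1 \cdot (g_1 \cdot k_2), g_1g_2)$ of the trivial fat extension. The map covers the identity on $G$ and restricts to the identity on $\textnormal{H}(V_M,C_M)$ by unitality. So it is a map of extensions which is the identity on kernels and on quotients, hence a bijection. Smoothness of the inverse follows from the submersion $F_G \ra G$ via local sections. The compatibility of conjugation actions is automatic because both sides are fat extensions.

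\emph{The converse.} Given a trivialisation, let $\sigma(g)$ be the element of $F_G$ corresponding to $(0,g)$. Define $h(H_g) = \pm h(\sigma(g), H_g)$, with the sign chosen so that $h(h_x) = h_x$. Invariance then follows from additivity of the comparison map, and multiplicativity from $\sigma$ being a groupoid map. Both constructions are readily seen to be mutually inverse.

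The main obstacle is the bookkeeping of signs and orderings in the multiplicativity computation, in particular matching the product on the trivial extension. I would carry this out first in the VB-groupoid model $\widehat V_G$ with $h = h_\Sigma$, using Proposition \ref{prop: cleavage and fat splitting} with $R_2 = 0$, and then transcribe it abstractly.
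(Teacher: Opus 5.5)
Your strategy is the direct verification the paper leaves to the reader; the paper writes no proof. Several parts of your plan are correct as written: the well-definedness and functoriality of $g\cdot$, smoothness via local sections, and the converse via $\sigma$ (with the plus sign, $h(H_g)=h(\sigma(g),H_g)$, and additivity $h(A,C)=h(A,B)+h(B,C)$ of the comparison map).

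The gap is in the hedge, and it is not a matter of conventions. Take the paper's definition of $\textnormal{H}(V_M,C_M)\times_M G$: kernel factor on the left, product $(k_1\cdot(g_1\cdot k_2),g_1g_2)$, action $(1+\partial k)(g\cdot -)$. With this definition, the map $H_g\mapsto(k_g,g)$ with $k_g=-h(H_g)(H_g^{-1}\cdot -)$ is not an isomorphism of fat extensions, and two of your supporting claims fail for it.

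\begin{itemize}
\item \emph{Intertwining.} Your identity $(1+\partial k_g)(H_g\cdot w)=g\cdot w$ shows that $(k_g,g)$ acts on $V_M$ by $(1+\partial k_g)(g\cdot -)=(1+\partial k_g)^2(H_g\cdot -)$. So the map does not intertwine the representations.
\item \emph{Restriction to the kernel.} Unitality gives $k_{1_x}=-h_x(1+\partial h_x)^{-1}=h_x^{-1}$. So the map restricts to inversion on $\textnormal{H}(V_M,C_M)$, which is an anti-automorphism, not the identity.
\item \emph{Product rule.} The $\widetilde h$-form of multiplicativity gives $k_{g_1g_2}=k_{g_1}\cdot(H_{g_1}\cdot k_{g_2})$, with conjugation by $H_{g_1}$ rather than by $g_1$. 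Using $H_{g_1}\cdot=(1+\partial k_{g_1})^{-1}(g_1\cdot -)$ on $V_M$, and similarly on $C_M$, this equals $(g_1\cdot k_{g_2})\cdot k_{g_1}$. That is the opposite of the trivial extension's product rule, not exactly that rule.
\end{itemize}

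The fix is to commit to the inverse, $k'_g\coloneqq k_g^{-1}=h(H_g)(g^{-1}\cdot -)$. This is what a flat cleavage gives in $\widehat V_G$, namely $H_g=k'_g\cdot\Sigma_g$. It is also exactly what your converse produces, since $H_g=k\cdot\sigma(g)$ gives $h(\sigma(g),H_g)=k(g\cdot -)$.

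For the map $H_g\mapsto(k'_g,g)$ the checks are immediate:
\begin{itemize}
\item $(1+\partial k'_g)(g\cdot v)=g\cdot v+\partial h(H_g)v=H_g\cdot v$, and $(1+k'_g\partial)(g\cdot c)=H_g\cdot c$.
\item $k'_{1_x}=h_x$, so the map is the identity on the kernel.
\item The morphism property is best checked without $\widetilde h$. Evaluating $k'_{g_1}\cdot(g_1\cdot k'_{g_2})$ on $(g_1g_2)\cdot v$ gives
\[
h(H_{g_1})(g_2\cdot v)+g_1\cdot h(H_{g_2})v+h(H_{g_1})\partial h(H_{g_2})v .
\]
Substitute $g_2\cdot v=H_{g_2}\cdot v-\partial h(H_{g_2})v$ and $g_1\cdot c=H_{g_1}\cdot c-h(H_{g_1})\partial c$. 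The result is the right-hand side of the multiplicativity identity, i.e.\ $h(H_{g_1}H_{g_2})v=k'_{g_1g_2}((g_1g_2)\cdot v)$.
\end{itemize}

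With this correction the rest of your plan goes through: bijectivity from being the identity on kernel and quotient, smoothness, and mutual inverseness with the converse.
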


To conclude, since the invariant cochains $C_\textnormal{inv}(F_G; C_M)$ form an abstract ruth, a fat splitting splits the invariant cochains $C_\textnormal{inv}(F_G; C_M)$:

\begin{proposition}\label{prop: invariant cochain complex is a ruth}
The complex $C_\textnormal{inv}(\widehat V_G; C_M)$ is an abstract ruth. Moreover, writing $\calE_G = C^\bullet(G; C_M) \oplus C^{\bullet-1}(G; V_M)$, a fat splitting $h$ defines an isomorphism
\begin{equation*}
\calE_G \xra{\sim} C_\textnormal{inv}(F_G; C_M) \qquad (f_0^\calE,f_1^\calE) \mapsto (f_0,f_1)
\end{equation*}
by setting
\begin{align*}
f_0(H_{g_1},g_2,\dots,g_\bullet) &= f_0^\calE(g_1,\dots,g_\bullet) - h(H_{g_1})f_1^\calE(g_2,\dots,g_\bullet) \\
f_1(g_2,\dots,g_\bullet) &= f_1^\calE(g_2,\dots,g_\bullet).
\end{align*}
The induced operators $R_1$ and $R_2$ on $\calE_G$ are given by
\begin{align*}
R_1(g)c &\coloneq H_g \cdot c - h(H_g)\partial c \qquad R_1(g)v = H_g \cdot v - \partial h(H_g) v \\ R_2(g_1,g_2)v &\coloneq h(H_{g_1} \cdot H_{g_2})v - h(H_{g_1})H_{g_2} \cdot v + h(H_{g_1})\partial h(H_{g_2})v - H_{g_1} \cdot h(H_{g_2})v.
\end{align*}
\end{proposition}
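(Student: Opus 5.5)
The first claim, that $C_\textnormal{inv}(\widehat V_G; C_M)$ (and more generally $C_\textnormal{inv}(F_G; C_M)$) is an abstract ruth, needs no new argument. By Corollary \ref{cor: invariant complex fat extension}, pullback gives an isomorphism of differential graded $CG$-modules $C_\textnormal{VB} V(F_G)^* \cong C_\textnormal{inv}(F_G; C_M)$. The left-hand side is a $2$-term abstract ruth by Proposition \ref{prop: VB complex}. So the real content is that the formulas define a $CG$-linear isomorphism, and that transporting $\delta$ along it produces the stated $R_1, R_2$.

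I would proceed in three steps.

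\textbf{Step 1: the map lands in the invariant complex.} Take $(f_0,f_1)$ defined from $(f_0^\calE,f_1^\calE)$. The component $f_1$ does not depend on the $H_{g_i}$ at all. For the invariance condition of Definition \ref{def: invariant cochains}, invariance of $h$ (Definition \ref{def: fat splittings}) gives
\begin{equation*}
h(h_{\bft g}\cdot H_{g}) - h(H_g) = h(H_g,h_{\bft g}\cdot H_g),
\end{equation*}
and by Remark \ref{rema: invariance condition for fat groupoid of VB-groupoid} this equals $h_{\bft g}(H_g\cdot -)$, up to the sign convention in Definition \ref{def: fat splittings}. Substituting this into $f_0 = f_0^\calE - h(H_{g_1})f_1^\calE$ yields the required relation $f_0(h\cdot H_{g_1},\dots) = f_0(H_{g_1},\dots) + h(H_{g_1}\cdot f_1)$. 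I expect this sign bookkeeping to be the most error-prone point; I would fix signs by checking the case $H_g = 1+h_x$, where unitality $h(h_x)=h_x$ pins them down.

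\textbf{Step 2: bijectivity and $CG$-linearity.} The inverse is $f_1^\calE = f_1$ and $f_0^\calE(g_1,\dots) = f_0(H_{g_1},\dots) + h(H_{g_1})f_1(g_2,\dots)$. The invariance identity from Step 1 shows the right-hand side is independent of the choice of $H_{g_1}$, so it descends to a function on $G^{(\bullet)}$. Smoothness follows by using local sections of the surjective submersion $F_G \ra G$, exactly as in Proposition \ref{prop: invariant complex iso to quotient complex}. $CG$-linearity is immediate, since only the first argument carries $h$ and multiplication by $f\in CG$ acts on the later arguments. The map induces the identity on $\Gamma E_M^\bullet$, so it is a splitting in the sense of Definition \ref{def: splitting of a ruth}.

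\textbf{Step 3: identify the induced operators.} Transport $\delta$ to $\calE_G$. By $CG$-linearity and the Leibniz rule (Remark \ref{rema: structure maps R}), it suffices to evaluate on generators: sections $c$ of $C_M$ in degree $0$ and sections $v$ of $V_M$ in degree $1$ (i.e.\ $f_1^\calE = v$, $f_0^\calE=0$).
\begin{itemize}
\item For $c$: the differential in $C(F_G;C_M\ra V_M)$ is $\delta c(H_g) = H_g\cdot c - c$ together with the $\partial c$ component. Pulling back through the inverse map adds $h(H_g)\partial c$, which gives $R_1(g)c = H_g\cdot c - h(H_g)\partial c$.
\item For $v$: the degree-one component gives $R_1(g)v = H_g\cdot v - \partial h(H_g)v$ in the same way.
\item For $R_2$: apply $\delta$ to the image $(-h(H_{g_1})v, v)$ and read off the $C^2(G;C_M)$ part after applying the inverse map. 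Expanding
\begin{equation*}
\delta(-h(\cdot)v)(H_{g_1},H_{g_2}) = \pm\bigl(H_{g_1}\cdot h(H_{g_2})v - h(H_{g_1}H_{g_2})v + h(H_{g_1})v\bigr),
\end{equation*}
then adding the correction term $h(H_{g_1})\,(\text{degree-one part})$ and subtracting $R_1$ contributions, gives the stated formula for $R_2$.
\end{itemize}
As a consistency check, for $F_G = \widehat V_G$ and $h = h_\Sigma$ this agrees with Proposition \ref{prop: cleavage and fat splitting}, and it agrees with Proposition \ref{prop: fat multiplicative splitting} when $R_2=0$.
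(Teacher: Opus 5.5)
Your plan is the paper's proof with the details filled in. The paper's argument has the same three parts in compressed form: the map is $CG$-linear, its inverse is obtained by reading the formula backwards (well defined by invariance), and $R_1,R_2$ are read off from the transported differential, deferring to Proposition~\ref{prop: cleavage and fat splitting} and the surrounding discussion. Deducing the first claim from Corollary~\ref{cor: invariant complex fat extension} and Proposition~\ref{prop: VB complex} is fine. The one point that does not close as written is the sign bookkeeping, and the test you propose cannot repair it.

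\textbf{The sign in Step 1.} Combine $h(H_g)-h(H_g')=-h(H_g,H_g')$ (Definition~\ref{def: fat splittings}) with $h(H_g,h_{\bft g}\cdot H_g)=h_{\bft g}(H_g\cdot -)$ (Remark~\ref{rema: identity map of fat extensions}). Substituting into $f_0=f_0^\calE-h(H_{g_1})f_1^\calE$ gives $f_0(h_{\bft g}\cdot H_{g_1},g_2,\dots)=f_0(H_{g_1},g_2,\dots)-h_{\bft g}(H_{g_1}\cdot f_1)$. This is the opposite sign from Definition~\ref{def: invariant cochains}. The test case $H_g=1+h_x$ cannot flip this, because unitality gives $h(h'\cdot h_x)-h(h_x)=h'(1+\partial h_x)=h'(h_x\cdot -)$, which only re-derives the sign of the splitting.

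\textbf{Where the inconsistency comes from.} It is in the paper. Definition~\ref{def: invariant cochains} uses the opposite convention to Propositions~\ref{prop: invariant cochains in the fat case} and~\ref{prop: relation invariant cochains to projectable cochains}. Those use $f_0(H_{g_1},\dots)-f_0(H'_{g_1},\dots)=h(H_{g_1},H'_{g_1})f_1$, which is exactly what makes $r_{g_1}f_0(H_{g_1},\dots)+H_{g_1}f_1$ independent of $H_{g_1}$. It is also the complex your map actually lands in.

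\textbf{How it affects Step 3.} This convention fixes the sign of $\partial$ in $\delta$. Since $(h_{\bft g}\cdot H_g)\cdot c=H_g\cdot c+h_{\bft g}(H_g\cdot\partial c)$, the coboundary $(H_g\cdot c-c,\ \epsilon\,\partial c)$ of $c\in\Gamma C_M$ satisfies that convention only for $\epsilon=-1$. The inverse $f_0^\calE=f_0+h(H_{g_1})f_1$ therefore \emph{subtracts} $h(H_g)\partial c$. Comparing with the split coboundary $(R_1(g)c-c,\ -\partial c)$ then gives $R_1(g)c=H_g\cdot c-h(H_g)\partial c$. Your ``adds $h(H_g)\partial c$'' would instead give $H_g\cdot c+h(H_g)\partial c$.

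If you prefer to keep Definition~\ref{def: invariant cochains}, the splitting has to be $f_0=f_0^\calE+h(H_{g_1})f_1^\calE$ instead; $R_1$ and $R_2$ come out the same. Once the convention is fixed, your treatment of $R_1(g)v$ and of $R_2$ goes through. For $R_2$, the point to check is that the non-tensorial terms $h(H_{g_1})v_{\bft g_2}$ in $\delta f_0$ cancel against the correction term $h(H_{g_1})f_1$, and they do.
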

\begin{proof}
It is clear that the given map is a map of $CG$-modules. The inverse of the map is given by reading the equation the other way around (which makes sense by the properties \eqref{eq: f_V_M relation to f_C_M 0} and \eqref{eq: f_V_M relation to f_C_M 1}). The last statement follows from the above discussion in this section.
\end{proof}

\subsection{Fat extensions as a functor to the quasi general linear Lie $2$-groupoid}\label{sec: Fat extensions as a functor to the quasi general linear Lie 2-groupoid}

In this section, we will explain how fat extensions of $G$ relate to ``pseudofunctors'' from $G$ to a type of general linear $2$-groupoid as defined in \cite{GLStefaniMatias}. Here, we will refer to that general linear Lie $2$-groupoid as the quasi-general linear Lie $2$-groupoid. It is a Lie $2$-groupoid that is not strict.

\begin{remark}\label{rema: 2-groupoids}
In the analogy we made in Remark \ref{rema: strict 2-groupoids}, we can relax ``strictness'' as follows. A Lie $2$-groupoid (so not necessarily strict) can similarly be described by a diagram $H_2 \rra H_1 \rra N$, but $H_1 \rra N$ is only a groupoid ``up to homotopy''. More precisely, we relax the condition that $H_2 \rra N$ and $H_1 \rra N$ are Lie groupoids to them being possibly only Lie categories. Still, $g \in H_1$ is assumed to have ``quasi-inverses'' $g^{-1} \in H_1$. The elements of $H_2$, which are invertible in the groupoid $H_2 \rra H_1$, can then again be interpreted as homotopies $h$ that show two elements in $H_1$ are homotopic. In particular, given $g \in H_1$, there will exist a $g^{-1} \in H_1$ and homotopies in $H_2$ that show $gg^{-1}$ and $g^{-1}g$ are homotopic to identity elements of $H_1$.
\end{remark}

We explain next the definition of the quasi general linear groupoid (the general linear groupoid that appeared in \cite{GLStefaniMatias}). We will use the following object (that will also appear in Section \ref{sec: morphisms of general linear PB-groupoids}): given two vector bundles $E_1$ and $E_2$ over $M$, we write
\begin{equation*}
\textnormal{Lin}(E_1,E_2) \coloneq \{E_{1,x} \ra E_{2,y} \mid x,y \in M\}
\end{equation*}
for the vector bundle over $M \times M$ of linear maps between fibers of $E_1$ and $E_2$.

\begin{example}\label{exa: using lax functor}
Recall the general linear strict Lie $2$-groupoid from Example \ref{exa: general linear 2-groupoid}. Relaxing the conditions for elements of $\textnormal{GL}(C_M,V_M)_M$ to be quasi-isomorphisms (see Remark \ref{rema: 2-groupoids}), we will show here that we obtain a well-defined quasi general linear Lie $2$-groupoid (as we call it here)
\begin{equation*}
\textnormal{QGL}(C_M,V_M)_G \rra \textnormal{QGL}(C_M,V_M)_M \rra M.
\end{equation*}
As observed in \cite{GLStefaniMatias}, notice that the quasi-isomorphisms 
\begin{equation*}
    \textnormal{QGL}(C_M,V_M)_M \subset \textnormal{Hom}(C_M,V_M) \times_M \textnormal{End } C_M \times_{M \times M} \textnormal{End } V_M \times_M \textnormal{Hom}(C_M,V_M)
\end{equation*}
form an embedded submanifold. To see this, observe first that, given a cochain map $(\partial_y, \Psi_{y,x}, \partial_x)$, then the existence of maps
\begin{equation*}
[\partial, \Psi^{-1}] = 0 \qquad 1 - \Psi\Psi^{-1} = [\partial, h_y] \qquad 1 - \Psi^{-1}\Psi = [\partial, h_x]
\end{equation*}
such that additionally\footnote{If $\Psi$ is a quasi-isomorphism, it is indeed possible to find homotopy data satisfying these extra conditions.}
\begin{equation*}
    h_y \Psi^{V_M} = \Psi^{C_M} h_x \qquad (\Psi^{C_M})^{-1} h_y = h_x (\Psi^{V_M})^{-1}
\end{equation*}
is equivalent to the existence of an isomorphism
\begin{equation*}
\begin{pmatrix}
\Psi^{C_M}_{y,x} & -h_y \\ \partial_x & (\Psi^{V_M}_{y,x})^{-1}
\end{pmatrix}: (C_M)_x \oplus (V_M)_y \xra{\sim} (C_M)_y \oplus (V_M)_x
\end{equation*}
(so the left column is fixed) with inverse given by
\begin{equation*}
\begin{pmatrix}
(\Psi^{C_M}_{y,x})^{-1} & h_x \\ -\partial_y & \Psi^{V_M}_{y,x}
\end{pmatrix}: (C_M)_y \oplus (V_M)_x \xra{\sim} (C_M)_x \oplus (V_M)_y
\end{equation*}
(where the bottom row is fixed). Therefore, $(\partial_y, \Psi_{y,x}, \partial_x)$ is a quasi-isomorphism if and only if
\begin{equation}\label{eq: open conditions to be a quasi-iso}
\ker \Psi^{C_M}_{y,x} \cap \ker \partial_x = 0 \qquad \im \Psi^{V_M}_{y,x} + \im \partial_y = (V_M)_y.
\end{equation}
In \cite{GLStefaniMatias} (Proposition 5.4 there) it is then shown that, on the open set $U$ of (not-necessarily cochain maps) $(\partial_y, \Psi_{y,x}, \partial_x)$ satisfying \eqref{eq: open conditions to be a quasi-iso}, the map 
\begin{equation*}
    U \ra \textnormal{Lin}(C_M,V_M) \qquad (\partial_y, \Psi_{y,x}, \partial_x) \mapsto \partial_y\Psi_{y,x}^{C_M} - \Psi_{y,x}^{V_M}\partial_x
\end{equation*}
has maximal rank. We now define $\textnormal{QGL}(C_M,V_M)_M \rra \textnormal{Hom}(C_M,V_M)$ as a Lie category with composition, and we set 
\begin{equation*}
\textnormal{GL}(C_M,V_M)_G \coloneq \textnormal{QGL}(C_M,V_M)_M \times_{M \times M} \textnormal{Lin}(V_M,C_M)
\end{equation*}
thought of as elements $(\partial_y, \Psi_{y,x}, \Psi_{y,x}', \partial_x, h_{y,x})$ (or simply $(\partial_y, \Psi_{y,x}, \partial_x, h_{y,x})$) that form homotopy data between the quasi-isomorphisms $\Psi_{y,x}$ and $\Psi_{y,x}'$:\footnote{We sometimes leave $\Psi_{y,x}'$ implicit for it is determined by $\Psi_{y,x}$ and $h_{y,x}$. Moreover, given $\Psi_{y,x}$, any $h_{y,x}$ defines the cochain map $\Psi_{y,x}' \coloneq \Psi_{y,x} - \partial_y h_{y,x} - h_{y,x}\partial_x$ which is homotopic to $\Psi_{y,x}$.}
\begin{equation*}
    \Psi_{y,x} - \Psi_{y,x}' = \partial_xh_{x,y} + h_{x,y}\partial_y.
\end{equation*}
Both products can be interpreted as homotopy composition in the following way. The Lie category structure $\textnormal{QGL}(C_M,V_M)_G \rra \textnormal{Hom}(C_M,V_M)$ is given by\footnote{Notice that $\Psi_{z,y}'h_{y,x} + h_{z,y}\Psi_{y,x} = \Psi_{z,y} h_{y,x} + h_{z,y}\Psi_{y,x}'$ (see Lemma \ref{lemm: homotopy composition}).}
\begin{equation*}
    (\partial_z, \Psi_{z,y}, \partial_y, h_{z,y}) \cdot (\partial_y, \Psi_{y,x}, \partial_x, h_{y,x}) \coloneq (\partial_z, \Psi_{z,y}\Psi_{y,x},  \partial_x, \Psi_{z,y}' h_{y,x} + h_{z,y}\Psi_{y,x}).
\end{equation*}
The groupoid structure $\textnormal{QGL}(C_M,V_M)_G \rra \textnormal{QGL}(C_M,V_M)_M$ has source and target map given by $\bfs(\partial_z, \Psi_{z,y}, \partial_y, h_{z,y}) = (\partial_z, \Psi_{z,y}', \partial_y)$ and $\bft(\partial_z, \Psi_{z,y}, \partial_y, h_{z,y}) = (\partial_z, \Psi_{z,y}, \partial_y)$, and 
\begin{equation*}
    (\partial_z, \Psi_{z,y}, \partial_y, h_{z,y}) \cdot (\partial_z, \Psi_{z,y}', \partial_y, h_{z,y}') \coloneq (\partial_z, \Psi_{z,y}, \Psi_{z,y}'', \partial_y, h_{z,y} + h_{z,y}').
\end{equation*}
Now, an element $(\partial_y, \Psi_{y,x}, \partial_x) \in \textnormal{QGL}(C_M,V_M)_M$ is not always invertible, but, given homotopy data $(h_x, \Psi_{y,x}^{-1}, h_y)$ as above, $(\partial_x, \Psi_{y,x}^{-1}, \partial_y)$ is a ``quasi-inverse'' in the sense of Remark \ref{rema: 2-groupoids}, which shows that $\textnormal{QGL}(C_M,V_M)_G \rra \textnormal{QGL}(C_M,V_M)_M \rra \textnormal{Hom}(C_M,V_M)$ is indeed a Lie $2$-groupoid.
\end{example}

\begin{remark}
    Notice that the we can identify $\textnormal{GL}(C_M,V_M)_G \subset \textnormal{QGL}(C_M,V_M)_G$ as the invertible elements of the Lie category $\textnormal{QGL}(C_M,V_M)_G \rra \textnormal{Hom}(C_M,V_M)$ via 
    \begin{equation*}
        (h_y, \partial_y, \Psi_{y,x}) \mapsto (\partial_y, (1+[\partial_y,h_y])\Psi_{y,x}, (\Psi_{y,x}^{V_M})^{-1}\partial_y\Psi_{y,x}^{C_M}, h_y\Psi_{y,x}^{V_M}).
    \end{equation*}
\end{remark}

We can interpret the step to pseudofunctors efficiently using the fat category extension. A fat category extension $F_G^\textnormal{cat}$ of $G$ over $C_M \ra V_M$ comes with a Lie category map
\begin{equation*}
F_G^\textnormal{cat} \ra \textnormal{QGL}(C_M,V_M)_M
\end{equation*}
which defines the cochain complex representation. Now, a fat category splitting is a section of the projection $F_G^\textnormal{cat} \ra G$ (see Remark \ref{rema: splitting of fat category extension} or Section \ref{sec: Relation to Representations up to homotopy}) and therefore defines a map
\begin{equation*}
R_1: G \ra F_G^\textnormal{cat} \ra \textnormal{QGL}(C_M,V_M)_M
\end{equation*}
which might fail to be a functor. Nevertheless, its image lies in $\textnormal{QGL}(C_M,V_M)_M$, so for all $g \in G$,
\begin{equation*}
[\partial,R_1(g)] = 0.
\end{equation*}
The failure for this map to define a functor can now be seen as a map
\begin{equation*}
R_2: G^{(2)} \ra \textnormal{QGL}(C_M,V_M)_G.
\end{equation*}
That it maps into $\textnormal{QGL}(C_M,V_M)_G$ is saying that, for all $(g_1,g_2) \in G^{(2)}$,
\begin{equation*}
R_1(g_1)R_1(g_2) - R_1(g_1g_2) = [\partial,R_2(g_1,g_2)].
\end{equation*}
The condition that $R_2$ is unital, and that for all $(g_1,g_2,g_3) \in G^{(3)}$,
\begin{equation*}
-R_2(g_1g_2,g_3) + R_2(g_1,g_2g_3) = -R_1(g_1)R_2(g_2,g_3) + R_2(g_1,g_2)R_1(g_3)
\end{equation*}
is then equivalent to the fact that $R_1$ and $R_2$ together define a so-called pseudofunctor.\footnote{The pseudofunctor conditions say that $R_2$ is unital and that the two homotopies
\begin{equation*}
    R_2(g_1g_2,g_3) + R_2(g_1,g_2)R_1(g_3) \qquad R_2(g_1,g_2g_3) + R_1(g_1)R_2(g_2,g_3)
\end{equation*}
are equal.} The functorial aspects of this construction are a bit subtle. It is proved in \cite{GLStefaniMatias} that the category of (split) $2$-terms ruths over a fixed $2$-term complex $C_M \ra V_M$, together with \textit{quasi-isomorphisms} of $2$-term ruths, is equivalent to the category of pseudofunctors together with lax-functors. A strict version of this correspondence is not discussed in \cite{GLStefaniMatias}, but some more comments about this issue are made in that work. 

As we already mentioned, a fat category splitting of a fat category extension turns it into a \textit{weak representation} as in \cite{Wolbert}, and such objects come with a natural notion of morphism that turns the correspondence into an equivalence of categories.

\section{Functorial aspects}\label{sec: functorial aspects}

We will introduce a morphism of fat extensions as a multiplicative tensor. This will be a first instance where we view a multiplicative tensor as an object that naturally occurs in the theory of fat extensions. This should therefore also be seen as part of the bigger project together with Joa\~o Nuno Mestre and Luca Vitagliano \cite{Multiplicativetensors}. A morphism of general linear PB-groupoids is similar in nature, but its properties can succinctly be described as \textit{$\textnormal{GL}$-equivariance}. We introduce both types of morphisms through the language of VB-groupoids. For this reason, in introducing the correct notions of morphisms, we will come to see naturally how the respective categories are equivalent to the category of VB-groupoids.

A notational comment: throughout, we will denote a cochain map as follows:
\begin{center}
\begin{tikzcd}
C_{M,1} \ar[r, "\partial"] \ar[d, "\Phi^{C_M}"] & V_{M,1} \ar[d, "\Phi^{V_M}"] \\
C_{M,2} \ar[r, "\partial"] & V_{M,2}
\end{tikzcd}
\end{center}
but often we simply write $\Phi$. A VB-groupoid map is denoted by
\begin{equation*}
\Phi = \Phi^{V_G}: V_{G,1} \ra V_{G,2}.
\end{equation*}
We focus on maps over the identity map of $G$. Such a map is a groupoid map $V_{G,1} \ra V_{G,2}$ that is also linear (i.e. intertwines the scalar multiplications). It comes with a cochain map by setting
\begin{equation*}
\Phi^{C_M} \coloneq \Phi^{V_G}|_{C_M} \qquad \Phi^{V_M} \coloneq \bfs\Phi^{V_G}|_{V_M}
\end{equation*}
where we used the natural inclusion
\begin{equation*}
C_M \oplus V_M = V_G|_M \hookrightarrow V_G.
\end{equation*}

\begin{remark}\label{rema: fibred categories}
We restrict attention to VB-groupoid maps over the identity for exposition. In order to pass to more general maps (over general groupoid maps), notice that the category of VB-groupoids is naturally fibred, just as, for example, the category of vector bundles. That is, general VB-groupoid maps from $V_G$ to $V_H$ can be seen as VB-groupoid maps
\begin{equation*}
V_G \ra \varphi^*V_H
\end{equation*}
over the identity, where $\varphi: G \ra H$ is a Lie groupoid map, and $\varphi^*V_H$ is the pullback VB-groupoid. The other categories we will consider (of fat extensions and of general linear PB-groupoids) are fibred analogously, and passing to more general morphisms is therefore straightforward and omitted.
\end{remark}

\subsection{Morphisms of fat extensions}\label{sec: morphisms of fat extensions}

Our aim is now to describe, given a map of VB-groupoids $\Phi = \Phi^{V_G}: V_{G,1} \ra V_{G,2}$, a type of morphism $f = f_\Phi: \widehat V_{G,1} \ra \widehat V_{G,2}$ that consists of exactly the same data. However, observe that there is no obvious groupoid map $\widehat V_{G,1} \ra \widehat V_{G,2}$ in general. Rather, we obtain a notion of ``related'' elements. We therefore aid our discussion with the following intermediate definition.

\begin{definition}\label{def: related elements}
Given a VB-groupoid $\Phi: V_{G,1} \ra V_{G,2}$, we say $H_{g,1} \in \widehat V_{G,1}$ and $H_{g,2} \in \widehat V_{G,2}$ are $\Phi$-related if the diagram
\begin{center}
\begin{tikzcd}
(V_{M,1})_{\bfs g} \ar[r, "H_{g,1}"] \ar[d,"\Phi^{V_M}"] & (V_{G,1})_g \ar[d, "\Phi^{V_G}"] \\
(V_{M,2})_{\bfs g} \ar[r, "H_{g,2}"] & (V_{G,2})_g
\end{tikzcd}
\end{center}
commutes. In this case, we write $H_{g,1} \sim_\Phi H_{g,2}$.
\end{definition}

Given $(g,g') \in G^{(2)}$, $H_{g,1} \sim_\Phi H_{g,2}$ and $H_{g',1} \sim_\Phi H_{g',2}$, then also $H_{g,1} \cdot H_{g',1} \sim_\Phi H_{g,2} \cdot H_{g',2}$. But for our purposes such a notion of ``multiplicative relation'' is too narrow. For example, there will often exist $H_{g,1} \in \widehat V_{G,1}$ not $\Phi$-related to any $H_{g,2} \in \widehat V_{G,2}$.\footnote{For example, if there is a $v \in \ker \Phi \subset (V_{M,1})_{\bfs g}$ and $H_{g,1}$ such that $\Phi H_{g,1}v \neq 0$.} In general, however, given $H_{g,1} \in \widehat V_{G,1}$ and $H_{g,2} \in \widehat V_{G,2}$, we can always produce the map
\begin{equation}\label{eq: VB-map to fat map}
f_\Phi(H_{g,1},H_{g,2}): (V_{M,1})_{\bfs g} \ra (C_{M,2})_{\bft g} \qquad v \mapsto r_{g^{-1}}(H_{g,2}\Phi^{V_M} v - \Phi^{V_G} H_{g,1}v)
\end{equation}
that is zero precisely when $H_{g,1} \sim_\Phi H_{g,2}$. The argument that the $\Phi$-relation is multiplicative can be generalised to the following property of $f_\Phi$:

\begin{lemma}\label{lemm: f_Phi is multiplicative 1}
Given a VB-groupoid map $\Phi: V_{G,1} \ra V_{G,2}$,
\begin{equation*}
f_\Phi: \widehat V_{G,1} \times_G \widehat V_{G,2} \ra \textnormal{Hom}(\bfs^*V_M, \bft^*C_M),
\end{equation*}
defined as above in \eqref{eq: VB-map to fat map}, satisfies, for all $(H_{g,1},H_{g',1}) \in \widehat V_{G,1}^{(2)}$ and $(H_{g,2},H_{g',2}) \in \widehat V_{G,2}^{(2)}$,
\begin{equation*}
f_\Phi(H_{g,1} \cdot H_{g',1}, H_{g,2} \cdot H_{g',2})v = f_\Phi(H_{g,1},H_{g,2})H_{g',1} \cdot v + H_{g,2} \cdot f_\Phi(H_{g',1}, H_{g',2})v.
\end{equation*}
\end{lemma}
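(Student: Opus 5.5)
The plan is to compute $r_{gg'}f_\Phi(H_{g,1}\cdot H_{g',1}, H_{g,2}\cdot H_{g',2})v$ directly from the definition \eqref{eq: VB-map to fat map}. The multiplication of the fat groupoid is $(H_g\cdot H_{g'})v = H_g(\bft_{g'}H_{g'}v)\cdot H_{g'}v$, and $\Phi^{V_G}$ is a groupoid map. We then rearrange the resulting difference in $(V_{G,2})_{gg'}$ into two pieces using the interchange of addition and multiplication in a VB-groupoid, $(a+b)\cdot(c+d) = a\cdot c + b\cdot d$ for composable pairs. The key identity behind this is that $r_{gg'}c = (c\cdot 0_g)\cdot 0_{g'} = r_g c\cdot 0_{g'}$, together with $r_{g'}c' = 0_{\bft g'}\cdot c'\ldots$ suitably translated. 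More precisely, for $c\in (C_{M,2})_{\bft g'}$ we have $H_{g,2}\cdot_{C}$-type expressions, as in \eqref{eq: representations of the fat groupoid actions}: $H_g\cdot c = r_{g^{-1}}(H_g(\partial c)\cdot c)$.

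Write $w = \Phi^{V_M}v$, $u_1 = \bft H_{g',1}v = H_{g',1}\cdot v$ and $u_2 = H_{g',2}\cdot w$. Since $\Phi$ is a VB-groupoid map, $\Phi^{V_G}$ commutes with sources and targets, so $\Phi^{V_M}u_1 = \bft\Phi^{V_G}H_{g',1}v$. Set $\kappa = f_\Phi(H_{g',1},H_{g',2})v$, so that $H_{g',2}w = \Phi^{V_G}H_{g',1}v + r_{g'}\kappa$; applying $\bft$ gives $u_2 = \Phi^{V_M}u_1 + \partial\kappa$. We then expand
\begin{equation*}
(H_{g,2}\cdot H_{g',2})w - \Phi^{V_G}\big((H_{g,1}\cdot H_{g',1})v\big) = H_{g,2}(u_2)\cdot H_{g',2}w - \Phi^{V_G}H_{g,1}(u_1)\cdot\Phi^{V_G}H_{g',1}v.
\end{equation*}
In the first factor we split $H_{g,2}(u_2) = H_{g,2}(\Phi^{V_M}u_1) + H_{g,2}(\partial\kappa)$. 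In the second factor we split $H_{g',2}w = \Phi^{V_G}H_{g',1}v + r_{g'}\kappa$. The difference is then a sum of two composable products by linearity of multiplication:
\begin{equation*}
\big(H_{g,2}\Phi^{V_M}u_1 - \Phi^{V_G}H_{g,1}u_1\big)\cdot 0_{g'} \;+\; H_{g,2}(\partial\kappa)\cdot r_{g'}\kappa .
\end{equation*}
This uses that the pairs are composable, which follows from sources matching, and that we subtract $0_{g'}$-type terms. The first summand equals $r_g f_\Phi(H_{g,1},H_{g,2})(u_1)\cdot 0_{g'} = r_{gg'}f_\Phi(H_{g,1},H_{g,2})H_{g',1}\cdot v$.

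The main obstacle is identifying the second summand $H_{g,2}(\partial\kappa)\cdot(\kappa\cdot 0_{g'})$ with $r_{gg'}(H_{g,2}\cdot\kappa)$. By associativity it equals $(H_{g,2}(\partial\kappa)\cdot\kappa)\cdot 0_{g'}$, and $H_{g,2}(\partial\kappa)\cdot\kappa = r_g(H_{g,2}\cdot\kappa)$ by the definition of the action on $C_M$. Hence the summand is $r_{gg'}(H_{g,2}\cdot\kappa)$. Applying $r_{(gg')^{-1}}$ then yields the claimed formula. The care needed is in checking that each splitting is a valid sum in the correct fibre and that the composability conditions hold.
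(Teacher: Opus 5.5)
Your proof is correct and is essentially the paper's own computation: the paper also expands $(H_{g,2}\cdot H_{g',2})\Phi v - \Phi((H_{g,1}\cdot H_{g',1})v)$ and uses linearity of the multiplication to split it into exactly your two pieces, $(H_{g,2}\Phi u_1 - \Phi H_{g,1}u_1)\cdot 0_{g'}$ and $H_{g,2}(\partial\kappa)\cdot r_{g'}\kappa$. It then identifies these with $r_{gg'}$ of the two terms on the right-hand side, as you do. One cosmetic point: the aside in your first paragraph writing $r_{g'}c'$ as $0_{\bft g'}\cdot c'$ is garbled (it is $c'\cdot 0_{g'}$), but the computation you actually carry out uses the correct form.
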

\begin{proof}
We compute directly:
\begin{align*}
r_{gg'}f_\Phi(H_{g,1} \cdot H_{g',1},H_{g,2} \cdot H_{g',2})v &= H_{g,2}(\bft H_{g',2}\Phi v) \cdot H_{g',2}\Phi v - \Phi(H_{g,1}(\bft H_{g',1}v)) \cdot \Phi(H_{g',1}v) \\
&= r_{g'}(H_{g,2}(\Phi(\bft H_{g',1}v)) - \Phi(H_{g,1}(\bft H_{g',1}v))) + \\
&\phantom{=} + H_{g,2}\bft(H_{g',2}\Phi v - \Phi H_{g',1}v) \cdot (H_{g',2}\Phi v - \Phi H_{g',1}v) \\
&= r_{gg'}f_\Phi(H_{g,1},H_{g,2})(H_{g',1} \cdot v) + r_{gg'}(H_{g,2} \cdot f_\Phi(H_{g',1},H_{g',2})v).
\end{align*}
This proves the statement.
\end{proof}

Aside from this multiplicativity property, which generalises the multiplicativity of the $\Phi$-relation, we also need a generalisation of the following fact. If $H_{g,1} \sim_\Phi H_{g,2}$, then the cochain map $\Phi = (\Phi^{C_M},\Phi^{V_M})$ intertwines the actions by $H_{g,1}$ and $H_{g,2}$, respectively:
\begin{equation*}
H_{g,2} \cdot \Phi^{C_M} c = \Phi^{C_M}(H_{g,1} \cdot c) \qquad H_{g,2} \cdot \Phi^{V_M} v = \Phi^{V_M}(H_{g,1} \cdot v).
\end{equation*}
That is, we observe next that the map $f_\Phi(H_{g,1},H_{g,2})$ can actually be interpreted as the homotopy that measures the failure of $\Phi$ to intertwine the actions by $H_{g,1}$ and $H_{g,2}$:

\begin{lemma}\label{lemm: f_Phi is multiplicative 2}
Given a VB-groupoid map $\Phi: V_{G,1} \ra V_{G,2}$,
\begin{equation*}
f_\Phi: \widehat V_{G,1} \times_G \widehat V_{G,2} \ra \textnormal{Hom}(\bfs^*V_M, \bft^*C_M),
\end{equation*}
defined as above in \eqref{eq: VB-map to fat map}, satisfies, for all $H_{g,1} \in \widehat V_{G,1}$ and $H_{g,2} \in \widehat V_{G,2}$,
\begin{align*}
f_\Phi(H_{g,1},H_{g,2})\partial c &= H_{g,2} \cdot \Phi^{C_M} c - \Phi^{C_M}(H_{g,1} \cdot c), \\
\partial f_\Phi(H_{g,1},H_{g,2})v &= H_{g,2} \cdot \Phi^{V_M} v - \Phi^{V_M}(H_{g,1} \cdot v).
\end{align*}
\end{lemma}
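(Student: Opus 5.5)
We verify the two identities by applying the structure maps of $V_{G,2}$ to the element
\begin{equation*}
w \coloneq H_{g,2}\Phi^{V_M}v - \Phi^{V_G}H_{g,1}v \in (V_{G,2})_g, \qquad r_g f_\Phi(H_{g,1},H_{g,2})v = w,
\end{equation*}
using only three facts: $\Phi^{V_G}$ is a linear groupoid map over the identity of $G$, $\bfs_g H_{g,i} = 1$, and the actions are given by \eqref{eq: representations of the fat groupoid actions}. First note that $\bfs_g w = \Phi^{V_M}v - \bfs_g\Phi^{V_G}H_{g,1}v = \Phi^{V_M}v - \Phi^{V_M}\bfs_g H_{g,1}v = 0$, since $\bfs\Phi^{V_G} = \Phi^{V_M}\bfs$. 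Hence $w$ lies in the image of $r_g$, so $f_\Phi$ is well defined, and $\bft_g r_g = \partial$ on $(C_{M,2})_{\bft g}$.

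For the second identity, apply $\bft_g$ to $w$. On the one hand $\bft_g w = \partial f_\Phi(H_{g,1},H_{g,2})v$. On the other hand, since $\bft\Phi^{V_G} = \Phi^{V_M}\bft$,
\begin{equation*}
\bft_g w = \bft_g H_{g,2}\Phi^{V_M}v - \Phi^{V_M}\bft_g H_{g,1}v = H_{g,2}\cdot\Phi^{V_M}v - \Phi^{V_M}(H_{g,1}\cdot v).
\end{equation*}
This is the second identity.

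For the first identity, take $c \in (C_{M,1})_{\bfs g}$ and set $v = \partial c$. By definition,
\begin{equation*}
H_{g,1}\cdot c = r_{g^{-1}}(H_{g,1}(\partial c)\cdot c),
\end{equation*}
that is, $r_{g}\bigl(H_{g,1}\cdot c\bigr) = H_{g,1}(\partial c)\cdot c\cdot 0_{g^{-1}}\cdot 0_g$. It is cleaner to work with the core element $H_{g,1}(\partial c)\cdot c$ directly. This product lies in the $g$-fibre with source $0$, since $\bft c = \partial c = \bfs H_{g,1}\partial c$ and $\bfs c = 0$. The fibre of $\bfs$ over $0$ in $(V_G)_g$ is exactly $r_g((C_M)_{\bft g})$, so in fact $H_{g,1}(\partial c)\cdot c = r_g(H_{g,1}\cdot c)$, after matching the convention of \eqref{eq: representations of the fat groupoid actions}. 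Applying the groupoid map $\Phi^{V_G}$, which restricts to $\Phi^{C_M}$ on $C_M$, gives
\begin{equation*}
\Phi^{V_G}H_{g,1}(\partial c)\cdot \Phi^{C_M}c = r_g\Phi^{C_M}(H_{g,1}\cdot c).
\end{equation*}
Similarly, $H_{g,2}(\Phi^{V_M}\partial c)\cdot\Phi^{C_M}c = r_g(H_{g,2}\cdot\Phi^{C_M}c)$, using $\partial\Phi^{C_M} = \Phi^{V_M}\partial$. Both products share the right factor $\Phi^{C_M}c$. Linearity of multiplication on composable pairs in $V_{G,2}^{(2)}$ lets us subtract them:
\begin{equation*}
(H_{g,2}\Phi^{V_M}\partial c - \Phi^{V_G}H_{g,1}\partial c)\cdot 0 = w\cdot 0_{\bfs g} = w.
\end{equation*}
Therefore $r_g f_\Phi(H_{g,1},H_{g,2})\partial c = r_g\bigl(H_{g,2}\cdot\Phi^{C_M}c - \Phi^{C_M}(H_{g,1}\cdot c)\bigr)$, and injectivity of $r_g$ gives the first identity.

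The main obstacle is bookkeeping of conventions: $r_g$ versus $r_{g^{-1}}$, and the sign and side conventions in \eqref{eq: representations of the fat groupoid actions}. I expect these to be resolved by the fibre-of-$\bfs$-over-$0$ argument above, which is convention-independent once both sides are seen to be core elements over $g$.
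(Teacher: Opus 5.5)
Your argument is correct. The paper states this lemma without proof, as the generalisation of its remark that $\Phi$-related elements intertwine the actions, and your computation is precisely the direct verification it leaves implicit: applying $\bft_g$ to $w$ gives the $V_M$-identity, and multiplying $H_{g,2}(\Phi^{V_M}\partial c)$ and $\Phi^{V_G}H_{g,1}(\partial c)$ on the right by the common factor $\Phi^{C_M}c$, then using linearity of $\bfm$ on $V_{G,2}^{(2)}$, gives the $C_M$-identity.

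Your closing worry about conventions is unfounded. The paper's $r_{g^{-1}}$ simply means right translation by $0_{g^{-1}}$, i.e.\ the inverse of $r_g$ on $\ker\bfs_g$, which is exactly how you used it. The only slip is calling $H_{g,1}(\partial c)\cdot c$ a core element: it lies in $\ker\bfs_g = r_g((C_{M,1})_{\bft g}) \subset (V_{G,1})_g$, not in the core itself.
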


There is one more important property that relate the map $f_\Phi$ and the cochain map $\Phi$: invariance. To introduce it, it is helpful to understand how one recovers $\Phi^{V_G}$ from $f_\Phi$ and the cochain map $\Phi$.

Recall that an element, say, $H_{g,1} \in \widehat V_{G,1}$, defines a linear isomorphism
\begin{equation*}
r_g + H_{g,1}: (C_{M,1})_{\bft g} \oplus (V_{M,1})_{\bfs g} \xra{\sim} (V_{G,1})_g \qquad (c,v) \mapsto c \cdot 0_g + H_{g,1}v.
\end{equation*}
We then have (for future reference, we write $f=f_\Phi$)
\begin{equation}\label{eq: fat map to VB-map}
\Phi_g^{V_G}(r_gc + H_{g,1}v) = r_g(\Phi^{C_M}(c) - f(H_{g,1},H_{g,2})v) + H_{g,2}\Phi^{V_M}(v).
\end{equation}
Interpreting $f_\Phi$ as a ``correcting term'' in this way points to the missing piece of information, invariance, that $f_\Phi$ and the cochain map $\Phi$ satisfy.

\begin{lemma}\label{lemm: f_Phi is invariant}
Given a VB-groupoid map $\Phi: V_{G,1} \ra V_{G,2}$,
\begin{equation*}
f_\Phi: \widehat V_{G,1} \times_G \widehat V_{G,2} \ra \textnormal{Hom}(\bfs^*V_M, \bft^*C_M)
\end{equation*}
defined as above in \eqref{eq: VB-map to fat map} satisfies, for all $H_{g,1}, H_{g,1}' \in \widehat V_{G,1}$ and $H_{g,2}, H_{g,2}' \in \widehat V_{G,2}$,
\begin{equation*}
f_\Phi(H_{g,1},H_{g,2})v - f_\Phi(H_{g,1}',H_{g,2}')v = h_2(H_{g,2},H_{g,2}')\Phi^{V_M} v - \Phi^{C_M} h_1(H_{g,1},H_{g,1}')v
\end{equation*}
where
\begin{equation*}
h_1(H_{g,1},H_{g,1}')v = r_{g^{-1}}(H_{g,1}'v-H_{g,1}v),
\end{equation*}
and $h_2$ is defined similarly.
\end{lemma}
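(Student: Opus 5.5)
The plan is a direct computation from the defining formula \eqref{eq: VB-map to fat map}, after applying the injective map $r_g$ to both sides. The only inputs are linearity of $\Phi^{V_G}$ and the identity $r_g r_{g^{-1}} = $ identity on $\ker\bfs_g$ (that is, $r_{g^{-1}}$ is the inverse of $r_g$ on $\ker \bfs_g$). Since $r_g$ is an isomorphism from $(C_{M,2})_{\bft g}$ onto $\ker \bfs_g \subset (V_{G,2})_g$, it suffices to prove the identity after applying $r_g$ to both sides.

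For the left-hand side, the definition gives
\begin{equation*}
r_g\bigl(f_\Phi(H_{g,1},H_{g,2})v - f_\Phi(H_{g,1}',H_{g,2}')v\bigr) = (H_{g,2} - H_{g,2}')\Phi^{V_M}v - \Phi^{V_G}(H_{g,1} - H_{g,1}')v .
\end{equation*}
Here we use that the two expressions $H_{g,2}\Phi^{V_M}v - \Phi^{V_G}H_{g,1}v$ and $H'_{g,2}\Phi^{V_M}v - \Phi^{V_G}H'_{g,1}v$ both lie in $\ker\bfs_g$. This holds because $\bfs_g H = 1$ and $\bfs\Phi^{V_G} = \Phi^{V_M}\bfs$.

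For the right-hand side, $r_g h_2(H_{g,2},H_{g,2}')\Phi^{V_M}v = (H_{g,2}' - H_{g,2})\Phi^{V_M}v$ by definition of $h_2$. For the second term, we need $r_g\Phi^{C_M} = \Phi^{V_G} r_g$. This holds because $\Phi^{V_G}$ is a groupoid map that is linear: it sends $c\cdot 0_g$ to $\Phi^{C_M}(c)\cdot 0_g$, using $\Phi^{C_M} = \Phi^{V_G}|_{C_M}$ and $\Phi^{V_G}(0_g) = 0_g$. Consequently
\begin{equation*}
r_g\Phi^{C_M}h_1(H_{g,1},H_{g,1}')v = \Phi^{V_G}(H_{g,1}' - H_{g,1})v .
\end{equation*}

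Comparing the two sides gives the claimed identity, but only up to a global sign, which must be tracked against the conventions of $h_1$ and $h_2$. The step where care is required is exactly this sign bookkeeping. The left-hand side contains $(H_{g,2}-H'_{g,2})\Phi^{V_M}v$, whereas $r_g h_2 \Phi^{V_M} v = (H'_{g,2}-H_{g,2})\Phi^{V_M}v$. I would therefore double-check whether the statement intends $f(H,H_2) - f(H',H_2')$ to equal $-(h_2\Phi - \Phi h_1)$, or whether the convention for $h$ absorbs this. If a sign discrepancy persists, I would resolve it by comparing with Remark \ref{rema: compare with VB-groupoids: direct sum vs tensor product}, where $h_\Sigma(H)-h_\Sigma(H') = -h(H,H')$. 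Applying that relation to $\Phi = $ id should fix the convention. Apart from this, the argument is purely linear-algebraic and needs no choice of cleavage.
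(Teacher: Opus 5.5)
Your computation is right, and it is the routine check the paper has in mind (no separate proof of this lemma is recorded there). After applying the injection $r_g$, and using $\bfs_g H = 1$, $\bfs\Phi^{V_G} = \Phi^{V_M}\bfs$ and $\Phi^{V_G}r_g = r_g\Phi^{C_M}$, the left-hand side becomes $(H_{g,2}-H'_{g,2})\Phi^{V_M}v - \Phi^{V_G}(H_{g,1}-H'_{g,1})v$, and the right-hand side becomes its negative.

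The gap is in how you finish. You leave the sign ``to be tracked against the conventions'', but there is no convention left to choose. The lemma itself fixes $h_i(H,H')v = r_{g^{-1}}(H'v-Hv)$. Here $r_{g^{-1}}$ is unambiguously the inverse of $r_g$ on $\ker\bfs_g$: it is right multiplication by $0_{g^{-1}}$, with no sign, unlike $\ell_g$. So the discrepancy cannot be absorbed, and the identity as printed is false. A quick counterexample: take $V_{G,1}=V_{G,2}$ and $\Phi=\mathrm{id}$, so that $f_\Phi=h$, and choose $H_{g,1}=H'_{g,1}=H_{g,2}$. The left-hand side is then $-h(H_{g,2},H'_{g,2})v$ and the right-hand side is $+h(H_{g,2},H'_{g,2})v$. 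These differ as soon as $H'_{g,2}\neq H_{g,2}$.

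You should therefore conclude decisively that what holds, and what your argument proves, is
\begin{equation*}
f_\Phi(H_{g,1},H_{g,2})v - f_\Phi(H'_{g,1},H'_{g,2})v = \Phi^{C_M}h_1(H_{g,1},H'_{g,1})v - h_2(H_{g,2},H'_{g,2})\Phi^{V_M}v,
\end{equation*}
that is, the stated formula with the right-hand side negated. Your proposed cross-check is the right one and confirms this. Your computation never uses invertibility of $\bft_gH$, so it also holds on the fat category. With $\Phi=\mathrm{id}$ and $H_{g,1}=H'_{g,1}=\Sigma_g$ it yields $h_\Sigma(H)-h_\Sigma(H')=-h(H,H')$, exactly as in Remark \ref{rema: compare with VB-groupoids: direct sum vs tensor product}.

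The same sign slip appears in the invariance axiom of Definition \ref{def: fat maps}. With the printed sign, even the identity morphism $h$ of Remark \ref{rema: identity map of fat extensions} would in general fail it, by the cocycle identity $h(A,B)+h(B,C)=h(A,C)$. With the corrected sign it satisfies it, so the corrected form is the one to use downstream as well.
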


Recognise that, in the above lemma, $h_1$ and $h_2$ are equal to $f_\Phi$ if we take $\Phi$ to be the respective identity maps. The similarity with the invariance condition that appeared in Section \ref{sec: From fat extensions to ruths} is, of course, not coincidental. In Remark \ref{rema: map of fat extensions is a multiplicative tensor} we clarify this connection further.

We organise the properties that $(f,\Phi)$ should satisfy, in order to set the correspondence, in the following way.

\begin{definition}\label{def: fat maps}
Consider fat extensions $F_{G,1}$ and $F_{G,2}$ of $G$, over $C_{M,1} \ra V_{M,1}$ and $C_{M,2} \ra V_{M,2}$, respectively. Consider a pair $(f, \Phi)$, where $\Phi = (\Phi^{C_M}, \Phi^{V_M})$ are two vector bundle maps
\begin{equation*}
C_{M,1} \xra{\Phi^{C_M}} C_{M,2} \qquad V_{M,1} \xra{\Phi^{V_M}} V_{M,2}
\end{equation*}
and $f$ is a smooth map
\begin{equation*}
f: F_{G,1} \times_G F_{G,2} \ra \textnormal{Hom}(\bfs^*V_{M,1},\bft^*C_{M,2}).
\end{equation*}
Now, $(f, \Phi)$ is a \textit{morphism $F_{G,1} \ra F_{G,2}$ of fat extensions} if it is invariant:
\begin{equation*}
f(H_{g,1},H_{g,2})v - f(H_{g,1}',H_{g,2}')v = h_2(H_{g,2},H_{g,2}')\Phi^{V_M} v - \Phi^{C_M} h_1(H_{g,1},H_{g,1}')v
\end{equation*}
and multiplicative: this means that $\Phi$ is a cochain map
\begin{center}
\begin{tikzcd}
C_{M,1} \ar[r, "\partial"] \ar[d, "\Phi^{C_M}"] & V_{M,1} \ar[d, "\Phi^{V_M}"] \\
C_{M,2} \ar[r, "\partial"] & V_{M,2}
\end{tikzcd}
\end{center}
that
\begin{align*}
f(H_{g,1},H_{g,2})\partial c &= H_{g,2} \cdot \Phi^{C_M} c - \Phi^{C_M}(H_{g,1} \cdot c), \\
\partial f(H_{g,1},H_{g,2})v &= H_{g,2} \cdot \Phi^{V_M} v - \Phi^{V_M}(H_{g,1} \cdot v),
\end{align*}
and that
\begin{equation*}
f(H_{g,1} \cdot H_{h,1}, H_{g,2} \cdot H_{h,2})v = f(H_{g,1},H_{g,2})(H_{h,1} \cdot v) + H_{g,2} \cdot f(H_{h,1}, H_{h,2})v.
\end{equation*}
\end{definition}

\begin{remark}\label{rema: invariance usually makes f determine Phi}
We often simplify and write a map of fat extensions $(f,\Phi)$ by $f$, so we leave $\Phi$ implicit from the notation. Similar as happens for the invariant cochains we described in Section \ref{sec: From fat extensions to ruths} (see Remark \ref{rema: invariance condition makes f_0 determine f_1}), $f$ usually determines the cochain map $\Phi$ by invariance. But, aside from suppressing $\Phi$ from the notation sometimes, we keep track of $\Phi$ as part of the data. 
\end{remark}

\begin{remark}\label{rema: map of fat extensions is a multiplicative tensor}
The point of this remark is to clarify why we used the term ``multiplicative'' in the definition above. In fact, we will argue here that we can think of a morphism of fat extensions as a multiplicative tensor.

To do this, notice that we can translate a map of fat extensions $f$ so that it takes values in
\begin{equation*}
\bft^*\textnormal{Hom}(V_M,C_M) = \textnormal{Hom}(\bft^*V_M,\bft^*C_M).
\end{equation*}
Indeed, simply replace $f$ with
\begin{equation*}
\widetilde f(H_{g,1},H_{g,2}): (V_{M,1})_{\bft g} \ra (C_{M,2})_{\bft g} \qquad v \mapsto f(H_{g,1},H_{g,2})(H_{g,1}^{-1} \cdot v).
\end{equation*}
In terms of $\widetilde f$, the invariance condition becomes
\begin{equation*}
\widetilde f(H_{g,1},H_{g,2})(H_{g,1} \cdot v) - \widetilde f(H_{g,1}',H_{g,2}')(H_{g,1}' \cdot v) = h_2(H_{g,2},H_{g,2}') \Phi^{V_M} v - \Phi^{C_M} h_1(H_{g,1},H_{g,1}')v,
\end{equation*}
and the two multiplicativity conditions involving $f$ translate to
\begin{align*}
\partial \widetilde f(H_{g,1},H_{g,2})v &= H_{g,2} \cdot \Phi^{V_M}(H_{g,1}^{-1} \cdot v) - \Phi^{V_M} v \\
\widetilde f(H_{g,1},H_{g,2})\partial c &= H_{g,2} \cdot \Phi^{C_M}(H_{g,1}^{-1} \cdot c) - \Phi^{C_M} c
\end{align*}
and
\begin{equation*}
\widetilde f(H_{g,1} \cdot H_{h,1}, H_{g,2} \cdot H_{h,2})v = \widetilde f(H_{g,1},H_{g,2})v + H_{g,2} \cdot \widetilde f(H_{h,1}, H_{h,2})(H_{g,1}^{-1} \cdot v).
\end{equation*}
This way, $f$ (or rather $(\widetilde f,\Phi)$) can be thought of as an ``invariant'' $1$-cocycle in (the $3$-term ruth corresponding to) the cochain complex representation
\begin{equation*}
\textnormal{Hom}(V_{M,1}, C_{M,2}) \ra \textnormal{Hom}(C_{M,1}, C_{M,2}) \oplus \textnormal{Hom}(V_{M,1}, V_{M,2}) \ra \textnormal{Hom}(C_{M,1}, V_{M,2})
\end{equation*}
of the groupoid $F_{G,1} \times_G F_{G,2}$, whose groupoid structure is defined componentwise. Here, the cochain complex representation can be thought of as a tensor product of the cochain complexes
\begin{equation*}
V_{M,1}^* \ra C_{M,1}^* \qquad C_{M,2} \ra V_{M,2}
\end{equation*}
with the action defined ``componentwise''. Studying more general tensor products this way is part of the objective of the work in progress \cite{Multiplicativetensors}.
\end{remark}

Before we move on, we discuss the composition of morphisms.

\begin{definition}\label{def: composition of morphisms}
Given maps of fat extensions
\begin{equation*}
F_{G,1} \times_G F_{G,2} \xra{f_{21}} \textnormal{Hom}(\bfs^*V_{M,1},\bft^*C_{M,2}) \qquad F_{G,2} \times_G F_{G,3} \xra{f_{32}} \textnormal{Hom}(\bfs^*V_{M,2},\bft^*C_{M,3}),
\end{equation*}
over cochain maps $\Phi_{21}$ and $\Phi_{32}$, respectively, the composition $f_{31} = f_{32} \circ f_{21}$ over $\Phi_{31} = \Phi_{32} \circ \Phi_{21}$ is given by
\begin{equation*}
f_{31}(H_{g,1}, H_{g,3}) = f_{32}(H_{g,2}, H_{g,3})\Phi_{21}^{V_M} + \Phi_{32}^{C_M}f_{21}(H_{g,1}, H_{g,2}),
\end{equation*}
where $H_{g,2} \in F_{G,2}$ is arbitrary.
\end{definition}

\begin{remark}\label{rema: identity map of fat extensions}
The identity map $F_G \ra F_G$ is the identity cochain map between $C_M \ra V_M$ and itself, together with the map
\begin{equation*}
h: F_G \times_G F_G \ra \textnormal{Hom}(\bfs^*V_M,\bft^*C_M) \qquad h(H_g,H_g') = h_{\bft g}(H_g \cdot v),
\end{equation*}
where $h_{\bft g}$ is unique with the property that
\begin{equation*}
h_{\bft g} \cdot H_g = H_g'.
\end{equation*}
If $F_G = \widehat V_G$ for a VB-groupoid $V_G$, we recover the map
\begin{equation*}
h(H_g,H_g') = r_{g^{-1}}(H_g' - H_g)
\end{equation*}
which we already mentioned in Remark \ref{rema: invariance condition for fat groupoid of VB-groupoid}.
\end{remark}

The following Remark should be compared to Remark \ref{rema: fat subextension and fat quotient extension as fat maps}, where we discussed the ``inclusion'' map of a fat subextension and the ``projection'' map of a fat quotient.

\begin{remark}\label{rema: injective and surjective maps of fat extensions}
We observe here that $f^{-1}$ is a left/right inverse to $f$ if and only if
\begin{equation*}
f^{-1}(H_{g,2}, H_{g,1}) = -\Phi^{-1} f(H_{g,1},H_{g,2}) \Phi^{-1},
\end{equation*}
where $\Phi^{-1}$ is a left/right inverse to the cochain map $\Phi$ underlying $f$. In this case, the map that sends $H_{g,1} \in F_{G,1}$ to the unique $H_{g,2} \in F_{G,2}$ for which\footnote{Explicitly, $H_{g,2}$ is given by $H_{g,2}' \cdot h_{\bfs g}$, where $H_{g,2}' \in F_{G,2}$ is arbitrary, and
\begin{equation*}
h_{\bfs g} \coloneq (H_{g,2}')^{-1} \cdot f(H_{g,1}, H_{g,2}')(\Phi^{V_M})^{-1}.
\end{equation*}}
\begin{equation*}
f(H_{g,1}, H_{g,2}) = 0
\end{equation*}
is a groupoid immersion/submersion $F_{G,1} \ra F_{G,2}$ that we can fit into a commutative diagram
\begin{center}
\begin{tikzcd}
1 \ar[r] & \textnormal{H}(C_{M,1}, V_{M,1}) \ar[r] \ar[d] & F_{G,1} \ar[r] \ar[d] & G \ar[r] \ar[d, "="] & 1 \\
1 \ar[r] & \textnormal{H}(C_{M,2}, V_{M,2}) \ar[r] & F_{G,2} \ar[r] & G \ar[r] & 1
\end{tikzcd}
\end{center}
Conversely, all such commutative diagrams arise in this way; that is, those for which the vertical map on the left is given by
\begin{equation*}
\textnormal{H}(C_{M,1}, V_{M,1}) \ra \textnormal{H}(C_{M,2}, V_{M,2}) \qquad h_x \mapsto \Phi^{C_M}h_x(\Phi^{V_M})^{-1}
\end{equation*}
if $\Phi^{-1}$ a left inverse of $\Phi$, and 
\begin{equation*}
    \textnormal{H}(C_{M,1}, V_{M,1}) \ra \textnormal{H}(C_{M,2}, V_{M,2}) \qquad h_x \mapsto (\Phi^{C_M})^{-1}h_x\Phi^{V_M}
    \end{equation*}
if $\Phi^{-1}$ a right inverse of $\Phi$.
\end{remark}

There are of course many other classes of maps of interest. For example, the class of maps for which the underlying cochain map is a quasi-isomorphism. Such maps correspond to VB-Morita maps (as can be inferred from \cite{VBMorita}). This will be further clarified and explored in \cite{Homotopyoperators}.

To conclude this section, we summarise:

\begin{proposition}\label{prop: VB-map to fat map}
Let $\Phi: V_{G,1} \ra V_{G,2}$ be a VB-groupoid map. Then,
\begin{equation*}
f_\Phi: \widehat V_{G,1} \ra \widehat V_{G,2},
\end{equation*}
defined as in \eqref{eq: VB-map to fat map}, is a morphism of fat extensions. Moreover, the assignment $\Phi \mapsto f_\Phi$ respects composition.
\end{proposition}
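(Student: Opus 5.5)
The first claim reduces almost entirely to the three lemmas stated just before the proposition. Definition \ref{def: fat maps} requires four things of $(f_\Phi,\Phi)$: invariance, that $\Phi$ is a cochain map, the two homotopy identities for $\partial$, and the multiplicativity identity. I would take them in turn.

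The underlying pair $(\Phi^{C_M},\Phi^{V_M})$, with $\Phi^{C_M}=\Phi^{V_G}|_{C_M}$ and $\Phi^{V_M}=\bfs\Phi^{V_G}|_{V_M}$, is a cochain map because $\Phi^{V_G}$ commutes with the target map. Invariance is Lemma \ref{lemm: f_Phi is invariant}. The homotopy identities are Lemma \ref{lemm: f_Phi is multiplicative 2}. The product rule is Lemma \ref{lemm: f_Phi is multiplicative 1}.

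Lemmas \ref{lemm: f_Phi is multiplicative 2} and \ref{lemm: f_Phi is invariant} are stated without proof, so I would check them directly from \eqref{eq: VB-map to fat map}.
\begin{itemize}
\item For invariance, expand $f_\Phi(H_{g,1},H_{g,2})-f_\Phi(H_{g,1}',H_{g,2}')$. The result is $r_{g^{-1}}$ applied to $(H_{g,2}-H_{g,2}')\Phi^{V_M}v-\Phi^{V_G}(H_{g,1}-H_{g,1}')v$. The first difference lies in $r_g C_{M,2}$ and gives $-h_2(H_{g,2},H_{g,2}')\Phi^{V_M}v$. For the second, $(H_{g,1}-H_{g,1}')v\in r_gC_{M,1}$, and $\Phi^{V_G}$ restricted to right-translated core elements is $r_g\Phi^{C_M}$, since $\Phi^{V_G}$ is a groupoid map. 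This contributes $+\Phi^{C_M}h_1(H_{g,1},H_{g,1}')v$. So the signs give exactly $h_2\Phi^{V_M}-\Phi^{C_M}h_1$ once the orientation of $h_i$ is tracked.
\item For the $\partial$-identities, apply $\bft_g$ to the defining expression to get the $V_M$-identity. For the $C_M$-identity, use the formula $H_g\cdot c=r_{g^{-1}}(H_g(\partial c)\cdot c)$ and again linearity and multiplicativity of $\Phi^{V_G}$.
\end{itemize}
Smoothness of $f_\Phi$ is clear, since it is built from smooth bundle maps.

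For compatibility with composition, take $\Phi_{21}:V_{G,1}\to V_{G,2}$ and $\Phi_{32}:V_{G,2}\to V_{G,3}$. The cochain maps compose because restriction to the core and to the units is functorial. I need to show
\begin{equation*}
f_{\Phi_{32}\Phi_{21}}(H_{g,1},H_{g,3})=f_{\Phi_{32}}(H_{g,2},H_{g,3})\Phi_{21}^{V_M}+\Phi_{32}^{C_M}f_{\Phi_{21}}(H_{g,1},H_{g,2})
\end{equation*}
for arbitrary $H_{g,2}$. Multiply both sides by $r_g$. The right side becomes
\begin{equation*}
(H_{g,3}\Phi_{32}^{V_M}\Phi_{21}^{V_M}v-\Phi_{32}^{V_G}H_{g,2}\Phi_{21}^{V_M}v)+\Phi_{32}^{V_G}(H_{g,2}\Phi_{21}^{V_M}v-\Phi_{21}^{V_G}H_{g,1}v),
\end{equation*}
using $\Phi_{32}^{V_G}r_g=r_g\Phi_{32}^{C_M}$ on the second term. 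This telescopes to $r_gf_{\Phi_{32}\Phi_{21}}(H_{g,1},H_{g,3})v$. It also shows the composition in Definition \ref{def: composition of morphisms} is independent of the choice of $H_{g,2}$.

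Finally, the identity VB-groupoid map goes to the identity map $h$ of Remark \ref{rema: identity map of fat extensions}, since $f_{\mathrm{id}}(H_g,H_g')=r_{g^{-1}}(H_g'-H_g)$.

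I expect the main obstacle to be sign and convention bookkeeping. In particular I need the identity $\Phi^{V_G}\circ r_g=r_g\circ\Phi^{C_M}$, together with the orientation conventions for $h_i$ and $r_{g^{-1}}$ in the invariance identity. I would verify that first, from $r_gc=c\cdot 0_g$ and the multiplicativity and linearity of $\Phi^{V_G}$ (which forces $\Phi^{V_G}(0_g)=0_g$).
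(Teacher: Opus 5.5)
This is the paper's proof. The paper also takes the first claim from Lemmas \ref{lemm: f_Phi is multiplicative 1}, \ref{lemm: f_Phi is multiplicative 2} and \ref{lemm: f_Phi is invariant}, and proves compatibility with composition by the same telescoping computation using $\Phi^{V_G}r_g=r_g\Phi^{C_M}$. Your direct checks of the two unproved lemmas, of identities, and of independence from $H_{g,2}$ are extra detail, and they are correct.

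One correction concerns the invariance check. Your computation correctly gives
\begin{equation*}
f_\Phi(H_{g,1},H_{g,2})v-f_\Phi(H_{g,1}',H_{g,2}')v=-h_2(H_{g,2},H_{g,2}')\Phi^{V_M}v+\Phi^{C_M}h_1(H_{g,1},H_{g,1}')v.
\end{equation*}
With $h_i(H,H')=r_{g^{-1}}(H'-H)$, this is the negative of the printed right-hand side of Lemma \ref{lemm: f_Phi is invariant}. It matches that right-hand side only after swapping the arguments of $h_1$ and $h_2$. So ``once the orientation of $h_i$ is tracked'' does not give exact agreement with the lemma as stated.

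You should state explicitly that the lemma, and the invariance axiom in Definition \ref{def: fat maps}, hold in the swapped form
\begin{equation*}
f_\Phi(H_{g,1},H_{g,2})-f_\Phi(H_{g,1}',H_{g,2}')=h_2(H_{g,2}',H_{g,2})\Phi^{V_M}-\Phi^{C_M}h_1(H_{g,1}',H_{g,1}).
\end{equation*}
This is also the form satisfied by the identity morphism $h$ of Remark \ref{rema: identity map of fat extensions}; the identity would fail the printed version. With invariance read this way, the rest of your argument, including independence of the composite from $H_{g,2}$, goes through unchanged.
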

\begin{proof}
Only the last statement is left to be verified. Given $\Phi_{21}$ and $\Phi_{32}$ VB-groupoid maps, we compute: any $H_{g,2} \in \widehat V_{G,2}$ gives
\begin{align*}
r_g f_{31}(H_{g,1}, H_{g,3}) &= H_{g,3}(\Phi_{32}\Phi_{21}v) - \Phi_{32}\Phi_{21}(H_{g,1}v) \\
&= \Phi_{32}(H_{g,2}\Phi_{21}v - \Phi_{21}(H_{g,1}v)) + (H_{g,3}\Phi_{32} - \Phi_{32}H_{g,2})\Phi_{21}v \\
&= r_g\Phi_{32}f_{21}(H_{g,1},H_{g,2})v + r_gf_{32}(H_{g,2},H_{g,3})\Phi_{21}v.
\end{align*}
This proves the statement.
\end{proof}

Starting with a map of fat extensions $f$, the invariance of $f$ makes sure the map $\Phi$, defined as in \eqref{eq: fat map to VB-map}, is well-defined. The multiplicativity of $f$ ensures this map $\Phi$ is a VB-groupoid map.

We arrived at the main theorem of this work.

\begin{theorem}\label{thm: equivalence of VBLG and fat}
The functor
\begin{equation*}
\{\textnormal{VB-groupoids}\} \ra \{\textnormal{Fat extensions}\} \qquad V_G \ra \widehat V_G \quad \Phi \mapsto f_\Phi
\end{equation*}
sets an equivalence of categories.
\end{theorem}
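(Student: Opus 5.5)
We show that the functor $V_G \mapsto \widehat V_G$, $\Phi \mapsto f_\Phi$ is well defined, essentially surjective, and fully faithful. By Proposition \ref{prop: VB-map to fat map} it respects composition. It also sends identities to identities: for $\Phi = 1$ the map $f_\Phi(H_g,H_g') = r_{g^{-1}}(H_g' - H_g)$ is exactly the identity morphism $h$ of Remark \ref{rema: identity map of fat extensions}. Essential surjectivity is Proposition \ref{prop: essential surjectivity of fat construction}. For that we must check that the canonical map $F_G \ra \widehat V(F_G)$ produced there gives an isomorphism of fat extensions in the sense of Definition \ref{def: fat maps}. We take the pair $(f,1)$, where $f(H_g, H_g')$ is the comparison map measuring the difference between $H_g'$ and the image of $H_g$; invertibility then follows from Remark \ref{rema: injective and surjective maps of fat extensions}.

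Since $(\Phi^{C_M},\Phi^{V_M})$ is the restriction of $\Phi^{V_G}$ to $V_G|_M = C_M \oplus V_M$, faithfulness is immediate from formula \eqref{eq: fat map to VB-map}. Fix any $H_{g,1}$ and $H_{g,2}$. The isomorphism $r_g + H_{g,1}\colon (C_{M,1})_{\bft g}\oplus (V_{M,1})_{\bfs g} \ra (V_{G,1})_g$ shows that $\Phi^{V_G}_g$ is determined by $\Phi$ together with the single value $f_\Phi(H_{g,1},H_{g,2})$. Hence $f_\Phi = f_\Psi$ with equal cochain maps forces $\Phi^{V_G} = \Psi^{V_G}$.

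Fullness is the heart of the proof. Given a morphism $(f,\Phi)\colon \widehat V_{G,1} \ra \widehat V_{G,2}$, we define $\Phi^{V_G}$ by \eqref{eq: fat map to VB-map}, choosing local sections $H_{g,1}, H_{g,2}$ of $\widehat V_{G,i} \ra G$. We then verify the following in order.
\begin{enumerate}
\item[(i)] Well-definedness. Changing $H_{g,1}$ to $H_{g,1}'$ re-expresses $r_g c + H_{g,1}v$ as $r_g(c - h_1(H_{g,1},H_{g,1}')v) + H_{g,1}'v$; changing $H_{g,2}$ alters $H_{g,2}\Phi^{V_M}v$ by $r_g h_2(H_{g,2},H_{g,2}')\Phi^{V_M}v$. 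The invariance axiom of Definition \ref{def: fat maps} is exactly the identity making these two corrections cancel.
\item[(ii)] Smoothness and fibrewise linearity, which are clear from local sections.
\item[(iii)] Compatibility with source and target. The source is immediate. The target uses $\bft_g(r_g c + H_g v) = \partial c + H_g\cdot v$, and reduces to the second relation $\partial f v = H_{g,2}\cdot\Phi^{V_M}v - \Phi^{V_M}(H_{g,1}\cdot v)$.
\item[(iv)] Compatibility with units. Taking $H_{1_x,i} = 1$ gives $f(1,1) = 0$ by multiplicativity, so $\Phi^{V_G}$ restricts to $\Phi$ on $C_M \oplus V_M$.
\item[(v)] Multiplicativity. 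We compute $(r_g c + H_{g,1}v)\cdot(r_{g'}c' + H_{g',1}v')$ with $v = \partial c' + H_{g',1}\cdot v'$. Using the product $(H_g \cdot H_{g'})v' = H_g(\bft H_{g'} v')\cdot H_{g'}v'$ and the core formula $H_g\cdot c$ of \eqref{eq: representations of the fat groupoid actions}, the product can be rewritten as $r_{gg'}(\cdots) + (H_{g,1}\cdot H_{g',1})v'$. We then compare $\Phi^{V_G}$ of this with the product of the images, evaluating the former with the element $H_{g,2}\cdot H_{g',2}$. The product rule for $f$ in Definition \ref{def: fat maps} supplies the matching term $f(H_{g,1},H_{g,2})(H_{g',1}\cdot v') + H_{g,2}\cdot f(H_{g',1},H_{g',2})v'$. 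The relation $f\partial c = H_{g,2}\cdot\Phi^{C_M}c - \Phi^{C_M}(H_{g,1}\cdot c)$ absorbs the core term $c'$ and the term $f(H_{g,1},H_{g,2})\partial c'$ that appears.
\end{enumerate}
We expect step (v) to be the main obstacle: it is the lengthy bookkeeping, run in reverse of Lemmas \ref{lemm: f_Phi is multiplicative 1} and \ref{lemm: f_Phi is multiplicative 2}. To keep it manageable we first reduce to the cases $c = c' = 0$, and to the case where only core elements are multiplied, using bilinearity of the VB-groupoid product on fibred pairs. Finally, $f_{\Phi^{V_G}} = f$ holds by construction, since \eqref{eq: VB-map to fat map} and \eqref{eq: fat map to VB-map} are inverse readings of each other; this proves fullness.
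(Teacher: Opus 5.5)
Your proposal is correct and is essentially the paper's own argument: functoriality from Proposition~\ref{prop: VB-map to fat map}, essential surjectivity from Proposition~\ref{prop: essential surjectivity of fat construction}, and full faithfulness by reading \eqref{eq: fat map to VB-map} as a definition, with invariance giving well-definedness and the multiplicativity axioms giving compatibility with source, target, units and products --- you have simply written out the verifications that the paper leaves implicit. One caution for step (i): the cancellation actually requires $f(H_{g,1},H_{g,2})-f(H_{g,1}',H_{g,2}') = \Phi^{C_M}h_1(H_{g,1},H_{g,1}') - h_2(H_{g,2},H_{g,2}')\Phi^{V_M}$, which is the identity $f_\Phi$ genuinely satisfies, whereas Definition~\ref{def: fat maps} and Lemma~\ref{lemm: f_Phi is invariant} print the opposite sign, so you should state the invariance axiom with this corrected sign.
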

\begin{proof}
In this section we established the (fully faithful) functoriality, and in Section \ref{sec: fat extensions} (see Section \ref{sec: From fat extensions to VB-groupoids}; also see Section \ref{sec: The fat extension of a VB-groupoid}) we showed the essential surjectivity.
\end{proof}

\begin{remark}
The proof of \cite{VBMorita} that the functor
\begin{equation*}
\{\textnormal{Split $2$-term ruths}\} \ra \{\textnormal{VB-groupoids}\}
\end{equation*}
is an equivalence can now be understood as follows. We call $\{\textnormal{Split VB-groupoids}\}$ the category of VB-groupoids equipped with a cleavage. Then the above functor decomposes as
\begin{equation*}
\{\textnormal{Split $2$-term ruths}\} \ra \{\textnormal{Split VB-groupoids}\} \ra \{\textnormal{VB-groupoids}\}
\end{equation*}
where the latter equivalence is given by the forgetful functor. To see how the first equivalence works, observe that, from a VB-groupoid map $\Phi: V_{G,1} \ra V_{G,2}$, we can construct the map $f=f_\Phi$ as above on $\widehat V_G^\textnormal{cat} \times_G \widehat V_G^\textnormal{cat}$. So, cleavages $\Sigma_1$ and $\Sigma_2$ of $V_{G,1}$ and $V_{G,2}$, respectively, define a map
\begin{equation*}
\mu_g \coloneq f(\Sigma_{g,1},\Sigma_{g,2}): (V_{M,1})_{\bfs g} \ra (C_{M,2})_{\bft g}.
\end{equation*}
In the first multiplicativity condition involving $f$ (see Definition \ref{def: fat maps}) we can replace $f$ with $\mu$, but the second condition is true only up to the commutator $[R_2(g,g'),\Phi]$:\footnote{We simply wrote the associated split ruths of $V_{G,1}$ and $V_{G,2}$ using the same notation.}
\begin{equation*}
\mu_{gg'}v = \mu_gR_1(g')v + R_1(g)\mu_hv + R_2(g,g')\Phi v - \Phi R_2(g,g')v.
\end{equation*}
This precisely means the pair $(\Phi,\mu)$ defines a ruth map, and, as above in \eqref{eq: fat map to VB-map}, the equation
\begin{equation*}
\Phi^{V_G}(r_gc + \Sigma_{g,1}v) = r_g(\Phi^{C_M}(c) - \mu_gv) + \Sigma_{g,2}\Phi^{V_M} v
\end{equation*}
sets a one-to-one correspondence between VB-groupoid maps and ruth maps.
\end{remark}

The notions of morphisms for fat extensions and for fat category extensions are formally identical. Therefore, it is readily verified that we also obtain:

\begin{theorem}\label{thm: equivalence of fat category and fat}
The functor
\begin{equation*}
    \{\textnormal{Fat category extensions}\} \ra \{\textnormal{Fat extensions}\} \qquad F_G^\textnormal{cat} \ra F_G \quad f \mapsto f|_{F_G \times F_G}
\end{equation*}
sets an equivalence of categories.
\end{theorem}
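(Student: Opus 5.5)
The functor in the statement sends $F_G^\textnormal{cat}$ to its groupoid of invertible elements $F_G$. A morphism $f$ of fat category extensions, with its underlying cochain map $\Phi$, is sent to its restriction $f|_{F_G \times_G F_G}$. The plan is to show that this functor is well defined, fully faithful, and essentially surjective, reusing Proposition \ref{prop: fat category extension correspondence} for the objects.

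First, well-definedness. The conditions in Definition \ref{def: fat maps} (invariance, the two homotopy identities, multiplicativity) are equations evaluated pointwise on pairs $(H_{g,1},H_{g,2})$. They therefore survive restriction to the invertible elements. Here the comparison maps $h_i$ of the fat extension $F_{G,i}$ are the restrictions of the comparison maps of $F_{G,i}^\textnormal{cat}$ (Definition \ref{def: fat category extension}), as shown in the discussion preceding Proposition \ref{prop: fat category extension correspondence}. Compatibility with composition in Definition \ref{def: composition of morphisms} is immediate: in the category setting we may choose the auxiliary element $H_{g,2}$ to be invertible, since $F_{G,2}\ra G$ is surjective and the composite does not depend on that choice (by invariance).

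Essential surjectivity follows from Proposition \ref{prop: fat category extension correspondence}: every fat extension $F_G$ is, up to canonical isomorphism, the image of the fat category $F_G^\textnormal{cat}$ of Definition \ref{def: abstract definition fat category}.

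The main step is full faithfulness, i.e.\ that $f$ on $F_{G,1}^\textnormal{cat}\times_G F_{G,2}^\textnormal{cat}$ is uniquely determined by, and can be recovered from, its restriction together with $\Phi$. I would use invariance to extend. Given $(K_{g,1},K_{g,2})$ in the fat categories and arbitrary invertible $H_{g,i}$ over the same $g$, set
\begin{equation*}
f(K_{g,1},K_{g,2}) \coloneq f(H_{g,1},H_{g,2}) + h_2(K_{g,2},H_{g,2})\Phi^{V_M} - \Phi^{C_M}h_1(K_{g,1},H_{g,1}).
\end{equation*}
Invariance on invertibles, combined with the cocycle property $h_i(A,B)+h_i(B,C)=h_i(A,C)$ of the comparison maps (which follows from equivariance and the restriction to differences on $\textnormal{Hom}(V_M,C_M)$), shows that this is independent of the chosen $H_{g,i}$. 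Any extension satisfying invariance must be given by this formula, which proves uniqueness (faithfulness). Smoothness comes from using local sections of $F_{G,i}\ra G$.

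It then remains to check that the extension satisfies the homotopy identities and multiplicativity on the whole fat category. The homotopy identities follow from the corresponding identities for $h_i$ in Definition \ref{def: fat category extension}. Multiplicativity is the main obstacle, since products of non-invertible elements involve the bilinear term $h\partial h$. I would handle it by realising elements of $F^\textnormal{cat}$ as classes $(h_g,H_g)$ with $H_g$ invertible, as in Definition \ref{def: abstract definition fat category}. The formula above then reads $f([h_{g,1},H_{g,1}],[h_{g,2},H_{g,2}]) = f(H_{g,1},H_{g,2}) + h_{g,2}\Phi^{V_M} - \Phi^{C_M}h_{g,1}$ up to sign conventions. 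Multiplicativity is then a direct expansion using the product formula of Section \ref{sec: the fat category}, the equivariance of the actions, and the homotopy identities for $f$ on invertibles; the $h\partial h$ terms cancel against $h_{g,2}\partial f$-type terms by those identities.
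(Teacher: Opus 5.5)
Your proposal is correct and is essentially the paper's argument made explicit. The paper only observes that objects correspond by Proposition~\ref{prop: fat category extension correspondence} and that the morphism axioms for fat category extensions and fat extensions are formally identical, so morphisms restrict to invertible elements and, by invariance and surjectivity of $F_G \ra G$, extend uniquely back. One sign needs fixing, and it is inherited from the invariance identity as printed in Definition~\ref{def: fat maps}, which is incompatible with the homotopy identities for $f$ and $h_i$ (apply $\partial$ to both sides): your first extension formula should read $f(K_{g,1},K_{g,2}) = f(H_{g,1},H_{g,2}) + h_2(H_{g,2},K_{g,2})\Phi^{V_M} - \Phi^{C_M}h_1(H_{g,1},K_{g,1})$. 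This corrected formula agrees with your quotient-model formula and with $f_\Phi$ on $\widehat V_G^\textnormal{cat}$, and with this sign your checks of the homotopy identities and of multiplicativity go through.
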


\begin{remark}
Similarly, we can establish equivalences of the form
\begin{align*}
\{\textnormal{Fat extensions}\} &\ra \{\textnormal{VB-groupoids}\} \qquad F_G \mapsto V(F_G) \\
\{\textnormal{Fat extensions}\} &\ra \{\textnormal{Abstract $2$-term ruths}\} \qquad F_G \mapsto C_{\textnormal{inv}}(F_G; C_M) \\
\{\textnormal{Split $2$-term ruths}\} &\ra \{\textnormal{Split fat extensions}\} \qquad \calE_G \mapsto \widehat \calE_G
\end{align*}
but we will not go any further into these details here.
\end{remark}

\subsection{Morphisms of general linear PB-groupoids}\label{sec: morphisms of general linear PB-groupoids}

To understand maps between general linear PB-groupoids, we first go through the equivalence of categories between ordinary vector bundles and ordinary general linear principal bundles. Doing so, we will already come across some of the most essential ingredients to understand the more general case. 

\subsubsection{Morphisms of general linear principal bundles}

Let $E_1 \ra M$ be a vector bundle of rank $k_1$, $E_2 \ra M$ be a vector bundle of rank $k_2$, and consider a vector bundle map $E_1 \ra E_2$ over $M$. If we then consider two frames over the same basepoint $x \in M$, one of $E_1$ and one of $E_2$, we obtain the following diagram
\begin{center}
\begin{tikzcd}
\bbR^{k_1} \ar[r, "\sim"] \ar[d, dashed] & E_{1,x} \ar[d] \\ \bbR^{k_2} \ar[r, "\sim"] & E_{2,x}
\end{tikzcd}
\end{center}
So, a linear map $E_1 \ra E_2$ produces a map
\begin{equation*}
f_\Phi: \textnormal{Fr } E_1 \times_M \textnormal{Fr } E_2 \ra \textnormal{Hom}(\bbR^{k_1}, \bbR^{k_2}) \qquad f_\Phi(\phi_1,\phi_2) \coloneq \phi_2^{-1} \Phi \phi_1.
\end{equation*}
This map has an additional special property, which leads to the following definition.

\begin{definition}\label{def: maps of general linear principal bundles}
A \textit{morphism of principal $\textnormal{GL}$-bundles} is a map
\begin{equation*}
f: P_1 \times_M P_2 \ra \textnormal{Hom}(\bbR^{k_1}, \bbR^{k_2})
\end{equation*}
that is $\textnormal{GL}$-equivariant: given $p_1 \in P_1$, $p_2 \in P_2$, $\Psi_1 \in \textnormal{GL}(k_1)$ and $\Psi_2 \in \textnormal{GL}(k_2)$, we have
\begin{equation*}
f(p_1\Psi_1, p_2\Psi_2) = \Psi_2^{-1} f(p_1,p_2) \Psi_1.
\end{equation*}
Moreover, composition $f_{31} = f_{32} \circ f_{21}: P_1 \ra P_3$ of morphisms $f_{21}: P_1 \ra P_2$ and $f_{32}: P_2 \ra P_3$ is given by
\begin{equation*}
f_{31}(p_1,p_3) = f_{32}(p_2,p_3) \circ f_{21}(p_1,p_2)
\end{equation*}
where $p_2 \in P_2$ is arbitrary.
\end{definition}

The term ''equivariant'' is justified for $\textnormal{GL}(k_1)$ and $\textnormal{GL}(k_2)$ act canonically on $\textnormal{Fr } E_1 \times_M \textnormal{Fr } E_2$ and also on $\textnormal{Hom}(\bbR^{k_1}, \bbR^{k_2})$. The $\textnormal{GL}$-equivariance condition can then literally be understood as the equivariance with respect to these actions.

\begin{remark}\label{rema: isomorphisms of general linear principal bundles}
The identity map $P \ra P$ is the map
\begin{equation*}
P \times_M P \ra \textnormal{Hom}(\bbR^k, \bbR^k)
\end{equation*}
which assigns to $(p_1,p_2) \in P \times_M P$ the unique $\Psi \in \textnormal{GL}(k)$ with the property that $p_1 = p_2\Psi$. In particular, $f$ is an isomorphism if and only if $f$ maps into $\textnormal{GL}(k)$, for then
\begin{equation*}
f^{-1}(p_2,p_1) = f(p_1,p_2)^{-1}
\end{equation*}
where the inverse on the right hand side is taken in $\textnormal{GL}(k)$. In this case, the map sending $p_1 \in P_1$ to the unique $p_2 \in P_2$ for which
\begin{equation*}
f(p_1,p_2) = 1
\end{equation*}
is an equivariant isomorphism $P_1 \xra{\sim} P_2$. Conversely, all such equivariant isomorphisms arise in this way.
\end{remark}

With the correct notion of morphism of general linear principal bundles at hand, we now prove:

\begin{proposition}\label{prop: equivalence ordinary vector bundles principal bundles}
The assignment
\begin{equation*}
\{\textnormal{Vector bundles}\} \ra \{\textnormal{Principal $\textnormal{GL}$-bundles}\} \qquad E \mapsto \textnormal{Fr } E \qquad \Phi \mapsto f_\Phi
\end{equation*}
sets an equivalence of categories.
\end{proposition}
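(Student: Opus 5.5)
We check three things in turn: that the assignment is a functor, that it is fully faithful, and that it is essentially surjective. Throughout we use the identity maps and the criterion for isomorphisms from Remark \ref{rema: isomorphisms of general linear principal bundles}.

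\textbf{Functoriality.} For a vector bundle map $\Phi: E_1 \ra E_2$, the map $f_\Phi(\phi_1,\phi_2) = \phi_2^{-1}\Phi\phi_1$ is smooth. It is $\textnormal{GL}$-equivariant because
\begin{equation*}
f_\Phi(\phi_1\Psi_1,\phi_2\Psi_2) = \Psi_2^{-1}\phi_2^{-1}\Phi\phi_1\Psi_1 = \Psi_2^{-1} f_\Phi(\phi_1,\phi_2)\Psi_1.
\end{equation*}
For composition, take any frame $\phi_2$ of $E_2$ at the same point. Then
\begin{equation*}
f_{\Phi_{32}}(\phi_2,\phi_3)\circ f_{\Phi_{21}}(\phi_1,\phi_2) = \phi_3^{-1}\Phi_{32}\phi_2\phi_2^{-1}\Phi_{21}\phi_1 = f_{\Phi_{32}\Phi_{21}}(\phi_1,\phi_3),
\end{equation*}
which also shows that the composite in Definition \ref{def: maps of general linear principal bundles} does not depend on the choice of $p_2$. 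For the identity, $f_{1}(\phi,\phi') = \phi'^{-1}\phi$ is the unique $\Psi$ with $\phi = \phi'\Psi$, which is exactly the identity morphism of Remark \ref{rema: isomorphisms of general linear principal bundles}.

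\textbf{Full faithfulness.} Given an equivariant $f: \textnormal{Fr } E_1 \times_M \textnormal{Fr } E_2 \ra \textnormal{Hom}(\bbR^{k_1},\bbR^{k_2})$, set
\begin{equation*}
\Phi_x \coloneq \phi_2 f(\phi_1,\phi_2)\phi_1^{-1}
\end{equation*}
for frames $\phi_1,\phi_2$ over $x$. Equivariance is precisely the statement that this is independent of the chosen frames. The map $\Phi$ is smooth because we can compute it with smooth local frames. By construction $f_\Phi = f$, and conversely $\Phi \mapsto f_\Phi \mapsto \Phi$ is the identity. So the assignment is a bijection on hom-sets.

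\textbf{Essential surjectivity.} Let $P$ be a principal $\textnormal{GL}(k)$-bundle. Take the associated bundle $E = P \times_{\textnormal{GL}(k)} \bbR^k$. The map $p \mapsto [p,\cdot\,]$ gives an equivariant isomorphism $P \cong \textnormal{Fr } E$. Its associated morphism $(p,\phi) \mapsto \phi^{-1}[p,\cdot\,]$ takes values in $\textnormal{GL}(k)$, so it is an isomorphism in the sense of Remark \ref{rema: isomorphisms of general linear principal bundles}.

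The main obstacle is only bookkeeping: checking that the morphisms of Definition \ref{def: maps of general linear principal bundles} form a category. In particular, composition must be associative and independent of the auxiliary element $p_2$. This follows by applying equivariance with $p_2\Psi$ in place of $p_2$. Everything else is a direct computation.
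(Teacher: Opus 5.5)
Your proposal is correct and follows the paper's argument. The core step is the same as in the paper: you recover $\Phi_x = \phi_2 f(\phi_1,\phi_2)\phi_1^{-1}$ from arbitrary frames and use $\textnormal{GL}$-equivariance to show it does not depend on the frames. You also spell out functoriality, well-definedness of composition and essential surjectivity via $P \times_{\textnormal{GL}(k)} \bbR^k$, which the paper leaves as ``readily verified''.
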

\begin{proof}
Suppose we are given a map
\begin{equation*}
\textnormal{Fr } E_1 \times_M \textnormal{Fr } E_2 \ra \textnormal{Hom}(\bbR^{k_1}, \bbR^{k_2})
\end{equation*}
that is $\textnormal{GL}$-equivariant. We can then define a linear map $E_{1,x} \ra E_{2, x}$ by choosing any two frames:
\begin{center}
\begin{tikzcd}
\bbR^{k_1} \ar[r, "\sim"] \ar[d] & E_{1,x} \ar[d, dashed] \\ \bbR^{k_2} \ar[r, "\sim"] & E_{2,x}
\end{tikzcd}
\end{center}
By the equivariance, this map is independent of the chosen frames. That we set a true one-to-one correspondence now is readily verified.
\end{proof}

We now move to understanding morphisms of general linear PB-groupoids. Although a natural guess arises from the above discussion, to avoid mistakes, it is especially useful to simply go through the above one-to-one correspondence in the more general setup. We start with setting the equivalence between general linear PB-groupoids and VB-groupoid as it seems the most appropriate following the above discussion. However, as we will see, the equivalence on the level of morphisms between general linear PB-groupoids and fat extensions is set in a similar way.

\subsubsection{Morphisms of general linear PB-groupoids}

Recall that the total space of the associated PB-groupoid of a VB-groupoid $V_G$ is the following space of frames:
\begin{equation*}
\textnormal{GL}_\bfs V_G = \{\phi_g: (C_M)_x \oplus (V_M)_x \xra{\sim} (V_G)_g \mid \bfs_g\phi_g|_{(C_M)_x} = 0, \bft_g\phi_g|_{(V_M)_x}, \bfs_g\phi_g|_{(V_M)_x} \in \textnormal{GL}(V_M)\}
\end{equation*}
Given a VB-groupoid map $V_{G,1} \ra V_{G,2}$ over $G$, and picking one such frame $\phi_1$ for $\textnormal{GL}_\bfs V_{G,1}$ and $\phi_2$ for $\textnormal{GL}_\bfs V_{G,2}$, we obtain the following diagram
\begin{center}
\begin{tikzcd}
(C_{M,1})_{x_1} \oplus (V_{M,1})_{x_1} \ar[r, "\sim"] \ar[d, dashed] & (V_{G,1})_g \ar[d] \\
(C_{M,2})_{x_2} \oplus (V_{M,2})_{x_2} \ar[r, "\sim"] & (V_{G,2})_g
\end{tikzcd}
\end{center}
Before we move on, observe that the resulting map
\begin{equation*}
(C_{M,1})_{x_1} \oplus (V_{M,1})_{x_1} \ra (C_{M,2})_{x_2} \oplus (V_{M,2})_{x_2}
\end{equation*}
is not an element of $\textnormal{Hom}(C_{M,1} \oplus V_{M,1}, C_{M,2} \oplus V_{M,2})$, but rather of
\begin{equation*}
\textnormal{Lin}(C_{M,1} \oplus V_{M,1}, C_{M,2} \oplus V_{M,2})
\end{equation*}
where, as in Section \ref{sec: Fat extensions as a functor to the quasi general linear Lie 2-groupoid}, for $E_1$ and $E_2$ vector bundles over $M$,
\begin{equation*}
\textnormal{Lin}(E_1,E_2) \coloneq \{E_{1,x} \ra E_{2,y} \mid x,y \in M\}
\end{equation*}
denotes the vector bundle over $M \times M$ of linear maps between fibers of $E_1$ and $E_2$. We often denote elements of $\textnormal{Lin}(C_{M,1} \oplus V_{M,1}, C_{M,2} \oplus V_{M,2})$ by blockmatrices, and use
\begin{equation*}
\textnormal{U}(C_{M,1} \oplus V_{M,1}, C_{M,2} \oplus V_{M,2})
\end{equation*}
to mean upper triangular blockmatrices. The reason for introducing this subbundle is because the maps from above are of this form, for $\bfs \phi_2 = 0$. We write the components of the above maps now (suggestively) as follows:
\begin{equation*}
\begin{pmatrix} \Phi(\phi_1,\phi_2) & f_\Phi(\phi_1, \phi_2) \\ 0 & \Phi(\phi_1,\phi_2) \end{pmatrix} (C_{M,1})_{x_1} \oplus (V_{M,1})_{x_1} \ra (C_{M,2})_{x_2} \oplus (V_{M,2})_{x_2}.
\end{equation*}
Up to conjugation by frames, $\Phi(\phi_1,\phi_2)$ is precisely the cochain map from $C_{M,1} \ra V_{M,1}$ to $C_{M,2} \ra V_{M,2}$. In analogy with Proposition \ref{prop: equivalence ordinary vector bundles principal bundles}, this does not come as a surprise. Rests to consider the map $f_\Phi: (V_{M,1})_{x_1} \ra (C_{M,2})_{x_2}$. To understand its essential property, we compute:

\begin{proposition}\label{prop: essential property of f for PB-groupoid maps from VB-groupoid map}
Let $\Phi: V_{G,1} \ra V_{G,2}$ be a VB-groupoid map, and consider $\Phi(\phi_1,\phi_2)$ and $f_\Phi(\phi_1,\phi_2)$ defined as above. Then, given $\Psi_1 \in \textnormal{GL}(C_{M,1},V_{M,1})_M$ composable with $\phi_1 \in \textnormal{GL}(C_M,V_M)$ and $\Psi_2 \in \textnormal{GL}(C_{M,2},V_{M,2})_M$ composable with $\phi_2 \in \textnormal{GL}(C_M,V_M)$, we have
\begin{equation}
\Phi(\phi_1 \cdot \Psi_1, \phi_2 \cdot \Psi_2) = \Psi_2^{-1} \Phi(\phi_1,\phi_2) \Psi_1,
\end{equation}
and, given $(h_1, \Psi_1) \in \textnormal{GL}(C_{M,1},V_{M,1})_G$ composable with $\phi_1 \in P_G$ and $(h_2, \Psi_2) \in \textnormal{GL}(C_{M,2},V_{M,2})_G$ composable with $\phi_2 \in P_G$, we have\footnote{The differentials $\partial_1$ and $\partial_2$ that appear are the suppressed differentials in the elements $(h_1, \Psi_1)$ and $(h_2, \Psi_2)$.}
\begin{align}\label{eq: property of f PB-groupoids}
&f_{\Phi}(\phi_1 \cdot (h_1, \Psi_1), \phi_2 \cdot (h_2, \Psi_2)) = \nonumber \\ &= (\Psi_2^{C_M})^{-1} (1+h_2\partial_2)^{-1}(f_{\Phi}(\phi_1, \phi_2) + \Phi^{C_M}(\phi_1,\phi_2)h_1 - h_2\Phi^{V_M}(\phi_1,\phi_2))\Psi_1^{V_M}.
\end{align}
\end{proposition}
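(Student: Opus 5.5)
The plan is to reduce both identities to a computation with the defining block matrix. By construction, the pair $(\Phi(\phi_1,\phi_2), f_\Phi(\phi_1,\phi_2))$ is the upper triangular block matrix
\begin{equation*}
A(\phi_1,\phi_2) \coloneq \phi_2^{-1}\circ\Phi^{V_G}\circ\phi_1 \in \textnormal{U}(C_{M,1} \oplus V_{M,1}, C_{M,2} \oplus V_{M,2}).
\end{equation*}
The right action of $\textnormal{GL}(C_M,V_M)_G$ on $\textnormal{GL}_\bfs V_G$ is given by matrix application, that is, by precomposition with
\begin{equation*}
B(h,\Psi) = \begin{pmatrix} (1+h\partial)\Psi^{C_M} & h\Psi^{V_M} \\ 0 & \Psi^{V_M}\end{pmatrix}
\end{equation*}
(see the end of Section \ref{sec: From VB-groupoids to general linear PB-groupoids} and Remark \ref{rema: representations general linear strict Lie 2-groupoid}). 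Granting this, the statement reduces to
\begin{equation*}
A(\phi_1\cdot(h_1,\Psi_1),\phi_2\cdot(h_2,\Psi_2)) = B(h_2,\Psi_2)^{-1}\,A(\phi_1,\phi_2)\,B(h_1,\Psi_1),
\end{equation*}
which holds immediately by associativity of composition. What remains is to invert $B$ and read off the blocks.

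First step: the action of $\textnormal{GL}(C_M,V_M)_M$ corresponds to the units $h=0$, where $B=\textnormal{diag}(\Psi^{C_M},\Psi^{V_M})$. The displayed identity then gives the first claim directly, with the diagonal blocks transforming by conjugation and $f_\Phi$ transforming as $(\Psi_2^{C_M})^{-1}f_\Phi\Psi_1^{V_M}$.

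Second step: for general $(h,\Psi)$, write $B(h,\Psi) = \begin{pmatrix}1+h\partial & h\\ 0&1\end{pmatrix}\textnormal{diag}(\Psi^{C_M},\Psi^{V_M})$. The first factor has inverse
\begin{equation*}
\begin{pmatrix}(1+h\partial)^{-1} & -(1+h\partial)^{-1}h\\ 0&1\end{pmatrix}.
\end{equation*}
Multiply out the upper-right block of $B_2^{-1}AB_1$ with $A = \begin{pmatrix}\Phi^{C_M}&f\\0&\Phi^{V_M}\end{pmatrix}$. The result is
\begin{equation*}
(\Psi_2^{C_M})^{-1}(1+h_2\partial_2)^{-1}\big(\Phi^{C_M}h_1 + f - h_2\Phi^{V_M}\big)\Psi_1^{V_M},
\end{equation*}
which is exactly \eqref{eq: property of f PB-groupoids}.

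The main obstacle is the bookkeeping rather than the algebra. One must check that the action is really given by this precomposition at the level of frames $\phi\colon (C_M)_x\oplus(V_M)_x\ra (V_G)_g$; in the paper's formula the $V_M$ column carries $\phi h\Psi^{V_M}$, where $h$ lands in $C_M$. One must also check that the differentials $\partial_1,\partial_2$ appearing in the statement are the suppressed ones attached to $(h_i,\Psi_i)$, namely the source differential conjugated by the frame. Finally, it is worth confirming that the diagonal entries of the product remain consistent, so that $\Phi(\cdot,\cdot)$ stays the conjugated cochain map. This last check is automatic from the block-triangular form.
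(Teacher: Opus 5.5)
Your proposal is correct and follows the paper's proof. The paper also identifies $(\Phi(\phi_1,\phi_2), f_\Phi(\phi_1,\phi_2))$ with the block matrix $\phi_2^{-1}\circ\Phi^{V_G}\circ\phi_1$, observes that acting on both frames replaces it by $B(h_2,\Psi_2)^{-1}A\,B(h_1,\Psi_1)$, and reads off the blocks; your explicit inversion of $B$ through the factorization with $\textnormal{diag}(\Psi^{C_M},\Psi^{V_M})$ simply spells out the block computation that the paper leaves implicit.
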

\begin{proof}
We prove \eqref{eq: property of f PB-groupoids}. Using the matrix notation we developed, the map
\begin{center}
\begin{tikzcd}
(C_{M,1})_{x_1} \oplus (V_{M,1})_{x_1} \ar[r, "\sim"] \ar[d, dashed] & (V_{G,1})_g \ar[d] \\
(C_{M,2})_{x_2} \oplus (V_{M,2})_{x_2} \ar[r, "\sim"] & (V_{G,2})_g
\end{tikzcd}
\end{center}
that we considered above is given by
\begin{equation*}
\begin{pmatrix} \Phi^{C_M}(\phi_1,\phi_2) & f_\Phi(\phi_1,\phi_2) \\ 0 & \Phi^{V_M}(\phi_1,\phi_2) \end{pmatrix}.
\end{equation*}
The left hand side of \eqref{eq: property of f PB-groupoids} corresponds to the diagram
\begin{center}
\begin{tikzcd}
(C_{M,1})_{y_1} \oplus (V_{M,1})_{y_1} \ar[r, "\sim"] \ar[d, dashed] & (C_{M,1})_{x_1} \oplus (V_{M,1})_{x_1} \ar[r, "\sim"] & (V_{G,1})_g \ar[d] \\
(C_{M,2})_{y_2} \oplus (V_{M,2})_{y_2} \ar[r, "\sim"] & (C_{M,2})_{x_2} \oplus (V_{M,2})_{x_2} \ar[r, "\sim"] & (V_{G,2})_g
\end{tikzcd}
\end{center}
which is the map
\begin{equation*}
\begin{pmatrix} \Phi^{C_M}(\phi_1 \cdot (h_1, \partial_1, \Psi_1), \phi_2 \cdot (h_2, \partial_2, \Psi_2)) & f_\Phi(\phi_1 \cdot (h_1, \partial_1, \Psi_1), \phi_2 \cdot (h_2, \partial_2, \Psi_2)) \\ 0 & \Phi^{V_M}(\phi_1 \cdot (h_1, \partial_1, \Psi_1), \phi_2 \cdot (h_2, \partial_2, \Psi_2)) \end{pmatrix}.
\end{equation*}
But, on the other hand, the diagram also represents the composition
\begin{equation*}
\begin{pmatrix} (1+h_2\partial_2)\Psi_2^{C_M} & h_2\Psi_2^{V_M} \\ 0 & \Psi_2^{V_M} \end{pmatrix}^{-1} \begin{pmatrix} \Phi^{C_M}(\phi_1,\phi_2) & f_\Phi(\phi_1,\phi_2) \\ 0 & \Phi^{V_M}(\phi_1,\phi_2) \end{pmatrix} \begin{pmatrix} (1+h_1\partial_1)\Psi_1^{C_M} & h_1\Psi_1^{V_M} \\ 0 & \Psi_1^{V_M} \end{pmatrix}.
\end{equation*}
This proves the statement.
\end{proof}

Observe that we can view the last matrix multiplication in the above proof as describing two right $2$-actions, one of $\textnormal{GL}(C_{M,1},V_{M,1})_G$ and one of $\textnormal{GL}(C_{M,2},V_{M,2})_G$ (see Remark \ref{rema: representations general linear strict Lie 2-groupoid}). That is, these $2$-actions of $\textnormal{GL}(C_{M,1},V_{M,1})_G$ and of $\textnormal{GL}(C_{M,2},V_{M,2})_G$ --- the two matrix multiplications --- are well-defined $2$-actions on $\textnormal{U}(C_{M,1} \oplus V_{M,1}, C_{M,2} \oplus V_{M,2})$. The above property of such maps
\begin{equation*}
\textnormal{GL}_\bfs V_{G,1} \times_G \textnormal{GL}_\bfs V_{G,2} \ra \textnormal{U}(C_{M,1} \oplus V_{M,1}, C_{M,2} \oplus V_{M,2})
\end{equation*}
can then literally be interpreted as $\textnormal{GL}$-equivariance.

\begin{definition}\label{def: morphism of PB-groupoids}
A \textit{morphism of general linear PB-groupoids} is a map
\begin{equation*}
\begin{pmatrix} \Phi^{C_M} & f_\Phi \\ 0 & \Phi^{V_M} \end{pmatrix}: P_{G,1} \times_G P_{G,2} \ra \textnormal{U}(C_{M,1} \oplus V_{M,1}, C_{M,2} \oplus V_{M,2})
\end{equation*}
that is $\textnormal{GL}$-equivariant; that is, given $(h_1, \Psi_1) \in \textnormal{GL}(C_{M,1},V_{M,1})_G$ composable with $p_1 \in P_G$ and $(h_2, \Psi_2) \in \textnormal{GL}(C_{M,2},V_{M,2})_G$ composable with $p_2 \in P_G$, we have
\begin{align*}
&\begin{pmatrix} \Phi^{C_M}(p_1 \cdot (h_1,\Psi_1),p_2 \cdot (h_2,\Psi_2)) & f_\Phi(p_1 \cdot (h_1,\Psi_1),p_2 \cdot (h_2,\Psi_2)) \\ 0 & \Phi^{V_M}(p_1 \cdot (h_1,\Psi_1),p_2 \cdot (h_2,\Psi_2)) \end{pmatrix} = \\
&= \begin{pmatrix} (1+h_2\partial_2)\Psi_2^{C_M} & h_2\Psi_2^{V_M} \\ 0 & \Psi_2^{V_M} \end{pmatrix}^{-1} \begin{pmatrix} \Phi^{C_M}(p_1,p_2) & f_\Phi(p_1,p_2) \\ 0 & \Phi^{V_M}(p_1,p_2) \end{pmatrix} \begin{pmatrix} (1+h_1\partial_1)\Psi_1^{C_M} & h_1\Psi_1^{V_M} \\ 0 & \Psi_1^{V_M} \end{pmatrix}.
\end{align*}
\end{definition}

Starting with a pair $(\Phi, f_\Phi)$ that is $\textnormal{GL}$-equivariant, we can define $\Phi: V_{G,1} \ra V_{G,2}$ by picking, for all $g \in G$, any two frames and using the diagram
\begin{center}
\begin{tikzcd}
(C_{M,1})_{x_1} \oplus (V_{M,1})_{x_1} \ar[r, "\sim"] \ar[d] & (V_{G,1})_g \ar[d, dashed] \\
(C_{M,2})_{x_2} \oplus (V_{M,2})_{x_2} \ar[r, "\sim"] & (V_{G,2})_g
\end{tikzcd}
\end{center}
that $(\Phi, f_\Phi)$ induces. The resulting map $\Phi_g: (V_{G,1})_g \ra (V_{G,2})_g$ is independent of chosen frames by $\textnormal{GL}$-equivariance. So, similar to Proposition \ref{prop: equivalence ordinary vector bundles principal bundles}, we have:

\begin{theorem}\label{thm: equivalence VB-groupoids and PB-groupoids}
The assignment
\begin{equation*}
\{\textnormal{VB-groupoids}\} \ra \{\textnormal{General linear PB-groupoids}\} \qquad V_G \mapsto \textnormal{GL}_\bfs V_G \qquad \Phi^{V_G} \mapsto \begin{pmatrix} \Phi^{C_M} & f_\Phi \\ 0 & \Phi^{V_M} \end{pmatrix}
\end{equation*}
sets an equivalence of categories.
\end{theorem}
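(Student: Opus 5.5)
The plan is to prove Theorem \ref{thm: equivalence VB-groupoids and PB-groupoids} the same way as Proposition \ref{prop: equivalence ordinary vector bundles principal bundles}. Essential surjectivity is already available: Proposition \ref{prop: essential surjectivity of GL of VB-groupoid} gives a canonical isomorphism $P_G \cong \textnormal{GL}_\bfs V(P_G)$. What remains is to check three things: the assignment on morphisms is well defined and functorial, it is faithful, and it is full. In each step I would use the identity $\Phi^{V_G}_g\circ\phi_1 = \phi_2\circ M_\Phi(\phi_1,\phi_2)$, where $M_\Phi$ denotes the upper triangular blockmatrix with entries $\Phi^{C_M}$, $f_\Phi$, $\Phi^{V_M}$.

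\textbf{Well-definedness.} First I would show that $M_\Phi(\phi_1,\phi_2)\coloneq\phi_2^{-1}\Phi^{V_G}_g\phi_1$ really lands in $\textnormal{U}$. The lower left block is $\bfs_g\Phi^{V_G}\phi_1|_{C_M}$, which equals $\Phi^{V_M}\bfs_g\phi_1|_{C_M}=0$; here I use that $\Phi^{V_G}$ commutes with sources and that $\bfs_g\phi_i|_{C_M}=0$. The diagonal blocks are the cochain map conjugated by the frame components. $\textnormal{GL}$-equivariance is exactly Proposition \ref{prop: essential property of f for PB-groupoid maps from VB-groupoid map}. The matrix form of the right $2$-action of $\textnormal{GL}(C_M,V_M)_G$ on $\textnormal{GL}_\bfs V_G$ shows that this is precisely the required conjugation. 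Smoothness follows because frames and $\Phi^{V_G}$ are smooth.

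\textbf{Functoriality.} Composition in the PB-category should be defined, as in Definition \ref{def: maps of general linear principal bundles}, by $M_{31}(p_1,p_3)=M_{32}(p_2,p_3)M_{21}(p_1,p_2)$ for an arbitrary $p_2$. Equivariance makes this independent of $p_2$, because the factor in the middle cancels. Functoriality then reads $\phi_3^{-1}\Phi_{32}\Phi_{21}\phi_1=(\phi_3^{-1}\Phi_{32}\phi_2)(\phi_2^{-1}\Phi_{21}\phi_1)$, which is immediate. The identity morphism is $(\phi,\phi')\mapsto\phi'^{-1}\phi$, the unique element of $\textnormal{GL}(C_M,V_M)_G$ relating the two frames. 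This parallels Remark \ref{rema: isomorphisms of general linear principal bundles}.

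\textbf{Faithfulness and fullness.} Faithfulness holds because $\Phi^{V_G}_g=\phi_2M_\Phi\phi_1^{-1}$ recovers $\Phi$. For fullness, start with a $\textnormal{GL}$-equivariant $M$ and define $\Phi_g\coloneq\phi_2M(\phi_1,\phi_2)\phi_1^{-1}$. Equivariance makes this independent of the frames, since the fibres of $\textnormal{GL}_\bfs V_G\ra G$ are $\textnormal{GL}(C_M,V_M)_G$-orbits. The map is smooth because we can use local sections of $\textnormal{GL}_\bfs V_G\ra G$, and it is linear on fibres.

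The main obstacle is showing that this $\Phi$ is a groupoid map $V_{G,1}\ra V_{G,2}$. An abstract $\textnormal{GL}$-equivariant $M$ carries no obvious multiplicativity, unlike $f$ in Definition \ref{def: fat maps}. I would attack this in two steps.

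First, I would evaluate $M$ on frames of the form $\phi=r_g\phi^{C}+H_g\phi^{V}$ coming from the fat groupoid, as in the remark after Proposition \ref{prop: PB-groupoid of a VB-groupoid}. Equivariance under the $\textnormal{GL}(C_M,V_M)_M$ part shows that $\Phi^{C_M},\Phi^{V_M}$ come from honest bundle maps. Equivariance under the $h$ part turns $f$ into a map on $\widehat V_{G,1}\times_G\widehat V_{G,2}$ satisfying the invariance condition of Definition \ref{def: fat maps}.

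Second, I would check compatibility with source, target and multiplication using \eqref{eq: fat map to VB-map}. This requires multiplicativity of $f$ and the homotopy identities of Lemma \ref{lemm: f_Phi is multiplicative 2}. I expect these not to follow from equivariance alone. They should then be imposed as part of the definition, as they are for fat maps. Alternatively, they are encoded in the requirement that $M$ be compatible with the groupoid structure of $P_{G,1}\times_GP_{G,2}$, i.e. $M(p_1p_1',p_2p_2')$ is expressible through $M(p_1,p_2)$ and $M(p_1',p_2')$. I would first try to verify multiplicativity directly from that compatibility, using products of frames. If that fails, I would add it to Definition \ref{def: morphism of PB-groupoids}, after which fullness follows as in Theorem \ref{thm: equivalence of VBLG and fat}.
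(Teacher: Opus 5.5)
You handle well-definedness, functoriality, faithfulness and essential surjectivity exactly as the paper does. Its proof consists of Proposition \ref{prop: essential property of f for PB-groupoid maps from VB-groupoid map}, the reconstruction $\Phi_g=\phi_2M(\phi_1,\phi_2)\phi_1^{-1}$ (independent of the frames by $\textnormal{GL}$-equivariance), and Proposition \ref{prop: essential surjectivity of GL of VB-groupoid}.

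The step you single out as the main obstacle is a genuine gap, and the paper does not close it either: nowhere does it check that the reconstructed $\Phi$ is a groupoid map. Your suspicion that this cannot come from equivariance is correct. $\textnormal{GL}(C_M,V_M)_G$ acts on $\textnormal{GL}_\bfs V_G$ by precomposition, simply transitively on the fibres over $G$. Hence a map satisfying Definition \ref{def: morphism of PB-groupoids} is exactly a smooth family of linear maps $\Phi_g\colon(V_{G,1})_g\ra(V_{G,2})_g$ with $\Phi_g(\ker\bfs_g)\subset\ker\bfs_g$ (this is what upper-triangularity encodes), and nothing more.

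For a counterexample, take $V_{G,1}=V_{G,2}=V_G$ and $\lambda\in C^\infty(G)$ with $\lambda(g)\neq\lambda(1_{\bfs g})$ for some $g$ such that $(V_M)_{\bfs g}\neq0$. Then $M(\phi_1,\phi_2)=\lambda(g)\,\phi_2^{-1}\phi_1$ is upper triangular and $\textnormal{GL}$-equivariant. By faithfulness its only possible preimage is $\Phi_g=\lambda(g)\,\mathrm{id}$. That map is not compatible with the source maps, since $\bfs\Phi_g=\lambda(g)\,\bfs$ whereas $\Phi_{1_{\bfs g}}=\lambda(1_{\bfs g})\,\mathrm{id}$. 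So with the definition as written the functor is not full. Your first option, deriving multiplicativity from the given data, cannot succeed.

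The same example refutes your intermediate claim that $\textnormal{GL}(C_M,V_M)_M$-equivariance makes $\Phi^{C_M}$ and $\Phi^{V_M}$ honest bundle maps. Transported by the frames, the diagonal blocks are only $g$-dependent maps $(C_{M,1})_{\bft g}\ra(C_{M,2})_{\bft g}$ and $(V_{M,1})_{\bfs g}\ra(V_{M,2})_{\bfs g}$ (here $\lambda(g)\,\mathrm{id}$). Consequently the $h$-part of equivariance gives an invariance identity for $f$ only with these $g$-dependent maps, not the invariance condition of Definition \ref{def: fat maps}.

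For maps that do come from VB-groupoid maps, one has $\Phi^{C_M}(p_1,p_2)=\Phi^{C_M}(1_{\bft p_1},1_{\bft p_2})$ and $\Phi^{V_M}(p_1,p_2)=\Phi^{V_M}(1_{\bfs p_1},1_{\bfs p_2})$. Conditions of this kind, together with a product rule for $f$, must be imposed. The statement therefore only holds once Definition \ref{def: morphism of PB-groupoids} is supplemented by a multiplicativity axiom. For example, require that $M$ be compatible with the groupoid structures of $P_{G,1}$ and $P_{G,2}$, or simply that the frame-independent $\Phi$ be a groupoid map.

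With that axiom your second option is the right one. Fullness is then immediate from your reconstruction, or, if the axiom is phrased through the associated fat map, from \eqref{eq: fat map to VB-map} and Theorem \ref{thm: equivalence of VBLG and fat}. The rest of your proposal goes through unchanged.
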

\begin{proof}
In this section we established the (fully faithful) functoriality, and in Section \ref{sec: PB-groupoids} (see Section \ref{sec: From VB-groupoids to general linear PB-groupoids}) we showed the essential surjectivity (see also \cite{PB-groupoids}).
\end{proof}

\begin{remark}\label{rema: isomorphisms of PB-groupoids}
Analogous to Remark \ref{rema: isomorphisms of general linear principal bundles}, the identity map $P_G \ra P_G$ of a PB-groupoid $P_G$ over $C_M \ra V_M$ is the map
\begin{equation*}
P_G \times_G P_G \ra \textnormal{U}(C_M \oplus V_M, C_M \oplus V_M)
\end{equation*}
which assigns to $(p_1,p_2) \in P_G \times_G P_G$ the unique (matrix representation of) $(h,\Psi) \in \textnormal{GL}(C_M,V_M)_G$ with the property that $p_1 = p_2 \cdot (h,\Psi)$. In particular, $f$ is an isomorphism if and only if $f$ maps into $\textnormal{GL}(C_M \oplus V_M)$, for then
\begin{equation*}
f^{-1}(p_2,p_1) = f(p_1,p_2)^{-1}
\end{equation*}
where the inverse on the right hand side is taken in $\textnormal{GL}(C_M \oplus V_M)$. In this case, the map sending $p_1 \in P_{G,1}$ to the unique $p_2 \in P_{G,2}$ for which
\begin{equation*}
f(p_1,p_2) = 1
\end{equation*}
is an equivariant isomorphism $P_1 \xra{\sim} P_2$. Conversely, all such equivariant isomorphisms arise in this way.
\end{remark}

\subsubsection{Morphisms of general linear PB-groupoids using fat extensions}

Given a map of general linear PB-groupoids
\begin{equation*}
\begin{pmatrix} \Phi_{\textnormal{PB}}^{C_M} & f_{\textnormal{PB}} \\ 0 & \Phi_{\textnormal{PB}}^{V_M} \end{pmatrix}: P_{G,1} \times_G P_{G,2} \ra \textnormal{U}(C_{M,1} \oplus V_{M,1}, C_{M,2} \oplus V_{M,2})
\end{equation*}
the cochain map $\Phi$ from $C_{M,1} \ra V_{M,1}$ to $C_{M,2} \ra V_{M,2}$ over $x \in M$ can simply be recovered as
\begin{equation*}
\Phi_x \coloneq \Phi_{\textnormal{PB}}(1_{1_x}, 1_{1_x}).
\end{equation*}
But we can also pick any other elements $\Psi_1 \in \textnormal{GL}(C_{M,1},V_{M,1})$ and $\Psi_2 \in \textnormal{GL}(C_{M,2},V_{M,2})$ over $x$, for then we have
\begin{equation*}
\Phi_x = \Psi_2 \Phi_{\textnormal{PB}}(\Psi_1, \Psi_2) \Psi_1^{-1}.
\end{equation*}
by $\textnormal{GL}$-equivariance.\footnote{The identification we use here uses Proposition \ref{prop: PB-groupoid has H in it}.} Reading this equation the other way around, given $\Phi$, we can generate the map $\Phi_{\textnormal{PB}}$ from the cochain map $\Phi$ by imposing $\textnormal{GL}$-equivariance.

Now, if we realise $P_{G,1}$ and $P_{G,2}$ through their associated fat extensions; that is, writing
\begin{equation*}
P_{G,1} = F_{G,1} \ltimes \textnormal{GL}(C_{M,1}, V_{M,1}) \qquad P_{G,2} = F_{G,2} \ltimes \textnormal{GL}(C_{M,2}, V_{M,2}),
\end{equation*}
then we can similarly recover the map of fat extensions as
\begin{equation*}
f_{\textnormal{fat}}(H_{g,1}, H_{g,2}) \coloneq f_{\textnormal{PB}}(H_{g,1}, 1_{\bfs g}, H_{g,2}, 1_{\bfs g}).
\end{equation*}
A straightforward computation shows that $f_\textnormal{fat}$ is indeed a map of fat extensions. On the other hand, we can generate the map of general linear PB-groupoids $f_{\textnormal{PB}}$ from the map of fat extensions $f_{\textnormal{fat}}$ similar to how we generated $\Phi_{\textnormal{PB}}$ from $\Phi$. We therefore obtain:
\begin{theorem}\label{thm: equivalence fat extensions and PB-groupoids}
The functor
\begin{equation*}
\{\textnormal{Fat extensions}\} \ra \{\textnormal{General linear PB-groupoids}\} \qquad F_G \mapsto F_G \ltimes \textnormal{GL}(C_M,V_M) \qquad f_{\textnormal{fat}} \mapsto f_{\textnormal{PB}}
\end{equation*}
sets an equivalence of categories.
\end{theorem}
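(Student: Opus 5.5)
The proof reduces to showing that the two explicit assignments $f_{\textnormal{fat}} \mapsto f_{\textnormal{PB}}$ and $f_{\textnormal{PB}} \mapsto f_{\textnormal{fat}}$ are well defined and mutually inverse on hom-sets. This gives full faithfulness; composition is then checked directly. Essential surjectivity is already available: by Proposition \ref{prop: one to one correspondence fat extensions and PB-groupoids} every general linear PB-groupoid is isomorphic to one of the form $F_G \ltimes \textnormal{GL}(C_M,V_M)$. One could also argue abstractly by composing the equivalences of Theorem \ref{thm: equivalence of VBLG and fat} and Theorem \ref{thm: equivalence VB-groupoids and PB-groupoids}. However, one would then still have to check that the composite functor agrees with the stated one up to natural isomorphism, so I prefer the direct route.

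\textbf{Construction of $f_{\textnormal{PB}}$.} Given a morphism of fat extensions $(f,\Phi)$, every element of $P_{G,i} = F_{G,i} \ltimes \textnormal{GL}(C_{M,i},V_{M,i})$ can be written as $(H_{g,i}, 1_{\bfs g}) \cdot (h_i,\Psi_i)$, via the right principal $\textnormal{GL}(C_{M,i},V_{M,i})_G$-action. Freeness of that action makes the pair $(h_i,\Psi_i)$ unique once $H_{g,i}$ is fixed. I would define
\begin{equation*}
f_{\textnormal{PB}}\bigl((H_{g,1},1)\cdot(h_1,\Psi_1),(H_{g,2},1)\cdot(h_2,\Psi_2)\bigr) \coloneq A_2^{-1}\begin{pmatrix}\Phi^{C_M} & f(H_{g,1},H_{g,2}) \\ 0 & \Phi^{V_M}\end{pmatrix} A_1,
\end{equation*}
where $A_i$ are the block matrices of $(h_i,\Psi_i)$ from Remark \ref{rema: representations general linear strict Lie 2-groupoid}. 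The representative $H_{g,i}$ is not unique. Replacing it by $H_{g,i}\cdot h_{\bfs g}$ with $h_{\bfs g}\in\textnormal{H}(V_M,C_M)$ changes $(h_i,\Psi_i)$ by the corresponding element $(h_{\bfs g},1)$. Well-definedness is therefore exactly the statement that the invariance condition of Definition \ref{def: fat maps} matches the upper-right entry produced by conjugating with the matrix of $(h_{\bfs g},1)$. The identity $h(H,H\cdot h_{\bfs g}) = H\cdot(h_{\bfs g}\,\cdot)$ from Remark \ref{rema: identity map of fat extensions} converts one into the other. I expect this bookkeeping step to be the main obstacle. The invariance in the paper is stated via left translation by $h_{\bft g}$, so it must first be rewritten in the right-translated form of Remark \ref{rema: invariance condition makes f_0 determine f_1}, with careful attention to signs. $\textnormal{GL}$-equivariance then holds by construction, since the actions compose as matrix products.

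\textbf{Restriction and inverse check.} Conversely, given $f_{\textnormal{PB}}$, set $f_{\textnormal{fat}}(H_{g,1},H_{g,2}) = f_{\textnormal{PB}}(H_{g,1},1,H_{g,2},1)$ and $\Phi_x = \Phi_{\textnormal{PB}}(1_{1_x},1_{1_x})$. Equivariance under elements $(h,1)$ with $h\in\textnormal{H}$ (via Proposition \ref{prop: PB-groupoid has H in it}) gives invariance of $f_{\textnormal{fat}}$. Equivariance under pure $\Psi$, combined with the fact that the cochain complex representation of $F_G$ is read off as $\bft p_g$ (Section \ref{sec: From general linear PB-groupoids to fat extensions}), gives the two homotopy identities of Definition \ref{def: fat maps}. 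Here I would avoid direct computation and use the previously established correspondences: pass both data to VB-groupoid maps via \eqref{eq: fat map to VB-map} and the frame diagram of Theorem \ref{thm: equivalence VB-groupoids and PB-groupoids}. Multiplicativity of $f_{\textnormal{fat}}$ then follows from Lemma \ref{lemm: f_Phi is multiplicative 1}, because the induced VB-map is multiplicative.

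\textbf{Conclusion.} The two constructions are visibly inverse: restricting the formula above to $h_i=0,\Psi_i=1$ recovers $f$, and equivariance reconstructs $f_{\textnormal{PB}}$ from its restriction. For composition, Definition \ref{def: composition of morphisms} is exactly the upper-right entry of a product of upper triangular block matrices with intermediate $H_{g,2}$ arbitrary. This matches composition of PB-morphisms, which is matrix multiplication with an arbitrary intermediate frame as in Definition \ref{def: maps of general linear principal bundles}, so the functor respects composition and identities (Remark \ref{rema: isomorphisms of PB-groupoids}).
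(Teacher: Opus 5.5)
Your plan has the same architecture as the paper's sketch: restrict to the representatives $(H_{g,i},1_{\bfs g})$, extend by $\textnormal{GL}$-equivariance, and take essential surjectivity from Proposition \ref{prop: one to one correspondence fat extensions and PB-groupoids}. However, the extension formula you write down is not well defined, and the reduction you claim at the step you call the main obstacle is false. For $k\in\textnormal{H}(V_M,C_M)$, the block matrix of $(k,1_{\bfs g})$ is $\begin{pmatrix}1+k\partial & k\\ 0 & 1\end{pmatrix}$, which is not unipotent. So changing representatives $H_{g,i}\mapsto H_{g,i}\cdot k_i$ also moves the diagonal. Your constant block $\Phi^{C_M}$ would have to equal $(1+k_2\partial)^{-1}\Phi^{C_M}(1+k_1\partial)$, i.e.\ $\Phi^{C_M}k_1\partial=k_2\Phi^{V_M}\partial$, and this fails in general. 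Concretely, take the identity morphism at the point $\bigl((H_g\cdot k,1_{\bfs g}),(H_g,1_{\bfs g})\bigr)$. With representatives $H_g\cdot k$ and $H_g$, your formula gives upper-left block $1$. Writing the first entry instead as $(H_g,1_{\bfs g})\cdot(k,1_{\bfs g})$, it gives $\begin{pmatrix}1+k\partial&k\\0&1\end{pmatrix}$ (using $h(H_g,H_g)=0$), and this second value is what Remark \ref{rema: isomorphisms of PB-groupoids} prescribes. There is also a typing problem. The frame of $(H_{g,i},1_{\bfs g})$ is modelled on the fibres over $\bfs g$, so the upper-right block must land in $(C_{M,2})_{\bfs g}$, while $f(H_{g,1},H_{g,2})$ lands in $(C_{M,2})_{\bft g}$.

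The missing ingredient is the change of model from Remark \ref{rema: equivalence fat extensions and PB-groupoids}. The frame of $(H_g,1_{\bfs g})$ is $(r_g+H_g)\circ D$ with $D=\begin{pmatrix}H_g\cdot(-) & 0\\ 0 & 1\end{pmatrix}$. By \eqref{eq: fat map to VB-map}, the fat map is the ``middle'' matrix $\begin{pmatrix}\Phi^{C_M} & -f\\ 0&\Phi^{V_M}\end{pmatrix}$ on $(C_M)_{\bft g}\oplus(V_M)_{\bfs g}$. So, with $A(\cdot)$ your block matrices, set
\[ f_{\textnormal{PB}}\bigl((H_{g,1},1_{\bfs g})\cdot a_1,(H_{g,2},1_{\bfs g})\cdot a_2\bigr)\coloneqq A(a_2)^{-1}D_2^{-1}\begin{pmatrix}\Phi^{C_M} & -f(H_{g,1},H_{g,2})\\ 0&\Phi^{V_M}\end{pmatrix}D_1\,A(a_1). \]
By the first homotopy identity of Definition \ref{def: fat maps}, $D_2^{-1}(\cdots)D_1=\begin{pmatrix}\Phi^{C_M}+\tilde f\partial & \tilde f\\ 0&\Phi^{V_M}\end{pmatrix}$ with $\tilde f=-H_{g,2}^{-1}\cdot f(H_{g,1},H_{g,2})$. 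In particular the diagonal block is $H_{g,2}^{-1}\cdot\Phi^{C_M}(H_{g,1}\cdot -)$, not $\Phi^{C_M}$. Moreover $D\,A(k,1_{\bfs g})=\begin{pmatrix}1 & h(H_g,H_g\cdot k)\\ 0&1\end{pmatrix}D'$, where $D'$ is built from $H_g\cdot k$. Hence a change of representatives now acts by unipotent conjugation on the constant-diagonal middle matrix, and your reduction becomes true: well-definedness is exactly invariance. The sign must be the one actually satisfied by $f_\Phi$ of \eqref{eq: VB-map to fat map}, namely $f(H_{g,1},H_{g,2})-f(H'_{g,1},H'_{g,2})=\Phi^{C_M}h_1(H_{g,1},H'_{g,1})-h_2(H_{g,2},H'_{g,2})\Phi^{V_M}$. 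This is opposite to the sign printed in Definition \ref{def: fat maps}; you can check it on the comparison map $h$. The restriction recovering $f_{\textnormal{fat}}$ must undo the transport, i.e.\ apply $-H_{g,2}\cdot$ to the upper-right block. In the middle model, composition is then literally matrix multiplication with an arbitrary intermediate $H_{g,2}$, as you wanted. Finally, equivariance under pure $\Psi$ only moves within the fibre over a fixed $g$. It therefore cannot yield the homotopy identities, which relate $f$ at $g$ to $\Phi$ at $\bfs g$ and $\bft g$. Like multiplicativity, these must come from the VB route through Theorem \ref{thm: equivalence VB-groupoids and PB-groupoids}.
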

\begin{proof}
In this section we established the (fully faithful) functoriality, and in Section \ref{sec: PB-groupoids} (see Sections \ref{sec: From fat extensions to general linear PB-groupoids} and \ref{sec: From general linear PB-groupoids to fat extensions}) we showed the essential surjectivity (see also \cite{PB-groupoids}).
\end{proof}

\begin{remark}\label{rema: equivalence fat extensions and PB-groupoids}
It might be worthwhile to realise that, when before we used VB-groupoids, we built the map of PB-groupoids using the diagram
\begin{center}
\begin{tikzcd}
(C_{M,1})_{x_1} \oplus (V_{M,1})_{x_1} \ar[r, "\sim"] \ar[d, dashed] & (V_{G,1})_g \ar[d] \\
(C_{M,2})_{x_2} \oplus (V_{M,2})_{x_2} \ar[r, "\sim"] & (V_{G,2})_g
\end{tikzcd}
\end{center}
But if we go through the composition of functors
\begin{equation*}
\{\textnormal{VB-groupoids}\} \ra \{\textnormal{Fat extensions}\} \ra \{\textnormal{General linear PB-groupoids}\},
\end{equation*}
we, instead, build the map of PB-groupoids as follows:
\begin{center}
\begin{tikzcd}
(C_{M,1})_{x_1} \oplus (V_{M,1})_{x_1} \ar[r, "\sim"] \ar[d, dashed] & (C_{M,1})_{\bft g} \oplus (V_{M,1})_{\bfs g} \ar[r, "\sim"] \ar[d] & (V_{G,1})_g \\
(C_{M,2})_{x_2} \oplus (V_{M,2})_{x_2} \ar[r, "\sim"] & (C_{M,2})_{\bft g} \oplus (V_{M,2})_{\bfs g} \ar[r, "\sim"] & (V_{G,2})_g
\end{tikzcd}
\end{center}
where we can think of the middle vertical map as taking the right-most horizontal maps as input. The end result is the same, because the diagram
\begin{center}
\begin{tikzcd}
(C_{M,1})_{\bft g} \oplus (V_{M,1})_{\bfs g} \ar[r, "\sim"] \ar[d] & (V_{G,1})_g \ar[d] \\
(C_{M,2})_{\bft g} \oplus (V_{M,2})_{\bfs g} \ar[r, "\sim"] & (V_{G,2})_g
\end{tikzcd}
\end{center}
commutes. The following full commutative diagram is perhaps a nice illustration of how all three different maps fit together (from left to right: a map of PB-groupoids, a map of fat extensions and a map of VB-groupoids): 
\begin{center}
\begin{tikzcd}
(C_{M,1})_{x_1} \oplus (V_{M,1})_{x_1} \ar[r, "\sim"] \ar[d] & (C_{M,1})_{\bft g} \oplus (V_{M,1})_{\bfs g} \ar[r, "\sim"] \ar[d] & (V_{G,1})_g \ar[d] \\
(C_{M,2})_{x_2} \oplus (V_{M,2})_{x_2} \ar[r, "\sim"] & (C_{M,2})_{\bft g} \oplus (V_{M,2})_{\bfs g} \ar[r, "\sim"] & (V_{G,2})_g
\end{tikzcd}
\end{center}
The first vertical map (the map of PB-groupoids) takes as input the two horizontal maps (in both rows), the second vertical map (the map of fat extensions) takes as input the right horizontal maps (in both rows), and the last vertical map is simply defined without extra input.
\end{remark}

\section{The infinitesimal picture}\label{sec: The infinitesimal picture}

We now turn to the infinitesimal theory of what we have discussed until now. As already observed in \cite{VBalgebroidsruths}, there is a fat algebroid $\widehat V_A$ associated to a VB-algebroid $V_A$. It carries canonical representations on $C_M$ and $V_M$, and it is shown in \cite{VBalgebroidsruths} how one obtains the (split) ruth associated to $V_A$ using these representations and a splitting of a canonical exact sequence:
\begin{center}
\begin{tikzcd}\label{eq: short exact sequence fat algebroid}
0 \ar[r] & \textnormal{Hom}(V_M,C_M) \ar[r] & \widehat V_A \ar[r] & A \ar[r] & 0.
\end{tikzcd}
\end{center}
In this section we elaborate on these remarks, explaining the infinitesimal counterparts of the theory we have seen in the above sections. We keep the discussion shorter, and a discussion on ``PB-algebroids'' (a concept which is not defined in the literature yet) is not included. We put an emphasis on describing the correspondences between VB-algebroids, $2$-term ruths, and the newly defined fat extensions. In the section after, Section \ref{sec: Fat Lie theory}, we explain the process of differentiation.

\subsection{Notation for VB-algebroids}
See, for example, \cite{VBalejandrohenriquematias,VanEsthomogeneous} for more details on the theory of VB-algebroids.

We denote the structure maps of a Lie algebroid by $([\cdot,\cdot], \rho)$. Writing $\Omega^\bullet A = \Gamma \textstyle\bigwedge A^*$, we write
\begin{align*}
d: \Omega^\bullet A \ra \Omega^{\bullet+1} A
\end{align*}
for the algebroid differential (the de Rham differential using $[\cdot,\cdot]$ and $\calL = \calL_\rho$). Then $\Omega A$ becomes a (commutative) differential graded algebra with $d$ and the wedge product.

For $A \Ra M$ a Lie algebroid, we denote a VB-algebroid over $A$ by $V_A \Ra V_M$.
\begin{remark}\label{rema: VB algebroid notation}
We think of a VB-algebroid as a diagram
\begin{center}
\begin{tikzcd}
V_A \ar[r,Rightarrow] \ar[d] & V_M \ar[d] \\
A \ar[r,Rightarrow]& M
\end{tikzcd}
\end{center}
so that we think of the algebroid structures as being ``horizontal'', while the other vector bundle structures (on $V_A$ and $V_M$) are ``vertical''. In line with this convention, we write
the projection maps $V_A \ra V_M$ and $A \ra M$ by $\pr_\ra$, and for the projection maps $V_A \ra A$ and $V_M \ra M$ we write $\pr_\da$. Moreover, we write
\begin{equation*}
\Gamma_{\ra} V_A = \Gamma(V_M,V_A) \textnormal{ and } \Gamma_{\da} V_A = \Gamma(A,V_A)
\end{equation*}
for the spaces of sections, and similarly $\Omega_\ra V_A$ and $\Omega_\da V_A$ for the respective forms, and we write the scalar multiplications and additions of $V_A \ra A$ and $V_M \ra M$ by $h^\lambda_\da$ and $+_\da$, respectively, and the scalar multiplications and additions of $V_A \ra V_M$ and $A \ra M$ by $h^\lambda_\ra$ and $+_\ra$. However, we often simplify to
\begin{equation*}
h^\lambda_\da v = \lambda_\da v \qquad h^\lambda_\ra v = \lambda_\ra v
\end{equation*}

In particular, $\lambda_\da$ is infinitesimally multiplicative:
\begin{equation*}
\lambda^*_\da [v,w] = [\lambda_\da^* v,\lambda_\da^* w]
\end{equation*}
(with $\lambda_\da^*v = \lambda^{-1}_\da \circ v \circ \lambda_\da$). That is, the scalar multiplication $\lambda_\da$ by $\lambda \in \bbR_{>0}$ is an algebroid automorphism $V_A \ra V_A$ \cite{VBalejandrohenriquematias}. On the other hand, the scalar multiplication $\lambda_\ra$ by $\lambda \in \bbR_{>0}$ is a vector bundle isomorphism $V_A \ra V_A$ with respect to the vertical vector bundle structures. Recall that on the core
\begin{equation*}
C_M = \ker \pr_\ra \cap \ker \pr_\da \ra M
\end{equation*}
of $V_A$ the two vector bundle structures coincide, i.e. for all $c \in C_M$, $\lambda_\da c = \lambda_\ra c$.
\end{remark}

Since $\lambda_\da$ is an algebroid map, the $1$-homogeneous cochains (with respect to $\lambda_\da$)
\begin{equation}\label{eq: linear complex of a VB algebroid}
\Omega^\bullet_{\textnormal{lin}}V_A = \{\omega \in \Omega_\ra^\bullet V_A \mid \lambda_\da^*\omega = \lambda \cdot \omega\} \subset \Omega_\ra^\bullet V_A
\end{equation}
form a differential graded submodule of $\Omega V_A$ called the linear (sub)complex. More generally, a form $\omega \in \Omega_{\ra} V_A$ is called $k$-homogeneous if
\begin{equation*}
\lambda_\da^*\omega = \lambda^k\omega
\end{equation*}
and the $k$-homogeneous forms form a differential graded submodule of $\Omega V_A$.

Similarly, given a section $\alpha \in \Gamma_\ra V_A$, it is $k$-homogeneous if\footnote{Notice that the appearance of $\lambda_\ra^k$ on the right hand side is similar to the appearance of $\lambda^k$ on the right hand side of the equation of homogeneity of functions on $V_A$. More precisely, $V_A \times \bbR \Ra V_A$ can be seen as a VB-algebroid over $0_{V_M}$ or $0_A$, and $k$-homogeneous sections of these VB-algebroids are exactly the $k+1$-homogeneous functions.
}
\begin{equation*}
\lambda_\da^* \alpha (=\lambda^{-1}_\da \circ \alpha \circ \lambda_\da) = \lambda^k_\ra \alpha
\end{equation*}
and we write $\Gamma_\textnormal{lin}V_A$ for the linear (i.e. $0$-homogeneous) sections.

\begin{remark}\label{rema: homogeneous functions on V_A}
A word on terminology and notation is perhaps on its place. Given a function $f \in C^\infty V_A$, we have two notions of $f$ being $k$-homogeneous, for it can be homogeneous ``horizontally'' or ``vertically'':
\begin{equation*}
\lambda_\ra^* f = \lambda^k f \qquad \lambda_\da^* f = \lambda^k f.
\end{equation*}
We therefore write
\begin{equation*}
C^\infty_{\ra\textnormal{lin}} V_A \qquad C^\infty_{\da\textnormal{lin}} V_A
\end{equation*}
for horizontally linear (i.e. $1$-homogeneous) functions and vertically linear functions respectively.

Notice now that, since\footnote{The notation $V_A^{*_{V_M}}$ is used to indicate that we dualise $V_A$ with respect to the vector bundle structure $V_A \ra V_M$.}
\begin{equation*}
\Omega_\ra^1 V_A = \Gamma V_A^{*_{V_M}} = C^\infty_{\ra \textnormal{lin}} V_A
\end{equation*}
we can think of elements $\omega$ of $\Omega_\ra^1 V_A$ as linear functions on $V_A$ with respect to the horizontal vector bundle structure. If $\omega \in \Omega_\textnormal{lin}^1 V_A$, then the induced function of $V_A$ is also linear with respect to the vertical vector bundle structure. This means that $\omega$ defines an element of $\Omega_\da^1 V_A$ as well. For general $\omega \in \Omega^\bullet_\textnormal{lin} V_A$, we have multilinearity ``horizontally'' and linearity ``vertically'' in the sense that
\begin{equation*}
\omega(\lambda_\da v_1 +_\da \mu_\da w_1, \dots, \lambda_\da v_\bullet +_\da \mu_\da w_\bullet) = \lambda\omega(v_1,\dots,v_\bullet) + \mu\omega(w_1,\dots,w_\bullet).
\end{equation*}
That is, ``linearity'' refers here to the vertical vector bundle structure, and it would, in line with our convention, be sensible to write $\Omega_\textnormal{lin} V_A$ as $\Omega_{\ra, \da\textnormal{lin}} V_A$. We avoid this cumbersome notation as it is clear from the context that, in this case, we are interested in forms on the algebroid $V_A \Ra V_M$ that are linear with respect to the compatible vector bundle structures on $V_A \ra A$ and $V_M \ra M$.
\end{remark}

In local coordinates it becomes clear that $\Gamma_\ra V_A$ is generated, as a $C^\infty V_M$-module, by the $(-1)$-homogeneous sections, called \textit{core sections}, and the $0$-homogeneous sections, called \textit{linear sections}. 

\begin{remark}\label{rema: local coordinate description of linear forms}
It is convenient to think about linear (or more generally homogeneous) forms as follows. The space of homogeneous sections in $\Gamma_{\ra} V_A$ are naturally graded. We can think of a form $\omega \in \Omega_\textnormal{lin} V_A$ therefore as a map
\begin{equation*}
\omega: \Gamma_{\ra} V_A \otimes_{C^\infty V_M} \cdots \otimes_{C^\infty V_M} \Gamma_{\ra} V_A \ra C^\infty V_M
\end{equation*}
that additionally restricts to a map of degree $+1$ between homogeneous elements (considering the total degree on the left hand side). Indeed,
\begin{equation*}
\omega(\alpha_\ell) \circ \lambda_\da = (\lambda_\da^* \omega)(\lambda_\da^* \alpha_\ell)
\end{equation*}
so the functions $\omega(\alpha_\ell)$, for $k_\ell$-homogeneous $\alpha_\ell \in \Gamma_{\ra} V_A$, are $k + \textstyle\sum_\ell k_\ell$-homogeneous if and only if $\omega$ is $k$-homogeneous.
\end{remark}

\subsection{From VB-algebroids to fat extensions}
We begin with a continuation of the previous section.

\subsubsection{The appearance of the fat algebroid}
From Remark \ref{rema: local coordinate description of linear forms}, we can conclude that $\omega \in \Omega_\textnormal{lin} V_A$ is completely determined by a map of the form
\begin{equation*}
\omega_0: \Gamma_\textnormal{lin} V_A \times \cdots \times \Gamma_\textnormal{lin} V_A \ra C^\infty_\textnormal{lin} V_M.
\end{equation*}
Now, $\Gamma_\textnormal{lin} V_A$ is the space of sections of a Lie algebroid $\widehat V_A$ over $M$, called the fat algebroid (see \cite{VBalgebroidsruths}). This algebroid fits into a short exact sequence (of Lie algebroids)
\begin{center}
\begin{tikzcd}
0 \ar[r] & \textnormal{Hom}(V_M,C_M) \ar[r] & \widehat V_A \ar[r] & A \ar[r] & 0
\end{tikzcd}
\end{center}
where the projection $\widehat V_A \ra A$ is induced by the projection $V_A \ra A$ (every linear section is projectable to a section of $A$), and the inclusion is given by
\begin{equation*}
\textnormal{Hom}(V_M,C_M) \ra \widehat V_A \qquad h_x \mapsto (v \mapsto h_x(v) +_\da 0_\ra(v)).
\end{equation*}
As a vector bundle, the fat algebroid is given by
\begin{equation*}
\widehat V_A = \{H_a: (V_M)_{\pr_\ra a} \ra (V_A)_a \mid \pr_\ra H_a = 1\}.
\end{equation*}
The anchor map is induced by the projection:
\begin{equation*}
\widehat\rho H_a = \rho a
\end{equation*}
and the Lie bracket is induced by the bracket of $V_A$ (the bracket of linear sections is linear).

\begin{remark}
In \cite{VBalgebroidsruths} the fat algebroid is written as $\hat A$. However, we prefer to keep track of the dependence on $V_A$.
\end{remark}

The Lie bracket of the bundle of Lie algebras $\textnormal{Hom}(V_M,C_M)$ is given by
\begin{equation*}
[h,h'] \coloneq h'\partial h - h\partial h'.
\end{equation*}
As is clear from its definition, it is defined for every $2$-term complex $C_M \ra V_M$.

\begin{definition}\label{def: Hom(V_M,C_M) bracket}
Let
\begin{equation*}
C_M \xra{\partial} V_M
\end{equation*}
be a $2$-term complex. We call the bundle of Lie algebras $\textnormal{Hom}(V_M,C_M)$ the \textit{bundle of infinitesimal homotopies} of the $2$-term complex $C_M \ra V_M$.
\end{definition}

\begin{remark}\label{rema: canonical cochain complex representation Hom(V_M,C_M)}
The bundle of infinitesimal homotopies $\textnormal{Hom}(V_M,C_M)$ comes canonically with a cochain complex representation on $C_M \ra V_M$: given $\gamma \in \Gamma C_M$ and $v \in \Gamma V_M$, we set
\begin{equation*}
\nabla_h \gamma \coloneq -h \partial \gamma \qquad \nabla_h v \coloneq -\partial h v.
\end{equation*}
\end{remark}

We see that, if $\omega \in \Omega_\textnormal{lin}^\bullet V_A$, then
\begin{equation*}
\omega_0: \Gamma \widehat V_A \times \overset{\bullet}{\cdots} \times \Gamma \widehat V_A \ra \Gamma V_M^* \qquad \omega_0(\alpha_1,\dots,\alpha_\bullet) \coloneq \iota_{\alpha_\bullet}\cdots\iota_{\alpha_1}\omega
\end{equation*}
and
\begin{equation*}
\omega_1: \Gamma \widehat V_A \times \overset{\bullet-1}{\cdots} \times \Gamma \widehat V_A \ra \Gamma C_M^* \qquad \omega_1(\alpha_1,\cdots,\alpha_{\bullet-1})\gamma \coloneq \iota_\gamma\iota_{\alpha_{\bullet-1}}\cdots\iota_{\alpha_1}\omega
\end{equation*}
together satisfy the following property: for all $h \in \textnormal{Hom}(V_M,C_M)$ we have
\begin{equation*}
\iota_h\omega_0 = \omega_1 h.
\end{equation*}
Notice that, in the definition $\omega_1 h$, a sign appears:
\begin{equation*}
(\omega_1 h)(\alpha_2,\dots,\alpha_\bullet)v \coloneq (-1)^{\bullet-1} \omega_1(\alpha_2,\dots,\alpha_\bullet) hv
\end{equation*}
We call $(\omega_0,\omega_1)$ ``invariant'' forms, and in the next sections we will explain how such invariant forms constitute a $2$-term ruth $\Omega_\textnormal{inv}(\widehat V_A; V_M^*)$ that is isomorphic to $\Omega_\textnormal{lin} V_A$.

\subsubsection{The fat algebroid and its canonical representations}

The fat algebroid $\widehat V_A$ has canonical representations on $C_M$ and $V_M^*$\footnote{We use here that $[\cdot,\cdot]$ is a degree $+1$ operator on homogeneous sections and, on homogeneous functions, $\rho_{V_A}(\alpha)$ has degree equal to the degree of $\alpha$.}
\begin{equation}\label{eq: representations of the fat algebroid actions}
\nabla_\alpha \coloneq [\alpha, \cdot]: \Gamma C_M \ra \Gamma C_M \qquad \nabla_\alpha \coloneq \rho_{V_A}\alpha: \Gamma V_M^* \ra \Gamma V_M^*
\end{equation}
and we can put them together to form a cochain complex representation on
\begin{equation*}
\partial: C_M \ra V_M.
\end{equation*}
Dually, we also have a cochain complex representation on
\begin{equation*}
V_M^* \xra{\partial^*} C_M^*.
\end{equation*}
We think of $V_M^*$ as sitting in degree $0$ and $C_M^*$ as sitting in degree $1$. We denote the resulting differential graded $\Omega A$-module by
\begin{equation*}
\Omega(\widehat V_A; V_M^* \ra C_M^*)^\bullet = \Omega^\bullet(\widehat V_A; V_M^*) \oplus \Omega^{\bullet-1}(\widehat V_A; C_M^*).
\end{equation*}
In analogy with Proposition \ref{prop: representation H(V_M,C_M) is action by conjugation}, we have:

\begin{proposition}\label{prop: action on Hom(V_M,C_M) equals representation}
The cochain complex representation of $\widehat V_A$ restricts to the canonical cochain complex representation of $\textnormal{Hom}(V_M,C_M)$. Moreover, the action of $\widehat V_A$ on $\textnormal{Hom}(V_M,C_M)$, viewing $\textnormal{Hom}(V_M,C_M)$ as an ideal of $\widehat V_A$, equals the representation of $\widehat V_A$ on $\textnormal{Hom}(V_M,C_M)$.
\end{proposition}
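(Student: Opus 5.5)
The plan is a direct computation with the explicit inclusion $\textnormal{Hom}(V_M,C_M) \hookrightarrow \widehat V_A$, $h \mapsto (v \mapsto h(v) +_\da 0_\ra(v))$. At the level of sections, $h \in \Gamma\textnormal{Hom}(V_M,C_M)$ becomes a linear section $\alpha_h \in \Gamma_\textnormal{lin} V_A$, obtained by adding to the zero section the core section determined by $h$. In a local description, $\Gamma_\ra V_A$ is generated by core sections $\widehat\gamma$ for $\gamma \in \Gamma C_M$ (degree $-1$) and by linear sections (degree $0$). The section $\alpha_h$ is then the linear section $\sum_i \ell_{\xi_i}\widehat{\gamma_i}$, where $h = \sum_i \xi_i \otimes \gamma_i$ with $\xi_i \in \Gamma V_M^*$ viewed as linear functions $\ell_{\xi_i}$ on $V_M$, and $\gamma_i \in \Gamma C_M$.

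The computation rests on three facts about VB-algebroids, which I take as known (e.g. from \cite{VBalejandrohenriquematias,VBalgebroidsruths}):
\begin{itemize}
\item the bracket of two core sections vanishes;
\item the anchor of a core section $\widehat\gamma$ is the vertical lift of $\partial\gamma$;
\item the anchor $\rho_{V_A}(\widehat\gamma)$ annihilates pullbacks of functions on $M$.
\end{itemize}

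For the first statement, I compute the two representations \eqref{eq: representations of the fat algebroid actions} on $\alpha_h$.

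For $\gamma \in \Gamma C_M$, the Leibniz rule and the vanishing of brackets of core sections give
\begin{equation*}
[\alpha_h,\widehat\gamma] = \textstyle\sum_i [\ell_{\xi_i}\widehat{\gamma_i},\widehat\gamma] = -\textstyle\sum_i \rho_{V_A}(\widehat\gamma)(\ell_{\xi_i})\,\widehat{\gamma_i} = -\textstyle\sum_i \xi_i(\partial\gamma)\widehat{\gamma_i} = -\widehat{h\partial\gamma}.
\end{equation*}
This matches $\nabla_h\gamma = -h\partial\gamma$ from Remark \ref{rema: canonical cochain complex representation Hom(V_M,C_M)}.

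For $\xi \in \Gamma V_M^*$, I compute $\rho_{V_A}(\alpha_h)\ell_\xi = \sum_i \ell_{\xi_i}\, \xi(\partial\gamma_i) = \ell_{(\partial h)^*\xi}$. This is the dual of the action $v \mapsto \partial h v$. Up to the sign convention for dual representations, it should give $\nabla_h v = -\partial h v$ on $V_M$. I need to check the sign carefully here, since the dual connection is $\nabla^*_\alpha\xi(v) = \rho(\alpha)\xi(v) - \xi(\nabla_\alpha v)$, and the action on $V_M^*$ is defined via $\rho_{V_A}$ of linear functions.

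For the second statement, take $\alpha \in \Gamma\widehat V_A$ and $h \in \Gamma\textnormal{Hom}(V_M,C_M)$. I compute $[\alpha,\alpha_h] = \sum_i \big(\rho_{V_A}(\alpha)(\ell_{\xi_i})\,\widehat{\gamma_i} + \ell_{\xi_i}[\alpha,\widehat{\gamma_i}]\big)$. The first term involves $\nabla_\alpha\xi_i$ on $V_M^*$, and the second involves $\nabla_\alpha\gamma_i$ on $C_M$. Their sum is exactly $\alpha_{\nabla_\alpha h}$ for the tensor product representation on $V_M^* \otimes C_M = \textnormal{Hom}(V_M,C_M)$. This is the infinitesimal analogue of Proposition \ref{prop: representation H(V_M,C_M) is action by conjugation}.

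The main obstacle is bookkeeping rather than ideas: fixing the sign conventions so that the dual representation on $V_M$ comes out as $-\partial h$ and the induced bracket on $\textnormal{Hom}(V_M,C_M)$ is $h'\partial h - h\partial h'$. As a consistency check, I will verify that $[\alpha_h,\alpha_{h'}] = \alpha_{\nabla_h h'}$ reproduces the bracket of Definition \ref{def: Hom(V_M,C_M) bracket}.
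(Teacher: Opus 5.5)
Your proposal is correct and is essentially the paper's proof. The paper also writes $\textnormal{Hom}(V_M,C_M)$ locally as generated by elements $f\otimes\gamma$ and uses the Leibniz rule to get $[\alpha, f\otimes\gamma] = f\otimes\nabla_\alpha\gamma + \nabla_\alpha f\otimes\gamma$. It proves only this second statement; your explicit check of the first statement, which the paper leaves implicit, is also right. The sign you flag does close up: the anchor of $\alpha_h$ in $\widehat V_A$ is zero, so the dual representation on $V_M$ is minus the transpose of $(\partial h)^*$, which gives exactly $\nabla_h v = -\partial h v$.
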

\begin{proof}
We prove the second statement, and we do a direct computation: locally, $\textnormal{Hom}(V_M,C_M)$ is generated by elements $f \otimes \gamma$, where $f \in \Gamma V_M^*$ and $\gamma \in \Gamma C_M$. Given $\alpha \in \Gamma \widehat V_A$, we then have
\begin{equation*}
[\alpha, f \otimes \gamma] = f \otimes \nabla_\alpha\gamma + \nabla_\alpha f \otimes \gamma. 
\end{equation*}
This proves the statement.
\end{proof}

\begin{remark}\label{rema: cochain complex representation algebroid in terms of gl}
Given a vector bundle $E_M$, algebroid representations of $A$ on $E_M$ correspond to algebroid maps
\begin{equation*}
A \ra \mathfrak{gl}(E_M),
\end{equation*}
where $\mathfrak{gl}(E_M)$ is the algebroid of derivations
\begin{equation*}
\mathfrak{gl}(E_M) \coloneq \{D = (D,X) \mid D(fs) = fD(s) + X(f) s\} \subset \textnormal{End } E_M \oplus TM.
\end{equation*}
Given two vector bundles $C_M$ and $V_M$, we set
\begin{equation*}
\mathfrak{gl}(C_M,V_M) \coloneq \mathfrak{gl}(C_M) \times_{TM} \mathfrak{gl}(V_M).
\end{equation*}
Using the action algebroid
\begin{equation*}
\mathfrak{gl}(C_M,V_M)_M \coloneq \textnormal{Hom}(C_M,V_M) \rtimes \mathfrak{gl}(C_M,V_M)
\end{equation*}
associated to the representation of $\mathfrak{gl}(C_M,V_M)$ on $\textnormal{Hom}(C_M,V_M)$ given by
\begin{equation*}
\nabla_D \partial \coloneq D^{V_M} \partial - \partial D^{C_M},
\end{equation*}
we see that cochain complex representations of $A$ on $C_M \ra V_M$ correspond to algebroid maps
\begin{equation*}
A \ra \mathfrak{gl}(C_M,V_M)_M
\end{equation*}
(see also Remark \ref{rema: PB-algebroids}).
\end{remark}

\subsubsection{Fat extensions}

We discussed now the structure behind the fat algebroid of a VB-algebroid that makes it into a \textit{fat extension}. Here is the abstract definition:

\begin{definition}\label{def: fat extensions for LA}
Let $A \Ra M$ be a Lie algebroid, and consider a $2$-term complex
\begin{equation*}
C_M \xra{\partial} V_M.
\end{equation*}
A \textit{fat extension} of $A$, with underlying complex $C_M \ra V_M$, consists of a Lie algebroid
\begin{equation*}
F_A \rra M,
\end{equation*}
that comes with a cochain complex representation on $C_M \ra V_M$, and a short exact sequence of Lie algebroids (over $M$)
\begin{center}
\begin{tikzcd}
0 \ar[r] & \textnormal{Hom}(V_M,C_M) \ar[r] & F_A \ar[r] & A \ar[r] & 0.
\end{tikzcd}
\end{center}
Moreover, the cochain complex representation of $F_A$ on $C_M \ra V_M$ restricts to the canonical cochain complex representation of $\textnormal{Hom}(V_M,C_M)$, and the two natural actions of $F_A$ on $\textnormal{Hom}(V_M,C_M)$ --- viewing $\textnormal{Hom}(V_M,C_M)$ as an ideal or as a representation --- agree.
\end{definition}

We will simply write $F_A$ for a fat extension, and the underlying algebroid $F_A$ is called the fat algebroid of the fat extension. From the above discussion, we obtain:

\begin{proposition}\label{prop: fat algebroid comes with fat extension}
Let $V_A$ be a VB-algebroid over $A$. Then, the fat algebroid $\widehat V_A$ of $V_A$, together with its cochain complex representation \eqref{eq: representations of the fat algebroid actions}, is a fat extension of $A$ over $C_M \ra V_M$.
\end{proposition}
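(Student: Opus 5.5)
The claim has three parts to verify. First, the sequence $0 \ra \textnormal{Hom}(V_M,C_M) \ra \widehat V_A \ra A \ra 0$ is a short exact sequence of Lie algebroids. Second, the cochain complex representation \eqref{eq: representations of the fat algebroid actions} is well defined and restricts on $\textnormal{Hom}(V_M,C_M)$ to the canonical one of Remark \ref{rema: canonical cochain complex representation Hom(V_M,C_M)}. Third, the two actions of $\widehat V_A$ on $\textnormal{Hom}(V_M,C_M)$ agree. The third point is exactly the second statement of Proposition \ref{prop: action on Hom(V_M,C_M) equals representation}, and the restriction part of the second point is its first statement. So the plan is to establish the structural facts and then cite that proposition, supplying the computation behind its first (unproved) statement.

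For exactness, I would work with $\Gamma \widehat V_A = \Gamma_\textnormal{lin} V_A$. Linear sections are projectable to sections of $A$, so $\widehat V_A \ra A$ is well defined. Since the bracket of linear sections projects to the bracket on $A$ and $\widehat\rho = \rho \circ \pr$, this projection is an algebroid map. It is surjective because linear sections through any point of $A$ exist locally: in local coordinates, linear sections lift local frames of $A$. The kernel consists of linear sections lying over the zero section of $A$. These are maps $v \mapsto h(v) +_\da 0_\ra(v)$ with $h \in \textnormal{Hom}(V_M,C_M)$, which gives exactness in the middle; the inclusion is clearly injective.

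For the representation, $\nabla_\alpha = [\alpha,\cdot]$ preserves core sections because the bracket of a $0$-homogeneous and a $(-1)$-homogeneous section is $(-1)$-homogeneous (degree counting as in Remark \ref{rema: local coordinate description of linear forms}). Likewise $\rho_{V_A}(\alpha)$ preserves fibrewise linear functions on $V_M$. The Leibniz rule and flatness follow from the Jacobi identity of $V_A$ together with the fact that $\rho_{V_A}$ is a Lie algebra map. Equivariance of $\partial$ follows from $\partial = \rho_{V_A}|_{C_M}$ and $\rho_{V_A}[\alpha,\gamma] = [\rho_{V_A}\alpha,\rho_{V_A}\gamma]$.

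The main computational step is identifying the restriction to $\textnormal{Hom}(V_M,C_M)$ with the canonical representation. For $h$, I would write its linear section as $\alpha_h = \sum f_i \gamma_i$ in terms of core sections $\gamma_i$ and linear functions $f_i$ on $V_M$, with $h = \sum f_i\otimes\gamma_i$. Then $[\alpha_h,\gamma] = -\sum \rho(\gamma)(f_i)\,\gamma_i = -\sum f_i(\partial\gamma)\gamma_i = -h\partial\gamma$, using that core sections bracket to zero and $\rho(\gamma)$ acts on linear functions by translation along $\partial\gamma$. Dually, $\rho_{V_A}(\alpha_h)$ acting on $V_M^*$ gives $-\partial h$ after transposing, with signs matching Remark \ref{rema: canonical cochain complex representation Hom(V_M,C_M)}. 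The signs here, and the identification of $\rho(\gamma)f$ with $f(\partial\gamma)$, are where I expect care to be needed. The last compatibility then follows from Proposition \ref{prop: action on Hom(V_M,C_M) equals representation}.
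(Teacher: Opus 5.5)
Your proposal is correct and follows essentially the paper's route: the proposition is obtained by assembling the short exact sequence of the fat algebroid, the canonical representations \eqref{eq: representations of the fat algebroid actions}, and Proposition \ref{prop: action on Hom(V_M,C_M) equals representation}. Your core-section computation giving $-h\partial\gamma$, together with its dual giving $-\partial h$, supplies the first statement of that proposition, which the paper leaves unproved. The same computation, or equivalently the agreement of the two actions applied with $\alpha$ itself in $\textnormal{Hom}(V_M,C_M)$, also shows that the induced bracket on the kernel is $h'\partial h - h\partial h'$; this is needed for the kernel to be the bundle of infinitesimal homotopies as a Lie algebroid.
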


\begin{remark}\label{rema: structure of dual VB-algebroid}
We observe here that, given a VB-algebroid $V_A$, the fat extension $\widehat V_A$ with its dual cochain complex representation should be seen as the fat extension of the dual VB-algebroid $V_A^* \Ra C_M^*$. First of all, the fat groupoid $\widehat V_A^*$ fits into a short exact sequence 
\begin{center}
\begin{tikzcd}
0 \ar[r] & \textnormal{Hom}(C_M^*,V_M^*) \ar[r] & \widehat V_A^* \ar[r] & A \ar[r] & 0.
\end{tikzcd}
\end{center}
Then, $\widehat V_A$ and $\widehat V_A^*$ are isomorphic as Lie algebroids via\footnote{Here, say, $\alpha \in \Gamma_\textnormal{lin} V_A$ is related to $\alpha \in \Gamma A$ via the projection, $\xi_\alpha \in \Gamma_\textnormal{lin} V_A^*$ is also related to $\alpha \in \Gamma A$ via the projection, $f \in \Gamma C_M^* = C^\infty_\textnormal{lin} C_M$ and $v \in \Gamma_\da V_A$.}
\begin{equation*}
\widehat V_A \ra \widehat V_A^*; \quad \alpha \mapsto \xi_\alpha(f)v = f(v-_\da \alpha \pr_\ra v).
\end{equation*}
Using the isomorphism
\begin{equation*}
\textnormal{Hom}(V_M,C_M) \ra \textnormal{Hom}(C_M^*,V_M^*); \quad h \mapsto -h^*,
\end{equation*}
the short exact sequences of $\widehat V_A$ and $\widehat V_A^*$ correspond to each other.

Notice that the structure of $V_A^* \Ra C_M^*$ is revealed from the fat extension $\widehat V_A^*$ as follows: given $\alpha,\beta \in \Gamma \widehat V_A$ (and writing $\xi_\alpha,\xi_\beta \in \Gamma \widehat V_A^*$ as above), $\gamma \in \Gamma C_M$ and $f \in \Gamma V_M^*$,
\begin{equation*}
\rho_{V_A^*}(\xi_\alpha)\gamma = [\alpha,\gamma] \qquad \rho_{V_A^*}(f)\gamma = \partial^*(f)\gamma \qquad [\xi_\alpha,\xi_\beta] = \xi_{[\alpha,\beta]} \qquad [\xi_\alpha, f] = \rho_{V_A}(\alpha)f.
\end{equation*}
Starting with $V_A$, these equations define the algebroid structure on $V_A^*$.
\end{remark}

\subsubsection{Extended Cartan calculus}\label{sec: extended Cartan calculus}
Before we continue, we develop a type of ``extended'' Cartan calculus on $\Omega_\textnormal{lin} V_A$ and on $\Omega(\widehat V_A; V _M^* \ra C_M^*)$. We find it particularly useful to see how $\Omega_\textnormal{lin} V_A$ can be described in terms of the fat algebroid $\widehat V_A$ using invariant cochains.

We start with the ordinary Cartan calculus on $\Omega_\ra V_A$. We have three types of operators: a differential $d$, and then for each section $\alpha \in \Gamma V_A$ a Lie derivative $\calL_\alpha$ and a contraction $\iota_\alpha$. The operators are derivations and of degree $+1$, $0$ and $-1$, respectively. Together, they satisfy the Cartan calculus identities:
\begin{equation}\label{eq: Cartan calculus identities}
[d,\iota_\alpha] = \calL_\alpha, \quad [d,\calL_\alpha] = 0 \quad [\iota_\alpha,\iota_\beta] = 0 \quad [\calL_\alpha,\calL_\beta] = \calL_{[\alpha,\beta]} \quad [\calL_\alpha,\iota_\beta] = \iota_{[\alpha,\beta]}.
\end{equation}
Similarly, on $\Omega_\textnormal{lin} V_A$ we have a Cartan calculus if we restrict attention to linear sections (see Remark \ref{rema: local coordinate description of linear forms}), and see the operators as satisfying the Leibniz rule.

However, from the inclusion $\Omega_\textnormal{lin} V_A \subset \Omega V_A$, we can also apply contractions and Lie derivatives of (not necessarily linear) sections to linear forms $\omega$. In particular, we can do this for core sections $\gamma \in \Gamma C_M$. Instead of thinking of $\iota_\gamma\omega$ as another form of $V_A$, we know from Remark \ref{rema: local coordinate description of linear forms} that we can see it as a form of $\widehat V_A$ (or even $A$). That is, we can define a contraction of the following form:
\begin{definition}\label{def: contraction C}
Let $V_A$ be a VB-algebroid. Given $\gamma \in \Gamma C_M$, we set
\begin{equation*}
\iota_\gamma: \Omega^\bullet_\textnormal{lin} V_A \ra \Omega^{\bullet-1} A \subset \Omega^{\bullet-1} \widehat V_A \qquad \iota_\gamma\omega(\alpha_2,\dots,\alpha_\bullet) \coloneq \omega(\gamma,\alpha_2,\dots,\alpha_\bullet).
\end{equation*}
\end{definition}
We think of this contraction as having degree $-1$. Notice that it is $\Omega A$-linear. Taking this point of view, we see that there is another type of contraction:
\begin{definition}\label{def: contraction V}
Let $V_A$ be a VB-algebroid. Given $v \in \Gamma V_M$, we set
\begin{equation*}
\iota_v: \Omega^\bullet_\textnormal{lin} V_A \ra \Omega^\bullet \widehat V_A \qquad \iota_v\omega(\alpha_1,\dots,\alpha_\bullet) \coloneq \omega(\alpha_1,\dots,\alpha_\bullet)v.
\end{equation*}
\end{definition}
We think of this contraction as having degree $0$. Also this map is $\Omega A$-linear. In turn, the contractions define Lie derivatives:
\begin{definition}\label{def: Lie derivatives}
Let $V_A$ be a VB-algebroid. Given $\gamma \in C_M$ and $v \in \Gamma V_M$, we set
\begin{equation*}
\calL_\gamma: \Omega^\bullet_\textnormal{lin} V_A \ra \Omega^\bullet \widehat V_A \qquad \calL_v: \Omega^\bullet_\textnormal{lin} V_A \ra \Omega^{\bullet+1} \widehat V_A,
\end{equation*}
(of degree $0$ and $+1$, respectively) given by
\begin{align*}
\calL_\gamma\omega(\alpha_1,\dots,\alpha_\bullet) &= \omega(\alpha_1,\dots,\alpha_\bullet)\partial\gamma - \textstyle\sum_k (-1)^k \omega([\alpha_k,\gamma],\alpha_1,\dots,\widehat\alpha_k,\dots,\alpha_\bullet) \\
\calL_v\omega(\alpha_0,\dots,\alpha_\bullet) &= -\textstyle\sum_k (-1)^k (\rho_{V_A}(\alpha_k)(\omega(\alpha_1,\dots,\widehat\alpha_k,\dots,\alpha_\bullet))v \\
&- \rho_A(\alpha_k)(\omega(\alpha_1,\dots,\widehat\alpha_k,\dots,\alpha_\bullet)v)).
\end{align*}
\end{definition}
Notice that both these maps satisfy the Leibniz rule. We ``extended'' successfully now the Cartan calculus on $\Omega_\textnormal{lin} V_A$: the usual Cartan calculus identities \eqref{eq: Cartan calculus identities} hold, but we should interpret equalities like $[d,\iota_\gamma] = \calL_\gamma$ as equality of maps $\Omega_\textnormal{lin} V_A \ra \Omega \widehat V_A$.\footnote{Note that $\iota_v$ is a degree $0$ operator, and $\calL_v$ is a degree $+1$ operator, so for example $\calL_v = [d,\iota_v] = d\iota_v-\iota_vd$.}

Similarly, on $\Omega(\widehat V_A; V_M^* \ra C_M^*)$ we have a Cartan calculus, for $\widehat V_A$ is a Lie algebroid, and $V_M^*$ and $C_M^*$ are representations: for $\alpha \in \Gamma \widehat V_A$ we have $\iota_\alpha$ as usual and $\calL_\alpha = \nabla_\alpha$. Notice that again it makes sense to apply contractions and Lie derivatives of sections of $C_M$ and $V_M$ in the following sense:

\begin{definition}\label{def: Cartan calculus invariant cochains}
Let $V_A$ be a VB-algebroid. Given $\gamma \in \Gamma C_M$ and $v \in \Gamma V_M$, we set
\begin{align*}
\iota_\gamma&: \Omega(\widehat V_A; V_M^* \ra C_M^*)^\bullet \ra \Omega^{\bullet-1} \widehat V_A \qquad
\iota_\gamma(\omega_0,\omega_1) = \iota_\gamma\omega_1 \\
\iota_v&: \Omega(\widehat V_A; V_M^* \ra C_M^*)^\bullet \ra \Omega^\bullet \widehat V_A \qquad \iota_v(\omega_0,\omega_1) = \iota_v\omega_0.
\end{align*}
where
\begin{align*}
\iota_\gamma\omega_1(\alpha_1,\dots,\alpha_{\bullet-1}) &= (-1)^{\bullet-1}\omega_1(\alpha_1,\dots,\alpha_{\bullet-1})\gamma \\
\iota_v\omega_0(\alpha_1,\dots,\alpha_\bullet) &= \omega_0(\alpha_1,\dots,\alpha_\bullet)v.
\end{align*}
Moreover,
\begin{equation*}
\calL_\gamma: \Omega(\widehat V_A; V_M^* \ra C_M^*)^\bullet \ra \Omega^\bullet \widehat V_A \qquad \calL_v: \Omega(\widehat V_A; V_M^* \ra C_M^*)^\bullet \ra \Omega^{\bullet+1} \widehat V_A,
\end{equation*}
are given, respectively, by
\begin{align*}
\calL_\gamma(\omega_0,\omega_1)(\alpha_1,\dots,\alpha_\bullet) &= \omega_0(\alpha_1,\dots,\alpha_\bullet)\partial\gamma + \textstyle\sum_k (-1)^{k+\bullet} \omega_1(\alpha_1,\dots,\widehat\alpha_k,\dots,\alpha_\bullet)[\alpha_k,\gamma] \\
\calL_v(\omega_0,\omega_1)(\alpha_0,\dots,\alpha_\bullet) &= -\textstyle\sum_k (-1)^k (\rho_{V_A}(\alpha_k)(\omega_0(\alpha_1,\dots,\widehat\alpha_k,\dots,\alpha_\bullet))v \\
&- \rho(\alpha_k)(\omega_0(\alpha_1,\dots,\widehat\alpha_k,\dots,\alpha_\bullet)v)).
\end{align*}
\end{definition}

\subsection{From fat extensions to $2$-term ruths}\label{sec: From fat extensions to $2$-term ruths LA}
We will now show that, as expected, $\Omega_\textnormal{lin} V_A$ defines a subcomplex of $\Omega(\widehat V_A; V_M^* \ra C_M^*)$. We will denote this subcomplex by $\Omega_\textnormal{inv}(\widehat V_A; V_M^*)$ and call its elements ``invariant''. This way, we will introduce the abstract $2$-term ruth $\Omega_\textnormal{inv}(F_A; C_M)$ of an abstract fat extension $F_A$. As for Lie groupoids, we can interpret the invariant complex as a type of relative complex from \cite{Mariarelative}. Notice that for $F_A = J^1A$, this complex appeared in \cite{ShengLAdeformations}.

Consider an element $\omega$ of $\Omega^\bullet_\textnormal{lin} V_A$. From Remark \ref{rema: local coordinate description of linear forms}, we know that linear forms are determined by the induced map\footnote{We assume for simplicity that $V_M$ is not the zero bundle, otherwise it is determined by a form on $A$ with values in $C_M$.}
\begin{equation*}
\omega_0: \Gamma \widehat V_A \times \cdots \times \Gamma \widehat V_A \ra \Gamma V_M^* \qquad \omega_0(\alpha_1,\dots,\alpha_\bullet) \coloneq \iota_{\alpha_\bullet} \cdots \iota_{\alpha_1} \omega.
\end{equation*}
This map is $C^\infty M$-multilinear and can therefore be thought of as an element of $\Omega^\bullet(\widehat V_A; V_M^*)$. In fact, this defines a cochain map
\begin{equation*}
\Omega_\textnormal{lin} V_A \ra \Omega(\widehat V_A; V_M^*) \qquad \omega \mapsto \omega_0
\end{equation*}
using the representation on $V_M^*$ discussed above. Since
\begin{equation*}
\iota_{\alpha_\bullet}\cdots\iota_{\alpha_2}\iota_h\omega(v) = \iota_{\alpha_\bullet}\cdots\iota_{\alpha_2}\iota_{h v}\omega,
\end{equation*}
the elements $\omega_0$ have the property that, for all $h \in \textnormal{Hom}(V_M,C_M)$,
\begin{equation*}\label{eq: relation omega_V_M and omega_C_M}
\iota_h\omega_0 = \omega_1 h,
\end{equation*}
where $\omega_1 \in \Omega^{\bullet-1}(A; C_M^*)$ is given by\footnote{Here, $\gamma \in \Gamma C_M$, and we use that $\iota_\gamma\iota_{\alpha_{\bullet-1}}\cdots\iota_{\alpha_1}\omega \in C^\infty M$ by Remark \ref{rema: local coordinate description of linear forms}.}
\begin{equation*}
\omega_1(\alpha_2,\cdots,\alpha_\bullet)\gamma \coloneq \iota_\gamma\iota_{\alpha_\bullet}\cdots\iota_{\alpha_2}\omega
\end{equation*}
and\footnote{To explain the sign, observe that we can think of $h^*$ as a graded endomorphism of degree $-1$ of the graded vector bundle $E_M = V_M^* \oplus C_M^*[-1]$. The expression $\omega h$ can then be interpreted as $h^* \wedge \omega$, where $\wedge$ is induced by the canonical pairing $\textnormal{End } E_M \otimes E_M \ra E_M$ (see, for example, \cite{LAruths}).}
\begin{equation*}
(\omega_1 h)(\alpha_2,\dots,\alpha_\bullet)v = (-1)^{\bullet-1} \omega_1(\alpha_2,\dots,\alpha_\bullet)h(v).
\end{equation*}
We can see $(\omega_0,\omega_1)$ as an element of the cochain complex representation $\Omega(\widehat V_A, V_M^* \ra C_M^*)^\bullet$. Using the Cartan calculus we developed in the previous section, Section \ref{sec: extended Cartan calculus}, a way to write the above condition is now that, for all $h \in \textnormal{Hom}(V_M,C_M)$ and $v \in \Gamma V_M$,
\begin{equation*}
\iota_v\iota_h = \iota_{h v}.
\end{equation*}
The map $\omega \mapsto (\omega_0,\omega_1)$ is $\Omega A$-linear, and it intertwines the contraction operators and Lie derivatives. Therefore:
\begin{proposition}\label{prop: linear forms as forms on the fat algebroid}
The map
\begin{equation*}
\Omega_\textnormal{lin} V_A \ra \Omega(\widehat V_A; V_M^* \ra C_M^*)
\end{equation*}
is a map differential graded $\Omega A$-modules, and it maps bijectively onto those pairs $(\omega_0,\omega_1)$ such that $\omega_1 \in \Omega^{\bullet-1}(\widehat V_A; C_M^*)$ is independent of its entries\footnote{By independence we mean that $\omega_1(\alpha_2,\dots,\alpha_\bullet)$ only depends on the base sections underlying $\alpha_2,\dots,\alpha_\bullet$.} and \eqref{eq: relation omega_V_M and omega_C_M} holds.
\end{proposition}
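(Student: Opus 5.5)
The plan has three parts: show the map is a morphism of differential graded $\Omega A$-modules, show it is injective, and identify its image.

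\textbf{Well-definedness and compatibility.} First, $\omega_0$ and $\omega_1$ are $C^\infty M$-multilinear in the sections $\alpha_k \in \Gamma\widehat V_A = \Gamma_\textnormal{lin} V_A$. Here we use the grading from Remark \ref{rema: local coordinate description of linear forms}: a linear form pairs linear sections to linear functions on $V_M$, i.e.\ sections of $V_M^*$. It pairs one core section and linear sections to functions on $M$. Next, $\Omega A$-linearity follows because forms on $A$ pull back along $V_A \ra A$, and contraction with $\alpha_k$ only sees the projection to $A$.

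For the differential, I would avoid grinding through the Koszul formula. Instead I would use the extended Cartan calculus of Section \ref{sec: extended Cartan calculus}. A pair $(\omega_0,\omega_1)$ is determined by its contractions $\iota_v$ and $\iota_\gamma$ for $v\in\Gamma V_M$ and $\gamma \in \Gamma C_M$. So it suffices to check that the map intertwines $\iota_v$, $\iota_\gamma$, $\iota_\alpha$ and $\calL_\alpha$, which holds by construction: compare Definitions \ref{def: contraction C}, \ref{def: contraction V}, \ref{def: Lie derivatives} with Definition \ref{def: Cartan calculus invariant cochains}, noting that $\calL_\alpha=\nabla_\alpha$ is given by \eqref{eq: representations of the fat algebroid actions}. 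Then the identities $[d,\iota_\gamma]=\calL_\gamma$ and $[d,\iota_v]=\calL_v$ hold on both sides. By induction on degree via $\iota_\alpha$ (using $[d,\iota_\alpha]=\calL_\alpha$), this shows $d$ is intertwined.

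\textbf{Injectivity.} By Remark \ref{rema: local coordinate description of linear forms}, $\Gamma_\ra V_A$ is locally generated over $C^\infty V_M$ by linear and core sections. A linear form is therefore determined by its values on tuples of these, and by homogeneity at most one entry can be a core section. Evaluations with two or more core entries vanish because they would give $(-1)$-homogeneous functions. Tuples of linear sections give $\omega_0$, and tuples with exactly one core entry give $\omega_1$. Hence $\omega$ is recovered from $(\omega_0,\omega_1)$.

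\textbf{Image.} The image lies in the stated subspace. First, $\omega_1$ depends only on the $A$-projections of its arguments: if $\alpha_k$ changes by an element $h\in\textnormal{Hom}(V_M,C_M)$, the change is $\omega(\gamma,h,\dots)$, which contains two core-type entries and so vanishes. Second, the relation $\iota_h\omega_0=\omega_1 h$ follows from $\iota_v\iota_h=\iota_{hv}$, since $h$ acts on $v$ as the core section $hv$.

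For surjectivity, given such a pair, I would define $\omega$ locally by prescribing its values on linear and core generators (zero on pairs of core entries) and extend $C^\infty V_M$-multilinearly. This is the main obstacle: one must check the prescription is consistent. Linear sections are only a $C^\infty M$-module, and the relations among generators over $C^\infty V_M$ are exactly those coming from $\textnormal{Hom}(V_M,C_M)\subset\Gamma_\textnormal{lin}V_A$, where a fibrewise linear function times a core section equals a linear section. Consistency on these relations is precisely the condition $\iota_h\omega_0=\omega_1 h$, together with the independence of $\omega_1$. I would verify this in a local frame of $C_M$, $V_M$ and a local splitting $A\ra\widehat V_A$, and then glue: since the construction is canonical, it is independent of the frame. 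Skew-symmetry and $1$-homogeneity are then immediate.
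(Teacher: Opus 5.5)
Your proposal is correct and follows the paper's route. The cochain-map property comes from the map intertwining the extended contractions and Lie derivatives of Section~\ref{sec: extended Cartan calculus}, and bijectivity onto the stated pairs comes from local generation of $\Gamma_{\ra} V_A$ by core and linear sections plus the homogeneity count of Remark~\ref{rema: local coordinate description of linear forms}; you work this out in more detail than the paper, notably the consistency check for surjectivity via $\textnormal{Hom}(V_M,C_M)\subset\Gamma_{\textnormal{lin}} V_A$. The induction on degree via $\iota_\alpha$ is superfluous: once the map intertwines $\iota_v,\iota_\gamma,\calL_v,\calL_\gamma$ and both sides satisfy $[d,\iota_v]=\calL_v$ and $[d,\iota_\gamma]=\calL_\gamma$, contracting with $v$ and $\gamma$ already shows that $d$ is intertwined.
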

\begin{proof}
The proof is a straightforward computation using the Cartan calculus we developed:
\begin{align*}
(d\omega)_0(\alpha_0,\dots,\alpha_\bullet) &= \textstyle\sum_k (-1)^k \calL_{\alpha_k} \iota_{\alpha_\bullet} \cdots \widehat\iota_{\alpha_k} \cdots \iota_{\alpha_0} \omega \\
&+ \textstyle\sum_{k < \ell} (-1)^{k+\ell} \iota_{\alpha_\bullet} \cdots \widehat\iota_{\alpha_\ell} \cdots \widehat\iota_{\alpha_k} \cdots \iota_{\alpha_0} \iota_{[\alpha_k,\alpha_\ell]} \omega = d\omega_0(\alpha_0,\dots,\alpha_\bullet)
\end{align*}
and
\begin{align*}
(d\omega)_1(\alpha_1,\dots,\alpha_\bullet)\gamma &= 
(-1)^\bullet\omega_0(\alpha_1,\dots,\alpha_\bullet)\partial\gamma \\
&-\textstyle\sum_k (-1)^k (\calL_{\alpha_k}\iota_\gamma - \iota_{[\alpha_k,\gamma]})\iota_{\alpha_\bullet} \cdots \widehat\iota_{\alpha_k} \cdots \iota_{\alpha_1} \omega \\
&+ \textstyle\sum_{k < \ell} (-1)^{k+\ell} \iota_\gamma \iota_{\alpha_\bullet} \cdots \widehat\iota_{\alpha_\ell} \cdots \widehat\iota_{\alpha_k} \cdots \iota_{\alpha_1} \iota_{[\alpha_k,\alpha_\ell]} \omega \\
&= \partial^*(\omega_0)(\alpha_1,\dots,\alpha_\bullet)\gamma + d\omega_1(\alpha_0,\dots,\alpha_\bullet)\gamma.
\end{align*}
This proves that the map $\omega \mapsto (\omega_0,\omega_1)$ is a cochain map. That it maps onto those pairs $(\omega_0,\omega_1)$ satisfying \eqref{eq: relation omega_V_M and omega_C_M} is a consequence of Remark \ref{rema: local coordinate description of linear forms}.
\end{proof}

We can immediately conclude the following.

\begin{corollary}\label{coro: linear forms as forms on the fat algebroid}
The map
\begin{equation*}
\Omega_\textnormal{lin} V_A \ra \Omega(\widehat V_A; V_M^*)
\end{equation*}
is a map of differential graded $\Omega A$-modules. If $V_M \neq 0$, it maps bijectively onto $\Omega_\textnormal{inv}(\widehat V_A; V_M^*)$, so those forms $\omega_0 \in \Omega^{\bullet}(\widehat V_A, V_M^*)$ for which there exists $\omega_1 \in \Omega^{\bullet-1}(\widehat V_A, C_M^*)$, independent of its entries, such that \eqref{eq: relation omega_V_M and omega_C_M} holds.\footnote{If $V_M=0$, then the map
\begin{equation*}
C^\bullet_\textnormal{lin} V_A \ra C^{\bullet-1}(A, C_M^*); \quad f \mapsto f_1
\end{equation*}
is a bijective cochain map.}
\end{corollary}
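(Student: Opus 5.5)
The plan is to deduce the corollary directly from Proposition \ref{prop: linear forms as forms on the fat algebroid}, by composing the map $\omega \mapsto (\omega_0,\omega_1)$ with the projection
\begin{equation*}
\Omega(\widehat V_A; V_M^* \ra C_M^*) \ra \Omega(\widehat V_A; V_M^*), \qquad (\omega_0,\omega_1) \mapsto \omega_0 .
\end{equation*}
First, I check that $\omega \mapsto \omega_0$ is a map of differential graded $\Omega A$-modules. $\Omega A$-linearity is clear, since the projection is $\Omega A$-linear. For compatibility with differentials, the first computation in the proof of the proposition already gives $(d\omega)_0 = d\omega_0$. Here $d$ on $\Omega(\widehat V_A; V_M^*)$ is the Chevalley--Eilenberg differential of the representation \eqref{eq: representations of the fat algebroid actions} on $V_M^*$. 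The relevant component of the total differential on $(\omega_0,\omega_1)$ only involves $\omega_0$: the term $\partial^*$ maps the $V_M^*$-part into the $C_M^*$-part, and never the other way. So the projection to the $V_M^*$-component is a cochain map, and the composite is as well.

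Second, I show injectivity when $V_M \neq 0$. By Proposition \ref{prop: linear forms as forms on the fat algebroid}, $\omega$ is determined by $(\omega_0,\omega_1)$, so it suffices to show that $\omega_1$ is determined by $\omega_0$. The relation \eqref{eq: relation omega_V_M and omega_C_M} says $\iota_h\omega_0 = \omega_1 h$ for all $h \in \textnormal{Hom}(V_M,C_M)$. Pointwise, I take $h = \xi \otimes \gamma$ with $\xi \in (V_M^*)_x$ and $\gamma \in (C_M)_x$; since $V_M \neq 0$, I can choose $v$ with $\xi(v) = 1$. Then $\omega_1(\alpha_2,\dots,\alpha_\bullet)\gamma$ is recovered, up to the sign $(-1)^{\bullet-1}$, as $(\iota_h\omega_0)(\alpha_2,\dots,\alpha_\bullet)v$. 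Smoothness of the recovered $\omega_1$ is not an issue, because $\omega_1$ is given a priori and we only need uniqueness.

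Third, I identify the image. By the proposition, the image of $\omega \mapsto (\omega_0,\omega_1)$ is exactly the set of pairs in which $\omega_1$ is independent of its entries and satisfies the relation. Projecting, the image of $\omega \mapsto \omega_0$ is exactly the set of $\omega_0$ for which such an $\omega_1$ exists, which is the definition of $\Omega_\textnormal{inv}(\widehat V_A; V_M^*)$. This set is a subcomplex because it is the image of a cochain map.

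For the footnote case $V_M = 0$, we have $\widehat V_A = A$ and $\omega_0 = 0$. The relation is then vacuous, so the proposition directly gives a bijection $\omega \mapsto \omega_1$ onto $\Omega^{\bullet-1}(A; C_M^*)$. The only mildly delicate point in the whole argument is the second step, showing that $\omega_1$ is determined by $\omega_0$, and it reduces to the choice of $h$ of rank one described there.
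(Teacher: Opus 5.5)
Your argument is correct and is essentially the paper's: there the corollary is stated as an immediate consequence of Proposition~\ref{prop: linear forms as forms on the fat algebroid}, obtained by projecting to the $V_M^*$-component, exactly as you do. Your recovery of $\omega_1$ from $\omega_0$ via rank-one homotopies $h=\xi\otimes\gamma$ is the same mechanism behind the paper's earlier observation (via Remark~\ref{rema: local coordinate description of linear forms}) that, for $V_M\neq 0$, a linear form is determined by $\omega_0$.
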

\begin{remark}
Although we will usually write $\Omega_\textnormal{inv}(\widehat V_A; V_M^*)$, we do this for notational convenience, so $\Omega_\textnormal{inv}(\widehat V_A; V_M^* \ra C_M^*)$ is meant when writing $\Omega_\textnormal{inv}(\widehat V_A; V_M^*)$.
\end{remark}

To conclude, we formulate the abstract definition of invariant cochains in terms of fat extensions.

\begin{definition}\label{def: invariant cochains fat extensions LA}
Let $F_A$ be a fat extension of $A$ over $C_M \ra V_M$. The \textit{invariant complex}
\begin{equation*}
\Omega_\textnormal{inv}(F_G; C_M) \subset \Omega(F_G; C_M \ra V_M)
\end{equation*}
consists of pairs $(\omega_0,\omega_1) \in \Omega(F_G; C_M \ra V_M)$ such that $\omega_1$ is independent of its entries, and, for all $h \in \textnormal{Hom}(V_M,C_M)$,
\begin{equation*}
\iota_h\omega_0 = h\omega_1.
\end{equation*}
\end{definition}

\begin{remark}\label{rema: condition for invariant cochain}
Notice that the above condition can be written as
\begin{equation*}
\iota_f\iota_h = \iota_{fh}
\end{equation*}
for all $f \in \Gamma C_M^*$ and $h \in \textnormal{Hom}(V_C,C_M)$.
\end{remark}

\subsection{Fat extensions and split $2$-term ruths}

In \cite{VBalgebroidsruths} it is shown how a VB-algebroid induces a split $2$-term ruth on $A$. This goes as follows:

\begin{remark}\label{rema: split ruth of A}
The choice of a splitting $\sigma: A \ra \widehat V_A$ defines $A$-connections $\nabla_\alpha = \nabla_{\sigma\alpha}$ on $V_M^*$ and $C_M^*$. These connections are usually not flat, but they are up to the homotopy $R_2 \in \Omega^2(A; \textnormal{Hom}(C_M^*,V_M^*) = \textnormal{Hom}(V_M,C_M))$ given by
\begin{equation*}
R_2(\alpha,\beta) = \sigma[\alpha,\beta] - [\sigma\alpha,\sigma\beta].
\end{equation*}
That is, $d = \partial^* + d_\nabla + R_2$\footnote{Here, $d_\nabla$ is given by the De Rham type formulas on $\Omega(A; V_M^*)$ and on $\Omega(A; C_M^*)$ using the $A$-connections.} squares to zero:
\begin{equation*}
[\partial^*, d_\nabla] = 0 \qquad d_\nabla^2 + [\partial^*,R_2] = 0 \qquad [d_\nabla,R_2] = 0. 
\end{equation*} 
\end{remark}

Although it is well-known that $\Omega_\textnormal{lin} V_A$ is isomorphic (using the splitting $\sigma$) to $\Omega(A; V_M^* \ra C_M^*)$, we observe that this is readily verified using the canonical model of invariant cochains from the previous section, Section \ref{sec: From fat extensions to $2$-term ruths LA}. The pullback of the splitting defines a map
\begin{equation*}
\sigma^*: \Omega(\widehat V_A; V_M^* \ra C_M^*) \ra \Omega(A; V_M^* \ra C_M^*).
\end{equation*}
This is usually not a cochain map, but it restricts to one on $\Omega_\textnormal{inv}(\widehat V_A; V_M^*)$ (i.e. those pairs $(\omega_0,\omega_1)$ for which \eqref{eq: relation omega_V_M and omega_C_M} holds).

\begin{proposition}\label{prop: linear forms as ruths}
Let $\sigma: A \ra \widehat V_A$ be a splitting and consider the split ruth $\Omega(A; V_M^* \ra C_M^*)$. Then the map
\begin{equation*}
\sigma^*: \Omega_\textnormal{inv}(\widehat V_A; V_M^*) \ra \Omega(A; V_M^* \ra C_M^*)
\end{equation*}
is an isomorphism of ruths.
\end{proposition}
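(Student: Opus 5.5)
The plan is to prove that $\sigma^*$ is bijective and $\Omega A$-linear by writing down an explicit inverse, and then to check that it intertwines the differentials. I will mirror the groupoid argument of Proposition \ref{prop: invariant cochain complex is a ruth}, with the fat splitting replaced by the algebroid splitting $\sigma$.

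\textbf{Step 1: the inverse.} Use the splitting to decompose $\widehat V_A \cong A \oplus \textnormal{Hom}(V_M,C_M)$, writing every section as $\alpha = \sigma\bar\alpha + h_\alpha$, where $\bar\alpha$ is the projection to $A$. Let $(\omega_0,\omega_1)$ be invariant. Since $\omega_1$ is independent of its entries, it is already an element of $\Omega^{\bullet-1}(A;C_M^*)$. The condition $\iota_h\omega_0 = \omega_1 h$ then shows that $\omega_0$ is determined by its restriction $\sigma^*\omega_0$ together with $\omega_1$. Indeed, expanding multilinearly,
\begin{equation*}
\omega_0(\alpha_1,\dots,\alpha_\bullet) = \sigma^*\omega_0(\bar\alpha_1,\dots,\bar\alpha_\bullet) + \textstyle\sum_k \pm\, \omega_1(\bar\alpha_1,\dots,\widehat{\bar\alpha_k},\dots,\bar\alpha_\bullet)\, h_{\alpha_k}.
\end{equation*}
Terms containing two or more entries from $\textnormal{Hom}(V_M,C_M)$ vanish, because $\iota_{h'}\iota_h\omega_0 = \iota_{h'}(\omega_1 h)$ and $\omega_1$ is independent of its entries, in particular of $h'$.

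Conversely, given $(\eta_0,\eta_1) \in \Omega(A;V_M^*\ra C_M^*)$, I define $\omega_0$ by the displayed formula with $\sigma^*\omega_0$ replaced by $\eta_0$, and set $\omega_1 = \eta_1$ pulled back along $\widehat V_A \ra A$. This pair is $C^\infty M$-multilinear and skew, and it satisfies the invariance condition by construction. Hence $\sigma^*$ is a bijection of graded $\Omega A$-modules; the $\Omega A$-linearity is clear, since pullback respects wedge products with forms on $A$.

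\textbf{Step 2: compatibility with differentials.} I will check $\sigma^* d = (\partial^* + d_\nabla + R_2)\sigma^*$ on invariant pairs. Evaluate $d\omega_0$ on $\sigma\alpha_0,\dots,\sigma\alpha_\bullet$ using the Koszul formula.
\begin{itemize}
\item The Lie derivative terms give exactly $d_\nabla$, since $\nabla_\alpha = \nabla_{\sigma\alpha}$ by definition.
\item In the bracket terms, write $[\sigma\alpha_k,\sigma\alpha_\ell] = \sigma[\alpha_k,\alpha_\ell] - R_2(\alpha_k,\alpha_\ell)$. The $\sigma$-part yields the remaining terms of $d_\nabla$.
\item The $R_2$-part, using invariance $\iota_{R_2}\omega_0 = \omega_1 R_2$, yields precisely the $R_2$-component of the split differential applied to $\eta_1$.
\end{itemize}
For the $C_M^*$-component, $d(\omega_0,\omega_1)_1 = \partial^*\omega_0 + d\omega_1$. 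Pulling back gives $\partial^*\eta_0 + d_\nabla\eta_1$, because $\omega_1$ is pulled back from $A$ and the $\widehat V_A$-action on $C_M^*$ along $\sigma$ is the connection $\nabla$.

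\textbf{Main obstacle.} I expect the sign bookkeeping in the $R_2$ term to be the hardest part. This means matching the sign convention $(\omega_1 h)v = (-1)^{\bullet-1}\omega_1 hv$ and the sign of $R_2$ with the Koszul signs $(-1)^{k+\ell}$, so that the term is $+R_2$ rather than $-R_2$. I would fix the signs by checking the case $\bullet = 1$ directly, and then argue by derivation properties. Both sides are derivations over $\Omega A$ compatible with $\sigma^*$, so it suffices to verify the identity on $\Omega A$-generators, namely elements of degrees $0$ and $1$.
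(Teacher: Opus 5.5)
Your proposal is correct and follows essentially the same route as the paper. The inverse is built from the $\textnormal{Hom}(V_M,C_M)$-component $\alpha-\sigma\bar\alpha$ of the entries, matching the paper's map $(\omega_0,\omega_1)\mapsto(\omega_0 - h(\omega_1),\omega_1)$ up to sign conventions. The chain-map property comes from $[\sigma\alpha_k,\sigma\alpha_\ell]=\sigma[\alpha_k,\alpha_\ell]-R_2(\alpha_k,\alpha_\ell)$: the only discrepancy between $\sigma^* d$ and $(\partial^*+d_\nabla+R_2)\sigma^*$ consists of $R_2$-terms, which cancel by invariance and by the entry-independence of $\omega_1$. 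Your reduction to generators of degree $0$ and $1$ is valid, since the difference of the two composites is $\Omega A$-linear, but the paper does not need it because it computes this difference on all of $\Omega(\widehat V_A; V_M^* \ra C_M^*)$ directly.
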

\begin{proof}
Indeed, for all $\omega=(\omega_0,\omega_1) \in \Omega(\widehat V_A; V_M^* \ra C_M^*)$ we have
\begin{align*}
([d,\sigma^*]\omega)_0(\alpha_0,\dots,\alpha_\bullet) &= \textstyle\sum_{k < \ell} (-1)^{k+\ell+\bullet} \omega_1(\sigma\alpha_0,\dots,\widehat{\sigma\alpha_k},\dots,\widehat{\sigma\alpha_\ell},\dots,\sigma\alpha_\bullet)R_2(\alpha_k,\alpha_\ell) \\
&+ \textstyle\sum_{k < \ell} (-1)^{k+\ell} \omega_0(R_2(\alpha_k,\alpha_\ell),\sigma\alpha_0,\dots,\widehat{\sigma\alpha_k},\dots,\widehat{\sigma\alpha_\ell},\dots,\sigma\alpha_\bullet) \\
([d,\sigma^*]\omega)_1(\alpha_0,\dots,\alpha_{\bullet-1}) &= \textstyle\sum_{k<\ell} (-1)^{k+\ell} \omega_1(R_2(\alpha_k,\alpha_\ell),\sigma\alpha_0,\dots,\widehat{\sigma\alpha_k},\dots,\widehat{\sigma\alpha_\ell},\dots,\sigma\alpha_{\bullet-1})
\end{align*}
which are zero on forms satisfying \eqref{eq: relation omega_V_M and omega_C_M}. On the other hand, using the projection $\widehat V_A \ra A$, its inverse is given by
\begin{equation*}
\Omega(A; V_M^* \ra C_M^*) \ra \Omega(\widehat V_A; V_M^* \ra C_M^*) \qquad (\omega_0,\omega_1) \mapsto (\omega_0 - h(\omega_1), \omega_1)
\end{equation*}
where we wrote
\begin{equation*}
h: \Omega^{\bullet-1}(A; C_M^*) \ra \Omega^\bullet(\widehat V_A; V_M^*) \qquad h(\omega_1)(\alpha_1,\dots,\alpha_\bullet) \coloneq (-1)^{\bullet-1}\omega_1(\alpha_2,\dots,\alpha_\bullet)(\alpha_1-\sigma\alpha_1).
\end{equation*}
It is readily verified that the maps are $\Omega A$-linear, so this proves the statement.
\end{proof}

\begin{remark}
In each step of passing to a different model
\begin{equation*}
\Omega_\textnormal{lin} V_A \ra \Omega_\textnormal{inv}(\widehat V_A; V_M^*) \ra \Omega(A; V_M^* \ra C_M^*)
\end{equation*}
we are realizing that $\omega \in \Omega_{\textnormal{lin}} V_A$ is determined by ``fewer'' sections, at the cost of increasing the combinatorial complexity. In the first step, we can canonically reduce from thinking of forms on $V_A$ to forms on $\widehat V_A$. In the second step, we choose a splitting $\sigma: A \ra \widehat V_A$, which puts us in the position to think of forms on $A$.
\end{remark}

\begin{remark}\label{rema: fat extension not just extension}
As was already observed in \cite{LAruths}, a splitting $\sigma: A \ra \widehat V_A$ splits the groupoid $\widehat V_A$ as follows. As a vector bundle, it takes the form
\begin{equation*}
\widehat V_A \cong \textnormal{Hom}(V_M,C_M) \oplus A,
\end{equation*}
and its Lie bracket transforms into
\begin{equation*}
[(h,\alpha),(h',\alpha')] \coloneq (R_2(\alpha,\alpha') + \nabla_h \alpha' - \nabla_{h'}\alpha + [h,h'], [\alpha,\alpha']).
\end{equation*}
This should be seen as the fat extension $\widehat \calE_A$ associated to a split ruth $\calE_A$. In \cite{LAruths}, after a sequence of observations about what we now recognise as the fat algebroid of a split ruth (Proposition 3.9 --- Proposition 3.12), it is said that ``\textit{...there doesn't seem to be any construction which associates to a Lie algebroid extension of $A$ a representation up to homotopy, so that, applying $J^1(A)$ one recovers the adjoint representation.}''. While that might be true, we now know that the ruth can be recovered from $J^1A$ and its extension when we think of $J^1A$ as a fat extension. That is, we can recover the ruth if we additionally keep track of the cochain complex representation $A \ra TM$ of $J^1A$ and its compatibility with the extension.
\end{remark}

\subsection{From fat extensions to VB-algebroids}

The step we have not discussed yet is how to build the VB-algebroid out of an abstract fat extension. So, let $F_A$ be an abstract fat extension of $A$ over $C_M \ra V_M$. Consider the VB-algebroid
\begin{equation*}
F_A \ltimes (C_M \ra V_M) \Ra V_M
\end{equation*}
associated to the cochain complex representation of $F_A$ on $C_M \ra V_M$. That is, we consider
\begin{equation*}
F_A \ltimes (C_M \ra V_M) \coloneq C_M \times_M F_A \times_M V_M.
\end{equation*}
as a vector bundle, and we define
\begin{equation*}
[(\gamma,\alpha),(\gamma',\alpha')] \coloneq (\nabla_\alpha\gamma' - \nabla_{\alpha'}\gamma, [\alpha,\alpha'])
\end{equation*}
on sections $(\gamma,\alpha) = (\gamma,\alpha,1): V_M \ra F_A \ltimes (C_M \ra V_M)$ that are constant along the fibers of $V_M$.
\begin{remark}\label{rema: VB-algebroid of fat extension in terms of fat groupoid of VB-algebroid}
When $F_G = \widehat V_A$ for a VB-algebroid $V_A$, then $\widehat V_A \ltimes (C_M \ra V_M)$ projects onto $V_A$ via
\begin{equation*}
\widehat V_A \ltimes (C_M \ra V_M) \ra V_A \qquad (\gamma,\alpha,v) \mapsto (\gamma +_\ra 0_\da \alpha) +_\da \alpha v = (\gamma +_\da 0_\ra v) +_\ra \alpha v.
\end{equation*}
As expected, the kernel can be identified with $\textnormal{Hom}(V_M,C_M) \ltimes V_M$, where $V_M$ is seen as a representation of $\textnormal{Hom}(V_M,C_M)$ trivially.
\end{remark}

Following the above remark, we observe next that $F_A \ltimes (C_M \ra V_M)$ contains the ideal
\begin{equation*}
\textnormal{Hom}(V_M,C_M) \ltimes V_M \hookrightarrow F_A \ltimes (C_M \ra V_M) \qquad (h,v) \mapsto (-hv,h,v)
\end{equation*}
and then the quotient is the required VB-algebroid $V(F_G)$:

\begin{proposition}\label{prop: correspondence fat extensions VB-algebroids}
The assignment
\begin{equation*}
\{\textnormal{VB-algebroids}\} \ra \{\textnormal{Fat extensions}\} \qquad V_A \mapsto \widehat V_A
\end{equation*}
sets a one-to-one correspondence, and the assignment
\begin{equation*}
\{\textnormal{Fat extensions}\} \ra \{\textnormal{VB-algebroids}\} \qquad F_A \mapsto V(F_A)
\end{equation*}
is inverse to it (up to canonical isomorphisms).
\end{proposition}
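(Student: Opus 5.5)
The plan mirrors the groupoid argument of Proposition \ref{prop: essential surjectivity of fat construction}, with the two canonical isomorphisms constructed at the level of sections.

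\textbf{Step 1: $V(F_A)$ is a VB-algebroid.} For an abstract fat extension $F_A$, I first check that
\begin{equation*}
\textnormal{Hom}(V_M,C_M) \ltimes V_M \hookrightarrow F_A \ltimes (C_M \ra V_M), \qquad (h,v) \mapsto (-hv,h,v),
\end{equation*}
is an ideal that is a sub-VB-algebroid, so that the quotient $V(F_A)$ is a VB-algebroid over $A$. On core sections and sections constant along $V_M$, the ideal condition reduces to two facts:
\begin{itemize}
\item the representation of $F_A$ restricts on $\textnormal{Hom}(V_M,C_M)$ to the canonical one $\nabla_h\gamma=-h\partial\gamma$, $\nabla_h v=-\partial hv$;
\item the bracket action of $F_A$ on the ideal $\textnormal{Hom}(V_M,C_M)$ agrees with the tensor-product representation (the compatibility axiom of Definition \ref{def: fat extensions for LA}).
\end{itemize}
These are exactly the two axioms of a fat extension, and the computation parallels the proof of Proposition \ref{prop: action on Hom(V_M,C_M) equals representation}. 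Since the core of the ideal is trivial and it lies over $\textnormal{Hom}(V_M,C_M)\subset F_A$, the quotient has core $C_M$, side bundle $V_M$ and base $F_A/\textnormal{Hom}(V_M,C_M)=A$, with underlying complex $\partial$.

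\textbf{Step 2: $V(\widehat V_A)\cong V_A$.} Remark \ref{rema: VB-algebroid of fat extension in terms of fat groupoid of VB-algebroid} gives a surjective map $\widehat V_A\ltimes(C_M\ra V_M)\ra V_A$ whose kernel is the above ideal. I will verify it is a VB-algebroid morphism by checking brackets on generators. Core sections bracket to zero. For a linear section $\alpha$ and a core section $\gamma$, $[\alpha,\gamma]=\nabla_\alpha\gamma$ by definition \eqref{eq: representations of the fat algebroid actions}. Linear sections bracket as in $\widehat V_A$. Anchors match because the anchor on $V_M$-linear functions is $\nabla_\alpha$ on $V_M^*$. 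It follows that $V(\widehat V_A)\cong V_A$ canonically.

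\textbf{Step 3: $F_A\cong\widehat{V(F_A)}$.} I define a map $F_A\ra\widehat{V(F_A)}$ by sending $\alpha_a$ to the linear map $v\mapsto[0,\alpha_a,v]$, which is the image of a linear section constant in $v$. This map:
\begin{itemize}
\item is a vector bundle map over $A$ compatible with the inclusions of $\textnormal{Hom}(V_M,C_M)$ (use $[0,h,v]=[hv,0,v]$);
\item intertwines brackets, anchors and the representations on $C_M\ra V_M$, by Step 1.
\end{itemize}
Both $F_A$ and $\widehat{V(F_A)}$ are extensions of $A$ by $\textnormal{Hom}(V_M,C_M)$, so the five lemma on vector bundles makes the map an isomorphism. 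This step is simpler than the groupoid case, since there is no need to normalize the $h$ and $H^v$ components pointwise.

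\textbf{Main obstacle.} I expect the hardest part to be Step 1: showing the ideal is closed under brackets with arbitrary linear sections of $F_A\ltimes(C_M\ra V_M)$, not just constant ones. The anchor acts on the functions $v\mapsto -hv$ through $\nabla$, and the sign conventions of Remark \ref{rema: canonical cochain complex representation Hom(V_M,C_M)} must match the bracket $[h,h']=h'\partial h-h\partial h'$. I would do this by a local computation in frames of $V_M$ and $C_M$, extending by the Leibniz rule.
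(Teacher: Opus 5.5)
Your proposal is correct and follows the paper's own route. The paper builds $V(F_A)$ as the quotient of $F_A\ltimes(C_M\ra V_M)$ by the ideal $(h,v)\mapsto(-hv,h,v)$ and identifies $V(\widehat V_A)\cong V_A$ through the projection of Remark \ref{rema: VB-algebroid of fat extension in terms of fat groupoid of VB-algebroid}. It then leaves the rest as the infinitesimal analogue of Proposition \ref{prop: essential surjectivity of fat construction}, which is what your Steps 1--3 carry out, and your five-lemma shortcut in Step 3 in place of the groupoid normalization argument is legitimate. In Step 1, state explicitly that the anchor vanishes on the ideal: the vertical term $\partial(-hv)$ from the core component cancels the contribution of $\nabla_h$ on $V_M$, and this is what makes your Leibniz-rule extension to arbitrary sections work.
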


Notice that the above result generalises \cite{ChenLiuSheng}.

\subsection{Comments on PB-algebroids and core-transitive double algebroids}\label{sec: Comments on PB-algebroids and double algebroids}

The notion of PB-algebroid has not been developed in the literature yet. But the discussion in Section \ref{sec: PB-groupoids} gives a good hint on what the ``PB-algebroid'' of a fat extension should be. Also for algebroids we can formulate a category of (vertically/horizontally) \textit{core-transitive} double algebroids, and such double algebroids correspond to \textit{core extensions}. This is a slight generalisation of the result in \cite{CorediagramLA} (similar to the generalisation for groupoids presented in Appendix \ref{app: double groupoids}). The definition is as follows.

\begin{definition}\label{def: core extension LA}
A \textit{core extension} is a diagram
\begin{center}
\begin{tikzcd}
    0 \ar[r] & \mathfrak{h} \ar[r] & \ar[dl] F \ar[r] & A_\ra \ar[r] & 0 \\
    \phantom{A} & A_\da \ar[u, phantom, "\textnormal{\Large$\circlearrowright$}"] & \phantom{A} & \phantom{A} & \phantom{A}
\end{tikzcd}
\end{center}
which is a short exact sequence of Lie algebroids, and the action $A_\da$ $\rotatebox[origin=c]{270}{\large$\circlearrowright$}$ $\mathfrak{h}$ is a \textit{bundle of Lie algebras representation}, i.e. a Lie algebroid action $\nabla$ of $A_\da$ on $\mathfrak{h}$ that is a derivation with respect to the bracket of $\mathfrak{h}$:
\begin{equation*}
\nabla_\alpha[h_1,h_2] = [\nabla_\alpha h_1,h_2] + [h_1, \nabla_\alpha h_2].
\end{equation*}
Moreover, the map $F \ra A_\da$ restricted to $\mathfrak{h}$ intertwines $\nabla_\alpha$ with $[\alpha,\cdot]$, and the \textit{Peiffer identity} holds: the action of $F$ on $\mathfrak{h}$, viewing $\mathfrak{h} \subset F$ as an ideal, agrees with the action of $F$ on $\mathfrak{h}$ induced by $A_\da$ $\rotatebox[origin=c]{270}{\large$\circlearrowright$}$ $\mathfrak{h}$ and the map $F \ra A_\da$.
\end{definition}

\begin{remark}\label{rema: PB-algebroids}
Although we do not discuss it in full detail, we mention here what the main objects are that define the ``general linear PB-algebroid'' associated to a fat extension of algebroids. Since we do not discuss ``$2$-actions'' (and in what way they are ``principal''), we mainly hint to some analogies with our discussion in Section \ref{sec: PB-groupoids}.

If we are given a fat extension $F_A$ of $A$ over $C_M \ra V_M$, we can consider
\begin{equation*}
F_A \ltimes \mathfrak{gl}(C_M,V_M) \Ra \mathfrak{gl}(C_M,V_M)
\end{equation*}
associated to the representation of $F_A$ on $C_M \ra V_M$ (see Remark \ref{rema: cochain complex representation algebroid in terms of gl}). This Lie algebroid carries a ``$2$-action'' of the strict Lie $2$-algebroid
\begin{equation*}
\mathfrak{gl}(C_M,V_M)_A \Ra \mathfrak{gl}(C_M,V_M)_M \Ra \textnormal{Hom}(C_M,V_M).
\end{equation*}
Following the above discussion, this strict Lie $2$-algebroid is actually determined by a \textit{crossed module of Lie algebroids}, so a core extension as above for which $\mathfrak{h}=F$. The bundle of Lie algebras $\mathfrak{h}$ in this case is
\begin{equation*}
\mathfrak{pert}(C_M,V_M) \coloneq \textnormal{Hom}(C_M,V_M) \oplus \textnormal{Hom}(V_M,C_M) \Ra \textnormal{Hom}(C_M,V_M)
\end{equation*}
whose bracket is given by (we write $h_\partial$ for the value of the section $h \in \Gamma \mathfrak{pert}(C_M,V_M)$ at $\partial$):
\begin{equation*}
[h,h']_\partial \coloneq h'_\partial \partial h_\partial - h_\partial \partial h'_\partial.
\end{equation*}
The Lie algebroid $\mathfrak{gl}(C_M,V_M)_M$ (see Remark \ref{rema: cochain complex representation algebroid in terms of gl}) acts on $\mathfrak{pert}(C_M,V_M)$ as a bundle of Lie algebras representation via
\begin{equation*}
(\nabla_Dh)_\partial \coloneq (D^{C_M}h - h D^{V_M})_\partial.
\end{equation*}
The Lie algebroid map that is part of the core extension is given by
\begin{equation*}
\mathfrak{pert}(C_M,V_M) \ra \mathfrak{gl}(C_M,V_M)_M \qquad h \mapsto (D^{C_M}_\partial \coloneq -h_\partial\partial, \textnormal{ } D^{V_M}_\partial \coloneq -\partial h_\partial).
\end{equation*}
As in Example \ref{exa: general linear 2-groupoid}, we can think of the resulting strict Lie $2$-algebroid as being of the form
\begin{equation*}
\mathfrak{gl}(C_M,V_M)_A \coloneq \mathfrak{pert}(C_M,V_M) \ltimes \mathfrak{gl}(C_M,V_M)_M.
\end{equation*}
\end{remark}

\subsection{Morphisms of fat extensions}

We briefly introduce here the notion of morphisms of fat extensions of Lie algebroids. We do this by introducing it directly as an infinitesimally multiplicative tensor. Given two fat extensions $F_{A,1}$ and $F_{A,2}$ of $A$, we have a Lie algebroid
\begin{equation*}
F_{A,1} \times_A F_{A,2} \Ra M.
\end{equation*}
This Lie algebroid carries representations that are the tensor product of a representation of $F_{A,1}$ with a representation of $F_{A,2}$.
\begin{definition}\label{def: morphisms of fat extensions}
Consider fat extensions $F_{A,1}$ and $F_{A,2}$ of $A$, over $C_{M,1} \ra V_{M,1}$ and $C_{M,2} \ra V_{M,2}$, respectively. Consider linear maps
\begin{center}
\begin{tikzcd}
C_{M,1} \ar[d, "\Phi^{C_M}"] & V_{M,1} \ar[d, "\Phi^{V_M}"] \\
C_{M,2} & V_{M,2}
\end{tikzcd}
\end{center}
and a smooth map (that is linear over $M$)
\begin{equation*}
f: F_{A,1} \times_A F_{A,2} \ra \textnormal{Hom}(V_{M,1},C_{M,2}).
\end{equation*}
Then $(f,\Phi)$ is a \textit{morphism of fat extensions} if it is invariant: for all $v \in \Gamma V_M$ and $(h_1,h_2) \in \textnormal{Hom}(V_{M,1},C_{M,1}) \oplus \textnormal{Hom}(V_{M,2},C_{M,2})$,
\begin{equation*}
f(h_1,h_2) = h_2\Phi^{V_M} - \Phi^{C_M} h_1,
\end{equation*}
and infinitesimally multiplicative: $d(f,\Phi)=0$, where $d$ is the differential associated to the $3$-term cochain complex representation
\begin{equation*}
\textnormal{Hom}(V_{M,1}, C_{M,2}) \ra \textnormal{Hom}(C_{M,1}, C_{M,2}) \oplus \textnormal{Hom}(V_{M,1}, V_{M,2}) \ra \textnormal{Hom}(C_{M,1}, V_{M,2})
\end{equation*}
of $F_{A,1} \ra F_{A,2}$.
\end{definition}

\begin{remark}\label{rema: morphisms of fat extensions algebroids more concretely}
The multiplicativity conditions for a morphism $(f,\Phi)$ are given more concretely by the following three conditions. The first one is that the map $\Phi$ is a cochain map with respect to the differential $\partial$. The second condition realises $f(\alpha_1,\alpha_2)$, for all $(\alpha_1,\alpha_2) \in F_{A,1} \times_A F_{A,2}$, as a homotopy measuring the failure of $\Phi$ to intertwine $\nabla_{\alpha_1}$ with $\nabla_{\alpha_2}$:
\begin{align*}
f(\alpha_1,\alpha_2)\partial &= \nabla_{\alpha_1}\Phi^{C_M} - \Phi^{C_M}\nabla_{\alpha_2} \\
\partial f(\alpha_1,\alpha_2) &= \nabla_{\alpha_1}\Phi^{V_M} - \Phi^{V_M}\nabla_{\alpha_2}.
\end{align*}
The last multiplicativity condition can be written as follows: given sections $\alpha, \beta \in \Gamma (F_{A,1} \times_A F_{A,2})$,
\begin{equation*}
f([\alpha,\beta])v = \nabla_{\alpha_2}(f(\beta)v) - f(\beta)(\nabla_{\alpha_1} v) + f(\alpha)(\nabla_{\beta_1} v) - \nabla_{\beta_2}(f(\alpha)v),
\end{equation*}
where we wrote $\alpha = (\alpha_1,\alpha_2)$ and $\beta = (\beta_1,\beta_2)$.
\end{remark}

Composition is defined in the same way as in Definition \ref{def: composition of morphisms}.

\begin{definition}\label{def: composition of morphisms algebroids}
Given maps of fat extensions
\begin{equation*}
F_{A,1} \times_A F_{A,2} \xra{f_{21}} \textnormal{Hom}(V_{M,1},C_{M,2}) \qquad F_{A,2} \times_A F_{A,3} \xra{f_{32}} \textnormal{Hom}(V_{M,2},C_{M,3}),
\end{equation*}
over cochain maps $\Phi_{21}$ and $\Phi_{32}$, respectively, the composition $f_{31} = f_{32} \circ f_{21}$ over $\Phi_{31} = \Phi_{32} \circ \Phi_{21}$ is given by
\begin{equation*}
f_{31}(\alpha_1, \alpha_3) = f_{32}(\alpha_2, \alpha_3)\Phi_{21}^{V_M} + \Phi_{32}^{C_M}f_{21}(\alpha_1, \alpha_2),
\end{equation*}
where $\alpha_2 \in F_{G,2}$ is arbitrary.
\end{definition}

We can arrive at the above definition of morphism by starting with a VB-algebroid map
\begin{equation*}
\Phi^{V_A}: V_{A,1} \ra V_{A,2}
\end{equation*}
and defining a comparison map
\begin{equation*}
f_\Phi: \widehat V_{A,1} \ra \widehat V_{A,2} \qquad f_\Phi(\alpha_1,\alpha_2)v \coloneq (\alpha_2 (\Phi^{V_M}v) -_\da \Phi^{V_A}(\alpha_1v)) -_\ra 0_\da\alpha.
\end{equation*}
The map $\Phi^{V_A}$ is recovered from $(f_\Phi,\Phi)$ by realising that
\begin{equation*}
\Phi^{V_A}((\gamma +_\ra 0_\da\alpha) +_\da \alpha_1 v) = ((\Phi^{C_M}(\gamma) - f_\Phi(\alpha_1,\alpha_2)v) +_\ra 0_\da\alpha) +_\da \alpha_2\Phi^{V_M}(v).
\end{equation*}
We then have:

\begin{theorem}\label{thm: equivalence of VBLA and fat}
The functor
\begin{equation*}
\{\textnormal{VB-algebroids}\} \ra \{\textnormal{Fat extensions}\} \qquad V_A \ra \widehat V_A \quad \Phi \mapsto f_\Phi
\end{equation*}
sets an equivalence of categories.
\end{theorem}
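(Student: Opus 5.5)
The proof follows the template of Theorem \ref{thm: equivalence of VBLG and fat}. Essential surjectivity is already contained in Proposition \ref{prop: correspondence fat extensions VB-algebroids}: every fat extension $F_A$ is isomorphic to $\widehat{V(F_A)}$. What remains is to show three things: that $\Phi \mapsto f_\Phi$ lands in morphisms in the sense of Definition \ref{def: morphisms of fat extensions}, that it respects composition and identities, and that it is bijective on hom-sets.

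For well-definedness, I will first note that $f_\Phi(\alpha_1,\alpha_2)v$ lies in the core. Indeed, $\alpha_2(\Phi^{V_M}v)$ and $\Phi^{V_A}(\alpha_1 v)$ both project to $\Phi^{V_M}v$ under $\pr_\ra$. They also project to the same point of $A$, because $\alpha_1$ and $\alpha_2$ lie over the same element of $A$. Invariance is then checked on the kernel. For $h_i \in \textnormal{Hom}(V_{M,i},C_{M,i})$ viewed in $\widehat V_{A,i}$ via $v \mapsto h_iv +_\da 0_\ra v$, linearity of $\Phi^{V_A}$ together with $\Phi^{V_A}|_{C_M}=\Phi^{C_M}$ gives $f_\Phi(h_1,h_2)=h_2\Phi^{V_M}-\Phi^{C_M}h_1$ directly.

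For infinitesimal multiplicativity, I will use the three concrete conditions of Remark \ref{rema: morphisms of fat extensions algebroids more concretely}. That $\Phi$ is a cochain map is standard. For the homotopy identities and the bracket identity, I will translate everything into brackets of linear and core sections of $V_{A,2}$. The key tool is that a VB-algebroid morphism over the identity of $A$ is characterised by $\Phi^{V_A}$-relatedness of sections. Concretely, the linear section $\alpha_2\circ\Phi^{V_M}$ and the pushforward of $\alpha_1$ differ by the core-valued correction $f_\Phi(\alpha_1,\alpha_2)$, seen as a $\Phi^{V_M}$-twisted core section. I will then expand $[\alpha_2,\beta_2]$ and $\Phi^{V_A}[\alpha_1,\beta_1]$ using the Leibniz rule and the representations \eqref{eq: representations of the fat algebroid actions}, and the difference should produce exactly the four-term formula. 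This step is the main obstacle, since brackets of non-projectable objects must be handled with care.

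A cleaner alternative, which I would try first, is to pass to the dual side. By Proposition \ref{prop: linear forms as forms on the fat algebroid}, VB-algebroid morphisms correspond to cochain maps of $\Omega A$-modules $\Omega_\textnormal{lin}V_{A,2}\to\Omega_\textnormal{lin}V_{A,1}$ that are determined in degrees $0$ and $1$. Written in the invariant models $\Omega_\textnormal{inv}(\widehat V_{A,i};V_{M,i}^*)$, the pair $(f_\Phi,\Phi)$ is exactly the degree $\le 1$ data of such a map. Compatibility with $d$ on the generators of degree $0$ and $1$ then unwinds precisely to $d(f,\Phi)=0$ for the $3$-term representation.

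For fullness and faithfulness, the inverse is given by the displayed reconstruction formula $\Phi^{V_A}((\gamma+_\ra 0_\da\alpha)+_\da\alpha_1v)=\ldots$. The plan here has four checks:

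\begin{enumerate}
\item Independence of the choice of $(\alpha_1,\alpha_2)$ over a given point of $A$ follows exactly from invariance, in the same way as the groupoid case in Lemma \ref{lemm: f_Phi is invariant}.
\item The result is fibrewise linear for both vector bundle structures.
\item The same computation as above, read backwards, shows that multiplicativity makes $\Phi^{V_A}$ bracket-preserving on linear and core sections, which generate $\Gamma_\ra V_A$. It is also anchor-compatible, so it is a VB-algebroid map.
\item Both composites are the identity, by construction.
\end{enumerate}

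Composition is respected by the same three-line computation as in Proposition \ref{prop: VB-map to fat map}, inserting $\pm\Phi_{32}(\alpha_2\Phi_{21}v)$. Identities map to the comparison map $h(\alpha,\alpha')=\alpha'-\alpha$, which is the identity morphism of fat extensions.
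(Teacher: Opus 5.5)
Your proposal is correct and follows the same route the paper takes. The paper leaves this proof implicit, by analogy with Theorem \ref{thm: equivalence of VBLG and fat}: essential surjectivity comes from Proposition \ref{prop: correspondence fat extensions VB-algebroids}, and full faithfulness comes from the comparison map $f_\Phi$ together with the displayed reconstruction formula for $\Phi^{V_A}$, with composition handled as in Proposition \ref{prop: VB-map to fat map}. The one step needing care is your step (3): since $\Phi^{V_A}$ covers $\Phi^{V_M}$ rather than an identity map, ``bracket-preserving on linear and core sections'' should be made precise through your dual alternative, namely the standard criterion that a vector bundle map is an algebroid morphism iff pulling back forms commutes with $d$, which here reduces to linear forms of degree $0$ and $1$ (forms pulled back from $A$ are automatic; this criterion, not Proposition \ref{prop: linear forms as forms on the fat algebroid}, is what underlies the correspondence you invoke), and it then gives fullness as well as well-definedness.
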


\begin{remark}
Similarly, we can establish equivalences of the form
\begin{align*}
\{\textnormal{Fat extensions}\} &\ra \{\textnormal{VB-algebroids}\} \qquad F_A \mapsto V(F_A) \\
\{\textnormal{Fat extensions}\} &\ra \{\textnormal{Abstract $2$-term ruths}\} \qquad F_A \mapsto C_{\textnormal{inv}}(F_A; C_M) \\
\{\textnormal{Split $2$-term ruths}\} &\ra \{\textnormal{Split fat extensions}\} \qquad \calE_A \mapsto \widehat \calE_A
\end{align*}
but we will not go any further into these details here.
\end{remark}

In the next section, Section \ref{sec: Fat Lie theory}, we will give a detailed description of the differentiation procedure in ``Fat Lie theory''. Apart from differentiating fat extensions, we will work out the details of the differentiation of the fat groupoid of a VB-groupoid to the fat algebroid of a VB-algebroid. The differentiation of the associated abstract $2$-term ruths is, as expected, a type of Van Est map.

\section{Fat Lie theory}\label{sec: Fat Lie theory}

In this section we explain the differentiation procedure of fat extensions. As we will see next, the differentiation procedure for fat extensions is relatively straightforward. Afterwards, we go through the details of differentiating the fat groupoid of a VB-groupoid to the fat algebroid of its VB-algebroid. To do that, some particular identifications have to be made. This step is also discussed in \cite{DrummondEgea} (Proposition 2.7), but we elaborate on some aspects. Afterwards, we comment on the Van Est map in this setup.

\subsection{The Fat Lie functor}

To obtain the fat extension of a the VB-algebroid of a VB-groupoid, we can simply apply the Lie functor multiple times in several ways:

\begin{proposition}\label{prop: Lie functor fat extensions}
Let $F_G$ be a fat extension of $G$ over $C_M \ra V_M$. Set
\begin{equation*}
F_A \coloneq \textnormal{Lie } F_G.
\end{equation*}
Moreover, consider the cochain complex representation $C_M \ra V_M$ of $A$ obtained by differentiation, and the exact sequence
\begin{center}
\begin{tikzcd}
0 \ar[r] & \textnormal{Hom}(V_M,C_M) \ar[r] & F_A \ar[r] & A \ar[r] & 0,
\end{tikzcd}
\end{center}
which is obtained from the fat extension of $G$ by differentiation. Then, together with this structure, $F_A$ is a fat extension of $A$ over $C_M \ra V_M$.
\end{proposition}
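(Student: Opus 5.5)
The plan is to apply the Lie functor to each piece of data in Definition \ref{def: fat extensions} and check that each axiom of Definition \ref{def: fat extensions for LA} is the derivative of the corresponding global axiom.

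\textbf{Extension and kernel.} Since the short exact sequence
\begin{equation*}
1 \ra \textnormal{H}(V_M,C_M) \ra F_G \ra G \ra 1
\end{equation*}
consists of Lie groupoid maps over the identity, with $F_G \ra G$ a surjective submersion, differentiating gives a short exact sequence of Lie algebroids
\begin{equation*}
0 \ra \textnormal{Lie }\textnormal{H}(V_M,C_M) \ra F_A \ra A \ra 0.
\end{equation*}
Exactness in the middle follows because $\textnormal{H}(V_M,C_M)$ is the fibre of the submersion over the units. The first concrete check is $\textnormal{Lie }\textnormal{H}(V_M,C_M) = \textnormal{Hom}(V_M,C_M)$ with bracket $[h,h'] = h'\partial h - h\partial h'$. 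Here $\textnormal{H}(V_M,C_M)$ is an open neighbourhood of the zero section in $\textnormal{Hom}(V_M,C_M)$ with unit $0$, so its Lie algebroid is $\textnormal{Hom}(V_M,C_M)$ with zero anchor. The bracket comes from the commutator of the quadratic part of the product $h\cdot h' = h + h' + h\partial h'$. Differentiating $t,s$ in $\textnormal{Ad}$, the commutator is $h\partial h' - h'\partial h$ up to the sign convention for brackets from right-invariant sections. I would fix that convention so it agrees with Definition \ref{def: Hom(V_M,C_M) bracket}; this is a possible sign nuisance but not a real obstacle.

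\textbf{Representation.} The cochain complex representation $F_G \ra \textnormal{GL}(C_M,V_M)_M$ (Remark \ref{rema: cochain complex representation as groupoid map}) differentiates to an algebroid map $F_A \ra \mathfrak{gl}(C_M,V_M)_M$, which is a cochain complex representation by Remark \ref{rema: cochain complex representation algebroid in terms of gl}. The global representation of $\textnormal{H}(V_M,C_M)$ is $h \mapsto (1 + h\partial, 1 + \partial h)$. Its derivative at $0$ is $h \mapsto (h\partial, \partial h)$, up to the same sign convention. This matches the canonical representation of Remark \ref{rema: canonical cochain complex representation Hom(V_M,C_M)}, where the minus signs reflect the convention used. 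By functoriality of $\textnormal{Lie}$, restricting the differentiated representation to $\textnormal{Hom}(V_M,C_M)$ is the derivative of the restriction.

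\textbf{Compatibility (main step).} Write the global compatibility as the equality of two groupoid actions of $F_G$ on the bundle of groups $\textnormal{H}(V_M,C_M)$: conjugation inside $F_G$, and the action $H_g \cdot_{V_M} h \cdot_{C_M} H_g^{-1}$ induced by the representation on $\textnormal{Hom}(V_M,C_M) = V_M^*\otimes C_M$. The latter is the restriction of a linear representation to an open subset containing $0$. Hence its linearisation along the zero section is that same linear representation, and its derivative along $F_A$ is the tensor product connection $\nabla_\alpha(f\otimes\gamma) = \nabla_\alpha f \otimes \gamma + f\otimes\nabla_\alpha\gamma$.

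The conjugation action, linearised at the identity of $\textnormal{H}$, is the adjoint action of $F_G$ on the ideal $\textnormal{Lie}\,\textnormal{H}$. Differentiating this along $F_A$ gives the bracket $[\alpha,\cdot]$ restricted to the ideal. I would make this rigorous via right-invariant sections: for $\alpha \in \Gamma F_A$ with flow $\phi_t$ by local bisections, $\frac{d}{dt}|_0\, \textnormal{Ad}_{\phi_t}h = [\alpha,h]$. Equality of the two global actions then yields equality of their derivatives, as in the proof of Proposition \ref{prop: action on Hom(V_M,C_M) equals representation}. The main obstacle is exactly this justification that differentiating a groupoid action on a bundle of groups twice gives the algebroid bracket action on the ideal. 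I would handle it with the standard bisection-flow argument.
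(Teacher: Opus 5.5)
Your proposal is correct and is essentially the paper's argument. The paper gives no separate proof beyond the remark that one ``simply applies the Lie functor'' to each piece of the fat extension, and your three steps are exactly the details behind that remark: exactness together with $\textnormal{Lie}\,\textnormal{H}(V_M,C_M)=\textnormal{Hom}(V_M,C_M)$, differentiating the cochain complex representation, and differentiating the equality of the two conjugation actions through the adjoint action on the ideal. As a small reassurance, with the standard right-invariant convention for the Lie functor, the bracket $h'\partial h - h\partial h'$ and the signs $\nabla_h = -h\partial$, $\nabla_h = -\partial h$ come out exactly as in the paper's definitions, so no convention needs to be adjusted by hand.
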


The assignment is functorial, for morphisms can be differentiated as well. Following Remark \ref{rema: map of fat extensions is a multiplicative tensor} and Remark \ref{rema: morphisms of fat extensions algebroids more concretely}, this should be seen as an application of a Van Est map. A general form of such Van Est maps will be discussed in \cite{Multiplicativetensors}. Here is the special case of interest:

\begin{proposition}
Let $f_G: F_{G,1} \ra F_{G,2}$ be a morphism of fat extensions over $G$. Then
\begin{equation*}
f_A(\alpha_1,\alpha_2) \coloneq \partial_{\epsilon=0} f(\exp(\epsilon \alpha_1), \exp(\epsilon \alpha_2))
\end{equation*}
defines a morphism of fat extensions $f_A: F_{A,1} \ra F_{A,2}$ over $A$. This defines a functor
\begin{equation*}
\textnormal{Lie}: \{\textnormal{fat extensions of } G\} \ra \{\textnormal{fat extensions of } A\} \qquad F_G \mapsto F_A \quad f_G \mapsto f_A.
\end{equation*}
\end{proposition}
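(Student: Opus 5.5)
We take the defining identities of a morphism of fat extensions of groupoids (Definition \ref{def: fat maps}), evaluate them along the curves $(\exp(\epsilon\alpha_1),\exp(\epsilon\alpha_2))$, and differentiate at $\epsilon=0$. Everything happens near the unit section of $F_{G,1}\times_G F_{G,2}$. The first task is to make sense of $f_A$. For $(\alpha_1,\alpha_2)$ in $F_{A,1}\times_A F_{A,2}$, the two $\alpha_i$ have the same image in $A$. So we choose a curve $\epsilon\mapsto(H_{\epsilon,1},H_{\epsilon,2})$ in $F_{G,1}\times_G F_{G,2}$ through $(1_x,1_x)$ with velocity $(\alpha_1,\alpha_2)$; the expression $\exp(\epsilon\alpha_i)$ is read as such a curve, or as the flow of right-invariant extensions.

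We must show that $\partial_{\epsilon=0}f(H_{\epsilon,1},H_{\epsilon,2})$ depends only on the velocity. The key fact is $f(1_x,1_x)=0$. To see this, apply multiplicativity with $H_{g,i}=H_{h,i}=1_x$: this gives $f(1,1)=f(1,1)+f(1,1)$. Since the value at the base point is zero, the derivative is a well-defined fibrewise linear map into $\textnormal{Hom}(V_{M,1},C_{M,2})$. This is just the differential of $f$ at $(1_x,1_x)$ restricted to $F_{A,1}\times_AF_{A,2}$.

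Next we check invariance. Take $h_i\in\textnormal{Hom}(V_{M,i},C_{M,i})$ and the curves $\epsilon h_i\in\textnormal{H}(V_{M,i},C_{M,i})$, which lie in the fat groupoids and have velocity $h_i$. Invariance of $f$ with $H'_{g,i}=1$ gives
\begin{equation*}
f(\epsilon h_1,\epsilon h_2)=h_2(\epsilon h_2,1)\Phi^{V_M}-\Phi^{C_M}h_1(\epsilon h_1,1).
\end{equation*}
The identity morphism of Remark \ref{rema: identity map of fat extensions} gives $h(\epsilon h,1)=(1+\epsilon h)^{-1}-1$, whose derivative is $-h$. Differentiating therefore produces $f_A(h_1,h_2)=\Phi^{C_M}h_1-h_2\Phi^{V_M}$. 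Up to the sign convention fixed for the infinitesimal inclusion of $\textnormal{Hom}(V_M,C_M)$, this is the invariance condition of Definition \ref{def: morphisms of fat extensions}. We will adjust the sign so that it matches the identification of $\textnormal{Lie}\,\textnormal{H}(V_M,C_M)$ used in Proposition \ref{prop: Lie functor fat extensions}.

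For the first two multiplicativity conditions, we differentiate the homotopy identities of Lemma \ref{lemm: f_Phi is multiplicative 2}. The representation of $F_A$ is the derivative of the representation of $F_G$, so these identities become $f_A(\alpha)\partial=\nabla_{\alpha_2}\Phi^{C_M}-\Phi^{C_M}\nabla_{\alpha_1}$ and its analogue on $V_M$. The cochain-map condition on $\Phi$ is unchanged.

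The main obstacle is the bracket condition of Remark \ref{rema: morphisms of fat extensions algebroids more concretely}. Following Remark \ref{rema: map of fat extensions is a multiplicative tensor}, we pass to $\widetilde f$, a groupoid $1$-cocycle on $F_{G,1}\times_GF_{G,2}$ with values in the representation $\textnormal{Hom}(V_{M,1},C_{M,2})$. Its cocycle identity reads $\widetilde f(HK)=\widetilde f(H)+H\cdot\widetilde f(K)$, and $\widetilde f$ agrees with $f$ to first order at units. We then invoke the standard Van Est fact: differentiating a multiplicative $1$-cocycle $c$ gives an algebroid $1$-cocycle $\nabla_\alpha(c_A\beta)-\nabla_\beta(c_A\alpha)-c_A[\alpha,\beta]=0$. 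The proof is to differentiate $c(\exp(s\alpha)\exp(t\beta)\exp(-s\alpha))$ at $s=t=0$ using right-invariant flows.

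Finally, we translate this $\widetilde f$-identity back to the $f$-form of Remark \ref{rema: morphisms of fat extensions algebroids more concretely}. The correction terms $f(\beta)(\nabla_{\alpha_1}v)$ come from differentiating the twist $H_{g,1}^{-1}\cdot v$ relating $f$ and $\widetilde f$. Functoriality is then checked directly. Differentiating the composition formula of Definition \ref{def: composition of morphisms} with $H_{g,2}=\exp(\epsilon\alpha_2)$ gives the formula of Definition \ref{def: composition of morphisms algebroids}. The identity morphism $h$ differentiates to the identity, by the same computation as in the invariance step.
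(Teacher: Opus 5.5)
Your route is the one the paper intends. The paper gives no detailed proof; it only observes, in Remark~\ref{rema: map of fat extensions is a multiplicative tensor}, that a morphism is an invariant multiplicative $1$-cocycle to which a Van Est map applies. Several of your steps are fine as they stand:
\begin{itemize}
\item $f(1_x,1_x)=0$ makes $f_A$ well defined;
\item the Van Est argument for $\widetilde f$ yields the bracket condition;
\item functoriality follows by differentiating Definition~\ref{def: composition of morphisms} along curves lying over the same $\exp(\epsilon\alpha)$.
\end{itemize}
The gap is the sign bookkeeping in the invariance and homotopy steps, and the repair you announce is not available. The identification $\textnormal{Lie}\,\textnormal{H}(V_M,C_M)=\textnormal{Hom}(V_M,C_M)$ is canonical, since $\textnormal{H}$ is an open subset of a vector bundle. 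Proposition~\ref{prop: Lie functor fat extensions} needs this identification to carry the canonical bracket $[k,k']=k'\partial k-k\partial k'$ and the canonical representation $\nabla_k=-k\partial$, $-\partial k$. Replacing $k$ by $-k$ reverses both, so $F_A$ would no longer be a fat extension.

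The sign you found comes from the invariance identity as printed in Definition~\ref{def: fat maps} and Lemma~\ref{lemm: f_Phi is invariant}. It is reversed relative to what morphisms actually satisfy. Expanding $f_\Phi(H_{g,1},H_{g,2})v=r_{g^{-1}}(H_{g,2}\Phi^{V_M}v-\Phi^{V_G}H_{g,1}v)$ gives
\begin{equation*}
f_\Phi(H_{g,1},H_{g,2})-f_\Phi(H_{g,1}',H_{g,2}')=\Phi^{C_M}h_1(H_{g,1},H_{g,1}')-h_2(H_{g,2},H_{g,2}')\Phi^{V_M}.
\end{equation*}
This is also the only version satisfied by the identity morphism of Remark~\ref{rema: identity map of fat extensions}. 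Test the printed version on $H_{g,i}=k_i\in\textnormal{H}(V_{M,i},C_{M,i})$ and $H_{g,i}'=1$: it is compatible with the homotopy conditions only if $(k_2\Phi^{V_M}-\Phi^{C_M}k_1)\partial=0$ for all $k_i$.

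With the correct sign, use $h(\epsilon k,1)=-\epsilon k$. This holds exactly; your $(1+\epsilon k)^{-1}-1$ omits the factor $H_g\cdot v$. You then get $f(\epsilon k_1,\epsilon k_2)=\epsilon(k_2\Phi^{V_M}-\Phi^{C_M}k_1)$, which is Definition~\ref{def: morphisms of fat extensions} with no adjustment.

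Second, differentiate the homotopy identities of Definition~\ref{def: fat maps}; Lemma~\ref{lemm: f_Phi is multiplicative 2} concerns $f_\Phi$ only. You must use the convention that produces $\nabla_k=-k\partial$, namely $\nabla_\alpha s=\partial_{\epsilon=0}\exp(\epsilon\alpha)^{-1}\cdot s(\bft\exp(\epsilon\alpha))$. Apply $H_{\epsilon,2}^{-1}$ to both sides before differentiating. This gives
\begin{equation*}
f_A(\alpha)\partial=\Phi^{C_M}\nabla_{\alpha_1}-\nabla_{\alpha_2}\Phi^{C_M},\qquad \partial f_A(\alpha)=\Phi^{V_M}\nabla_{\alpha_1}-\nabla_{\alpha_2}\Phi^{V_M}.
\end{equation*}
This is the opposite of what you wrote, and it is the consistent, well-typed reading of the first display in Remark~\ref{rema: morphisms of fat extensions algebroids more concretely}. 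Setting $\alpha_i=k_i$ checks it against invariance.

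Your two sign slips cancel each other, which is why your list looked consistent. Flipping only the invariance sign would leave you with contradictory conditions.

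The same conventions — inverse in $\nabla$, right-invariant brackets — are what make the Van Est step output $\nabla_\alpha(\widetilde f_A\beta)-\nabla_\beta(\widetilde f_A\alpha)-\widetilde f_A[\alpha,\beta]=0$. Since $\widetilde f_A=f_A$, expanding the induced connection on $\textnormal{Hom}(V_{M,1},C_{M,2})$ gives the bracket condition directly. No separate translation step is needed.
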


In the next section \ref{sec: The tangent bundle of the fat groupoid of a VB-groupoid} we show that, given a VB-groupoid $V_G$ with VB-algebroid $V_A$, the Lie algebroid of $\widehat V_G$ can be seen as $\widehat V_A$.

\subsection{The tangent bundle of the fat groupoid of a VB-groupoid}\label{sec: The tangent bundle of the fat groupoid of a VB-groupoid}

We often take the kinematic approach to tangent vectors in this section. Given $M \ni x$, we write tangent vectors as
\begin{equation*}
\partial_{\epsilon=0} x_\epsilon = \tfrac{\partial}{\partial \epsilon}\vert_{\epsilon=0} x_\epsilon \in T_xM
\end{equation*}
where $x_\epsilon$ is a curve in $M$ with $x_0=x$. We will show a precise relation relating $T\widehat{V_G}$ and $\widehat{TV_G}$, where we view $TV_G$ as a VB-groupoid over $TG$ (a similar statement holds for VB-algebroids). We start with the following observation about vector bundles.

If $E \ra M$ and $F \ra M$ are vector bundles, then $T\textnormal{Hom}(E,F)$ is a vector bundle both over $\textnormal{Hom}(E,F)$ and over $TM$.\footnote{Given a vector bundle $E \ra M$, we write $T_eE$ for the fiber over $e \in E$ and $(TE)_v$ for the fiber over $v \in TM$.} We consider the vector bundle structure over $TM$, and we will show that it embeds into $\textnormal{Hom}_{TM}(TE,TF)$.

Let $\partial_{\epsilon=0} \varphi_\epsilon \in T_\varphi\textnormal{Hom}(E,F)$ be a tangent vector of $\textnormal{Hom}(E,F)$, say, with
\begin{equation*}
\varphi_\epsilon: E_{x_\epsilon} \ra F_{x_\epsilon},
\end{equation*}
$x=x_0$ and $\varphi=\varphi_0: E_x \ra F_x$. Write $v = \partial_{\epsilon=0} x_\epsilon \in T_xM$ for the projection. Then, we define
\begin{equation*}
(TE)_v \ra (TF)_v \qquad \partial_{\epsilon = 0} e_\epsilon \mapsto \partial_{\epsilon = 0} \varphi_\epsilon e_\epsilon.
\end{equation*} 
This map is injective and linear, so we obtained the following:

\begin{proposition}\label{prop: tangent of hom}
Let $E$ and $F$ be vector bundles over $M$. Then, the map
\begin{equation*}
T\textnormal{Hom}(E,F) \hookrightarrow \textnormal{Hom}_{TM}(TE, TF) \qquad \partial_{\epsilon=0} \varphi_\epsilon \mapsto (\partial_{\epsilon=0} e_\epsilon \mapsto \partial_{\epsilon=0} \varphi_\epsilon e_\epsilon)
\end{equation*}
is a vector bundle embedding over $TM$.
\end{proposition}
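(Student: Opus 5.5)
The plan is to work in local trivialisations, where everything becomes explicit linear algebra depending smoothly on a parameter. First I will check that the assignment is well defined, i.e.\ independent of the chosen curves. Take a curve $\varphi_\epsilon$ in $\textnormal{Hom}(E,F)$ over $x_\epsilon$, and a curve $e_\epsilon$ in $E$ over the same base curve $x_\epsilon$; this is what it means for $\partial_{\epsilon=0} e_\epsilon$ to lie in $(TE)_v$. Over a trivialising chart $U\ni x$ with $E|_U\cong U\times\bbR^k$ and $F|_U\cong U\times\bbR^\ell$, write $\varphi_\epsilon=(x_\epsilon,A_\epsilon)$ with $A_\epsilon$ a matrix, and $e_\epsilon=(x_\epsilon,a_\epsilon)$. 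Then
\begin{equation*}
\partial_{\epsilon=0}\varphi_\epsilon e_\epsilon=(v,\ \dot A_0a_0+A_0\dot a_0).
\end{equation*}
This only depends on $(v,A_0,\dot A_0)$, which is the tangent vector $\partial_{\epsilon=0}\varphi_\epsilon$, and on $(v,a_0,\dot a_0)$, which is $\partial_{\epsilon=0}e_\epsilon$. A priori the curves need only share the first-order data $v$. I will handle this by noting that any two tangent vectors over $v$ can be represented by curves over a common base curve: fix one base curve and reparametrise in the trivialisation.

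Next comes linearity, taken over the vector bundle structure $TE\ra TM$. In the trivialisation, $(TE)_v\cong\{(a,\dot a)\}\cong\bbR^{2k}$ with the componentwise linear structure, and the formula $(a,\dot a)\mapsto(A_0a,\ \dot A_0a+A_0\dot a)$ is visibly linear. Similarly, the dependence on $(A_0,\dot A_0)$ is linear, and $(T\textnormal{Hom}(E,F))_v\cong\{(A,\dot A)\}$ carries the linear structure of the tangent prolongation. So the map is fiberwise linear over $TM$, and it is smooth because it is polynomial in the coordinates. Transition functions act on both sides compatibly by the chain rule, so the local descriptions glue.

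The remaining point is injectivity, and I expect this to be the only place needing thought. Suppose $(A,\dot A)$ induces the zero map. Evaluating at $(a,0)$ gives $(Aa,\dot Aa)=0$ for all $a$, so $A=0$ and $\dot A=0$. Finally, an injective fiberwise linear map of constant rank between vector bundles over $TM$ is an embedding. Here the rank is constant because the source has rank $2k\ell$, which yields the statement.
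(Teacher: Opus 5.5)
Your proposal is correct and is essentially the paper's argument. The paper simply asserts that the map is linear and injective and says this is easiest to see in local coordinates; your trivialisation computation supplies exactly that, with the fibre map $(a,\dot a)\mapsto(A_0a,\dot A_0a+A_0\dot a)$ and injectivity obtained by evaluating on $(a,0)$. You also make explicit the independence of the choice of curves, which the paper leaves implicit; there you are really lifting $\partial_{\epsilon=0}e_\epsilon$ to a curve over the given base curve rather than reparametrising.
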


This is probably easiest to see by writing the map in (standard) local coordinates. The above map over the zero section of $TM \ra M$ takes the following form.

\begin{remark}\label{rema: tangent of hom}
Here, we write the above embedding for $v=0_x \in T_xM$. We decompose\footnote{Here, we wrote $E_x^\textnormal{vert}$ for the space of vertical vectors, which can be identified with $E_x$.}
\begin{equation*}
(TE)_{0_x} = E_x \oplus E_x^{\textnormal{vert}}
\end{equation*}
canonically. Then, $\textnormal{Hom}((TE)_{0_x},(TF)_{0_x})$ splits canonically as
\begin{equation*}
\textnormal{Hom}(E_x,F_x) \oplus \textnormal{Hom}(E_x^{\textnormal{vert}},F_x) \oplus \textnormal{Hom}(E_x,F_x^{\textnormal{vert}}) \oplus \textnormal{Hom}(E_x^{\textnormal{vert}}, F_x^{\textnormal{vert}}).
\end{equation*}
The image of the fiber $(T\textnormal{Hom}(E,F))_{0_x}$, under the embedding, can be identified with
\begin{equation*}
\textnormal{Hom}(E_x,F_x) \oplus \textnormal{Hom}(E_x^{\textnormal{vert}},F_x^{\textnormal{vert}}).
\end{equation*}
Indeed, let $\partial_{\epsilon=0} \varphi_\epsilon \in T_\varphi\textnormal{Hom}(E,F)$ such that its projection to $TM$ is $0_x$. Plugging in
\begin{equation*}
\partial_{\epsilon=0} e_\epsilon \mapsto \partial_{\epsilon=0} \varphi_\epsilon e_\epsilon
\end{equation*}
an element of $E_x$ gives $\varphi: E_x \ra F_x$. If we plug in a vertical vector, then, using the notation from above, we can take $x_\epsilon = x$, and so $\partial_{\epsilon=0} \varphi_\epsilon e_\epsilon$ is again vertical. The map $(TE)_{0_x} \ra (TF)_{0_x}$ therefore identifies with $\varphi: E_x \ra F_x$ and this map $E_x^{\textnormal{vert}} \ra F_x^{\textnormal{vert}}$.
\end{remark}

Now, let $V_G \rra V_M$ be a VB-groupoid, and consider the VB-groupoids
\begin{center}
\begin{tikzcd}
T\widehat V_G \ar[r, shift left] \ar[r, shift right] \ar[d] & TM \ar[d] \\
\widehat V_G \ar[r, shift left] \ar[r, shift right] & M
\end{tikzcd} \qquad
\begin{tikzcd}
TV_G \ar[r, shift left] \ar[r, shift right] \ar[d] & TV_M \ar[d] \\
TG \ar[r, shift left] \ar[r, shift right] & TM
\end{tikzcd}
\end{center}

\begin{proposition}\label{prop: tangent bundle of fat}
Let $V_G \rra V_M$ be a VB-groupoid. Then the map
\begin{equation*}
T\widehat V_G \ra \widehat{TV_G} \qquad \partial_{\epsilon=0} H_{g_\epsilon} \mapsto (\partial_{\epsilon=0} v_\epsilon \mapsto \partial_{\epsilon=0} H_{g_\epsilon}v_\epsilon)
\end{equation*}
is an embedding of Lie groupoids. In fact, using the canonical embedding of Proposition \ref{prop: tangent of hom}, there is a commutative diagram (of short exact sequences over $TM$)
\begin{center}
\begin{tikzcd}
1 \ar[r] & T\textnormal{H}(V_M,C_M) \ar[r] & T\widehat V_G \ar[r] & TG \ar[r] \ar[d, "="] & 1 \\
1 \ar[r] & \textnormal{H}(TV_M,TC_M) \ar[r] \ar[u, hookleftarrow] & \widehat{TV_G} \ar[r] \ar[u, hookleftarrow] & TG \ar[r] & 1
\end{tikzcd}
\end{center}
\end{proposition}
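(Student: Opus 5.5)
We plan to reduce everything to Proposition \ref{prop: tangent of hom} applied to the vector bundles $\bfs^*V_M$ and $V_G$ over $G$. The fat category $\widehat V_G^\textnormal{cat}$ is an affine subbundle of $\textnormal{Hom}(\bfs^*V_M,V_G)$ over $G$, cut out by the linear condition $\bfs_g H_g = 1$, and $\widehat V_G$ is an open subset of it. Hence $T\widehat V_G$ is an open subset of $T\widehat V_G^\textnormal{cat} \subset T\textnormal{Hom}(\bfs^*V_M,V_G)$. Proposition \ref{prop: tangent of hom} then gives an injective map into $\textnormal{Hom}_{TG}(T(\bfs^*V_M),TV_G)$. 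Here we use $T(\bfs^*V_M) = (d\bfs)^*TV_M$, which is the pullback of $TV_M$ along the source map $TG\ra TM$ of the tangent groupoid.

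\textbf{Step 1: the image lies in $\widehat{TV_G}$.} Take a tangent vector $\partial_{\epsilon=0}H_{g_\epsilon}$ and a vector $\partial_{\epsilon=0}v_\epsilon$ over $\partial_{\epsilon=0} \bfs g_\epsilon$. Then
\[
d\bfs(\partial_{\epsilon=0}H_{g_\epsilon}v_\epsilon)=\partial_{\epsilon=0}\bfs_{g_\epsilon}H_{g_\epsilon}v_\epsilon=\partial_{\epsilon=0}v_\epsilon,
\]
so the unitality condition $\bfs H=1$ is preserved. For invertibility, the composite $d\bft\circ TH$ is the image under Proposition \ref{prop: tangent of hom} of the tangent vector $\partial_{\epsilon=0}\bft_{g_\epsilon}H_{g_\epsilon}$ of the curve in $\textnormal{GL}(V_M)$. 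Its inverse is the image of the tangent vector of the inverse curve $(\bft_{g_\epsilon}H_{g_\epsilon})^{-1}$, because the embedding is compatible with composition of curves of linear maps. Thus the map lands in $\widehat{TV_G}$. Smoothness and injectivity come from the same proposition, and so does the immersion property: in local trivialisations the map is linear in the fibre coordinates.

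\textbf{Step 2: the map is a groupoid morphism.} Since all structure maps are defined pointwise along curves, the check is one line. If $H_{g_\epsilon}$ and $H_{g'_\epsilon}$ are composable curves, then
\[
(H_{g_\epsilon}\cdot H_{g'_\epsilon})v_\epsilon=H_{g_\epsilon}(\bft H_{g'_\epsilon}v_\epsilon)\cdot H_{g'_\epsilon}v_\epsilon .
\]
Differentiating and using that the multiplication of $TV_G$ is $T\bfm$, we obtain the product in $\widehat{TV_G}$ of the two images. Units and inverses are handled the same way. The map covers the identity on $TG$ and on $TM$, which gives the right-hand square of the diagram.

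\textbf{Step 3: the left-hand square.} We restrict to the kernel. $T\textnormal{H}(V_M,C_M)$ is an open subset of $T\textnormal{Hom}(V_M,C_M)$, and by Proposition \ref{prop: tangent of hom} over $M$ it embeds into $\textnormal{Hom}_{TM}(TV_M,TC_M)$. We will check that, under the identification $h\mapsto 1+h$, its image sits in $\textnormal{H}(TV_M,TC_M)$, where the differential of the complex $TC_M \ra TV_M$ is $T\partial$. The point is that $1+T\partial\circ Th = T(1+\partial h)$ is invertible by the argument of Step 1. Commutativity of the square then follows from the pointwise definitions. Exactness of both rows is already known: the top row is the tangent of the exact sequence of Lie groupoids, and the bottom row is the fat extension of $TV_G$.

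The main obstacle we expect is identifying the core and the complex of the VB-groupoid $TV_G \rra TV_M$ over $TG$ correctly. We need its core to be $TC_M$, with differential $T\partial$, so that $\textnormal{H}(TV_M,TC_M)$ is the right kernel. We need this in order to match the core inclusions $r_g$ with their tangents, and to verify the inclusion of the kernels without confusing the two vector bundle structures on $T\textnormal{Hom}$. We would settle this first, using that $T$ preserves kernels of submersions, so that $T(\ker\bfs|_M)=\ker T\bfs|_{TM}$.
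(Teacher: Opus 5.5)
Your proposal is correct and follows the paper's own route. The paper's proof uses only the kinematic description of the multiplication in $TG$: composable tangent vectors are represented by curves $g_\epsilon, h_\epsilon$ with $\bfs g_\epsilon = \bft h_\epsilon$, and $\partial_{\epsilon=0} g_\epsilon \cdot \partial_{\epsilon=0} h_\epsilon = \partial_{\epsilon=0}(g_\epsilon h_\epsilon)$. It then calls the rest "readily verified", and your Steps 1--3 supply those checks: unitality and invertibility via Proposition \ref{prop: tangent of hom}, the core $TC_M$ with differential $T\partial$, and exactness of both rows.

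The one point to state explicitly in Step 2 is the one the paper singles out. Any composable pair in $T\widehat V_G$ can be represented by curves that are composable for all small $\epsilon$, because $\widehat\bfs$ and $\widehat\bft$ are submersions and hence $T(\widehat V_G^{(2)}) = T\widehat V_G \times_{TM} T\widehat V_G$.
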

\begin{proof}
To see that the above map is a groupoid map, observe that we can write the multiplication of $TG \rra TM$ as follows (see \cite{Mackenziebook}). Given $v \in T_gG$ and $w \in T_hG$ composable, we can find $g_\epsilon$ and $h_\epsilon$ curves in $G$, such that $v = \partial_{\epsilon=0} g_\epsilon$ and $w = \partial_{\epsilon=0} h_\epsilon$, while $\bfs g_\epsilon = \bft h_\epsilon$ for all (small) $\epsilon$. Then
\begin{equation*}
v \cdot w = \partial_{\epsilon=0}(g_\epsilon \cdot h_\epsilon).
\end{equation*}
Using this formula, the statement is readily verified.
\end{proof}

\subsection{The fat algebroid of the fat groupoid of a VB-groupoid}\label{sec: The fat algebroid of the fat groupoid of a VB-groupoid}

In this section, we prove that there is a canonical identification
\begin{equation*}
\textnormal{Lie } \widehat V_G = \widehat V_A
\end{equation*}
as vector bundles. Recall that $\widehat V_G$ is an open subset of $\widehat V_G^\textnormal{cat}$, and $\widehat V_G^\textnormal{cat}$ is the preimage of $1$ (the identity section in $\textnormal{End } V_M$) under the surjective linear map (over $\bfs: G \ra M$)
\begin{equation*}
\textnormal{Hom}(\bfs^*V_M, V_G) \ra \textnormal{End } V_M \qquad H_g \mapsto \bfs_g H_g.
\end{equation*}
The tangent space $T_{H_g} \widehat V_G$ can therefore be thought of as elements
\begin{equation*}
\partial_{\epsilon=0} H_{g_\epsilon} \in T_{H_g}\textnormal{Hom}(\bfs^*V_M, V_G)
\end{equation*}
where $H_{g_\epsilon}$ are maps $(V_M)_{\bfs g_\epsilon} \ra (V_G)_{g_\epsilon}$, such that
\begin{equation*}
\partial_{\epsilon=0}\bfs_g H_{g_\epsilon} = 0.
\end{equation*}
Using Proposition \ref{prop: tangent of hom}, this equation can be interpreted inside of $\textnormal{End } (TV_M)_{d\bfs X}$, where
\begin{equation*}
X = \partial_{\epsilon=0} g_\epsilon \in T_gG.
\end{equation*}

\begin{proposition}\label{prop: Lie algebroid of fat groupoid is fat algebroid as vector bundles}
We have
\begin{equation*}
\textnormal{Lie } \widehat V_G = \{H_a: (V_M)_{\pr_\ra a} \ra (V_A)_a \mid \pr_\ra H_a = 1\} = \widehat V_A
\end{equation*}
as vector bundles.
\end{proposition}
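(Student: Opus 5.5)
The Lie algebroid of $\widehat V_G$ is, as a vector bundle, $\textnormal{Lie}\,\widehat V_G = \ker d\widehat\bfs|_M \subset T\widehat V_G|_M$, where $M$ is embedded via the unit section $x \mapsto 1_x \in \widehat V_G$ (which is the map $H_{1_x} = \bfu|_{(V_M)_x}$, i.e. $v \mapsto 1_v$). The strategy is to compute this kernel directly, using the tangent-space description of $\widehat V_G$ as the open subset of the level set $\{\bfs_g H_g = 1\}$, and then to apply the embedding of Proposition \ref{prop: tangent of hom} (equivalently Proposition \ref{prop: tangent bundle of fat}) to turn tangent vectors into linear maps.

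\textbf{Step 1: from curves to maps.} Take a tangent vector $\partial_{\epsilon=0} H_{g_\epsilon} \in T_{1_x}\widehat V_G$ with $\widehat\bfs H_{g_\epsilon} = \bfs g_\epsilon$ constant equal to $x$. Then $g_\epsilon$ is a curve in the source fibre through $1_x$, so $a \coloneq \partial_{\epsilon=0} g_\epsilon \in A_x = \ker d\bfs_{1_x}$. Each $H_{g_\epsilon}$ is a map $(V_M)_x \ra (V_G)_{g_\epsilon}$ with a fixed domain. We therefore define
\begin{equation*}
H_a: (V_M)_x \ra (V_A)_a = (\ker d\bfs)_{a} \subset T_{1_v}V_G, \qquad v \mapsto \partial_{\epsilon=0} H_{g_\epsilon} v .
\end{equation*}
Because $\bfs(H_{g_\epsilon}v) = v$ for all $\epsilon$, the vector $\partial_{\epsilon=0}H_{g_\epsilon}v$ is tangent to a source fibre of $V_G$ at $1_v$, so it lies in $V_A = \ker d\bfs|_{V_M}$. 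It also projects onto $a$ under $V_A \ra A$. The map is linear in $v$, and its horizontal projection satisfies $\pr_\ra H_a v = v$. Hence $H_a \in \widehat V_A$.

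This is exactly the restriction of the embedding of Proposition \ref{prop: tangent of hom} to vectors over the zero section, combined with the identification in Remark \ref{rema: tangent of hom}. Since $v$ lies in $V_M$ with zero velocity, only the ``$\textnormal{Hom}(E_x,F_x)$-part'' of the embedding is evaluated on $\textnormal{Lie}\,\widehat V_G$. Well-definedness, i.e. independence of the chosen curve representing the tangent vector, follows from the same proposition.

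\textbf{Step 2: injectivity, linearity and dimension count.} The vector bundle embedding of Proposition \ref{prop: tangent of hom} shows that the map $\textnormal{Lie}\,\widehat V_G \ra \widehat V_A$ is fibrewise linear and injective. To conclude that it is an isomorphism, I will compare ranks.

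Since $\widehat V_G \ra G$ is a surjective submersion with fibres open in the affine space $\textnormal{Hom}(V_M,C_M)$, we get
\begin{equation*}
\rank \textnormal{Lie}\,\widehat V_G = \rank A + \rank \textnormal{Hom}(V_M,C_M).
\end{equation*}
The fibration $\widehat V_A \ra A$ is affine with model fibre $\textnormal{Hom}(V_M,C_M)$, so $\rank \widehat V_A$ is the same number.

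As a more structural alternative to the count, I can check surjectivity directly. Choose a local section $H$ of $\widehat V_G \ra G$ with $H_{1_x} = 1_x$; the differential of such a section produces one preimage for each $a$. Adding vertical variations $\partial_{\epsilon=0}(1+\epsilon h)$ with $h \in \textnormal{Hom}(V_M,C_M)$ then covers the affine fibre, since these are sent to $v \mapsto hv +_\da 0_\ra(v)$, which is the inclusion of $\textnormal{Hom}(V_M,C_M)$ into $\widehat V_A$.

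\textbf{Main obstacle.} The delicate point is Step 1: confirming that $\partial_{\epsilon=0}H_{g_\epsilon}v$ genuinely lands in the fibre $(V_A)_a$ of $V_A \ra A$, with the correct linear structure. This means reconciling the tangent (vertical/horizontal) vector bundle structures on $TV_G|_{V_M}$ with the two structures on $V_A$. I would handle this with Remark \ref{rema: tangent of hom}: write $(TV_G)_{a}$ in local VB-groupoid coordinates, where the curve $H_{g_\epsilon}v$ is linear in $v$, so that linearity with respect to $+_\da$ is immediate.
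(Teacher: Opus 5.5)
Your proposal is correct and follows essentially the same route as the paper. A source-vertical tangent vector $\partial_{\epsilon=0}H_{g_\epsilon}$ is sent, via the embedding of Propositions \ref{prop: tangent of hom} and \ref{prop: tangent bundle of fat}, to the linear map $v \mapsto \partial_{\epsilon=0}H_{g_\epsilon}v$; this assignment is injective, and a rank count finishes the argument. The paper does this at an arbitrary $H_g$ and then restricts to the units, whereas you work at the units from the start.

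Your extra surjectivity argument is a useful supplement: it uses a local section of $\widehat V_G \ra G$ through $1_x$ together with the vertical variations $\partial_{\epsilon=0}(1+\epsilon h)$. Combined with the rank count, it gives bijectivity without relying on the injectivity claim, which you only justify loosely since you evaluate at fixed points $v$.
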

\begin{proof}
Since
\begin{equation*}
(d\widehat \bfs)_{H_g}: T_{H_g}\widehat V_G \ra T_{\bfs g}M \qquad \partial_{\epsilon=0} H_{g_\epsilon} \mapsto \partial_{\epsilon=0} \bfs g_\epsilon,
\end{equation*}
the vector $X = \partial_{\epsilon=0} g_\epsilon$ is tangent to the source fiber if $\partial_{\epsilon=0} H_{g_\epsilon} \in \ker (d\widehat\bfs)_{H_g}$. In this situation, we can take $g_\epsilon \in \bfs^{-1}\bfs g$. An element
\begin{equation*}
(X, \partial_{\epsilon = 0} v_\epsilon) \in d\bfs^*TV_M
\end{equation*}
(here, $v_\epsilon \in (V_M)_{\bfs g_\epsilon} = (V_M)_{\bfs g}$) has then the property that $\partial_{\epsilon=0} v_\epsilon$ is vertical. This element $\partial_{\epsilon=0} v_\epsilon$ can now itself be thought of as an element of $(V_M)_{\bfs g}$. With this identification, the embedding of Proposition \ref{prop: tangent bundle of fat} produces a linear map
\begin{equation*}
H_X: (V_M)_{\bfs g} \ra (H_g^*\ker (d\bfs)|_{(V_G)_g})_X \qquad \partial_{\epsilon=0} v_\epsilon \mapsto \partial_{\epsilon=0} H_{g_\epsilon} v_\epsilon.
\end{equation*}
We can see these maps as (horizontal) linear sections of the double vector bundle\footnote{Notice that this makes sense as $H_g$ has full rank.}
\begin{center}
\begin{tikzcd}
H_g^*\ker (d\bfs)|_{(V_G)_g} \ar[r] \ar[d] & (V_M)_{\bfs g} \ar[d] \\
\ker (d\bfs)_g \ar[r] & \{g\}
\end{tikzcd}
\end{center}
(the base map of the section $H_X$ is given by $g \mapsto X \in \ker (d\bfs)_g$). The linear map we constructed
\begin{equation}\label{eq: ker d fat source map}
\ker (d\widehat\bfs)_{H_g} \ra \{H_X: (V_M)_{\bfs g} \ra (H_g^*\ker d\bfs|_{(V_G)_g})_X \mid \pr_\ra H_X = 1\}
\end{equation}
is injective, so by a dimension count a bijection. The restriction to the units is the desired isomorphism of vector bundles. This proves the statement.
\end{proof}

\begin{remark}
The double vector bundle 
\begin{center}
\begin{tikzcd}
H_g^*\ker (d\bfs)|_{(V_G)_g} \ar[r] \ar[d] & (V_M)_{\bfs g} \ar[d] \\
\ker (d\bfs)_g \ar[r] & \{g\}
\end{tikzcd}
\end{center}
can be seen as the pullback along $H_g: (V_M)_{\bfs g} \ra (V_G)_g$ of
\begin{center}
\begin{tikzcd}
\ker (d\bfs)|_{(V_G)_g} \ar[r] \ar[d] & (V_G)_g \ar[d] \\
\ker (d\bfs)_g \ar[r] & \{g\}
\end{tikzcd}
\end{center}
which is a sub double vector bundle of
\begin{center}
\begin{tikzcd}
TV_G \ar[r] \ar[d] & V_G \ar[d] \\
TG \ar[r] & G.
\end{tikzcd}
\end{center}
That is, we view $\ker(d\bfs)|_{(V_G)_g} \ra \ker(d\bfs)_g$ as a vector bundle over $(V_G)_g \ra \{g\}$, and then we change the base to $(V_M)_{\bfs g} \ra \{g\}$ using $H_g$.
\end{remark}

\subsection{Right invariant vector fields of the fat groupoid}\label{sec: Right invariant vector fields of the fat groupoid}
Under the correspondence
\begin{equation*}
\frakX_{\textnormal{R-inv}} V_G \cong \Gamma_\ra V_A,
\end{equation*}
$\frakX_{\textnormal{R-inv,lin}} V_G$ corresponds to $\Gamma_{\textnormal{lin}} V_A = \Gamma \widehat V_A$. Linear vector fields on $V_G$ are (horizontal) linear sections of the double vector bundle
\begin{center}
\begin{tikzcd}
TV_G \ar[r] \ar[d] & V_G \ar[d] \\
TG \ar[r] & G
\end{tikzcd}
\end{center}
so $\Gamma \widehat V_A$, with the induced bracket from $V_A$, is isomorphic to $\frakX_{\textnormal{R-inv,lin}} V_G$ as Lie algebras.

On the other hand, by the discussion from the last section, Section \ref{sec: The fat algebroid of the fat groupoid of a VB-groupoid}, $\Gamma \widehat V_A \cong \mathfrak{X}_{\textnormal{R-inv}} \widehat V_G$. Our goal is now to show that this isomorphism is an isomorphism of Lie algebras, i.e. that the induced bracket on $\textnormal{Lie } \widehat V_G$ is the correct one (the one induced from $V_A$). That is, we want to show that
\begin{equation*}
\mathfrak{X}_{\textnormal{R-inv,lin}} V_G \cong \mathfrak{X}_{\textnormal{R-inv}} \widehat V_G
\end{equation*}
is an isomorphism of Lie algebras. For the following result, see also \cite{DrummondEgea}.

\begin{proposition}\label{prop: Lie algebroid of fat groupoid is fat algebroid}
The canonical identification of vector bundles
\begin{equation*}
\textnormal{Lie } \widehat V_G \cong \widehat V_A
\end{equation*}
of Proposition \ref{prop: Lie algebroid of fat groupoid is fat algebroid as vector bundles} is an isomorphism of Lie algebroids.
\end{proposition}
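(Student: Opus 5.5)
We need to show that the identification $\textnormal{Lie } \widehat V_G \cong \widehat V_A$ of Proposition \ref{prop: Lie algebroid of fat groupoid is fat algebroid as vector bundles} intertwines brackets and anchors. The anchor is immediate: $\widehat\rho$ on both sides is induced by $\widehat V_G \ra G$ and $\widehat V_A \ra A$, so it reduces to the anchor of $A$. Hence the plan concentrates on the bracket. We realise both sides through right-invariant vector fields, $\Gamma \widehat V_A \cong \frakX_{\textnormal{R-inv,lin}} V_G$ and $\Gamma \textnormal{Lie } \widehat V_G \cong \frakX_{\textnormal{R-inv}} \widehat V_G$. It then suffices to produce a map $\frakX_{\textnormal{R-inv}} \widehat V_G \ra \frakX_{\textnormal{R-inv,lin}} V_G$ that is a Lie algebra morphism and agrees with the vector bundle identification on the units.

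For this we use the canonical left action of $\widehat V_G$ on $V_G$ from Section \ref{sec: canonical representations of the fat groupoid}, namely $H_g \cdot v = H_g(\bft_g v) \cdot v$. Its moment map is $V_G \ra M$, $v \mapsto \bft g$ (composing $\bft$ with the projection). An action of a Lie groupoid $\calG$ on a manifold $P$ with moment map $\mu$ induces an infinitesimal action $\Gamma \textnormal{Lie } \calG \ra \frakX(P)$. This map is a Lie algebra morphism, up to the usual sign convention for right-invariant fields, and it sends right-invariant vector fields of $\calG$ to the vector fields $\xi \mapsto \partial_{\epsilon=0} \exp(\epsilon\xi)_{\mu(p)} \cdot p$. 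This is the standard fact that infinitesimal actions of groupoid actions are bracket-preserving. Applying it with $\calG = \widehat V_G$ and $P = V_G$ gives the desired Lie algebra morphism $\Gamma \textnormal{Lie } \widehat V_G \ra \frakX(V_G)$.

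The remaining work is to identify the image. First, the action commutes with the right multiplication of $V_G$ (it is left multiplication by linear bisections), so the induced fields are right-invariant. Second, the action commutes with scalar multiplication, since $H_g$ is linear and $\lambda(v\cdot w) = (\lambda v)\cdot(\lambda w)$, so the induced fields are linear. Hence the image lies in $\frakX_{\textnormal{R-inv,lin}} V_G$.

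We must then check that the composite $\Gamma \textnormal{Lie } \widehat V_G \ra \frakX_{\textnormal{R-inv,lin}} V_G \cong \Gamma_\textnormal{lin} V_A$ coincides with the identification of Proposition \ref{prop: Lie algebroid of fat groupoid is fat algebroid as vector bundles}. Take a curve $H_{g_\epsilon}$ in a source fiber through $1_x$, with velocity $\partial_{\epsilon=0} H_{g_\epsilon}$ and $g_\epsilon \in \bfs^{-1}(x)$. The induced vector at $v \in (V_M)_x$, viewed as a unit of $V_G$, is
\begin{equation*}
\partial_{\epsilon=0} H_{g_\epsilon}(v) \cdot v = \partial_{\epsilon=0} H_{g_\epsilon} v,
\end{equation*}
because $v$ is a unit and $\bft$ fixes it. This is exactly the map $H_X$ from the proof of Proposition \ref{prop: Lie algebroid of fat groupoid is fat algebroid as vector bundles}, evaluated along the units. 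Since right-invariant fields are determined by their values on units, the two identifications agree.

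Finally, the identification $\frakX_{\textnormal{R-inv,lin}} V_G \cong \Gamma_{\textnormal{lin}} V_A$ is a Lie algebra isomorphism onto the bracket of $V_A$ restricted to linear sections, which is by definition the bracket of $\widehat V_A$. Combining this with the previous steps proves the statement.

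We expect the main obstacle to be the identification step just described. It requires careful bookkeeping of the double vector bundle $TV_G$ and of the vertical/core identification of $\partial_{\epsilon=0} v_\epsilon$. A second delicate point is sign conventions: whether left actions paired with right-invariant fields yield a bracket morphism or anti-morphism. This has to be matched consistently with the convention $\frakX_{\textnormal{R-inv}} V_G \cong \Gamma_\ra V_A$ used for the VB-algebroid, so that any sign cancels on both sides.
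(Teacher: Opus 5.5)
Your argument is correct, but it is not the paper's route. The paper argues with bisections. Bisections of $\widehat V_G$ are exactly the linear bisections of $V_G$, and the right-invariant flow of a right-invariant linear vector field on $V_G$ coincides with that of the corresponding right-invariant vector field on $\widehat V_G$ (made explicit in the remark after the proposition). The paper then concludes from the expression of the bracket of right-invariant vector fields through the exponential map.

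You instead stay infinitesimal. The left action of $\widehat V_G$ on $V_G$ by linear bisections gives fundamental vector fields. You show they lie in $\frakX_{\textnormal{R-inv,lin}} V_G$, and at the units they reproduce the map $H_X$ that defines the vector bundle identification.

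Your sign concern resolves itself. Let $\mu$ be the moment map and $\xi^R$ the right-invariant vector field of $\xi$. On $\widehat V_G \times_{\widehat\bfs,\mu} V_G$, the field $(\xi^R,0)$ is related under the surjective action map to the fundamental field of $\xi$. With the right-invariant convention used both for $\textnormal{Lie}\,\widehat V_G$ and for $\frakX_{\textnormal{R-inv}} V_G \cong \Gamma_{\ra} V_A$, you therefore get a homomorphism, not an anti-homomorphism.

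Your route needs only a first-order computation at the units and avoids the completeness caveat the paper must attach to the exponential map. The paper's route, in exchange, produces the equality of flows $\varphi^{\widehat X}_\epsilon = \varphi^X_\epsilon$, which it reuses later for the explicit Van Est formula. The two are the infinitesimal and integrated versions of the same fact: $\widehat V_G$ acts on $V_G$ through linear bisections.
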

\begin{proof}
This is straightforward using bisections. Recall that the bisections of $\widehat V_G$ can be seen as linear bisections of $V_G$ (Remark \ref{rema: core and linear bisections}). The exponential map (taking right-invariant flow)\footnote{As usual, this should not be interpreted literally due to completeness issues.}
\begin{equation*}
\exp: \frakX_{\textnormal{R-inv}} V_G \ra \textnormal{Bis } V_G
\end{equation*}
restricts on linear sections to a map $\frakX_{\textnormal{R-inv,lin}} V_G \ra \textnormal{Bis } \widehat V_G$, and then the diagram
\begin{center}
\begin{tikzcd}
\frakX_{\textnormal{R-inv,lin}} V_G \ar[rd, "\exp"] & \phantom{A} \\
\ar[u, "\sim"] \Gamma \widehat V_A \ar[d, "\sim"'] \ar[r, dotted] & \textnormal{Bis } \widehat V_G \\
\frakX_{\textnormal{R-inv}} \widehat V_G \ar[ru, "\exp"'] & \phantom{A}
\end{tikzcd}
\end{center}
commutes. Using the formula for the Lie bracket in terms of the exponential map, we see that the brackets on $\mathfrak{X}_{\textnormal{R-inv,lin}} V_G$ and on $\mathfrak{X}_{\textnormal{R-inv}} \widehat V_G$ coincide.
\end{proof}

\begin{remark}\label{rema: Lie algebroid of fat groupoid is fat algebroid}
In view of the next section, Section \ref{sec: Van Est}, we add some more details. A section $\alpha \in \Gamma \widehat V_A$ defines right-invariant vector fields
\begin{equation*}
X \in \frakX_{\textnormal{R-inv,lin}} V_G \qquad \widehat X \in \frakX_{\textnormal{R-inv}} \widehat V_G,
\end{equation*}
both over $X \in \frakX_{\textnormal{R-inv}} G$. The right-invariant flow of the linear vector field $X$ is determined by a curve of maps
\begin{equation*}
\varphi^X_\epsilon: V_M \ra V_G
\end{equation*}
that are linear over a map $\varphi_\epsilon^X: M \ra G$. Linearity means that $\varphi^X_\epsilon$ restricts to linear maps\footnote{Notice that, given $v \in (V_M)_x$, the curve $(\varphi^X_\epsilon)_xv$ stays within the source fiber $\bfs^{-1}\bfs v$.}
\begin{equation*}
(\varphi^X_\epsilon)_x: (V_M)_x \ra (V_G)_{\varphi^X_\epsilon(x)}.
\end{equation*}
From the previous section, Section \ref{sec: Right invariant vector fields of the fat groupoid}, recall \eqref{eq: ker d fat source map}, i.e. that $\ker (d\widehat\bfs)_{H_g}$ can be identified with (horizontal) linear sections of the double vector bundle
\begin{center}
\begin{tikzcd}
H_g^*\ker (d\bfs)|_{(V_G)_g} \ar[r] \ar[d] & (V_M)_{\bfs g} \ar[d] \\
\ker (d\bfs)_g \ar[r] & \{g\}.
\end{tikzcd}
\end{center}
For this reason, we can see the flow of some section $\widehat X$ of $\ker d\widehat \bfs$ (not necessarily right-invariant)  as a curve of linear maps
\begin{equation*}
\varphi^{\widehat X}_\epsilon: \widehat V_G \ra \widehat V_G \qquad \varphi^{\widehat X}_\epsilon(H_g): (V_M)_{\bfs g} \ra (V_G)_{\varphi^X_\epsilon(g)}
\end{equation*}
Now, if $\widehat X$ is right-invariant, i.e. such that
\begin{equation*}
\widehat X(H_g \cdot H_h) = (dR_{H_h})_{H_g} \widehat X(H_g),
\end{equation*}
then $\widehat X$ is determined on the identity elements of $\widehat V_G$, which defines the section $\alpha \in \Gamma \widehat V_A$ we started with. The flow $\varphi^{\widehat X}$ is therefore right-invariant and determined by its values on the identity elements; these form a family of linear maps
\begin{equation*}
\varphi^{\widehat X}_\epsilon: (V_M)_x \ra (V_G)_{\varphi_\epsilon^X(x)}.
\end{equation*}
The two maps $\varphi^{\widehat X}_\epsilon$ and $\varphi^X_\epsilon$ coincide.
\end{remark}

Next, we explain how the differentiation procedure leads to a Van Est map on invariant cochains.

\subsection{The Van Est map}\label{sec: Van Est}

Before we begin, we briefly mention how the differentiation procedure of abstract ruths looks like.

\subsubsection{Differentiation of abstract ruths}

To emphasise, this is only a short discussion. In \cite{Homotopyoperators}, more details will be included.

As is recently shown in \cite{CabreraDelHoyo}, the differentiation of a Lie groupoid $G$ to a Lie algebroid $A$ can be thought of algebraically as follows. The differential graded algebra $CG$ contains the \textit{differentiating ideal} $J_\textnormal{N} = J + \delta J$, where
\begin{equation*}
J = \textstyle\bigoplus_{\bullet \ge 0} J^\bullet \qquad J^\bullet \coloneq \{f \in C^\bullet G \mid f \textnormal{ vanishes to order $\ge 1$ at } \im u_k \textnormal{ for some } k\}.
\end{equation*}
The differential graded algebra $\Omega A$ is canonically isomorphic to $(CG)_\textnormal{N} / J_\textnormal{N}$. The quotient map
\begin{equation*}
\textnormal{VE}: (CG)_\textnormal{N} \ra \Omega A
\end{equation*}
is the ordinary \textit{Van Est map} of \cite{MariusVanEst} (see \cite{CabreraDelHoyo} for more details).

Now, given a ruth $\calE_G$ of $G$, then we can quotient $(\calE_G)_\textnormal{N}$ by $(\calE_G)_\textnormal{N} \cdot J_\textnormal{N}$. The result
\begin{equation*}
\calE_A \coloneq (\calE_G)_\textnormal{N}/(\calE_G)_\textnormal{N} \cdot J_\textnormal{N} \cong (\calE_G)_\textnormal{N} \otimes_{(CG)_\textnormal{N}} \Omega A
\end{equation*}
is a well-defined ruth of $A$, and accordingly there is a Van Est map
\begin{equation*}
(\calE_G)_\textnormal{N} \ra \calE_A.
\end{equation*}
There is a \textit{Van Est theorem} (for split ruths, see \cite{CamiloFlorianruths}) which extends the ordinary Van Est theorem of \cite{MariusVanEst}. This fact and other further details will be discussed in \cite{Homotopyoperators}. We now move to giving an explicit description of the Van Est map/theorem in terms of invariant cochains.

\subsubsection{The Van Est map of invariant cochains}
Let $F_G$ be a fat extension of a Lie groupoid $G$ over $C_M \ra V_M$, and consider the associated fat extension $F_A$ of $A$ over $C_M \ra V_M$. The Van Est map
\begin{equation*}
\textnormal{VE}: C_\textnormal{inv}(F_G; C_M)_\textnormal{N} \ra \Omega_\textnormal{inv}(F_A; C_M)
\end{equation*}
is simply the restriction of the ordinary Van Est map
\begin{equation*}
\textnormal{VE}: C(F_G; C_M \ra V_M)_\textnormal{N} \ra \Omega(F_A; C_M \ra V_M)
\end{equation*}
defined componentwise from the ordinary Van Est maps associated to the representations on $C_M$ and $V_M$. Following \cite{CamiloFlorianruths,VanEsthomogeneous}, there is a Van Est theorem:

\begin{theorem}\label{thm: Van Est theorem of invariant cochains}
The Van Est map
\begin{equation*}
\textnormal{VE}: C^\bullet(F_G; C_M \ra V_M)_\textnormal{N} \ra \Omega^\bullet(F_A; C_M \ra V_M)
\end{equation*}
restricts to a map of differential graded modules
\begin{equation*}
\textnormal{VE}: C^\bullet_\textnormal{inv}(F_G; C_M)_\textnormal{N} \ra \Omega^\bullet_\textnormal{inv}(F_A; C_M).
\end{equation*}
Moreover, if $G$ has (homologically) $\le n$-connected fibers, then $\textnormal{VE}$ induces an isomorphism in cohomology in degrees $\le n$, and it induces an injective map in cohomology in degree $n+1$.
\end{theorem}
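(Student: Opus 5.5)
The proof has two parts. First we show that the componentwise Van Est map sends invariant normalised cochains to invariant forms. Then we reduce the cohomological statement to the known Van Est theorem for $2$-term ruths of \cite{CamiloFlorianruths,VanEsthomogeneous}, using the identifications of invariant complexes with VB-complexes.

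For the first part, recall that on normalised cochains the ordinary Van Est map is the composite of the maps $R_{\alpha}$, each of which differentiates in one slot along right-invariant vector fields of $F_G$ and evaluates at units. Concretely, $\textnormal{VE}(f)(\alpha_1,\dots,\alpha_\bullet)$ is an antisymmetrisation of $R_{\alpha_1}\cdots R_{\alpha_\bullet}f$. The plan is to check the two invariance conditions of Definition \ref{def: invariant cochains fat extensions LA}.

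\begin{itemize}
\item \emph{$\omega_1$ is independent of its entries.} If $f_1$ depends only on $(g_2,\dots,g_\bullet)$, then each $R_{\alpha}$ applied to $f_1$ depends only on the image of $\alpha$ in $A$. This is because differentiating along $\exp(\epsilon h)$ with $h\in\textnormal{Hom}(V_M,C_M)$ does not move $g$.
\item \emph{$\iota_h\omega_0 = h\omega_1$.} Take $\alpha_1=h\in\Gamma\textnormal{Hom}(V_M,C_M)$. Differentiate the invariance identity of Definition \ref{def: invariant cochains} along $h_\epsilon=\exp(\epsilon h)\in\textnormal{H}(V_M,C_M)$ acting on $H_{g_1}=1_x$. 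This gives $R_h f_0 = h\cdot f_1$ at the unit. Antisymmetrisation then produces exactly the sign $(-1)^{\bullet-1}$ in the definition of $\omega_1 h$.
\end{itemize}

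Because of the normalisation, only the first-slot derivative contributes when $h$ is placed in another slot. Since VE is a cochain map of dg modules, its restriction is too.

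For the second part, consider the commutative square
\begin{equation*}
\begin{array}{ccc}
C_\textnormal{inv}(F_G;C_M)_\textnormal{N} & \xrightarrow{\textnormal{VE}} & \Omega_\textnormal{inv}(F_A;C_M)\\
\downarrow & & \downarrow\\
C_\textnormal{VB}(V(F_G)^*)_\textnormal{N} & \xrightarrow{\textnormal{VE}} & \Omega_\textnormal{lin}(V(F_A)^*)
\end{array}
\end{equation*}
The left vertical map is the isomorphism of Corollary \ref{cor: invariant complex fat extension}. The right one is the isomorphism of Proposition \ref{prop: linear forms as forms on the fat algebroid}, combined with the identification $\textnormal{Lie}\,V(F_G)=V(F_A)$. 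That identification follows from Proposition \ref{prop: Lie functor fat extensions} together with Propositions \ref{prop: correspondence fat extensions VB-algebroids} and \ref{prop: Lie algebroid of fat groupoid is fat algebroid}.

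The bottom map is the Van Est map for VB-groupoids, restricted to linear or VB cochains. By \cite{VanEsthomogeneous} it is an isomorphism in degrees $\le n$ and injective in degree $n+1$ when the source fibres of $G$ are homologically $n$-connected; the inclusion $C_\textnormal{VB}\hookrightarrow C_\textnormal{lin}$ is a quasi-isomorphism, so this applies. Note that it is the fibres of $G$ that matter here, not those of $F_G$.

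An alternative route is to choose a fat splitting $h$ and use Proposition \ref{prop: invariant cochain complex is a ruth}, together with its infinitesimal analogue Proposition \ref{prop: linear forms as ruths} for the differentiated splitting. This transports VE to the Van Est map for split ruths of \cite{CamiloFlorianruths}.

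The main obstacle is proving that the square commutes, equivalently that VE intertwines the two models. This amounts to matching the pullback along the VB-groupoid quotient $F_G\ltimes(C_M\to V_M)\to V(F_G)$ with its derivative, and tracking the sign conventions of the dual ruth. The first thing to try is naturality of Van Est under Lie groupoid maps, applied to this surjective submersion. With that in hand, commutativity reduces to naturality plus the explicit formulas of Proposition \ref{prop: invariant cochains in the fat case}.
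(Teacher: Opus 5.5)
Your proposal follows essentially the same route as the paper: invariance of $\textnormal{VE}f$ is checked by noting that a derivative in an $\textnormal{H}(V_M,C_M)$-direction survives only in the first-differentiated slot (the paper's $\partial_h\partial_\alpha=0$) and then differentiating the invariance identity. The cohomological statement is deduced from \cite{VanEsthomogeneous}, or alternatively \cite{CamiloFlorianruths}, via the commutative square with the VB-complex and the linear complex; the paper writes this square for $F_G=\widehat V_G$ and you write it for $V(F_G)$, and your suggestion to get commutativity from naturality of Van Est under the quotient groupoid map is a reasonable way to fill in what the paper only asserts. One small correction: the antisymmetrisation contributes no sign, because only permutations fixing the slot of $h$ survive, so any sign $(-1)^{\bullet-1}$ in the comparison with $\omega_1 h$ must be absorbed by the contraction conventions rather than produced by the alternating sum.
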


Instead of proving the statement directly, we explain how the Van Est map looks like in terms of $F_G = \widehat V_G$. Following the discussion of the previous sections, we can give a clear description of the formula in this situation.

As explained in the Section \ref{sec: Right invariant vector fields of the fat groupoid}, $\alpha \in \Gamma \widehat V_A$ exponentiates to a curve of linear maps\footnote{Here, $\epsilon > 0$ is small enough so that $\epsilon\alpha$ is close to the zero section.}
\begin{equation*}
\exp_x(\epsilon\alpha) = \exp_{\exp(\epsilon\alpha)x}(\epsilon\alpha): (V_M)_x \ra (V_G)_{\exp(\epsilon\alpha)x}.
\end{equation*}
For $\bullet \ge 0$, consider the maps
\begin{equation*}
\partial_\alpha: C^{\bullet+1}(\widehat V_G; V_M^*) \ra C^\bullet(\widehat V_G; V_M^*) \qquad \partial_\alpha: C^{\bullet+1}(\widehat V_G; C_M^*) \ra C^\bullet(\widehat V_G; C_M^*)
\end{equation*}
given both by
\begin{equation*}
(\partial_\alpha f)(H_{g_1}, \dots, H_{g_\bullet})e = \partial_{\epsilon=0}f(\exp_{1_{\bft g_1}}(\epsilon \alpha), H_{g_1}, \dots, H_{g_\bullet}) \exp_{1_{\bft g_1}}(\epsilon \alpha) \cdot e.
\end{equation*}
Then the Van Est map
\begin{equation*}
\textnormal{VE}: C^\bullet(\widehat V_G; V_M^* \ra C_M^*)_\textnormal{N} \ra \Omega^\bullet(\widehat V_A; V_M^* \ra C_M^*)
\end{equation*}
is given componentwise by\footnote{We used $\textnormal{S}_\bullet$ for the group of permutations on $\bullet$ elements.}
\begin{equation*}
\textnormal{VE} f(\alpha_1,\dots,\alpha_\bullet) = \textstyle\sum_{\sigma \in \textnormal{S}_\bullet} (-1)^\sigma \partial_{\alpha_{\sigma \bullet}} \cdots \partial_{\alpha_{\sigma 1}} f|_M.
\end{equation*}

\begin{proposition}\label{prop: Van Est theorem}
The Van Est map
\begin{equation*}
\textnormal{VE}: C^\bullet(\widehat V_G; V_M^* \ra C_M^*)_\textnormal{N} \ra \Omega^\bullet(\widehat V_A; V_M^* \ra C_M^*)
\end{equation*}
restricts to a cochain map
\begin{equation*}
\textnormal{VE}: C^\bullet_\textnormal{inv}(\widehat V_G; V_M^*)_\textnormal{N} \ra \Omega^\bullet_\textnormal{inv}(\widehat V_A; V_M^*).
\end{equation*}
Moreover, if $G$ has (homologically) $\le n$-connected fibers, then $\textnormal{VE}$ induces an isomorphism in cohomology in degrees $\le n$, and it induces an injective map in cohomology in degree $n+1$.
\end{proposition}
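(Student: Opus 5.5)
The plan is to reduce the proposition to the known Van Est theorem for the VB-complex, using the identifications established earlier in the paper, and to check that the explicit formula given above is compatible with those identifications. Concretely, I would consider the square formed by the isomorphism of Proposition \ref{prop: cochain map to representation up to homotopy of fat groupoid} (or Corollary \ref{coro: cochain map to representation}), $C_\textnormal{VB} V_G \cong C_\textnormal{inv}(\widehat V_G; V_M^*)$, and the isomorphism of Corollary \ref{coro: linear forms as forms on the fat algebroid}, $\Omega_\textnormal{lin} V_A \cong \Omega_\textnormal{inv}(\widehat V_A; V_M^*)$. The top and bottom maps of the square are the Van Est maps. The left edge is the Van Est map $C_\textnormal{VB}(V_G)_\textnormal{N} \ra \Omega_\textnormal{lin} V_A$, which is the restriction of the ordinary Van Est map of the Lie groupoid $V_G \rra V_M$ to $1$-homogeneous normalised cochains. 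The known Van Est theorem for linear/VB cochains \cite{VanEsthomogeneous,CamiloFlorianruths} then gives the cohomological statement, since the source fibers of $V_G$ are vector bundles over those of $G$ and so have the same connectivity. Thus everything reduces to showing that the square commutes.

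To prove commutativity I would use Proposition \ref{prop: Lie algebroid of fat groupoid is fat algebroid} and Remark \ref{rema: Lie algebroid of fat groupoid is fat algebroid}. Fix $\alpha \in \Gamma \widehat V_A$. There are two flows: the right-invariant linear flow $\varphi^X_\epsilon$ on $V_G$, and the curve $\exp_x(\epsilon\alpha)$ in $\widehat V_G$. The remark says these coincide as linear maps $(V_M)_x \ra (V_G)_{\varphi^X_\epsilon(x)}$. The ordinary Van Est map on $V_G$ is built from operators $R_X$ that differentiate along the first argument with respect to the right-invariant flow. Applying the transfer formula $f_0(H_{g_1},\dots)v = f(H_{g_1}(\bft H_{g_1})^{-1}v, g_2,\dots)$ with $H_{g_1} = \exp(\epsilon\alpha)$ yields $(R_X f)_0 = \partial_\alpha f_0$. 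Here the twisting factor $\exp(\epsilon\alpha)\cdot e$ in the definition of $\partial_\alpha$ is exactly what accounts for the factor $(\bft H)^{-1}$. I would carry out the analogous computation for the $f_1$ component using the core part $f(-c^{-1},\dots)$. Iterating and antisymmetrising then shows that the Van Est map on $V_G$, transported to the fat side, agrees with the formula $\textnormal{VE} f = \sum_\sigma (-1)^\sigma \partial_{\alpha_{\sigma\bullet}}\cdots\partial_{\alpha_{\sigma 1}} f|_M$.

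The remaining points are the following. Normalisation has to be preserved under the identification. For this I would use Remark \ref{rema: VB-complex and Dold-Kan}, which says that normalised VB-cochains are precisely the normalised cosimplicial ones, and check that the unit degeneracies correspond on the fat side. Once commutativity holds, the restriction to invariant cochains automatically lands in $\Omega_\textnormal{inv}$. The resulting map is a cochain map because the other three edges of the square are cochain isomorphisms or cochain maps.

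I expect the main obstacle to be the bookkeeping in the $f_1$/core component. Differentiating $f(-c^{-1},\dots)$ along a linear flow picks up terms from the action on $C_M$, and these terms must match the $[\alpha,\gamma]$ contributions in Definition \ref{def: Cartan calculus invariant cochains}. Sign conventions, including the $(-1)^{\bullet-1}$ in $\omega_1 h$, need care here. If a direct check is too messy, my fallback is an alternative route. Since both sides are $CG$-/$\Omega A$-linear derivation-compatible maps, I would verify commutativity only on local generators, namely degree-$0$ sections of $V_M^*$ and degree-$1$ core generators built from a local section $H$ of $\widehat V_G \ra G$ as in the alternative proof of Proposition \ref{prop: VB complex}. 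I would then extend by multiplicativity of the Van Est map.
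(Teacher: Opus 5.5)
Your proof is correct, but it splits the work differently from the paper.

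\textbf{How the paper argues.} The paper does not derive invariance of the image from the square. It verifies $\iota_v\iota_h\textnormal{VE} f_0=\iota_{hv}\textnormal{VE} f_1$ directly on the fat side. The key input is that $\exp_x(\epsilon h)=1+\epsilon h$ is a bisection over the unit bisection of $G$. An invariant cochain depends only on the base arrows in every slot except the first, so normalisation gives $\partial_h\partial_\alpha f_0=0$. Hence only the terms in which $h$ is differentiated first survive, and these are computed from the invariance relation between $f_0$ and $f_1$. The square with $(C_{\textnormal{VB}}V_G)_\textnormal{N}$ and $\Omega_{\textnormal{lin}}V_A$ is then used, and merely asserted via the discussion of flows, only to import the cohomological statement from the homogeneous Van Est theorem.

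\textbf{What each route buys.} The paper's invariance argument is intrinsic to the fat extension and never uses $V_G$. Once invariance is known, the square only has to be checked on $V_M^*$-components, because an invariant pair is determined by $\omega_0$ when $V_M\neq 0$. That check is exactly your flow computation $(R_Xf)_0=\partial_\alpha f_0$. Your route proves the square in full, which the paper leaves implicit, and gets invariance for free. The cost is that you also need the core component.

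\textbf{The core component is easier than you expect.} It rests on the same observation as the paper's argument.
\begin{itemize}
\item The right-invariant flow of a core section $\gamma$ is $v\mapsto v+\epsilon\, r_g\gamma(\bft g)$, which is again a bisection over the units. So for a normalised VB-cochain, every term in which $R_\gamma$ is not applied first vanishes.
\item One has $R_\gamma f=f(\gamma,g_2,\dots)=f(-\gamma^{-1},g_2,\dots)$. This holds because $-\gamma^{-1}=\gamma-1_{\partial\gamma}$ and $f$ vanishes when its first entry is a unit.
\item When you compare $\partial_\alpha f_1$, paired with $\gamma$, with the untwisted derivative of the scalar cochain $R_\gamma f$, the twist by the action contributes nothing. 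Normalised cochains vanish when the first slot is a unit.
\end{itemize}
So no $[\alpha,\gamma]$-terms arise; those belong to the differential, not to the identification $\omega\mapsto(\omega_0,\omega_1)$.

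\textbf{One precision.} $R_Xf$ is in general no longer a VB-cochain. To iterate you should therefore use the $C_{\textnormal{lin}}$-version of $f\mapsto f_0$, namely the pullback along $(C_M\ra V_M)\rtimes\widehat V_G\ra V_G$. For that version, $(R_Xf)_0=\partial_\alpha f_0$ holds for every linear $f$.
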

\begin{proof}
Given $(f_0,f_1) \in C^\bullet_\textnormal{inv}(\widehat V_G; V_M^*)$, we want to show that
\begin{equation}\label{eq: VE invariance formula}
\iota_v\iota_h \textnormal{VE} f_0 = \iota_{hv} \textnormal{VE} f_1
\end{equation} 
for all $h \in \Gamma \textnormal{Hom}(V_M,C_M)$ and $v \in \Gamma V_M$. First observe that
\begin{equation*}
\exp_x(\epsilon h) = 1 + \epsilon h: (V_M)_x \ra (V_G)_{1_x}.
\end{equation*}
In particular, as this is a bisection over the unit bisection $1: M \ra G$, we have, for all $\alpha \in \Gamma \widehat V_A$, that $\partial_h\partial_\alpha=0$. Therefore, the only surviving terms $\partial_{\beta_\bullet}\cdots\partial_{\beta_1} f_0$ in
\begin{equation*}
\textnormal{VE} f_0(\alpha_1, \dots, \alpha_\bullet) = \textstyle\sum_{\sigma \in \textnormal{S}_\bullet} (-1)^\sigma \partial_{\alpha_{\sigma \bullet}} \cdots \partial_{\alpha_{\sigma 1}} f_0|_M,
\end{equation*}
say, for $\alpha_1 = h$, are those for which $\beta_1=h$. The result follows using the explicit formula of \cite{MariusVanEst}:
\begin{equation*}
(\partial_{\beta_\bullet} \cdots \partial_{\beta_1} f) v = \partial_{\epsilon_\bullet=0} \cdots \partial_{\epsilon_1=0} f(\exp(\epsilon_1\beta_1), \dots, \exp(\epsilon_\bullet\beta_\bullet)) \exp(\epsilon_1\beta_1) \cdots \exp(\epsilon_\bullet\beta_\bullet) v.
\end{equation*}
The last statement is a consequence of, for example, the result of \cite{VanEsthomogeneous} and our discussion on flows, as it follows from that discussion that we have a commutative diagram
\begin{center}
\begin{tikzcd}
(C_{\textnormal{VB}} V_G)_\textnormal{N} \ar[r, "\sim"] \ar[d, "\textnormal{VE}"] & C_{\textnormal{inv}}(\widehat V_G; V_M^*)_\textnormal{N} \ar[d, "\textnormal{VE}"] \\
\Omega_{\textnormal{lin}} V_A \ar[r, "\sim"] & \Omega_{\textnormal{inv}}(\widehat V_A; V_M^*)
\end{tikzcd}
\end{center}
One could also establish such a commutative diagram using instead the result of \cite{CamiloFlorianruths}.
\end{proof}

One can prove the Van Est theorem also directly using similar techniques as in \cite{MariusVanEst}; this will appear in \cite{Homotopyoperators}.

\section{Deformation theory for Lie groupoids using the jet groupoid}\label{sec: Deformation theory for Lie groupoids using the jet groupoid}
We focus now on the fat extension of the jet groupoid $J^1G$. Recall that
\begin{align*}
J^1G &= \{j^1\sigma \mid \sigma \in \textnormal{Bis}_\textnormal{loc} G\} \\
&= \{H_g: T_{\bfs g} M \ra T_gG \mid d\bfs_g H_g = 1 \textnormal{ and } d\bft_g H_g \textnormal{ is a linear isomorphism}\} \\
&= \{\omega_g: T_gG \ra A_{\bfs g} \mid \omega_g \ell_g = 1 \textnormal{ and } \omega_g r_g \textnormal{ is a linear isomorphism}\}.
\end{align*}
The deformation complex of $G$ is $C_{\textnormal{proj}} TG$, which we now see is isomorphic to $C_\textnormal{inv}(J^1G; A)$. However, as we will see, it is also natural, perhaps even more natural, to work with the \textit{jet category} whose elements are $1$-jets of sections of the source map. 

We already reformulated the vanishing of proper groupoids (see Proposition \ref{prop: vanishing for proper groupoids}) and the Van Est theorem in terms of $C_\textnormal{inv}(J^1G; A)$ (see Proposition \ref{prop: Van Est theorem}).
We will discuss two more reformulations, namely that of describing the deformation class of a deformation of groupoids, and for the deformation cohomology in the regular case. There are many other reformulations possible of other parts of \cite{Deformationsgroupoids}, but we won't discuss further applications here. We think the small discussion presented here gives a good starting point for the interested reader to understand deformation theory of Lie groupoids in this setup. 

\begin{remark}\label{rema: deformation complex using jet algebroid}
Before we continue, notice that in \cite{ShengLAdeformations} the deformation complex of Lie algebroids is discussed in terms of the jet algebroid $J^1A$. That is, $\Omega_\textnormal{inv}(J^1A; A)$ is the deformation complex of $A$ (see also \cite{Deformationsalgebroids}). The dgLa structure is given as follows: given $\omega_0 \in \Omega_\textnormal{inv}^{\bullet+1}(J^1A; A)$ and $\omega_0' \in \Omega_\textnormal{inv}^{\bullet'+1}(J^1A; A)$, we first define
\begin{align*}
(\omega_0 \circ \omega_0')(\alpha_1,\dots,\alpha_{\bullet+\bullet'+1}) = \textstyle\sum_\sigma (-1)^\sigma \omega_0(\omega_0'(\alpha_{\sigma 1}, \dots, \alpha_{\sigma(\bullet'+1)}), \alpha_{\sigma(1+\bullet'+1)}, \dots, \alpha_{\sigma(\bullet + \bullet'+1)})
\end{align*}
and then put
\begin{equation*}
[\omega,\omega'] = (-1)^{\bullet\bullet'}\omega \circ \omega' - \omega' \circ \omega.
\end{equation*}
Notice that we can see elements $\omega_0$ in degree $2$ satisfying the Maurer Cartan equation $[\omega_0,\omega_0] = 0$ as Lie brackets by setting
\begin{equation*}
[\alpha,\beta]_\omega \coloneq \omega_0(d\alpha,d\beta).
\end{equation*}
The Leibniz rule follows by the existence of the form $\omega_1$, which can be seen as the anchor map. In fact, for any form $\omega \in \Omega^\bullet_\textnormal{inv}(J^1A; A)$ we can define a multiderivation by
\begin{equation*}
D(\alpha_1,\dots,\alpha_\bullet) = \omega_0(d\alpha_1,\dots,d\alpha_\bullet)
\end{equation*}
whose symbol is given by $\omega_1$. This way, we obtain an isomorphism of differential graded Lie algebras between $\Omega_{\textnormal{def}} A$ of \cite{Deformationsalgebroids} and $\Omega_\textnormal{inv}(J^1A; A)$ of \cite{ShengLAdeformations}.
\end{remark}

\subsection{Deformations of Lie groupoids}

We define a deformation of Lie groupoids as in \cite{Deformationsgroupoids}.
\begin{definition}\label{def: deformation of LG}
A deformation of a Lie groupoid $G$ is a Lie groupoid $G_\bbR \rra M_\bbR$ together with a Lie groupoid submersion
\begin{equation*}
G_\bbR \ra 1_\bbR
\end{equation*}
such that $G_0 = G$ (the preimage of zero).
\end{definition}
\begin{example}\label{exam: deformation to the normal cone}
Let $G \rra M$ be a Lie groupoid, and let $H \rra N$ be a Lie subgroupoid of $G$. Consider the deformation to the normal cone (see for instance \cite{DNCconstructionmanifolds,Meinrenkenlecturenotes,Blup,lobsterblup})\footnote{As sets, $\calD(M,N) = M \times (\bbR \backslash \{0\}) \sqcup \calN(M,N)$. The smooth structure is canonically defined, and it is unique with the property that $M \times (\bbR \backslash \{0\})$ is an embedded submanifold.}
\begin{equation*}
\calD(G,H) \rra \calD(M,N).
\end{equation*}
This is a deformation, in the above sense, of $\calN(G,H) \rra \calN(M,N)$, and it is central to the linearisation problem for Lie groupoids.
\end{example}
Deformations can be seen as families of Lie groupoids $G_\epsilon \rra M_\epsilon$. For the deformation theory of $G$, both $C_{\textnormal{def}} G$ and $C_{\textnormal{def}} G_\bbR$ are relevant. We can see $TG_\bbR$ as a deformation of $TG$, and elements of $J^1G_\bbR$ take the form
\begin{equation*}
H_g: T_{\bfs g}M_\bbR \ra T_gG_\bbR
\end{equation*}
that we should see as a family of maps $T_{\bfs g_\epsilon} M_\epsilon \ra T_{g_\epsilon} G_\epsilon$.\footnote{Note that if $X \in T_{\bfs g} M$ projects to $\lambda \tfrac{\partial}{\partial t}$, then $H_g X$ also projects to $\lambda \tfrac{\partial}{\partial t}$.}

\subsection{The deformation class of a deformation}
Given a deformation $G_\bbR$ of $G$, we can lift the vector field $\tfrac{\partial}{\partial t} \in \frakX \bbR$ to a vector field $X_\bbR$ of $M_\bbR$. Then, take a fat splitting $h$ (see Section \ref{sec: Relation to Representations up to homotopy}) and define the map
\begin{equation*}
f_0: J^1G_\bbR \ra \bft^*A_\bbR \qquad f_0(H_{g}) = -h(H_g) X_\bbR(\bfs g).
\end{equation*}
This defines an element of $C_\textnormal{inv}^1(J^1G_\bbR; A_\bbR)$ with $f_1 = X_\bbR \in \frakX M_\bbR$:
\begin{align*}
f_0(H_g) - f_0(H_{g}') = h(H_g,H_{g}')f_1(\bfs g).
\end{align*}
\begin{remark}
It is good to realise that, using the model $C_{\textnormal{proj}} TG_\bbR$, and using a cleavage $\Sigma$ of $TG_\bbR$, we can lift $X_\bbR$ to an $\bfs$-projectable vector field $\Sigma X_\bbR$ of $G_\bbR$ (i.e. $g \mapsto \Sigma_{g} X_\bbR(\bfs g)$) and it can directly be seen as a $1$-cochain, for it is $\bfs$-projectable. On the one hand, the $1$-cochain we write here makes it clear that, when we pass to the adjoint (split) ruth using $\Sigma$, the $1$-cochain is nothing but $X_\bbR \in C^0(G_\bbR;TM_\bbR)$. On the other hand, the usage of the cleavage shows that the map $f$ above is more naturally defined on the fat category. 
\end{remark}
Taking $\delta f_0$ defines a cocycle that we can restrict to a cocycle in $C_\textnormal{inv}^2(J^1G; A)$. Indeed, since
\begin{equation*}
\delta f_0(H_{g_1},g_2) = h(H_{g_1})X_\bbR(\bfs g_1) - (h(H_{g_1} \cdot H_{g_2}) - H_{g_1} \cdot h(H_{g_2}))X_\bbR(\bfs g_2),
\end{equation*}
where $H_{g_2}$ is arbitrary, and $X_\bbR$ projects to $\tfrac{\partial}{\partial t}$, the resulting element in $(A_\bbR)_{\bft g}$ is vertical with respect to the projection to $\bbR$. Therefore, the same equation defines a cocycle in $C_\textnormal{inv}^2(J^1G; A)$.

The resulting cohomology class is independent of the choice of vector field $X_\bbR$ or the fat splitting $h$. The reason is that $f_0-f_0'$ (where $f_0'$ is defined using a different choice $X_\bbR'$ and $h'$) restricts to $C_\textnormal{inv}^1(J^1G; A)$ (the same reasoning as before applies). Since
\begin{equation*}
\delta f_0 = \delta f_0' + \delta(f_0-f_0'),
\end{equation*}
the cohomology classes defined by $f_0$ and by $f_0'$ are the same.

\begin{remark}
As can be inferred from \cite{Eulerlike,Deformationsgroupoids}, we can linearise a proper Lie groupoid $G$ along an invariant submanifold $N \subset M$ using $\calD(G,G_N)$ and the homotopy operator $\eta$ from Proposition \ref{prop: vanishing for proper groupoids}. The projectable complex from \cite{Deformationsgroupoids} is natural for such an application, for it is easy to see that its $1$-cocycles comprise the multiplicative vector fields. The above model, especially when put in terms of the jet category, seems to also give a natural point of view. 
\end{remark}

\subsection{Deformation cohomology in the regular case}
Given a regular Lie groupoid $G$, $TG$ is also regular. A VB-groupoid $V_G$ over $G$ whose underlying complex $C_M \ra V_M$ is regular, comes with a short exact sequence of complexes
\begin{center}
\begin{tikzcd}
\ker \partial \ar[r] \ar[d] & C_M \ar[r] \ar[d, "\partial"] & 0 \ar[d] \\
0 \ar[r] & V_M \ar[r] & \textnormal{coker } \partial
\end{tikzcd}
\end{center}
all three of which we can see as cochain complex representations of $\widehat V_G$. Notice that the invariant complex of the outer two representations are canonically isomorphic to the corresponding representations of $G$. We therefore obtain a long exact sequence
\begin{center}
\begin{tikzcd}
\cdots \ar[r] & H^\bullet(G; \ker \partial) \ar[r] & H^\bullet_{\textnormal{inv}}(\widehat V_G; C_M) \ar[r] & H^{\bullet-1}(G; \textnormal{coker } \partial) \ar[r] & \cdots.
\end{tikzcd}
\end{center}
If $G=G(P)$ is the Gauge groupoid of a principal bundle $P$, we recover the result that
\begin{equation*}
H^\bullet_{\textnormal{def}} G(P) \cong H^\bullet(G(P); P \times_G \frakg),
\end{equation*}
where $P \times_G \frakg$ is the adjoint bundle. If $G=G(\calF)$ is a foliation groupoid of a foliation $\calF$, we recover the result that
\begin{equation*}
H^\bullet_{\textnormal{def}} G(\calF) \cong H^{\bullet-1}(G(\calF); \calN),
\end{equation*}
where $\calN$ is the normal bundle of the foliation. Similar arguments work to prove analogous results for VB-algebroids.

\appendix
\section{Double groupoids and core extensions}\label{app: double groupoids}
Recall the following notation we use for double groupoids:

\begin{center}
\begin{tikzcd}
    G \ar[d, shift left] \ar[d, shift right] \ar[r, shift left] \ar[r, shift right] & G_\da \ar[d, shift left] \ar[d, shift right] \\
    G_\ra \ar[r, shift left] \ar[r, shift right] & M
\end{tikzcd}
\end{center}

We denote the structure maps of $G_\ra$ and $G_\da$ using the same arrow-index. We denote the groupoid structure maps of $G \rra G_\da$ with indices $\ra$ and $G \rra G_\ra$ with indices $\da$.

The core groupoid is given by
\begin{equation*}
    F \coloneq \ker \bfs_\da \cap \ker \bfs_\ra \rra M,
\end{equation*}
whose structure maps are given by $\bfs(g) = \bfs_\ra\bfs_\da g = \bfs_\da\bfs_\ra g$, $\bft(g) = \bft_\ra\bft_\da g = \bft_\da\bft_\ra g$, and\footnote{That is, we right-translate $g$ to be composable with $h$.}
\begin{align*}
    g \cdot h &= (g \cdot_\da 1_\ra(\bft_\ra h)) \cdot_\ra h = (g \cdot_\ra 1_\da(\bft_\da h)) \cdot_\da h \\
    g^{-1} &= (g)_\ra^{-1} \cdot_\da 1_\ra(\bft_\ra g)^{-1} = (g)_\da^{-1} \cdot_\ra 1_\da(\bft_\da g)^{-1}.
\end{align*}
It comes with two groupoid maps
\begin{equation*}
    G_\da \xla{\bft_\ra} F \xra{\bft_\da} G_\ra.
\end{equation*}
Then, (vertically) core-transitive double groupoids are those for which the map
\begin{equation*}
    \bft_\da: F \ra G_\ra
\end{equation*}
is a surjective submersion.

Following \cite{Corediagram} (or rather generalising their result) we will show that this type of double groupoid can be described in a much more efficient way using, what we call here, a \textit{core extension}.

\subsection{From core-transitive double groupoids to core extensions}

Let $G$ be a vertically core-transitive double groupoid. Then $G$ comes with a bundle of Lie groups suggestively denoted by $\textnormal{H}$, which is the kernel of the map $F \ra G_\ra$. Notice that there is a natural conjugation action of $G_\da$ on $\textnormal{H}$. Namely, for $g \in G_\da$ and $h_x \in \textnormal{H}$, such that $\bfs_\da g = x$, we can set
\begin{equation*}
    g \cdot h_x \coloneq 1_\ra(g) \cdot_\da h_x \cdot_\da 1_\ra(g^{-1}).
\end{equation*}
Notice that this action defines a ``bundle of Lie groups representation'' on the bundle of Lie groups $\textnormal{H}$ in the following sense.

\begin{definition}\label{def: representations of bundles of Lie groups}
Let $G \rra M$ be a Lie groupoid, and consider a bundle of Lie groups $\textnormal{H} \rra M$. An action of $G$ on $\textnormal{H}$ is called a \textit{bundle of Lie groups representation} if the action is by Lie group automorphisms. That is, given $g \in G$ and $h_x,h_x' \in \textnormal{H}$ with $\bfs g=x$,
\begin{equation*}
    g \cdot (h_x \cdot h_x') = (g \cdot h_x) \cdot (g \cdot h_x').
\end{equation*}
\end{definition}

We actually defined now three structures from a vertically core-transitive double groupoid $G$ --- a short exact sequence of Lie groupoids
\begin{center}
\begin{tikzcd}
    1 \ar[r] & \textnormal{H} \ar[r] & F \ar[r] & G_\ra \ar[r] & 1,
\end{tikzcd}
\end{center}
a Lie groupoid map $F \ra G_\da$, and a bundle of Lie groups representation $G_\da$ $\rotatebox[origin=c]{270}{\large$\circlearrowright$}$ $\textnormal{H}$. We summarise this data into a diagram as follows
\begin{center}
\begin{tikzcd}
    1 \ar[r] & \textnormal{H} \ar[r] & \ar[dl] F \ar[r] & G_\ra \ar[r] & 1. \\
    \phantom{A} & G_\da \ar[u, phantom, "\textnormal{\Large$\circlearrowright$}"] & \phantom{A} & \phantom{A} & \phantom{A}
\end{tikzcd}
\end{center}
There are two compatibility assumptions between the three structures described above that are reminiscent of the axioms of a so-called \textit{crossed module} of Lie groupoids. Firstly, the map $F \ra G_\da$ restricted to $\textnormal{H}$ is \textit{equivariant}. By this we mean that the map turns the action of $G_\da$ on $\textnormal{H}$ into the usual conjugation of $G_\da$:\footnote{Notice that $\textnormal{H}$ maps into the isotropy of $G_\da$.}
\begin{equation*}
    \bft_\ra(g \cdot h_x) = g \cdot (\bft_\ra h_x) \cdot g^{-1}.
\end{equation*}
Secondly, the map $F \ra G_\da$ defines an action of $F$ on $\textnormal{H}$. The following lemma shows that this action equals the conjugation action of $F$ on $\textnormal{H}$ (viewing $\textnormal{H} \subset F$ as a normal subgroupoid).

\begin{lemma}\label{lemm: Peiffer identity}
Given a vertically core-transitive double groupoid $G$, we have for all $g \in F$ and $h_x \in \textnormal{H}$ with $\bfs g = x$,\footnote{On the right hand side the product of $F$ is used.}
\begin{equation*}
    1_\ra(\bft_\ra g) \cdot_\da h_x \cdot_\da 1_\ra(\bft_\ra g)^{-1} = g \cdot h \cdot g^{-1}.
\end{equation*}
We call this the \textit{Peiffer identity}.
\end{lemma}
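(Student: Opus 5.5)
The plan is a direct computation with the interchange law, working inside the double groupoid $G$. I first fix the elements involved. Take $g\in F$ with $\bfs g = x$ and $h_x\in\textnormal{H}$. Then $h_x\in F$ with $\bft_\da h_x = 1_x$ in $G_\ra$, and $h_x$ is both vertically and horizontally source-trivial. Set $a \coloneq \bft_\ra g \in G_\da$. The left hand side of the Peiffer identity conjugates $h_x$ by the horizontal unit $1_\ra(a)$ using the vertical product $\cdot_\da$. This makes sense because $h_x$ is a vertical arrow from $1_\ra(1_x)$ to itself: $\bfs_\da h_x = 1_\ra(x)$ by the definition of $F$, and $\bft_\da h_x = 1_\ra(x)$ since $h_x\in\textnormal{H}$.

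The key step is to rewrite $g$ so that $1_\ra(a)$ appears. I would use core-transitivity only implicitly; the main tool is the factorisation
\begin{equation*}
 g = (g\cdot_\da 1_\ra(a)^{-1})\cdot_\ra\ldots ,
\end{equation*}
or, more cleanly, the element $k\coloneq g\cdot_\ra 1_\da(\bft_\da g)^{-1}$-type decompositions. Concretely, I would express $g = u \cdot_\da 1_\ra(a)$, where $u \coloneq g\cdot_\da 1_\ra(a)^{-1}$ has horizontal source and target equal to units, so $u$ is ``purely horizontal'' (it lies over units of $G_\da$). Then I would compute $g\cdot h_x\cdot g^{-1}$ in $F$ using the formulas
\begin{equation*}
g\cdot h=(g\cdot_\da 1_\ra(\bft_\ra h))\cdot_\ra h,\qquad g^{-1}=(g)_\da^{-1}\cdot_\ra 1_\da(\bft_\da g)^{-1}.
\end{equation*}
I expect the horizontal factors coming from $u$, and from $1_\da(\bft_\da g)$, to cancel pairwise via the interchange law. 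The reason is that $h_x$ commutes horizontally past vertical units: $h_x\cdot_\ra 1_\da(\cdot)$ can be swapped using $(p\cdot_\da q)\cdot_\ra(p'\cdot_\da q')=(p\cdot_\ra p')\cdot_\da(q\cdot_\ra q')$ with identity squares. After the cancellation, what remains is $1_\ra(a)\cdot_\da h_x\cdot_\da 1_\ra(a)^{-1}$.

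The main obstacle will be bookkeeping: checking composability at each application of the interchange law, and tracking which units are horizontal versus vertical. I would organise the computation as a $3\times 3$ grid of squares, with $g$, $h_x$, $g^{-1}$ in the middle row and units around them. I would evaluate it once row-first and once column-first. Row-first evaluation gives the $F$-conjugate, and column-first evaluation gives the $1_\ra(a)$-conjugate. If this grid does not assemble directly, I would fall back to first proving the identity for $g = 1_\ra(a)\cdot_\da\ldots$ of the special form, then for purely horizontal $u$. In the purely horizontal case conjugation is trivial, because $u$ and $h_x$ commute by an Eckmann–Hilton style interchange argument. Multiplicativity of both sides in $g$ then concludes.
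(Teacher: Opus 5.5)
There is a genuine gap: the decomposition your argument rests on does not exist. Put $a = \bft_\ra g$ and $u = g\cdot_\da 1_\ra(a^{-1})$, where $1_\ra(a^{-1})$ is the vertical inverse of $1_\ra(a)$. Then $\bft_\ra u = a a^{-1}$ is a unit, but $\bfs_\ra u = \bfs_\ra g\cdot a^{-1} = a^{-1}$, because $\bfs_\ra$ and $\bft_\ra$ are groupoid maps for the vertical structure. So $u$ does not lie over units of $G_\da$.

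No choice of $u$ can fix this. If $u$ lay over units of $G_\da$, then $u\cdot_\da 1_\ra(a)$ would have horizontal source and target both equal to $a$. But $g$ has $\bfs_\ra g = 1_x \neq a$ in general. Hence no element of $F$ with non-unit $\bft_\ra g$ splits as ``purely horizontal times $1_\ra(a)$''.

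The $3\times 3$ grid is not well-formed either. You cannot place $g$, $h_x$, $g^{-1}$ composably in one row, since $\bft_\ra h_x$ is an isotropy element of $G_\da$ rather than a unit, and $\bft_\ra g^{-1} = a^{-1}$. So the cancellations you ``expect'' are never actually set up.

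Your Eckmann--Hilton observation is correct: $F\cap\ker\bft_\ra$ commutes with $\textnormal{H}$. Combined with multiplicativity in $g$, it reduces the lemma to some set of elements of $F$ hitting every value of $\bft_\ra$. But you never say which elements (note $1_\ra(a)\notin F$), and proving the identity for them is the original problem.

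The missing ingredient is the horizontal inverse $(g)^{-1}_\ra$, which is where $1_\ra(a) = g\cdot_\ra (g)^{-1}_\ra$ comes from. Since $\bft_\da h_x$ is a unit, $g\cdot h_x = g\cdot_\da h_x$. Also $g^{-1} = (g)^{-1}_\ra\cdot_\da 1_\ra(a^{-1})$, with $\bft_\ra g^{-1} = a^{-1}$. Hence
\begin{equation*}
g\cdot h_x\cdot g^{-1} = \bigl(g\cdot_\da (h_x\cdot_\da 1_\ra(a^{-1}))\bigr)\cdot_\ra\bigl((g)^{-1}_\ra\cdot_\da 1_\ra(a^{-1})\bigr).
\end{equation*}
One application of the interchange law to this $2\times 2$ grid gives $(g\cdot_\ra (g)^{-1}_\ra)\cdot_\da h_x\cdot_\da 1_\ra(a^{-1}) = 1_\ra(a)\cdot_\da h_x\cdot_\da 1_\ra(a^{-1})$. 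This is the paper's proof.

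Your instinct to evaluate a grid in two orders is right. But the grid must place $g$ next to its horizontal inverse, not next to its core inverse, and core-transitivity plays no role.
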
%
\begin{proof}
This follows by the interchange law. We do a direct computation:
\begin{align*}
    g \cdot h_x \cdot g^{-1} &= (g \cdot_\da h_x \cdot_\da 1_\ra(\bft_\ra g)^{-1}) \cdot_\ra ((g)_\ra^{-1} \cdot_\da 1_\ra(\bft_\ra g)^{-1}) \\
    &= (g \cdot_\ra (g)_\ra^{-1}) \cdot_\da h_x \cdot_\da 1_\ra(\bft_\ra g)^{-1} = 1_\ra(\bft_\ra g) \cdot_\da h_x \cdot_\da 1_\ra(\bft_\ra g)^{-1}.
\end{align*}
This proves the statement.
\end{proof}

The terminology is chosen to reflect that the identity closely resembles the Peiffer identity for crossed modules of Lie groupoids. That is, the structure we arrived at is both a generalisation of that of core diagrams in \cite{Corediagram} and that of crossed modules (see, for example, \cite{MackenzieClassification,CrossedAndrouli,CamilleWagemann}). We call it a \textit{core extension} in analogy with our definition of fat extension.

\begin{definition}\label{def: core extension}
A \textit{core extension} is a diagram
\begin{center}
    \begin{tikzcd}
        1 \ar[r] & \textnormal{H} \ar[r] & \ar[dl] F \ar[r] & G_\ra \ar[r] & 1 \\
        \phantom{A} & G_\da \ar[u, phantom, "\textnormal{\Large$\circlearrowright$}"] & \phantom{A} & \phantom{A} & \phantom{A}
    \end{tikzcd}
\end{center}
such that the restriction of the map $F \ra G_\da$ to $\textnormal{H}$ is equivariant (with respect to conjugation in $G_\da$) and the Peiffer identity holds: the conjugation action of $F$ on $\textnormal{H}$, viewing $\textnormal{H}$ as a normal subgroupoid of $F$, equals the action by $F$ on $\textnormal{H}$ induced by the bundle of Lie groups representation $G_\da$ $\rotatebox[origin=c]{270}{\large$\circlearrowright$}$ $\textnormal{H}$ and the map $F \ra G_\da$.
\end{definition}

We usually simply write $F$ for a core extension. Notice that, in a core extension $F$, the orbits of $F$ are the same as the orbits of $G_\ra$.

Since a double groupoid might be both horizontally and vertically core-transitive, we sometimes call the associated core extension of a vertically core-transitive double groupoid the \textit{vertical core extension}.

\begin{remark}\label{rema: core extension of a strict Lie 2-groupoid}
It is worthwhile to observe that a strict Lie $2$-groupoid $H_G \rra H_M \rra N$ is always vertically core-transitive. Writing $F_H$ for its core, the above exact sequence for $H_G$ is given by
\begin{center}
\begin{tikzcd}
    1 \ar[r] & F_H \ar[r] & F_H \ar[r] & N \ar[r] & 1.
\end{tikzcd}
\end{center}
So, the vertical core extension of a strict Lie $2$-groupoid is completely determined by a bundle of Lie groups $F_H \rra N$ and a Lie groupoid $H_M \rra N$, together with a Lie groupoid map $F_H \ra H_M$, and a bundle of Lie groups representation $H_M$ $\rotatebox[origin=c]{270}{\large$\circlearrowright$}$ $F_H$. This recovers the concept of crossed module of Lie groupoids. This definition is used in, for example, \cite{MackenzieClassification,CrossedAndrouli,CamilleWagemann}. As we will see in the next section, Section \ref{sec: From core extensions to double groupoids}, the one-to-one correspondence we set between vertically core-transitive double groupoids and core extensions reduces to a one-to-one correspondence between strict Lie $2$-groupoids and crossed modules of Lie groupoids $(F_H,H_M)$ (where $F_H$ is a bundle of Lie groups). See also Remark \ref{rema: correspondence strict Lie 2-groupoids and crossed modules}.
\end{remark}

\subsection{From core extensions to core-transitive double groupoids}\label{sec: From core extensions to double groupoids}

In this section we show that we can recover a vertically core-transitive double groupoid from its vertical core extension. As we already mentioned before, this sets a one-to-one correspondence. For a large and essential part, this was proved already in \cite{Corediagram} (see especially the last comments in that work). There, the focus lies on transitive double groupoids.\footnote{In \cite{Corediagram}, transitive groupoids are called locally trivial groupoids.} These are double groupoids such that all four groupoids defining the double groupoid are assumed to be transitive. Additionally, they are assumed to be horizontally core-transitive and vertically core-transitive. However, it is observed at the end of the work \cite{Corediagram} that generalisations should be possible for vertically core-transitive double groupoids that satisfy another assumption. Writing $G$ for the double groupoid as before, the assumption is written in \cite{Corediagram} as follows: all elements of $G$ can be written as
\begin{equation*}
    g_1 \cdot_\da 1_\ra(g_2) \cdot_\da (g_3)^{-1}_\da
\end{equation*}
where $g_1,g_3 \in F$ and $g_2 \in G_\da$. However, essentially, this assumption is equivalent to the vertically core-transitive assumption:\footnote{A similar remark was already made in \cite{CorediagramLA}.}

\begin{lemma}\label{lemm: vertically core-transitive is equivalent to assumption}
Let $G$ be a double groupoid. Then $G$ is vertically core-transitive if and only if
\begin{equation*}
    F \times_M G_\da \times_M F \ra G \qquad (g_1,g_2,g_3) \mapsto g_1 \cdot_\da 1_\ra(g_2) \cdot_\da (g_3)^{-1}_\da
\end{equation*}
is a surjective submersion.
\end{lemma}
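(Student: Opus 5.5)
The plan is to relate the map $\mu\colon F \times_M G_\da \times_M F \ra G$, $(g_1,g_2,g_3)\mapsto g_1 \cdot_\da 1_\ra(g_2) \cdot_\da (g_3)^{-1}_\da$, to $\bft_\da\colon F\ra G_\ra$ through the double source map. First I fix the fibered product. For the composition to make sense we need $\bfs_\da g_1 = \bft_\da 1_\ra(g_2) = 1_\ra(\bft_\da g_2)$ in $G_\ra$. Because $g_1\in F$, we have $\bfs_\da g_1 = 1_\ra(\bfs g_1)$, and similarly for $g_3$. So the fibered products are over the source and target of $G_\da$, and the domain is a manifold since $\bfs,\bft$ of $F$ are submersions.

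For the direction ($\Leftarrow$), assume $\mu$ is a surjective submersion. Then so is $\bft_\da\circ\mu$. Computing, $\bft_\da\mu(g_1,g_2,g_3) = \bft_\da g_1 \cdot \bft_\da 1_\ra(g_2)\cdot(\bft_\da g_3)^{-1}$ in $G_\ra$, where the middle factor is a unit because $\bft_\da\circ 1_\ra = 1_\ra\circ\bft$ lands in units of $G_\ra$. So the composite equals $\bft_\da g_1 \cdot (\bft_\da g_3)^{-1}$. Restricting to the submanifold $g_3 = 1$ (units of $F$), the image of $\bft_\da$ on $F$ must already be all of $G_\ra$: $\bft_\da\colon G\ra G_\ra$ is surjective, and any $\bft_\da g_1(\bft_\da g_3)^{-1}$ lies in the image of the groupoid map $\bft_\da|_F$. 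For the submersion property I write $\bft_\da\circ\mu$ as the division map of $G_\ra$ composed with $\bft_\da|_F\times\bft_\da|_F$. A submersion onto $G_\ra$ that factors through $\bar\bfm\circ(\bft_\da|_F\times \bft_\da|_F)$ forces the image of $d(\bft_\da|_F)$ to span at units modulo the orbit directions, and then everywhere by translating. I expect this to need care, and would instead argue directly that $\bft_\da|_F$ is a submersion at units, hence everywhere by right translation in $F$.

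For the direction ($\Rightarrow$), assume $\bft_\da|_F$ is a surjective submersion, and take $g\in G$ with $\bfs_\da g = a\in G_\ra$, $\bft_\da g = b$. Choose $g_1,g_3\in F$ with $\bft_\da g_1 = b$ and $\bft_\da g_3 = a$, which is possible by surjectivity. Then $k := (g_1)^{-1}_\da\cdot_\da g\cdot_\da g_3$ has $\bfs_\da k$ and $\bft_\da k$ equal to units of $G_\ra$. I then show $k = 1_\ra(g_2)$ with $g_2 = \bfs_\ra k$, using the interchange law, after possibly adjusting $g_1$ by an element of $\textnormal{H}=\ker(\bft_\da|_F)$ to absorb the discrepancy. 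This is the main obstacle: an element with vertical source and target units need not be a horizontal unit in general. So I would instead solve for $g_3$: set $g_3 := \big((g_1)^{-1}_\da \cdot_\da g\big)^{-1}_\da$ corrected by $1_\ra(g_2)^{-1}$ with $g_2 := \bft_\ra$ of the appropriate element, and check via the double-source structure that it lies in $\ker\bfs_\da\cap\ker\bfs_\ra = F$. Submersivity then comes from constructing local smooth sections of $\mu$: smooth local sections of $\bft_\da|_F$ give $g_1,g_3$ smoothly in $g$, and $g_2$ is obtained smoothly by the formula. A map admitting local sections through every point is a surjective submersion.
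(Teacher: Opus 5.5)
Your direction ($\Leftarrow$) rests on an incorrect computation. The map $\bft_\da$ is the \emph{target} of the vertical groupoid $G \rra G_\ra$, whose multiplication is $\cdot_\da$. So $\bft_\da(a \cdot_\da b) = \bft_\da a$; $\bft_\da$ is a groupoid morphism only for the horizontal structure. Hence $\bft_\da \circ \mu = \bft_\da|_F \circ \pr_1$, not $\bft_\da g_1 \cdot (\bft_\da g_3)^{-1}$. Your surjectivity conclusion happens to survive, but the wrong formula pushes you to the division map of $G_\ra$, and the submersion claim is never proved. ``Argue directly that $\bft_\da|_F$ is a submersion at units'' is a placeholder, not an argument.

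With the correct identity this direction is immediate. The projection $\pr_1$ is surjective, since any $g_1 \in F$ extends to $(g_1, 1_{\bfs g_1}, 1_{\bfs g_1})$ (a unit of $G_\da$ and the double unit). Also $\bft_\da|_F \circ \pr_1 = \bft_\da \circ \mu$ is a surjective submersion. By the chain rule, $d(\bft_\da|_F)$ is onto at every point of $F$. The paper says the same thing with sections: a local section $\sigma$ of $\mu$ gives the local section $\pr_1 \circ \sigma \circ 1_\da$ of $\bft_\da|_F$. Taking $\sigma$ through $(f, 1_{\bfs f}, f)$, which lies over $\mu(f,1_{\bfs f},f) = 1_\da(\bft_\da f)$, makes it pass through any prescribed $f \in F$.

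In direction ($\Rightarrow$) you treated as an obstacle an idea that actually works and coincides with the paper's argument. Set $g_2 := \bfs_\ra k = \bfs_\ra g$ and $h := k \cdot_\da 1_\ra(g_2^{-1})$. Then $\bfs_\da h$, $\bft_\da h$ and $\bfs_\ra h$ are all units, so $h \in \textnormal{H}$. Replacing $g_1$ by $g_1 \cdot_\da h = g \cdot_\da g_3 \cdot_\da 1_\ra(g_2^{-1})$ gives $\mu(g_1 \cdot_\da h, g_2, g_3) = g$. This is exactly the paper's local section
\begin{equation*}
g \mapsto \bigl(g \cdot_\da \sigma(\bfs_\da g) \cdot_\da 1_\ra((\bfs_\ra g)^{-1}),\ \bfs_\ra g,\ \sigma(\bfs_\da g)\bigr),
\end{equation*}
so only one section of $\bft_\da|_F$ is needed, and your initial choice of $g_1$ drops out.

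Your fallback of solving for $g_3$ also works, but the correction must use $\bfs_\ra$, not $\bft_\ra$. The right formula is $g_3 = (g)^{-1}_\da \cdot_\da g_1 \cdot_\da 1_\ra(\bfs_\ra g)$, with middle entry $g_2 = \bfs_\ra g$.

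Finally, ``local sections through every point'' must be checked, not asserted. Given $(g_1,g_2,g_3)$, take $\sigma$ through $g_3$. Since $\bfs_\da \mu(g_1,g_2,g_3) = \bft_\da g_3$ and $\bfs_\ra \mu(g_1,g_2,g_3) = g_2$, the section above returns the original triple.
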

\begin{proof}
A local section $\sigma$ of $\bft_\da: F \ra G_\ra$ defines a local section of the above map by setting
\begin{equation*}
    g \mapsto (g \cdot_\da \sigma(\bfs_\da g) \cdot_\da 1_\ra(\bfs_\ra g)^{-1}, \bfs_\ra g, \sigma(\bfs_\da g)).
\end{equation*}
Given $(g_1,g_2,g_3) \in F \times_M G_\da \times_M F$, a local section of $F \ra G_\ra$ through $g_3$ defines (around the element $g_1 \cdot_\da 1_\ra(g_2) \cdot_\da (g_3)_
\da^{-1} \in G$) a local section through $(g_1,g_2,g_3)$. A local section $\sigma$ of the above map defines a local section of $F \ra G_\ra$ by setting
\begin{equation*}
    g \mapsto \pr_1\sigma 1_\da(g) = \pr_3\sigma 1_\da(g).
\end{equation*}
This proves the statement.
\end{proof}
That is, in Remark 4.3 of \cite{Corediagram}, assumptions (4.3.1) and (4.3.2) are equivalent. The relevance of the above lemma becomes clear after the following observation. The pullback groupoid of $G_\da$ along the source map of $F$ defines a double groupoid
\begin{center}
\begin{tikzcd}
    F \times_M G_\da \times_M F \ar[r, shift left] \ar[r, shift right] \ar[d, shift left] \ar[d, shift right] & G_\da \ar[d, shift left] \ar[d, shift right] \\
    F \ar[r, shift left] \ar[r, shift right] & M
\end{tikzcd}
\end{center}
The vertical groupoid structure takes the form $\bft_\da(g_1,g_2,g_3) = g_1$, $\bfs_\da(g_1,g_2,g_3) = g_3$, and
\begin{equation*}
    (g_1,g_2,g_3) \cdot_\da (g_3, g_4, g_5) = (g_1, g_2 g_4, g_5).
\end{equation*}
The horizontal groupoid structure $F \times_M G_\da \times_M F \rra G_\da$ is given by $\bft_\ra(g_1,g_2,g_3) = (\bft_\ra g_1) g_2 (\bft_\ra g_3)^{-1}$, $\bfs_\ra(g_1,g_2,g_3) = g_2$, and
\begin{equation*}
    (g_1,g_2,g_3) \cdot_\ra (h_1,h_2,h_3) = (g_1h_1, (\bft_\ra h_1)^{-1} g_2 (\bft_\ra h_3) = h_2, g_3h_3).
\end{equation*}

A direct computation shows:

\begin{lemma}\label{lem: horizontally transitive morphism before taking quotient}
Let $G$ be a vertically core-transitive double groupoid. Then the map
\begin{equation*}
    F \times_M G_\da \times_M F \ra G \qquad (g_1,g_2,g_3) \mapsto g_1 \cdot_\da 1_\ra(g_2) \cdot_\da (g_3)^{-1}_\da
\end{equation*}
is a map of double groupoids that is a surjective submersion.
\end{lemma}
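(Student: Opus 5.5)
The map, call it $\Theta(g_1,g_2,g_3) = g_1 \cdot_\da 1_\ra(g_2) \cdot_\da (g_3)^{-1}_\da$, is already known to be a surjective submersion by Lemma \ref{lemm: vertically core-transitive is equivalent to assumption}. It therefore remains to check that $\Theta$ is a morphism for both groupoid structures on $F \times_M G_\da \times_M F$, together with the correct base maps. These base maps are the identity on $G_\da$ (horizontally) and $\bft_\da|_F : F \ra G_\ra$ (vertically). First I would verify that $\Theta$ is well defined, i.e.\ that the vertical composition makes sense. Since $g_1, g_3 \in F$ we have $\bfs_\da g_i = 1_\da(\bfs_\ra \bfs_\da g_i)$ and $\bfs_\ra g_i = 1_\ra$ of a point. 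Hence $\bfs_\da g_1 = \bft_\da 1_\ra(g_2)$ reduces to the unit of $G_\ra$ at $\bft g_2$, and similarly $\bft_\da (g_3)^{-1}_\da = \bfs_\da 1_\ra(g_2)$.

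For the vertical structure I would compute $\bft_\da \Theta = \bft_\da g_1$ and $\bfs_\da \Theta = \bft_\da g_3$. These agree with $\bft_\da|_F$ applied to the vertical target $g_1$ and source $g_3$. Multiplicativity is then a telescoping cancellation:
\begin{equation*}
\Theta(g_1,g_2,g_3) \cdot_\da \Theta(g_3,g_4,g_5) = g_1 \cdot_\da 1_\ra(g_2) \cdot_\da 1_\ra(g_4) \cdot_\da (g_5)^{-1}_\da,
\end{equation*}
and this equals $\Theta(g_1,g_2g_4,g_5)$ because $1_\ra : G_\da \ra G$ is a groupoid map for $\cdot_\da$. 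Compatibility with units and inverses follows in the same way.

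For the horizontal structure I would first check sources and targets. Since $\bfs_\ra$ is a morphism for $\cdot_\da$ and $\bfs_\ra g_i$ is a unit, we get $\bfs_\ra\Theta = g_2$. Likewise $\bft_\ra \Theta = (\bft_\ra g_1)\, g_2\, (\bft_\ra g_3)^{-1}$. The main step is horizontal multiplicativity. Here the key tool is the interchange law
\begin{equation*}
(a \cdot_\da b \cdot_\da c) \cdot_\ra (a' \cdot_\da b' \cdot_\da c') = (a\cdot_\ra a') \cdot_\da (b \cdot_\ra b') \cdot_\da (c \cdot_\ra c'),
\end{equation*}
applied to $\Theta(g_1,g_2,g_3) \cdot_\ra \Theta(h_1,h_2,h_3)$ with $h_2 = (\bft_\ra h_1)^{-1} g_2 (\bft_\ra h_3)$. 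The difficulty is that the factors are not horizontally composable as they stand, since $g_1$ and $h_1$ have $\ra$-sources and targets that do not directly match. So I would first rewrite each factor by inserting units. Concretely, I would write $g_1 = (g_1 \cdot_\ra \text{unit})$ and $h_1$ as $1_\ra(\bft_\ra h_1)^{-1}$-translates, using the formula for the core product $g_1 h_1 = (g_1 \cdot_\da 1_\ra(\bft_\ra h_1)) \cdot_\ra h_1$. This turns the left-hand side into
\begin{equation*}
\bigl((g_1 \cdot_\da 1_\ra(\bft_\ra h_1))\cdot_\ra h_1\bigr) \cdot_\da \bigl(1_\ra(\bft_\ra h_1)^{-1}\cdot_\da 1_\ra(g_2)\cdot_\da 1_\ra(\bft_\ra h_3)\bigr)\cdot_\da \cdots,
\end{equation*}
which becomes $(g_1h_1)\cdot_\da 1_\ra(h_2)\cdot_\da (g_3h_3)^{-1}_\da$ after using the inverse formula for the core. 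I expect this bookkeeping, in particular handling the inverse of the third factor via the interchange law for $\cdot_\da$-inverses, to be the main obstacle. Finally, smoothness is clear, since $\Theta$ is a composite of structure maps.
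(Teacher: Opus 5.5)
Your proposal is correct and follows essentially the same route as the paper. The surjective-submersion part is quoted from the preceding lemma, and vertical multiplicativity is immediate. Horizontal multiplicativity comes from inserting $1_\ra(\bft_\ra h_1)^{\pm 1}$, applying the interchange law, and recognising both the core product $g_1h_1 = (g_1 \cdot_\da 1_\ra(\bft_\ra h_1)) \cdot_\ra h_1$ and the relation $h_2 = (\bft_\ra h_1)^{-1} g_2 (\bft_\ra h_3)$.

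One small correction concerns the third factor: what you need there is not the core inverse formula. You need that vertical inversion is a morphism for the horizontal structure, so that $(g_3 \cdot_\da 1_\ra(\bft_\ra h_3))^{-1}_\da \cdot_\ra (h_3)^{-1}_\da = (g_3h_3)^{-1}_\da$, which is exactly how the paper closes the computation.
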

\begin{proof}
That the map intertwines the vertical products is readily verified. We do a direct computation to verify that the map intertwines the horizontal products. Suppose $(g_1,g_2,g_3)$ and $(h_1,h_2,h_3)$ in $F \times_M G_\da \times_M F$ are horizontally composable. Then, first of all,
\begin{align*}
    (g_1 \cdot_\da 1_\ra(g_2) &\cdot_\da (g_3)_\da^{-1}) \cdot_\ra (h_1 \cdot_\da 1_\ra(h_2) \cdot_\da (h_3)_\da^{-1}) \\
    &= ((g_1 \cdot_\da 1_\ra(\bft_\ra h_1)) \cdot_\ra h_1) \cdot_\da ((1_\ra((\bft_\ra h_1)^{-1}g_2) \cdot_\da (g_3)_\da^{-1}) \cdot_\ra (1_\ra(h_2) \cdot_\da (h_3)_\da^{-1})) \\
    &= (g_1h_1) \cdot_\da ((1_\ra((\bft_\ra h_1)^{-1}g_2) \cdot_\da (g_3)_\da^{-1}) \cdot_\ra (1_\ra(h_2) \cdot_\da (h_3)_\da^{-1})),
\end{align*}
where in the first equality we replaced $g_1 \cdot_\da 1_\ra(g_2)$ with
\begin{equation*}
    g_1 \cdot_\da 1_\ra(\bft_\ra h_1) \cdot_\da 1_\ra((\bft_\ra h_1)^{-1}g_2)
\end{equation*}
and used the interchange law. We now use that $(\bft_\ra h_1)^{-1}g_2 = h_2(\bft_\ra h_3)^{-1}$ and conclude:
\begin{align*}
    (g_1 \cdot_\da 1_\ra(g_2) &\cdot_\da (g_3)_\da^{-1}) \cdot_\ra (h_1 \cdot_\da 1_\ra(h_2) \cdot_\da (h_3)_\da^{-1}) \\
    &= (g_1h_1) \cdot_\da ((1_\ra(h_2) \cdot_\da 1_\ra(\bft_\ra h_3)^{-1} \cdot_\da (g_3)_\da^{-1}) \cdot_\ra (1_\ra(h_2) \cdot_\da (h_3)_\da^{-1})) \\
    &= (g_1h_1) \cdot_\da 1_\ra(h_2) \cdot_\da ((g_3 \cdot_\da 1_\ra(\bft_\ra h_3))^{-1}_\da \cdot_\ra (h_3)_\da^{-1}) \\
    &= (g_1h_1) \cdot_\da 1_\ra(h_2) \cdot_\da (g_3h_3)_\da^{-1}.
\end{align*}
That the map is a surjective submersion was already proved in Lemma \ref{lemm: vertically core-transitive is equivalent to assumption}.
\end{proof}

Now, we can realise $G$ as a quotient of $F \times_M G_\da \times_M F$ by the horizontal kernel of the above map.

\begin{remark}\label{rema: horizontal kernel is double groupoid}
Given a map of double groupoids $\Phi: G \ra G'$ that is a surjective submersion, notice that its horizontal kernel $K \coloneq \ker_\ra \Phi$ can be seen as a double subgroupoid of $G$. Here,
\begin{equation*}
\ker_\ra \Phi \coloneq \{g \in G \mid \Phi(g) = \im 1_\ra\}.
\end{equation*}
This is a consequence of the interchange law. We can then see the resulting exact sequence
\begin{center}
\begin{tikzcd}
    1 \ar[r] & K \ar[r] & G \ar[r, "\Phi"] & G' \ar[r] & 1
\end{tikzcd}
\end{center}
as a short ``horizontally'' exact sequence of double groupoids. More precisely, all maps in the exact sequence are maps of double groupoids, but the sequence is only exact with respect to the horizontal products.
\end{remark}

The map $F \times_M G_\da \times_M F \ra G$ from Lemma \ref{lemm: vertically core-transitive is equivalent to assumption}, seen as a groupoid map of the horizontal products, has kernel isomorphic to the action groupoid
\begin{equation*}
    G_\da \ltimes \textnormal{H} \rra \textnormal{H},
\end{equation*}
where the action is given by
\begin{equation*}
    g \cdot h_x = 1_\ra(g) \cdot_\da h_x \cdot_\da 1_\ra(g)^{-1}.
\end{equation*}
Its structure of double groupoid is
\begin{center}
\begin{tikzcd}
G_\da \ltimes \textnormal{H} \ar[r, shift left] \ar[r, shift right] \ar[d, shift left] \ar[d, shift right] & G_\da \ar[d, shift left] \ar[d, shift right] \\ \textnormal{H} \ar[r, shift left] \ar[r, shift right] & M
\end{tikzcd}
\end{center}
where $G_\da \times_M \textnormal{H} \rra G_\da$ is seen as a bundle of Lie groups.

\begin{proposition}\label{prop: kernel of defining map between core extension and double groupoid}
Consider the map $F \times_M G_\da \times_M F \ra G$ from Lemma \ref{lem: horizontally transitive morphism before taking quotient}. This map, together with the map
\begin{equation*}
G_\da \ltimes \textnormal{H} \ra F \times_M G_\da \times_M F \qquad (g,h_x) \mapsto (1_\ra(g) \cdot_\da h_x \cdot_\da 1_\ra(g)^{-1}, g, h_x)
\end{equation*}
fit into a short horizontally exact sequence of double Lie groupoids (see Remark \ref{rema: horizontal kernel is double groupoid})
\begin{center}
\begin{tikzcd}
1 \ar[r] & G_\da \ltimes \textnormal{H} \ar[r] & F \times_M G_\da \times_M F \ar[r] & G \ar[r] & 1.
\end{tikzcd}
\end{center}
\end{proposition}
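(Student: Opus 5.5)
The plan is to verify directly that the map $\iota\colon(g,h_x)\mapsto(1_\ra(g)\cdot_\da h_x\cdot_\da 1_\ra(g)^{-1},g,h_x)$ is a well-defined embedding of double groupoids, and that its image is precisely the horizontal kernel of the map $\Phi\colon(g_1,g_2,g_3)\mapsto g_1\cdot_\da 1_\ra(g_2)\cdot_\da(g_3)_\da^{-1}$ of Lemma \ref{lem: horizontally transitive morphism before taking quotient}. By Remark \ref{rema: horizontal kernel is double groupoid}, the horizontal kernel is automatically a double subgroupoid and the sequence is then horizontally exact. Surjectivity and the submersion property of $\Phi$ are already given by Lemmas \ref{lemm: vertically core-transitive is equivalent to assumption} and \ref{lem: horizontally transitive morphism before taking quotient}.

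First I check that $\iota$ lands in $F\times_M G_\da\times_M F$. Write $g\cdot h_x\coloneq 1_\ra(g)\cdot_\da h_x\cdot_\da 1_\ra(g)^{-1}$ for the conjugation action. Then $g\cdot h_x\in\textnormal{H}\subset F$, because conjugation by horizontal units preserves $\ker\bfs_\da\cap\ker\bfs_\ra$ and the condition $\bft_\da=1$. The composability conditions hold because $\bfs(g\cdot h_x)=\bft g$ and $\bfs h_x=\bfs g$. Next I check that $\iota$ is a double groupoid map from $G_\da\ltimes\textnormal{H}$, using the structures described before the statement. For the vertical structure, the product in $G_\da\ltimes\textnormal{H}\rra\textnormal{H}$ is the action groupoid product $(g,g'\cdot h)\cdot(g',h)=(gg',h)$, which maps to $(gg'\cdot h,gg',h)$. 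This agrees with $(g\cdot(g'\cdot h),g,g'\cdot h)\cdot_\da(g'\cdot h,g',h)$ because the conjugation action is an action. For the horizontal structure, $G_\da\times_M\textnormal{H}\rra G_\da$ is a bundle of groups, and I need
\begin{equation*}
(g\cdot h,g,h)\cdot_\ra(g\cdot h',g,h')=((g\cdot h)(g\cdot h'),(\bft_\ra(g\cdot h'))^{-1}g\,\bft_\ra h',hh').
\end{equation*}
The middle entry is $g$, since $\bft_\ra$ is equivariant, $\bft_\ra(g\cdot h')=g\,\bft_\ra(h')g^{-1}$. The first entry is $g\cdot(hh')$, since the action is by group automorphisms. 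These are the axioms of a core extension established in the previous subsection.

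Then I compute $\Phi\circ\iota$ and show that it lands in $\im 1_\ra$. Using Lemma \ref{lemm: Peiffer identity} together with the interchange law, $(g\cdot h)\cdot_\da 1_\ra(g)\cdot_\da h_\da^{-1}=1_\ra(g)\cdot_\da h\cdot_\da h_\da^{-1}=1_\ra(g)$, so $\iota$ maps into the horizontal kernel.

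The main obstacle is the converse: every $(g_1,g_2,g_3)$ with $g_1\cdot_\da 1_\ra(g_2)\cdot_\da(g_3)_\da^{-1}=1_\ra(k)$ is of the form $\iota(g_2,h)$. I would apply $\bfs_\ra$ to get $k=g_2$. I would apply $\bft_\da$ to get $\bft_\da g_1=\bft_\da g_3$ in $G_\ra$, and apply $\bfs_\da$ to the identity rewritten as $g_1\cdot_\da 1_\ra(g_2)=1_\ra(g_2)\cdot_\da g_3$. Since $g_3\in F$ and $\bft_\da g_3=\bft_\da(1_\ra(g_2)\cdot_\da g_3)$, which is a horizontal unit, we get $g_3\in\textnormal{H}$. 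Then $g_1=1_\ra(g_2)\cdot_\da g_3\cdot_\da 1_\ra(g_2)^{-1}=g_2\cdot g_3$, so the element is $\iota(g_2,g_3)$. Injectivity of $\iota$ is clear from the last two components. For smoothness, $\iota$ is a smooth injective immersion with smooth left inverse $\pr_{2,3}$, hence an embedding onto the (closed, embedded) fiber of the submersion $\Phi$ over the units. The delicate point is keeping track of which units and inverses are being used in these computations.
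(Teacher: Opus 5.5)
Your overall strategy is the paper's: identify the horizontal kernel of $\Phi$ as the triples $(g_1,g_2,g_3)$ with $g_1,g_3\in\textnormal{H}$ and $g_1=1_\ra(g_2)\cdot_\da g_3\cdot_\da 1_\ra(g_2)^{-1}$, which is exactly the image of $\iota$. You also check, which the paper leaves implicit, that $\iota$ is a morphism of double groupoids (via equivariance and the automorphism property) and an embedding. Those parts are fine.

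However, the converse step where you show $g_3\in\textnormal{H}$ is wrong as written. For vertical composites, $\bft_\da(a\cdot_\da b)=\bft_\da a$. So $\bft_\da(1_\ra(g_2)\cdot_\da g_3)=\bft_\da 1_\ra(g_2)$ is a unit whatever $g_3$ is, and it says nothing about $\bft_\da g_3$.

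Likewise, applying $\bft_\da$ to the kernel equation does not give $\bft_\da g_1=\bft_\da g_3$. For kernel elements these are the units of $G_\ra$ over $\bft_\da g_2$ and over $\bfs_\da g_2$ respectively, so the equality fails unless $g_2$ is a loop. What that step actually gives is $\bft_\da g_1=\bft_\da 1_\ra(g_2)$, a unit of $G_\ra$, so $g_1\in\textnormal{H}$ directly.

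Then solve the kernel equation for $g_3$: $g_3=1_\ra(g_2)^{-1}\cdot_\da g_1\cdot_\da 1_\ra(g_2)$. This lies in $\textnormal{H}$ by the invariance of $\textnormal{H}$ under conjugation by horizontal units, which you already verified when checking $\iota$ is well defined. Alternatively, apply $\bft_\da$ to this expression to get the unit over $\bfs_\da g_2$.

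With this repair, $g_1=1_\ra(g_2)\cdot_\da g_3\cdot_\da 1_\ra(g_2)^{-1}$ gives $(g_1,g_2,g_3)=\iota(g_2,g_3)$, and the argument is complete.
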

\begin{proof}
The horizontal kernel consists of elements $(g_1,g_2,g_3)$ satisfying
\begin{equation*}
    g_1 \cdot_\da 1_\ra(g_2) \cdot_\da (g_3)_\da^{-1} = 1_\ra(g_2),
\end{equation*}
so $g_1,g_3 \in H$, and
\begin{equation*}
g_1 = 1_\ra(g_2) \cdot_\da g_3 \cdot_\da 1_\ra(g_2)^{-1}.
\end{equation*}
This proves that the horizontal kernel can indeed be identified with $G_\da \ltimes \textnormal{H}$ as above. The map of Lemma \ref{lem: horizontally transitive morphism before taking quotient} induces an isomorphism of double groupoids
\begin{equation*}
    G(F) \ra G \qquad [g_1,g_2,g_3] \mapsto g_1 \cdot_\da 1_\ra(g_2) \cdot_\da (g_3)_\da^{-1},
\end{equation*}
where $G(F)$ is the double groupoid
\begin{equation*}
    G(F) \coloneq (F \times_M G_\da \times_M F) / (G_\da \ltimes \textnormal{H}) \rra F/\textnormal{H} = G_\ra
\end{equation*}
over $G_\da \rra M$. This proves the statement.
\end{proof}
Now, the structure being used to define
\begin{equation*}
    G(F) = (F \times_M G_\da \times_M F) / (G_\da \ltimes \textnormal{H})
\end{equation*}
uses precisely the data underlying the vertical core extension of $G$. To ensure $G(F)$ is defined for any core extension $F$, the compatibility assumptions are used in the following way. Firstly, the action groupoid $G_\da \ltimes \textnormal{H} \rra \textnormal{H}$ is a double groupoid over $G_\da$ precisely because the action defines a bundle of Lie groups representation (see Definition \ref{def: representations of bundles of Lie groups}). The equivariance condition makes sure that the embedding
\begin{equation*}
    G_\da \ltimes \textnormal{H} \ra F \times_M G_\da \times_M F
\end{equation*}
is a map of double groupoids. The Peiffer identity ensures this groupoid is actually normal inside of $F \times_M G_\da \times_M F$. We now conclude:

\begin{theorem}\label{thm: horizontally core transtive double Lie groupoids equivalent to core extensions}
The association
\begin{equation*}
    \{\textnormal{Vertically core-transitive double Lie groupoids}\} \xra{\sim} \{\textnormal{Vertical core extensions}\} \qquad G \mapsto F
\end{equation*}
sets a one-to-one correspondence, and the assignment
\begin{equation*}
    \{\textnormal{Vertical core extensions}\} \xra{\sim} \{\textnormal{Vertically core-transitive double Lie groupoids}\} \qquad F \mapsto G(F)
\end{equation*}
is inverse to it (up to canonical isomorphisms).
\end{theorem}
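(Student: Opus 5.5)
The plan is to check that the two assignments are well defined and then that both composites are naturally isomorphic to the identity, using the results proved just above.

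\emph{The forward assignment.} Given a vertically core-transitive double groupoid $G$, we take its core $F$, the kernel $\textnormal{H}$ of $\bft_\da: F \ra G_\ra$, the map $\bft_\ra: F \ra G_\da$, and the conjugation action $g \cdot h_x = 1_\ra(g) \cdot_\da h_x \cdot_\da 1_\ra(g)^{-1}$. Smoothness comes from two facts: $\bft_\da$ is a surjective submersion, and the double source map is a submersion, so $F$ is a Lie groupoid. Hence $\textnormal{H}$ is an embedded bundle of Lie groups. The action is by group automorphisms; this follows from the interchange law. Equivariance of $\bft_\ra|_{\textnormal{H}}$ follows by applying $\bft_\ra$, which is a groupoid map $G \rra G_\ra \to G_\da \rra M$. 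The Peiffer identity is Lemma \ref{lemm: Peiffer identity}. So $F$ is a vertical core extension.

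\emph{The backward assignment.} Given an abstract core extension, we form the double groupoid $F \times_M G_\da \times_M F$ with the two structures written above; these only use the data of the core extension. We then check three things, which is the step I expect to be the main obstacle.
\begin{itemize}
\item $G_\da \ltimes \textnormal{H}$, embedded by $(g,h_x) \mapsto (g\cdot h_x, g, h_x)$, is a double subgroupoid. This needs the bundle-of-Lie-groups representation property, and equivariance gives compatibility of $\bft_\ra$.
\item It is normal for the horizontal product, so that the quotient inherits both groupoid structures. Here I would compute conjugation of $(g\cdot h, g, h)$ by a horizontally composable $(g_1,g_2,g_3)$, using the formula $(\bft_\ra h_1)^{-1} g_2 \bft_\ra h_3 = h_2$. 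The Peiffer identity is exactly what rewrites $g_1 (g\cdot h) g_1^{-1}$ as the action of $\bft_\ra g_1$, so that the result is again of the form $(g'\cdot h', g', h')$.
\item The action on the manifold is free and proper. It is free because the action is by left multiplication in the third component, and proper because $\textnormal{H}$ is closed in $F$. So $G(F)$ is smooth.
\end{itemize}
Finally, vertical core-transitivity of $G(F)$ holds because its core is $F$ itself, via $g_1 \mapsto [g_1, 1, 1]$, and $\bft_\da$ becomes $F \ra F/\textnormal{H} = G_\ra$.

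\emph{The composites.} Starting from a double groupoid $G$, Proposition \ref{prop: kernel of defining map between core extension and double groupoid} already gives a canonical isomorphism $G(F) \cong G$. It uses Lemmas \ref{lemm: vertically core-transitive is equivalent to assumption} and \ref{lem: horizontally transitive morphism before taking quotient}, which show the map is a surjective submersion of double groupoids, and the horizontal kernel is identified with $G_\da \ltimes \textnormal{H}$. Starting from a core extension $F$, we compute the core of $G(F)$. An element $[g_1,g_2,g_3]$ has $\bfs_\da = g_3$ modulo $\textnormal{H}$ and $\bfs_\ra = g_2$. Using the relation to normalise $g_3 = 1$ and $g_2 = 1$, the core is identified with $F$ by $g_1 \mapsto [g_1,1,1]$. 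We then verify that under this identification $\textnormal{H}$, the map to $G_\da$ and the action are recovered, which is a direct check on representatives.
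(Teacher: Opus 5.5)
Your proposal is correct and follows essentially the paper's route. The composite $G \mapsto F \mapsto G(F)$ is handled by Proposition~\ref{prop: kernel of defining map between core extension and double groupoid}; well-definedness of $G(F)$ uses the representation property, equivariance and the Peiffer identity exactly as in the remark preceding the theorem; and the other composite is settled by identifying the core of $G(F)$ with $F$ via $g \mapsto [g,1_{\bfs g},1_{\bfs g}]$. Your explicit checks that the $\textnormal{H}$-action is free and proper, which make the quotient $G(F)$ smooth, fill in details the paper leaves implicit.
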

\begin{proof}
We already showed that, given $G$ a vertically core-transitive double groupoid, $G$ and $G(F)$ are isomorphic, where $F$ is the vertical core extension of $G$. So, it remains to observe that, given a core extension $F$, the core groupoid of $G(F)$ can be identified with $F \ni g$ as the classes of elements represented by $[g, 1_{\bfs g}, 1_{\bfs g}]$. That we recover the core extension is now readily verified.
\end{proof}

\begin{remark}\label{rema: correspondence strict Lie 2-groupoids and crossed modules}
Restricting attention to strict Lie $2$-groupoids yields an interesting consequence: they correspond to crossed modules of Lie groupoids $(F_H,H_M)$ for which $F_H$ is a bundle of Lie groups (see Remark \ref{rema: core extension of a strict Lie 2-groupoid}).
\end{remark}

The whole discussion also applies to horizontally core-transitive double Lie groupoids (by interchanging horizontal and vertical structures everywhere). From our discussion it follows that \textit{double core-transitive groupoids} (so double groupoids that are both horizontally and vertically core-transitive) are isomorphic to either of the two double groupoids that its core extensions define. However, starting with two core extensions, it is not immediate that the two resulting double groupoids one constructs are isomorphic. Therefore, an extra compatibility assumption between the two core extensions appears. This leads to the following definition:

\begin{definition}\label{def: double core-transitive}
A \textit{double core extension}, or core diagram, is a diagram
\begin{center}
\begin{tikzcd}
    1 \ar[r] & \textnormal{H}_\da \ar[rd] & \phantom{A} & \ar[ld] \textnormal{H}_\ra & \ar[l] 1 \\
    \phantom{A} & \phantom{A} & \ar[ld] F \ar[rd] & \phantom{A} & \phantom{A} \\
    1 & \ar[l] G_\da & \phantom{A} & G_\ra \ar[r] & 1
\end{tikzcd}
\end{center}
of exact sequences such that $\textnormal{H}_\da$ and $\textnormal{H}_\ra$ commute elementwise in $F$: for all $h_{x,\da} \in \textnormal{H}_\da$ and $h_{x,\ra} \in \textnormal{H}_\ra$ we have that
\begin{equation*}
    h_{x,\da} h_{x,\ra} = h_{x,\ra} h_{x,\da} 
\end{equation*}
as elements of $F$.
\end{definition}

\begin{remark}\label{rema: double core-transitive}
The above compatibility condition (of ``commuting elementwise'') is written in \cite{Corediagram}. We did not specify the actions in the above definition, for specifying the actions in a double core extension is unnecessary (this should be compared to Lemma 2.6 of \cite{Corediagram}). Indeed, given two core extensions
\begin{center}
\begin{tikzcd}
    1 \ar[r] & \textnormal{H}_\da \ar[rd] & \phantom{A} & \ar[ld] \textnormal{H}_\ra & \ar[l] 1 \\
    \phantom{A} & \textnormal{\Large$\circlearrowright$} & \ar[ld] F \ar[rd] & \textnormal{\Large$\circlearrowleft$} & \phantom{A} \\
    1 & \ar[l] G_\da & \phantom{A} & G_\ra \ar[r] & 1
\end{tikzcd}
\end{center}
satisfying that elements of $\textnormal{H}_\da$ and $\textnormal{H}_\ra$ commute elementwise in $F$, then, for example, the action of $G_\ra$ on $H_\ra$ is given by
\begin{equation*}
    g \cdot h_x = \widetilde g \cdot h_x \cdot \widetilde g^{-1}
\end{equation*}
where $\widetilde g \in F$ is arbitrary with the property that $\widetilde g$ maps to $g$ under the map $F \ra G_\ra$. That this action is well-defined uses the compatibility between the two core extensions. The newly defined actions in terms of lifts therefore determine the actions that are part of the data of the given core extensions. As equivariance and the Peiffer identity are immediate from this new description of the action, a double core extension can be defined as in Definition \ref{def: double core-transitive}.
\end{remark}

From our discussion we can conclude (this result was already proved in \cite{CorediagramLA}):

\begin{corollary}\label{coro: double core-transitive}
There is a one-to-one correspondence
\begin{equation*}
    \{\textnormal{Double core-transitive groupoids}\} \xra{\sim} \{\textnormal{Double core extensions}\} \qquad G \mapsto F.
\end{equation*}
\end{corollary}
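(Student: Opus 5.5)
The plan is to deduce the corollary from Theorem \ref{thm: horizontally core transtive double Lie groupoids equivalent to core extensions} applied twice, once vertically and once horizontally (with the roles of the two structures interchanged). The work is to show that the elementwise commutation condition in Definition \ref{def: double core-transitive} is exactly what makes the two reconstructed double groupoids agree.

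\emph{From a double core-transitive groupoid $G$ to a double core extension.} Take the core groupoid $F=\ker\bfs_\da\cap\ker\bfs_\ra$. Vertical core-transitivity gives the exact sequence $1\ra \textnormal{H}_\da\ra F\ra G_\ra\ra 1$ with $\textnormal{H}_\da=\ker\bft_\da|_F$. Horizontal core-transitivity gives $1\ra\textnormal{H}_\ra\ra F\ra G_\da\ra 1$ with $\textnormal{H}_\ra=\ker\bft_\ra|_F$. Now take $h_\da\in\textnormal{H}_\da$ and $h_\ra\in\textnormal{H}_\ra$ over the same $x$. The element $h_\da$ is a vertical arrow from $1_\ra(1_x)$ to itself; dually for $h_\ra$. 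Use the interchange law on the square built from $h_\da$ and $h_\ra$, exactly as in Lemma \ref{lemm: Peiffer identity}: both $h_\da\cdot h_\ra$ and $h_\ra\cdot h_\da$ (products in $F$) equal $h_\da\cdot_\da h_\ra=h_\da\cdot_\ra h_\ra$. Hence they commute. The actions are recovered from lifts as in Remark \ref{rema: double core-transitive}, so nothing beyond the diagram needs to be recorded.

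\emph{From a double core extension to a double groupoid.} Given the diagram, first define the action of $G_\da$ on $\textnormal{H}_\da$ by $g\cdot h=\widetilde g h\widetilde g^{-1}$, where $\widetilde g\in F$ is any lift. Two lifts differ by an element of $\textnormal{H}_\ra$, which commutes with $\textnormal{H}_\da$, so the action is well defined. Smoothness follows from local sections of the submersion $F\ra G_\da$. By construction it is an action by group automorphisms, and equivariance and the Peiffer identity are immediate. This gives a vertical core extension, and Theorem \ref{thm: horizontally core transtive double Lie groupoids equivalent to core extensions} produces a vertically core-transitive $G(F)$. The symmetric construction gives a horizontal core extension and a horizontally core-transitive $G'(F)$.

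The main obstacle is to show that $G(F)$ is also horizontally core-transitive with horizontal core extension the given one, or equivalently that $G(F)\cong G'(F)$. I would attempt this directly. The horizontal groupoid $G(F)\rra G_\da$ has the core $\{[g,1,1]\}\cong F$. The map $\bft_\ra:[g,1,1]\mapsto \bft_\ra g\in G_\da$ is the given surjective submersion $F\ra G_\da$, so horizontal core-transitivity holds. Its kernel is $\textnormal{H}_\ra$, and the induced action of $G_\ra$ on it is again conjugation by lifts, using commutation. Then the horizontal core extension of $G(F)$ coincides with the given one, and by the horizontal version of the theorem $G(F)\cong G'(F)$.

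Finally, the two assignments are mutually inverse up to canonical isomorphism. In one direction this holds because each one-sided version is, by Theorem \ref{thm: horizontally core transtive double Lie groupoids equivalent to core extensions}. In the other direction, the core of $G(F)$ recovers the full diagram, as just checked.
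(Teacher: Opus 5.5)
Your proposal is correct and follows the paper's route. You apply Theorem~\ref{thm: horizontally core transtive double Lie groupoids equivalent to core extensions} in both directions and use elementwise commutation to make the lift-defined actions of Remark~\ref{rema: double core-transitive} well defined; your check that $G(F)$ is horizontally core-transitive with the prescribed horizontal data is exactly the step the paper leaves implicit.

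One small slip needs fixing. The interchange identity should read $h_\da h_\ra = h_\da\cdot_\ra h_\ra = h_\ra\cdot_\da h_\da = h_\ra h_\da$. The product $h_\da\cdot_\da h_\ra$ is in general undefined, because $\bft_\da h_\ra$ lies in the isotropy of $G_\ra$ but need not be a unit. Alternatively, Lemma~\ref{lemm: Peiffer identity} with $g=h_\ra$ gives $h_\ra h_\da h_\ra^{-1}=h_\da$ at once, since $\bft_\ra h_\ra$ is a unit.
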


\begin{remark}\label{rema: recovering corediagram result}
In \cite{Corediagram}, the focus lies on ``double transitive groupoids'', so horizontally and vertically transitive (and core-transitive) double groupoids. To recover the main result of \cite{Corediagram} on double transitive groupoids, recall that, in a core extension
\begin{center}
\begin{tikzcd}
    1 \ar[r] & \textnormal{H} \ar[r] & \ar[dl] F \ar[r] & G_\ra \ar[r] & 1 \\
    \phantom{A} & G_\da \ar[u, phantom, "\textnormal{\Large$\circlearrowright$}"] & \phantom{A} & \phantom{A} & \phantom{A}
\end{tikzcd}
\end{center}
the orbits of $F$ are equal to the orbits of $G_\ra$. Therefore, $F$ is transitive if and only if $G_\ra$ is, and we call core extensions for which this holds transitive. The resulting double groupoid is vertically transitive, and this sets an equivalence between transitive vertical core extensions and vertically transitive (that are also vertically core-transitive) double groupoids. Using Corollary \ref{coro: double core-transitive}, this reproduces the result of \cite{Corediagram} on double transitive groupoids (namely that they correspond to double transitive core extensions).
\end{remark}

\begin{remark}\label{rema: principal bibundle as double groupoid}
A class of examples of double groupoids which are usually not (core-)transitive are given by commuting groupoid actions $G$ \rotatebox[origin=c]{270}{\large$\circlearrowright$} $P$ \rotatebox[origin=c]{90}{\large$\circlearrowright$} $H$. Given such commuting actions, we have a double groupoid
\begin{center}
\begin{tikzcd}
    G \ltimes P \rtimes H \ar[r, shift left] \ar[r, shift right] \ar[d, shift left] \ar[d, shift right] & P \rtimes H \ar[d, shift left] \ar[d, shift right] \\
    G \ltimes P \ar[r, shift left] \ar[r, shift right] & P
\end{tikzcd}
\end{center}
The core is the unit groupoid $1_P$, and the core data consists further of the unit embeddings
\begin{equation*}
    G \ltimes P \hookleftarrow P \hookrightarrow P \rtimes H.
\end{equation*}
These maps carry no information about the actions. In this case, rather than considering the core, we can view $G \ltimes P \rtimes H$ as a groupoid over $P$, and there are two Lie groupoid maps
\begin{equation*}
    G \la G \ltimes P \rtimes H \ra H.
\end{equation*}
It is well-known that there is a natural way to set an equivalence between left/right principal bibundles (commutating actions, one of which is principal) and such diagrams $G \la K \ra H$ where one is a Morita map (see \cite{Matiasorbispaces}).
\end{remark}

We now go back to PB-groupoids, and show that PB-groupoids (that come with a gauge double groupoid) can be efficiently described using core extensions.

\subsection{PB-groupoids and core extensions}

Principal groupoid bundles do not always come with a smooth gauge groupoid. A typical assumption that guarantees the smoothness of the gauge groupoid is that the moment map is a surjective submersion (see e.g. \cite{Matiasorbispaces}). General linear PB-groupoids come with a gauge double groupoid if the differential has constant rank. We focus here on the following class of PB-groupoids:

\begin{definition}\label{def: PB-groupoid with gauge double groupoid}
let $P_G$ be a PB-groupoid. If the moment map(s) of $P_G$ are surjective submersions onto a smooth embedded submanifold, then we call $P_G$ a \textit{PB-groupoid with gauge double groupoid}.
\end{definition}

To be clear, PB-groupoids with gauge double groupoid really come with a gauge double groupoid:

\begin{proposition}\label{prop: PB-groupoid with gauge double groupoid}
Let $P_G \rra P_M$ be a PB-groupoid with gauge double groupoid. Then
\begin{equation*}
    P_G \times_{H_G} P_G \rra P_M \times_{H_M} P_M
\end{equation*}
is a Lie groupoid with $\bfs[p_g,p_h] \coloneq [\bfs p_g, \bfs p_h]$, $\bft[p_g,p_h] = [\bft p_g, \bft p_h]$, and
\begin{equation*}
    [p_g,p_h] \cdot [p_{g'},p_{h'}] \coloneq [p_g \cdot p_{g'}, p_h \cdot p_{h'}] \qquad [p_g,p_h]^{-1} = [p_g^{-1},p_h^{-1}].
\end{equation*}
(here, $\bfs p_g = \bft p_{g'}$ and $\bfs p_h = \bft p_{h'}$). Moreover,
\begin{center}
\begin{tikzcd}
    P_G \times_{H_G} P_G \ar[r, shift left] \ar[r, shift right] \ar[d, shift left] \ar[d, shift right] & P_M \times_{H_M} P_M \ar[d, shift left] \ar[d, shift right] \\
    G \ar[r, shift left] \ar[r, shift right] & M
\end{tikzcd}
\end{center}
is a full double groupoid, called the \textit{gauge double groupoid} of $P_G$.
\end{proposition}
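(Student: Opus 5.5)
The plan has three parts: make $P_G\times_{H_G}P_G$ and $P_M\times_{H_M}P_M$ into manifolds, check the groupoid axioms on representatives, and then assemble the double groupoid. The first part is the standard gauge groupoid construction, carried out twice. The quotient $P_G\times_{H_G}P_G$ is taken with respect to the diagonal right action $(p_g,p_h)\cdot k=(p_g\cdot k,p_h\cdot k)$. This action is defined on the fibre product $P_G\times_{N'}P_G$ over the moment map, where $N'\subset N$ is the embedded submanifold that the moment maps surject onto.

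Since the moment map $\mu\colon P_G\to N'$ is a surjective submersion, $P_G\times_{N'}P_G$ is an embedded submanifold. The diagonal action of $H_G|_{N'}$ on it is free and proper because the action on $P_G$ is principal. Hence the quotient is smooth, and the quotient map is a submersion. The same argument applies at the base level to $P_M\times_{H_M}P_M$, using the moment map of $P_M$.

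For the groupoid structure I would check well-definedness on representatives. The source and target of $P_G$, together with the structure maps of $H_G\rightrightarrows H_M$, are compatible with the action, because the action map is a groupoid map (Definition \ref{def: PB-groupoid}). Therefore $\bfs(p_g\cdot k)=\bfs p_g\cdot \bfs k$, and $[\bfs p_g,\bfs p_h]$ does not depend on the representative.

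For the product, suppose $[\bfs p_g,\bfs p_h]=[\bft p_{g'},\bft p_{h'}]$. Then there is some $k_M\in H_M$ with $\bft p_{g'}=\bfs p_g\cdot k_M$ and $\bft p_{h'}=\bfs p_h\cdot k_M$. I would replace $(p_{g'},p_{h'})$ by $(p_{g'},p_{h'})\cdot k$, where $k\in H_G$ is the unit over $k_M^{-1}$ (more precisely $k=1_{k_M}^{-1}$, using the unit embedding $H_M\hookrightarrow H_G$). This makes the pairs composable, and the class of $(p_{g'},p_{h'})$ is unchanged.

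The independence of the product from the choice of representatives, i.e.\ $(p_g\cdot k)(p_{g'}\cdot k')=(p_gp_{g'})\cdot(kk')$, is again the statement that the action map is a groupoid map. Here one uses that $(k,k')$ is composable exactly when the translated elements are. Smoothness of the structure maps follows because they lift to smooth maps on $P_G\times_{N'}P_G$, and the quotient map is a submersion. The source map of the gauge groupoid is a submersion because the source of $P_G$ is a submersion and the quotient maps are submersions. The groupoid axioms are inherited from those of $P_G$.

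For the double groupoid, the vertical structure $P_G\times_{H_G}P_G\rightrightarrows G$ is the usual gauge groupoid: it is defined by $[p_g,p_h]\mapsto$ the images of $p_h$ and $p_g$ in $G=P_G/H_G$. Similarly, $P_M\times_{H_M}P_M\rightrightarrows M$ is a gauge groupoid. I would then verify that the horizontal structure maps (source, target, multiplication, unit, inversion) are groupoid maps over the corresponding structure maps of $G\rightrightarrows M$. On representatives this reduces to the fact that the projection $P_G\to G$ is a groupoid map, and that vertical multiplication is concatenation of representatives. From this the interchange law follows on representatives.

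Fullness means that the double source map $[p_g,p_h]\mapsto([\bfs p_g,\bfs p_h],\,\pi(p_h))$ is a surjective submersion onto $(P_M\times_{H_M}P_M)\times_M G$. For surjectivity, given $[q,q']$ and $h$ with $\bfs h=\pi(q')$, I would pick $p_h$ over $h$ with $\bfs p_h=q'$, which is possible since $P_G\to P_M\times_M G$ is surjective by principality. Then I would pick $p_g$ with $\bfs p_g=q$ in the correct fibre.

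I expect fullness, especially the submersion property, to be the main obstacle. My first attempt would be to lift everything to $P_G\times_{N'}P_G$ and show that $(p_g,p_h)\mapsto(\bfs p_g,\bfs p_h,\pi p_h)$ is a submersion onto the corresponding fibre product. This would use that $(\bfs,\pi)\colon P_G\to P_M\times_M G$ is a submersion, which is a local-triviality consequence of principality. I would then descend through the equivariant quotient maps.
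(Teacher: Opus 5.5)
Your proposal is correct and, for the only part the paper actually argues (fullness), it follows the paper's strategy: lift to $P_G\times_N P_G$, handle the $p_h$-component through the principal bundle $P_G\to G$ corrected by units $1_{k_M}$ with $k_M\in H_M$, and descend through the quotient. The paper does this by writing explicit local sections of the double source map, while you argue at the tangent level, which amounts to the same thing.

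The other verifications you list (smoothness of the quotients, well-definedness on representatives, the interchange law) are the routine ones the paper leaves implicit. Your lift of the $p_g$-component through the submersion $\bfs\colon P_G\to P_M$ makes the constraint $\mu(p_g)=\mu(p_h)$ automatic, since $\mu=\mu\circ\bfs$. This is cleaner than the paper's section of the moment map corrected by $\Psi$, because that correction only makes sense for points of $P_M$ lying in the same fibre over $M$.
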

\begin{proof}
We prove here that the above double groupoid is full. That is, the double source map
\begin{equation*}
    (\bfs_\ra, \bfs_\da): P_G \times_{H_G} P_G \ra (P_M \times_{H_M} P_M) \times_M G
\end{equation*}
is a surjective submersion. Indeed, to construct a section through $[p_g,p_h] \in P_G \times_{H_G} P_G$, first pick a local section $\sigma$ of $P_M \ra M$. Then, take a local section $\sigma$ of $P_G \ra G$, which is projectable to the local section $\sigma$ of $P_M \ra M$, and goes through
\begin{equation*}
    p_h \cdot 1_{\Psi(\sigma(\bfs h), \bfs p_h)^{-1}},
\end{equation*}
where we used
\begin{equation*}
    \Psi: P_M \times_M P_M \xla{\sim} P_M \times_N H_M \ra H_M.
\end{equation*}
Lastly, we pick a local section $\sigma_\mu$ of the moment map $\mu: P_G \ra N$ through $p_g$ (defined on an open set in the image of $\mu$). This way, we can build the local section
\begin{equation*}
    (P_M \times_N P_M) \times_M G \ra P_G \times_N P_G \qquad (p_x,p_{\bfs k},k) \mapsto (\sigma_\mu(\mu p_x) \cdot 1_{\Psi(\bfs \sigma_\mu(\mu p_x), p_x)} , \sigma(k) \cdot 1_{\Psi(\sigma(\bfs k), p_{\bfs k})})
\end{equation*}
which induces the desired well-defined local section through $[p_g,p_h] \in P_G \times_{H_G} P_G$.
\end{proof}

We will argue next that the gauge groupoid of a PB-groupoid with gauge double groupoid is always vertically core-transitive. Basically, this follows from the fact that the structural strict Lie $2$-groupoid is always vertically core-transitive (see Remark \ref{rema: core extension of a strict Lie 2-groupoid}).

\begin{lemma}\label{lemm: core-transitive PB-groupoid}
Let $P_G \rra P_M$ be a PB-groupoid with gauge double groupoid, and denote by $H_G \rra H_M \rra N$ its structural strict Lie $2$-groupoid. Then its gauge double groupoid
\begin{center}
\begin{tikzcd}
    P_G \times_{H_G} P_G \ar[r, shift left] \ar[r, shift right] \ar[d, shift left] \ar[d, shift right] & P_M \times_{H_M} P_M \ar[d, shift left] \ar[d, shift right] \\
    G \ar[r, shift left] \ar[r, shift right] & M
\end{tikzcd}
\end{center}
is vertically core-transitive.
\end{lemma}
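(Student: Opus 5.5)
We need to show that $\bft_\da: F \ra G_\ra = G$ is a surjective submersion, where $F$ is the core of the gauge double groupoid. Here the vertical structure is $P_G \times_{H_G} P_G \rra G$, with projection $[p_g,p_h] \mapsto$ (the pair $(g,h)$ modulo $H_G$, which is the element $g h^{-1}$-type quotient). We follow the paper's convention that the vertical groupoid is the gauge groupoid of the principal $H_G$-bundle $P_G \ra G$. The horizontal structure is the componentwise one, over $P_M \times_{H_M} P_M$. The core then consists of classes $[p_g,p_h]$ with vertical source a unit of $G$ and horizontal source a unit of $P_M \times_{H_M} P_M$.

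The plan is to identify the core explicitly and exhibit local sections of $\bft_\da$. A convenient normalisation is to represent elements as $[p_g, p_x]$ with $p_x$ a unit in $P_G$ over $x=\bfs g$. Then the core is
\[
F \cong \{[p_g, 1_{p}] \mid \bfs p_g \text{ and } p \text{ are } H_M\text{-related in a way giving the unit class}\},
\]
and $\bft_\da[p_g,1_p]$ is, up to the gauge identification, $g$.

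Given $g_0 \in G$, we pick a local section $\sigma$ of $P_G \ra G$ near $g_0$. This exists because $P_G \ra G$ is a principal bundle, hence a surjective submersion. We then correct $\sigma(g)$ on the right by an element of $H_G$ so that its source lands in a prescribed element. The natural choice is to use the unit $1_\ra$ of $H_G \rra H_M$ applied to $\Psi(\bfs\sigma(g), \tau(\bfs g))$, where $\tau$ is a local section of $P_M\ra M$ and $\Psi$ is as in the proof of Proposition \ref{prop: PB-groupoid with gauge double groupoid}. This produces $\tilde\sigma(g)$ with $\bfs\tilde\sigma(g)=\tau(\bfs g)$. The map
\[
g \mapsto [\tilde\sigma(g), 1_{\tau(\bfs g)}]
\]
then has horizontal source $[\tau(\bfs g),\tau(\bfs g)]$, which is a unit, and vertical source a unit of $G$. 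So it lies in $F$ and is a smooth local section of $\bft_\da$ through any prescribed point, after choosing $\sigma$ through a given representative. Smoothness uses the smoothness of the gauge double groupoid, which holds by Proposition \ref{prop: PB-groupoid with gauge double groupoid}.

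Since every point of $G$ admits a smooth local section, $\bft_\da$ is a surjective submersion. For submersivity at every point of $F$, rather than only along the image of one section, we use that every element of $F$ can be written in the normalised form above. This is done by vertically translating by an element of the isotropy, using the right translation available in the full double groupoid. A section can therefore be chosen through each point.

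The main obstacle is bookkeeping the moment map compatibility: $1_\ra\Psi(\cdot,\cdot)$ must be composable with $\sigma(g)$ in the $2$-action. We would handle this via the fact that the moment map $P_G \ra 1_N$ is a groupoid map, mirroring the argument that strict Lie $2$-groupoids are vertically core-transitive (Remark \ref{rema: core extension of a strict Lie 2-groupoid}). An alternative is to reduce directly to that remark by working locally, where $P_G$ trivialises as $G\times_N H_G$-type data.
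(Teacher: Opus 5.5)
Your argument is correct and is essentially the paper's. In both, local sections of the principal bundle $P_G \ra G$ through a suitable representative are adjusted using the structural $2$-groupoid and the fact that the moment map $P_G \ra 1_N$ is a groupoid map, which yields smooth local sections of $\bft_\da\colon F \ra G$ through every point of $F$. Your pairing with the units, $g \mapsto [\sigma(g), 1_{\bfs\sigma(g)}]$, makes landing in $F$ automatic. In fact your $\tau$/$\Psi$ normalisation is redundant: with $\xi = \Psi(\bfs\sigma(g),\tau(\bfs g))$ one has $1_{\bfs\sigma(g)}\cdot 1_\ra(\xi) = 1_{\bfs\sigma(g)\cdot\xi}$, hence $[\sigma(g)\cdot 1_\ra(\xi), 1_{\bfs\sigma(g)\cdot\xi}] = [\sigma(g),1_{\bfs\sigma(g)}]$. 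The one step you should state precisely is the normal form. For $[p_g,q]\in F$, write $q = 1_{\bfs q}\cdot\eta$ with $\eta \in H_G$; its source in $H_M$ is a unit by freeness of the $H_M$-action on $P_M$. Then $[p_g,q] = [p_g\cdot\eta^{-1}, 1_{\bfs q}]$ with $\bfs(p_g\cdot\eta^{-1}) = \bfs q$. This is a change of representative in the diagonal $H_G$-quotient, not a translation in the double groupoid.
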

\begin{proof}
We write $F_H$ for the core of $H_G$ and $F$ for the core of the gauge double groupoid. Note that
\begin{equation*}
    F = \{[p_g,q_{1_{\bfs g}}] \in P_G \times_{H_G} P_G \mid \bfs p = \bfs q \in P_M\}.
\end{equation*}
Now, let $\sigma$ be a (local) section of $P_G \times_N P_G \ra G \times G$ that is $\bfs \times \bfs$-projectable to a section of $P_M \times_N P_M \ra M \times M$, and consider a section of $\bft_\da: F_H \ra N$. Define
\begin{equation*}
    a: G \times G \xra{\sigma} P_G \times_N P_G \ra N \ra F_H.
\end{equation*}
Given $[p_g,q_{1_{\bfs g}}] \in F$, we take $\sigma$ defined on an open set of $(g, 1_{\bfs g})$, and such that $\sigma(g,1_{\bfs g}) = (p_g,q_{1_{\bfs g}})$. Then the pointwise multiplication map
\begin{equation*}
    \sigma \cdot a: G \times G \ra P_G \times_{H_G} P_G
\end{equation*}
defines through
\begin{equation*}
    G \hookrightarrow G \times 1 \xra{\sigma \cdot a} F
\end{equation*}
the desired section of $F \ra G$ through $[p_g,q_{1_{\bfs g}}] \in F$. This proves the statement.
\end{proof}

In turn, a PB-groupoid with gauge double groupoid comes with a vertical core extension. We can describe it as follows.

\begin{definition}\label{def: core extension of a PB-groupoid}
Let $P_G \rra P_M$ be a PB-groupoid with gauge double groupoid. The core extension of $P_G$ is the principal groupoid bundle $P_M$ together with the core extension
\begin{center}
\begin{tikzcd}
    1 \ar[r] & \textnormal{H} \ar[r] & \ar[dl] F \ar[r] & G \ar[r] & 1. \\
    \phantom{A} & P_M \times_{H_M} P_M \ar[u, phantom, "\textnormal{\Large$\circlearrowright$}"] & \phantom{A} & \phantom{A} & \phantom{A}
\end{tikzcd}
\end{center}
\end{definition}

\begin{remark}
In the above discussion, the core groupoid $F$ is isomorphic to
\begin{equation*}
    P_G/H_M \rra P_M/H_M = M
\end{equation*}
where $H_M$ acts on $P_G$ via the unit embedding $H_M \hookrightarrow H_G$. The isomorphism is given by
\begin{equation*}
    P_G/H_M \ra F \qquad [p] \mapsto [p,1_{\bfs p}].
\end{equation*}
This should be compared with Section \ref{sec: From general linear PB-groupoids to fat extensions}.
\end{remark}

If $P_G$ is a regular general linear PB-groupoid, then the gauge groupoid associated to the principal groupoid bundle $P_M = \textnormal{GL}(C_M,V_M)$ is the Lie groupoid $\textnormal{Aut}(C_M \ra V_M)$. To tie everything together, we end this section with the following result.

\begin{proposition}\label{prop: fat extension as core extension}
Let $F_G \rra M$ be a Lie groupoid, and let $C_M \ra V_M$ be a regular $2$-term complex. Consider the canonical (conjugation) action $\textnormal{Aut}(C_M \ra V_M)$ $\rotatebox[origin=c]{270}{\large$\circlearrowright$}$ $\textnormal{H}(C_M,V_M)$. Then $F_G$ defines a fat extension of $G$ over $C_M \ra V_M$ if and only if
\begin{center}
    \begin{tikzcd}
        1 \ar[r] & \textnormal{H}(V_M,C_M) \ar[r] & \ar[dl] F_G \ar[r] & G \ar[r] & 1, \\
        \phantom{A} & \textnormal{Aut}(C_M \ra V_M) \ar[u, phantom, "\textnormal{\Large$\circlearrowright$}"] & \phantom{A} & \phantom{A} & \phantom{A}
    \end{tikzcd}
\end{center}
defines a core extension.
\end{proposition}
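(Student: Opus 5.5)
The plan is to compare the two definitions axiom by axiom. Both start from the same short exact sequence $1 \to \textnormal{H}(V_M,C_M) \to F_G \to G \to 1$, so the work is to match the remaining data. The key tool is regularity of $\partial$. By Proposition~\ref{prop: bundle of Lie groups is locally trivial}, $\textnormal{H}(V_M,C_M)$ is then a Lie group bundle, and $\textnormal{Aut}(C_M \ra V_M)$ is a Lie subgroupoid of $\textnormal{GL}(C_M,V_M)$ (Remark~\ref{rema: GL of a cochain complex representation}). It follows that a cochain complex representation of $F_G$ on $C_M \ra V_M$ is exactly a Lie groupoid map $F_G \ra \textnormal{Aut}(C_M \ra V_M)$. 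We therefore identify the diagonal map of the core extension with the representation. Smoothness of a map into $\textnormal{Aut}(C_M \ra V_M)$ follows from smoothness into $\textnormal{GL}(C_M,V_M)$ because the subgroupoid is embedded.

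Next I would check that the conjugation action $\Psi \cdot h = \Psi^{C_M} h (\Psi^{V_M})^{-1}$ of $\textnormal{Aut}(C_M \ra V_M)$ on $\textnormal{H}(V_M,C_M)$ is a bundle of Lie groups representation, in the sense of Definition~\ref{def: representations of bundles of Lie groups}. The computation is direct. Using $\partial \Psi^{C_M} = \Psi^{V_M}\partial$, one gets
\[
\Psi\cdot(h + h' + h\partial h') = \Psi\cdot h + \Psi\cdot h' + (\Psi\cdot h)\,\partial\,(\Psi\cdot h').
\]
The same identity shows that the action preserves the invertibility of $1+\partial h$, as in Example~\ref{exa: cochain complex defines PB-groupoid}. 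This ingredient is independent of $F_G$.

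Now match the two compatibility axioms on each side.
\begin{itemize}
\item \emph{Equivariance} in the core extension asks that the restriction $\textnormal{H}(V_M,C_M) \ra \textnormal{Aut}(C_M \ra V_M)$ intertwine this action with conjugation in $\textnormal{Aut}$.
\item The fat extension instead asks that this restriction be the canonical representation $h \mapsto (1+h\partial, 1+\partial h)$.
\end{itemize}
In the direction from fat extensions to core extensions, equivariance is a direct check, since
\[
\Psi(1+h\partial)\Psi^{-1} = 1 + (\Psi\cdot h)\partial .
\]
In the converse direction, equivariance alone does not force the restriction to be the canonical map. So I would read ``defines a core extension'' as taking the map $F_G \ra \textnormal{Aut}$ to extend the canonical map on $\textnormal{H}(V_M,C_M)$, which is the setting implicit in Definition~\ref{def: general linear double groupoids}. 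I expect this to be the main obstacle, and I would state this convention explicitly.

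The Peiffer identity then compares conjugation in $F_G$ with the action induced through $F_G \ra \textnormal{Aut}$. That induced action is $H_g \cdot_{V_M} h \cdot_{C_M} H_g^{-1}$, which is precisely the second fat-extension axiom in Definition~\ref{def: fat extensions}. Hence the Peiffer identity and that axiom are literally the same condition, and the two notions coincide.
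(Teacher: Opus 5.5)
Your proposal follows the paper's proof: equivariance is matched with the restriction of the representation to $\textnormal{H}(V_M,C_M)$ being canonical, and the Peiffer identity with the agreement of the two conjugation actions. You also supply details the paper leaves implicit, namely smoothness via regularity of $\textnormal{Aut}(C_M \ra V_M)$ and the fact that the conjugation action is by bundle-of-Lie-group automorphisms.

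Your caveat is justified. The paper asserts that equivariance is \emph{equivalent} to the canonical restriction, but only one implication holds. For example, take $C_M = V_M = M \times \bbR$, $\partial = 1$, $G = 1_M$ and $F_G = \textnormal{H}(V_M,C_M)$, which is abelian. The trivial map $F_G \ra \textnormal{Aut}(C_M \ra V_M)$ then gives a core extension that is not a fat extension. So the converse needs the canonical restriction built into the data, exactly as you propose.
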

\begin{proof}
The equivariance condition is equivalent to $F_G$ restricting to the canonical cochain complex representation, and the Peiffer identity translates to the fact that the two natural conjugation actions of $F_G$ on $\textnormal{H}(V_M,C_M)$ agree. This proves the statement.
\end{proof}

\begin{remark}
That the action in a core extension
\begin{center}
\begin{tikzcd}
    1 \ar[r] & \textnormal{H} \ar[r] & \ar[dl] F \ar[r] & G_\ra \ar[r] & 1 \\
    \phantom{A} & G_\da \ar[u, phantom, "\textnormal{\Large$\circlearrowright$}"] & \phantom{A} & \phantom{A} & \phantom{A}
\end{tikzcd}
\end{center}
defines a bundle of Lie groups representation uses, of course, the group structure of $G_\da$ explicitly. But the equivariance and Peiffer identity use the groupoid structure $G_\da$ in terms of the groupoid map $F \ra G_\da$ only.

While the regularity of the complex in the above statement is used to ensure that $\textnormal{Aut}(C_M \ra V_M)$ is a smooth Lie groupoid, this is irrelevant for the description of a fat extension $F_G$. The reason is that we can ``bypass'' $\textnormal{Aut}(C_M \ra V_M)$ by embedding $\textnormal{Aut}(C_M \ra V_M)$ into, for example, $\textnormal{GL}(C_M,V_M)$. Although the conjugation action of $\textnormal{GL}(C_M,V_M)$ on $\textnormal{H}(V_M,C_M)$ is not well-defined, it is well-defined on the image of $F_G \ra \textnormal{GL}(C_M,V_M)$. So, a fat extension is, in essence, a type of core extension, but only when interpreting the concept in such a more general form.
\end{remark}

In general, the gauge double groupoid of a PB-groupoid with gauge double groupoid is encoded by a core extension. But the above remark suggests that at least general linear PB-groupoids are also encoded by a type of ``core extension'': fat extensions. Investigating this definition for PB-groupoids in more general setups (in the style of the previous remark) seems an interesting direction that will be pursued elsewhere.


\bibliographystyle{alpha}
\bibliography{bib}

@article{Eulerlike,
 author = {Bischoff, F. and Bursztyn, H. and Lima, H. and Meinrenken, E.},
 title = {Deformation spaces and normal forms around transversals},
 fjournal = {Compositio Mathematica},
 journal = {Compos. Math.},
 issn = {0010-437X},
 volume = {156},
 number = {4},
 pages = {697--732},
 year = {2020},
 language = {English},
 doi = {10.1112/S0010437X1900784X},
 zbMATH = {7169455},
 Zbl = {1434.53029}
}

@article{Matiasorbispaces,
 author = {del Hoyo, M. L.},
 title = {Lie groupoids and their orbispaces},
 fjournal = {Portugaliae Mathematica. Nova S{\'e}rie},
 journal = {Port. Math. (N.S.)},
 issn = {0032-5155},
 volume = {70},
 number = {2},
 pages = {161--209},
 year = {2013},
 language = {English},
 doi = {10.4171/PM/1930},
 zbMATH = {6215352},
 Zbl = {1277.22005}
}

@article{VBalejandrohenriquematias,
 author = {Bursztyn, H. and Cabrera, A. and del Hoyo, M. L.},
 title = {Vector bundles over {Lie} groupoids and algebroids},
 fjournal = {Advances in Mathematics},
 journal = {Adv. Math.},
 issn = {0001-8708},
 volume = {290},
 pages = {163--207},
 year = {2016},
 language = {English},
 doi = {10.1016/j.aim.2015.11.044},
 zbMATH = {6538715},
 Zbl = {1391.53092}
}

@article{VBMorita,
 author = {del Hoyo, M. L. and Ortiz, C.},
 title = {Morita equivalences of vector bundles},
 fjournal = {IMRN. International Mathematics Research Notices},
 journal = {Int. Math. Res. Not.},
 issn = {1073-7928},
 volume = {2020},
 number = {14},
 pages = {4395--4432},
 year = {2020},
 language = {English},
 doi = {10.1093/imrn/rny149},
 zbMATH = {7441784},
 Zbl = {1478.18012}
}

@article{Highervectorbundles,
    author = {del Hoyo, M. L. and Trentinaglia, G.},
    title = {Higher Vector Bundles},
    journal = {International Mathematics Research Notices},
    volume = {2025},
    number = {11},
    pages = {rnaf128},
    year = {2025},
    month = {05},
    issn = {1073-7928},
    doi = {10.1093/imrn/rnaf128},
    url = {https://doi.org/10.1093/imrn/rnaf128},
    eprint = {https://academic.oup.com/imrn/article-pdf/2025/11/rnaf128/63326912/rnaf128.pdf},
}

@article{VBalgebroidsruths,
 author = {Gracia-Saz, A. and Mehta, R. A.},
 title = {Lie algebroid structures on double vector bundles and representation theory of {Lie} algebroids},
 fjournal = {Advances in Mathematics},
 journal = {Adv. Math.},
 issn = {0001-8708},
 volume = {223},
 number = {4},
 pages = {1236--1275},
 year = {2010},
 language = {English},
 doi = {10.1016/j.aim.2009.09.010},
 zbMATH = {5671649},
 Zbl = {1183.22002}
}

@article{VBgroupoidsruths,
 author = {Gracia-Saz, A. and Mehta, R. A.},
 title = {{{\(\mathcal{VB}\)}}-groupoids and representation theory of {Lie} groupoids},
 fjournal = {The Journal of Symplectic Geometry},
 journal = {J. Symplectic Geom.},
 issn = {1527-5256},
 volume = {15},
 number = {3},
 pages = {741--783},
 year = {2017},
 language = {English},
 doi = {10.4310/JSG.2017.v15.n3.a5},
 zbMATH = {6826876},
 Zbl = {1387.18009}
}

@article{Blup,
 author = {Debord, C. and Skandalis, G.},
 title = {Blow-up constructions for {Lie} groupoids and a {Boutet} de {Monvel} type calculus},
 fjournal = {M{\"u}nster Journal of Mathematics},
 journal = {M{\"u}nster J. Math.},
 issn = {1867-5778},
 volume = {14},
 number = {1},
 pages = {1--40},
 year = {2021},
 language = {English},
 doi = {10.17879/59019640550},
 zbMATH = {7324080},
 Zbl = {1460.58013}
}

@article{VanEsthomogeneous,
 author = {Cabrera, A. and Drummond, T.},
 title = {Van {Est} isomorphism for homogeneous cochains},
 fjournal = {Pacific Journal of Mathematics},
 journal = {Pac. J. Math.},
 issn = {1945-5844},
 volume = {287},
 number = {2},
 pages = {297--336},
 year = {2017},
 language = {English},
 doi = {10.2140/pjm.2017.287.297},
 zbMATH = {6705118},
 Zbl = {1360.22006}
}

@article{LAruthsasfgp,
 author = {Mehta, R. A.},
 title = {Lie algebroid modules and representations up to homotopy},
 fjournal = {Indagationes Mathematicae. New Series},
 journal = {Indag. Math., New Ser.},
 issn = {0019-3577},
 volume = {25},
 number = {5},
 pages = {1122--1134},
 year = {2014},
 language = {English},
 doi = {10.1016/j.indag.2014.07.013},
 zbMATH = {6351313},
 Zbl = {1301.58009}
}

@article{LGruths,
 author = {Arias Abad, C. and Crainic, M.},
 title = {Representations up to homotopy and {Bott}'s spectral sequence for {Lie} groupoids},
 fjournal = {Advances in Mathematics},
 journal = {Adv. Math.},
 issn = {0001-8708},
 volume = {248},
 pages = {416--452},
 year = {2013},
 language = {English},
 doi = {10.1016/j.aim.2012.12.022},
 zbMATH = {6264524},
 Zbl = {1284.55018}
}

@article{Tensorruth,
 author = {Arias Abad, C. and Crainic, M. and Dherin, B.},
 title = {Tensor products of representations up to homotopy},
 fjournal = {Journal of Homotopy and Related Structures},
 journal = {J. Homotopy Relat. Struct.},
 issn = {2193-8407},
 volume = {6},
 number = {2},
 pages = {239--288},
 year = {2011},
 language = {English},
 url = {tcms.org.ge/Journals/JHRS/},
 zbMATH = {6240899},
 Zbl = {1278.18029}
}

@article{LAruths,
 author = {Arias Abad, C. and Crainic, M.},
 title = {Representations up to homotopy of {Lie} algebroids},
 fjournal = {Journal f{\"u}r die Reine und Angewandte Mathematik},
 journal = {J. Reine Angew. Math.},
 issn = {0075-4102},
 volume = {663},
 pages = {91--126},
 year = {2012},
 language = {English},
 zbMATH = {6012495},
 Zbl = {1238.58010}
}

@article{CamiloFlorianruths,
 author = {Arias Abad, C. and Sch{\"a}tz, F.},
 title = {Deformations of {Lie} brackets and representations up to homotopy},
 fjournal = {Indagationes Mathematicae. New Series},
 journal = {Indag. Math., New Ser.},
 issn = {0019-3577},
 volume = {22},
 number = {1-2},
 pages = {27--54},
 year = {2011},
 language = {English},
 doi = {10.1016/j.indag.2011.07.003},
 url = {www.zora.uzh.ch/id/eprint/58215/4/1006.1550v1.pdf},
 zbMATH = {5982442},
 Zbl = {1235.53085}
}

@article{Deformationsalgebroids,
 author = {Crainic, M. and Moerdijk, I.},
 title = {Deformations of {Lie} brackets: {Cohomological} aspects},
 fjournal = {Journal of the European Mathematical Society (JEMS)},
 journal = {J. Eur. Math. Soc. (JEMS)},
 issn = {1435-9855},
 volume = {10},
 number = {4},
 pages = {1037--1059},
 year = {2008},
 language = {English},
 doi = {10.4171/JEMS/139},
 url = {www.ems-ph.org/journals/show_pdf.php?issn=1435-9855&vol=10&iss=4&rank=7},
 zbMATH = {5365144},
 Zbl = {1159.58011}
}

@article{Deformationsgroupoids,
 author = {Crainic, M. and Mestre, J. N. and Struchiner, I.},
 title = {Deformations of {Lie} groupoids},
 fjournal = {IMRN. International Mathematics Research Notices},
 journal = {Int. Math. Res. Not.},
 issn = {1073-7928},
 volume = {2020},
 number = {21},
 pages = {7662--7746},
 year = {2020},
 language = {English},
 doi = {10.1093/imrn/rny221},
 zbMATH = {7319827},
 Zbl = {1470.58020}
}

@misc{Fat,
  author = {Obster, L.},
  howpublished = {In preparation},
  title = {Fat Lie theory}
}

@misc{Multiplicativetensors,
  author = {Mestre, J. N. and Obster, L. and Vitagliano, L.},
  howpublished = {In preparation},
  title = {A cohomology theory for (infinitesimally) multiplicative tensors}
}

@misc{Homotopyoperators,
  author = {Marcut, I. and Mestre, J. N. and Obster, L.},
  howpublished = {In preparation},
  title = {Homotopy operators for representations up to homotopy}
}

@misc{lobsterblup,
      title={Blow-ups of {L}ie groupoids and {L}ie algebroids}, 
      author={Obster, L.},
      year={2021},
      eprint={2110.12247},
      archivePrefix={arXiv},
      primaryClass={math.DG},
      url={https://arxiv.org/abs/2110.12247}, 
}

@article{ShengLAdeformations,
 author = {Sheng, Y.},
 title = {On deformations of {Lie} algebroids},
 fjournal = {Results in Mathematics},
 journal = {Result. Math.},
 issn = {1422-6383},
 volume = {62},
 number = {1-2},
 pages = {103--120},
 year = {2012},
 language = {English},
 doi = {10.1007/s00025-011-0133-x},
 zbMATH = {6112882},
 Zbl = {1318.53094}
}

@misc{Stefanithesis,
      title={Representations up to homotopy and perfect complexes over differentiable stacks}, 
      howpublished = {PhD Thesis},
      author={Stefani, D.},
      year={2019}
}

@article{Zanbroscats,
 author = {Grad, {\v{Z}}.},
 title = {Fundamentals of {Lie} categories},
 fjournal = {Journal of Noncommutative Geometry},
 journal = {J. Noncommut. Geom.},
 issn = {1661-6952},
 volume = {19},
 number = {1},
 pages = {211--248},
 year = {2025},
 language = {English},
 doi = {10.4171/JNCG/563},
 zbMATH = {8005353},
 Zbl = {1560.22008}
}

@article{LiblandMeinrenkenVanEst,
 author = {Li-Bland, D. and Meinrenken, E.},
 title = {On the {Van} {Est} homomorphism for {Lie} groupoids},
 fjournal = {L'Enseignement Math{\'e}matique. 2e S{\'e}rie},
 journal = {Enseign. Math. (2)},
 issn = {0013-8584},
 volume = {61},
 number = {1-2},
 pages = {93--137},
 year = {2015},
 language = {English},
 doi = {10.4171/LEM/61-1/2-5},
 zbMATH = {6545827},
 Zbl = {1337.55017}
}

@article{MariaMeinrenkenVanEst,
 author = {Meinrenken, E. and Salazar, M. A.},
 title = {Van {Est} differentiation and integration},
 fjournal = {Mathematische Annalen},
 journal = {Math. Ann.},
 issn = {0025-5831},
 volume = {376},
 number = {3-4},
 pages = {1395--1428},
 year = {2020},
 language = {English},
 doi = {10.1007/s00208-019-01917-1},
 zbMATH = {7184947},
 Zbl = {1443.22004}
}

@misc{PerturbationMarius,
 author = {Crainic, M.},
 title = {On the perturbation lemma, and deformations},
 year = {2004},
 howpublished = {Preprint, {arXiv}:math/0403266 [math.{AT}]},
 url = {https://arxiv.org/abs/math/0403266},
 arXiv = {arXiv:math/0403266}
}

@article{PB-groupoids,
 author = {Cattafi, F. and Garmendia, A.},
 title = {{PB}-groupoids vs {VB}-groupoids},
 journal = {Rev. Mat. Iberoam.},
 volume = {42},
 number = {1},
 pages = {345--392},
 year = {2026},
 doi = {10.4171/RMI/1580}
}

@article{Corediagram,
 author = {Brown, R. and Mackenzie, K. C. H.},
 title = {Determination of a double {Lie} groupoid by its core diagram},
 fjournal = {Journal of Pure and Applied Algebra},
 journal = {J. Pure Appl. Algebra},
 issn = {0022-4049},
 volume = {80},
 number = {3},
 pages = {237--272},
 year = {1992},
 language = {English},
 doi = {10.1016/0022-4049(92)90145-6},
 zbMATH = {89584},
 Zbl = {0766.22001}
}

@article{CorediagramLA,
 author = {Jotz-Lean, M. and Mackenzie, K. C. H.},
 title = {Transitive double {Lie} algebroids via core diagrams},
 fjournal = {Journal of Geometric Mechanics},
 journal = {J. Geom. Mech.},
 issn = {1941-4889},
 volume = {13},
 number = {3},
 pages = {403--457},
 year = {2021},
 language = {English},
 doi = {10.3934/jgm.2021023},
 zbMATH = {7453485},
 Zbl = {1484.22015}
}

@article{GLStefaniMatias,
 author = {del Hoyo, M. L. and Stefani, D.},
 title = {The general linear 2-groupoid},
 fjournal = {Pacific Journal of Mathematics},
 journal = {Pac. J. Math.},
 issn = {1945-5844},
 volume = {298},
 number = {1},
 pages = {33--57},
 year = {2019},
 language = {English},
 doi = {10.2140/pjm.2019.298.33},
 zbMATH = {7020375},
 Zbl = {1441.18031}
}

@article{DoublegroupoidsMehtaTang,
 author = {Mehta, R. A. and Tang, X.},
 title = {From double {Lie} groupoids to local {Lie} 2-groupoids},
 fjournal = {Bulletin of the Brazilian Mathematical Society. New Series},
 journal = {Bull. Braz. Math. Soc. (N.S.)},
 issn = {1678-7544},
 volume = {42},
 number = {4},
 pages = {651--681},
 year = {2011},
 language = {English},
 doi = {10.1007/s00574-011-0033-4},
 zbMATH = {6037492},
 Zbl = {1242.53104}
}

@article{MariusVanEst,
 author = {Crainic, M.},
 title = {Differentiable and algebroid cohomology, {Van} {Est} isomorphisms, and characteristic classes},
 fjournal = {Commentarii Mathematici Helvetici},
 journal = {Comment. Math. Helv.},
 issn = {0010-2571},
 volume = {78},
 number = {4},
 pages = {681--721},
 year = {2003},
 language = {English},
 doi = {10.1007/s00014-001-0766-9},
 zbMATH = {2038403},
 Zbl = {1041.58007}
}

@incollection{DNCconstructionmanifolds,
 author = {Carrillo Rouse, P.},
 title = {A {Schwartz} type algebra for the tangent groupoid},
 booktitle = {\(K\)-theory and noncommutative geometry. Proceedings of the ICM 2006 satellite conference, Valladolid, Spain, August 31--September 6, 2006},
 isbn = {978-3-03719-060-9},
 pages = {181--199},
 year = {2008},
 publisher = {Z{\"u}rich: European Mathematical Society (EMS)},
 language = {English},
 zbMATH = {5381937},
 Zbl = {1165.58003}
}

@misc{Meinrenkenlecturenotes,
  author = {Meinrenken, E.},
  howpublished = {Lecture notes},
  title = {Lie groupoids and Lie algebroids lecture notes, fall 2017},
  year = {2017}
}

@unpublished{CrossedAndrouli,
  TITLE = {{Crossed modules and the integrability of Lie brackets}},
  AUTHOR = {Androulidakis, I.},
  URL = {https://hal.science/hal-00010028},
  NOTE = {41 pages},
  YEAR = {2005},
  HAL_ID = {hal-00010028},
  HAL_VERSION = {v1},
}

@article{DrummondEgea,
title = {Differential forms with values in VB-groupoids and its Morita invariance},
journal = {Journal of Geometry and Physics},
volume = {135},
pages = {42-69},
year = {2019},
issn = {0393-0440},
doi = {https://doi.org/10.1016/j.geomphys.2018.08.019},
url = {https://www.sciencedirect.com/science/article/pii/S0393044018303024},
author = {T. Drummond and L. Egea}
}

@article{BlaomCartangroupoids,
 author = {Blaom, A. D.},
 title = {Cartan connections on {Lie} groupoids and their integrability},
 fjournal = {SIGMA. Symmetry, Integrability and Geometry: Methods and Applications},
 journal = {SIGMA, Symmetry Integrability Geom. Methods Appl.},
 issn = {1815-0659},
 volume = {12},
 pages = {paper 114, 26},
 year = {2016},
 language = {English},
 doi = {10.3842/SIGMA.2016.114},
 zbMATH = {6662713},
 Zbl = {1353.53026}
}

@article{BlaomInfinitesimal,
 author = {Blaom, A. D.},
 title = {Geometric structures as deformed infinitesimal symmetries},
 fjournal = {Transactions of the American Mathematical Society},
 journal = {Trans. Am. Math. Soc.},
 issn = {0002-9947},
 volume = {358},
 number = {8},
 pages = {3651--3671},
 year = {2006},
 language = {English},
 doi = {10.1090/S0002-9947-06-04057-8},
 zbMATH = {5023934},
 Zbl = {1100.53029}
}

@incollection{Stefanini,
 author = {Stefanini, L.},
 title = {On the integration of {{\(\mathfrak{LA}\)}}-groupoids and duality for {Poisson} groupoids},
 booktitle = {Proceedings of the school on Poisson geometry and related topics, Yokohama, Japan, May 31--June 2, 2006},
 isbn = {978-2-87971-092-1},
 pages = {39--59},
 year = {2007},
 publisher = {Luxembourg: University of Luxembourg, Faculty of Science, Technology {and} Communication},
 language = {English},
 zbMATH = {5302796},
 Zbl = {1156.58008}
}

@misc{MariaThesis,
 author = {Salazar, M. A.},
 title = {Pfaffian groupoids},
 year = {2013},
 howpublished = {PhD thesis, {arXiv}:1306.1164 [math.{DG}] (2013)},
 url = {https://arxiv.org/abs/1306.1164},
 arXiv = {arXiv:1306.1164}
}

@article{IvanMariaMarius,
 author = {Crainic, M. and Salazar, M. A. and Struchiner, I.},
 title = {Multiplicative forms and {Spencer} operators},
 fjournal = {Mathematische Zeitschrift},
 journal = {Math. Z.},
 issn = {0025-5874},
 volume = {279},
 number = {3-4},
 pages = {939--979},
 year = {2015},
 language = {English},
 doi = {10.1007/s00209-014-1398-z},
 zbMATH = {6422645},
 Zbl = {1408.58015}
}

@article{LucaLaPastina,
 author = {La Pastina, P. P. and Vitagliano, L.},
 title = {Deformations of vector bundles over {Lie} groupoids},
 fjournal = {Revista Matem{\'a}tica Complutense},
 journal = {Rev. Mat. Complut.},
 issn = {1139-1138},
 volume = {36},
 number = {3},
 pages = {933--971},
 year = {2023},
 language = {English},
 doi = {10.1007/s13163-022-00441-2},
 zbMATH = {7741377},
 Zbl = {1540.22006}
}

@article{ShengCategorification,
 author = {Sheng, Y.},
 title = {Categorification of {{\(\mathsf{VB}\)}}-{Lie} algebroids and {{\(\mathsf{VB}\)}}-{Courant} algebroids},
 fjournal = {Journal of Geometric Mechanics},
 journal = {J. Geom. Mech.},
 issn = {1941-4889},
 volume = {15},
 number = {1},
 pages = {27--58},
 year = {2023},
 language = {English},
 doi = {10.3934/jgm.2023002},
 zbMATH = {7701145},
 Zbl = {1517.53073}
}

@book{Papantonis,
 author = {Papantonis, T.},
 title = {{{\(Z\)}}-graded supergeometry: differential graded modules, higher algebroid representations, and linear structures},
 year = {2021},
 publisher = {G{\"o}ttingen: Univ. G{\"o}ttingen (Diss.)},
 language = {English},
 doi = {10.53846/goediss-8678},
 zbMATH = {8090284},
 Zbl = {1571.58002}
}

@article{MeinrenkenPike,
 author = {Meinrenken, E. and Pike, J.},
 title = {The {Weil} algebra of a double {Lie} algebroid},
 fjournal = {IMRN. International Mathematics Research Notices},
 journal = {Int. Math. Res. Not.},
 issn = {1073-7928},
 volume = {2021},
 number = {11},
 pages = {8550--8622},
 year = {2021},
 language = {English},
 doi = {10.1093/imrn/rnz361},
 zbMATH = {7398553},
 Zbl = {1486.53098}
}

@article{LiBlandMeinrenkenManin,
 author = {Li-Bland, D. and Meinrenken, E.},
 title = {On the integration of {Manin} pairs},
 fjournal = {Differential Geometry and its Applications},
 journal = {Differ. Geom. Appl.},
 issn = {0926-2245},
 volume = {99},
 pages = {32},
 note = {Id/No 102246},
 year = {2025},
 language = {English},
 doi = {10.1016/j.difgeo.2025.102246},
 zbMATH = {8041169},
 Zbl = {1565.53067}
}

@article{CamiloMariusWeil,
 author = {Arias Abad, C. and Crainic, M.},
 title = {The {Weil} algebra and the {Van} {Est} isomorphism},
 fjournal = {Annales de l'Institut Fourier},
 journal = {Ann. Inst. Fourier},
 issn = {0373-0956},
 volume = {61},
 number = {3},
 pages = {927--970},
 year = {2011},
 language = {English},
 doi = {10.5802/aif.2633},
 url = {https://eudml.org/doc/219714},
 zbMATH = {6002988},
 Zbl = {1237.58021}
}

@article{BSS,
 author = {Bott, R. and Shulman, H. and Stasheff, J.ames},
 title = {On the de {Rham} theory of certain classifying spaces},
 fjournal = {Advances in Mathematics},
 journal = {Adv. Math.},
 issn = {0001-8708},
 volume = {20},
 pages = {43--56},
 year = {1976},
 language = {English},
 doi = {10.1016/0001-8708(76)90169-9},
 zbMATH = {3533786},
 Zbl = {0342.57016}
}

@misc{Mariarelative,
 author = {Salazar, M. A.},
 title = {On {Relative} {Cohomology} in {Lie} {Theory}},
 year = {2024},
 howpublished = {Preprint, {arXiv}:2407.00169 [math.{DG}]},
 url = {https://arxiv.org/abs/2407.00169},
 arXiv = {arXiv:2407.00169}
}

@article{MackenzieClassification,
 author = {Mackenzie, K. C. H.},
 title = {Classification of principal bundles and {Lie} groupoids with prescribed gauge group bundle},
 fjournal = {Journal of Pure and Applied Algebra},
 journal = {J. Pure Appl. Algebra},
 issn = {0022-4049},
 volume = {58},
 number = {2},
 pages = {181--208},
 year = {1989},
 language = {English},
 doi = {10.1016/0022-4049(89)90157-6},
 zbMATH = {4102066},
 Zbl = {0673.55015}
}

@article{CamilleWagemann,
 author = {Laurent-Gengoux, C. and Wagemann, F.},
 title = {Obstruction classes of crossed modules of {Lie} algebroids and {Lie} groupoids linked to existence of principal bundles},
 fjournal = {Annals of Global Analysis and Geometry},
 journal = {Ann. Global Anal. Geom.},
 issn = {0232-704X},
 volume = {34},
 number = {1},
 pages = {21--37},
 year = {2008},
 language = {English},
 doi = {10.1007/s10455-007-9098-0},
 zbMATH = {5306115},
 Zbl = {1146.55012}
}

@article{BursztynDrummond,
 author = {Bursztyn, H. and Drummond, T.},
 title = {Lie theory of multiplicative tensors},
 fjournal = {Mathematische Annalen},
 journal = {Math. Ann.},
 issn = {0025-5831},
 volume = {375},
 number = {3-4},
 pages = {1489--1554},
 year = {2019},
 language = {English},
 doi = {10.1007/s00208-019-01881-w},
 zbMATH = {7126538},
 Zbl = {1472.22002}
}

@misc{CabreraDelHoyo,
 author = {Cabrera, A. and del Hoyo, M. L.},
 title = {Geometric differentiation of simplicial manifolds},
 year = {2026},
 howpublished = {Preprint, {arXiv}:2602.09885 [math.{DG}]},
 url = {https://arxiv.org/abs/2602.09885},
 arXiv = {arXiv:2602.09885}
}

@article{Bouaziz,
 author = {Bouaziz, E.},
 title = {Deformations and representations of {Lie} algebroids},
 fjournal = {Communications in Algebra},
 journal = {Commun. Algebra},
 issn = {0092-7872},
 volume = {50},
 number = {2},
 pages = {880--888},
 year = {2022},
 language = {English},
 doi = {10.1080/00927872.2021.1975730},
 zbMATH = {7517845},
 Zbl = {1487.58012}
}

@incollection{CrainicFernandeschar,
 author = {Crainic, M. and Fernandes, R. L.},
 title = {Secondary characteristic classes of {Lie} algebroids},
 booktitle = {Quantum field theory and noncommutative geometry. Based on the workshop, Sendai, Japan, November 2002},
 isbn = {3-540-23900-6},
 pages = {157--176},
 year = {2005},
 publisher = {Berlin: Springer},
 language = {English},
 zbMATH = {2182877},
 Zbl = {1068.22004}
}

@article{Transversemeasures,
 author = {Evens, S and Lu, J-H and Weinstein, A.},
 title = {Transverse measures, the modular class and a cohomology pairing for {Lie} algebroids},
 fjournal = {The Quarterly Journal of Mathematics. Oxford Second Series},
 journal = {Q. J. Math., Oxf. II. Ser.},
 issn = {0033-5606},
 volume = {50},
 number = {200},
 pages = {417--436},
 year = {1999},
 language = {English},
 doi = {10.1093/qjmath/50.200.417},
 zbMATH = {1396426},
 Zbl = {0968.58014}
}

@article{PradinesVBLA,
 author = {Pradines, J.},
 title = {Repr{\'e}sentation des jets non holonomes par des morphismes vectoriels doubles soudes},
 fjournal = {Comptes Rendus Hebdomadaires des S{\'e}ances de l'Acad{\'e}mie des Sciences, S{\'e}rie A},
 journal = {C. R. Acad. Sci., Paris, S{\'e}r. A},
 issn = {0366-6034},
 volume = {278},
 pages = {1523--1526},
 year = {1974},
 language = {French},
 zbMATH = {3447808},
 Zbl = {0285.58002}
}

@article{PradinesVBLG,
 author = {Pradines, J.},
 title = {Remarque sur le groupo{\"{\i}}de cotangent de {Weinstein}-{Dazord}. ({A} remark about {Weinstein}-{Dazord} cotangent groupoid)},
 fjournal = {Comptes Rendus de l'Acad{\'e}mie des Sciences. S{\'e}rie I},
 journal = {C. R. Acad. Sci., Paris, S{\'e}r. I},
 issn = {0764-4442},
 volume = {306},
 number = {13},
 pages = {557--560},
 year = {1988},
 language = {French},
 zbMATH = {4077516},
 Zbl = {0659.18009}
}

@book{Mackenziebook,
 author = {Mackenzie, K. C. H.},
 title = {The general theory of {Lie} groupoids and {Lie} algebroids},
 fseries = {London Mathematical Society Lecture Note Series},
 series = {Lond. Math. Soc. Lect. Note Ser.},
 issn = {0076-0552},
 volume = {213},
 isbn = {0-521-49928-3},
 year = {2005},
 publisher = {Cambridge: Cambridge University Press},
 language = {English},
 zbMATH = {822678},
 Zbl = {1078.58011}
}

@misc{Wolbert,
 author = {Wolbert, S.},
 title = {Weak representations, representations up to homotopy, and {VB}-groupoids},
 year = {2017},
 howpublished = {Preprint, {arXiv}:1704.05019 [math.{DG}]},
 url = {https://arxiv.org/abs/1704.05019},
 arXiv = {arXiv:1704.05019}
}

@misc{TangVillatoro,
 author = {Tang, X. and Villatoro, J.},
 title = {Simplicial sheaves of modules and {Morita} invariance of groupoid cohomology},
 year = {2025},
 howpublished = {Preprint, {arXiv}:2509.07285 [math.{DG}]},
 url = {https://arxiv.org/abs/2509.07285},
 arXiv = {arXiv:2509.07285}
}

@article{Matchedpairs,
 author = {Jotz-Lean, M.},
 title = {Lie 2-algebroids and matched pairs of 2-representations: a geometric approach},
 fjournal = {Pacific Journal of Mathematics},
 journal = {Pac. J. Math.},
 issn = {1945-5844},
 volume = {301},
 number = {1},
 pages = {143--188},
 year = {2019},
 language = {English},
 doi = {10.2140/pjm.2019.301.143},
 zbMATH = {7178871},
 Zbl = {1443.53047}
}

@misc{Zanbrocov,
 author = {Grad, {\v{Z}}.},
 title = {Covariant derivatives in the representation-valued {Bott}-{Shulman}-{Stasheff} and {Weil} complex},
 year = {2025},
 howpublished = {Preprint, {arXiv}:2503.08873 [math.{DG}]},
 url = {https://arxiv.org/abs/2503.08873},
 arXiv = {arXiv:2503.08873}
}

@misc{AlvarezCueca,
      title={Homological vector fields over differentiable stacks}, 
      author={{\'{A}}lvarez, D. and Cueca, M.},
      year={2024},
      eprint={2403.14871},
      archivePrefix={arXiv},
      primaryClass={math.DG},
      url={https://arxiv.org/abs/2403.14871}, 
}

@article{BruceGrabowskaGrabowski,
 author = {Bruce, A.J. and Grabowska, K. and Grabowski, J.},
 title = {Remarks on contact and {Jacobi} geometry},
 fjournal = {SIGMA. Symmetry, Integrability and Geometry: Methods and Applications},
 journal = {SIGMA, Symmetry Integrability Geom. Methods Appl.},
 issn = {1815-0659},
 volume = {13},
 pages = {paper 059, 22},
 year = {2017},
 language = {English},
 doi = {10.3842/SIGMA.2017.059},
 zbMATH = {6756045},
 Zbl = {1369.53057}
}

@article{GrabowskiRotkiewicz,
 author = {Grabowski, J. and Rotkiewicz, M.},
 title = {Higher vector bundles and multi-graded symplectic manifolds},
 fjournal = {Journal of Geometry and Physics},
 journal = {J. Geom. Phys.},
 issn = {0393-0440},
 volume = {59},
 number = {9},
 pages = {1285--1305},
 year = {2009},
 language = {English},
 doi = {10.1016/j.geomphys.2009.06.009},
 zbMATH = {5598302},
 Zbl = {1171.58300}
}

@article{GrabowskaGrabowskiRavanpak,
 author = {Grabowska, K. and Grabowski, J. and Ravanpak, Z.},
 title = {{VB}-structures and generalizations},
 fjournal = {Annals of Global Analysis and Geometry},
 journal = {Ann. Global Anal. Geom.},
 issn = {0232-704X},
 volume = {62},
 number = {1},
 pages = {235--284},
 year = {2022},
 language = {English},
 doi = {10.1007/s10455-022-09847-z},
 zbMATH = {7539230},
 Zbl = {1508.53032}
}

@article{ChenLiuSheng,
 author = {Chen, Z. and Liu, Z.J. and Sheng, Y.H.},
 title = {On double vector bundles},
 fjournal = {Acta Mathematica Sinica. English Series},
 journal = {Acta Math. Sin., Engl. Ser.},
 issn = {1439-8516},
 volume = {30},
 number = {10},
 pages = {1655--1673},
 year = {2014},
 language = {English},
 doi = {10.1007/s10114-014-2412-4},
 zbMATH = {6378963},
 Zbl = {1318.58001}
}

@misc{RonchiZhu,
 author = {Ronchi, S. and Zhu, C.},
 title = {Duals of {Higher} {Vector} {Spaces}},
 year = {2025},
 howpublished = {Preprint, {arXiv}:2407.03306 [math.{DG}] (2025)},
 url = {https://arxiv.org/abs/2407.03306},
 arXiv = {arXiv:2407.03306}
}

@article{Ruioan,
 author = {Fernandes, R.L. and M{\u{a}}rcu{\c{t}}, I.},
 title = {Multiplicative {Ehresmann} connections},
 fjournal = {Advances in Mathematics},
 journal = {Adv. Math.},
 issn = {0001-8708},
 volume = {427},
 pages = {84},
 note = {Id/No 109124},
 year = {2023},
 language = {English},
 doi = {10.1016/j.aim.2023.109124},
 zbMATH = {7695596},
 Zbl = {1516.58004}
}

@misc{LaPastinathesis,
      title={Deformations of vector bundles in the categories of Lie algebroids and groupoids}, 
      author={La Pastina, P. P.},
      year={2020},
      eprint={2001.07559},
      archivePrefix={arXiv},
      primaryClass={math.DG},
      url={https://arxiv.org/abs/2001.07559}, 
}

@article{BursztynCabreraOrtiz,
 author = {Bursztyn, H. and Cabrera, A. and Ortiz, C.},
 title = {Linear and multiplicative 2-forms},
 fjournal = {Letters in Mathematical Physics},
 journal = {Lett. Math. Phys.},
 issn = {0377-9017},
 volume = {90},
 number = {1-3},
 pages = {59--83},
 year = {2009},
 language = {English},
 doi = {10.1007/s11005-009-0349-9},
 zbMATH = {5659882},
 Zbl = {1206.58005}
}

@article{BursztynCabrera,
 author = {Bursztyn, H. and Cabrera, A.},
 title = {Multiplicative forms at the infinitesimal level},
 fjournal = {Mathematische Annalen},
 journal = {Math. Ann.},
 issn = {0025-5831},
 volume = {353},
 number = {3},
 pages = {663--705},
 year = {2012},
 language = {English},
 doi = {10.1007/s00208-011-0697-5},
 zbMATH = {6050579},
 Zbl = {1247.58014}
}

@article{EilenbergMaclane,
    AUTHOR = {Eilenberg, S. and MacLane, S.},
     TITLE = {Cohomology theory in abstract groups. {I}},
   JOURNAL = {Ann. of Math. (2)},
  FJOURNAL = {Annals of Mathematics. Second Series},
    VOLUME = {48},
      YEAR = {1947},
     PAGES = {51--78},
      ISSN = {0003-486X},
   MRCLASS = {20.0X},
  MRNUMBER = {19092},
MRREVIEWER = {P.\ A.\ Smith},
       DOI = {10.2307/1969215},
       URL = {https://doi.org/10.2307/1969215},
}

@article{Dold,
 author = {Dold, A.},
 title = {Homology of symmetric products and other functors of complexes},
 fjournal = {Annals of Mathematics. Second Series},
 journal = {Ann. Math. (2)},
 issn = {0003-486X},
 volume = {68},
 pages = {54--80},
 year = {1958},
 language = {English},
 doi = {10.2307/1970043},
 zbMATH = {3135507},
 Zbl = {0082.37701}
}

@article{Kan,
 author = {Kan, D. M.},
 title = {Functors involving c. s. s. complexes},
 fjournal = {Transactions of the American Mathematical Society},
 journal = {Trans. Am. Math. Soc.},
 issn = {0002-9947},
 volume = {87},
 pages = {330--346},
 year = {1958},
 language = {English},
 doi = {10.2307/1993103},
 zbMATH = {3148099},
 Zbl = {0090.39001}
}

\end{document}